\numberwithin{equation}{section}
\newif\ifdraft\drafttrue
\newcommand{\lab}{\label}
\newcommand{\ben}{\begin{enumerate}}
\newcommand{\een}{\end{enumerate}}
\newcommand{\bea}{\begin{eqnarray}}
\newcommand{\ba}{\begin{array}}
\newcommand{\bean}{\begin{eqnarray*}}
\newcommand{\ea}{\end{array}}
\newcommand{\eea}{\end{eqnarray}}
\newcommand{\eean}{\end{eqnarray*}}
\newcommand{\beq}{\begin{equation}}
\newcommand{\eeq}{\end{equation}}
\newcommand{\bthm}{\begin{thm}}
\newcommand{\ethm}{\end{thm}}
\newcommand{\blem}{\begin{lem}}
\newcommand{\elem}{\end{lem}}
\newcommand{\bprop}{\begin{prop}}
\newcommand{\eprop}{\end{prop}}
\newcommand{\bcor}{\begin{cor}}
\newcommand{\ecor}{\end{cor}}
\newcommand{\bdfn}{\begin{dfn}}
\newcommand{\edfn}{\end{dfn}}
\newcommand{\brem}{\begin{rem}}
\newcommand{\erem}{\end{rem}}
\newcommand{\bpf}{\begin{proof}}
\newcommand{\epf}{\end{proof}}
\newcommand{\bfact}{\begin{fact}}
\newcommand{\efact}{\end{fact}}
\newcommand{\bobs}{\begin{observation}}
\newcommand{\eobs}{\end{observation}}
\newcommand{\nl}{\newline}
\newtheorem{thm}{Theorem}[section]
\newtheorem{prop}[thm]{Proposition}
\newtheorem{lem}[thm]{Lemma}
\newtheorem{cor}[thm]{Corollary}
\theoremstyle{definition}
\newtheorem{dfn}[thm]{Definition}
\newtheorem{rem}[thm]{Remark}
\newtheorem{fact}[thm]{Fact}
\newtheorem{observation}[thm]{Observation}
\newtheorem{example}[thm]{Example}
\def\cA{\mathcal A}
\def\cS{\mathcal S}             \def\cE{\mathcal E}       \def\cD{\mathcal D}
\def\N{{\mathbb N}}            \def\Z{{\mathbb Z}}      \def\R{{\mathbb R}}            \def\cR{{\mathcal R}}          \def\bH{{\mathbb H}}
\def\C{{\mathbb C}}                  \def\oc{\widehat\C}
\def\1{1\!\!\text{{\rm 1}}}
\def\and{\text{ and }}        
          \def\Con{\text{Con}} \def\Per{\text{{\rm Per}}}
        \def\diam{\text{\rm {diam}}}
\def\Crit{\text{Crit}}
\def\h{{\rm h}}             \def\res{\text{{\rm res}}}
\def\H{\text{{\rm H}}}     \def\HD{\text{{\rm HD}}}   
            \def\PC{\text{{\rm PC}}}
\def\re{\text{{\rm Re}}}    \def\im{\text{{\rm Im}}}  
\def\Int{\text{{\rm Int}}}    
 \def\Conv{{\rm Conv}}
\def\Ker{\text{{\rm Ker}}}  \def\Leb{{\rm Leb}}
         \def\P{\text{{\rm P}}}     \def\Id{\text{{\rm Id}}}
\def\a{\alpha}                \def\b{\beta}             \def\d{\delta}
\def\De{\Delta}               \def\e{\varepsilon}          \def\f{\phi}
\def\g{\gamma}                \def\Ga{\Gamma}           
\def\La{\Lambda}              \def\om{\omega}           \def\Om{\Omega}
\def\Sg{\Sigma}               \def\sg{\sigma}
               \def\th{\theta}           
\def\ka{\kappa}               \def\vp{\varphi}          \def\phi{\varphi}
\def\bi{\bigcap}              \def\bu{\bigcup}
\def\({\big(}                \def\){\big)}
\def\lt{\left}                \def\rt{\right}
\def\ld{\ldots}               \def\bd{\partial}         \def\^{\widetilde}
      \def\du{\bigoplus}
\def\es{\emptyset}            \def\sms{\setminus}
\def\sbt{\subseteq}             \def\spt{\supseteq}
             \def\lek{\preceq}
\def\eqv{\Leftrightarrow}     
\def\comp{\asymp}
\def\upto{\nearrow}           \def\downto{\searrow}
\def\sp{\medskip}             \def\fr{\noindent}        \def\nl{\newline}
\def\ov{\overline}            
\def\ess{{\rm ess}}           
\def\om{\omega}
\def\re{\text{{\rm Re}}}
\def\supp{\text{{\rm supp}}}
\def\D{{\mathbb D}}
\newcommand{\lam}{\lambda}
\newcommand{\ep}{\varepsilon}
\newcommand{\pf}{{\mathcal{L}}}
\begin{document}

\date{\today}
 \title[Asymptotic Counting in Conformal Dynamical Systems]{Asymptotic Counting \\ in \\ Conformal Dynamical Systems}

\author{Mark Pollicott}
\address{University of Warwick, Institute of Mathematics, UK} 
\email{masdbl@warwick.ac.uk}

\author{Mariusz Urba\'nski}
\address{University of North Texas, Department of Mathematics, 1155
  Union Circle \#311430, Denton, TX 76203-5017, USA} 
\email{urbanski@unt.edu  \newline \hspace*{0.3cm} Web:\!\!\!
www.math.unt.edu/$\sim$urbanski}

%
\thanks{The research of both authors supported in part by the NSF
Grant DMS 0700831. The first author would like to thank Richard Sharp for useful discussions. Both authors thank Tushar Das for careful reading the first draft of our manuscript. His comments and suggestions improved the final exposition. The authors also wish to thank Hee Oh, whose valuable comments helped them to make the text more accurate  and to supplement the historical background  and references on circle packings.} 
\keywords{}
\subjclass{Primary:}

\begin{abstract}
In this monograph we consider the  general setting of conformal graph directed Markov systems modeled by countable state symbolic subshifts of finite type. We deal with two classes of such systems: attracting and parabolic. The latter being treated by means of the former.

We prove fairly complete asymptotic counting results for multipliers and diameters associated with preimages or periodic orbits
ordered hy a natural geometric weighting. We also prove the corresponding Central Limit Theorems describing the further features of the distribution of their weights.  

These results  have direct applications to a variety of examples, including the case of Apollonian Circle Packings, Apollonian Triangle, expanding and parabolic rational functions, Farey maps, continued fractions, Mannenville-Pomeau maps, Schottky groups, Fuchsian groups, and many more. A fairly complete collection of asymptotic counting results for them is presented.

Our new approach is founded on spectral properties of complexified Ruelle--Perron--Frobenius operators and Tauberian theorems as used in classical problems of prime number theory. 
\end{abstract}


\maketitle

\tableofcontents

\section{{\bf{\large Introduction}}}

\

\subsection{Short General Introduction} We begin with a simple problem formulated for iterated function systems (schemes). Let 
$$
\phi_e:X \to X, \  \  e\in E,
$$
be a countable, either finite or infinite, family of $C^{1+\alpha}$ contracting maps. We can associate to a point $\xi\in X$ the images 
$$
\phi_\om(\xi):= \phi_{\om_1} \circ \cdots \circ \phi_{\om_n}(\xi)
$$
where $\om_i\in E$, and then we associate two natural weights 
$$
\lam_\xi(\om):= -\log|(\phi_\om)'(\xi)|
$$
and
$$
\De_\xi(\om):=-\log\diam(\phi_\om(X)).
$$
Since there is no obvious way to order and count these images  in terms of their combinatorial weight (the length of $\om = (\omega_1, \cdots, \omega_n)$) we use instead the two weights introduced above: $\lam_\xi(\om)$ and $\De_\xi(\om)$. 

Under mild natural hypotheses we show that there exist two constants $C_1, C_2 > 0$ (we provide explicit dynamical expressions for them) and $\delta \in(0,+\infty)$ such that 
$$
\lim_{T\to+\infty}\frac{\#\{\om \hbox{ : } \lam_\xi(\om) \leq T\}}{e^{\d T}}= C_1 
$$
and 
$$
\lim_{T\to+\infty}\frac{\#\{\om \hbox{ : } \De_\xi(\om)\leq T\}}{e^{\d T}}= C_2.
$$
These are the most transparent and simplest highlights of our results; but we prove more. For example, we provide the corresponding asymptotic results when in addition one requires that the points $\phi_\om(\xi)$ are to fall into a prescribed Borel subset $B$ of $X$. We also count multipliers and diameters if the points $\phi_\om(\xi)$ are replaced by periodic points of the system, i.e. by fixed points $x_\om$ of the maps $\phi_\om$. We denote
$$
\lam_p(\om) = -\log|(\phi_\om)'(x_\om)|.
$$

A fuller  description of our results is provided below in further subsections of this introduction and in complete detail in appropriate technical sections of the manuscript.

There are natural and instructive parallels of our work and the classical 
approach to the  prime number theorem, as well as 
with known results on the Patterson-Sullivan orbit counting technology and the asymptotics of Apollonian circles. There are also applications to both expanding and parabolic rational functions, complex continued fractions, Farey maps, Mannenville-Pomeau maps, Schottky groups, Fuchsian groups, including Hecke groups, and more examples. We apply our general results to all of them, thus giving a unified approach which yields both new results 
and  a new  approach to established results.   

All of these are based on our results for conformal graph directed Markov systems over a countable alphabet. 
Our counting results (on the symbolic level) are close in spirit to those of Steve Lalley from \cite{lalley}. These would directly apply to our counting on the symbolic level if the graph directed Markov systems we considered had finite alphabets. However, we need to deal with those systems with a countable alphabet and we obtain our counting results via the study of spectral properties of complexified Ruelle--Perron--Frobenius operators, as used by William Parry and the first--named author, rather than the renewal theory approach of Lalley. It is worth mentioning that our results on the symbolic level could have been  formulated and proved with no real additional  difficulties in terms of ergodic sums of 
summable H\"older continuous potentials rather than merely the functions $\lam_\xi(\om)$ from the next subsection.

We would also like to add that our work was partly inspired by counting results of Kontorovich and Oh for Apollonian packings from \cite{KO} (see also \cite{OS1}--\cite{OS3}), which in our monograph are recovered and ultimately follow from our more general results for conformal graph directed Markov systems. Nevertheless the level of generality our approach is still entirely different than that of Kontorovich and Oh. We have recently received an interesting preprint \cite{Heersink} of Byron Heersink where he studies the counting problems for the Farey map, Gauss map, and closed geodesics on the modular surface. We would also like to note that a part of the classical work of the first named author and William Parry (including \cite{P_RPF_2}, \cite{Pollicott}, \cite{PP}, \cite{PP2}) the method of the complex Perron--Frobenius operator to approach various counting problems in geometry and dynamics has been used by several authors including \cite{Morris}, 
\cite{Naud}, \cite{PS}, \cite{AHS}.

We now discuss our results below in more detail.

\subsection{Asymptotic Counting Results}
In Sections~\ref{Attracting_GDMS_Prel} and \ref{section:parabolic} we recall from \cite{MU_GDMS} the respective concepts of attracting and parabolic countable alphabet conformal graph directed Markov systems. 
This symbolic viewpoint is convenient for keeping track of  the quantities  we  want to  counting.
 Let $A$ be the associated transition matrix
 and $\pi_{\cS}(\rho) \in X$ is a reference point coded by an infinite sequence $\rho$.
Fix any Borel set $B \subset X$ 
then for  $T>0$ we define:
\begin{align*}
N_\rho(B, T)&:=\# 
\left\{\om\in E_\rho^*  \hbox{ : }\phi_{\om}(\pi_\cS(\rho)) \in B 
 \hbox{ and }  
 \lambda_\rho(\om) \leq T \right\} 
\\ \hbox{ and }  \\
N_p(B,T)&:=\#
\left\{\om\in E_p^*  \hbox{ : } x_\om \in B  \hbox{ and }   \lambda_p(\om) \leq T \right\},
\end{align*}
where
$$
E_\rho^*:=\{\om\in E_A^*:\om\rho\in E_A^*\},
$$
and 
$$
E_p^*=\{\om\in E_A^*:A_{\om_{|\om|}\om_1}=1\},
$$
are finite words of symbols,
i.e. we count the number of words $\om\in E_i^*$ for which the 
weight $\lambda_i(\om)$ doesn't exceed $T$ and, additionally, the image $\phi_\om(\pi_\cS(\rho))$ is in $B$ if $i=\rho$, or the fixed point $x_\om$ 
of $\phi_\om$, is in $B$ if $i=p$. The following result comprises both Theorem~\ref{dyn} for attracting conformal GDMSs and Theorem~\ref{t2pc6_B} for parabolic systems.
We refer the reader to the appropriate sections for the detailed  definitions of any unfamiliar hypotheses (or to the next subsection for concrete  examples where these are known to  hold).

\sp
\begin{thm}[Asymptotic Equidistribution Formula for Multipliers] \label{t1_2017_04_17}
Suppose that $\cS$ is either a strongly regular finitely irreducible D-generic attracting conformal GDMS or finite alphabet parabolic conformal GDMS.  

Fix $\rho\in E_A^\infty$. If $B \subset X$ is a Borel set such that $\^m_\d(\bd B)=0$ (equivalently $\^\mu_\d(\bd B)=0$) then,
$$
\lim_{T \to +\infty} \frac{N_\rho(B,T)}{e^{\d T}} 
= \frac{\psi_\d(\rho)}{\d\chi_{\mu_\d}}\^m_\d(B)
$$
and 
$$
\lim_{T \to +\infty} \frac{N_p(B,T)}{e^{\d T}} 
= \frac{1}{\d\chi_{\mu_\d}}\^\mu_\d(B).
$$
Here we use the following notation.
\begin{itemize}
\item $\d=\HD(J_\cS)$ is the Hausdorff dimension of the limit set (attractor) of the GDMS $\cS$. 

\item $\^m_\d$ is the $\d$-conformal measure for $\cS$.

\item $\^\mu_\d$ is its $\cS$--invariant version.

\item $\psi_\d$ is essentially the Radon--Nikodym of $\^\mu_\d$ with respect $\^m_\d$ but on the symbolic level. 

\item The quantity $\chi_{\mu_\d}$ is the corresponding Lyapunov exponent.
\end{itemize}
\end{thm}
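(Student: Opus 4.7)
The plan is to translate both counting functions into statements about the analytic behavior of suitable Dirichlet series and then apply a Wiener--Ikehara Tauberian theorem, following the Parry--Pollicott paradigm in its complex Ruelle operator form. For the preimage count I would introduce
$$
\eta_\rho(s, B) := \sum_{\om\in E_\rho^*} \1_B\(\phi_\om(\pi_\cS(\rho))\)\, e^{-s\lam_\rho(\om)},
$$
and for the periodic count
$$
\zeta_p(s, B) := \sum_{\om\in E_p^*} \1_B(x_\om)\, e^{-s\lam_p(\om)}.
$$
Grouping by word length, $\eta_\rho(s,B)$ rewrites as $\sum_{n\geq 1}(\pf_s^n \1_B)(\pi_\cS(\rho))$, where $\pf_s$ is the Ruelle--Perron--Frobenius operator on the symbol space associated with the potential $-s\log|\phi_e'|$, while $\zeta_p(s,B)$ admits a trace-type expansion in iterates of $\pf_s$ appropriate to the countable-alphabet setting.

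The analytic heart of the matter is the spectral structure of $\pf_s$ for $s$ in a neighbourhood of $\d$. At the real point $s=\d$ the theory developed in \cite{MU_GDMS} for strongly regular conformal GDMS gives a leading simple eigenvalue equal to $1$, with positive eigenfunction $\psi_\d$ and eigenmeasure $\^m_\d$ (the $\d$-conformal measure); the product $\^\mu_\d = \psi_\d\^m_\d$ is the invariant Gibbs state, and a spectral gap holds on an appropriate H\"older Banach space. Standard analytic perturbation theory then yields an analytic leading eigenvalue $\gamma(s)$ near $s=\d$ with $\gamma(\d)=1$ and $\gamma'(\d)=-\chi_{\mu_\d}$. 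The technically delicate step, on which everything hinges, is to prove that $\pf_s$ has no further eigenvalues of modulus $1$ on the line $\re(s)=\d$ away from $s=\d$ itself, and that its resolvent has controlled growth in $|\im s|$. This is a Dolgopyat-type non-concentration estimate; the D-generic hypothesis is precisely what rules out a lattice obstruction, and strong regularity is what keeps the estimates under control on a countable alphabet, where the finite-alphabet renewal approach of \cite{lalley} does not directly apply.

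Granted this spectral picture, $\eta_\rho(s,B)$ and $\zeta_p(s,B)$ extend meromorphically past $\re(s)=\d$ with a single simple pole at $s=\d$, and using the spectral decomposition one computes
$$
\text{Res}_{s=\d}\,\eta_\rho(s,B) = \frac{\psi_\d(\rho)}{\chi_{\mu_\d}}\,\^m_\d(B), \qquad \text{Res}_{s=\d}\,\zeta_p(s,B) = \frac{1}{\chi_{\mu_\d}}\,\^\mu_\d(B),
$$
the second residue picking up $\^\mu_\d=\psi_\d\^m_\d$ because the trace formula weighs each periodic orbit by the eigenfunction value at its fixed point. The boundary hypothesis $\^m_\d(\bd B)=0$ permits $\1_B$ to be sandwiched between continuous functions to which the transfer-operator machinery applies, so the residue computation is legitimate. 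The Wiener--Ikehara theorem then converts these residues into the asymptotics claimed, the extra factor $1/\d$ arising from the integration $\int_0^T e^{\d t}\,dt \sim e^{\d T}/\d$ that passes from Dirichlet coefficients to a counting function.

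For the parabolic half of the theorem the finite-alphabet hypothesis lets me invoke the standard induction procedure of \cite{MU_GDMS}: collapse excursions near each parabolic fixed point into compositions that form a new, attracting, countable-alphabet GDMS which remains strongly regular and to which the preceding argument applies verbatim. One then transfers the asymptotics back to the original system by tracking how multipliers and fixed-point locations behave under the induction; since the parent alphabet is finite, the return-time tails are summable and the leading $e^{\d T}$ rate is preserved, while the constants transform cleanly using the explicit Radon--Nikodym relationship between the conformal and invariant measures of the two systems. This reduction, rather than the Tauberian core, is where most of the bookkeeping lives in the parabolic case.
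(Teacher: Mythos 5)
Your treatment of the preimage count follows the paper's route: complexified operators $\pf_s$, quasi-compactness, absence of peripheral spectrum on $\re(s)=\d$ via D-genericity, meromorphic extension of the Poincar\'e series with the residue $\frac{\psi_\d(\rho)}{\chi_{\mu_\d}}\^m_\d(\cdot)$, and Wiener--Ikehara. Two remarks there. First, you over-state the analytic input: no Dolgopyat-type bound on the resolvent in $|\im s|$ is required. The Ikehara--Wiener theorem in the form used here needs only analyticity of $\eta_\rho(\cdot)-\frac{D}{s-\d}$ on a neighbourhood of the closed half-plane, and that follows from ruling out eigenvalues of modulus $e^{\P(\sg)}$ at each $s=\d+it$, $t\neq0$ (D-genericity plus simplicity of peripheral eigenvalues), together with Kato--Rellich perturbation near each boundary point. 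Demanding resolvent growth control would be both unnecessary and, on a countable alphabet, far harder than the theorem warrants. Second, feeding $\1_B$ directly into $\pf_s$ takes you outside $\H_\a(A)$; the paper instead proves the asymptotics for cylinders $[\tau]$ (whose weights $|\phi_\tau'|^s\circ\pi$ are H\"older) and only then passes to Borel sets with $\^m_\d(\bd B)=0$ by sandwiching the \emph{counting functions} between those of unions of cylinders and of open sets, after the Tauberian step. Your sandwiching idea is workable but must be done at that level, not inside the residue computation.

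The genuine gap is in the periodic-orbit half. You propose a ``trace-type expansion'' of $\zeta_p(s,B)$ in iterates of $\pf_s$, with the residue picking up $\psi_\d$ ``because the trace formula weighs each periodic orbit by the eigenfunction value at its fixed point.'' On a countable alphabet $\pf_s$ acting on $\H_\a(A)$ is not trace class, and the finite-alphabet device of Parry--Pollicott and Lalley --- approximating $\sum_{\om\in E_p^n}e^{-s\lam_p(\om)}$ by $\sum_i\lam_i(s)^n$ up to an exponentially smaller error via restriction to locally constant functions --- does not carry over; even the $n=1$ term is an infinite sum whose relation to the spectrum needs justification. The paper avoids this entirely: it \emph{derives} the $N_p$ asymptotics from the already-proved $N_\rho$ asymptotics, using bounded distortion to compare $|\phi_{\g\om}'(x_{\g\om})|$ with $|\phi_{\g\om}'(\pi(\g\g^+))|$ up to constants $K_q\to1$, and splitting $E_A^q$ into a finite part and a tail of small $m_\d$-measure (Lemma~\ref{l1_2016_11_07}) to control the infinite alphabet; the weight $\psi_\d$ then appears because $\sum_\g\psi_\d(\tau\g(\tau\g)^+)m_\d([\tau\g])$ reassembles into $\mu_\d([\tau])$. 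You need either this approximation argument or a genuinely new justification of the trace identity. Finally, in the parabolic reduction you must also \emph{prove} that the induced attracting system $\cS^*$ is D-generic (the parabolic statement assumes no genericity, so this cannot be hypothesised away); the paper does this in Theorem~\ref{t1pc3} by a pigeonhole argument on the multipliers $|\phi_{\a b^n\a_1}'(x_n)|\asymp n^{-(p_b+1)/p_b}$, and the periodic count near powers of parabolic generators requires the separate estimate culminating in \eqref{1ma13}, which your ``the constants transform cleanly'' does not cover.
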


\sp\fr Our proof of this theorem for attracting systems is based on following five steps:  
\begin{enumerate}

\sp\item Describing the spectrum of an associated complexified Ruelle-Perron-Frobenius (RPF) operator; done at the symbolic level, culminating in the results in Section~\ref{CRPFOSDG},

\sp\item Using this information  on the RPF operator  to find meromorphic extensions of associated complex $\eta$ functions, i.e., Poincar\'e functions (or series), see Section~\ref{Local_Poincare},

\sp\item Using the information on the domain of the Poincar\'e series to deduce the asymptotic formulae (Theorem~\ref{dynA}) for $\lam_\om(\xi)$ on the mixture of the symbolic level (the words $\om\rho$ are required to belong to a symbolic cylinder $[\tau]$ rather than $\phi_{\om}(\pi_\cS(\rho))$ or $x_\om$ to belong to $B$) and GDMS level, by classical methods from prime number theory based on Tauberian theorems. 

\sp\item Having (3) derive the asymptotic formulae for $-\log|\phi_\om'(x_\om)|$; i.e. for periodic points $x_\om$ of $\phi_\om$ by means of sufficiently fine approximations. 

\sp\item Deducing the asymptotic formulae for the Borel sets $B\sbt X$ (Theorem~\ref{dyn}) from those  
at the symbolic level 
(Theorem~\ref{dynA}).
\end{enumerate}  

\bigskip

We can leverage our results for attracting systems to prove the
corresponding results for the more delicate case of parabolic systems.
This is done by associating with a parabolic system (by a form of
inducing) a countable alphabet attracting GDMSs and expressing the
corresponding Poincar\'e series for parabolic systems as infinite sums
of the Poincar\'e series for those associated attracting systems.

Furthermore,  the $D$-generic hypothesis of Theorem~\ref{t1_2017_04_17} needed for attracting systems is very mild.  
Moreover, parabolic systems, or more precisely the attracting systems associated to them, are automatically D--generic (see Theorem~\ref{t1pc3}), so no genericity hypothesis is needed for them.

Finally, parabolic systems are of equal importance to the attracting systems. Indeed, many of the applications, such as to Farey maps or Apollonian packings for example, are based on parabolic GDMSs. The parabolic systems generate more complex and intriguing counting phenomena, particularly in regard to counting diameters. 

\sp We now describe the results for asymptotic counting of diameters. These are more geometrical and more complex than for multipliers, and  counting multipliers is intrinsically more of a  ``dynamical process''. The following theorem comprises Theorem~\ref{t1da7}, Theorem~\ref{t1ma1}, Remark~\ref{r1_2017_03_20},   Theorem~\ref{t1dp13}, Theorem~\ref{t1dp13B}, and Remark~\ref{r1_2017_03_20B}.
We again refer the reader to the appropriate section for the  detailed definitions of the hypotheses (and to the next subsection for specific examples where these are known to hold).  However, for  the present, we note that
  $\Omega$ denotes the set of all parabolic points
and 
$\Omega_{\rho_1} \subset \Omega$ denotes  the subset
whose coding by an infinite sequence $\rho_1 \in E_A^\infty$ begins with the symbol $\rho_1$.  Finally, 
 $\Omega_\infty$ denotes the set of 
 parabolic  points $x_a$ whose corresponding index $p(a)$ 
 (see Proposition \ref{p1c5.13}) satisfies $\delta > 2p(a)/(1+ p(a))$.

\bthm [Asymptotic Equidistribution Formula for Diameters]\label{t2_2017_04_17} 
Suppose that $\cS$ is either a strongly regular finitely irreducible D-generic attracting conformal GDMS or a finite irreducible parabolic conformal GDMS.

Denote by $\d$ the Hausdorff dimension of its limit set $J_\cS$. 
Fix $\rho\in E_A^\infty$ and then a set $Y\sbt X_{i(\rho)}$ having at least two elements. 
 If $B \subset X$ is a Borel set such that $m_\d(\bd B)=0$ (equivalently $\mu_\d(\bd B)=0$) then,
$$
\lim_{T \to +\infty} \frac{D^{\rho}_Y(B,T)}{e^{\d T}} 
=C_{\rho_1}(Y)m_\d(B)
=\lim_{T \to +\infty} \frac{E^{\rho}_Y(B,T)}{e^{\d T}} ,
$$
where $C_{\rho_1}(Y)\in (0,+\infty]$ is a constant depending only on the system $\cS$, the letter $\rho_1$ and the set $Y$. 

In addition $C_{\rho_1}(Y)$ is finite if 
and only if either

\begin{enumerate}
\item 
$
\ov Y\cap \Om_\infty=\es
$
or 
\item 
$
\d>\max\big\{p(a):a\in\Om_{\rho_1} \  \  {\rm and} \  \  x_a\in \ov Y \big\}.
$
\end{enumerate}
In particular $C_{\rho_1}(Y)$ is finite if the system $\cS$ is attracting.
\ethm

\fr The proof of the results 
in Theorem~\ref{t2_2017_04_17} for diameters
are based on Theorem~\ref{t1_2017_04_17} for multipliers. The subtlety in the attracting case is that the basic bounded distortion property alone does not suffice to pass from the case of multipliers to the case of diameters; one needs additional approximating steps. For parabolic systems even the basic  bounded distortion property is weaker and more involved and a careful analysis of parabolic behavior is needed. 

It is worth emphasizing once again the importance of parabolic systems for many  applications and classes of examples, including that of  Apollonian packings. This is even more pronounced in the case of diameters than multipliers, since the  diameters often appear more frequently in the geometric setting.

\subsection{Examples}\label{examples_I} Now we would like to describe some classes of conformal dynamical systems to which we can  apply Theorem~\ref{t1_2017_04_17} and Theorem~\ref{t2_2017_04_17}. Often applying these results  requires some non-trivial preparation.

Our first class of examples is formed by conformal expanding repellers, see Definition~\ref{exprep}. The appropriate consequences of Theorem~\ref{t1_2017_04_17} and Theorem~\ref{t2_2017_04_17} are stated as Theorem~\ref{t1ex4}. The primary examples of non-linear conformal expanding repellers are formed by expanding rational functions of the Riemann sphere $\oc$. The consequences of Theorem~\ref{t1_2017_04_17} and Theorem~\ref{t2_2017_04_17} in this context, are given by Theorem~\ref{t1ex6}.

Perhaps the the most obvious example related to attracting GDMSs are the Gauss map 
$$
G(x) = x - [x],
$$
and the corresponding Gauss IFS $\mathcal G$ consisting of the maps 
$$
[0,1]\ni x \longmapsto g_n(x):=\frac{1}{x+n}, \  \ n \in \mathbb N.
$$
Theorem~\ref{t1_2017_04_10} summarizes the consequences of Theorem~\ref{t1_2017_04_17} and Theorem~\ref{t2_2017_04_17} stated for the Gauss map $G$ itself.

\sp Now let  describe some well known parabolic GDMSs to which our results apply. We start with $1$-dimensional systems. Our primary classes of such systems, defined and analyzed in Subsection~\ref{CPDS}, are illustrated by following.

\begin{enumerate}
\item[a)]
Manneville--Pomeau maps $f_\a:[0,1] \to [0,1]$ defined by
$$
f_\a(x) = x + x^{1+\alpha} \hbox{ (mod $1$)},
$$
where $\alpha > 0$ is a fixed number, and the Farey map 
$f:[0,1] \to [0,1]$ defined by
$$
f(x)
= 
\begin{cases}
\frac{x}{1-x} &\hbox{ if } 0 \leq x \leq \frac{1}{2}\\
\frac{1-x}{x} &\hbox{ if } \frac{1-x}{x} \leq x \leq 1.
\end{cases}
$$
The appropriate asymptotic counting results, stemming from Theorem~\ref{t1_2017_04_17} and Theorem~\ref{t2_2017_04_17}, are provided by Theorem~\ref{t1ex18} and Theorem~\ref{1ex5par}.

\item[b)]
A large class of conformal parabolic systems is provided by parabolic rational functions of the Riemann sphere $\oc$. These are those rational functions (see Subsection~\ref{PRF}) that have no critical points in the Julia sets but do have rationally indifferent periodic points. The appropriate asymptotic counting results, consequences of Theorem~\ref{t1_2017_04_17} and Theorem~\ref{t2_2017_04_17}, are stated as Corollary~\ref{c1ex19}. Probably the best known example of a parabolic rational function is the polynomial
$$
\widehat \C \ni z \longmapsto f_{1/4}(z) := z^2 + \frac{1}{4}\in \widehat \C.
$$
It has only one parabolic point, namely $z = 1/2$. In fact this is a fixed point of $f_{1/4}$ and $f_{1/4}'(1/2) =1$. Another explicit class of such functions is given by the maps of the form 
$$
\widehat {\mathbb C} \ni z \longmapsto 2 + 1/z + t
$$ 
where $t \in \mathbb R$.

\item[c)]
A separate large class of examples is provided by Kleinian groups, namely by finitely generated Shottky groups and essentially all finitely generated Fuchsian groups. 

Convex co-compact (no tangencies) Schottky groups are described and analyzed in detail in Section~\ref{Schottky-No Tangencies} while general Schottky groups (allowing tangencies) are dealt with in Subsection~\ref{tangent Schottky}. The appropriate asymptotic counting results, stemming from Theorem~\ref{t1_2017_04_17} and Theorem~\ref{t2_2017_04_17}, are provided by Theorem~\ref{t1ex16} and Theorem~\ref{tangent Schottky}.

\end{enumerate}

As a particularly famous example, the  counting problem of circles in a full Apollonian packing reduces to an appropriate counting problem for a finitely generated Schottky group with tangencies. The full presentation of asymptotic counting in this context, stemming from Theorem~\ref{t1_2017_04_17} and Theorem~\ref{t2_2017_04_17}, is given by Corollary~\ref{c2ex28}. We present below a more  restricted form (see Theorem~\ref{t1ex29}) involving only the counting of diameters; it overlaps with results from \cite{KO} (see also \cite{OS1}--\cite{OS3}), obtained by entirely different methods. 

\begin{thm}\label{t1ex29_I}
Let $C_1, C_2, C_3$ be three mutually tangent circles in the Euclidean plane having mutually disjoint interiors. Let $C_4$ be the circle tangent to all the circles $C_1, C_2, C_3$ and having all of them in its interior; we then refer to the configuration $C_1, C_2, C_3, C_4$ as bounded. Let $\mathcal A$ be the corresponding circle packing.

Let $\delta = 1.30561 \ldots$ be the Hausdorff dimension of the residual set of $\mathcal A$ and let $m_\delta$ be the Patterson-Sullivan measure of the corresponding parabolic Schottky group $\Gamma$.  

If $N_{\mathcal A}(T)$ denotes the number of circles in $\mathcal A$ of diameter at least $\frac{1}{T}$ then the limit
$$
\lim_{T\to +\infty} \frac{N_{\mathcal A}(T)}{e^{\delta T}}
$$
exists, is positive, and finite. Moreover, there exists a constant $C \in (0, +\infty)$ such that 
if $N_{\mathcal A}(T, B)$ denotes the number of circles in $\mathcal A$ of diameter at least $\frac{1}{T}$ and lying in $B$
 then
$$
\lim_{T \to +\infty} 
 \frac{N_{\mathcal A}(T; B)}{e^{\delta T}} = C m_\delta(B)
$$
for every open ball  $B \subset \mathbb C$.
\end{thm}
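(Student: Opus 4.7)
The plan is to deduce Theorem~\ref{t1ex29_I} from the general diameter asymptotic Theorem~\ref{t2_2017_04_17} by encoding $\mathcal A$ as a finite alphabet parabolic conformal Schottky GDMS and reading the diameters of the circles of $\mathcal A$ off the diameters $\diam(\phi_\omega(X_{t(\omega)}))$ of images of the coordinate pieces of this GDMS.

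First I would construct the parabolic conformal Schottky GDMS $\mathcal S_{\mathcal A}$ associated to the bounded configuration $C_1,C_2,C_3,C_4$. Label the four seed circles by $E=\{1,2,3,4\}$, take $\phi_e$ to be a suitable restriction of the Möbius inversion in $C_e$ acting on (a neighbourhood of the union of) the remaining three seed circles, and set $A_{ef}=1$ iff $e\neq f$. Each tangency between two seed circles produces a parabolic fixed point for the length--two word obtained by composing the two corresponding inversions, and these exhaust the parabolic set $\Omega$ of $\mathcal S_{\mathcal A}$. The system is finite and irreducible, its limit set coincides with the residual set of $\mathcal A$, and its Hausdorff dimension equals $\delta$; these are precisely the hypotheses of Theorem~\ref{t2_2017_04_17} in the parabolic case. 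Next I would set up the bijection between packing circles and admissible words: every circle of $\mathcal A\sms\{C_1,\ldots,C_4\}$ is of the form $\phi_\omega(C_f)$ for a unique $\omega\in E_A^*$ and a unique $f\in E$ with $A_{\omega_{|\omega|}f}=1$, and by the (parabolic) bounded--distortion property $\diam(\phi_\omega(C_f))$ is comparable, up to a multiplicative constant independent of $\omega$ and $f$, to $\diam(\phi_\omega(X_{t(\omega)}))$.

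With this dictionary in place I would apply Theorem~\ref{t2_2017_04_17} with any fixed $\rho\in E_A^\infty$ and any $Y\subseteq X_{i(\rho)}$ of at least two points, obtaining $D^\rho_Y(B,T)/e^{\delta T}\to C_{\rho_1}(Y)\,m_\delta(B)$. Finiteness of $C_{\rho_1}(Y)$ is immediate: every parabolic fixed point of a two--dimensional Möbius system has rank $p(a)=1$, so condition (2) of Theorem~\ref{t2_2017_04_17} reduces to $\delta>1$, which holds since $\delta=1.30561\ldots$. Summing over the finitely many admissible choices of initial letter $\rho_1$, and absorbing the negligible contribution of $C_1,\ldots,C_4$ themselves, yields the asymptotics for $N_{\mathcal A}(T)$ and, localised to a Borel set $B$, for $N_{\mathcal A}(T;B)$. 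The remaining technical point $m_\delta(\partial B)=0$ for an open ball $B\subset\mathbb C$ follows since the Apollonian residual set has Hausdorff dimension $\delta>1$ while $\partial B$ is a smooth curve, so their intersection has Hausdorff dimension at most $1<\delta$ and hence $m_\delta$--measure zero.

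The main obstacle is the last part of the second step above: transferring the intrinsic coded diameter $\diam(\phi_\omega(X_{t(\omega)}))$ to the geometric diameter of an actual packing circle $\phi_\omega(C_f)$ uniformly in $\omega$. In an attracting GDMS a single application of bounded distortion suffices, but for the parabolic system $\mathcal S_{\mathcal A}$ derivatives collapse along words whose tails accumulate on a parabolic fixed point, and this must be handled by the finer approximation arguments outlined in the passage immediately after Theorem~\ref{t2_2017_04_17}. This same parabolic degeneracy also explains why the lower bound $\delta>1$ is indispensable in the Apollonian setting: without it the constant $C_{\rho_1}(Y)$ would be infinite and no limit of the form $C\,e^{\delta T}$ would exist.
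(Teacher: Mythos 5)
There is a genuine gap at the very first step: the GDMS you build is the wrong one. Inversion in the seed circle $C_e$ maps the other three seed circles to circles lying \emph{inside} the disk bounded by $C_e$, so every image $\phi_\omega(C_f)$ of your system is contained in one of the four disks bounded by $C_1,\ldots,C_4$. The circles of the Apollonian packing $\mathcal A$, however, live in the complementary curvilinear triangles: for instance the circle inscribed in the central triangle $\mathcal T$ (tangent to $C_1,C_2,C_3$) is disjoint from all four of those disks and therefore is not of the form $\phi_\omega(C_f)$ for any word $\omega$. Your claimed bijection between packing circles and admissible words fails already at the first generation, and likewise the limit set of the group generated by the inversions in $C_1,\ldots,C_4$ is not the residual set of $\mathcal A$ (most points of the circles $C_i$ belong to the residual set but not to that limit set). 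The correct construction — the one the paper uses — inverts in the four \emph{dual} circles $K_1,\ldots,K_4$, each passing through three of the six tangency points and labelled so that $C_i\cap K_i=\es$; one then passes to the orientation-preserving index-two subgroup $\Gamma=\langle g_4\circ g_1,\,g_4\circ g_2,\,g_4\circ g_3\rangle$, which is an honest parabolic Schottky group whose limit set \emph{is} the residual set, and enumerates the circles of $\mathcal A$ by an explicit disjoint decomposition into orbits of the seed circles under sub-semigroups of $\Gamma$ (Observation~\ref{o3ex28}), applying the diameter asymptotics of Corollary~\ref{c2ex28} with $Y$ equal to the relevant seed circle in each piece.

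The remainder of your outline is sound and agrees with the paper once the correct group is in place: Möbius parabolics in dimension $2$ have rank $p(a)=1$, so the finiteness criterion of Theorem~\ref{t2_2017_04_17} reduces to $\delta>1$, which holds since $\delta=1.30561\ldots$; taking $Y$ to be a seed circle avoids any need to compare $\diam(\phi_\omega(X_{t(\omega)}))$ with $\diam(\phi_\omega(C_f))$, since the general theorems already allow arbitrary two-point sets $Y$; and $m_\delta(\partial B)=0$ for a ball $B$ follows because $m_\delta$ is a multiple of $\delta$-dimensional Hausdorff measure and $\partial B$ has dimension $1<\delta$.
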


\fr Closely related to $\mathcal A$ is the curvilinear triangle $\mathcal T$ (or Apollonian triangle) formed by the three edges joining the three tangency points of $C_1, C_2, C_3$
and lying on these circles. The collection 
$$
\mathcal G := \{ C \in \mathcal A \hbox{ : } C \subset \mathcal T\}
$$
is called the Apollonian gasket generated by the circles $C_1, C_2, C_3$. 
As a consequence of Theorem~\ref{t1ex29_I} we get the following (see Corollary~\ref{c1ex30}); it overlaps with results from \cite{KO} (see also \cite{OS1}--\cite{OS3}), obtained with entirely different methods.

\begin{cor}\label{c1ex30_I}
Let $C_1, C_2, C_3$ be three mutually tangent circles in the Euclidean plane having mutually disjoint interiors. Let $C_4$ be the circle tangent to all the circles $C_1, C_2, C_3$ and having all of them in its interior; we then refer to  the configuration $C_1, C_2, C_3, C_4$ as bounded. Let $\mathcal A$ be the corresponding circle packing.

If $\mathcal T$ is the curvilinear triangle formed by $C_1$, $C_2$ and $C_3$, then the limit
$$
\lim_{T\to +\infty} \frac{N_{\mathcal A}(T; \mathcal T)}{e^{\delta T}}
$$
exists, is positive, and finite and   counts the elements of $\mathcal G$. Moreover, there exists a constant $C \in (0, +\infty)$, in fact the one of Theorem~\ref{t1ex29}, such that 
$$
\lim_{T \to +\infty}  \frac{N_{\mathcal A}(T; B)}{e^{\delta T}} = C m_\delta(B)
$$
for every Borel set $B \subset \mathcal T$ with $m_\delta(\partial B) = 0$.
\end{cor}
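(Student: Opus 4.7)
My plan is to derive this corollary as a direct application of Theorem~\ref{t1ex29_I}---or, more precisely, of its fuller form Corollary~\ref{c2ex28}, which holds for every Borel set $B\subset\mathbb{C}$ with $m_\delta(\partial B)=0$ and not merely for open balls. Granting such a Borel-set version, the two displayed limits in the present corollary will follow by applying the counting theorem once with $B=\mathcal{T}$ and once with a general Borel $B\subset\mathcal{T}$ satisfying $m_\delta(\partial B)=0$, provided one first verifies $m_\delta(\partial\mathcal{T})=0$ so that $\mathcal{T}$ itself is an admissible test set.

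The key technical step is therefore to establish $m_\delta(\partial\mathcal{T})=0$. The boundary $\partial\mathcal{T}$ is the union of three pairwise tangent circular arcs, one on each of $C_1, C_2, C_3$, and hence is a finite union of smooth curves of Hausdorff dimension exactly $1$. Since $\delta=1.30561\ldots>1$, the Patterson--Sullivan shadow lemma for the geometrically finite parabolic Schottky group $\Gamma$ associated to the packing gives a uniform upper bound $m_\delta(B(x,r))\lesssim r^\delta$ for $x$ in the limit set $\Lambda(\Gamma)$ and all sufficiently small $r>0$. A standard Vitali covering argument then shows that every Borel subset of $\Lambda(\Gamma)$ of Hausdorff dimension strictly less than $\delta$ has $m_\delta$-measure zero; applying this to each of the three boundary arcs yields $m_\delta(\partial\mathcal{T})=0$.

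Once this is in hand, the identification $N_{\mathcal{A}}(T;\mathcal{T})=\#\{C\in\mathcal{G}\colon\mathrm{diam}(C)\geq 1/T\}$ is immediate from the definition $\mathcal{G}=\{C\in\mathcal{A}\colon C\subset\mathcal{T}\}$, so invoking Corollary~\ref{c2ex28} with $B=\mathcal{T}$ yields the first claim, with the constant $C$ being precisely the one of Theorem~\ref{t1ex29_I}. For the second assertion, if $B\subset\mathcal{T}$ is Borel with $m_\delta(\partial B)=0$ (the topological boundary being taken in $\mathcal{T}$ or in $\mathbb{C}$---the two differ only on a subset of $\partial\mathcal{T}$, which is $m_\delta$-null by the previous paragraph), then $B$ is an admissible test set for Corollary~\ref{c2ex28}, and that corollary gives the stated limit with the same constant $C$.

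The main obstacle is the verification that $m_\delta(\partial\mathcal{T})=0$: one must know enough quantitative regularity of the Patterson--Sullivan measure of the parabolic Schottky group $\Gamma$ to rule out its concentration on the smooth $1$-dimensional arcs bounding $\mathcal{T}$. Once this is established, the rest of the argument is essentially a bookkeeping exercise combining the definitions of $\mathcal{A}$, $\mathcal{T}$, and $\mathcal{G}$ with the general counting theorem for diameters.
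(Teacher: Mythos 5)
Your proposal is correct and follows the same route as the paper: both reduce the corollary to the single verification that $m_\delta(\partial\mathcal T)=0$ and then apply the Borel-set version of the counting theorem (Theorem~\ref{t1ex29} as stated in the body holds for all Borel $B\subset\mathbb C$ with $m_\delta(\partial B)=0$, not just balls, so no upgrade is needed there). The only divergence is in how the null-boundary fact is justified. The paper notes that $m_\delta$ is a constant multiple of the $\delta$-dimensional Hausdorff measure restricted to $\Lambda(\Gamma)$, so the three bounding arcs, having Hausdorff dimension $1<\delta$, are automatically null (alternatively it cites Corollary~1.4 of \cite{FS}). You instead invoke the Stratmann--Velani type shadow bound $m_\delta(B(x,r))\lesssim r^\delta$ together with a covering argument; this is also valid, but note that the uniform exponent $\delta$ in that bound is not automatic for geometrically finite groups with parabolics --- near a parabolic point of rank $k$ the local exponent is $2\delta-k$, and one gets $\gtrsim r^\delta$ uniformly only because $\delta>1=k$ here, so this hypothesis should be made explicit. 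Both justifications are standard and equally short; the paper additionally sketches a second, independent proof via the parabolic IFS $\{\phi_1,\phi_2,\phi_3\}$ on $\overline B_4$ and Theorem~\ref{c1da12.1J}, bypassing the Schottky group entirely, which your argument does not touch but is not needed for the statement.
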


\fr In fact we can provide a more direct proof of Corollary 
\ref{c1ex30_I}, by appealing directly to the theory
 of parabolic conformal IFSs and avoiding the intermediate 
step of parabolic Schottky groups. Indeed, it follows immediately from  Theorem~\ref{c1da12.1J}

\begin{figure}[h]
\includegraphics[height=6cm]{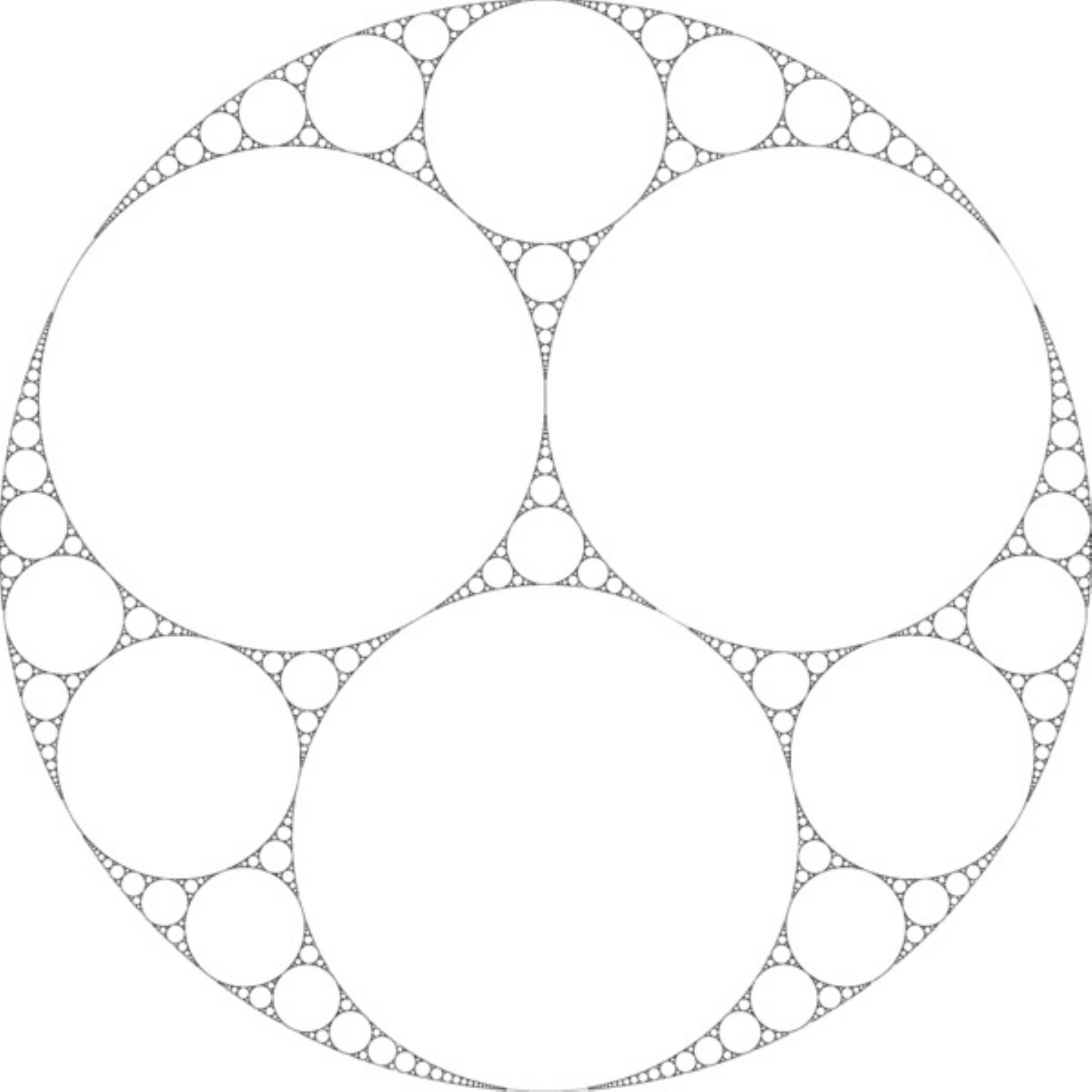}
\hskip 1cm
\includegraphics[height=6cm]{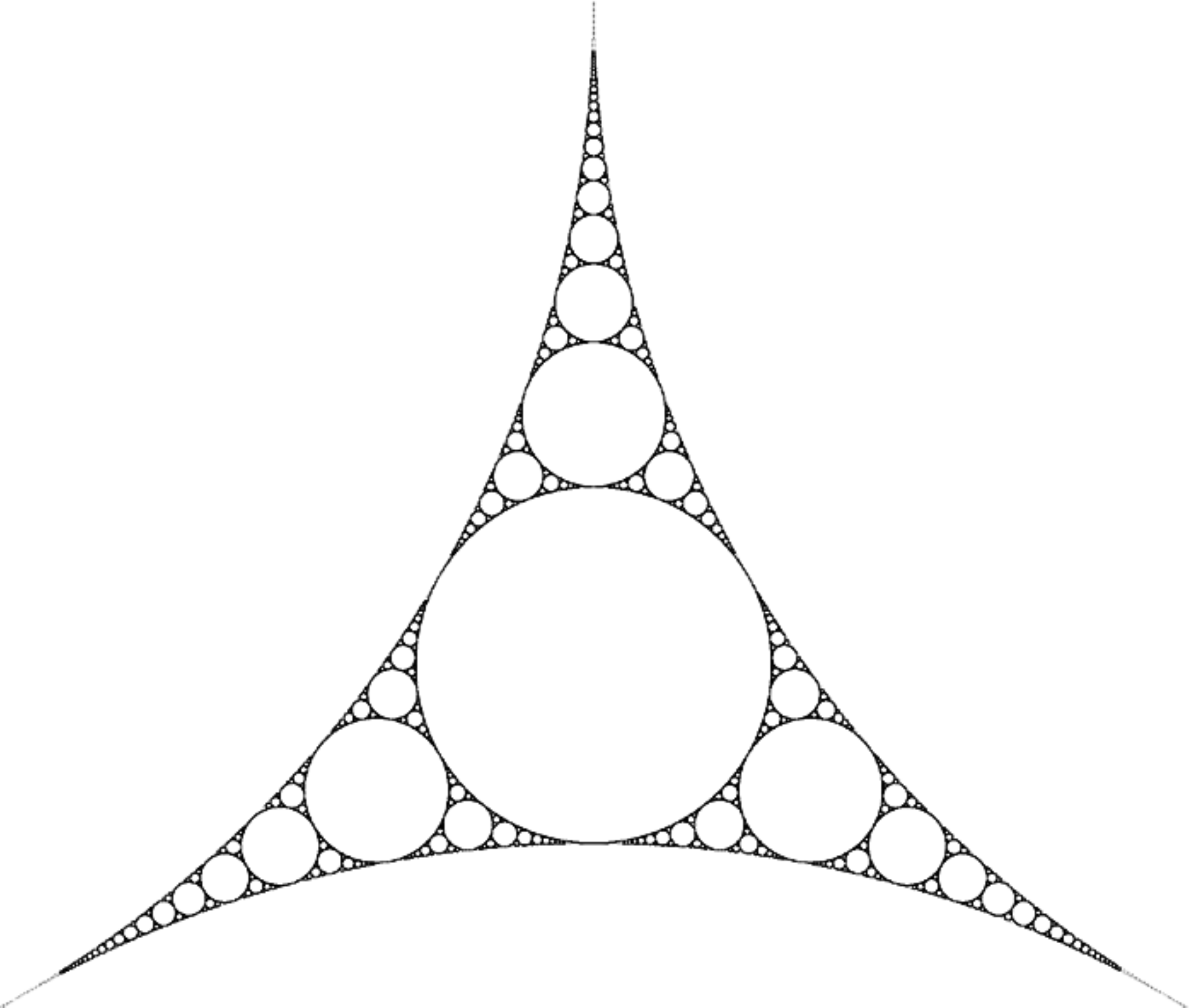}
\caption{(i) The Standard Apollonian Packing; (ii) The Apollonian Gasket}
\end{figure}

\subsection{Statistical results}
A second aim of this monograph is to consider the statistical properties of the distribution of the different weights $\lam_\rho(\om)$ and $\diam(\phi_\om(X))$  corresponding to words $\om$ with the same length $n$. In the context of attracting and parabolic GDMSs we have the following Central Limit Theorem, see Part~\ref{Section-CLT}. We refer the reader to the appropriate section for a detailed definitions of the hypothesis.

\begin{thm}\label{t1ms1_I}  
If $\cS$ is either a strongly regular finitely irreducible
D--generic conformal GDMS or a finite alphabet irreducible parabolic GDMS with $\delta > \frac{2p}{p+1}$ \footnote{this hypothesis means that the corresponding invariant measure $\mu_\d$ is finite, thus a probability after normalization},
then there exists $\sigma^2 > 0$ such that if $G \subset \mathbb R$
is a Lebesgue measurable set with $\hbox{{\rm Leb}}(\partial G) = 0$, then 
$$
\lim_{n \to +\infty}
\mu_\d\left(
\left\{
\omega \in E_A^\infty:\frac{-\log \big|\phi_{\omega|_n}' (\pi_{\cS}(\sigma^n(\omega)) )\big| - \chi_{\mu_\d} n}{\sqrt{n}}
\in G
\right\}
\right)
= \frac{1}{\sqrt{2\pi}\sigma} \int_G e^{-\frac{t^2}{2\sigma^2}} \,dt.
$$  
In particular, for any $\alpha <\beta$
$$
\lim_{n \to +\infty}
\mu_\d\left(
\left\{
\omega \in E_A^\infty:\alpha \leq \frac{-\log\big|\phi_{\omega|_n}' (\pi_{\cS}(\sigma^n(\omega)))\big| - \chi_{\mu_\d} n}{\sqrt{n}}
\leq \beta
\right\}
\right)
= \frac{1}{\sqrt{2\pi}\sigma} \int_\a^\b e^{-\frac{t^2}{2\sigma^2}}\, dt.
$$  
\end{thm}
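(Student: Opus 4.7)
The plan is to recognise the random variables in the theorem as Birkhoff sums and to derive the CLT from the spectral analysis of a complexified Ruelle--Perron--Frobenius operator that parallels the one already developed in Sections~\ref{Attracting_GDMS_Prel}--\ref{CRPFOSDG}. Define
$$
\zeta(\omega) := -\log|\phi'_{\omega_1}(\pi_\cS(\sigma(\omega)))|.
$$
By the chain rule, $-\log|\phi'_{\omega|_n}(\pi_\cS(\sigma^n(\omega)))| = S_n\zeta(\omega) := \sum_{k=0}^{n-1}\zeta\circ\sigma^k(\omega)$, and by Birkhoff $\chi_{\mu_\delta} = \int \zeta\,d\mu_\delta$. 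Thus the theorem asserts a non-degenerate CLT for $n^{-1/2}(S_n\zeta - n\chi_{\mu_\delta})$ under $\mu_\delta$; the final displayed statement then follows from the first by taking $G=[\alpha,\beta]$.

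For strongly regular finitely irreducible D-generic attracting GDMSs I would apply the Nagaev--Guivarc'h method. The complexified operators $\mathcal L_{it}$ obtained by adding the perturbation $it(\zeta-\chi_{\mu_\delta})$ to the geometric potential $-\delta\log|\phi'|$ form an analytic family of bounded operators on the H\"older space where the leading transfer operator of Section~\ref{CRPFOSDG} has a simple isolated eigenvalue $1$ and a spectral gap. Analytic perturbation theory therefore yields a simple branch $\lambda(t)$ of the leading eigenvalue for $t$ near $0$, with $\lambda(0)=1$, $\lambda'(0)=0$ and $\lambda''(0)=-\sigma^2$, where
$$
\sigma^2 = \int(\zeta-\chi_{\mu_\delta})^2\,d\mu_\delta + 2\sum_{k=1}^\infty\int(\zeta-\chi_{\mu_\delta})\,(\zeta-\chi_{\mu_\delta})\circ\sigma^k\,d\mu_\delta.
$$
The standard duality identity $\int e^{it(S_n\zeta-n\chi_{\mu_\delta})/\sqrt{n}}\,d\mu_\delta = \lambda(t/\sqrt{n})^n(1+o(1)) \to e^{-\sigma^2 t^2/2}$ together with L\'evy's continuity theorem then gives convergence in distribution of $n^{-1/2}(S_n\zeta - n\chi_{\mu_\delta})$ to $N(0,\sigma^2)$, and hence the stated equality for every Lebesgue continuity set $G\subset\mathbb R$.

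The critical non-degeneracy $\sigma^2>0$ is where the D-generic hypothesis must enter. By the classical Livsic/Leonov dichotomy for H\"older ergodic sums, $\sigma^2=0$ forces $\zeta-\chi_{\mu_\delta}$ to be a H\"older coboundary for $\sigma$, whence $\lambda_p(\omega)=|\omega|\chi_{\mu_\delta}$ for every periodic $\omega\in E_p^*$. This is incompatible with D-genericity, which by definition produces two periodic orbits whose logarithmic multipliers have irrational ratio, so $\sigma^2>0$.

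For the finite-alphabet irreducible parabolic case with $\delta>2p/(p+1)$ I would induce to the associated countable-alphabet attracting GDMS $\cS^*$ constructed in Section~\ref{section:parabolic}, whose invariant probability measure is the normalized restriction of $\widehat\mu_\delta$ to a chosen base cylinder disjoint from the parabolic letters. The induced system is strongly regular and, by Theorem~\ref{t1pc3}, automatically D-generic, so the attracting CLT of the preceding paragraphs applies to the induced potential $\zeta^*=S_\tau\zeta$, where $\tau$ is the first-return time to the base. The hypothesis $\delta>2p/(p+1)$ is precisely what makes $\mu_\delta$ finite and, through the neutral-fixed-point return asymptotics already exploited in Section~\ref{section:parabolic}, also forces $\tau\in L^2(\widehat\mu_\delta^*)$. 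A standard Melbourne--T\"or\"ok transfer argument then lifts the induced CLT to a CLT for $S_n\zeta$ under $\mu_\delta$ with variance $\sigma^2=(\sigma^*)^2/\int\tau\,d\widehat\mu_\delta^*>0$. The main obstacle is exactly this last step: extracting the $L^2$ tail control of $\tau$ and of $\zeta^*$ needed for the transfer, which must be distilled from the same indifferent-fixed-point estimates that justify the sharp integrability condition $\delta>2p/(p+1)$ in the first place.
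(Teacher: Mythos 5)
For the attracting half your route is essentially the paper's. Theorem~\ref{t1ms1} is obtained there by citing the CLT machinery of \cite{MU_GDMS} for Gibbs states of summable H\"older potentials together with Remark~\ref{r2_2017_02_17}; what underlies that citation is precisely the Nagaev--Guivarc'h scheme you describe (analytic perturbation of the leading eigenvalue of $\mathcal L_{\delta+it}$ on $\H_\a(A)$, $\lambda''(0)=-\sigma^2$ given by the Green--Kubo series, L\'evy continuity), and the non-degeneracy $\sigma^2>0$ is deduced exactly as you do, from the fact that D--genericity forbids $\zeta$ from being cohomologous to a constant (if it were, every $\log|\phi_\om'(x_\om)|$, $\om\in E_p^*$, would lie in $\chi_{\mu_\delta}\Z$, making the relevant group cyclic). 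The only point to keep in mind is that $\zeta$ is unbounded when $E$ is infinite, so the perturbation must be taken in the parameter $s$ near $\delta$, which strong regularity places in $\Int(\Ga_{\mathcal S})$; this is handled by the cited results.

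The parabolic half contains a genuine gap. Your inducing-and-lifting strategy is structurally the same as the paper's (Theorem~\ref{t1ms1-again2} is reduced to Young's tower method and Gou\"ezel's work), but the quantitative claim on which your transfer step rests is false: $\delta>\frac{2p}{p+1}$ does \emph{not} force $\tau\in L^2(\widehat\mu_\delta^*)$. By Proposition~\ref{p1c5.13} one has $\|\phi_{a^n}'\|\asymp n^{-(p_a+1)/p_a}$, hence on the base of the induced system
$$
\mu_\delta^*(\tau=n)\asymp n^{-\frac{p_a+1}{p_a}\delta},
$$
so that $\tau\in L^1$ precisely when $\delta>\frac{2p_a}{p_a+1}$ (this is the finiteness of $\mu_\delta$, Theorem~\ref{t1_2017_02_18}), whereas $\tau\in L^2$ requires the strictly stronger condition $\delta>\frac{3p_a}{p_a+1}$. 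In the intermediate range $\frac{2p}{p+1}<\delta\le\frac{3p}{p+1}$ the return time has infinite variance, and since $\zeta$ takes the value $0\neq\chi_{\mu_\delta}$ at the parabolic point (so $\zeta^*-\chi_{\mu_\delta}\tau\approx-\chi_{\mu_\delta}\tau$ on long excursions, with no cancellation), Gou\"ezel's dichotomy for intermittent maps yields a stable law of index $\frac{p+1}{p}\delta-1\in(1,2]$ rather than a Gaussian with $\sqrt n$ normalization; the model case is the Manneville--Pomeau map with $\frac12<\a<1$. So the Melbourne--T\"or\"ok lift cannot be carried out as you state it under the hypothesis $\delta>\frac{2p}{p+1}$ alone, and any correct argument must either strengthen the hypothesis to $\delta>\frac{3p}{p+1}$ or explain what replaces the $L^2$ control of $\tau$.
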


The following  result is an alternative Central Limit Theorem 
considering  instead the logarithms of the  diameters of the images of  reference sets.

\begin{thm}\label{t1ms1-again_I}
Suppose that there $\cS$ is either a strongly regular finitely irreducible D--generic conformal GDMS or a finite alphabet irreducible parabolic GDMS with $\delta > \frac{2p}{p+1}$. Let $\sigma^2:=\P''(0)(\neq  0)$. For every $v \in V$ let $Y_v \subset X_v$ be a set with at least two points. If $G \subset \mathbb R$ is a Lebesgue measurable set with $\hbox{\rm Leb}(\partial G) = 0$, then 
$$
\lim_{n \to +\infty}
\mu_\d\left(
\left\{
\omega \in E_A^\infty:
 \frac{
 -\log \hbox{\rm diam}(\phi_{\omega|_n}(Y_{t(\omega_n)}))  - \chi_{\mu_\d} n
 }{\sqrt{n}}
\in G
\right\}
\right)
= \frac{1}{\sqrt{2\pi}\sigma} \int_G e^{-\frac{t^2}{2\sigma^2}}\, dt.
$$  
In particular, for any $\alpha < \beta$
$$
\lim_{n \to +\infty}
\mu_\d\left(
\left\{
\omega \in E_A^\infty: \alpha \leq 
 \frac{-\log \hbox{\rm diam}(\phi_{\omega|_n}(Y_{t(\omega_n)}))  - \chi_{\mu_\d} n}{\sqrt{n}}
\leq \beta
\right\}
\right)
= \frac{1}{\sqrt{2\pi}\sigma} \int_\alpha^\beta e^{-\frac{t^2}{2\sigma^2}}\,dt.
$$  
\end{thm}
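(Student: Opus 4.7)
My plan is to derive Theorem~\ref{t1ms1-again_I} from the already established multiplier CLT of Theorem~\ref{t1ms1_I} by showing that the two random variables in question differ by a quantity that is uniformly bounded in $\omega$ and $n$, so that after normalization by $\sqrt{n}$ the difference vanishes in the limit and the limiting Gaussian distribution is common to both.

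I would first invoke the bounded distortion property of the GDMS $\cS$ together with a Koebe-type comparison to obtain, for any $\xi \in X_{t(\omega_n)}$, an estimate of the form
$$
\diam\!\big(\phi_{\omega|_n}(Y_{t(\omega_n)})\big) \asymp |\phi_{\omega|_n}'(\xi)| \cdot \diam(Y_{t(\omega_n)})
$$
with multiplicative constants uniform in the admissible word $\omega|_n$. Specialising $\xi = \pi_\cS(\sigma^n \omega) \in X_{t(\omega_n)}$, and using that $V$ is finite while every $Y_v$ contains at least two points, so that $\diam(Y_v)$ takes only finitely many values in $(0,+\infty)$, taking logarithms yields a constant $M < +\infty$ independent of $\omega$ and $n$ with
$$
\big| -\log \diam(\phi_{\omega|_n}(Y_{t(\omega_n)})) + \log|\phi_{\omega|_n}'(\pi_\cS(\sigma^n \omega))| \big| \leq M.
$$
Consequently the two centred and $\sqrt{n}$-normalised random variables appearing in Theorem~\ref{t1ms1_I} and Theorem~\ref{t1ms1-again_I} differ by at most $M/\sqrt{n}$, which tends to $0$ uniformly on $E_A^\infty$. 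Slutsky's theorem then transfers the distributional convergence to $\mathcal{N}(0,\sigma^2)$, with $\sigma^2 = \P''(0) > 0$, from the multiplier quantity to the diameter quantity, and the portmanteau theorem applied to continuity sets of the Gaussian law, i.e.\ Lebesgue sets $G$ with $\Leb(\partial G) = 0$, delivers the stated integral formula.

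The hard part will be verifying the uniform bounded distortion comparison above in the parabolic setting, since compositions $\phi_{\omega|_n}$ whose prefix is a high power of a parabolic generator have derivatives that behave delicately near the associated parabolic fixed point. To address this I would appeal to the refined distortion estimates for parabolic conformal GDMSs developed in Section~\ref{section:parabolic}, which provide a distortion constant that remains uniform on the entire fiber $X_{t(\omega_n)}$, including on neighbourhoods of the parabolic points, rather than merely on compact subsets separated from the parabolic set. Once this uniform $O(1)$ comparison is in place, the Slutsky--portmanteau conclusion proceeds exactly as in the attracting case; the hypothesis $\delta > 2p/(p+1)$ is needed here only to guarantee that $\mu_\d$ is a probability measure, which is precisely what makes Theorem~\ref{t1ms1_I} available in the parabolic regime.
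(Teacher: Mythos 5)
Your argument for the attracting case is exactly the paper's: by (BDP) the quantity $\bigl|\log\diam(\phi_{\omega|_n}(Y_{t(\omega_n)}))-\log|\phi_{\omega|_n}'(\pi_\cS(\sigma^n\omega))|\bigr|$ is uniformly bounded, so dividing by $\sqrt n$ kills the difference and the CLT for multipliers (Theorem~\ref{t1ms1_I}) transfers verbatim. That half is correct.

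The parabolic half has a genuine gap. You correctly flag it as the hard part, but the tool you invoke to close it --- a distortion constant ``uniform on the entire fiber $X_{t(\omega_n)}$, including on neighbourhoods of the parabolic points'' --- does not exist, and no such estimate is proved in Section~\ref{section:parabolic}: condition (5) there is explicitly restricted to words whose last letter is hyperbolic or is not a repetition, and Proposition~\ref{p1c5.13} controls $|\phi_{a^n}'|$ only on $X_a=\bigcup_{j\neq a}\phi_j(X)$, i.e.\ away from the parabolic point. The comparison you want is in fact false: take $a\in\Om$ and $\omega=a^{n+k}b\ldots$, so that $\omega|_n=a^n$ and $\xi:=\pi_\cS(\sigma^n\omega)$ lies at distance $\asymp(k+1)^{-1/p_a}$ from $x_a$. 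Then $|\phi_{a^n}'(\xi)|\asymp (n+k+1)^{-(p_a+1)/p_a}(k+1)^{(p_a+1)/p_a}$, which tends to $1$ as $k\to\infty$ with $n$ fixed, whereas $\diam(\phi_{a^n}(Y))\asymp n^{-\alpha}$ with $\alpha\in\{1/p_a,(p_a+1)/p_a\}$ independently of $k$. Hence the log-difference is of order $\log n$ (more generally $\log(j+k+1)$ for $\omega|_n=\tau a^j$ followed by $a^k$), and is unbounded over $\omega$ and $n$; Slutsky applied to a uniformly vanishing perturbation is therefore unavailable. What the paper does instead is measure-theoretic: setting $g_n(\omega)$ equal to this difference, it uses the representation $\omega=\tau a^j\sigma^n(\omega)$ and the polynomial tail estimates for $\mu_{\d}([\tau a^{j+k}b])$ (this is where $\d>\tfrac{2p}{p+1}$ enters, giving $\tfrac{p+1}{p}\d>2$) to prove $\sup_n\int g_n\,d\mu_{\d}<+\infty$, whence by Chebyshev $\mu_{\d}(\{g_n\geq n^{1/4}\})=O(n^{-1/4})\to 0$; since $n^{1/4}/\sqrt n\to 0$, convergence in distribution still transfers. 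Your proof needs this (or an equivalent ``difference is $o(\sqrt n)$ off a set of vanishing measure'') argument in place of the nonexistent uniform distortion bound.
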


\fr There are more theorems in this vein proven in Part~\ref{Section-CLT},  for example the Law of Iterated Logarithm. In order to formulate 
other  statistical results of  a slightly different flavor, we define the following measures
$$
\mu_n(H) := \frac{\sum_{\omega \in H} e^{-\delta \lambda_\rho(\omega)}}{\sum_{\omega \in E^n_\rho} e^{-\delta \lambda_\rho(\omega)}}
$$
for integers $n\ge 1$ and $H \subset E^n_\rho$.
We also consider the function $\Delta_n:E_A^n\to\R$ given by 
$$
\Delta_n(\omega)= \frac{\lambda_\rho(\omega) - \chi_\delta n}{\sqrt{n}}.
$$ 		
\begin{thm}\label{t1ms5.2_I} 
If $\cS$ is either a finitely irreducible strongly regular conformal GDMS or a finite alphabet irreducible parabolic GDMS with $\delta > \frac{2p}{p+1}$, then for every $\rho \in E_A^\infty$ we have that
$$
\lim_{n \to +\infty} \int_{E_\rho^n} \frac{\lambda_\rho}{n} d\mu_n 
= \chi_{\mu_\d}=\int_{E_\rho^\infty}\lambda d\mu_\delta.
$$
\end{thm}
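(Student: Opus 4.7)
The strategy is to interpret $\int_{E_\rho^n}\lambda_\rho/n\,d\mu_n$ as a normalised logarithmic derivative of a pressure--type partition function and then to exploit the spectral theory of the Ruelle--Perron--Frobenius operator established earlier in the paper. Set
$$
Z_n(t):=\sum_{\om\in E_\rho^n}e^{-(\d+t)\lambda_\rho(\om)},\qquad t\in\R \text{ near } 0,
$$
so that $Z_n(0)$ is exactly the denominator defining $\mu_n$ and an elementary differentiation gives
$$
\int_{E_\rho^n}\lambda_\rho\, d\mu_n\;=\;-\frac{d}{dt}\bigg|_{t=0}\log Z_n(t).
$$
The chain rule for $\phi_\om$ together with the definition of $\lambda_\rho$ furnishes the cocycle identity $\lambda_\rho(\om)=S_n\lambda(\om\rho)$ for every $\om\in E_\rho^n$, and summing over the preimages of $\rho$ under $\sigma^n$ identifies
$$
Z_n(t)\;=\;\cL_{-(\d+t)\lambda}^n\1(\rho),
$$
where $\cL_{-(\d+t)\lambda}$ is the RPF operator on the symbolic space $E_A^\infty$ associated to the H\"older potential $-(\d+t)\lambda$.

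Next I would invoke the spectral picture for $\cL_{-(\d+t)\lambda}$ developed at the symbolic level in Section~\ref{CRPFOSDG}. For real $t$ in a small neighbourhood of $0$ the operator has a simple isolated leading eigenvalue $e^{\P(t)}$ with $\P(t):=\P(-(\d+t)\lambda)$, a strictly positive H\"older eigenfunction $\psi_t$ depending real--analytically on $t$, and a uniform spectral gap. Normalising the dual eigenmeasure to pair with $\psi_t$ to $1$, this yields the asymptotic
$$
\cL_{-(\d+t)\lambda}^n\1(\rho)\;=\;e^{n\P(t)}\psi_t(\rho)\bigl(1+O(\th^n)\bigr)
$$
for some $\th\in(0,1)$, uniformly for $t$ close to $0$. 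Taking logarithms, differentiating in $t$ at $t=0$ (the real--analytic dependence and uniform error justify the interchange), and dividing by $n$ produces
$$
\frac{1}{n}\int_{E_\rho^n}\lambda_\rho\, d\mu_n\;=\;-\P'(0)\;-\;\frac{1}{n}\frac{d}{dt}\bigg|_{t=0}\log\psi_t(\rho)\;+\;O(\th^n).
$$
The standard first--variation formula for topological pressure gives $\P'(0)=\int(-\lambda)\,d\mu_\d=-\chi_{\mu_\d}$; letting $n\to\infty$ therefore yields the first equality of the theorem, while the identity $\chi_{\mu_\d}=\int_{E_\rho^\infty}\lambda\,d\mu_\d$ is simply the definition of the Lyapunov exponent of $\mu_\d$.

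The principal obstacle is maintaining the uniform spectral control of $\cL_{-(\d+t)\lambda}$ as $t$ varies, so that the Ruelle asymptotic can be differentiated term by term. In the strongly regular attracting case this is precisely the real--analytic perturbation statement that accompanies the RPF analysis of Section~\ref{CRPFOSDG}, combined with standard analytic perturbation theory for simple isolated eigenvalues. In the parabolic case the direct RPF operator lacks a spectral gap; following the inducing strategy employed elsewhere in the paper, I would pass to the associated countable--alphabet attracting GDMS, decompose $Z_n(t)$ as an infinite sum over orbits grouped by their sojourns at parabolic symbols (the same decomposition that represents the parabolic Poincar\'e series as an infinite sum of attracting ones), apply the attracting result to each summand, and then control the tail. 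The hypothesis $\d>2p/(p+1)$ enters essentially here, as it guarantees both that $\lambda\in L^1(\mu_\d)$ and that the contributions of long parabolic sojourns are summable, so the remainders in the differentiated expansion can be absorbed uniformly in $n$.
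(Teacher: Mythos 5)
For the attracting case your argument is correct, but it takes a genuinely different route from the paper. You perturb the parameter, write $Z_n(t)=\pf_{\d+t}^n\1(\rho)$, and differentiate the leading-eigenvalue asymptotics $\pf_s^n\1(\rho)=\lam_s^nQ_s\1(\rho)+S_s^n\1(\rho)$ at $s=\d$, using $\P'(\d)=-\chi_{\mu_\d}$ (Theorem~\ref{t1_2016_01_29}); the Cauchy estimates on a complex neighbourhood of $\d$ (available since $\d\in\Int(\Ga_\cS)$ by strong regularity, and $s\mapsto\pf_s$, $\lam_s$, $Q_s$, $S_s$ are holomorphic there with $\|S_s^n\|\le C\ka^n$ uniformly) do justify differentiating the error term. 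The paper instead works at the fixed parameter $\d$, writing $\int\lam_\rho/n\,d\mu_n$ as a Ces\`aro average $\frac1n\sum_j\pf_\d^{n-(j+1)}\(\pf_\d(\lam\,\pf_\d^j\1)\)(\rho)/\pf_\d^n\1(\rho)$ and using the convergence $\pf_\d^m g\to(\int g\,dm_\d)\psi_\d$. A point in favour of your route: it sidesteps the unboundedness of $\lam$ on an infinite alphabet, which the paper must handle by observing that $\pf_\d(\lam\,\pf_\d^j\1)$ is bounded H\"older. Your method is essentially the first-moment version of the characteristic-function computation the paper uses for the CLT in Theorem~\ref{t1m58}.

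The parabolic case, however, has a genuine gap. For a finite-alphabet parabolic system the pressure satisfies $\P(s)=0$ for all $s\ge\d$ (the parabolic block alone gives $\sum_{|\om|=n}\|\phi_\om'\|^s\gek n^{-s(p+1)/p}$, so $\P(s)\ge0$, while $\P(s)\le0$ for $s\ge\d$), and under the standing hypothesis $\d>2p/(p+1)$ the measure $\mu_\d$ is finite and $\P$ has a genuine corner at $\d$: the left derivative is $-\chi_{\mu_\d}<0$ and the right derivative is $0$. So the two-sided identity $\P'(0)=-\chi_{\mu_\d}$ on which your computation rests is false for the parabolic operator, and convexity of $t\mapsto\frac1n\log Z_n(t)$ only localizes the limit of $\int\lam_\rho/n\,d\mu_n$ in the subdifferential interval $[0,\chi_{\mu_\d}]$ --- it does not single out $\chi_{\mu_\d}$.

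Your proposed fix by inducing does not repair this as stated, because the quantity in the theorem is tied to the \emph{parabolic} word length $n$, which indexes both the summation set $E_\rho^n$ and the normalization $1/n$, whereas the attracting result for $\cS^*$ controls averages over words of fixed \emph{induced} length. Grouping $Z_n(t)$ by parabolic sojourns produces a sum over compositions of $n$ (a renewal-type equation), not a sum of terms to which the attracting theorem applies directly; making that work requires a return-time-weighted (two-parameter) transfer operator in the style of Gou\"ezel, which your sketch does not set up. The paper avoids all of this: its parabolic proof (Theorem~\ref{t1ms15.1}) abandons spectral perturbation entirely and instead combines Birkhoff's Ergodic Theorem (via Lusin and Egorov) with Hu's uniform two-sided bounds $Q_\rho^{-1}\le\pf_\d^n\1(\rho)\le Q_\rho$, splitting $E_\rho^n$ into a set where the Birkhoff average of $\lam$ is within $\e$ of $\chi_{\mu_\d}$ and a complementary set of small $\mu_n$-mass. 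You would need an argument of that type (or a full renewal analysis) to close the parabolic case.
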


\fr The following  theorem describes  precisely the magnitude of deviations in this convergence, and is another form of Central Limit Theorem. 

\begin{thm}\label{t1m58_I} 
If $\cS$ is either a strongly regular finitely irreducible D--generic attracting conformal graph directed Markov system or a finite alphabet irreducible parabolic GDMS with $\delta > \frac{2p}{p+1}$ , then the sequence of random variables $(\Delta_n)_{n=1}^\infty$ converges in distribution
to the normal (Gaussian) distribution 
$\mathcal N_0(\sigma)$ with mean value zero and the variance $\sigma^2 = \P''(\delta)$.  
Equivalently, the sequence 
$(\mu_n \circ \Delta_n^{-1})_{n=1}^\infty$
converges weakly to the normal distribution $\mathcal N_0(\sigma^2)$.  This means that for every Borel set $F \subset \mathbb R$ with 
$\hbox{\rm Leb}(\partial F) = 0$, we have 
$$
\lim_{n \to +\infty}\frac{\sum_{\omega\in  E^n_\rho}  
\big|\phi_\om'(\pi_\cS(\rho)\big|^\d\1_F\lt(\frac{\lambda_\rho(\omega|_{n}) - \chi_\delta n}{\sqrt{n}}\rt)}
{\sum_{\omega\in E^n_\rho}\big|\phi_\om'(\pi_\cS(\rho|)\big|^\d}
=\lim_{n \to +\infty} \mu_n(\Delta_n^{-1}(F))
= \frac{1}{\sqrt{2\pi} \sigma}
\int_F e^{- t^2/2\sigma^2} dt.
$$
\end{thm}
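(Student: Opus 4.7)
The plan is to apply the Nagaev--Guivarc'h perturbation method to the complexified Ruelle--Perron--Frobenius operators, making full use of the spectral theory already developed in Section~\ref{CRPFOSDG}. The idea is to express the characteristic function of $\Delta_n$ under $\mu_n$ in terms of iterates of $\cL_s$ at a complex parameter near $\d$, Taylor-expand the leading eigenvalue, and conclude via L\'evy's continuity theorem.

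First, setting $Z_n(s) := \sum_{\om \in E^n_\rho} e^{-s\lambda_\rho(\om)}$, a direct computation gives
$$
\int e^{it\Delta_n}\, d\mu_n \;=\; e^{-it\chi_{\mu_\d}\sqrt{n}}\cdot \frac{Z_n(\d - it/\sqrt{n})}{Z_n(\d)}.
$$
By the bounded distortion property, $Z_n(s)$ is comparable, up to a uniformly bounded factor that depends holomorphically on $s$, to $(\cL_s^n \1)(\rho)$, where $\cL_s$ is the RPF operator associated to the potential $s\log|\phi'|$ on the symbol space. Thus the asymptotics of the ratio above are governed entirely by the iterates $\cL_s^n$ for $s$ in a complex neighborhood of $\d$.

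Next, I would invoke the spectral results of Section~\ref{CRPFOSDG}: in such a neighborhood, $\cL_s$ has a simple isolated leading eigenvalue $\lambda(s) = e^{\P(s)}$ with an eigenprojection $\Pi_s$ depending holomorphically on $s$ and a uniform spectral gap, so that
$$
\cL_s^n \;=\; \lambda(s)^n\, \Pi_s \;+\; R_s^n, \qquad \|R_s^n\| \le C\theta^n
$$
for some $\theta < 1$. The pressure identities $\P(\d) = 0$, $\P'(\d) = -\chi_{\mu_\d}$, and $\P''(\d) = \sigma^2$ yield the Taylor expansion
$$
n\, \P\!\left(\d - \frac{it}{\sqrt{n}}\right) \;=\; it\,\chi_{\mu_\d}\sqrt{n} \;-\; \frac{\sigma^2 t^2}{2} \;+\; O(n^{-1/2}).
$$
Substituting this into the ratio cancels the oscillatory factor $e^{-it\chi_{\mu_\d}\sqrt n}$, while the ratio $\Pi_{\d - it/\sqrt n}(\1)(\rho)/\Pi_\d(\1)(\rho)$ tends to $1$ by holomorphic dependence. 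Therefore the characteristic function converges pointwise to $e^{-\sigma^2 t^2/2}$, and L\'evy's continuity theorem yields the weak convergence of $\mu_n \circ \Delta_n^{-1}$ to $\mathcal N_0(\sigma^2)$, which (since the Gaussian is absolutely continuous) specializes to the stated integral formula for all Borel $F$ with $\hbox{\rm Leb}(\partial F) = 0$.

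Finally, for the parabolic case I would reduce to the associated induced attracting countable-alphabet GDMS used throughout the excerpt to handle parabolic systems, run the argument above there, and then lift back: the hypothesis $\d > 2p/(p+1)$ is exactly what makes $\mu_\d$ finite, so that a Kac-type identity transfers the Gaussian limit from the induced system to the parabolic one without altering the variance. The main obstacle is confirming that the analytic perturbation machinery applies cleanly at the level of the countable-alphabet complex operator $\cL_s$, i.e.\ that $\cL_s$ acts boundedly on a function space containing $\1$, depends holomorphically on $s$, and retains a genuine spectral gap uniformly in a neighborhood of $\d$; this is precisely what the strongly regular and D--generic hypotheses (together with the inducing analysis used for parabolic systems) are designed to provide, and is the essential content imported from Section~\ref{CRPFOSDG}.
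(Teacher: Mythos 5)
Your argument for the attracting case is essentially the paper's own proof: one writes
$\int e^{it\Delta_n}\,d\mu_n = e^{-it\chi_{\mu_\d}\sqrt n}\,\pf_{\d-it/\sqrt n}^n\1(\rho)/\pf_\d^n\1(\rho)$
(note this is an exact identity, since $\sum_{\om\in E_\rho^n}|\phi_\om'(\pi(\rho))|^s=\pf_s^n\1(\rho)$ --- no ``comparable up to a bounded factor'' is needed), then uses the decomposition $\pf_s^n=\lam_s^nQ_s+S_s^n$ from \eqref{1nh20}, the expansion of $\log\lam_s=\P(s)$ with $\P'(\d)=-\chi_{\mu_\d}$ and $\P''(\d)=\sigma^2>0$ (D--genericity), and L\'evy's continuity theorem. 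That half is correct and complete modulo the spectral input of Section~\ref{CRPFOSDG}, exactly as in the paper.

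The parabolic half, however, has a genuine gap. The measures $\mu_n$ are supported on words of symbolic length exactly $n$ in the \emph{parabolic} alphabet, whereas the induced attracting system $\cS^*$ reorganizes orbits by (variable) return times; a Kac-type identity transfers Birkhoff-sum CLTs for the \emph{invariant} measure, but it gives you no direct way to convert a level-$n$ counting CLT for $\cS^*$ into one for $\cS$, because the set of $\cS$-words of length $n$ does not correspond to any fixed iterate of $\cS^*$. Moreover, you cannot instead run Nagaev--Guivarc'h directly on the parabolic transfer operator $\cL_s$, since the indifferent fixed points destroy the spectral gap at $s=\d$ ($\cL_\d^n\1$ does not converge exponentially, only boundedly by Theorem E of \cite{Hu}), so the ``clean analytic perturbation'' you invoke is unavailable. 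The paper's actual proof of Theorem~\ref{t1ms16} circumvents both problems with operator renewal theory: it introduces the first-return sets $Z_q$ and the truncated operators $\widehat{\cL}_\d^{(n)}g=\1_{Z_q}\widehat{\cL}_\d^n(g\1_{Z_q})$, applies Gou\"ezel's Theorems 2.1 and 3.7 and Lemma 4.4 to get $\widehat{\cL}_\d^{(n)}(e^{it\Delta_n})(\rho)\to\mu_\d(Z_q)^2e^{-\sigma^2t^2/2}$, controls the complementary term by $\widehat{\cL}_\d^n(\1_{Z_q^c})(\rho)\to 1-\mu_\d(Z_q)$, and lets $q\to\infty$. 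Some substitute for this renewal-type analysis is needed; your sketch as written would not close.
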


\fr In particular all these theorems hold for all classes of examples described in 
subsection~\ref{examples_I}, in the case of parabolic systems under the additional hypothesis that $\delta > \frac{2p}{p+1}$, which ensures  that the corresponding invariant measure $\mu_\d$ is finite, thus probabilistic after normalization. In the case of continued fractions these take on exactly the same form, in the case of Kleinian groups, including Apollonian circle packings, the same form for associated GDMSs.

However, in giving  statements of the Central Limit Theorems for examples we have chosen  rational functions to best illustrate them. 
 The first result is a Central Limit Theorem for the distribution of the derivatives of $T$ along orbits.
\begin{thm}\label{t51_2017_04_03} 
Let $f:\oc\to\oc$ be either an expanding rational function of the Riemann sphere $\widehat{\mathbb{C}}$ or a parabolic rational function of $\widehat{\mathbb{C}}$ with $\d>\frac{2p}{p+1}$. Then
there exists $\sigma^2 > 0$  such that if $G \subset \mathbb R$ is a Lebesgue measurable set with $\hbox{{\rm Leb}}(\partial G) = 0$, then 
$$
\lim_{n \to +\infty}
\mu_\d\left(
\left\{
z\in J(f) \hbox{ : } \frac{\log \big|(f^n)'(z)\big| - \chi_{\mu_\d} n}{\sqrt{n}}
\in G
\right\}
\right)
= \frac{1}{\sqrt{2\pi}\sigma} \int_G e^{-\frac{t^2}{2\sigma^2}} \,dt.
$$  
In particular, for any $\alpha <\beta$
$$
\lim_{n \to +\infty}
\mu_\d\left(
\left\{
z\in J(f) \hbox{ : } \alpha \leq \frac{\log\big|(f^n)'(z)\big| - \chi_{\mu_\d} n}{\sqrt{n}}
\leq \beta
\right\}
\right)
= \frac{1}{\sqrt{2\pi}\sigma} \int_\a^\b e^{-\frac{t^2}{2\sigma^2}}\, dt.
$$  
\end{thm}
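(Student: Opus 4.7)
The plan is to reduce Theorem~\ref{t51_2017_04_03} to the symbolic Central Limit Theorem~\ref{t1ms1_I} (or its parabolic analogue with $\delta > \frac{2p}{p+1}$) via the coding of the Julia set $J(f)$ by a conformal GDMS built from inverse branches of $f$. First I would recall from the earlier sections of the paper (the analysis of expanding rational functions in Subsection on expanding rational maps, and Subsection~\ref{PRF} for parabolic rational functions) that $f$ admits a Markov partition $\{R_v\}_{v \in V}$ of a neighborhood of $J(f)$, and the associated collection $\cS = \{\phi_e\}_{e \in E}$ of univalent inverse branches forms a strongly regular, finitely irreducible, $D$-generic conformal GDMS (attracting in the expanding case, parabolic in the parabolic case). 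The coding map $\pi_\cS:E_A^\infty \to J(f)$ is surjective, bounded-to-one, and conjugates the shift $\sigma$ with $f$ in the sense that $f \circ \pi_\cS = \pi_\cS \circ \sigma$. Under this coding the $\delta$--conformal measure $m_\delta$ for $f$ corresponds to $\widehat m_\delta$, and likewise the invariant measure $\mu_\delta$ corresponds to $\widehat \mu_\delta$; in particular $\chi_{\mu_\delta}$ agrees with the symbolic Lyapunov exponent.

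Next, I would translate $\log|(f^n)'(z)|$ into the symbolic multiplier along inverse branches. For $\mu_\delta$--a.e. $z \in J(f)$ there exists $\omega \in E_A^\infty$ with $\pi_\cS(\omega) = z$, and by the chain rule together with the identity $\phi_{\omega|_n} \circ f^n = \Id$ on a neighborhood of $z$, we get
$$
\log\big|(f^n)'(z)\big| = -\log\big|\phi_{\omega|_n}'(\pi_\cS(\sigma^n \omega))\big|.
$$
This identity is the exact bridge to the sum that appears in Theorem~\ref{t1ms1_I}. Consequently the random variable $\log|(f^n)'(z)|$ on $(J(f),\mu_\delta)$ has the same distribution as $-\log|\phi_{\omega|_n}'(\pi_\cS(\sigma^n\omega))|$ on $(E_A^\infty,\widehat\mu_\delta)$ (after appropriate identification via $\pi_\cS$, which is injective off a $\widehat\mu_\delta$--null set in the attracting case and can be handled similarly in the parabolic case).

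With this identification, the CLT statement of Theorem~\ref{t51_2017_04_03} becomes an immediate consequence of Theorem~\ref{t1ms1_I}: the pushforward of the symbolic CLT by $\pi_\cS$ yields exactly the asserted Gaussian limit, with the same variance $\sigma^2 = \P''(\delta) > 0$. The second displayed limit follows by specializing the Borel set $G$ to the interval $[\alpha,\beta]$, whose boundary has Lebesgue measure zero.

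The main obstacle is the parabolic case. Here the natural GDMS associated to a parabolic rational function has finitely many indifferent parabolic maps, and Theorem~\ref{t1ms1_I} is stated for parabolic systems under the assumption $\delta > \frac{2p}{p+1}$ precisely so that the invariant measure $\mu_\delta$ is finite and $\chi_{\mu_\delta} < \infty$. I would need to verify that the parabolic GDMS $\cS$ attached to $f$ in Subsection~\ref{PRF} is finite and irreducible, that $p = \max_{a \in \Omega} p(a)$ coincides with the petal index of the parabolic periodic points of $f$, and that the inducing used to pass from the parabolic system to its associated attracting system preserves the normalization needed for the CLT, so that the limit variance on the symbolic side transfers correctly to the Julia set. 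Once this bookkeeping is done, the proof is complete.
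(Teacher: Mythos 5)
Your proposal is correct and takes essentially the same route as the paper: the authors obtain this theorem as an immediate consequence of the symbolic Central Limit Theorems for attracting and parabolic GDMSs (Theorems~\ref{t1ms1} and \ref{t1ms1-again2}) via the Markov-partition coding of $J(f)$ by the associated conformal GDMS, with the chain-rule identity $\log|(f^n)'(z)|=-\log\big|\phi_{\omega|_n}'(\pi_\cS(\sigma^n\omega))\big|$ serving as the bridge and the hypothesis $\d>\frac{2p}{p+1}$ guaranteeing finiteness of $\mu_\d$ in the parabolic case. The only cosmetic discrepancy is that you record the variance as $\P''(\d)$ whereas the paper's invariant-measure CLT gives $\sigma^2=\P''(0)$; since the statement only asserts the existence of some $\sigma^2>0$, this does not affect the argument.
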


The second result is  a Central limit Theorem describing the diameter of the preimages of reference sets.

\begin{thm}\label{t12_2017_04_03_I} 
Let $f:\oc\to\oc$ be either an expanding rational function of the Riemann sphere $\widehat{\mathbb{C}}$ or a parabolic rational function of $\widehat{\mathbb{C}}$ with $\d>\frac{2p}{p+1}$. 
Then for every $e \in F$ let $Y_e \subset R_e$ be a set with at least two points. If $G \subset \mathbb R$ is a Lebesgue measurable set with $\hbox{\rm Leb}(\partial G) = 0$, then 
$$
\lim_{n \to +\infty}
\mu_\d\left(
\left\{
z\in J(f) \hbox{ : }
 \frac{
 -\log \hbox{\rm diam}\(f_x^{-n}(Y_{e(z,n)})\)  - \chi_{\mu_\d} n
 }{\sqrt{n}}
\in G
\right\}
\right)
= \frac{1}{\sqrt{2\pi}\sigma} \int_G e^{-\frac{t^2}{2\sigma^2}}\, dt
$$  
where $f_x^{-n}$ is a local inverse for $f^n$ in a neighbourhood of $x= f^n(z)$.
In particular, for any $\alpha < \beta$
$$
\lim_{n \to +\infty}
\mu_\d\left(
\left\{
z\in J(f) \hbox{ : } \alpha \leq 
 \frac{-\log \hbox{\rm diam}\(f_x^{-n}(Y_{e(z,n)})\) - \chi_{\mu_\d} n}{\sqrt{n}}
\leq \beta
\right\}
\right)
= \frac{1}{\sqrt{2\pi}\sigma} \int_\alpha^\beta e^{-\frac{t^2}{2\sigma^2}}\,dt.
$$  
\end{thm}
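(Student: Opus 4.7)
\medskip\noindent\textbf{Proof proposal.} The plan is to reduce the statement to its symbolic counterpart, Theorem~\ref{t1ms1-again_I}, applied to a conformal GDMS $\cS$ naturally associated with $f$, and then push the resulting distributional limit forward to $J(f)$ via the canonical coding map.

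In the expanding case one fixes a Markov partition $\{R_e\}_{e\in F}$ for $f|_{J(f)}$ and lets $\cS$ be the GDMS whose maps $\phi_e$ are the inverse branches of $f$ restricted to the partition elements; its incidence matrix is the incidence matrix of the Markov partition and its limit set is $J(f)$. The resulting system is strongly regular, finitely irreducible, and D-generic. For a parabolic rational function one performs the analogous Markov construction adapted to the rationally indifferent periodic orbits, obtaining a finite alphabet irreducible parabolic conformal GDMS; here the hypothesis $\d>\frac{2p}{p+1}$ is exactly what makes the corresponding invariant measure $\mu_\d$ finite, so that Theorem~\ref{t1ms1-again_I} applies.

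Let $\pi_\cS:E_A^\infty\to J(f)$ be the canonical coding map; it is a measurable surjection which, up to normalization, intertwines the symbolic $\mu_\d$ with the geometric $\mu_\d$ on $J(f)$. Writing $\om=\om(z)$ for a symbolic code of $z$ and $x=f^n(z)$, the branch $f_x^{-n}$ coincides on a neighbourhood of $x$ with the GDMS composition $\phi_{\om|_n}$, so that, after identifying the reference sets $Y_e\sbt R_e$ on the $f$-side with the GDMS sets $Y_{t(\om_n)}$, we obtain
\[
-\log\diam\big(f_x^{-n}(Y_{e(z,n)})\big)\;=\;-\log\diam\big(\phi_{\om|_n}(Y_{t(\om_n)})\big)
\]
for $\mu_\d$-a.e. $z\in J(f)$. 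The event whose $\mu_\d$-measure we wish to compute on $J(f)$ therefore pulls back under $\pi_\cS$ to precisely the symbolic event controlled by Theorem~\ref{t1ms1-again_I}, whose application then yields the claimed Gaussian limit. As in Theorem~\ref{t1ms1-again_I}, the hypothesis $\Leb(\partial G)=0$ is what lets one upgrade the weak convergence to convergence on $G$ itself rather than on its interior.

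The principal obstacle is the parabolic case. There the naive Markov coding is not uniformly contracting near the parabolic points, so one has to work with the associated induced countable alphabet attracting GDMS from Section~\ref{section:parabolic} and then verify that the CLT proved on the induced system descends to the original rational map. This is exactly the point at which $\d>\frac{2p}{p+1}$ is used, via a Kac-type argument ensuring that the return function is integrable with respect to $\mu_\d$ and hence that the ergodic sums of $-\log|f'|$ normalize correctly by $\chi_{\mu_\d}n$. A secondary, routine issue is that $\pi_\cS$ is only finite-to-one away from a countable union of boundaries of cylinders, but this exceptional set is $\mu_\d$-null and therefore invisible to the CLT.
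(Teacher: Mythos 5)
Your proposal follows the paper's route exactly: the paper obtains this theorem as an immediate consequence of the symbolic CLT for diameters (Theorem~\ref{t1ms1-again_I}, i.e.\ Theorems~\ref{t1ms1-again} and \ref{t1ms1-again3} in the body) applied to the Markov-partition GDMS in the expanding case and to the associated parabolic GDMS in the parabolic case, using precisely the coding identification $f_x^{-n}(Y_{e(z,n)})=\phi_{\omega|_n}(Y_{t(\omega_n)})$ and the $\mu_\d$-nullity of the ambiguity set of the coding map. The one point to be careful about is your unqualified assertion of D-genericity in the expanding case: it fails for $z\mapsto z^d$, and the paper's body version of the statement excludes these maps, invoking Zdunik's theorem that they are the only essentially linear (hence the only non-D-generic) expanding rational functions.
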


\begin{thm}\label{t54_2017_04_03_I}
If $f:\oc\to\oc$ is either an expanding rational function of the Riemann sphere $\widehat{\mathbb{C}}$ or a parabolic rational function of $\widehat{\mathbb{C}}$ with $\d>\frac{2p}{p+1}$, then for every $\xi\in J(f)$, we have that
$$
\lim_{n \to +\infty} \int_{f^{-n}(\xi)} \frac{\log\big|(f^n)'\big|}{n} d\mu_n 
= \chi_\delta.
$$
\end{thm}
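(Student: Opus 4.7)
The plan is to derive Theorem~\ref{t54_2017_04_03_I} from its purely symbolic counterpart Theorem~\ref{t1ms5.2_I} via the standard coding of the Julia set by a conformal GDMS. In the expanding case, a Markov partition for $f$ on $J(f)$ produces a finite-alphabet, strongly regular, finitely irreducible attracting conformal GDMS $\cS$ whose limit set is $J(f)$ (modulo a countable set), and the $D$-generic hypothesis holds automatically for non-exceptional expanding rational maps. In the parabolic case one appeals to the inducing procedure used throughout the paper to associate to $f$ a countable-alphabet attracting GDMS $\widetilde{\cS}$ with identifiable parabolic contributions; the assumption $\d>2p/(p+1)$ guarantees that $\widetilde{\cS}$ is strongly regular and that $\mu_\d$ descends to a finite, $f$-invariant probability measure on $J(f)$.

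Fix $\xi\in J(f)$ and choose $\rho\in E_A^\infty$ with $\pi_\cS(\rho)=\xi$. For each $n$, the local inverse branches of $f^n$ at $\xi$ are in canonical bijection with the admissible symbolic extensions $\om\in E_\rho^n$, giving preimages
$$
z_\om := \pi_\cS(\om\rho)\in f^{-n}(\xi).
$$
Because the GDMS is conformal, the standard bounded distortion property yields a constant $C\ge 1$, independent of $n$ and $\om$, such that
$$
\bigl|\log|(f^n)'(z_\om)|-\lambda_\rho(\om)\bigr|\le C,
\qquad
C^{-1}\le \frac{|\phi_\om'(\pi_\cS(\rho))|^\d}{|(f^n)'(z_\om)|^{-\d}}\le C.
$$
Under the identification $z_\om\leftrightarrow \om$, the measure $\mu_n$ on $f^{-n}(\xi)$ (weighting each preimage by $|(f^n)'|^{-\d}$ and normalizing) therefore coincides, up to a uniformly bounded Radon--Nikodym density that cancels in the normalization, with the symbolic measure $\mu_n$ of Theorem~\ref{t1ms5.2_I} on $E_\rho^n$.

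Combining these two bounded-distortion estimates gives
$$
\left|\int_{f^{-n}(\xi)}\frac{\log|(f^n)'|}{n}\,d\mu_n-\int_{E_\rho^n}\frac{\lambda_\rho}{n}\,d\mu_n\right|\le \frac{C'}{n},
$$
for some constant $C'$ independent of $n$. Applying Theorem~\ref{t1ms5.2_I} to the GDMS $\cS$ (respectively $\widetilde{\cS}$) immediately shows that the right-hand integral converges to $\chi_{\mu_\d}=\chi_\d$, and the estimate above transfers this convergence to the left-hand side, which is exactly the claim of Theorem~\ref{t54_2017_04_03_I}.

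The main obstacle lies in the parabolic case. Near a parabolic periodic point the derivative of $f$ is no longer uniformly expanded and the naive bounded distortion estimate fails; consequently the inverse branches of $f^n$ need not correspond one-to-one to length-$n$ words in $\widetilde{\cS}$. This is precisely why one uses the induced system, whose generators encode long parabolic excursions as single symbols of finite $\lambda$-weight and finite distortion. Verifying that the pushforward of the symbolic weights on $\widetilde{\cS}$ matches (up to bounded multiplicative error) the natural weights $|(f^n)'|^{-\d}$ on the true preimages $f^{-n}(\xi)$, and that no mass is lost through parabolic concentration when $\d>2p/(p+1)$, is the delicate step; all the necessary technology, however, has been developed in the earlier parabolic sections of the paper.
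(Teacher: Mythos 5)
Your expanding case is correct and is exactly the paper's route: code $J(f)$ by the finite attracting GDMS $\cS_{\cR}$ coming from a Markov partition, note that the length-$n$ admissible words $\om\in E_\rho^n$ parametrize the branches of $f^{-n}$ at $\xi$ with $|(f^n)'(z_\om)|=|\phi_\om'(\xi)|^{-1}$ (this is an exact chain-rule identity, so your bounded-distortion slack $C$ is not even needed and the measures $\mu_n$ match on the nose), and invoke the attracting clause of Theorem~\ref{t1ms5.2_I}. Two small remarks: the $D$-genericity you invoke is irrelevant here — Theorem~\ref{t1ms5.2_I} does not assume it, which is why Theorem~\ref{t54_2017_04_03_I} holds even for $z\mapsto z^d$.

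The parabolic case as you have set it up has a genuine gap. You propose to apply the symbolic theorem to the \emph{induced} attracting system $\widetilde{\cS}=\cS^*$ and then transfer the conclusion to $f^{-n}(\xi)$. But the grading by word length does not survive the inducing: a single symbol of $\cS^*$ is a block $i^k j$ of arbitrary length in the original alphabet, so length-$n$ words of $\cS^*$ correspond to branches of $f^{-m}$ for wildly varying $m\ge n$, and conversely $f^{-n}(\xi)$ is not the image of $E_{*\rho}^m$ for any $m$. Consequently the normalized measures $\mu_n$ of the statement (supported on $f^{-n}(\xi)$) are simply not the measures to which Theorem~\ref{t1ms5.2_I} applied to $\widetilde{\cS}$ speaks, and no bounded Radon--Nikodym comparison can repair a mismatch of index sets. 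You have the correspondence backwards: it is the \emph{parabolic} finite-alphabet GDMS associated to $f$ whose length-$n$ words are in exact bijection with the branches of $f^{-n}$, again with $\log|(f^n)'(z_\om)|=\lambda_\rho(\om)$ exactly. The correct reduction is therefore to invoke the parabolic clause of Theorem~\ref{t1ms5.2_I} (Theorem~\ref{t1ms15.1} in the body) for that parabolic system directly. That clause cannot be obtained by the spectral argument of the attracting case precisely because $\cL_\d^n\1$ no longer converges in the relevant norm; the paper proves it by a different mechanism — Birkhoff's ergodic theorem combined with Lusin and Egorov to control the Birkhoff averages of $\lambda$ on a large-measure set, together with the two-sided bounds $Q_\rho^{-1}\le\cL_\d^n\1(\rho)\le Q_\rho$ from Hu's Theorem~E to control the complementary set. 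Your proposal defers exactly this point to ``technology developed earlier,'' but the earlier parabolic sections develop counting and spectral results for $\eta_\rho$, not the uniform control of $\cL_\d^n\1$ that this theorem needs; so the missing step is the actual content of the parabolic case.
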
 

The final result is a Central Limit Theorem which describes the distribution of preimages of a reference point.

\begin{thm}\label{t54_2017_04_04_I} 
If $f:\oc\to\oc$ is either an expanding rational function of the Riemann sphere $\widehat{\mathbb{C}}$ or a parabolic rational function of $\widehat{\mathbb{C}}$ with $\d>\frac{2p}{p+1}$, then the sequence of random variables $(\Delta_n)_{n=1}^\infty$ converges in distribution to the normal (Gaussian) distribution 
$\mathcal N_0(\sigma)$ with mean value zero and the variance $\sigma^2>0$.  
Equivalently, the sequence 
$(\mu_n \circ \Delta_n^{-1})_{n=1}^\infty$
converges weakly to the normal distribution $\mathcal N_0(\sigma^2)$.  This means that for every Borel set $F \subset \mathbb R$ with 
$\hbox{\rm Leb}(\partial F) = 0$, we have 
$$
\lim_{n \to +\infty}\!\!\frac{\sum_{z\in f^{-n}(\xi)}|(f^n)'(z)|^{-\d}\1_F\lt(\frac{\log|(f^n)'(z)|-\chi_\delta n}{\sqrt{n}}\rt)}
{\sum_{z\in f^{-n}(\xi)}|(f^n)'(z)|^{-\d}}
=\lim_{n \to +\infty} \mu_n(\Delta_n^{-1}(F))
= \frac{1}{\sqrt{2\pi} \sigma}
\int_F e^{- t^2/2\sigma^2} dt.
$$
\end{thm}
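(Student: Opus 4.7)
The plan is to transfer the general Central Limit Theorem for GDMSs (Theorem~\ref{t1m58_I}) to the dynamical setting of rational functions via the symbolic coding that has already been set up for expanding and parabolic rational functions earlier in the paper. Concretely, to a rational function $f$ (expanding, or parabolic with $\d>2p/(p+1)$) one associates a conformal GDMS $\cS$ whose branches are local inverse branches of $f$ defined on a suitable Markov partition of a neighbourhood of $J(f)$. This GDMS is strongly regular, finitely irreducible, and $D$-generic in the expanding case, and finite alphabet irreducible parabolic in the parabolic case, so the hypotheses of Theorem~\ref{t1m58_I} are met.

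First I would fix $\xi\in J(f)$ and a sequence $\rho\in E_A^\infty$ with $\pi_{\cS}(\rho)=\xi$ (or, in the parabolic case, with $\pi_{\cS}(\rho)$ in the same fiber; the finitely many exceptional points can be handled by a density argument). The coding then yields, for each large $n$, a natural bijection (up to multiplicity bounded by $\deg f$) between the preimage set $f^{-n}(\xi)$ and the set $E_\rho^n$ of admissible words of length $n$ terminating compatibly with $\rho$; under this bijection the $f$-preimage $z\in f^{-n}(\xi)$ corresponds to $\omega\in E_\rho^n$ with
\[
|(f^n)'(z)|^{-1}=\big|\phi_\omega'(\pi_{\cS}(\rho))\big|\cdot (1+o(1)),
\]
with multiplicative error uniformly bounded by the Koebe/bounded distortion constant of $\cS$ (in the parabolic case the inducing procedure used to construct $\cS$ absorbs the orbit segments passing near the parabolic points, so bounded distortion still holds). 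Consequently $\log|(f^n)'(z)|=\lambda_\rho(\omega)+O(1)$ and the weighting $|(f^n)'(z)|^{-\d}$ translates, up to a uniformly bounded factor, into $e^{-\d\lambda_\rho(\omega)}$. Therefore the probability measure
\[
\mu_n(H)=\frac{\sum_{z\in f^{-n}(\xi)\cap H}|(f^n)'(z)|^{-\d}}{\sum_{z\in f^{-n}(\xi)}|(f^n)'(z)|^{-\d}}
\]
transports to the symbolic measure with the same name defined in the excerpt, and the random variable $\Delta_n(z)=(\log|(f^n)'(z)|-\chi_\d n)/\sqrt n$ transports to the symbolic $\Delta_n(\omega)$ up to an additive term $O(1/\sqrt n)$, which is negligible in the weak limit.

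Given this reduction, Theorem~\ref{t1m58_I} yields the weak convergence of $\mu_n\circ\Delta_n^{-1}$ to $\mathcal N_0(\sigma^2)$ with $\sigma^2=\P''(\d)$, and the identity claimed in the theorem is precisely the pullback of that convergence through the coding. For Borel $F\subset\R$ with $\Leb(\partial F)=0$, the $O(1)$ distortion and the $O(1/\sqrt n)$ shift only affect those $\omega$ whose $\Delta_n$ value lies in an arbitrarily small neighbourhood of $\partial F$; by the portmanteau theorem and the absolute continuity of $\mathcal N_0(\sigma^2)$ these boundary contributions vanish in the limit, giving the stated formula.

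The main obstacle is the parabolic case: there the coding $\cS$ is obtained by an inducing scheme so that the identification between $f^{-n}(\xi)$ and words of length $n$ in $\cS$ is only ``asymptotic''—a single preimage may correspond to an induced word whose length $k$ differs from $n$, with the discrepancy governed by the number of returns to parabolic neighbourhoods. The condition $\d>2p/(p+1)$ guarantees integrability of the inducing time with respect to $\mu_\d$, so by the Abramov relation the induced Lyapunov exponent and variance are proportional to the original ones and, more importantly, the fluctuations of the inducing time around its mean are of smaller order than $\sqrt n$. Handling this rigorously requires a standard ``first return versus iteration count'' argument (as used repeatedly in earlier sections of this monograph) showing that the CLT for the induced system transfers to a CLT for $f$ itself; this step is the one demanding care, while the rest of the argument is a direct pushforward of Theorem~\ref{t1m58_I} through the coding $\pi_{\cS}$.
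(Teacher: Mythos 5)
Your overall strategy---code $f$ by a conformal GDMS of inverse branches over a Markov partition and push the symbolic counting CLT (Theorem~\ref{t1m58_I}) forward through $\pi_{\cS}$---is exactly the route the paper takes, and your treatment of the expanding case is correct: there the reduction is Observation~\ref{o1ex4} plus bounded distortion, and the boundary effects are disposed of by the portmanteau argument just as you say.

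The problem is your last paragraph. For a parabolic rational function the paper does \emph{not} code $f$ by an induced (uniformly hyperbolic) system; it codes it by a finite-alphabet \emph{parabolic} GDMS in which a word of length $n$ corresponds exactly to a branch of $f^{-n}$, so the identification between $f^{-n}(\xi)$ and length-$n$ words is exact and there is no discrepancy between iteration count and word length. The parabolic half of Theorem~\ref{t1m58_I} (Theorem~\ref{t1ms16} in the body) is stated and proved directly for such parabolic GDMSs and for the measures $\mu_n=\mathcal L_{\d}^n\1_{[\cdot]}(\rho)/\mathcal L_{\d}^n\1(\rho)$ of the \emph{original} system; the inducing (Gou\"ezel's first-return operators $\widehat{\mathcal L}_{\d}^{(n)}$ on $Z_q$ and his operator renewal theory) lives entirely inside the proof of that symbolic theorem, not in the coding of $f$. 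So the obstacle you identify does not arise, and the transfer to $f$ is as immediate as in the expanding case. Conversely, the fix you sketch would not be routine if it were actually needed: the Abramov relation and the ``fluctuations of the inducing time are $o(\sqrt n)$'' argument are tools for \emph{invariant} measures and Birkhoff sums (as in Remark~\ref{r4_2017_02_17}), whereas here $\mu_n$ is a normalized transfer-operator weight on the fiber $f^{-n}(\xi)$, not an invariant measure; relating $n$ iterates of the parabolic transfer operator to iterates of the induced one is precisely the nontrivial content of the operator renewal theory the paper invokes, and your sketch supplies no mechanism for it. Replace the last paragraph by the observation that the finite-alphabet parabolic GDMS codes $f^{-n}$ exactly and that Theorem~\ref{t1m58_I} already applies to it, and the proof is complete.
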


\part{{\Large  Attracting Conformal Graph Directed Markov Systems}}

\sp\section{{\large Thermodynamic Formalism of Subshifts of Finite Type with Countable Alphabet; Preliminaries}\label{sec:subshifts}}

\

In this section we introduce the basic symbolic setting in which we will be working.
We will describe the fundamental thermodynamic concepts, ideas and results, particularly those related to the associated Ruelle-Perron-Frobenius operators, which will play a crucial role throughout the monograph.

Let $\mathbb{N}=\{1, 2, \ldots \}$  be  the set of all positive
integers  and let $E$ be  a  countable  set, either finite or infinite,
called in the sequel  an {\it alphabet}. Let
$$
\sg: E^\mathbb{N} \to E^\mathbb{N}
$$
be the  shift map, \index{shift map} i.e. cutting off the first coordinate and shifting one place to the left. It is given by the
formula
$$
\sg\( (\om_n)^\infty_{n=1}  \) =  \( (\om_{n+1})^\infty_{n=1}  \).
$$
We also set
$$
E^*=\bigcup_{n=0}^\infty E^n.
$$
to be the set of finite strings.
\fr For every $\om \in E^*$, we denote by $|\om|$  the unique integer
$n \geq 0$ such that $\om \in E^n$. We  call $|\om|$ the length of
$\om$. We make the convention that $E^0=\{\es\}$. If $\om \in
E^\mathbb{N}$ and $n \geq 1$, we put
$$
\om |_n=\om_1\ldots \om_n\in E^n.
$$
If $\tau \in E^*$ and $\om \in E^* \cup E^\mathbb{N}$, we define the concatenation of $\tau$ and $\omega$ by:  
$$
\tau\om:=
\begin{cases}
\tau_1\dots\tau_{|\tau|}\om_1\om_2\dots\om_{|\om|} \  \  &{\rm if } \ \om \in E^*, \\
\tau_,\dots\tau_{|\tau|}\om_1\om_2\dots \  \  &{\rm if } \ \om \in E^\N,.
\end{cases}
$$
Given $\om,\tau\in E^{\mathbb N}$, we define $\omega\wedge\tau  \in
E^{\mathbb N}\cup E^*$ to be the longest
initial block common to both $\om$ and $\tau$. For each $\alpha >
0$, we define a metric $d_\alpha$ on
$E^{\mathbb N}$ by setting
\begin{equation}\label{d-alpha}
d_\alpha(\om,\tau) ={\rm e}^{-\alpha|\om\wedge\tau|}.
\end{equation}
All these metrics induce the same topology, known to be the product (Tichonov) topology. A real or complex valued function defined on a subset of $E^\N$ is uniformly
continuous with respect to one of these metrics if and only if it is
uniformly continuous with respect to all of them. Also, this function is
H\"older with respect to one of these metrics if and only if it is
H\"older with respect to all of them although, of course, the H\"older exponent depends
on the metric. If no metric is specifically mentioned, we take it to
be $d_1$.

Now consider an arbitrary matrix $A:E \times E \to \{0,1\}$. Such a matrix will be called the incidence matrix in the sequel. Set
$$
E^\infty_A
:=\{\om \in E^\mathbb{N}:  \,\, A_{\om_i\om_{i+1}}=1  \,\, \mbox{for
  all}\,\,   i \in \N
\}.
$$
Elements \index{A-admissible matrices@$A$-admissible matrices} of $E^\infty_A$ are called {\it $A$-admissible}. We also set
$$
E^n_A
:=\{\om \in E^\mathbb{N}:  \,\, A_{\om_i\om_{i+1}}=1  \,\, \mbox{for
  all}\,\,  1\leq i \leq
n-1\}, \ n \in \N,
$$
and
$$
E^*_A:=\bigcup_{n=0}^\infty E^n_A.
$$
The elements of these sets are also called $A$-admissible. For
every  $\om \in E^*_A$, we put
$$
[\om]:=\{\tau \in E^\infty_A:\,\, \tau_{|_{|\om|}}=\om \}.
$$
The set $[\om]$ is called the cylinder generated by the word $\om$. The collection of all such cylinders forms a base for the product topology relative to $E^\infty_A$.
The following fact is obvious.

\bprop\label{p1j83}
The set $E^\infty_A$ is  a closed subset of
$E^\mathbb{N}$, invariant under the shift map $\sg: E^\mathbb{N}\to E^\mathbb{N}$, the latter meaning that
$$
\sg(E^\infty_A)\sbt E^\infty_A.
$$
\eprop

The matrix $A$ is said to be {\it finitely irreducible} if there
exists a finite set $\La \sbt E_A^*$ such that for all $i,j\in E$
there exists $\om\in \La$ for which $i\om j\in E_A^*$. If all elements of some such $\La$ are of the same length, then $A$ is called {\it finitely primitive} (or aperiodic).

\sp The topological pressure of a continuous function $f:E_A^\infty\to\R$ with respect to the shift map $\sg:E_A^\infty\to E_A^\infty$ is defined to be
\beq\lab{2.1.1}
\P(f)
:=\lim_{n\to\infty}\frac{1}{n}\log \sum_{\om \in E_A^n}\exp \lt(\sup_{\tau\in [\om]}\sum_{j=0}^{n-1}f(\sg^j(\tau))\rt).
\eeq
The existence of this limit, following from the observation that the ``$\log$'' above forms a subadditive sequence, was established in \cite{MU_Israel}, comp. \cite{MU_GDMS}. Following the common usage we abbreviate
$$
S_nf:=\sum_{j=0}^{n-1}f\circ\sg^j
$$
and call $S_nf(\tau)$ the $n$th Birkhoff's sum of $f$ evaluated at a word $\tau\in E_A^\infty$.

\sp Observe that a function $f:E_A^\infty\to\R$ is (locally)  {\it H\"older continuous
with an exponent $\a>0$} if and only if
$$
V_\a(f):=\sup_{n\ge 1}\{V_{\a,n}(f)\}<+\infty,
$$
where 
$$
V_{\a,n}(f)=\sup\{|f(\om)-f(\tau)|e^{\a(n-1)}:\om,\tau\in E_A^\infty
\text{ and } |\om\wedge \tau|\ge n\}.
$$
Observe further that $\H_\a(A)$, the vector space of all bounded H\"older continuous functions $f:E_A^\infty\to\R\, ({\rm or \ }\C)$ with an exponent $\a>0$ becomes a Banach space with the norm $||\cdot||_\a$ defined as follows:
$$
||f||_\a:=||f||_\infty+V_\a(f).
$$
The following theorem has been proved in \cite{MU_Israel}, comp. \cite{MU_GDMS}, for the class of acceptable functions defined there. Since H\"older continuous ones are among them, we have the following.

\bthm[Variational Principle]\lab{t2.1.6}
If the incidence matrix $A:E\times E\to\{0,1\}$ is 
finitely irreducible and if $f:E_A^\infty\to\R$ is H\"older continuous, then
$$
\P(f)=\sup\Big\{\h_{\mu}(\sg)+\int f\,d\mu\Big\},
$$
where the supremum is taken over all $\sg$-invariant (ergodic) Borel
probability measures $\mu$ such that $\int f\,d\mu >-\infty$.
\ethm

We call a $\sg$-invariant probability measure $\mu$ on $E_A^\infty$ an {\it equilibrium
state of a H\"older continuous function $f:E_A^\infty\to\R$} if $\int -f\,d\mu<+\infty$ and 
\begin{equation}\lab{2.2.9}
\h_{\mu}(\sg)+\int \!\! f\, d\mu=\P(f). 
\end{equation}
If $f:E_A^\infty\to\R$ is a H\"older continuous function, then following \cite{MU_Israel}, and \cite{MU_GDMS} a Borel probability
measure $\mu$ on $E_A^\infty$ is called a {\it Gibbs state} for $f$ (comp. also \cite{Bo1}, \cite{HMU}, \cite{PU}, \cite{Ru1}, \cite{Wa} and \cite{U1}) if there exist constants
$Q\ge 1$ and $\P_\mu\in\R$ such that for every $\om\in E_A^*$ and every
$\tau\in [\om]$
\begin{equation}\lab{2.2.1}
Q^{-1}\le {\mu([\om])\over \exp\(S_{|\om|}f(\tau)-\P_{\mu}|\om|\)}
\le Q. 
\end{equation}
If additionally $\mu$ is shift-invariant, it is then called an
{\it invariant Gibbs state}. It is readily seen from this definition that if a H\"older continuous function $f:E_A^\infty\to\R$ admits a Gibbs state $\mu$, then 
$$
\P_\mu=\P(f).
$$
From now on  throughout this section $f:E_A^\infty\to\R$ is assumed to be a
H\"older continuous function with an exponent $\a>0$, and it is also assumed to satisfy the following requirement 
\begin{equation}\lab{2.3.1}
\sum_{e\in E}\exp(\sup(f|_{[e]}))<+\infty. 
\end{equation}
Functions $f$ satisfying this condition are called (see \cite{MU_Israel}, and \cite{MU_GDMS}) in the sequel
{\it summable}. We note that if $f$ has a Gibbs state, then $f$ is summable.
This requirement of  summability, allows us to define the {\it Perron-Frobenius operator} 
$$
\pf_f:C_b(E_A^\infty)\to C_b(E_A^\infty),
$$
acting on the space of bounded 
continuous functions $C_b(E_A^\infty)$ endowed with $\|\cdot\|_\infty$, the 
supremum norm, as follows:
$$
\pf_f(g)(\om):=\sum_{e\in E:A_{e\om_1}=1}\exp(f(e\om)g(e\om).
$$
Then $\|\pf_f\|_\infty\le \sum_{e\in E}\exp(\sup(f|_{[e]}))<+\infty$ and for every
$n\ge 1$
$$
\pf_f^n(g)(\om)=\sum_{\tau\in E_A^n:A_{\tau_n\om_1}=1}\exp\(S_nf(\tau\om)\)
g(\tau\om).
$$
The conjugate operator $\pf_f^*$ acting on the space $C_b^*(E_A^\infty)$ has the following form:
$$
\pf_f^*(\mu)(g):=\mu(\pf_f(g)) = \int\pf_f(g)\,d\mu.
$$
Observe that the operator $\pf_f$ preserves the space $\H_\a(A)$, of all H\"older continuous functions with an exponent $\a>0$. More precisely
$$
\pf_f(\H_\a(A))\sbt \H_\a(A).
$$

We now provide a brief account of those elements of the spectral theory that we will need and use in the sequel. Let $B$ be a Banach space and let $L:B\to B$ be a bounded linear operator. A point $\lam\in\C$ is said to belong to the spectral set (spectrum) of the operator $L$ if the operator $\lam I_B-L:B\to B$ is not invertible, where $I_B:B\to B$ is the identity operator on $B$. The spectral radius $r(L)$ of $L$ is defined to be the supremum of moduli of all elements in the spectral set of $L$. It is known that $r(L)$ is finite and
$$
r(L)=\lim_{n\to\infty}\|L^n\|^{1/n}.
$$
A point $\lam$ of the spectrum of $L$ is said to belong to the essential spectral set (essential spectrum) of the operator $L$ if $\lam$ is not an isolated eigenvalue of $L$ of finite multiplicity. The essential spectral radius $r_\ess(L)$ of $L$ is defined to be the supremum of moduli of all elements in the essential spectral set of $L$. It is known (see \cite{Nussbaum}) that 
$$
r_\ess(L)=\varlimsup_{n\to\infty}\inf\big\{\|L^n-K\|^{1/n}\big\},
$$
where for every $n\ge 1$ the infimum is taken over all compact operators $K:B\to B$. 
The operator $L:B\to B$ is called quasi-compact if either $r(L)=0$ or 
$$
r_\ess(L)<r(L).
$$
The proof of the following theorem can be found in \cite{MU_GDMS}. For the items (a)--(f) see also Corollary~4.3.8 in \cite{CTU}.

\bthm\lab{c2.7.5} 
Suppose that $f:E_A^\infty\to\R$ is a H\"older continuous summable function
and the incidence matrix $A$ is finitely irreducible. Then

\sp\begin{itemize}
\item[(a)] There exists a unique Borel probability eigenmeasure $m_f$ of the conjugate 
Perron-Frobenius operator $\pf_f^*$ and the corresponding eigenvalue is equal
to $e^{\P(f)}$.

\sp\item[(b)] The eigenmeasure $m_f$ is a Gibbs state for $f$.

\sp\item[(c)] The function $f:E_A^\infty\to\R$ has a unique $\sg$-invariant Gibbs
state $\mu_f$.

\sp\item[(d)] The measure $\mu_f$ is ergodic, equivalent to $m_f$ and if $\psi_f=
d\mu_f/dm_f$ is the Radon--Nikodym derivative of $\mu_f$ with respect to $m_f$, then
$\log\psi_f$ is uniformly bounded.

\sp\item[(e)] If $\int-f\,d\mu_f<+\infty$, then the $\sg$-invariant Gibbs
state $\mu_f$ is the unique equilibrium state for the potential $f$.

\sp\item[(f)] In case the incidence matrix $A$ is finitely primitive, the Gibbs
state $\mu_f$ is completely ergodic. 

\sp\item[(g)] The spectral radius of the operator $\pf_f$ considered as acting either on $C_b(E_A^\infty)$ or $\H_\a(A)$ is in both cases equal to $e^{\P(f)}$.

\sp\item[(h)] In either case of {\rm(g)} the number $e^{\P(f)}$ is a simple (isolated in the case of $\H_\a(A)$) eigenvalue of $\pf_f$ and the Radon--Nikodym derivative $\psi_f\in \H_\a(A)$ generates its eigenspace.

\sp\item[(i)] The remainder of the spectrum of the operator $\pf_f:\H_\a(A)\to\H_\a(A)$ is contained in a union of finitely many eigenvalues of finite multiplicity (different from $e^{\P(f)}$) of modulus $e^{\P(f)}$ and a closed disk centered at $0$ with radius strictly smaller than $e^{\P(f)}$. \nl
In particular, the operator $\pf_f:\H_\a(A)\to\H_\a(A)$ is quasi-compact.

\sp\fr In the case where the incidence matrix $A$ is finitely primitive a stronger statement holds: namely, apart from $e^{\P(f)}$, the spectrum of $\pf_f:\H_\a(A)\to\H_\a(A)$ is contained in a closed disk centered at $0$ with radius strictly smaller than $e^{\P(f)}$.\nl
In particular, the operator $\pf_f:\H_\a(A)\to\H_\a(A)$ is quasi-compact.
\end{itemize}
\ethm

\section{Attracting Conformal Countable Alphabet Graph Directed Markov Systems (GDMSs) \\ and \\ Countable Alphabet Attracting Iterated Function Systems (IFSs);  \\ Preliminaries}\label{Attracting_GDMS_Prel}

In this article we consider a slightly more general setting than just the usual conformal iterated function systems, namely the ones better 
suited to modeling  the examples in which we are interested.
In later sections we will prove the results in this context and explain  how they can be used to derive  
 different geometric and dynamical  results, such as those already mention in the introduction.

Let us define a graph directed Markov system (abbr. GDMS) relative to a directed multigraph $(V,E,i,t)$ and an incidence matrix $A:E\times E\to\{0,1\}$. Such systems are defined and studied at length in \cite{MU_London} and \cite{MU_GDMS}. A \emph{directed multigraph} consists of a finite set $V$ of vertices, a countable (either finite or infinite) set $E$ of directed edges, two functions 
$$
i,t:E\to V,
$$
and an \emph{incidence matrix} $A:E\times E\to \{0,1\}$ on $(V,E,i,t)$ such that 
$$
A_{ab} = 1 \  \  \ {\rm implies } \  \  \  t(b) = i(a).
$$
Now suppose that in addition, we have a collection of nonempty compact metric spaces $\{X_v\}_{v\in V}$ and a number $\ka \in (0,1)$, such that for every $e\in E$, we have a one-to-one contraction $\phi_e:X_{t(e)}\to X_{i(e)}$ with Lipschitz constant (bounded above by) $\ka$. Then the collection
\[
\cS = \{\phi_e:X_{t(e)}\to X_{i(e)}\}_{e\in E}
\]
is called an \emph{attracting graph directed Markov system} (or GDMS). We will frequently refer to it just as a graph directed Markov system or GDMS. We will however always  keep the  adjective "parabolic" when, in later sections, we will also speak about parabolic graph directed Markov systems. We extend the functions $i,t:E\to V$ in a natural way to $E_A^*$ as follows:
$$
t(\om):=t\(\om_{|\om|}\) \  \  \  {\rm and }  \  \  \  i(\om):=i(\om_1).
$$
For every word $\om\in E_A^*$, say $\om\in E_A^n$, $n\ge 0$, let us denote 
$$
\phi_{\omega}:= \phi_{\omega_1} \circ \cdots \circ  \phi_{\omega_n}:X_{t(\om)}\to X_{i(\om)}.
$$
We now describe the limit set, also frequently called the  attractor, of the system $\cS$. For any $\omega \in E^\infty_A$, the sets $\{\phi_{\omega|_n}\left(X_{t(\omega_n)}\right)\}_{n \geq 1}$ form a descending sequence of nonempty compact sets and therefore $\bigcap_{n \geq 1}\phi_{\omega|_n}\left(X_{t(\omega_n)}\right)\ne\emptyset$. Since for every $n \geq 1$, 
$$
\diam\left(\phi_{\omega|_n}\left(X_{t(\omega_n)}\right)\right)
\le \ka^n\diam\left(X_{t(\omega_n)}\right)
\le \ka^n\max\{\diam(X_v):v\in V\}, 
$$
we conclude that the intersection 
\[
\bigcap_{n \in \N}\phi_{\omega|_n}\left(X_{t(\omega_n)}\right)
\]
is a singleton and we denote its only element by $\pi_{\cS}(\omega)$ or simpler, by $\pi(\omega)$. In this way we have defined a map
\[
\pi_{\cS}=\pi:E^\infty_A\to X:=\coprod_{v\in V}X_v,
\]
where $\coprod_{v\in V} X_v$ is the disjoint union of the compact topological spaces $X_v$, $v\in V$. The map $\pi$ is called the \emph{coding map}, and the set
\[
J = J_\cS = \pi(E^\infty_A)
\]
is called the \emph{limit set} of the GDMS $\cS$. The sets
\[
J_v = \pi(\{\omega \in E_A^\infty : i(\omega_1) = v\}), \;\; v\in V,
\]
are called the \emph{local limit sets} of $\cS$.

\sp\fr We call the GDMS $\cS$ \emph{finite} if the alphabet $E$ is finite. Furthermore, we call $\cS$ \emph{maximal} if for all $a,b\in E$, we have $A_{ab}=1$ if and only if $t(b)=i(a)$.  In \cite{MU_GDMS}, a maximal GDMS was called a \emph{graph directed system} (abbr. GDS).
Finally, we call a maximal GDMS $\cS$ an \emph{iterated function system} (or IFS) if $V$, the set of vertices of $\cS$, is a singleton. Equivalently, a GDMS is an IFS if and only if the set of vertices of $\cS$ is a singleton and all entries of the incidence matrix $A$ are equal to $1$.

\bdfn\label{definitionsymbolirred}
We call the GDMS $\cS$ and its incidence matrix $A$ \emph{finitely irreducible} if there exists a finite set $\Omega\subset E_A^*$ such that for all $a, b\in E$ there exists a word $\omega\in\Omega$ such that the concatenation $a\omega b$ is in $E_A^*$. $\cS$ and $A$ are called \emph{finitely primitive} if the set $\Omega$ may be chosen to consist of words all having the same length. If such a set $\Omega$ exists but is not necessarily finite, then $\cS$ and $A$ are called irreducible and primitive, respectively. Note that all IFSs are symbolically irreducible.
\edfn

\brem\label{r1_2017_04_01}
For every integer $q\ge 1$ define $\cS^q$, the $q$th iterate of the system $\cS$,  to be 
$$
\{\phi_\om:X_{t(\om)}\to X_{i(\om)}:\om\in E_A^q\}
$$
and its alphabet is $E_A^q$. All the theorems proved in this monograph hold under the formally weaker hypothesis that all the elements of some iterate $\cS^q$, $q\ge 1$, of the system $\cS$, are uniform contractions. This in particular pertains to the Gauss system of Example~\ref{r1ex5} for which $q=2$ works. 
\erem

\sp With the aim of moving on to geometric applications,  and following \cite{MU_GDMS}, we call a GDMS \emph{conformal} if for some $d\in\N$, the following conditions are satisfied.

\sp
\begin{itemize}
\item[(a)] For every vertex $v\in V$, $X_v$ is a compact connected subset of $\R^d$, and $X_v=\overline{\Int(X_v)}$.
\item[(b)] (Open Set Condition) For all $a,b\in E$ such that $a\ne b$,
\[
\phi_a(\Int(X_{t(a)}))\cap \phi_b(\Int(X_{t(b)}))=\emptyset.
\]
\item[(c)] (Conformality)  There exists a family of open connected sets $W_v \subset X_v$, $v\in V$, such that for every $e\in E$, the map $\phi_e$ extends to a $C^1$ conformal diffeomorphism from $W_{t(e)}$ into $W_{i(e)}$ with Lipschitz constant $\leq \ka$.

\item[(d)] (Bounded Distortion Property (BDP)) There are two constants $L\ge 1$ and $\alpha>0$ such that for every $e\in E$ and every pair of points $x,y\in X_{t(e)}$,
\[
\bigg|\frac{|\phi_e'(y)|}{|\phi_e'(x)|}-1 \bigg| \le L\|y-x\|^\alpha,
\]
where $|\phi_\omega'(x)|$ denotes the scaling factor of the derivative $\phi_\omega'(x):\R^d\to\R^d$ which is a similarity map.
\end{itemize}

\brem\label{p1.033101}
When $d=1$ the conformality is automatic.  
If $d\ge 2$ and a family $\cS = \{\phi_e\}_{e\in E}$ satisfies the conditions (a) and (c), then it also satisfies condition (d) with $\alpha=1$. When $d = 2$ this is due to the well-known Koebe's Distortion Theorem (see for example, \cite[Theorem 7.16]{Conway_II}, \cite[Theorem 7.9]{Conway_II}, or \cite[Theorem 7.4.6]{Hille_II}). When $d \geq 3$ it is due to \cite{MU_GDMS} depending heavily on Liouville's representation theorem for conformal mappings; see \cite{Iwaniec_Martin} for a detailed development of this theorem leading up to the strongest current version, and also including exhaustive references to the historical background. 
\erem

For every real number $s\ge 0$, let (see \cite{MU_London} and \cite{MU_GDMS})
$$ 
\P(s) := \lim_{n \to +\infty} \frac{1}{n} \log 
\left(\sum_{|\om| = n}\|\phi_\om'\|_\infty^s\right),
$$
where $\|\phi'\|_\infty$ denotes the supremum norm of the derivative of a conformal map $\phi$ over its domain; in our context these domains will be always the sets $X_v$, $v\in V$. The above limit always exists because the corresponding sequence is clearly subadditive. The number $\P(s)$ is called the topological pressure of the parameter $s$. Because of the Bounded Distortion Property (i.e., Property (d)), we have also the following characterization of topological pressure:
$$ 
\P(s) := \lim_{n \to +\infty} \frac{1}{n} \log 
\left(\sum_{|\om| = n}  |\phi_\om'(z_\om)|^s\right),
$$
where $\{z_\om: \om\in E_A^*\}$ is an entirely arbitrary set of points such that $z_\om\in X_{t(\om)}$ for every $\om\in E_A^*$. Let $\zeta: E^\infty_A \to \mathbb{R}$ be defined by the formula
\beq\label{1MU_2014_09_10}
\zeta(\om):= \log|\vp'_{\om_1}(\pi(\sg(\om))|.
\eeq
The following proposition is easy to prove; see \cite[Proposition 3.1.4]{MU_GDMS} for complete details. 

\bprop\label{l1j85}
For every real $s\geq 0$ the function $s\zeta:E^\infty_A \to
\mathbb{R}$ is H\"older continuous and 
$$
\P(s\zeta)=\P(s).
$$
\eprop

\bdfn\label{fins}
We say that a nonnegative real number $s$ belongs to $\Ga_\cS$ if
\beq\label{finite_parameters}
\sum_{e\in E}\|\vp'_e\|_\infty^s<+\infty.
\eeq
\edfn

\fr Let us record the following immediate observation.

\bobs\label{o1_2016_01_20}
A nonnegative real number $s$ belongs to $\Ga_\cS$ if and only if the H\"older continuous potential $s\zeta:E_A^\infty\to\R$ is summable.
\eobs

\fr We recall from \cite{MU_London} and \cite{MU_GDMS} the following definitions:
$$
\g_\cS:=\inf\Ga_\cS=\inf\lt\{s\ge 0: \sum_{e\in E}\|\phi_e'\|_\infty^s<+\infty\rt\}.
$$
The proofs of the following two statements can be found in \cite{MU_GDMS}.

\bprop\label{p1_2016_01_12}
If $\cS$ is an irreducible conformal GDMS, then for every $s\ge 0$ we have that 
$$
\Ga_\cS=\{s\ge 0: \P(s)<+\infty\}
$$
In particular,
$$
\g_\cS:=\inf\lt\{s\ge 0: \P(s)<+\infty\rt\}.
$$
\eprop

\bthm\label{t3_2016_01_12}
If $\cS$ is a finitely irreducible conformal GDMS, then the function $\Ga_\cS\ni s\mapsto \P(s)\in\R$ is 

\sp\begin{enumerate}
\item strictly decreasing, 

\sp\item real-analytic, 

\sp\item convex, and 

\sp\item $\lim_{s\to+\infty}\P(s)=-\infty$.
\end{enumerate}
\ethm

\fr We denote
$$
\pf_s:=\pf_{s\zeta}
$$
acting either on $C_b(E_A^\infty)$ or  on $\H_a(A)$.
Because of Proposition~\ref{l1j85} and Observation~\ref{o1_2016_01_20}, our Theorem~\ref{c2.7.5} applies to all functions $s\zeta:E_A^\infty\to\R$ giving the following. 

\bthm\lab{thm-conformal-invariant}
Suppose that the system $\cS$ is finitely irreducible and $s\in\Ga_\cS$. Then
\begin{itemize}
\item[(a)] There exists a unique Borel probability eigenmeasure $m_s$ of the conjugate 
Perron-Frobenius operator $\pf_s^*$ and the corresponding eigenvalue is equal
to $e^{\P(s)}$.

\sp\item[(b)] The eigenmeasure $m_s$ is a Gibbs state for $t\zeta$.

\sp\item[(c)] The function $s\zeta:E_A^\infty\to\R$ has a unique $\sg$-invariant Gibbs state $\mu_s$. 

\sp\item[(d)] The measure $\mu_s$ is ergodic, equivalent to $m_s$ and if $\psi_s=
d\mu_s/dm_s$ is the Radon--Nikodym derivative of $\mu_s$ with respect to $m_s$, then
$\log\psi_s$ is uniformly bounded.

\sp\item[(e)] If $\chi_{\mu_s}:=-\int\zeta\,d\mu_s<+\infty$, then the $\sg$-invariant Gibbs
state $\mu_s$ is the unique equilibrium state for the potential $s\zeta$.

\sp\item[(f)] In case the the system $\cS$ is finitely primitive, the Gibbs
state $\mu_s$ is completely ergodic.

\sp\item[(g)] The spectral radius of the operator $\pf_s$ considered as acting either on $C_b(E_A^\infty)$ or $\H_\a(A)$ is in both cases equal to $e^{\P(s)}$.

\sp\item[(h)] In either case of {\rm(g)} the number $e^{\P(s)}$ is a simple (isolated in the case of $\H_\a(A)$) eigenvalue of $\pf_s$ and the Radon--Nikodym derivative $\psi_s\in \H_\a(A)$ generates its eigenspace.

\sp\item[(i)] The reminder of the spectrum of the operator $\pf_s:\H_\a(A)\to\H_\a(A)$ is contained in a union of finitely many eigenvalues (different from $e^{\P(s)}$) of modulus $e^{\P(s)}$ and a closed disk centered at $0$ with radius strictly smaller than $e^{\P(s)}$ (if $A$ is finitely primitive, then these eigenvalues of modulus smaller than $e^{\P(s)}$ disappear). In particular, the operator $\pf_s:\H_\a(A)\to\H_\a(A)$ is quasi-compact.
\end{itemize}
\ethm

\fr Given $s\in\Ga_\cS$ it immediately follows from this theorem and the definition of Gibbs states that
\begin{equation}\label{515pre_1}
C_s^{-1} e^{-\P(s)|\om|}\|\phi_\om'\|_\infty^s
\leq  m_s([\om])
\comp \mu_s([\om])
\leq C_se^{-\P(s)|\om|}\|\phi_\om'\|_\infty^s
\end{equation}
for all $\om\in E_A^*$, where $C_s\ge 1$ denotes some constant. We put
\beq\label{1_2016_12_14}
\^m_s:=m_s\circ\pi_\cS^{-1} 
\  \  \  {\rm and} \  \  \
\^\mu_s:=\mu_s\circ\pi_\cS^{-1}.
\eeq
The measure $\^m_s$ is characterized (see \cite{MU_GDMS}) by the following two properties:
\beq\label{2_2016_12_14}
\^m_s(\phi_e(F))=e^{-\P(s)}\int_F|\phi_e'|\,d\^m_s
\eeq
for every $e\in E$ and every Borel set $F\sbt X_{t(e)}$, and
\beq\label{3_2016_12_14}
\^m_s\(\phi_a(X_{t(a)})\cap \phi_b(X_{t(b)})\)=0
\eeq
whenever $a, b\in E$ and $a\ne b$. By a straightforward induction these extend to 
\beq\label{3_2016_12_14-again}
\^m_s(\phi_\om(F))=e^{-\P(s)|\om|}\int_F|\phi_\om'|\,d\^m_s
\eeq
for every $\om\in E_A^*$ and every Borel set $F\sbt X_{t(\om)}$, and
\beq\label{4_2016_12_14}
\^m_s\(\phi_\a(X_{t(\a)})\cap \phi_\b(X_{t(\b)})\)=0
\eeq
whenever $\a,\b\in E_A^*$ and are incomparable.

The following theorem, providing a geometrical interpretation of the parameter $\d_\cS$, has been proved in  \cite{MU_GDMS} (\cite{MU_London} in the case of IFSs).

\bthm\label{t2_2016_01_12}
If $\cS$ is an finitely irreducible conformal GDMS, then 
$$
\d=\d_\cS:=\HD(J_\cS)=\inf\{s\ge 0: \P(s)\le 0\}\ge \g_\cS.
$$
\ethm

Following \cite{MU_London} and \cite{MU_GDMS} we call the system $\cS$ regular if there exists $s\in (0,+\infty)$ such that 
$$
\P(s)=0.
$$
Then by Theorems~\ref{t2_2016_01_12} and \ref{t3_2016_01_12}, such zero is unique and is equal to $\d_\cS$. So,
\beq\label{1_2016_08_19}
\P(\d_\cS)=0.
\eeq
Formula \eqref{515pre_1} then takes  the following form:
\begin{equation}\label{515pre}
C_{\d_\cS}^{-1}\|\phi_\om'\|_\infty^{\d_\cS}
\leq  m_{\d_\cS}([\om])
\comp \mu_{\d_\cS}([\om])
\leq C_{\d_\cS}\|\phi_\om'\|_\infty^{\d_\cS}
\end{equation}
for all $\om\in E_A^*$. The measure $\^m_{\d_\cS}$ is then referred to as the $\d_\cS$--conformal measure of the system $\cS$.

Also following \cite{MU_London} and \cite{MU_GDMS}, we call the system $\cS$ strongly regular if there exists $s\in [0,+\infty)$ (in fact in $(\g_\cS,+\infty)$) such that 
$$
0<\P(s)<+\infty. 
$$
Because of Theorem~\ref{t3_2016_01_12} each strongly regular conformal GDMS is regular. Furthermore, we record the following two immediate observations.

\bobs\label{o5_2016_01_20}
If $s\in\Int(\Ga_\cS)$, then $\chi_{\mu_s}<+\infty$.
\eobs

\bobs\label{o1_2016_01_20-plus}
A finitely irreducible conformal GDMS $\cS$ is strongly regular if and only if
$$
\g_\cS<\d_\cS.
$$
In particular, if the system $\cS$ is a strongly regular, then $\d_\cS\in\Int(\Ga_\cS)$.
\eobs

\fr These two observations yield the following.

\bcor\label{c1_2016_01_21}
If a finitely irreducible conformal GDMS $\cS$ is strongly regular, then $\chi_{\mu_\d}<+\infty$.
\ecor 

\fr We will also need the following fact, well-known in the case of finite alphabets $E$, and proved for all countable alphabets in \cite{MU_GDMS}.

\bthm\label{t1_2016_01_29}
If $s\in\Int(\Ga_\cS)$, then 
$$
\P'(s)=-\chi_{\mu_s}.
$$
In particular this formula holds if the system $\cS$ is strongly regular and $s=\d_\cS$.
\ethm

\fr We end this section by noting that each finite irreducible system is strongly regular.

\section{Complex Ruelle--Perron--Frobenius Operators; Spectrum and D--Genericity}
\label{CRPFOSDG}
A key ingredient when analyzing the Poincar\'e series $\eta_\xi(s)$ 
and $\eta_p(s)$ mentioned in the introduction is to use complex Ruelle-Perron-Frobenius or Transfer operators. These are closely related to the RPF operators already introduced, except that we now allow the weighting function to take complex values.
More precisely,  we  extend the definition of operators $\pf_s$, $s\in \Ga_\cS$, to the complex half-plane
$$
\Ga_\cS^+:=\{s\in\C:\re s>\g_\cS\},
$$
in a most natural way; namely, for every $s\in\Ga_\cS^+$, we set
\beq\label{5_2016_01_21}
\pf_s(g)(\om)=\sum_{e\in E:A_{e\om_1}=1}|\phi_e'(\pi(\om))|^sg(e\om).
\eeq
Clearly these linear operators $\pf_s$ act on both Banach spaces $C_b(E_A^\infty)$ and $\H_a(A)$, are bounded, and we have the following.

\bobs\label{o1_2016_01_21}
The function 
$$
\Ga_\cS^+\ni s\mapsto \pf_s\in L(\H_a(A))
$$
is holomorphic, where $L(\H_a(A))$ is the Banach space of all bounded linear operators on $\H_a(A)$ endowed with the operator norm. 
\eobs
\bprop\label{p1nh15}
Let $\cS$ be a finitely irreducible conformal GDMS. Then for every $s=\sg+it\in\Ga_\cS^+$ 

\sp
\begin{enumerate}

\item the spectral radius $r(\pf_s)$ of the operator $\pf_s:\H_\a(A)\to \H_a(A)$ is not larger than $e^{\P(\sg)}$ and

\sp\item the essential spectral radius $r_\ess(\pf_s)$ of the operator $\pf_s:\H_\a(A)\to \H_a(A)$ is not larger than $e^{-\a}e^{\P(\sg)}$.
\end{enumerate}
\eprop

\bpf
Assume without loss of generality that $E=\N$. For every $\om\in E_A^*$ choose arbitrarily $\hat\om\in[\om]$. Now for every integer $n\ge 1$ define the linear operator 
$$
E_n:\H_\a(A)\to \H_a(A)
$$
by the formula
\beq\label{1nh16}
E_n(g):=\sum_{\om\in E_A^n}g(\hat\om)\1_{[\om]}.
\eeq
Equivalently
$$
E_n(g)=g(\hat\om), \  \om\in E_A^\infty.
$$
Of course $||E_n(g)||_\a\le||g||_\a$ and $E_n$ is a bounded operator with $||E_n||_\a\le 1$. However, the series \eqref{1nh16} is not uniformly convergent, i.e. it is not convergent in the supremum norm $||\cdot||_\infty$, thus not in the H\"older norm $||\cdot||_\a$ either. For all integers $N\ge 1$ and $n\ge1$ denote
$$
E_A^n(N):=\{\om\in E_A^n:\forall_{j\le n} \, \om_j\le N\}
$$
and
$$
E_A^n(N+):=\{\om\in E_A^n:\exists_{j\le n} \, \om_j> N\}.
$$
Let  us further write 
$$
E_{n,N}g:=\sum_{\om\in E_A^n(N)}g(\hat\om)\1_{[\om]}
$$
and 
$$
E_{n,N}^+g:=\sum_{\om\in E_A^n(N+)}g(\hat\om)\1_{[\om]}.
$$
Of course $E_{n,N}:\H_\a(A)\to \H_a(A)$ is a finite--rank operator, thus compact. Therefore, the composite operator $\pf_sE_{n,N}:\H_\a(A)\to \H_a(A)$ is also compact. We know that
\beq\label{2nh16}
\begin{aligned}
\|\pf_s^n-\pf_s^nE_{n,N}\|_\a
&=\|(\pf_s^n-\pf_s^nE_n)+\pf_s^n(E_n-E_{n,N})\|_\a
=\|\pf_s^n(I-E_n)+\pf_s^nE_{n,N}^+\|_\a \\
&\le \|\pf_s^n(I-E_n)\|_\a+\|\pf_s^nE_{n,N}^+\|_\a.
\end{aligned}
\eeq
We will estimate from above each of the last two terms separately.  We begin first with the first of these two terms. In the same way as for real parameters $s$, which is done in \cite{MU_GDMS}, one proves for all operators $\pf_s:\H_\a(A)\to \H_a(A)$ the following form of the Ionescu--Tulcea--Marinescu inequality:
\beq\label{1_2016_08_17}
\|\pf_s^ng\|_\a\le Ce^{\P(\sg)n}\(\|g\|_\infty+e^{-\a n}\|g\|_\a\)
\eeq
with some constant $C>0$. This establishes item (1) of our theorem. Since a straightforward calculation shows that $\|g-E_ng\|_\a\le 2\|g\|_\a$ and $\|g-E_ng\|_\infty\le \|g|_\a e^{-\a n}$ , we therefore get that
$$
\|\pf_s^n(I-E_n)g\|_\a
\le Ce^{\P(\sg)n}\(\|g\|_\a e^{-\a n}+2e^{-\a n}\|g\|_\a\)
=3Ce^{\P(\sg)n}e^{-\a n}\|g\|_\a.
$$
Thus,
\beq\label{1nh17}
||\pf_s^n(I-E_n)||_\a\le3Ce^{\P(\sg)n}e^{-\a n}.
\eeq
Passing to the estimate of the second term, we have
$$
\pf_s^nE_{n,N}^+g(\om)
=\sum_{{\tau\in E_A^n(N+)\atop\tau\om\in E_A^\infty}}g(\hat\tau)|\phi_\tau'(\pi(\sg(\om)))|^s.
$$
Therefore, 
$$
\begin{aligned}
\|\pf_s^nE_{n,N}^+g\|_\a
&\le\sum_{\tau\in E_A^n(N+)}|g(\hat\tau)
     |\Big\|\big|\phi_\tau'\circ\pi\circ\sg\big|^s\Big\|_\a
     \cr
&\le\|g\|_\infty\sum_{\tau\in E_A^n(N+)}\Big\|\big|   
     \phi_\tau'\circ\pi\circ\sg\big|^s\Big\|_\a\\
&\le\|g\|_\infty\sum_{\tau\in E_A^n(N+)}\Big\|\big|   
     \phi_\tau'\circ\pi\circ\sg\big|^s\Big\|_\a.
\end{aligned}
$$
Hence,
\beq\label{2nh17}
\|\pf_s^nE_{n,N}^+\|_\a
\le\sum_{\tau\in E_A^n(N+)}
    \Big\|\big|\phi_\tau'\circ\pi\circ\sg\big|^s\Big\|_\a.
\eeq
But 
$$
\Big\|\big|\phi_\tau'\circ\pi\circ\sg\big|^s\Big\|_\a
\le C \|\phi_\tau'\|_\infty^\sg.
$$
for all $\tau\in E_A^*$ with some constant $C>0$. Since the matrix $A:E\times E\to \{0,1\}$ is finitely irreducible, there exists a finite set 
$\La_\infty\sbt E_A^\infty$ such that for every $e\in E$ there exists (at least one) $\hat e\in\La_\infty$ such that $e\hat e\in E_A^\infty$. We further set for every $\tau\in E_A^*$,
$$
\hat\tau:=\widehat{\tau_{|\tau|}}.
$$
For every $k\in E=\N$ let
\beq\label{1nh17.2.1}
\xi_k:=\sup\{\|\phi_n'\|_\infty:n\ge k\}\longrightarrow 0  \  \  {\rm as }  \  \
   k\to \infty.
\eeq
Fix an arbitrary $\e>0$ so small that $\sg-\e>\g_{\cS}$. By the Bounded Distortion Property and \eqref{1nh17.2.1}, we then have

\beq\label{1nh17.2}
\begin{aligned}
\sum_{\tau\in E_A^n(N+)}\|\phi_\tau'\|_\infty^\sg
&\le K^\sg\sum_{\tau\in E_A^n(N+)}|\phi_\tau'(\pi(\hat\tau))|^\sg \cr
& \le K^\sg\sum_{\om\in\La_\infty}\sum_{\tau\in E_A^n(N+)\atop 
       \tau\om\in  E_A^\infty}|\phi_\tau'(\pi(\om))|^\sg \\
&=   K^\sg\sum_{\om\in\La_\infty}\sum_{\tau\in E_A^n(N+)\atop 
       \tau\om\in E_A^\infty}|\phi_\tau'(\pi(\om))|^\e
       |\phi_\tau'(\pi(\om))|^{\sg-\e} \\
&\le K^\sg\xi_N^\e\sum_{\om\in\La_\infty}\sum_{\tau\in E_A^n(N+)\atop 
       \tau\om\in E_A^\infty}|\phi_\tau'(\pi(\om))|^{\sg-\e} \\
&\le K^\sg\#\La_\infty\xi_N^\e\pf_{\sg-\e}^n\1(\om)
 \le K^\sg\#\La_\infty\xi_N^\e\|\pf_{\sg-\e}^n\|_\infty \\
&\le K^\sg\#\La_\infty\xi_N^\e\|\pf_{\sg-\e}^n\|_\a \\
&\le CK^\sg\#\La_\infty\xi_N^\e e^{\P(\sg-\e)n},
\end{aligned}
\eeq
where the last inequality was written due to \eqref{1_2016_08_17} applied with $s=\sg-1$ and $g=\1$. Inserting this to \eqref{1nh17.2.1} and \eqref{1nh17.2}, we thus get that
$$
\|\pf_s^nE_{n,N}^+\|_\a\le CK^\sg\#\La_\infty\xi_N^\e e^{\P(\sg-\e)n}.
$$
Now, take an integer $N_n\ge 1$ so large that $\xi_N^\e\le (K^\sg\#\La_\infty)^{-1}e^{-\a n}$. Inserting this to the above display, we get that 
$$
\|\pf_s^nE_{n,N_n}^+\|_\a\le Ce^{\P(\sg-\e)n}e^{-\a n}.
$$
Along with (\ref{1nh17}), (\ref{2nh16}), and the fact that $\P(\sg)\le \P(\sg-\e)$, this gives that
$$
\|\pf_s^n-\pf_s^nE_{n,N_n}\|_\a\le 4Ce^{\P(\sg-\e)n}e^{-\a n}.
$$
Therefore,
$$
r_\ess(\pf_s)
\le \varlimsup_{n\to\infty}\|\pf_s^n-\pf_s^n\circ E_{n,N_n}\|_\a^{1/n}
\le e^{\P(\sg-\e)}e^{-\a}.
$$
Letting $\e\to 0$ and using continuity of the pressure function $\Ga_\cS^+\ni t\mapsto\P(t)\in\R$, we thus get that 
$$
r_\ess(\pf_s) \le e^{-\a}e^{\P(\sg)}.
$$
The proof of item (2) is thus complete, and we are done.
\epf

\sp We recall that if $\lam_0$ is an isolated point of the spectrum of a bounded  linear operator $L$ acting on a Banach space $B$, then the Riesz projector $P_{\lam_0}:B\to B$ of $\lam_0$ (with respect to $L$) is defined as
$$
\frac1{2\pi i}\int_\g(\lam I-L)^{-1}d\lam
$$
where, $\g$ is any simple closed rectifiable Jordan curve enclosing $\lam_0$ and enclosing no other point of the spectrum of $L$. We recall that $\lam_0$ is called simple if the range $P_{\lam_0}(B)$ of the projector $P_{\lam_0}$ is $1$-dimensional. Then $\lam_0$ is necessarily an eigenvalue of $L$. We recall the following well-known fact.

\bthm\label{t1_2016_03_10} 
Let $\lam_0$ be an eigenvalue of a bounded linear operator $L$ acting on a Banach space $B$. Assume that the Riesz projector $P_{\lam_0}$ of $\lam_0$ (and $L$) is of finite rank. If there exists a constant $C\in [0,+\infty)$ such that 
$$
\|L^n\|\le C|\lam_0|^n
$$
for all integers $n\ge 0$,
then (of course) $r(L)=|\lam_0|$, and moreover
$$
P_{\lam_0}(B)=\Ker(\lam_0I-L).
$$
\ethm

\fr What we will really need in conjunction with Proposition~\ref{p1nh15} is the following.

\blem\label{l1nh18.1}
If $\cS$ is a finitely irreducible conformal GDMS and if $s=\sg+it\in\Ga_\cS^+$, then every eigenvalue of $\pf_s:\H_\a(A)\to \H_a(A)$ with modulus equal to  $e^{\P(\sg)}$ is simple.
\elem

\bpf
Since $\|\pf_s^n\|_\a\le 3||\pf_\sg^n||_\a\le Ce^{\P(\sg)n}$ for every $n\ge 0$ and some constant $C>0$ independent of $n$, and since the Riesz projector of every eigenvalue of modulus $e^{\P(\sg)}$ of $\pf_s$ is of finite rank (as by Proposition~\ref{p1nh15} such an eigenvalue does not belong to the essential spectrum of $\pf_s$), we conclude from Theorem~\ref{t1_2016_03_10} that in order to prove our lemma it suffices to show that
$$
\dim\(\Ker(\lam I-\pf_s)\)=1
$$
for any such eigenvalue $\lam$. Consider two operators $\hat\pf_\sg, \hat\pf_s:\H_\a(A)\to \H_a(A)$ given by the formulae
\beq\label{1nh18.1}
\hat\pf_\sg g(\om):=e^{-\P(\sg)}\frac1{\psi_\sg(\om)}\pf_\sg(g\psi_\sg)(\om)
\eeq
and 
\beq\label{2nh18.1}
\hat\pf_s g(\om):=e^{-\P(\sg)}\frac1{\psi_\sg(\om)}\pf_s(g\psi_\sg)(\om)
\eeq
Both these operators are conjugate respectively to the operators $e^{-\P(\sg)}\pf_\sg$ and $e^{-\P(\sg)}\pf_s$, $r(\hat\pf_\sg)=1$, 
\beq\label{3nh18.1}
\hat\pf_\sg\1=\1 \  \  (\text{{\rm so}} \  \  \hat\pf_\sg^n\1=\1 \  \
\text{{\rm for all}} \  \  n\ge 0),
\eeq
and in order to prove our lemma it is enough to show that
$$
\dim\(\Ker(\lam I-\hat\pf_s)\)=1
$$
for every eigenvalue $\lam$ of $\hat\pf_s$ with modulus equal to $1$. We shall prove the following.

\sp{\sl Claim~$1^0$:} If $u\in \H_\a(A)$, then the sequence 
$$
\lt(\frac1n\sum_{j=0}^{n-1}\hat\pf_\sg^ju\rt)_{n=1}^\infty
$$
converges uniformly on compact subsets of $E_A^\infty$ to the constant function equal to $\int_{E_A^\infty}u\, d\mu_\sg$.

\bpf
The same proof as that of Theorem~4.3 in \cite{MU_GDMS} asserts that any subsequence of the sequence $\lt(\frac1n\sum_{j=0}^{n-1} \hat\pf_\sg^ju\rt)_{n=1}^\infty$ has a subsequence converging uniformly on compact subsets of $E_A^\infty$ to a function which is a fixed point of $\hat\pf_\sg$. By \eqref{3nh18.1} and Corollary~7.5 in \cite{MU_GDMS} each such function is a constant. Since the operator $\hat\pf_\sg$ preserves integrals ($\hat\pf_\sg^*\mu_\sg=\mu_\sg$) against Gibbs/equilibrium measure $\mu_\sg$, it follows that all these constants must be equal to $\int_{E_A^\infty}u\, d\mu_\sg$. The proof of Claim~$1^0$ is thus complete.
\epf

Now, fix $\lam\in \Ker(\lam I-\hat\pf_s)$ arbitrary and let $g\ne 0\in\Ker(\lam I-\hat\pf_s)$ be arbitrary.

\sp{\sl Claim~$2^0$:} The function $E_A^\infty\ni\om\mapsto|g(\om)|\in\R$ is constant.

\bpf
For every $\om\in E_A^\infty$ and every integer $n\ge 0$ we have $|g(\om)|=|\hat\pf_s^ng(\om)|\le \hat\pf_\sg^n|g|(\om)$, and therefore
$$
|g(\om)|\le \frac1n\sum_{j=0}^{n-1}\hat\pf_\sg^j|g|(\om).
$$
So, invoking Claim~$1^0$, we get that
$$
|g(\om)|\le \int_{E_A^\infty}|g|\, d\mu_\sg.
$$
Since $g$ is continuous and $\supp(\mu_\sg)=E_A^\infty$, this implies that 
$$
|g(\om)|=\int_{E_A^\infty}|g|\, d\mu_\sg
$$
for all $\om\in E_A^\infty$. The proof of Claim~$2^0$ is thus complete.
\epf

\sp\fr Formulae \eqref{1nh18.1}--\eqref{3nh18.1} give for every $\tau\in E_A^\infty$ that
$$
\hat\pf_\sg^ng(\tau)
=\sum_{{\om\in E_A^n\atop A_{\om_n\tau_1}=1}}\exp\(S_nh(\om\tau)\)g(\om\tau)
$$
and 
$$
\lam^ng(\tau) 
=\hat\pf_s^ng(\tau)
=\sum_{{\om\in E_A^n\atop A_{\om_n\tau_1}=1}}\exp\(S_nh(\om\tau)\)
|\phi_\om'(\pi(\tau))|^{it}g(\om\tau),
$$
where $h:E_A^\infty\to(-\infty,0)$ is some H\"older continuous function resulting from \eqref{3nh18.1} and 
$$
\sum_{{\om\in E_A^n\atop A_{\om_n\tau_1}=1}}\exp\(S_nh(\om\tau)\)=1.
$$
Since $\lam^n=1$, it follows from the last two formulas and Claim~$1^0$ that
$$
|\phi_\om'(\pi(\tau))|^{it}g(\om\tau)=\lam^ng(\tau)
$$
for all $\om\in E_A^n$ with $A_{\om_n\tau_1}=1$. Equivalently:
$$
g(\om\tau)=\lam^n|\phi_\om'(\pi(\tau))|^{-it}g(\tau).
$$
This implies that if $g_1$, $g_2$ are two arbitrary functions in $\Ker(\lam I-\pf_s)$ such that
$$
g_1(\tau)=g_2(\tau),
$$
then $g_1$ coincides with $g_2$ on the set $\{\om\tau:\om\in E_A^* \  \and  \, A_{\om_{|om|}\tau_1}=1\}$. But since this set is dense in $E_A^\infty$ and both $g_1$ and $g_2$ are continuous, it follows that 
$$
g_1=g_2.
$$
Thus the vector space $\Ker(\lam I-\pf_s)$  is $1$-dimensional and the proof is complete.
\epf

\fr Now we define
$$
E_p^*:=\{\om\in E_A^*:A_{\om_{|\om|}\om_1}=1\}.
$$
This set will be treated in greater detail in the forthcoming sections and will play an important role throughout the monograph.

For all $t,a\in\R$ we denote by $G_a(t)$ and $G_a^i(t)$ the multiplicative subgroups respectively of positive reals $(0,+\infty)$ and of the unit circle $S^1:=\{z\in\C:|z|=1\}$ that are respectively generated by the sets
$$
\big\{e^{-a|\om|}|\phi_\om'(x_\om)|^t:\om\in E_p^*\big\}\sbt (0,+\infty) 
\  \and  \
\big\{e^{-ia|\om|}|\phi_\om'(x_\om)|^{it}:\om\in E_p^*\big\}\sbt S^1,
$$
where $x_\omega$ is the only fixed point for $\phi_\omega:X_{i(\om_1)}\to X_{i(\om_1)}$.
The following proposition has been proved in \cite{P_RPF_2} in the context of finite alphabets $E$, but the proof carries through without any change to the case of countable infinite alphabets as well.

\bprop\label{p1nh12}
Let $\cS=\{\phi_e\}_{e\in E}$ be a finitely irreducible conformal GDMS. If $t\in\R$ and $a\in\R$, then the following conditions are equivalent.
\begin{itemize}

\item [(a)] $G_a(t)$ is generated by $e^{2\pi k}$ with some $k\in\N_0$.

\sp\item [(b)] $\exp(ia+\P(\sg))$ is an eigenvalue for $\pf_{\sg+it}: C_b(E_A^{\mathbb N})\to C_b(E_A^{\mathbb N})$ for some $\sg\in \Ga_\cS$.

\sp\item [(c)] $\exp(ia+\P(\sg))$ is an eigenvalue for $\pf_{\sg+it}:\H_\a(A)\to \H_\a(A)$ for all $\sg\in \Ga_\cS$.

\sp\item [(d)] There exists $u\in C_b(E_A^\infty)\, (\H_a(A))$ such that the function
$$
E_A^\infty\ni \om\mapsto t\zeta(\om)-a+u(\om)-u\circ\sg(\om)
$$
belongs to $C_b(E_A^\infty,2\pi\Z)\, (\H_a(E_A^\infty,2\pi\Z))$.

\sp\item [(e)]  $G_a^i(t)=\{1\}$.
\end{itemize}
\eprop

\fr As a matter of fact \cite{P_RPF_2} establishes equivalence (in the case of finite alphabet) of conditions (a)--(d) but the equivalence of (a) and (e) is obvious.

\sp\fr We call a parameter $t\in\R$ $\cS$-generic if the above condition (a) fails for $a=0$
 and we call it strongly $\cS$--generic if it fails for all $a\in\R$. We call the system $\cS$ D--generic if each parameter $t\in\R\sms\{0\}$ is $\cS$--generic and we call it strongly D-generic if each parameter $t\in\R\sms\{0\}$ is strongly $\cS$-generic. 
 
\brem\label{r2_2017_02_17}
We would like to remark that if the GDMS $\cS$ is D-generic, then no function $t\zeta:E_A^\infty\to\R$, $t\in\R\sms\{0\}$, is cohomologous to a constant. Precisely, there is no function $u\in C_b(E_A^\infty)$ such that
$$
t\zeta(\om)+u(\om)-u\circ\sg(\om)
$$
is a constant real-valued function.
\erem

The concept of D--genericity will play a pivotal role throughout our whole article. We start dealing with it by proving the following.

\bprop\label{c1nh18}
If $\cS$ is a finitely irreducible strongly D-generic conformal GDMS and if $s=\sg+it\in\Ga_\cS^+$ with $t\in\R\sms\{0\}$, then $r(\pf_s)<e^{\P(\sg)}$.
\eprop
\begin{proof}
By Proposition~\ref{p1nh15} the set 
$$
\sg(\pf_s)\cap\(\C\sms\ov B(0,e^{-\a/2}e^{\P(\sg)})\)
$$
is finite and consists only of eigenvalues of $\pf_s$. So, by Proposition~\ref{p1nh12},
$$
\sg(\pf_s)\cap \(\C\sms \ov B(0,e^{-\a/2}e^{\P(\sg)})\)\cap \{\lam\in\C:|\lam|=e^{\P(\sg)}\}=\es.
$$
Therefore, using also Theorem~\ref{thm-conformal-invariant} (g), we get that
$$
r(\pf_s)\le\max\Big\{e^{-\a/2}e^{\P(\sg)},\max\big\{|\lam|:\lam\in \sg(\pf_s)\cap \(\C\sms \ov B(0,e^{-\a/2}e^{\P(\sg)})\)\big\}\Big\} < e^{\P(\sg)}.
$$
The proof is complete.
\end{proof}

We now shall provide a useful characterization of D-generic and strongly D-generic systems. 

\bprop\label{p1nh13}
A finitely irreducible conformal GDMS $\cS=\{\phi_e\}_{e\in E}$ is D--generic if and only if the additive group generated by the set
$$
\big\{\log|\phi_\om'(x_\om)|:\om\in E_p^*\big\} 
$$
is not cyclic.
\eprop

\bpf
Suppose first that the system $\cS=\{\phi_e\}_{e\in E}$ is not D--generic. This means that there exists $t\in\R\sms\{0\}$ which is not $\cS$-generic. This in turn means that the group $G_0(t)$ is generated by some non-negative integral power of $e^{2\pi}$, say by $e^{2q\pi}$, $q\in\N_0$. And this means that for every $\om\in E_p^*$,
$$
|\phi_\om'(x_\om)|^t=\exp\(2\pi qk_\om\)
$$
with some (unique) $k_\om\in\Z$. But then $t\log|\phi_\om'(x_\om)|=2\pi qk_\om$ or equivalently
$$
\log|\phi_\om'(x_\om)|= \frac{2\pi q}{t}k_\om. 
$$
This implies that the additive group generated by the set
$$
\big\{\log|\phi_\om'(x_\om)|:\om\in E_p^*\big\}\sbt\R 
$$
is a subgroup of $\langle\frac{2\pi q}{t}\rangle$, the cyclic group generated by $\frac{2\pi q}{t}$, and is therefore itself cyclic.

\sp For the converse implication suppose that the additive group generated by the set
$$
\big\{\log|\phi_\om'(x_\om)|:\om\in E_p^*\big\} 
$$
is cyclic. This means that there exists $\g\in(0,+\infty)$ such that 
$$
\log|\phi_\om'(x_\om)|=2\pi \g l_\om
$$
for all $\om\in E_p^*$ and some $l_\om\in-\N_0$. There then exists $t\in\R\sms\{0\}$ such that $t\g\in\N$. But then
$$
|\phi_\om'(x_\om)|^t=\exp\((2\pi t\g)l_\om\),
$$
implying that the multiplicative group generated by the set
$$
\{|\phi_\om'(x_\om)|^t:\om\in E_p^*\big\} 
$$
is a subgroup of $<e^{2\pi t\g}>$, the cyclic group generated by $e^{2\pi t\g}$, and is therefore itself cyclic. This means that $t\in\R\sms\{0\}$ is not $\cS$-generic, and this finally means that the system $\cS$ is not D-generic. We are done.
\epf

\begin{rem}\label{generic}
The D--genericity assumption is fairly generic.
For example, it  holds if
there are two values $i,j\in E$ (or the weaker condition $i, j\in E_A^*$) such that $\frac{\log|\phi_i'(x_i)|}{\log|\phi_j'(x_j)|}$ is irrational; where we recall  that $x_i$ and $x_j$ are the unique fixed points, respectively, of $\phi_i$ and $\phi_j$.
On the other hand, it is easy to construct specific conformal GDMSs for which it fails.  For example, we can consider maps $\phi_i(x) = \frac{x+1}{2^i}$ for $i \geq 1$ and than we can deduce that 
$\log |\phi_i'(x)| \in (\log 2)\Z$.
\end{rem}

\bprop\label{p1nh13-again}
A finitely irreducible conformal GDMS $\cS=\{\phi_e\}_{e\in E}$ is strongly D--generic if and only if the additive group generated by the set
$$
\big\{\log|\phi_\om'(x_\om)|-\b|\om|:\om\in E_p^*\big\} 
$$
is not cyclic for any $\b\in\R$.
\eprop

\bpf
Suppose first that the system $\cS=\{\phi_e\}_{e\in E}$ is not strongly D--generic. This means that there exists $t\in\R\sms\{0\}$ which is not $\cS$-generic. This in turn means that for some $a\in\R$ the group $G_a(t)$ is generated by some non-negative integral power of $e^{2\pi}$, say by $e^{2q\pi}$, $q\in\N_0$. And this means that for every $\om\in E_p^*$,
$$
e^{-a|\om|}|\phi_\om'(x_\om)|^t=\exp\(2\pi qk_\om\)
$$
with some (unique) $k_\om\in\Z$. But then $t\log|\phi_\om'(x_\om)|-a|\om|=2\pi qk_\om$ or equivalently
$$
\log|\phi_\om'(x_\om)|-\frac{a}{t}|\om|= \frac{2\pi q}{t}k_\om. 
$$
This implies that the additive group generated by the set
$$
\big\{\log|\phi_\om'(x_\om)|- \frac{a}{t}|\om|:\om\in E_p^*\big\} 
$$
is a subgroup of $<\frac{2\pi q}{t}>$, the cyclic groups generated by $\frac{2\pi q}{t}$, and is therefore itself cyclic.

\sp For the converse implication suppose that the additive group generated by the set
$$
\big\{\log|\phi_\om'(x_\om)|- \b|\om|:\om\in E_p^*\big\} 
$$
is cyclic for some $\b\in\R$. This means that there exists $\g\in(0,+\infty)$ such that 
$$
\log|\phi_\om'(x_\om)|- \b|\om|=2\pi \g l_\om
$$
for all $\om\in E_p^*$ and some $l_\om\in\Z$. There then exists $t\in\R\sms\{0\}$ such that $t\g\in\N$. But then
$$
e^{-t\b|\om|}|\phi_\om'(x_\om)|^t=\exp\((2\pi t\g)l_\om\),
$$
implying that the multiplicative group generated by the set
$$
\big\{e^{-t\b|\om|}|\phi_\om'(x_\om)|^t:\om\in E_p^*\big\} 
$$
is a subgroup of $<e^{2\pi t\g}>$, the cyclic group generated by $e^{2\pi t\g}$, and is therefore itself cyclic. This means that $t\in\R\sms\{0\}$ is not strongly $\cS$-generic, and this finally means that the system $\cS$ is not strongly D-generic. We are done.
\epf

\section{Asymptotic Results for Multipliers; Statements and First Preparations}

In this section we keep the setting of the previous one. In this framework
we can formulate our  main asymptotic  result,  which has the dual virtues of being relatively easy to prove in this setting  and also having many interesting applications, as illustrated in the introduction. In a later section we will also formulate the general result for  $C^2$ multidimensional contractions, although the basic statements will be exactly the same. We can now define  two natural counting functions in the present context
corresponding to ``preimages'' and ``periodic points'' respectively.

\bdfn
We can naturally  order the countable family of the compositions of contractions $\vp\in E_A^*$ in two different ways. Fix $\rho\in E_A^\infty$ arbitrary and set $\xi:=\pi_\cS(\rho)\in J_\cS$. Let 
$$
E_\rho^*:=\{\om\in E_A^*:\om\rho\in E_A^*\},
$$
and for all integers $n\ge 1$ let
$$
E_\rho^n:=\{\om\in E_A^n:\om\rho\in E_A^*\}.
$$
We recall from the previous section the set
$$
E_p^*=\{\om\in E_A^*:A_{\om_{|\om|}\om_1}=1\},
$$
and for all integers $n\ge 1$ we put
$$
E_p^n:=\{\om\in E_A^n:A_{\om_n\om_1}=1\},
$$
i.e., the words $\om$ in $E_A^*$ such that the words $\om^\infty\in E_A^\infty$, the infinite concatenations of $\om$s, are periodic points of the shift map $\sg:E_A^\infty\to E_A^\infty$ with period $n$.
\begin{enumerate}
\item
Firstly, we can associate the weights
$$
\lambda_\rho(\om) := -\log |\phi_\om'(\xi)| > 0, \  \  \om\in E_\rho^*,
$$
and
\sp\item
Secondly, we can use  the weights  
$$
\lambda_p(\om) := -\log |\phi_\om'(x_\om)| > 0, \  \  \om\in E_p^*,
$$
where we recall that $x_\om (= \phi_\om(x_\om))$ is the unique  fixed point for the contraction $\phi_\om:X_{i(\om_1)}\to X_{i(\om_1)}$; we note that  $t(\om)=i(\om_1)$.
\end{enumerate}
We can associate appropriate counting functions to each of these weights, defined by 
$$
\pi_\rho(T) := \left\{\om\in E_\rho^*  \hbox{ : } \lambda_\rho(\om) \leq T \right\} 
\ \hbox{ and } \
\pi_p(T) := \left\{\om\in E_p^* \hbox{ : } \lambda_p(\om) \leq T \right\}, 
$$
respectively, 
and their cardinalities
$$
N_\rho(T):=\#\pi_\rho(T) \ 
\hbox{ and } \  N_p(T):=\#\pi_p(T),
$$ 
respectively, 
for each $T>0$, i.e. the number of words $\om\in E_i^*$ for which the corresponding 
weight $\lambda_i(\om)$  doesn't exceed $T$ for $i = \rho, p$.
\edfn

\fr The functions $\pi_\rho(T)$ and $\pi_p(T)$ are clearly  both monotone increasing in $T$.

\fr We first prove the following  basic result, showing that the rates of growth of these two functions are both equal to the Hausdorff Dimension of the limit set $J_{\mathcal S}$.

\begin{prop}\label{delta} 
If the (finitely irreducible) conformal GDMS $\cS$ is strongly regular, then
$$
\delta_\cS 
= \lim_{T \to +\infty} \frac{1}{T} \log N_\rho(T) 
= \lim_{T \to +\infty} \frac{1}{T} \log N_p(T).
$$
\end{prop}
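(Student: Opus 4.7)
The plan is to sandwich both $T^{-1}\log N_\rho(T)$ and $T^{-1}\log N_p(T)$ between quantities converging to $\delta_\cS$: the upper bound coming from convergence of a Poincar\'e series above $\delta_\cS$, and the lower bound from the Gibbs/entropy structure of $\mu_\d$ at $\delta_\cS$ itself.

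For the upper bound, I would introduce the Poincar\'e series
$$
\eta_\rho(s) := \sum_{\omega \in E_\rho^*} |\phi_\omega'(\pi_\cS(\rho))|^s
\quad\text{and}\quad
\eta_p(s) := \sum_{\omega \in E_p^*} |\phi_\omega'(x_\omega)|^s.
$$
Since strong regularity places $\delta_\cS$ in $\Int(\Ga_\cS)$, Theorem~\ref{t3_2016_01_12} together with $\P(\delta_\cS)=0$ yields $\P(s)<0$ for every $s>\delta_\cS$ sufficiently close to $\delta_\cS$. The Bounded Distortion Property then bounds both series by a constant multiple of $\sum_{\omega\in E_A^*}\|\phi_\omega'\|_\infty^s=\sum_{n\ge 1}\sum_{|\omega|=n}\|\phi_\omega'\|_\infty^s$, and the pressure formula dominates the inner sums eventually by $e^{n(\P(s)+\e)}$, which is summable once $\e<-\P(s)$. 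The trivial estimate $N_{*}(T)e^{-sT}\le\eta_{*}(s)$ for $*\in\{\rho,p\}$ then gives $\limsup_{T\to+\infty}T^{-1}\log N_{*}(T)\le s$, and letting $s\searrow\delta_\cS$ completes the upper bound.

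For the lower bound on $N_\rho$, I would work with the shift-invariant Gibbs state $\mu_\d$ supplied by Theorem~\ref{thm-conformal-invariant}; strong regularity is what makes $\mu_\d$ well-defined and guarantees, via Corollary~\ref{c1_2016_01_21}, that $\chi_{\mu_\d}<+\infty$. Applying Birkhoff's ergodic theorem to the potential $\zeta$ of \eqref{1MU_2014_09_10} and the Shannon--McMillan--Breiman theorem to $\mu_\d$ gives, for $\mu_\d$-a.e.\ $\omega$, the two convergences $\lambda_\rho(\omega|_n)/n\to\chi_{\mu_\d}$ and $-n^{-1}\log\mu_\d([\omega|_n])\to h_{\mu_\d}$; here the BDP lets me replace $\|\phi_{\omega|_n}'\|_\infty$ by $|\phi_{\omega|_n}'(\pi_\cS(\rho))|$, and the variational principle combined with $\P(\delta_\cS)=0$ identifies $h_{\mu_\d}=\delta_\cS\chi_{\mu_\d}$. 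Egorov's theorem then produces a set $A_\e\sbt E_A^\infty$ with $\mu_\d(A_\e)\ge\tfrac12$ and $N_\e\in\N$ on which both convergences are uniform up to error $\e$ for $n\ge N_\e$. Choosing $n=\lfloor T/(\chi_{\mu_\d}+\e)\rfloor$ forces $\lambda_\rho(\omega|_n)\le T$ for every $\omega\in A_\e$, and the upper Gibbs bound on cylinder measures makes the number of distinct words $\omega|_n$ with $\omega\in A_\e$ at least $\tfrac12 e^{n(\delta_\cS\chi_{\mu_\d}-\e)}$. Finite irreducibility provides a fixed finite set $\Lambda\sbt E_A^*$ and, for each such $\tau$, a word $\nu(\tau)\in\Lambda$ with $\tau\nu(\tau)\in E_\rho^*$; the BDP yields $\lambda_\rho(\tau\nu(\tau))\le\lambda_\rho(\tau)+C$ with $C$ independent of $\tau$. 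The map $\tau\mapsto\tau\nu(\tau)$ is injective on length-$n$ words, so absorbing the additive constant into $T$ and letting $\e\searrow 0$ gives $\liminf T^{-1}\log N_\rho(T)\ge\delta_\cS$. The corresponding lower bound for $N_p$ I would derive from the one for $N_\rho$ by a closing argument: for each $\om\in E_\rho^n$ finite irreducibility yields $\nu\in\Lambda$ with $\om\nu\in E_p^*$, and the BDP gives $\lambda_p(\om\nu)\le\lambda_\rho(\om)+C'$; since $\om\mapsto\om\nu$ is at most $|\Lambda|$-to-one, $N_p(T+C')\ge|\Lambda|^{-1}N_\rho(T)$.

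I expect the main obstacle to be the lower bound step: orchestrating the Birkhoff--Shannon--McMillan--Breiman--Egorov chain together with the BDP symbolic-to-geometric transfer and the finite-irreducibility boundary adjustment $\tau\mapsto\tau\nu(\tau)$, so that the exponential rate survives every step and no slack is introduced that would lower the constant below $\delta_\cS$ after taking $\e\searrow 0$.
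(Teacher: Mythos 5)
Your proposal is correct, and the lower bound is essentially the paper's own argument: both rest on $h_{\mu_\d}=\d\chi_{\mu_\d}$, the Gibbs/Shannon--McMillan--Breiman count of cylinders of measure roughly $e^{-nh_{\mu_\d}}$, and the BDP transfer between $\|\phi_{\om|_n}'\|_\infty$ and the pointwise derivative. The paper converts the condition $\lam_i(\om)\le(\chi_{\mu_\d}+\ep)n$ directly into a lower bound on $\log\mu_\d([\om])$ via the two-sided Gibbs estimate \eqref{515pre} and then invokes SMB once, which lets it skip your Birkhoff--Egorov layer; on the other hand it silently treats every word in $E_A^n$ as if it lay in $E_\rho^n$ (resp.\ $E_p^n$), whereas your connector-word device $\tau\mapsto\tau\nu(\tau)$ with $\nu(\tau)$ drawn from the finite irreducibility set $\La$, together with the observation that this map is at most $\#\La$-to-one, handles the admissibility constraint explicitly — a genuine (if minor) gain in rigor. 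Your upper bound takes a different route from the paper's: you use convergence of $\eta_\rho(s)$ and $\eta_p(s)$ for $s>\d_\cS$ (which follows from $\P(s)<0$ there, by strict monotonicity of the pressure and $\P(\d_\cS)=0$) plus the Chebyshev inequality $N_*(T)\le e^{sT}\eta_*(s)$, then let $s\searrow\d_\cS$; the paper instead argues by contradiction, using the uniform contraction bound $|\om|\le T/|\log\ka|$ to pigeonhole a single word-length level $j_n$ carrying mass $\ge e^{\ep T_n}/k_n$ and contradicting $\P(\d)=0$. Both bounds ultimately rest on the same pressure information; yours is shorter and anticipates the Poincar\'e-series machinery used later in the paper (Observation~\ref{o1_2015_12_21}), while the paper's avoids discussing convergence of the series at this early stage.
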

\begin{proof}
Fix $i\in\{\rho,p\}$. Write $\d:=\d_\cS$. Assume for a contradiction  that
$$
\varlimsup_{T \to +\infty} \frac{1}{T} \log N_i(T)>\d.
$$
There then exists $\e>0$ and an increasing unbounded sequence $T_n\to+\infty$ such that 
$$
 N_i(T_n)\ge e^{(\d+\e)T_n}.
$$
We recall from the definition of a conformal GDMS that $\|\phi_e'\|_\infty \leq \ka\in(0,1)$ for all $e \in E$, and then $\|\phi_{\omega}'\|_\infty \leq \ka^{|\om|}$ for all $\omega \in E_A^*$. Since 
\beq\label{1_2016_08_20}
\lambda_i(\omega)+\log\|\phi_\omega'\|_\infty \ge 0
\eeq
for all $\omega \in E_A^*$.
we conclude that whenever $\om\in \pi_i(T_n)$, i.e. whenever $\lambda_i(\omega) \leq T_n$, then
$$
|\om| 
\leq \frac{T_n}{ |\log\ka|} 
\leq  k_n := \left[\frac{T_n}{ |\log\ka|}\right] + 1,
$$
where $[\cdot]$ denotes the integer part. Therefore, we can also bound
$$
\sum_{j=1}^{k_n}\sum_{\omega \in E^j_A}\|\phi_\omega'\|_\infty^\delta
\ge 
\sum_{\omega \in\pi_i(T_n)}\|\phi_\omega'\|_\infty^\delta  
\geq N_i(T_n) e^{-  \delta T_n} 
\geq e^{\e T_n}.
$$
Hence, there exists $1 \leq j_n \leq k_n$ such that 
$$ 
\sum_{\omega \in E^{j_n}_A}\|\phi_\omega'\|_\infty^\delta 
\geq \frac{1}{k_n}e^{\e T_n}.
$$
In particular, $\lim_{n\to\infty}j_n=+\infty$. Recalling that each strongly regular system is regular and invoking \eqref{1_2016_08_19}, we finally get
$$
\begin{aligned}
0
&=\P(\d)
=\lim_{n\to +\infty}\frac{1}{j_n}\log\sum_{\omega \in E_A^{j_n}}\|\phi_\om'\|_\infty^\d
\geq  \varlimsup_{n\to +\infty}\frac{1}{j_n}\log
\left(\frac{e^{\e T_n}}{k_n}\right) \\
&\geq \varlimsup_{n\to +\infty}\frac{1}{k_n}\log
\left(\frac{e^{\e T_n}}{k_n}\right) 
=\varlimsup_{n\to +\infty}\frac{1}{k_n}(\e T_n-\log k_n) \\
&=\e\varlimsup_{n\to +\infty}\frac{T_n}{k_n} 
 = \e|\log\ka|> 0. 
\end{aligned}
$$
This contradiction shows that
\beq\label{1_2016_01_20}
\varlimsup_{T \to +\infty} \frac{1}{T} \log N_i(T)\le\d.
\eeq
For the lower bound recall that
$$
\chi_\d=-\int_{E_A^\infty}\log|\phi_{\om_1}'(\pi(\sg(\om)))|\,d\mu_\d>0
$$
is the Lyapunov exponent of the measure $\mu_\d$ with respect to the shift map $\sg:E_A^\infty\to E_A^\infty$. Since the system $\cS$ is strongly regular, it follows from Observations~\ref{o1_2016_01_20-plus} and \ref{o5_2016_01_20} that $\chi_\d$ is finite. It then further follows from Theorem~\ref{thm-conformal-invariant} (e) that $\h_{\mu_\d}$ is finite and 
$$
\frac{\h_{\mu_\d}}{\chi_\d}=\d.
$$
Recall that along with \eqref{1_2016_08_20} the Bounded Distortion Property, yields
\beq\label{2_2016_08_20}
0\le \lambda_i(\omega)+\log\|\phi_\omega'\|_\infty \le \log C
\eeq
for all $\omega \in E_A^*$ and some constant $C>1$. Using this and \eqref{515pre} we then get for every $\ep>0$ and all integers $n\ge 1$ large enough that
$$
\begin{aligned}
\big\{\om\in E_A^n:\lam_i(\om)&\le (\chi_{\mu_\d}+\ep)n\big\} = \\
&=  \lt\{\om\in E_A^n:\lam_i(\om)\le \lt(\frac{\h_{\mu_\d}}{\d}+\ep\rt)n\rt\} \\
&\spt\lt\{\om\in E_A^n:-\frac1\d\log\mu_\d([\om])\le \lt(\frac{\h_{\mu_\d}}{\d}+
    \ep\rt)n+\frac{\log C_\d}{\d}-\log C\rt\} \\
&\spt \lt\{\om\in E_A^n:-\frac1\d\log\mu_\d([\om])\le \lt(\frac{\h_{\mu_\d}}{\d}+
    2\ep \rt)n\rt\} \\
&=  \big\{\om\in E_A^n:\log\mu_\d([\om])\ge -\(\h_{\mu_\d}+2\ep\d\)n\big\}.
\end{aligned}
$$
Having this, it follows from Breiman-McMillan-Shannon Theorem that 
$$
\#\big\{\om\in E_A^n:\lam_i(\om)\le (\chi_{\mu_\d}+\ep)n\big\}
\ge \exp\((\h_{\mu_\d}-3\ep\d)n\)
$$
for all integers $n\ge 1$ large enough. Since we also  obviously have 
$$
\pi_i\((\chi_{\mu_\d}+\ep)n\)
\spt \{\om\in E_A^n:\lam_i(\om)\le (\chi_{\mu_\d}+\ep)n\big\},
$$
we therefore get for every $T>0$ large enough,
$$
\begin{aligned}
\log N_i(T)
&=   \log N_i\lt((\chi_{\mu_\d}+\ep)\frac{T}{(\chi_{\mu_\d}+\ep)}\rt)
\ge \log N_i\lt((\chi_{\mu_\d}+\ep)\lt[\frac{T}{(\chi_{\mu_\d}+\ep)}\rt]\rt) \\
&\ge (\h_{\mu_\d}-3\ep\d)\lt[\frac{T}{(\chi_{\mu_\d}+\ep)}\rt].
\end{aligned}
$$
Therefore,
$$
\varliminf_{T \to +\infty} \frac{1}{T} \log N_i(T)
\ge \frac{\h_{\mu_\d}-3\ep\d}{\chi_{\mu_\d}+\ep}.
$$
So, letting $\e\downto 0$ yields
$$
\varliminf_{T \to +\infty} \frac{1}{T} \log N_i(T)
\ge \frac{\h_{\mu_\d}}{\chi_{\mu_\d}}
=\d.
$$
Along with \eqref{1_2016_01_20} this completes the proof.
\end{proof}

\sp\fr In particular, this proposition gives one more characterization  of the value of $\delta$.
paper

\sp One of our main objectives in this monograph is to provide a wide ranging  substantial improvement of Proposition~\ref{delta}. This is the asymptotic formula below, formulated at level of conformal graph directed Markov systems, along with its further strengthenings, extensions, and generalizations, both  for conformal graph directed Markov systems and beyond. Our first main result is the following.

\begin{thm}[Asymptotic Formula]\label{asymp}
If $\cS$ is a strongly regular finitely irreducible D-generic conformal GDMS, then
$$
\lim_{T \to +\infty} \frac{N_\rho(T)}{e^{\d T}} 
= \frac{\psi_\d(\rho)}{\d\chi_{\mu_\d}}
$$
and
$$
\lim_{T \to +\infty} \frac{N_p(T)}{e^{\d T}} 
= \frac{1}{\d\chi_{\mu_\d}}.
$$
\end{thm}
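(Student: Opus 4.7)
My plan is to run the standard spectral--Tauberian scheme: encode both counts in Poincar\'e series, locate the rightmost singularity via the spectral theory of $\pf_s$ developed in Section~\ref{CRPFOSDG}, and extract the asymptotics by a Wiener--Ikehara Tauberian theorem. Concretely, for $\re s > \d$ define the Dirichlet--type series
$$
\eta_\rho(s) := \sum_{\om\in E_\rho^*} |\phi_\om'(\pi_\cS(\rho))|^s
\and
\eta_p(s) := \sum_{\om\in E_p^*} |\phi_\om'(x_\om)|^s.
$$
A direct unfolding of the definition \eqref{5_2016_01_21} of $\pf_s$ yields $\sum_{n\ge 1}\pf_s^n\1(\rho) = \eta_\rho(s)$ (up to the trivial empty--word term), and convergence for $\re s > \d$ follows from Proposition~\ref{p1nh15}(1) combined with $\P(\re s) < 0$. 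The Stieltjes identity $\eta_i(s) = s\int_0^\infty e^{-sT} N_i(T)\,dT$, $i\in\{\rho,p\}$, then reduces the counting problem to the meromorphic structure of $\eta_i$ along and across the line $\re s = \d$.

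\medskip

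Quasi--compactness of $\pf_s$ (Theorem~\ref{thm-conformal-invariant}(h)(i) together with Proposition~\ref{p1nh15}) combined with analytic perturbation theory at $s = \d$ produces, on a complex neighborhood of $\d$, a holomorphic family $(\lam(s),\cQ(s))$ of simple dominant eigenvalues and rank--one Riesz projectors onto $\C\psi_s$, together with the decomposition $\pf_s^n = \lam(s)^n\cQ(s) + N(s)^n$, where $r(N(s))$ is uniformly bounded away from $|\lam(s)|$. Geometric summation gives
$$
\eta_\rho(s) = \frac{\lam(s)}{1-\lam(s)}\cQ(s)\1(\rho) + H_\rho(s),
$$
with $H_\rho$ holomorphic near $\d$. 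Since $\P(\d)=0$, Theorem~\ref{t1_2016_01_29} yields $\lam'(\d) = \P'(\d) = -\chi_{\mu_\d}$, and normalizing $\cQ(\d)g = \psi_\d\int g\,dm_\d$ one reads off $\operatorname{Res}_{s=\d}\eta_\rho(s) = \psi_\d(\rho)/\chi_{\mu_\d}$. The same spectral picture handles $\eta_p$ through the identification $x_\om = \pi_\cS(\om^\infty)$: the Bounded Distortion Property together with the spectral gap gives $\sum_{\om\in E_p^n}|\phi_\om'(x_\om)|^s = \lam(s)^n(1 + o(1))$ with geometric error, leading to the companion residue $1/\chi_{\mu_\d}$.

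\medskip

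The crucial step uses D--genericity to rule out any further singularities on the critical line $\re s = \d$. By Proposition~\ref{p1nh15}(2) one has $r_\ess(\pf_{\d+it}) \le e^{-\a} < 1$ for every $t\in\R$, so the part of $\sg(\pf_{\d+it})$ on the unit circle is finite and consists of eigenvalues of finite multiplicity. D--genericity, via Proposition~\ref{p1nh12} applied with $a = 0$, forces $1 \notin \sg(\pf_{\d+it})$ for every $t \ne 0$, hence $(I - \pf_{\d+it})^{-1}$ exists and depends continuously on $t$. This upgrades $\eta_\rho$ and $\eta_p$ to meromorphic functions on a strip $\re s > \d - \e_K$ over every compact subinterval of the critical line, with a single simple pole at $s = \d$. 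Wiener--Ikehara applied to the nondecreasing functions $N_\rho(T)$ and $N_p(T)$, with the residues of $\eta_i(s)/s$ at $\d$ equal to $\operatorname{Res}_{s=\d}\eta_i(s)/\d$, yields exactly the two asserted asymptotics $\psi_\d(\rho)/(\d\chi_{\mu_\d})$ and $1/(\d\chi_{\mu_\d})$.

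\medskip

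The principal obstacle lies on the periodic--orbit side. In the countable--alphabet setting no genuine Fredholm--trace identity $\operatorname{tr}(\pf_s^n) = \sum_{\om\in E_p^n}|\phi_\om'(x_\om)|^s / |\det(I - \phi_\om'(x_\om))|$ is available, so the asymptotic $\sum_{\om\in E_p^n}|\phi_\om'(x_\om)|^s \sim \lam(s)^n$ must be proved by hand: one compares $|\phi_\om'(x_\om)|^s$ with $|\phi_\om'(\pi_\cS(\tau))|^s$ for any admissible continuation $\tau$ using bounded distortion, then invokes the spectral decomposition to extract the leading contribution while showing that the complementary contribution decays at a geometrically faster rate than $\lam(s)^n$. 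Propagating these uniform estimates in $s$ across the critical line, in a form compatible with the Wiener--Ikehara hypothesis, is the most delicate technical point in the entire argument.
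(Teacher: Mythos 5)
Your treatment of $N_\rho(T)$ is essentially the paper's argument: the series $\sum_{n\ge1}\pf_s^n\1(\rho)$, the Kato--Rellich perturbation of the simple dominant eigenvalue at $s=\d$, the residue $\psi_\d(\rho)/\chi_{\mu_\d}$ via $\lam'(\d)=\P'(\d)=-\chi_{\mu_\d}$, the exclusion of poles on $\re s=\d$, $s\ne\d$, from D-genericity through Proposition~\ref{p1nh12} with $a=0$, and the Ikehara--Wiener theorem (the paper proves the localized version $\eta_\rho(\tau,s)$ in Theorem~\ref{t2nh18} and specializes to $\tau=\es$).

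For $N_p(T)$ you take a genuinely different route. You form a periodic-orbit Dirichlet series $\eta_p(s)=\sum_{\om\in E_p^*}|\phi_\om'(x_\om)|^s$, aim to show $\sum_{\om\in E_p^n}|\phi_\om'(x_\om)|^s=\lam(s)^n(1+o(1))$ uniformly near and on the critical line, and then apply the Tauberian theorem a second time. The paper never forms $\eta_p$: it derives the $N_p$ asymptotics from the already-proved $N_\rho(\tau,T)$ asymptotics purely at the level of counting functions, via the distortion sandwich $K_q^{-1}|\phi_{\g\om}'(\pi(\g\g^+))|\le|\phi_{\g\om}'(x_{\g\om})|\le K_q|\phi_{\g\om}'(\pi(\g\g^+))|$ for prefixes $\g\in E_A^q$, summing over $\g$ and letting $q\to\infty$ so that $K_q\to1$. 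Your route is the classical Parry--Pollicott one and buys a self-contained analytic statement about $\eta_p$; the paper's route buys the advantage of only ever needing real-variable estimates once the Tauberian step for $\eta_\rho(\tau,\cdot)$ is done. The point of genuine risk in your version is the one you flag yourself: proving $\sum_{\om\in E_p^n}|\phi_\om'(x_\om)|^s\sim\lam(s)^n$ over a countable alphabet requires the conditioning-on-prefixes argument \emph{uniformly in complex $s$ on compacta of the line $\re s=\d$}, and moreover the error is not controlled merely by the spectral remainder $S_s^n$: since crude bounded distortion only gives comparison up to a fixed constant $K^{\pm\sg}$ (not tending to $1$), and since one cannot interchange $\varlimsup$ with an infinite sum over prefixes, one must split $E_A^q$ into a finite set $F_q$ and an infinite tail and control the tail separately (this is exactly the role of Lemma~\ref{l1_2016_11_07} in the paper). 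Your phrase ``the complementary contribution decays at a geometrically faster rate'' glosses over this tail, which is the only place where the countable-alphabet hypothesis genuinely bites; as written the periodic half of your proposal is a program rather than a proof, though the program is sound and is carried out (in real-variable form) in the paper's proof of formula \eqref{2_2016_01_30B}.
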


\fr The proof of this theorem will be completed as a special case of Theorem~\ref{dynA} (which is proved in Section~\ref{Ikehara}). 

\begin{rem}
\sp \fr If the generic D-genericity hypothesis fails, 
then we may still have an asymptotic formulae, but of a different type, e.g., 
there exists $N_i(T)\sim C \exp (\delta a[(\log T)/a ])$ as $T \to +\infty$.   
This is illustrated by the example in Remark \ref{generic} with $a= \log 2$.
\end{rem}

\sp\fr 
In preparation for the proof of Theorem~\ref{asymp} we now introduce a version of the main tool that will be used in the sequel. The standard strategy, stemming from number theoretical considerations of distributions of prime numbers, in such results is to use an appropriate complex function defined in terms of all of 
the weights $\lambda_\rho(\om)$ and then to apply a Tauberian theorem to convert properties of the function into the required asymptotic formula of $N_\rho(T)$, i.e. the first formula of Theorem~\ref{asymp}. The asymptotic formula for $N_p(T)$, i.e. the second formula of Theorem~\ref{asymp} will be directly derived from the former, i.e. that of $N_\rho(T)$. The basic complex function in the symbolic context is the following.

\bdfn
Given $s\in \mathbb C$ we define the
\emph{Poincar\'e (formal) series}  by:
$$
\eta_\rho(s) 
:= \sum_{\om\in E_\rho^*}e^{-s\lambda_\rho(\om)}
 = \sum_{n=1}^\infty \sum_{\om\in E_\rho^n} e^{-s\lambda_\rho(\om)}.
$$  
\edfn

\fr In fact we will need a localized version 
of this function, which will be introduced and analyzed  in Section~\ref{Local_Poincare}.

\medskip
\fr 
For the present,  we observe that since 
$$
      \sum_{\om\in E_\rho^n} |e^{-s\lambda_\rho(\om)}|
=     \sum_{\om\in E_\rho^n} e^{-\re(s\lambda_\rho(\om))}
\comp \sum_{\om\in E_\rho^n}\|\phi_\om'\|_\infty^{\re s}
\le   \sum_{\om\in E_A^n}\|\phi_\om'\|_\infty^{\re s}
$$
and since 
$$
\lim_{n\to\infty}\frac1n\log \left(\sum_{\om\in E_\rho^n}\|\phi_\om'\|_\infty^{\re s}\right) =\P(\re s)<0
$$
whenever $\re s>\d_\cS$, we get the following preliminary result.

\bobs\label{o1_2015_12_21}
The Poincar\'e series 
$$
\eta_\rho(s) 
=\sum_{n=1}^\infty \sum_{\om\in E_\rho^n} e^{-s\lambda_\rho(\om)}  
$$
converges absolutely 
uniformly on each set $\{s\in\C: \re s>t\}$, for $t>\d_\cS$.
\eobs

\fr For notational convenience  to follow we introduce the following set
$$
\De_\cS^+:=\{s\in\C: \re s>\d_\cS\}.
$$
As have said, the series $\eta_\rho(s)$ will be our main tool to acquire the asymptotic formula for the cardinalities of the sets $\pi_\rho(T)$, i.e. of the numbers  $N_\rho(T)$. An appropriate knowledge of the behavior of the series $\eta_\rho(s)$ on the imaginary line $Re(s) = \delta_{\mathcal S}$ is required for this end.
Indeed, in fact one needs to know that the function $\eta_\rho(s)$ has a meromorphic extension to some open neighborhoods of $\overline{\De_\cS^+} = \{s\in\C: \re s\geq\d_\cS\}$ with the only pole at $s=\delta_\cS$, that this pole is simple and the corresponding residue is to be calculated. 
This extension of $\eta_\rho(s)$ functions will come from an understanding of the spectral properties of the  associated complex RPF operators.  

\

With very little additional work, we  can actually get slightly finer asymptotic results than those of Theorem~\ref{asymp}. These count words subject to their weights being less than $T$ and, additionally,  their images being located in some part of the limit set. 

\bdfn
Let  $\rho\in E_A^\infty$ and let $\tau\in E_A^*$. Fix any Borel set $B \subset X$. Having $T>0$ we define:
\begin{align*}
\pi_\rho(B, T) 
&:= \left\{\om\in E_\rho^*  \hbox{ : }\phi_{\om}(\pi_\cS(\rho)) \in B 
 \hbox{ and }  
 \lambda_\rho(\om) \leq T \right\} 
\\ \hbox{ and }  \\
\pi_p(B, T) 
&:= \left\{\om\in E_p^*  \hbox{ : } x_\om \in B  \hbox{ and }   \lambda_p(\om) \leq T \right\}.
\end{align*}
We also define
\begin{align*}
\pi_\rho(\tau, T) 
&:= \left\{\om\in E_\rho^*: \lambda_\rho(\tau\om) \leq T \right\} 
\  \  \hbox{ and } \ \
\pi_p(\tau, T) 
:= \left\{\om\in E_p^*:\lambda_p(\tau\om) \leq T \right\}.
\end{align*}
The corresponding cardinalities of these sets are denoted by:
$$
N_\rho(B, T):=\#\pi_\rho(B, T) \  \ {\rm and } \  \ N_p(B,T)=\#\pi_p(B, T),
$$
and 
$$
N_\rho(\tau, T):=\#\pi_\rho(\tau, T) \  \ {\rm and } \  \ N_p(\tau, T)=\#\pi_p(\tau, T),\
$$
i.e. the first pair  count the number of words $\om\in E_i^*$ for which the 
weight $\lambda_i(\om)$ doesn't exceed $T$ and, additionally, the image $\phi_\om(\pi_\cS(\rho))$ is in $B$ if $i=\rho$, or the fixed point $x_\om$, 
of $\phi_\om$, is in $B$ if $i=p$, while the second pair  count the number of words $\om\in E_i^*$ for which the weight $\lambda_i(\tau\om)$ doesn't exceed $T$ 
(for $i=p, \rho$) and an initial block of $\om$ coincides with $\tau$.
\edfn

\fr The following are refinements of the asymptotic results presented in Theorem~\ref{asymp}, whose  proof will be completed in Section~\ref{Ikehara}.

\begin{thm}[Asymptotic Equidistribution Formula for Multipliers I]\label{dynA}
Suppose that $\cS$ is a strongly regular finitely irreducible D-generic conformal GDMS.  Fix $\rho\in E_A^\infty$. If $\tau\in E_A^*$ then,
\beq\label{2_2016_01_30}
\lim_{T \to +\infty} \frac{N_\rho(\tau,T)}{e^{\d T}} 
= \frac{\psi_\d(\rho)}{\d\chi_{\mu_\d}}m_\d([\tau]),
\eeq
and 
\beq\label{2_2016_01_30B}
\lim_{T \to +\infty} \frac{N_p(\tau,T)}{e^{\d T}} 
= \frac{1}{\d\chi_{\mu_\d}}\mu_\d([\tau]).
\eeq
\end{thm}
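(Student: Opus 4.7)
The plan is to establish both asymptotic formulas via a Tauberian argument applied to the appropriate complex Poincar\'e series, whose meromorphic extension will be controlled by the spectral theory of $\mathcal L_s$ developed in Section~\ref{CRPFOSDG}. First I would introduce the localized Poincar\'e series
$$
\eta_{\rho,\tau}(s):=\sum_{n=0}^\infty \mathcal L_s^n \mathbf 1_{[\tau]}(\rho)
=\sum_{n=0}^\infty\sum_{\omega\in E_\rho^n,\,\omega\rho\in[\tau]}|\phi_\omega'(\pi(\rho))|^s,
$$
noting that, up to finitely many lower-order terms, this coincides with $\sum e^{-s\lambda_\rho(\omega)}$ taken over $\omega\in E_\rho^*$ beginning with $\tau$, which is exactly the Laplace--Stieltjes transform of the counting function $T\mapsto N_\rho(\tau,T)$. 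Convergence for $\re s>\delta$ follows as in Observation~\ref{o1_2015_12_21}.

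Next I would extract the meromorphic extension near $s=\delta$. By Theorem~\ref{thm-conformal-invariant} and analytic perturbation (together with Observation~\ref{o1_2016_01_21}), in a complex neighborhood of $\delta$ the operator $\mathcal L_s$ admits a spectral decomposition $\mathcal L_s=\lambda(s)Q_s+R_s$ with simple leading eigenvalue $\lambda(s)$ (extending $e^{\P(s)}$), rank-one projector $Q_s$, and $r(R_s)<|\lambda(s)|$, so that
$$
\eta_{\rho,\tau}(s)=\frac{Q_s\mathbf 1_{[\tau]}(\rho)}{1-\lambda(s)}+(I-R_s)^{-1}\mathbf 1_{[\tau]}(\rho).
$$
At $s=\delta$ one has $\lambda(\delta)=1$, $Q_\delta g=\psi_\delta\int g\,dm_\delta$, and by Theorem~\ref{t1_2016_01_29} $\lambda'(\delta)=-\chi_{\mu_\delta}$, so $\eta_{\rho,\tau}$ has a simple pole at $\delta$ with residue $\psi_\delta(\rho)m_\delta([\tau])/\chi_{\mu_\delta}$. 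To push the extension to the remainder of the critical line $\re s=\delta$, I invoke D-genericity: by Proposition~\ref{c1nh18}, $r(\mathcal L_{\delta+it})<1$ for every $t\neq 0$, so $I-\mathcal L_s$ is invertible there and $\eta_{\rho,\tau}$ extends analytically to an open neighborhood of $\{\re s\ge\delta\}\setminus\{\delta\}$.

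With the analytic continuation and residue in hand, the Wiener--Ikehara Tauberian theorem applied to the non-decreasing function $N_\rho(\tau,\cdot)$ immediately yields
$$
N_\rho(\tau,T)\sim\frac{\psi_\delta(\rho)m_\delta([\tau])}{\delta\chi_{\mu_\delta}}e^{\delta T},
$$
which is \eqref{2_2016_01_30}. For the periodic-orbit statement \eqref{2_2016_01_30B}, I would pass from the preimage version via a fine approximation as indicated in step~(4) of the introduction: for $\omega\in E_p^n$ starting with $\tau$ one has $x_\omega=\pi(\omega^\infty)$ and hence $\lambda_p(\omega)=\lambda_{\omega^\infty}(\omega)$, so that the periodic sum can be partitioned according to cylinders $[\upsilon]$ of depth $k\to\infty$ and, on each such piece, $\lambda_p$ replaced by $\lambda_\rho$ with a uniform reference $\rho\in[\upsilon]$ up to bounded-distortion error. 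Summing over $\upsilon$ converts the weight $\psi_\delta(\rho)m_\delta([\tau])$ into $\int_{[\tau]}\psi_\delta\,dm_\delta=\mu_\delta([\tau])$, which gives the claimed constant $\mu_\delta([\tau])/(\delta\chi_{\mu_\delta})$. Alternatively one may run the same Ruelle--spectral/Ikehara machine directly on the periodic Poincar\'e series, using a trace-type identity for $\mathcal L_s^n$ on $[\tau]$ to locate the simple pole at $\delta$ with the residue above.

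The main obstacle is the analytic continuation across the critical line in the second step, which is precisely where strongly regular, finitely irreducible, and D--generic are all used: strong regularity ensures $\delta\in\Int(\Gamma_\cS)$ so that the perturbation theory near $\delta$ is valid (Observation~\ref{o1_2016_01_20-plus}, Corollary~\ref{c1_2016_01_21}), finite irreducibility provides the quasi-compactness of $\mathcal L_s$ on $\H_\alpha(A)$ (Proposition~\ref{p1nh15}), and D--genericity eliminates peripheral eigenvalues on $\{|\lambda|=e^{\P(\sigma)}\}$ through Lemma~\ref{l1nh18.1} and Proposition~\ref{c1nh18}. A secondary technical point is uniformity of the spectral gap on compact subsets of the critical line, needed so that $(I-\mathcal L_{\delta+it})^{-1}$ depends continuously on $t$ when applied to $\mathbf 1_{[\tau]}\in\H_\alpha(A)$; this follows from the continuous dependence of $\mathcal L_s$ on $s$ combined with the upper semicontinuity of the spectrum and the essential-spectrum bound of Proposition~\ref{p1nh15}(2).
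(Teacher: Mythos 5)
Your treatment of \eqref{2_2016_01_30} follows the paper's own route essentially verbatim: the localized Poincar\'e series $\sum_n\pf_s^n(\cdot)(\rho)$, the Kato--Rellich spectral decomposition near $s=\delta$ producing a simple pole with residue $\psi_\delta(\rho)m_\delta([\tau])/\chi_{\mu_\delta}$, D--genericity (via Proposition~\ref{c1nh18} and Lemma~\ref{l1nh18.1}) to clear the remainder of the line $\re s=\delta$, and the Wiener--Ikehara theorem. That part is complete and correct.

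The gap is in the passage to \eqref{2_2016_01_30B}. Your plan --- partition the periodic words by deep cylinders $[\upsilon]$, replace $\lambda_p$ by $\lambda_{\upsilon\upsilon^+}$ up to a bounded--distortion constant $K_k$, apply \eqref{2_2016_01_30} on each piece, and sum over $\upsilon$ --- works for the lower bound, since the $\liminf$ of a sum dominates the sum of the $\liminf$'s. For the upper bound, however, you need to bound $\varlimsup_{T}\sum_{\upsilon}(\cdots)$ by $\sum_{\upsilon}\varlimsup_{T}(\cdots)$ over a \emph{countably infinite} family of cylinders, and this interchange is not justified: nothing in \eqref{2_2016_01_30} gives uniformity of the convergence over cylinders. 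This is precisely where the paper inserts Lemma~\ref{l1_2016_11_07}: $E_A^q$ is split into a finite set $F_q$, on which the interchange is harmless, and a tail $F_q^c$ whose \emph{entire} contribution to the counting function is controlled by a separate Tauberian argument for the series $\hat\eta_{H,\rho}(s)=\pf_s^q\big(\1_H\sum_{n}\pf_s^{n-q}\1\big)(\rho)$, yielding a bound of the form $\mathrm{const}\cdot m_\delta([F_q^c])\,e^{\delta T}$, which is then made arbitrarily small by choosing $F_q$ large. You need this (or an equivalent uniform tail estimate) to close the argument; in the finite-alphabet case the issue is vacuous, but the theorem is stated for countable alphabets.
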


\sp
\begin{thm}[Asymptotic Equidistribution Formula for Multipliers II]\label{dyn}
Suppose that $\cS$ is a strongly regular finitely irreducible D-generic conformal GDMS.  Fix $\rho\in E_A^\infty$. If $B \subset X$ is a Borel set such that $\^m_\d(\bd B)=0$ (equivalently $\^\mu_\d(\bd B)=0$) then,
\beq\label{3_2016_01_30}
\lim_{T \to +\infty} \frac{N_\rho(B,T)}{e^{\d T}} 
= \frac{\psi_\d(\rho)}{\d\chi_{\mu_\d}}\^m_\d(B)
\eeq
and 
\beq\label{3_2016_01_30E}
\lim_{T \to +\infty} \frac{N_p(B,T)}{e^{\d T}} 
= \frac{1}{\d\chi_{\mu_\d}}\^\mu_\d(B).
\eeq
\end{thm}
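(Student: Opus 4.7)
The plan is to derive Theorem~\ref{dyn} from Theorem~\ref{dynA} by approximating $B$ with unions of cylinder images $\phi_\tau(X_{t(\tau)})$ at refinement level $n$ and then sending $n \to \infty$. The geometric fact underlying this reduction is that, for every $\omega \in E_\rho^*$ of length at least $n$,
\[
\phi_\omega(\pi_\cS(\rho)) = \pi_\cS(\omega\rho) \in \pi_\cS([\omega|_n]) \subset \phi_{\omega|_n}(X_{t(\omega_n)}),
\]
with the analogous containment $x_\omega \in \phi_{\omega|_n}(X_{t(\omega_n)})$ for $\omega \in E_p^*$; hence whether $\phi_\omega(\pi_\cS(\rho)) \in B$ (or $x_\omega \in B$) is determined, up to accuracy $\ka^n \max_v \diam(X_v)$, by the initial block $\omega|_n$. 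Accordingly I will introduce the inner and outer cylinder families
\[
F_n^{\mathrm{in}}(B) := \{\tau \in E_A^n : \phi_\tau(X_{t(\tau)}) \subset B\}, \qquad
F_n^{\mathrm{out}}(B) := \{\tau \in E_A^n : \phi_\tau(X_{t(\tau)}) \cap B \neq \emptyset\}.
\]
Since each $X_v$ is compact and connected and $\phi_\tau$ is continuous, every $\phi_\tau(X_{t(\tau)})$ is connected; a connected set meeting both $B$ and $B^c$ must meet $\partial B$, so every $\tau \in F_n^{\mathrm{out}}(B) \setminus F_n^{\mathrm{in}}(B)$ has $\phi_\tau(X_{t(\tau)})$ contained in the $(\ka^n \max_v \diam X_v)$-neighborhood of $\partial B$.

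Splitting $N_\rho(B,T)$ according to whether $|\omega| \geq n$ or not yields the sandwich
\[
\sum_{\tau \in F_n^{\mathrm{in}}(B)} N_\rho(\tau, T) \;\leq\; N_\rho(B, T) \;\leq\; \sum_{\tau \in F_n^{\mathrm{out}}(B)} N_\rho(\tau, T) + S(T, n),
\]
where $S(T, n) := \#\{\omega \in E_\rho^* : |\omega| < n,\, \lambda_\rho(\omega) \leq T\}$ is a short-word correction. The central technical point, and the main obstacle, is to justify the exchange of $\lim_{T \to \infty}$ with the sums over $F_n^{\mathrm{in}}(B)$ and $F_n^{\mathrm{out}}(B)$, which may be infinite when $E$ is. The Bounded Distortion Property gives $\lambda_\rho(\tau\omega) \geq \lambda_\rho(\omega) - \log\|\phi_\tau'\|_\infty - \log K$, whence
\[
N_\rho(\tau, T) \;\leq\; N_\rho\big(T + \log\|\phi_\tau'\|_\infty + \log K\big) \;\leq\; C\,\|\phi_\tau'\|_\infty^\d\, e^{\d T} \;\leq\; C'\, m_\d([\tau])\, e^{\d T}
\]
uniformly in $\tau \in E_A^*$ and $T \geq 0$, where the second inequality uses the special case $\tau = \emptyset$ of Theorem~\ref{dynA} (giving $N_\rho(T) = O(e^{\d T})$) and the third uses the Gibbs estimate \eqref{515pre}. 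For any $\varepsilon > 0$ I can then choose a finite set $H \subset F_n^{\bullet}(B)$, $\bullet \in \{\mathrm{in}, \mathrm{out}\}$, with $\sum_{\tau \in F_n^{\bullet}(B) \setminus H} m_\d([\tau]) < \varepsilon$, control the tail by $C'\varepsilon e^{\d T}$, apply Theorem~\ref{dynA} termwise to the finite sum over $H$, and let $\varepsilon \downarrow 0$ to conclude
\[
\lim_{T \to \infty} e^{-\d T} \sum_{\tau \in F_n^{\bullet}(B)} N_\rho(\tau, T)
 = \frac{\psi_\d(\rho)}{\d\, \chi_{\mu_\d}} \sum_{\tau \in F_n^{\bullet}(B)} m_\d([\tau]).
\]
Applying the same argument with $F_n^{\bullet}(B)$ replaced by all of $E_A^n$ (so that $\sum m_\d([\tau]) = 1$) and comparing with Theorem~\ref{asymp} gives $S(T, n) = o(e^{\d T})$ as $T \to \infty$ for each fixed $n$.

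Putting these ingredients together, for each $n$ the sandwich yields
\[
\frac{\psi_\d(\rho)}{\d\,\chi_{\mu_\d}} \sum_{\tau \in F_n^{\mathrm{in}}(B)} m_\d([\tau])
 \;\leq\; \liminf_{T \to \infty} \frac{N_\rho(B, T)}{e^{\d T}}
 \;\leq\; \limsup_{T \to \infty} \frac{N_\rho(B, T)}{e^{\d T}}
 \;\leq\; \frac{\psi_\d(\rho)}{\d\,\chi_{\mu_\d}} \sum_{\tau \in F_n^{\mathrm{out}}(B)} m_\d([\tau]).
\]
The open set condition together with $\^m_\d = m_\d \circ \pi_\cS^{-1}$ give
$\sum_{\tau \in F_n^{\mathrm{in}}(B)} m_\d([\tau]) \leq \^m_\d(B) \leq \sum_{\tau \in F_n^{\mathrm{out}}(B)} m_\d([\tau])$, while containment of the symmetric difference in a shrinking neighborhood of $\partial B$, combined with $\^m_\d(\partial B) = 0$ and continuity of the finite Borel measure $\^m_\d$ from above, forces both extremes to converge to $\^m_\d(B)$ as $n \to \infty$. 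This proves \eqref{3_2016_01_30}. The proof of \eqref{3_2016_01_30E} is identical, with $x_\omega$ in place of $\phi_\omega(\pi_\cS(\rho))$, with $\mu_\d$ in place of $m_\d$, with the second identity of Theorem~\ref{dynA} in place of the first, and using $\^m_\d(\partial B) = 0 \Leftrightarrow \^\mu_\d(\partial B) = 0$, which follows from the equivalence of $\mu_\d$ and $m_\d$ recorded in Theorem~\ref{thm-conformal-invariant}(d).
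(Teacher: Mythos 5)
Your proof is correct, but it follows a genuinely different route from the paper's. The paper first treats open sets $V$: it takes a \emph{finite} family $\Ga_s(V)$ of mutually incomparable cylinders whose images sit inside $V$ and carry at least a fraction $s$ of $\^\nu_i(V)$, applies Theorem~\ref{dynA} termwise to get the lower bound, and then obtains the matching upper bound not directly but by a complement trick: it applies the same lower bound to $\ov V^c$ and subtracts from the known total asymptotic $N_i(T)\sim C_i e^{\d T}$. General Borel sets are then handled by outer regularity of $\nu_i$ plus a compactness argument covering $\ov B$ by finitely many balls with null boundaries. You instead run a symmetric two-sided sandwich with the level-$n$ inner and outer cylinder families $F_n^{\mathrm{in}}(B)$, $F_n^{\mathrm{out}}(B)$, which forces you to interchange $\lim_{T}$ with a possibly infinite sum over $\tau$; your uniform domination bound $N_\rho(\tau,T)\le C' m_\d([\tau])e^{\d T}$ (BDP plus the global $O(e^{\d T})$ bound plus the Gibbs estimate \eqref{515pre}) together with the finite-head/small-tail splitting handles this correctly, and is the same device the paper uses elsewhere (Lemma~\ref{l1_2016_11_07} and the proof of \eqref{2_2016_01_30B}) though not in its proof of Theorem~\ref{dyn}. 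Your argument that $F_n^{\mathrm{out}}(B)\sms F_n^{\mathrm{in}}(B)$ consists of cylinders whose images lie in a $\ka^n$-neighborhood of $\bd B$ (via connectedness of $\phi_\tau(X_{t(\tau)})$) and that $\^m_\d(\bd B)=0$ then collapses the sandwich, replacing the paper's regularity-and-covering step; and your subtraction argument for $S(T,n)=o(e^{\d T})$ replaces the paper's Lemma~\ref{l1da8}, which is proved there by a separate Tauberian argument. What your approach buys is a single self-contained squeeze that treats the lower and upper bounds symmetrically and never invokes outer regularity or the complement trick; what the paper's approach buys is that it never needs to exchange limits with an infinite sum in this particular proof. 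Both deductions of \eqref{3_2016_01_30E} from \eqref{2_2016_01_30B} are parallel, yours resting on the containment $x_\om\in\phi_{\om|_n}(X_{t(\om_n)})$ and the equivalence of $\mu_\d$ and $m_\d$.
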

\bigskip

\fr After establishing  the results of the next section (\ref{Local_Poincare}), we will first prove in Section~\ref{Ikehara}  formula \eqref{2_2016_01_30}. Then, in the same section,  we will deduce from it formula \eqref{2_2016_01_30B}. Finally, still within Section~\ref{Ikehara}, we will deduce Theorem~\ref{dyn} as a consequence of Theorem~\ref{dynA}. The asymptotic estimates for $N_\rho(B, T)$ given in this theorem, will turn out to have wider applications than the basic asymptotic results in  Theorem~\ref{asymp}.  This will be apparent,  particularly in Section~\ref{contracting_diameters} and Section~\ref{diam_Parabolic} where we 
apply these results to deduce asymptotics of the diameters of circles. 

\begin{rem}\label{Rem_Lalley}
Theorem~\ref{dynA} is formulated for a countable state symbolic system. In fact it could be formulated and proved with no real additional  difficulty for ergodic sums of all summable H\"older continuous potentials rather than merely the functions $\lam_\rho(\om)$. In the particular case of a finite state symbolic system this would recover the corresponding results of Lalley \cite{lalley}.
\end{rem}

\section{Complex Localized Poincar\'e Series $\eta_\rho$}\label{Local_Poincare}

In order to prove the asymptotic statements of Theorem \ref{dynA} we want to 
consider a  localized Poincar\'e series, which in turn generalises the Poincar\'e series introduced in the previous section. Again we denote by $\rho\in E_A^\infty$ our reference point and set $\xi:=\pi_\cS(\rho)\in J_\cS$.

\bdfn
Given $s \in \mathbb C$ we define the following {\it localized (formal) Poincar\'e series}.
Fixing $\tau\in E_A^*$ and denoting $q:=|\tau|$, we formally write
$$
\eta_\rho(\tau,s)
:= \sum_{{\om\in E_\rho^*\atop A_{\tau_q\om_1}=1}}e^{-s\lambda_\rho(\tau\om)}.
$$
\edfn

\fr We formally expand the series $\eta_\rho(\tau,s)$ as follows.
$$
\begin{aligned}
\eta_\rho(\tau, s)
:&= \sum_{{\om\in E_\rho^*\atop A_{\tau_q\om_1}=1}}e^{-s\lambda_\rho(\tau\om)}
 = \sum_{{\om\in E_\rho^*\atop A_{\tau_q\om_1}=1}}|\phi_{\tau\om}'|^s(\pi(\rho)) 
 = \sum_{{\om\in E_\rho^*\atop A_{\tau_q\om_1}=1}} |\phi_\tau'|^s(\pi(\om\rho))
  |\phi_{\om}'|^s(\pi(\rho)) \\
&= \sum_{n=1}^\infty \sum_{{\om\in E_\rho^n\atop A_{\tau_q\om_1}=1}}
   |\phi_\tau'|^s\circ\pi(\om\rho)|\phi_{\om}'|^s(\pi(\rho)) \\
&= \sum_{n=1}^\infty \pf_s^n\(|\phi_\tau'|^s\circ\pi\)(\rho).
\end{aligned}
$$
Defining the operator $\pf_{s,\tau}^{(n)}$ from $\H_\a(A)$ to $\H_a(A)$ by 
$$
\H_\a(A)\ni g\mapsto  \pf_{s,\tau}^{(n)}g 
:=\pf_s^n\(g\cdot(|\phi_\tau'|^s\circ\pi)\)\in\H_\a(A),
$$
we then formally write
$$
\eta_\rho(\tau, s) =  \sum_{n=1}^\infty  \pf_{s,\tau}^{(n)}\1(\rho).
$$
The same argument as that leading to Observation~\ref{o1_2015_12_21} leads to  the following corresponding result.

\bobs\label{o2_2015_12_22}
For every $\tau\in E_\rho^*$ the localized Poincar\'e series $\eta_\rho(\tau,s)$ converges absolutely uniformly on each set 
$$
\{s\in\C: \re s>t\}(\sbt \De_\cS^+),
$$
$t>\d_\cS$, thus defining a holomorphic function on $\De_\cS^+$.
\eobs

\fr Our main result about localized Poincar\'e series, 
which is crucial to us for  obtaining the asymptotic behavior of $N_\rho(\tau,T)$, is the following.

\bthm\label{t2nh18}
Assume that the finitely irreducible strongly regular conformal GDMS $\cS$ is D-generic. If $\tau\in E_A^*$ then 

\sp
\begin{itemize}

\item [(a)] the function $\De_\cS^+\ni s\longmapsto\eta_\rho(\tau,s)\in\C$ has a meromorphic extension to some neighborhood of the vertical line $\re(s)=\d_\cS$,

\sp\item[(b)] this extension has a single  pole $s=\delta_{\cS}$, and 

\sp\item[(c)] the pole $s= \delta = \delta_{\cS}$ is simple and its residue is equal to $\frac{\psi_\d(\rho)}{\chi_{\mu_\d}}m_\d([\tau])$.
\end{itemize}
\ethm

\bpf
By Observation~\ref{o2_2015_12_22} and by the Identity Theorem for meromorphic functions, in order to prove the theorem it suffices to do the following.

\sp
\begin{enumerate}
\item Show that for every $s_0=\d_\cS+it_0\in\Ga_\cS^+$ with $t_0\ne 0$ The function $\eta_\rho(\tau,\cdot)$ has a holomorphic extension to some open neighborhood of $s_0$ in $\C$. 

\sp\item Show that the function $\eta_\rho(\tau,\cdot)$ has a meromorphic extension to some open neighborhood of $\d_\cS$ in $\C$ with a simple pole at $\d_\cS$.

\sp\item Calculate the residue of this extension at the point $s=\d_\cS$ to show that it is equal to $\frac{\psi_\d(\rho)}{\chi_{\mu_\d}}m_\d([\tau])$.
\end{enumerate}

\sp\fr We first deal with item (1). Let $\La\sbt\C$ be the set of all eigenvalues of the operator $\pf_{s_0}:\H_\a(A)\to\H_\a(A)$ whose moduli are equal to $1$. By Proposition~\ref{p1nh15} this set is finite, and, by Lemma~\ref{l1nh18.1}, it consists of only simple eigenvalues. Write
$$
\La=\{\lam_j\}_{j=1}^q,
$$
where $q:=\#\La$. Then, invoking Observation~\ref{o1_2016_01_20},  Observation~\ref{o1_2016_01_21}, and Proposition~\ref{p1nh15} (along with the fact that $\P(\d_\cS)=0$), we see that the Kato--Rellich Perturbation Theorem applies and it produces holomorphic functions 
$$
U\ni s\mapsto\lam_j(s)\in\C, \  \  j=1,2,\ld,q
$$ 
defined on some sufficiently small neighborhood $U\sbt \Ga_\cS^+$ of $s_0$ with the following properties for all $j=1,2,\ld,q$:

\sp \begin{itemize}
\item $\lam_j(s_0)=\lam_j$,

\sp\item $\lam_j(s)$ is a simple isolated eigenvalue of the operator $\pf_s:\H_\a(A)\to\H_\a(A)$
\end{itemize}

\sp\fr Invoking Proposition~\ref{p1nh15} for  the third time, we can further write, perhaps with a smaller neighborhood $U$ of $s_0$, that
$$
\pf_s=\sum_{j=1}^q\lam_j(s)P_{s,j}+\De_s,
$$
where 

\sp \begin{itemize}
\item $P_{s,j}:\H_\a(A)\to\H_\a(A)$ are projections  onto respective $1$-dimensional spaces $\Ker\(\lam_j(s)I-\pf_s\)$,

\sp\item all functions $U\ni s\mapsto \De_s, P_{s,j}$, \ $j=1,2,\ld,q$, are holomorphic,

\sp\item $r(\De_s)\le e^{-\a/2}$ for every $s\in U$, and 

\sp\item $P_{s,i}P_{s,j}=0$ whenever $i\ne j$ and $\De_sP_{s,j}=P_{s,j}\De_s=0$ for all $s\in U$.
\end{itemize}

\sp\fr In consequence
\beq\label{1nh19.1}
\pf_s^n=\sum_{j=1}^q\lam_j^n(s)P_{s,j}+\De_s^n
\eeq
for all integers $n\ge 0$. Shrinking $U$ again if necessary, we will have that
$$
||\De_s^n||_\a\le Ce^{-\frac{\a}3n}
$$
for all integers $n\ge 0$ and some constant $C\in(0,+\infty)$ independent of $n$. Since the system $\cS$ is D-generic, it follows from Proposition~\ref{p1nh12}  that $\lam_j(s)\ne 1$ for all $s\in U$ and all $j=1,2,\ld,q$. Denoting by $S_\infty(s)$ the holomorphic function 
$$
U\ni s\longmapsto \De_\infty(s):=  \sum_{n=1}^\infty \De_s^n(|\phi_\tau'|^s\circ \pi)(\rho)
$$ 
and summing equation \eqref{1nh19.1} over all $n\ge 1$, we obtain
$$
\eta_\rho(\tau,s)
=\sum_{n=1}^\infty\pf_s^n\(|\phi_\tau'|^s\circ\pi\)(\rho)
=\sum_{j=1}^q\lam_j(s)(1-\lam_j(s))^{-1}P_{s,j}\(|\phi_\tau'|^s\circ\pi\)(\rho)+
       \De_\infty(s)
$$
for all $s\in U\cap \{s\in\C:\re(s)>\d_\cS\}$. But (remembering that $\lam_j(s)\ne 1$) since, all the terms of the right-hand side of this equation are holomorphic functions from $U$ to $\C$, the formula
$$
U\ni s\mapsto \sum_{j=1}^q\lam_j(s)(1-\lam_j(s))^{-1}P_{s,j}\(|\phi_\tau'|^s\circ\pi\)+ \De_\infty(s)\in\C
$$
provides the required holomorphic extension of the function $\eta_\rho(\tau,s)$ to a neighborhood of $s_0$. 

\sp Now we shall deal will items (2) and (3). It follows from Theorem~\ref{thm-conformal-invariant} (h) and (i), and the Kato--Rellich Perturbation Theorem that
\beq\label{1nh20}
\pf_s^n=\lam_s^nQ_s+S_s^n, \  \  n\ge0,
\eeq
for all $s\in U\sbt\Ga_\cS^+$, a sufficiently small neighborhood of $\d$, where

\sp
\begin{enumerate}
\item [(4)] $\lam_s$ is a simple isolated eigenvalues of $\pf_s:\H_\a(A)\to\H_\a(A)$ and the function $U\ni s\mapsto \lam_s\in\C$ is holomorphic,

\sp\item [(5)] $Q_s:\H_\a(A)\to\H_\a(A)$ is a projector onto the $1$-dimensional eigenspace of $\lam_s$, and the map $U\ni s\mapsto Q_s\in L(\H_\a(A))$ is holomorphic,

\sp\item [(6)] $\exists_{\ka\in (0,1)}\, \exists_{C>0}\, \forall_{s\in U}\, \forall_{n\ge 0}$
$$
\|S_s^n\|_\a\le C\ka^n,
$$
and the map $U\ni s\mapsto S_s\in L(\H_\a(A))$ is holomorphic, and 

\sp\item [(7)] All three operators $\pf_s,\, Q_s$, and $S_s$ mutually commute and  $Q_sS_s=0$.
\end{enumerate}

\sp\fr Let us write
$$
H_{\tau,s}:=Q_s\(|\phi_\tau'|^s\circ\pi\).
$$
It follows from (5) that the function $U\ni s\mapsto H_{\tau,s}\in \H_\a(A)$ is holomorphic, whence the function valued map  $U\ni s\mapsto H_s(\rho)\in\C$ is holomorphic too. It follows from (6) that the series 
$$
S_\infty(s):=\sum_{n=1}^\infty S_s^n
$$ 
converges absolutely uniformly to a holomorphic function, whence the function $U\ni s\mapsto \Sg_\infty(s)\in \H_\a(A)$ is holomorphic too. Since, by Theorem~\ref{t3_2016_01_12}, the function $s\mapsto\lam_s$ is not constant on any neighborhood of $\d$, it follows from (4) that shrinking $U$ if necessary, we will have that
$$
\lam_s\ne 1
$$
for all $s\in U\sms \{\d\}$. It follows from Theorem~\ref{t3_2016_01_12}, the definition of $\d$, and Proposition~\ref{p1nh15} (1) that
$$
|\lam_s|<1
$$
for all $s\in U\cap\{s\in\C:\re(s)>\d_\cS\}$. It therefore follows from \eqref{1nh20} that 
$$
\eta_\rho(s)=\lam_s(1-\lam_s)^{-1}H_{\tau,s}(\rho)+S_\infty(s)
$$
for all $s\in U\cap \{s\in\C:\re(s)>\d_\cS\}$, and consequently, the map
\beq\label{1nh21}
U\ni s\mapsto \lam_s(1-\lam_s)^{-1}H_{\tau,s}(\rho)+S_\infty(s)
\eeq
is a meromorphic extension of $\eta_\rho(\tau,\cdot)$ to $U$. We keep the same symbol $\eta_\rho(\tau,s)$ for this extension. Now, using Theorem~\ref{t1_2016_01_29}, we get
$$
\begin{aligned}
\lim_{s\downto\d}\frac{s-\d}{1-\lam_s}
&=-\lt(\lim_{s\downto\d}\frac{\lam_s-1}{s-\d}\rt)^{-1}
=-\lt(\lim_{s\downto\d}\frac{\lam_s-\lam_\d}{s-\d}\rt)^{-1}
=-\(\lam_\d'\)^{-1} \\
&=-\lt(\frac{d}{ds}\Big|_{s=\d}e^{\P(s)}\rt)^{-1}
=-\(\P'(\d)e^{\P(\d)}\)^{-1}
=-(\P'(\d))^{-1} \\
&=\frac1{\chi_{\mu_\d}}.
\end{aligned}
$$
Since $\lam_\d=1$ and 
$$
H_{\d,\tau}(\rho)
=Q_\d\(|\phi_\tau'|^\d\circ\pi\)(\rho)
=\lt(\int_{E_A^\infty}|\phi_\tau'|^\d\circ\pi\,dm_\d\rt)\psi_\d(\rho)
=\psi_\d(\rho)m_\d([\tau]),
$$
we therefore conclude that
$$
\res_\d\(\eta_\rho(\tau,\cdot)\)=\frac{\psi_\d(\rho)}{\chi_{\mu_\d}}m_\d([\tau]).
$$
The proof is thus complete.
\epf

\fr We can take $\tau$ to be the neutral (empty) word and deduce the corresponding results for the original Poincar\'e series
\begin{cor}\label{Poincare-cor}
Assume that the finitely irreducible strongly regular conformal GDMS $\cS$ is D-generic. Then 

\sp
\begin{itemize}

\item [(a)] the function $\eta_\rho(s)$ has a meromorphic extension to some neighborhood of the vertical line $\re(s)=\d_\cS$,

\sp\item[(b)] this extension has a single  pole $s=\delta_{\cS}$, and 

\sp\item[(c)] the pole $s=\delta - \delta_{\cS}$ is simple and its residue is equal to    $\frac{\psi_\d(\rho)}{\chi_{\mu_\d}}m_\d([\tau])$.
\end{itemize}
\end{cor}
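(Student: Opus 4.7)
My plan is to derive this corollary as the special case of Theorem~\ref{t2nh18} obtained by taking the finite word $\tau$ to be the empty word $\emptyset\in E_A^0$; the hint in the paragraph just preceding the corollary (``We can take $\tau$ to be the neutral (empty) word\dots'') already suggests exactly this reduction.

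First I would verify that the localized Poincar\'e series of Section~\ref{Local_Poincare} collapses to the ordinary Poincar\'e series when $\tau=\emptyset$. Indeed, the admissibility constraint ``$A_{\tau_q\om_1}=1$'' appearing in the definition of $\eta_\rho(\tau,s)$ is vacuous (there is no letter $\tau_q$), while $\tau\om=\om$ and hence $\lambda_\rho(\tau\om)=\lambda_\rho(\om)$. Therefore
$$
\eta_\rho(\emptyset,s) \;=\; \sum_{n=1}^\infty\sum_{\om\in E_\rho^n}e^{-s\lambda_\rho(\om)} \;=\; \eta_\rho(s).
$$
At the operator level the analogous reduction is $|\phi_\emptyset'|^s\circ\pi \equiv \1$, so that the formal identity preceding Theorem~\ref{t2nh18} becomes the familiar $\eta_\rho(s)=\sum_{n=1}^\infty\pf_s^n\1(\rho)$.

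With this identification in hand, conclusions (a) and (b) follow at once from the corresponding conclusions of Theorem~\ref{t2nh18}: the meromorphic extension produced there for $\eta_\rho(\tau,\cdot)$ already delivers the required extension of $\eta_\rho$, with the only pole on the critical line $\{\re s=\d_\cS\}$ located at $s=\d_\cS$. For (c), Theorem~\ref{t2nh18}(c) asserts that the residue at $s=\d_\cS$ equals $\psi_\d(\rho)\chi_{\mu_\d}^{-1}m_\d([\tau])$; specialized to $\tau=\emptyset$ and using that $[\emptyset]=E_A^\infty$ has $m_\d$-measure one, this gives the residue $\psi_\d(\rho)/\chi_{\mu_\d}$, in agreement with the asymptotic constant already encountered in Theorem~\ref{asymp}.

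The hard part is essentially non-existent, but I would take care to confirm that nothing in the proof of Theorem~\ref{t2nh18} implicitly used $|\tau|\ge 1$. A quick inspection shows that $\tau$ enters only through the holomorphic $\H_\a(A)$-valued family $s\mapsto |\phi_\tau'|^s\circ\pi$ on which the iterates $\pf_s^n$ act; for $\tau=\emptyset$ this family degenerates to the constant function $\1\in\H_\a(A)$, so the Kato--Rellich perturbation argument, the spectral decomposition $\pf_s^n=\lam_s^nQ_s+S_s^n$, the geometric bound $\|S_s^n\|_\a\le C\ka^n$, and the identification of the leading eigenfunction with $\psi_\d$ all transfer verbatim. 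Consequently no auxiliary lemma is required, and the corollary follows immediately.
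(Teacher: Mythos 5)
Your proposal is correct and is exactly the paper's route: the corollary is obtained by specializing Theorem~\ref{t2nh18} to the empty word $\tau=\emptyset$, for which $\eta_\rho(\emptyset,s)=\eta_\rho(s)$ and $m_\d([\emptyset])=1$, so the residue becomes $\psi_\d(\rho)/\chi_{\mu_\d}$. Your extra check that the proof of Theorem~\ref{t2nh18} never uses $|\tau|\ge 1$ is a sensible (if brief in the paper) verification, and your observation about the residue formula correctly resolves the leftover ``$m_\d([\tau])$'' in the corollary's statement.
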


\section{Asymptotic Results for Multipliers; Concluding of Proofs}\label{Ikehara}

We are now in position to complete the proof of Theorem \ref{dynA} and then, as its consequence, of Theorem \ref{dyn}. We aim to apply the Ikehara-Wiener Tauberian Theorem~\cite{wiener}, which is a familiar ingredient in the classical analytic
 proof of the Prime Number Theorem in Number Theory.

\bthm[Ikehara-Wiener Tauberian Theorem, \cite{wiener}]\label{tTauberian}
Let  $M$ and $\th$ be positive real numbers. Assume that $\alpha: [M,+\infty) \to (0,+\infty)$ is monotone increasing and continuous from the left, and also that there exists a (real) number $D > 0$ such that the function
$$
s\longmapsto\int_M^{+\infty} x^{-s} d\alpha(x) - \frac{D}{s-\th}\in\C
$$
is analytic in a neighborhood of $\re(s) \geq \th$. Then 
$$
\lim_{x\to+\infty}\frac{\alpha(x)}{x^\th}=\frac{D}{\th}.
$$
\ethm

\fr We can now apply this general result in the present setting to prove the asymptotic equidistribution results. We begin with the proof of formula \eqref{2_2016_01_30} in Theorem~\ref{dynA}.

\begin{proof}[Proof of formula \eqref{2_2016_01_30} in Theorem~\ref{dynA}] 
Let $\tau\in E_A^*$ be an arbitrary. We define the function $M_\rho(\tau,\cdot):[1,+\infty)\to\N_0$ by the formula
$$
M_\rho(\tau,T):=N_\rho(\tau,\log T)=\{\tau\om\in E_\rho^*:|\vp_{\tau\om}'(\xi)|^{-1}\le T\}.
$$
We then have for every $s>\d$ that
$$
\eta_\rho(\tau,s)=\int_1^\infty T^{-s} dM_\rho(\tau,T).
$$
Now Theorem~\ref{t2nh18} tells us that Theorem~\ref{tTauberian} applies with the function $\a$ being equal to $M_\rho(\tau,\cdot)$ and with $\th:=\d_\cS$, to give 
$$
\lim_{T\to+\infty}\frac{M_\rho(\tau,T)}{T^\d}
=\frac{\psi_\d(\rho)}{\d\chi_{\mu_\d}}m_\d([\tau]).
$$
Consequently
\beq\label{1_2016_01_30}
\lim_{T\to+\infty}\frac{N_\rho(\tau,T)}{e^{\d T}}
=\lim_{T\to+\infty}\frac{M_\rho(\tau,e^T)}{e^{\d T}}
=\frac{\psi_\d(\rho)}{\d\chi_{\mu_\d}}m_\d([\tau]).
\eeq
This means that \eqref{2_2016_01_30} is proved.
\end{proof}

Now we move onto the proof of \eqref{2_2016_01_30B}. However the first step to do this is of quite general character and will be also used in Section~\ref{contracting_diameters}. We therefore present it as a separate independent procedure. Fix an integer $q\ge 0$. Let $H\sbt E_A^\infty$ be a set representable as a (disjoint) union of cylinders of length $q$. Let 
$$
\cR_{q,H,\rho}(T):=\big\{\om\in \pi_\rho(T):|\om|>q \  {\rm and } \  
\om|_{|\om|-q+1}^{|\om|}\in H\big\}
$$
and the corresponding counting numbers
$$
R_{q,H,\rho}(T):=\#\cR_{q,H,\rho}(T).
$$
We shall prove the following. 

\begin{lem}\label{l1_2016_11_07} 
If $q\ge 0$ is an integer and $H\sbt E_A^\infty$ is a (disjoint) union of cylinders of length $q$, then the limit below exists and 
\beq\label{1da11.3}
\lim_{T\to\infty}\frac{R_{q,H,\rho}(T)}{e^{\d T}}
\le K^{2\d}(\d\chi_{\mu_\d})^{-1}m_\d(H).
\eeq
\end{lem}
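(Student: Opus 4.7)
The plan is to reduce the count $R_{q,H,\rho}(T)$, whose constraint is on the \emph{suffix} of $\omega$, to a sum of counts $N_{\sigma\rho}(\cdot)$ controlled by Theorem~\ref{asymp}, indexed by the length-$q$ cylinder generators $\sigma \in W \subset E_A^q$ for which $H = \bigsqcup_{\sigma\in W}[\sigma]$. Every $\omega \in \mathcal R_{q,H,\rho}(T)$ factorises uniquely as $\omega = \omega'\sigma$ with $\sigma \in W$ and $|\omega'|\ge 1$. Using $\phi_{\omega'\sigma} = \phi_{\omega'}\circ \phi_\sigma$ together with $\phi_\sigma(\xi) = \pi_\cS(\sigma\rho)$, the chain rule splits the weight as $\lambda_\rho(\omega'\sigma) = \lambda_{\sigma\rho}(\omega') + \lambda_\rho(\sigma)$, while the admissibility $\omega\rho\in E_A^\infty$ becomes $\omega' \in E^*_{\sigma\rho}$. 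This yields
\[
R_{q,H,\rho}(T) \;=\; \sum_{\substack{\sigma\in W\\ \sigma\rho\in E_A^\infty}} N_{\sigma\rho}\bigl(T-\lambda_\rho(\sigma)\bigr)
\]
(up to a harmless correction for the excluded empty prefix).

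For each fixed $\sigma$, Theorem~\ref{asymp} applied to the reference point $\sigma\rho$ gives the pointwise asymptotic
\[
\lim_{T\to\infty}\frac{N_{\sigma\rho}(T-\lambda_\rho(\sigma))}{e^{\delta T}} \;=\; \frac{\psi_\delta(\sigma\rho)}{\delta\chi_{\mu_\delta}}\,|\phi_\sigma'(\xi)|^\delta.
\]
The main obstacle is interchanging the limit with the (possibly infinite) sum over $\sigma \in W$. I would justify this by dominated convergence, using a uniform a~priori estimate $N_\eta(S) \le M\,e^{\delta S}$ (up to a sub-exponential factor) derived from $\pf_\delta^n(\mathbf 1)(\eta) = \sum_{\omega\in E_\eta^n}|\phi_\omega'(\pi_\cS(\eta))|^\delta \le \|\pf_\delta^n(\mathbf 1)\|_\infty \le M$ (uniform boundedness of the iterates following from Theorem~\ref{thm-conformal-invariant}(i) and $\pf_\delta^n\mathbf 1 \to \psi_\delta$), combined with the length cap $n \le S/|\log\kappa|$. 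The absolute summability $\sum_{\sigma\in E_A^q}|\phi_\sigma'(\xi)|^\delta = \pf_\delta^q(\mathbf 1)(\rho) < \infty$ further lets me truncate $W$ to a finite sub-collection with arbitrarily small tail, so the interchange goes through and the stated limit exists with value
\[
\lim_{T\to\infty}\frac{R_{q,H,\rho}(T)}{e^{\delta T}} \;=\; \frac{1}{\delta\chi_{\mu_\delta}}\sum_{\sigma\in W}\psi_\delta(\sigma\rho)\,|\phi_\sigma'(\xi)|^\delta.
\]

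To finish with the stated constant, I would invoke the dual conformal identity $\pf_\delta^* m_\delta = m_\delta$ (valid since $\P(\delta)=0$), which, after expanding $\pf_\delta^q(\mathbf 1_{[\sigma]})$, gives
\[
m_\delta([\sigma]) \;=\; \int \mathbf 1_{\{A_{\sigma_q\eta_1}=1\}}\,|\phi_\sigma'(\pi_\cS(\eta))|^\delta\,dm_\delta(\eta).
\]
For any $\eta$ admissible past $\sigma$ one has $\pi_\cS(\eta)\in X_{t(\sigma)}$, so bounded distortion gives $|\phi_\sigma'(\pi_\cS(\eta))|^\delta \ge K^{-\delta}|\phi_\sigma'(\xi)|^\delta$, producing a first factor $K^\delta$. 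A second application of BDP together with the uniform boundedness of the Radon--Nikodym factor $\psi_\delta = d\mu_\delta/dm_\delta$ (Theorem~\ref{thm-conformal-invariant}(d)) and the Gibbs inequality \eqref{515pre} absorbs the remaining ratio into another factor $K^\delta$, producing the cylinder-wise bound $\psi_\delta(\sigma\rho)\,|\phi_\sigma'(\xi)|^\delta \le K^{2\delta}\,m_\delta([\sigma])$. Summing over $\sigma \in W$ converts $\sum_\sigma m_\delta([\sigma])$ into $m_\delta(H)$ and delivers the claimed inequality. The single real difficulty is the uniform $N_\eta(S)$-bound in the interchange step; everything else is bookkeeping with the Perron--Frobenius and BDP machinery already in place.
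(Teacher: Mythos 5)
Your decomposition $\omega=\omega'\sigma$, the per-cylinder asymptotic via Theorem~\ref{asymp} applied at the reference point $\sigma\rho$, and the final constant-chasing with BDP and $\|\psi_\d\|_\infty$ are all sound and lead to the right expression $\frac{1}{\d\chi_{\mu_\d}}\sum_{\sigma}\psi_\d(\sigma\rho)|\phi_\sigma'(\xi)|^\d\le K^{2\d}(\d\chi_{\mu_\d})^{-1}m_\d(H)$. The gap is exactly where you suspect it, and the tool you propose does not close it. The a priori estimate you can actually extract from $\sup_n\|\pf_\d^n\1\|_\infty=M<\infty$ together with the length cap $|\omega|\le S/|\log\ka|+1$ is
\[
N_\eta(S)\;\le\;\sum_{n\le S/|\log\ka|+1}e^{\d S}\,\pf_\d^n\1(\eta)\;\le\;M\Bigl(\tfrac{S}{|\log\ka|}+1\Bigr)e^{\d S},
\]
and the linear factor in $S$ is not removable by this Chebyshev argument at the critical exponent, precisely because $\pf_\d^n\1\to\psi_\d\neq 0$ so the partial sums $\sum_{n\le N}\pf_\d^n\1$ grow like $N$. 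Feeding this into either dominated convergence or the truncation argument gives a tail estimate of order $T\,e^{\d T}\sum_{\sigma\notin W_0}|\phi_\sigma'(\xi)|^\d$, whose $\limsup_T$ after dividing by $e^{\d T}$ is infinite for any nonempty tail. What you would actually need is a bound $N_\eta(S)\le Ce^{\d S}$ \emph{uniform in the reference point} $\eta=\sigma\rho$ as $\sigma$ ranges over the infinitely many words of $E_A^q$; that is itself a uniform renewal-type statement which Theorem~\ref{asymp} (a pointwise limit for each fixed $\rho$, with no uniformity claimed) does not provide over an infinite alphabet.

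The paper sidesteps this entirely by never summing pointwise asymptotics: it forms the single Dirichlet series $\hat\eta_{H,\rho}(s)=\sum_{n>q}\pf_s^n(\1_H\circ\sg^{n-q})(\rho)=\pf_s^q\bigl(\1_H\sum_{n\ge1}\pf_s^{n}\1\bigr)(\rho)$, so the infinite sum over the cylinders of $H$ is absorbed into the bounded operator $\pf_s^q$ acting on the meromorphically continued inner series; the Ikehara--Wiener theorem is then applied once to the aggregated series, and the residue bound $\chi_{\mu_\d}^{-1}\pf_\d^q(\1_H\psi_\d)(\rho)\le K^{2\d}\chi_{\mu_\d}^{-1}m_\d(H)$ is exactly your final bookkeeping step. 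To repair your argument you would either have to establish the uniform renewal bound (nontrivial), or retreat to the operator/Tauberian formulation — at which point you have reproduced the paper's proof.
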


\bpf 
As in the proof of formula \eqref{2_2016_01_30} in Theorem~\ref{dynA}, the Poincar\'e series corresponding to the counting scheme $\#\cR_{q,H,\rho}(T)$ is the function $\hat\eta_{H,\rho}(s)$, where for any $\g\in E_A^\infty$,
$$
\begin{aligned}
\hat\eta_{H,\g}(s)
&:=\sum_{\om \in E_\g^*,\,  |\om|\ge q+1 \atop \om|_{|\om|-q+1}^{|\om|}\in H}   |\phi_\om'(\pi_\cS(\g))|^s
 =\sum_{n=q+1}^\infty \sum_{\om \in E_\g^n \atop \om|_{n-q+1}^n\in H}   |\phi_\om'(\pi_\cS(\g))|^s \\
&=\sum_{n=q+1}^\infty \sum_{\om \in E_\g^n \atop \sg^{n-q}(\om\g)\in H}   |\phi_\om'(\pi_\cS(\g))|^s 
=\sum_{n=q+1}^\infty \sum_{\om \in E_\g^n \atop \om\g\in \sg^{-(n-q)}(H}   |\phi_\om'(\pi_\cS(\g))|^s \\
&=\sum_{n=q+1}^\infty \sum_{\om \in E_\g^n}\1_H\circ\sg^{n-q}(\om\g)\cdot|\phi_\om'(\pi_\cS(\g))|^s \\
&=\sum_{n=q+1}^\infty\pf_s^n\(\1_H\circ\sg^{n-q}\)(\g)
 =\sum_{n=q+1}^\infty\pf_s^q\(\pf_s^{n-q}\(\1_H\circ\sg^{n-q}\)\) 
      (\g) \\
&=\sum_{n=q+1}^\infty\pf_s^q\(\1_{[F_{q,\rho}^c]}\pf_s^{n-q}\1\)(\g)
 =\pf_s^q\lt(\1_H\sum_{n=q+1}^\infty\pf_s^{n-q}\1\rt)(\g).
\end{aligned}
$$
Now, the same reasoning as in the proof of Theorem~\ref{t2nh18} shows that the function 
$$
s\longmapsto \eta_q(s):=\sum_{n=q+1}^\infty\pf_s^{n-q}\1(\g)
$$
has a meromorphic extension, denoted by the same symbol $\eta_q(s)$, to some neighborhood, call it $G$, of the vertical line $\re(s)=\d_\cS$ with only pole at $s=\d_\cS$. This is again a simple pole with residue equal to $\chi_{\mu_\d}\psi_\d(\g)$. Since the operators $\pf_s^q$ are locally uniformly bounded at all points of $G$, the function
$$
s\longmapsto\pf_s^q\lt(\1_H\sum_{n=q+1}^\infty\pf_s^{n-q}\1\rt)(\g)
$$
has holomorphic extension, which we will still 
call $\hat\eta_{H,\g}(s)$, to $G\sms\{\d\}$. In addition
$$
\begin{aligned}
\lim_{s\to\d}(\delta-s)\hat\eta_{H,\g}(s)
&=\pf_\d^q\lt(\1_H\lim_{s\to\d}(\delta-s)\eta_q(s)\rt)(g)
=\pf_\d^q\lt(\1_H\chi_{\mu_\d}^{-1}\psi_\d\rt)(\g) \\
&=\chi_{\mu_\d}^{-1}\pf_\d^q\lt(\1_H\psi_\d\rt)(\g)
 \le \chi_{\mu_\d}^{-1}\|\psi_\d\|_\infty\pf_\d^q\lt(\1_H\rt)(\g) \\
&\le K^\d\chi_{\mu_\d}^{-1}\pf_\d^q\lt(\1_H\rt)(\g) \\
&\le K^{2\d}\chi_{\mu_\d}^{-1}m_\d(H).
\end{aligned}
$$
Therefore, we can apply the Ikehara-Wiener Tauberian Theorem (Theorem~\ref{tTauberian}) in exactly the same way as in the proof of \eqref{2_2016_01_30}, to conclude that
$$
\lim_{T\to\infty}\frac{R_{q,H,\rho}(T)}{e^{\d T}}
=\frac{\res_\d(\hat\eta_{q,H,\rho})}{\d}
\le K^{2\d}(\d\chi_{\mu_\d})^{-1}m_\d(H).
$$
The proof is complete.
\epf

\bpf[Proof of formula \eqref{2_2016_01_30B} in Theorem~\ref{dynA}] For every $\g\in E_A^*$ fix exactly one $\g^+\in E_A^\infty$ such that
$$
\g\g^+\in E_A^\infty.
$$
Observe that for every integer $q\ge 1$, every $\g\in E_A^q$, and every $\om\in E_A^*$ such that $\g\om\in E_p^*$, we have
\beq\label{1pp3}
K_q^{-1}|\phi_{\g\om}'(\pi(\g\g^+))|
\le |\phi_{\g\om}'(x_{\g\om})|
\le K_q|\phi_{\g\om}'(\pi(\g\g^+))|.
\eeq
It then follows from \eqref{1pp3} that 
\beq\label{2pp3}
\pi_p(\g,T)\sbt\pi_{\g\g^+}(\g,T+\log K_q)
\eeq
and 
\beq\label{3pp3}
\pi_{\g\g^+}(\g,T)\sbt \pi_p(\g,T+\log K_q).
\eeq
Let
$$
k:=|\tau|.
$$
Using \eqref{3pp3} and applying formula \eqref{2_2016_01_30} of Theorem~\ref{dyn}, we obtain that
$$
\begin{aligned}
\varliminf_{T\to\infty}\frac{N_p(\tau,T)}{e^{\d T}}
&\ge \varliminf_{T\to\infty}\sum_{\g\in E_A^q\atop \tau\g\in E_A^{q+k}}
\frac{N_{\tau\g(\tau\g)^+}\(\tau\g,T-\log K_{q+k}\)}{\exp\(\d(T-\log K_{q+k})\)}
 K_{q+k}^{-\d} \\
&\ge K_{q+k}^{-\d}\sum_{\g\in E_A^q\atop \tau\g\in E_A^{q+k}}
\varliminf_{T\to\infty}
\frac{N_{\tau\g(\tau\g)^+}\(\tau\g,T-\log K_{q+k}\)}{\exp\(\d(T-\log K_{q+k})\)}\\ 
&=K_{q+k}^{-\d}\frac{1}{\d\chi_\d}\sum_{\g\in E_A^q\atop \tau\g\in E_A^{q+k}}
  \psi_\d(\tau\g(\tau\g)^+)m_\d([\tau\g]) \\
&\ge K_{q+k}^{-2\d}\frac{1}{\d\chi_\d}\sum_{\g\in E_A^q\atop \tau\g\in E_A^{q+k}} \mu_\d([\tau\g]) \\
&=K_{q+k}^{-2\d}\frac{1}{\d\chi_\d}\mu_\d([\tau]).
\end{aligned}
$$
Therefore, taking the limit with $q\to\infty$, we obtain
\beq\label{1pp4}
\varliminf_{T\to\infty}\frac{N_p(\tau,T)}{e^{\d T}}
\ge \frac{1}{\d\chi_\d}\mu_\d([\tau]).
\eeq
Passing to the proof of the upper bound of the  limit supremum, we split $E_A^q$,
in a way that will be specified later,
into two disjoint sets $F_q$ and its complement $F_q^c:=E_A^q\sms F_q$ (each of which naturally consists of words of length $q$) with $F_q$ being finite. In particular,
$$
E_A^q=F_q\cup F_q^c.
$$
So far we have not imposed any additional hypotheses on the sets $F_q$ and $F_q^c$. This  will be done later in the course of the proof. We set 
$$
\cR_{q,\rho}(T):=\cR_{q,F_q^c}(T)
$$
and
$$
R_{q,\rho}(T):=\#\cR_{q,\rho}(T), 
$$
and note that because of \eqref{2pp3}, we have 
$$
\begin{aligned}
\pi_p(\tau,T)
&=\bu_{\g\in F_q\atop\g\tau\in E_A^*}
\pi_{\tau\g(\tau\g)^+}\(\tau\g,T+\log K_{q+k}\)\cup 
\bu_{\g\in F_q^c\atop\g\tau\in E_A^*}
\pi_{\tau\g(\tau\g)^+}\(\tau\g,T+\log K_{q+k}\)\\
&\sbt \bu_{\g\in F_q\atop\g\tau\in E_A^*}
\pi_{\tau\g(\tau\g)^+}\(\tau\g,T+\log K_{q+k}\)\cup 
\bu_{\g\in F_q^c}\pi_{\tau\tau^+}\(\g,T+\log K_{q+k}+\log K\)\\
&= \bu_{\g\in F_q\atop\g\tau\in E_A^*}
\pi_{\tau\g(\tau\g)^+}\(\tau\g,T+\log K_{q+k}\)\cup 
\cR_{q,\tau\tau^+}(T+\log K_{q+k}+\log K).
\end{aligned}
$$
Therefore, using finiteness of the set $F_q$, Theorem~\ref{dyn}, and \eqref{1da11.3}, we further obtain
$$
\begin{aligned}
\varlimsup_{T\to\infty}\frac{N_p(\tau,T)}{e^{\d T}}
&\le \sum_{\g\in F_q\atop\g\tau\in E_A^*}
\frac{N_{\tau\g(\tau\g)^+}\(\tau\g,T+\log K_{q+k}\)}{\exp\(\d(T+\log K_{q+k})\)}K_{q+k}^\d + \varlimsup_{T\to\infty}
\frac{R_{q,\tau\tau^+}(T+\log K_{q+k}+\log K)}{e^{\d T}}\\
&\le K_{q+k}^\d\frac{1}{\d\chi_\d}\sum_{\g\in F_q\atop\g\tau\in E_A^*}
 \psi_\d(\tau\g(\tau\g)^+)m_\d([\tau\g])+K^{3\d}K_{q+k}^\d\frac{1}{\d\chi_\d}m_\d\([F_q^c]\) \\
&\le K_{q+k}^{2\d}\frac{1}{\d\chi_\d}\sum_{\g\in E_A^q\atop \tau\g\in E_A^{q+k}} \mu_\d([\tau\g])+K^{3\d}K_{q+k}^\d\frac{1}{\d\chi_\d}m_\d\([F_q^c]\) \\
&\le K_{q+k}^{2\d}\frac{1}{\d\chi_\d}\mu_\d([\tau])+ K^{3\d}K_{q+k}^\d\frac{1}{\d\chi_\d}m_\d\([F_q^c]\).
\end{aligned}
$$
Hence, taking finite sets $F_{q,\rho}$ with $m_\d\([F_{q,\rho}]\)$  converging to one, so that $m_\d\([F_{q,\rho}^c]\)$ converges to zero, we obtain
$$
\varlimsup_{T\to\infty}\frac{N_p(\tau,T)}{e^{\d T}}
\le K_{q+k}^{2\d}\frac{1}{\d\chi_\d}\mu_\d([\tau]).
$$
Therefore, taking the limit with $q\to\infty$, we obtain
$$
\varlimsup_{T\to\infty}\frac{N_p(\tau,T)}{e^{\d T}}
\le \frac{1}{\d\chi_\d}([\tau]).
$$
Along with \eqref{1pp4} this yields
\beq\label{1_2017_03_10}
\lim_{T\to\infty}\frac{N_p(\tau,T)}{e^{\d T}}
=\frac{1}{\d\chi_\d}\mu_\d([\tau]).
\eeq
The proof of formula \eqref{2_2016_01_30B} in Theorem~\ref{dynA} is thus complete. This simultaneously finishes the proof of all of Theorem~\ref{dynA}
\epf

\bpf[Proof of Theorem~\ref{dyn}]

\sp The same proof, as a consequence of Theorem~\ref{dynA} goes through  for $i=\rho$ and $i=p$. We therefore denote
$$
C_i:= 
\begin{cases}
\frac{1}{\d\chi_\d}\psi_\d(\rho)
\ \  \  &{\rm if } \  \  \  i=\rho \\
\frac{1}{\d\chi_\d}
\  \  \  &{\rm if } \  \  \  i=p,
\end{cases}
$$

$$
\nu_i:=
\begin{cases}
m_\d
\ \  \  &{\rm if } \  \  \  i=\rho \\
\mu_d
\ \  \  &{\rm if } \  \  \  i=p
\end{cases}
\  \  \  {\rm and} \  \  \
\^\nu_i:=
\begin{cases}
\^m_\d
\  \  \  &{\rm if } \  \  \  i=\rho \\
\^\mu_\d
\  \  \  &{\rm if } \  \  \  i=p.
\end{cases}
$$
We shall first prove both formulae \eqref{3_2016_01_30} and \eqref{3_2016_01_30E} for all sets $B$ that are open. To emphasize this, let  us denote an arbitrary open subset of $X$ by $V$. We assume that $\^\nu_i(\bd V)=0$. Then  for every $s\in(0,1)$ there exists a finite set $\Ga_s(V)$ consisting of mutually incomparable elements of $E_A^*$ such that
$$
\bu_{\tau\in\Ga_s(V)}\phi_\tau\(X_{t(\tau)}\)\sbt V 
\  \  \  \and  \  \  \
\nu_i\lt(\bu_{\tau\in\Ga_s(V)}[\tau]\rt) 
=\^\nu_i\lt(\bu_{\tau\in\Ga_s(V)}\phi_\tau\(X_{t(\tau)}\)\rt)
\ge s \^\nu_i(V)
$$
where the ``$=$'' sign in this formula is due to \eqref{4_2016_12_14}. So, for both $i=\rho, p$, using \eqref{1_2016_01_30}, we get that
$$
\begin{aligned}
\varliminf_{T\to+\infty}\frac{N_i(V,T)}{e^{\d T}}
&\ge \sum_{\tau\in\Ga_\ka(V)}\varliminf_{T\to+\infty}\frac{N_i(\tau,T)}{e^{\d T}}
 = \sum_{\tau\in\Ga_\ka(V)}C_i\nu_i([\tau])\\
&=C_i\nu_i\lt(\bu_{\tau\in\Ga_\ka(V)}[\tau]\rt) \\
&\ge sC_i\^\nu_i(V).
\end{aligned}
$$
Letting $s\upto 1$, we thus obtain
\beq\label{1nh23}
\varliminf_{T\to+\infty}\frac{N_i(V,T)}{e^{\d T}}
\ge C_i\^\nu_i(V).
\eeq
Therefore, we also have
\beq\label{4nh23}
\varliminf_{T\to+\infty}\frac{N_i(\ov V^c,T)}{e^{\d T}}
\ge C_i\^\nu_i(\ov V^c).
\eeq
But since $\nu_i(\bd V)=0$, we have $\nu_i(V)+\nu_i(\ov V^c)=1$, whence 
\beq\label{2nh23}
\varliminf_{T\to+\infty}\frac{N_i(\ov V^c,T)}{e^{\d T}}
\ge C_i(1-\^\nu_i(V)).
\eeq
Therefore, using (\ref{1_2016_01_30}) and \eqref{1_2017_03_10}, both with $\tau$ replaced by $E_A^{\mathbb N}$, we get
\beq\label{3nh23}
\begin{aligned}
C_i 
&=\lim_{T\to+\infty}\frac{N_i(T)}{e^{\d T}}
\ge \varlimsup_{T\to+\infty}\frac{N_i(V,T)+N_i(\ov V^c,T)}{e^{\d T}} \\
&\ge \varlimsup_{T\to+\infty}\frac{N_i(V,T)}{e^{\d T}}  +
    \varliminf_{T\to+\infty}\frac{N_i(\ov V^c,T)}{e^{\d T}} \\
&\ge \varlimsup_{T\to+\infty}\frac{N_i(V,T)} {e^{\d T}} +
   C_i\(1- \^\nu_i(V)\).
   \end{aligned}
\eeq
Thus,
$$
\varlimsup_{T\to+\infty}\frac{N_i(V,T)}{e^{\d T}}
\le C_i\^\nu_i(V).
$$
Along with \eqref{1nh23} this implies
\beq\label{1nh24}
\lim_{T\to+\infty}\frac{N_i(V,T)}{e^{\d T}}
= C_i\^\nu_i(V).
\eeq
Finally, let $B$ be an arbitrary Borel subset of $X$ such that $\^\nu_i(\bd B)=0$. Then $\ov B=B\cup\bd B$ and
$$
\^\nu_i(\ov B)=\^\nu_i(B).
$$
Since the measure 
$\nu_i$
is outer regular, given $\e>0$ there exists an open set $G\sbt X$ such that $B\sbt G$ and 
\beq\label{2nh24}
\^\nu_i(G)\le \^\nu_i(B)+\e.
\eeq
Now, for every $x\in \ov B$ there exists an open set $V_x\sbt G$, in fact an open ball centered at $x$, such that $x\in V_x$ and 
$$
\^\nu_i(\bd V_x)=0.
$$
In particular, $\{V_x\}_{x\in\ov B}$ is a open cover of $\ov B$. Since $\ov B$ is compact, there thus exists a finite set $F\sbt \ov B$ such that
$$
\ov B\sbt V:=\bu_{x\in F}V_x\sbt G.
$$
Since $F$ is finite, $\bd V\sbt \bu_{x\in F}\bd V_x$, whence $\nu_i(\bd V)=0$. Therefore, \eqref{1nh24} applies to $V$ to give
$$
\begin{aligned}
\varlimsup_{T\to+\infty}\frac{N_i(B,T)}{e^{\d T}}
&\le \varlimsup_{T\to+\infty}\frac{N_i(\ov B,T)}{e^{\d T}}
\le \lim_{T\to+\infty}\frac{N_i(V,T)}{e^{\d T}}
=C_i\^\nu_i(V) \\
&\le C_i\^\nu_i(G) \\
&\le C_i(\^\nu_i(B)+\e).
\end{aligned}
$$
Letting $\e\downto 0$, we therefore get
\beq\label{1nh25}
\varlimsup_{T\to+\infty}\frac{N_i(B,T)}{e^{\d T}}
\le C_i\^\nu_i(B).
\eeq
Now, we can finish the argument in the same way as in the case of open sets. Since $\bd B^c=\bd B$, we have $m_\d(\bd B^c)=0$. In particular, \eqref{1nh25} also yields
$$
\varlimsup_{T\to+\infty}\frac{N_i(B^c,T)}{e^{\d T}}
\le C_i\^\nu_i(B^c)
=C_i(1-\^\nu_i(B)).
$$
Therefore, using Theorem~\ref{asymp} we can write
$$
\begin{aligned}
C_i
&=\lim_{T\to+\infty}\frac{N_i(T)}{e^{\d T}}
 =  \lim_{T\to+\infty}\frac{N_i(B,T)+N_i(B^c,T)}{e^{\d T}} \\
&\le \varliminf_{T\to+\infty}\frac{N_i(B,T)}{e^{\d T}}  +
    \varlimsup_{T\to+\infty}\frac{N_i(B^c,T)}{e^{\d T}} \\
&\le \varliminf_{T\to+\infty}\frac{N_i(B,T)}{e^{\d T}} +
   C_i\(1-\^\nu_i(B)\). 
\end{aligned}
$$
Thus,
$$
\varliminf_{T\to+\infty}\frac{N_i(B,T)}{e^{\d T}}
\ge C_i\^\nu_i(B).
$$
Along with \eqref{1nh25} this gives
$$
\lim_{T\to+\infty}\frac{N_i(B,T)}{e^{\d T}}
=C_i\^\nu_i(B),
$$
and the proof of the theorem is complete.
\end{proof}

\section{Asymptotic Results for Diameters}\label{contracting_diameters} 

In this section we obtain asymptotic counting properties corresponding to the functions 
$$
-\log\diam\(\phi_\om(X_{t(\om)}\), \  \  \om\in E_A^*.
$$
These are relatively simple consequences of Theorem~\ref{dyn},  but not quite so simple as one would expect. The subtle difficulty is due to the fact that the functions $N_i(B,T)$, $i=\rho, p$ are very sensitive to additive changes. In fact it follows from Theorem~\ref{dyn} that for every $u>0$,
$$
\lim_{T\to\infty}\frac{N_i(B,T+u)}{N_i(B,T)}=e^{\d u}>0.
$$
In fact we will do something more general, namely for every $v\in V$ we fix an arbitrary set $Y_v\sbt X_v$, having at least two points, and we look at asymptotic counting properties corresponding to the functions
$$
-\log\diam\(\phi_\om(Y_{t(\om)}\)), \  \  \om\in E_A^*.
$$
Such a generalization is interesting in its own right, but will turn out to be particularly useful when dealing with asymptotic counting properties for diameters in the context of parabolic GDMSs, see Section~\ref{diam_Parabolic}.

So, again $\cS$ is a finitely irreducible conformal GDMS, we fix $\rho\in E_A^\infty$ and put $\xi=\pi_\cS(\rho)$. We denote 
$$
\De(\om):=-\log\diam\(\phi_\om(Y_{t(\om)}\)), \  \  \om\in E_A^*,
$$
with the natural convention that for $\om=\ep$, being the empty (neutral) word:
$$
\De(\ep)=-\log\diam(Y_{i(\rho)}),
$$
and further, for any $T>0$,
$$
\cD^{\rho}_{Y_{i(\rho)}}(B,T)
:=\cD^{\rho}(B,T)
:=\{\om\in E_\rho^*:\De(\om)\le T \ {\rm and } \  \phi_\om(\xi)\in B\},
$$
$$
D^{\rho}_{Y_{i(\rho)}}(B,T)
=D^{\rho}_{Y_{i(\rho)}}(B,T)
:=\#\cD^{\rho}_{Y_{i(\rho)}}(B,T).
$$
The main result of this section is the following.

\bthm\label{t1da7}
Suppose that $\cS$ is a strongly regular finitely irreducible conformal $D$-generic GDMS. Fix $\rho\in E_A^\infty$ and $Y\sbt X_{i(\rho)}$ having at least two points. If $B \subset X$ is a Borel set such that $\^m_\d(\bd B)=0$ (equivalently $\^\mu_\d(\bd B)=0$) then,
\beq\label{3_2016_01_30C}
\lim_{T \to +\infty} \frac{D^{\rho}_Y(B,T)}{e^{\d T}} 
=C_\rho(Y)\^m_\d(B),
\eeq
where $C_\rho(Y)\in (0,+\infty)$ is a constant depending only on the system $\cS$, the word $\rho$ (but see Remark~\ref{r1_2017_03_20}), and the set $Y$. In addition
\beq\label{1da7.1}
K^{-2\d}(\d\chi_\d)^{-1}\diam^\d(Y)
\le C_\rho(Y)
\le K^{2\d}(\d\chi_\d)^{-1}\diam^\d(Y).
\eeq
\ethm

\sp We first shall prove the following auxiliary result. It is trivial in the case of finite alphabet $E$ but requires an argument in the infinite case.

\blem\label{l1da8}
With the hypotheses of Theorem~\ref{t1da7}, for every integer $q\ge 1$ let 
$$
\pi_i^{(q)}(B,T):=\pi_i(B,T)\cap E_A^q, \  \  i=\rho, p,
$$
and 
$$
N_i^{(q)}(B,T):=\#\pi_i^{(q)}(B,T).
$$
Then
$$
\lim_{T\to\infty}\frac{N_i^{(q)}(B,T)}{e^{\d T}}=0.
$$
\elem

\bpf
Since $N_i^{(q)}(B,T)\le N_i^{(q)}(T):=N_i^{(q)}(X,T)$, it suffices to prove that
$$
\lim_{T\to\infty}\frac{N_i^{(q)}(T)}{e^{\d T}}=0.
$$
By considering the iterate $\cS^q$ of $\cS$ it is further evident that it suffices to show that 
$$
\lim_{T\to\infty}\frac{N_i^{(1)}(T)}{e^{\d T}}=0.
$$
To see this consider the Poincar\'e series
$$
s\longmapsto \eta_\rho^{(1)}(s):=\pf_s\1(\rho),
$$
notice that it is holomorphic throughout $\{s\in\C:\re(s)>\g_\cS\} \spt \ov{\De_\cS^+}$, and conclude the proof with the help of the Ikehara-Wiener Tauberian Theorem (Theorem~\ref{tTauberian}), in the same way as in the proof of Theorem~\ref{asymp}.
\epf

\fr Denote also 
$$
\cD^{(\rho,q)}(B,T)
:=\cD^\rho(B,T)\cap E_\rho^q
=\cD^\rho(B,T)\cap E_\rho^A.
$$
By (BDP)
$$
N_i^{(\rho,q)}(B,T-\log K)
\le D^{(\rho,q)}(B,T)
\le N_i^{(\rho,q)}(B,T+\log K).
$$
Therefore, as an immediate consequence of Lemma~\ref{l1da8}, we get the following.

\bcor\label{c1da8.1}
With the hypotheses of Theorem~\ref{t1da7}, for every integer $q\ge 1$, we have
$$
\lim_{T\to\infty}\frac{D^{(\rho,q)}(B,T)}{e^{\d T}}=0.
$$
\ecor

Now we can turn  to the actual proof of Theorem~\ref{t1da7}.

\bpf[Proof of Theorem~\ref{t1da7}]
Fix an integer $q\ge 0$ and define:
$$
K_q:=\sup\lt\{\frac{|\phi_\om'(y)|}{|\phi_\om'(x)|}:\tau\in E_A^q,\, x, y\in \Conv\(\phi_\tau(X_{t(\tau)})),\, \om\in E_\tau^*\rt\}\ge 1,
$$
where $\Conv(F)$ is the convex hull of a set $F\sbt \R^d$. In particular $K_0=K$, the distortion constant of the system $\cS$. (BDP) yields \beq\label{4da9}
\lim_{q\to\infty}K_q=1.
\eeq
(BDP) again, along with the Mean Value Theorem, imply that for all $\tau\in E_\rho^*$ and all $\om\in E_\tau^*$, we have that
$$
\diam\(\phi_{\om\tau}(Y)\)
=\diam\(\phi_\om\(\phi_\tau(Y)\)\)
\le K_q|\phi_\om'(\phi_\tau(\xi))|\diam(\phi_\tau(Y))
$$
and 
$$
\diam\(\phi_{\om\tau}(Y)\)
\ge K_q^{-1}|\phi_\om'(\phi_\tau(\xi))|\diam(\phi_\tau(Y)).
$$
Equivalently
\beq\label{1da9}
\lam_{\tau\rho}(\om)+\De(\tau)-\log K_q
\le \De(\om\tau)
\le \lam_{\tau\rho}(\om)+\De(\tau)+\log K_q.
\eeq
Denote
$$
\cD_\tau^\rho(B,T):=\{\om\in E_\tau^*:\om\tau\in \cD^\rho(B,T)\}
$$
and 
$$
D_\tau^\rho(B,T):=\#\cD_\tau^\rho(B,T).
$$
Formula \eqref{1da9} then gives
\beq\label{2da9}
\pi_{\tau\rho}(B,T)\sbt \cD_\tau^\rho(B,T+\De(\tau)+\log K_q)
\eeq
and 
\beq\label{3da9}
\cD_\tau^\rho(B,T)\sbt \pi_{\tau\rho}(B,T-\De(\tau)+\log K_q).
\eeq
The former equation is equivalent to
$$
\cD_\tau^\rho(B,T)\spt \pi_{\tau\rho}(B,T-\De(\tau)-\log K_q).
$$
This formula and \eqref{3da9} yield
\beq\label{2da10}
N_{\tau\rho}\(B,T-\De(\tau)-\log K_q\)
\le {D}_\tau^\rho(B,T)
\le N_{\tau\rho}\(B,T-\De(\tau)+\log K_q\).
\eeq
since
\beq\label{1da10}
\cD^\rho(B,T)
=\bu_{\tau\in E_\rho^q}\cD_\tau^\rho(B,T)\tau\cup\bu_{j=0}^q\cD^{(\rho,j)}(B,T)
\eeq
and since all the terms in this union are mutually disjoint, formula \eqref{1da10} yields
$$
D^\rho(B,T)\ge \sum_{\tau\in E_\rho^q}D_\tau^\rho(B,T).
$$
By inserting it into  formula \eqref{2da10}, we get
$$
D^\rho(B,T)\ge \sum_{\tau\in E_\rho^q}N_{\tau\rho}\(B,T-\De(\tau)-\log K_q\).
$$
Therefore,
$$
\begin{aligned}
\frac{D^\rho(B,T)}{e^\d T}
&\ge  \sum_{\tau\in E_\rho^q}\frac{N_{\tau\rho}\(B,T-\De(\tau)-\log K_q\)}
      {\exp\(\d(T-\De(\tau)-\log K_q)\)}\cdot \frac{\exp\(\d(T-\De(\tau)-\log
        K_q)\)}{e^\d T} \\
&=  \sum_{\tau\in E_\rho^q}\frac{N_{\tau\rho}\(B,T-\De(\tau)-\log K_q\)}
      {\exp\(\d(T-\De(\tau)-\log K_q)\)}K_q^{-\d}e^{-\d\De(\tau)}   \\
&=  K_q^{-\d}\sum_{\tau\in E_\rho^q}\frac{N_{\tau\rho}\(B,T-\De(\tau)-\log K_q\)}
      {\exp\(\d(T-\De(\tau)-\log K_q)\)}e^{-\d\De(\tau)} .           
\end{aligned}
$$
Hence, applying Theorem~\ref{dyn}, we get
\beq\label{1da11}
\begin{aligned}
\varliminf_{T\to\infty}\frac{D^\rho(B,T)}{e^\d T}
&\ge  K_q^{-\d}\sum_{\tau\in E_\rho^q}e^{-\d\De(\tau)}\varliminf_{T\to\infty}
      \frac{N_{\tau\rho}\(B,T-\De(\tau)-\log K_q\)}{\exp\(\d(T-\De(\tau)-\log K_q)\)}\\  
&\ge  K_q^{-\d}\sum_{\tau\in E_\rho^q}e^{-\d\De(\tau)}
      (\chi_\d\d)^{-1}\psi_\d(\tau\rho)m_\d(B) \\
&=(\chi_\d\d)^{-1}m_\d(B)K_q^{-\d}\sum_{\tau\in E_\rho^q}e^{-\d\De(\tau)}
     \psi_\d(\tau\rho).
\end{aligned}
\eeq
This is a good enough lower bound for us but getting a sufficiently good upper bound is more subtle. As in the proof of formula \eqref{2_2016_01_30B} in Theorem~\ref{dynA}, we split $E_A^q$, at the moment arbitrarily, into two disjoint sets $F_q$ and its complement $F_q^c:=E_A^q\sms F_q$ (each of which naturally consists of words of length $q$) with $F_q$ being finite. In particular,
$$
E_A^q=F_q\cup F_q^c.
$$
So far we do not require anything more from the sets $F_q$ and $F_q^c$. 
We will make specific choices later in the course of the proof. We are now primarily interested in the sets
$$
\cR_{q,\rho}(T)
:= \cR_{q,[F_q^c],\rho}(T)
=\big\{\om\in \pi_\rho(T):|\om|>q \  {\rm and } \  
\om|_{|\om|-q+1}^{|\om|}\in F_{q,\rho}^c\big\}
$$
and the corresponding counting numbers
$$
R_{q,\rho}(T):=\#\cR_{q,\rho}(T).
$$
We are interested in estimating from above, the upper limit
$$
\varlimsup_{T\to\infty}\frac{D^\rho(B,T)}{e^\d T}.
$$
First of all, Lemma~\ref{l1_2016_11_07} yields  
\beq\label{1da11.3-again}
\lim_{T\to\infty}\frac{R_{q,\rho}(T)}{e^{\d T}}
\le K^{2\d}\d^{-1}\chi_{\mu_\d}m_\d\([F_q^c]\).
\eeq
Denote now 
$$
\cR_{q,\rho}^*(T):=\big\{\om\in \cD^\rho(T):|\om|>q \  {\rm and } \  
\om|_{|\om|-q+1}^{|\om|}\in F_q^c\big\}
$$
and the corresponding counting numbers
$$
R_{q,\rho}^*(T):=\#\cR_{q,\rho}^*(T).
$$
It follows from \eqref{1da9}, applied with $\tau$ being empty (neutral) word, that 
$$ 
\cR_{q,\rho}^*(T)\sbt \cR_{q,\rho}\(T+\log\De(\ep)+\log K\).
$$
Along with \eqref{1da11.3} this yields
$$
\varlimsup_{T\to\infty}\frac{R_{q,\rho}^*(T)}{e^{\d T}}
\le K^{3\d}\d^{-1}\chi_{\mu_\d}m_\d\De(\ep)\([F_q^c]\).
$$
Now we write
$$
\bu_{\tau\in E_\rho^q}\cD_\tau^\rho(B,T)\tau
=\bu_{\tau\in F_q\cap E_\rho^q}\cD_\tau^\rho(B,T)\tau
\cup\cR_{q,\rho}^*(T).
$$
Together  with \eqref{1da10} and \eqref{2da10} this yields
$$
\begin{aligned}
D^\rho(B,T)
&\le\sum_{\tau\in F_{q,\rho}\cap E_\rho^q}D_\tau^\rho(B,T)\tau
     +R_{q,\rho}^*(T)+\sum_{j=0}^qD^{(\rho,j)}(B,T) \\
&\le \sum_{\tau\in F_{q,\rho}\cap E_\rho^q} 
     N_{\tau\rho}(B,T-\De(\tau)+\log K_q)+
     R_{q,\rho}^*(T)+\sum_{j=0}^qD^{(\rho,j)}(B,T).
\end{aligned}
$$
Hence, invoking also Corollary`\ref{c1da8.1} and finiteness of the set $F_{q,\rho}$, we get
\beq\label{2da11}
\begin{aligned}
\varlimsup_{T\to\infty}\frac{D^\rho(B,T)}{e^{\d T}}
&\le K_q^\d \sum_{\tau\in F_{q,\rho}\cap E_\rho^q}e^{-\De(\tau)}
  \varlimsup_{T\to\infty}\frac{N_{\tau\rho}(B,T-\De(\tau)+\log
   K_q)}{\exp\(\d(T-\De(\tau)+\log K_q)\)} 
   +\varlimsup_{T\to\infty}\frac{R_{q,\rho}^*(T)}{e^{\d T}}\\
&\le (\chi_\d\d)^{-1}m_\d(B)K_q^{\d}\sum_{\tau\in E_\rho^q}
    e^{-\d\De(\tau)}\psi_\d(\tau\rho)+
    K^{3\d}(\d\chi_\d)^{-1}\De(\ep)m_\d\([F_q]\).
\end{aligned}
\eeq
Hence, taking finite sets $F_{q,\rho}$ with $m_\d\([F_{q,\rho}]\)$  converging to one, with  $m_\d\([F_{q,\rho}^c]\)$ converging to zero, we obtain
\beq\label{1_2016_10_06}
\varlimsup_{T\to\infty}\frac{D^\rho(B,T)}{e^{\d T}}
\le K_q^{\d}(\chi_\d\d)^{-1}m_\d(B)\sum_{\tau\in E_\rho^q}
    e^{-\d\De(\tau)}\psi_\d(\tau\rho).
\eeq
Since
$$
\psi_\d(\rho)
=\pf_\d^q\psi_\d(\rho)
\lek \sum_{\tau\in E_\rho^q}e^{-\d\De(\tau)}\psi_\d(\tau\rho)
\lek \pf_\d^q\psi_\d(\rho)
=\psi_\d(\rho),
$$
we conclude from \eqref{1da11} and \eqref{1_2016_10_06} that both $\varliminf_{T\to\infty}\frac{D^\rho(B,T)}{e^{\d T}}$ and $\varlimsup_{T\to\infty}\frac{D^\rho(B,T)}{e^{\d T}}$ are finite and positive numbers. Furthermore, we conclude from these same two formulae  that for every $q\ge 1$,
$$
1
\le\frac{\varlimsup_{T\to\infty}\frac{D^\rho(B,T)}{e^{\d T}}}
{\varliminf_{T\to\infty}\frac{D^\rho(B,T)}{e^{\d T}}}
\le K_q^{2\d}.
$$
Formula \eqref{4da9} then yields that  the limit $\lim_{T\to\infty}\frac{D^\rho(B,T)}{e^{\d T}}$  exists and is finite and positive. Invoking \eqref{1da11} and \eqref{1_2016_10_06} again along with \eqref{4da9}, we thus deduce the limit
$$
\lim_{q\to\infty}\sum_{\tau\in E_\rho^q}
    e^{-\d\De(\tau)}\psi_\d(\tau\rho)
$$
also exists, is finite and positive. Denoting this limit 
 by $C_\cS'$, we thus conclude that
$$
\lim_{T\to\infty}\frac{D^\rho(B,T)}{e^{\d T}}
=\frac1{\d\chi_\d}C_\cS'm_\d(B),
$$
and so, in order to complete the proof of Theorem~\ref{t1da7}, we only need to estimate $C_\cS'$. Indeed,
$$
\begin{aligned}
\sum_{\tau\in E_\rho^q}e^{-\d\De(\tau)}\psi_\d(\tau\rho)
&=\sum_{\tau\in E_\rho^q}\diam^\d\(\phi_\tau(Y)\)
      \psi_\d(\tau\rho)
\le \sum_{\tau\in E_\rho^q}\|\phi_\tau'\|_\infty^\d\diam^\d(Y)
    \psi_\d(\tau\rho) \\
&\le K^\d\diam^\d(Y)\sum_{\tau\in E_\rho^q}| 
     \phi_\tau'(\pi_\cS(\rho))|^\d\psi_\d(\tau\rho) \\
&= K^\d\psi_\d(\rho)\diam^\d(Y) \\
&\le K^{2\d}\diam^\d(Y),
\end{aligned}
$$
and similarly,
$$
\sum_{\tau\in E_\rho^q}e^{-\d\De(\tau)}\psi_\d(\tau\rho)
\ge  K^{-2\d}\diam^\d(Y).
$$
The proof is complete.
\epf

We can now consider a slightly different approach to counting 
diameters.
Given a set $B\sbt X$, we define:
$$
\mathcal E_Y^\rho(B,T) := \{ \omega \in E_\rho^* \hbox{ : } \Delta(\omega) 
\leq T \ \hbox{ and } \  \phi_\omega (Y) \cap B \neq \emptyset\}
$$
and 
$$
E_Y^\rho(B,T) := \#\mathcal E_Y^\rho(B,T).
$$

\bthm\label{t1ma1}
Suppose that $\cS$ is a strongly regular finitely irreducible conformal $D$-generic GDMS. Fix $\rho\in E_A^\infty$ and $Y\sbt X_{i(\rho)}$ having at least two points and such that $\pi_\cS(\rho)\in Y$. If $B \subset X$ is a Borel set such that $\^m_\d(\bd B)=0$ (equivalently $\^\mu_\d(\bd B)=0$) then,
\beq\label{3_2016_01_30C}
\lim_{T \to +\infty} \frac{E^{\rho}_Y(B,T)}{e^{\d T}} 
=C_\rho(Y)\^m_\d(B),
\eeq
where $C_\rho(Y)\in (0,+\infty)$ is a constant, in fact the one produced in Theorem~\ref{t1da7}, depending only on the system $\cS$, the word $\rho$ (but see Remark~\ref{r1_2017_03_20}), and the set $Y$. In addition
\beq\label{1da7.1_D}
K^{-2\d}(\d\chi_\d)^{-1}\diam^\d(Y)
\le C_\rho(Y)
\le K^{2\d}(\d\chi_\d)^{-1}\diam^\d(Y).
\eeq
\ethm

\begin{proof}
Since $\pi_\cS(\rho)\in Y$ we have that 
$$
D_Y^\rho(B,T) \leq E_Y^\rho(B,T).
$$
It therefore follows from Theorem~\ref{t1da7} that
\beq\label{1ma1}
\liminf_{T \to +\infty} \frac{E_Y^\rho(B,T)}{e^{\delta T}} \geq C_\rho(Y)  \widetilde m_\d(B).
\eeq
Since $\mathcal E_Y^\rho (T) = \mathcal E_Y^\rho (X, T) = \mathcal D_Y^\rho (T) $, Theorem~\ref{t1da7}, also yields
\beq\label{2ma1}
\lim_{T \to +\infty} \frac{E_Y^\rho(T)}{e^{\delta T}} = C_S(Y).
\eeq
Now fix $(\epsilon_n)_{n=1}^\infty$, a sequence of positive numbers converging to zero  such that for all $n\geq 1$
$$
\widetilde m_\delta (\partial B(B, \epsilon_n))=0.
$$
Then $\widetilde m_\delta (\partial B^c(B, \epsilon_n))=0$ and $\phi_\om(Y)$ intersects at most one of the sets $B$ or $B^c(B; \epsilon_k) \cap B^c$ if $\Delta(\om) \geq \log(1/\epsilon_n)$.  Hence applying formula (\ref{1ma1})
to the sets $B^c(B, \epsilon_n) \cap B^c$ and using (\ref{2ma1}) we get for every $n \geq 1$ that 
$$
\begin{aligned}
C_\rho(Y) & \geq\limsup_{T \to +\infty} \frac{E^{\rho}_Y(B,T) + E_Y^{\rho}(B^c(B, \epsilon_n), T)}{e^{\delta T}} \cr
&\geq \limsup_{T \to +\infty} \frac{E^{\rho}_Y(B,T)}{e^{\delta T}}
+ 
\liminf_{T \to +\infty} \frac{E_Y^{\rho}(B^c(B, \epsilon_n), T)}{e^{\delta T}}\cr
&\geq \limsup_{T \to +\infty} \frac{E^{\rho}_Y(B,T)}{e^{\delta T}}
+ 
C_{\rho}(Y)\widetilde m_\delta (B^c(B, \epsilon_n)).\cr
\end{aligned}
$$
But 
$\lim_{n \to +\infty} \widetilde m_\delta (B^c(B, \epsilon_n)) = \widetilde m_\delta(B^c) = 1 - m_\delta(B)$,
(remembering that $\widetilde m_\delta(\partial B)= 0$), and therefore 
$$
C_\rho(Y) \geq \limsup_{T \to +\infty} \frac{E_Y^{\rho}(B,T)}{e^{\delta T}} + C_{\rho}(Y) (1- m_\delta(B)).
$$
Hence 
$$
\limsup_{T \to +\infty}   \frac{E_Y^\rho(B,T)}{e^{\delta T}} \leq C_\rho(Y)m_\delta(B).
$$
Along with (\ref{1ma1}) this finishes the proof of the first part of the theorem. The second part, i.e. \eqref{1da7.1_D}, is just formula \eqref{1da7.1}.
\end{proof}

\brem\label{r1_2017_03_20}
Since the left-hand side of \eqref{3_2016_01_30C} depends only on $\rho_1$, i.e. the first coordinate of $\rho$, we obtain that the constant $C_Y(\rho)$ of Theorem~\ref{t1ma1} and Theorem~\ref{t1da7}, also depends in fact only on $\rho_1$.
We could have provided a direct argument of  this already when proving Theorem~\ref{t1da7} and  this would not affect the proof of Theorem~\ref{t1ma1}. However,  our approach seems to be most economical.
\erem

\sp We say that a graph directed Markov system $\cS$ has the property (A) if for every vertex $v\in V$ there exists $a_v\in E$ such that
$$
i(a_v)=v
$$
and 
$$
A_{ea_v}=1
$$
whenever $t(e)=v$. As an immediate consequence of Theorem~\ref{t1da7},  Theorem~\ref{t1ma1} and Remark~\ref{r1_2017_03_20}, we get the following.

\bthm\label{t1da12.1}
Suppose that $\cS$ is a strongly regular finitely irreducible $D$-generic conformal GDMS with property (A). For any $v\in V$ let $Y_v\sbt X_v$ having at least two points  fixed. If $B \subset X$ is a Borel set such that $\^m_\d(\bd B)=0$ (equivalently $\^\mu_\d(\bd B)=0$) and $\rho\in E_A^\infty$ is with $\rho_1=a_v$, then,
\beq\label{3_2016_01_30N}
\lim_{T \to +\infty} \frac{D^{\rho}_Y(B,T)}{e^{\d T}} 
=\lim_{T \to +\infty} \frac{E^{\rho}_Y(B,T)}{e^{\d T}}
=C_v(Y_v)\^m_\d(B),
\eeq
where $C_v(Y_v)\in (0,+\infty)$
is a constant depending only on the vertex $v\in V$ and the set $Y_v$. In particular, this holds for $Y_v:=X_v$, $v\in V$.
\ethm

\fr Recall, see \cite{CTU} for example, that a GDMS $\cS$ is maximal if $A_{ab}=1$ whenever $t(a)=i(b)$. Since every iterated function system is maximal and finitely irreducible and  since each maximal GDMS has property (A), as an immediate consequence of Theorem~\ref{t1da12.1}, and Remark~\ref{r1_2017_03_20} (improved to claim that now $C_\rho(Y)$ depends only on $i(\rho_1)$ and $Y$) we get the following two corollaries.

\bcor\label{t1da12.1G}
Suppose that $\cS$ is a strongly regular finitely irreducible $D$-generic maximal conformal GDMS. For any $v\in V$ let $Y_v\sbt X_v$ having at least two points be fixed. If $B \subset X$ is a Borel set such that $\^m_\d(\bd B)=0$ (equivalently $\^\mu_\d(\bd B)=0$) and $\rho\in E_A^\infty$ is with $i(\rho_1)=v$, then,
\beq\label{3_2016_01_30P}
\lim_{T \to +\infty} \frac{D^{\rho}_Y(B,T)}{e^{\d T}} 
=\lim_{T \to +\infty} \frac{E^{\rho}_Y(B,T)}{e^{\d T}}
=C_v(Y_v)\^m_\d(B),
\eeq
where $C_v(Y_v)\in (0,+\infty)$
is a constant depending only on the vertex $v\in V$ and the set $Y_v$. In particular, this holds for $Y_v:=X_v$, $v\in V$.
\ecor

\bcor\label{c1da12.1H}
Suppose that $\cS$ is a strongly regular $D$-generic conformal IFS acting on a phase space $X$. Fix $Y\sbt X$ having at least two points. If $B \subset X$ is a Borel set such that $\^m_\d(\bd B)=0$ (equivalently $\^\mu_\d(\bd B)=0$) and $\rho\in E_A^\infty$, then,
\beq\label{3_2016_01_30Q}
\lim_{T \to +\infty} \frac{D^{\rho}_Y(B,T)}{e^{\d T}} 
=\lim_{T \to +\infty} \frac{E^{\rho}_Y(B,T)}{e^{\d T}}
=C(Y)\^m_\d(B),
\eeq
where $C(Y)\in (0,+\infty)$
is a constant depending only on the set $Y$. In particular, this holds for $Y:=X$.
\ecor

\

\part{{\Large Parabolic Conformal Graph Directed Markov Systems}}

\sp\section{Parabolic GDMS; Preliminaries}\label{section:parabolic}
We will  want to apply the previous  results (Theorem~\ref{dyn}) to prove counting theorems for a variety of examples. In particular, these  results can then  be applied to prove the geometric counting problems for Apollonian packings and related topics. In order to do this, that is in order to be in position to apply Theorem \ref{dyn}, we formulate these geometric counting problems in the  framework of conformal parabolic iterated function systems, and more generally of parabolic graph directed Markov systems. Therefore, we first prove appropriate counting results, i.e. Theorem~\ref{t2pc6}, for parabolic systems, which is both an analogue of Theorem~\ref{dyn} in this setting and its (Theorem~\ref{t2pc6}) quite involved, corollary.

In present section, following  \cite{MU_Parabolic_1} and \cite{MU_GDMS}, we describe  the suitable parabolic setting, canonically associated to it an ordinary (uniformly contracting) conformal graph directed Markov system (a kind of inducing), and we prove Theorem~\ref{t1pc3}, which is a somewhat surprising and  remarkable result about parabolic systems. 

In Section~\ref{Parabolic_Counting}, we obtain actual counting results for parabolic systems and in Section~
\ref{tangent Schottky}
we apply them in geometric contexts such as Apollonian packings and the like. In the whole of Section~
\ref{CPDS}
we apply our general theorems, i.e. Theorem~\ref{dyn} and Theorem~\ref{t2pc6}, to other counting problems naturally arising in the realm of Kleinian groups and one-dimensional systems.

\sp\fr As in Section~\ref{Attracting_GDMS_Prel} we assume that we  are given a directed multigraph $(V,E,i,t)$ ($E$ countable, $V$ finite), an incidence matrix $A:E\times E\to \{0,1\}$, and two functions $i,t:E\to V$ such that $A_{ab} = 1$ implies $t(b) = i(a)$. Also, we have nonempty compact metric spaces $\{X_v\}_{v\in V}$. Suppose further that we have  a collection of
conformal maps $\phi_e:X_{t(e)}\to X_{i(e)}$, $e\in E$, satisfying the following conditions (which are more general than in Section~\ref{Attracting_GDMS_Prel} in that we don't necessarily assume 
 that the maps are uniform contractions).

\sp\begin{itemize}
\item[(1)](Open Set Condition)
 $\phi_a(\Int(X))\cap \phi_b(\Int(X))=\es$ for all $a, b\in E$ with $a\ne b$.

\sp\item[(2)] $|\phi_e'(x)|<1$ everywhere except for finitely many
pairs $(e,x_e)$, $e\in E$, for which $x_i$ is the unique fixed point
of $\phi_e$ and $|\phi_e'(x_e)|
=1$. Such pairs and indices $i$ will be called parabolic and the set of
parabolic indices will be denoted by $\Om$. All other indices will be 
called hyperbolic. We assume that $A_{ee}=1$ for all $e\in\Om$.

\sp\item[(3)]  $\forall n\ge 1 \  \forall \om = (\om_1\om_2...\om_n)\in E_A^n$
if $\om_n$ is a hyperbolic
index or $\om_{n-1}\ne \om_n$, then $\phi_{\om}$ extends conformally to
an open connected set $W_{t(\om_n)}\sbt\R^d$ and maps $W_{t(\om_n)}$ into $W_{i(\om_n)}$.

\sp\item[(4)] If $e\in E$ is a parabolic index, then 
$$
\bi_{n\ge 0}\phi_{e^n}(X)
=\{x_e\}
$$ 
and the diameters of the sets $\phi_{e^n}(X)$ converge
to 0.

\sp\item[(5)] (Bounded Distortion Property) $\exists K\ge 1 \  \forall
n\ge 1
 \  \forall \om\in E_A^n  \  \forall x,y\in W_{t(\om_n)}$, if $\om_n$ 
is a hyperbolic index or $\om_{n-1}\ne \om_n$, then
$$
{|\phi_\om'(y)| \over |\phi_\om'(x)| } \le K.
$$
\item[(6)] $\exists \ka<1 \  \forall n\ge 1  \  \forall \om\in E_A^n$ if 
$\om_n$ is a hyperbolic index or $\om_{n-1}\ne \om_n$, then
$\|\phi_\om'\|\le \ka$.

\sp\item[(7)] (Cone Condition)  There exist $\a,l>0$ such that for
 every $x\in\bd X \sbt\R^d$ there exists an open cone 
$\Con(x,\a,l)\sbt \Int(X)$ with vertex $x$, central
angle of Lebesgue measure $\a$, and altitude $l$.

\sp\item[(8)] There exists a constant $L\ge 1$ such that
\[
\bigg|\frac{|\phi_e'(y)|}{|\phi_e'(x)|}-1 \bigg| \le L\|y-x\|^\alpha,
\]
for every $e\in E$ and every pair of points $x,y\in V$.
\end{itemize}

\sp\fr We call such a system of maps 
$$
\cS=\{\phi_e:e\in E\}
$$ 
a {\it  subparabolic conformal graph directed Markov system. }

Let us note that conditions (1), (3), (5)--(7) are modeled on similar  conditions which were used to examine hyperbolic conformal systems.

\bdfn\label{Def_Parabolic}
If $\Om\ne\es$, we call the system $\cS=\{\phi_i:i\in E\}$ {\it parabolic}. 
\edfn

As stated in (2) the elements of the set $E\sms \Om$ are called
hyperbolic.
We extend this name to all the words appearing in (5) and (6). It follows
from (3) that for every hyperbolic word $\om$,
$$
\phi_\om(W_{t(\om)})\sbt W_{t(\om)}.
$$
Note that our conditions ensure that $\f_e'(x) \neq 0$ 
for all $e\in E$ and all $x \in X_{t(i)}$. It was proved (though only for IFSs although the case of GDMSs can be treated completely similarly) in \cite{MU_Parabolic_1} (comp. \cite{MU_GDMS}) that
\beq\label{1_2016_03_15}
\lim_{n\to\infty}\sup_{\om\in E_A^n}\big\{\diam(\phi_\om(X_{t(\om)}))\big\}=0.
\eeq
As its immediate consequence, we record the following.

\bcor\lab{p1c2.3} 
The map $\pi=\pi_\cS:E_A^\infty\to X:=\du_{v\in V}X_v$, 
$$
\{\pi(\om)\}:=\bi_{n\ge 0}\phi_{\om|_n}(X),
$$
is well defined, i.e. this intersection is always a singleton, and the map $\pi$ is uniformly continuous.
\ecor

\fr As for hyperbolic (attracting) systems the limit set $J = J_\cS$ of the system $\cS = \{\f_e\}_{e\in e}$ is defined to be
$$
J_\cS:=\pi(E_A^\infty)
$$
and it enjoys the following self-reproducing property:
$$
J = \bu_{e\in E} \f_e(J).
$$
We now, still  following \cite{MU_Parabolic_1} and \cite{MU_GDMS}, want to associate to the parabolic system $\cS$ a canonical hyperbolic system $\cS^*$.  We will then be able to apply the ideas from the previous section 
to $\cS^*$. The set of edges is defined as follows:
$$
E_*:= \big\{i^nj: n\ge 1, \  i\in\Om, \ i\ne j\in E, \ A_{ij}=1\big\} \cup 
(E\sms \Om)\sbt E_A^*.
$$ 
We set
$$
V_*=t(E_*)\cup i(E_*)
$$
and keep the functions $t$ and $i$ on $E_*$ as the restrictions of $t$ and $i$ from $E_A^*$. The incidence matrix $A^*:E_*\times E_*\to\{0,1\}$ is defined in the natural (and the only reasonable) way by declaring that $A^*_{ab}=1$ if and only if $ab\in E_A^*$. Finally 
$$
\cS^*=\{\phi_e:X_{t(e)}\to X_{t(e)}|\, e\in E^*\}.
$$
It immediately follows from our assumptions (see \cite{MU_Parabolic_1} and \cite{MU_GDMS} for more details) that the following is true.

\bthm\lab{p1t5.2} 
The system $S^*$ is a hyperbolic (contracting) conformal GDMS and the limit sets $J_\cS$ and $J_{\cS^*}$ differ only by a countable set. If the system $\cS$ is finitely irreducible, then so is the system $\cS^*$.
\ethm

\fr 
The price we pay by replacing  the non-uniform ``contractions'' in $\cS$ with the
uniform contractions in $\cS^*$ is that even if the alphabet $E$ is finite, the alphabet $E^*$ of $\cS^*$ is always infinite. In particular, already at the first level (just the maps $\phi_\om$, $\om\in E^*$,), we get more scaling factors to deal with. In order to understand them, we will need the following quantitative result, whose complete proof can be found in \cite{SzUZ}. 

\bprop\lab{p1c5.13} 
Let $\cS$ be a conformal parabolic GDMS. Then there exists a constant $C\in(0,+\infty)$ and for every $i\in\Om$ there exists some constant
$p_i\in(0,+\infty) $ such that for all $n\ge 1$ and for all $z\in X_i:=
\bu_{j\in I\sms\{i\}}\phi_j(X)$,
$$
C^{-1}n^{-{p_i+1\over p_i}}\le |\phi_{i^n}'(z)|
\le Cn^{-{p_i+1\over p_i}}.
$$
Furthermore, if  $d=2$ then all constants $\b_i$ are integers $\ge 1$ and if $d\ge 3$ then all constants $\b_i$ are equal to $1$. 
\eprop

\sp\fr Let us also introduce the following auxiliary system:
$$
\cS^-:=\{\phi_e:e\in E\sms\Om\}.
$$
As an immediate consequence of Proposition~\ref{p1c5.13}  we get the following.

\bprop\label{p1_2016_03_16}
If $\cS$ is a finitely irreducible conformal parabolic GDMS, then
$$
\Ga_{\cS^*}=\Ga_{\cS^-}\sms\lt(-\infty,\frac{p_\cS}{p_\cS+1}\rt] 
\  \  \and  \  \ \,
\g_{\cS^*}=\max\lt\{\g_{\cS^*},\frac{p_\cS}{p_\cS+1}\rt\},
$$
where 
$$
p_\cS:=\max\{p_i:i\in\Om\}.
$$
In particular if the alphabet $E$ is finite, then
$$
\Ga_{\cS^*}=\lt(\frac{p_\cS}{p_\cS+1},+\infty\rt),
 \  \
\g_{\cS^*}=\frac{p_\cS}{p_\cS+1},
$$
and the system $\cS^*$ is hereditarily (co-finitely) regular.
\eprop

We set
$$
\d_\cS:=\d_\cS^*,
$$
$$
m_{\d_\cS}:=m_{\d_{\cS^*}}^*
\  \  \  {\rm and }\  \  \
\^m_{\d_\cS}:=\^m_{\d_{\cS^*}}^*.
$$
Given $\rho\in E_{A^*}^\infty$, let
$$
\Om_\rho:=\{a\in \Om:A_{a\rho_1}=1\}.%
$$
of course $\Om_\rho$, regarded   as a function 
of $\rho$,  depends only on $\rho_1$. We will need the following facts proved in \cite{MU_Parabolic_1}, comp. \cite{MU_GDMS}.

\bthm\label{t1_2017_02_18}
If $\cS$ is a finite alphabet irreducible conformal parabolic GDMS, then

\sp\begin{enumerate}
\item $\d_\cS=\HD(J_\cS)$,

\sp\item 
The measure $\^m_{\d_\cS}$ is $\d$--conformal for the original system $\cS$ in the sense that
$$
\^m_{\d_\cS}(\phi_\om(F))=\int_F|\phi_\om'|^{\d_\cS}\,d\^m_{\d_\cS}
$$
for every $\om\in E_A$ and every Borel set $F\sbt X_{t(\om)}$, and
$$
\^m_{\d_\cS}\(\phi_\a(X_{t(\a)})\cap \phi_\b(X_{t(\b)})\)=0
$$
whenever $\a,\b\in E_A^*$ and are incomparable. 

\sp\item There exists a,  unique up to multiplicative constant, $\sg$--finite shift--invariant measure $\mu_{\d_\cS}$ on $E_A^\infty$, absolutely continuous with respect to $m_{\d_\cS}$. The measure $\mu_{\d_\cS}$ is equivalent to $m_{\d_\cS}$ and 

\sp \begin{itemize} 
\item[(a)] The Radon--Nikodym derivative of $\mu_{\d_\cS}$ with respect to $m_\d$ is given by the following formula:
$$
\psi_{\d_\cS}(\rho):=\frac{d\mu_{\d_\cS}}{d m_{\d_\cS}}(\rho)=\psi_{\d_\cS}^*(\rho)+\sum_{a\in\Om_\rho}\sum_{k=1}^\infty
|\phi_{a^k}'(\pi(\rho))|^\d\psi_{\d_\cS}^*(a^k\rho).
$$
\item[(b)] The measure $\mu_{\d_\cS}$ (and $\^\mu_{\d_\cS}:=\mu_{\d_\cS}\circ\pi_\cS^{-1})$ is finite (we then always treat it as normalized so that it is a probability measure) if and only if 
$$
{\d_\cS}>\frac{2p_\cS}{p_\cS+1}.
$$
More precisely, the following conditions are equivalent:

\sp\begin{itemize}
\item[(b1)] $\delta_\cS > \frac{2p_a}{p_a+1}$,

\sp\item[(b2)] There exists an integer $l\geq 1$ such that $\mu_{\d_\cS}([a^l]) < +\infty$, and 

\sp\item[(b3)] For every integer $l \geq 1$, $\mu_{\d_\cS}([a^l]) < +\infty$.
\end{itemize}
\end{itemize}

\sp\item Furthermore, we have that
$$
\chi_{\d_\cS}:=-\int_{E_A^\infty}\log\big|\phi_{\om_1}'(\pi_\cS(\om)\big|d\mu_\d=\chi_{\d_\cS}^*\in (0,+\infty)
$$
and, as for attracting GDMSs, we call $\chi_{\d_\cS}$ the Lyapunov exponent of the system $\cS$ with respect to measure $\mu_{\d_\cS}$.
\end{enumerate}  
\ethm

\sp\fr For future use we denote
$$
\Om_\infty=\Om_\infty(\cS):=\lt\{a\in\Om:\frac{2p_a}{p_a+1}\ge \delta_\cS\rt\}.
$$

A crucial feature of the hyperbolic systems arising from  parabolic systems is that they are automatically $D$-hyperbolic. We have already seen that this is not necessarily true for hyperbolic systems.

\bthm\label{t1pc3}
If $\cS$ is a finitely irreducible conformal parabolic GDMS with finite alphabet, then $\cS^*$, the associated contracting (hyperbolic) GDMS, is D-generic.
\ethm

\bpf
Assume for a contradiction that $\cS^*$ is not D-generic. According to Proposition~\ref{p1nh13} this means that the additive group generated by the set 
$$
\big\{-\log|\phi_\om'(x_\om)|:\om\in E_{*A^*}^*\big\}\sbt\R
$$
is cyclic. Denote its generator by $M>0$. Fix $b\in \Om$ and then take $\a\in E_A^*$ such that $\a_1\ne b$ and $\a b^2\a_1\in E_A^*$. Note that then $\a b^2\a_1\in E_{*A^*}^*$ and moreover $\a b^n\a_1\in E_{*A^*}^*$ for all integers $n\ge 2$. For every integer $n\ge 2$ denote by $x_n\in J_{\cS^*}$ the only fixed point of the map $\phi_{\a b^n\a_1}:X_{t(\a_1)}\to X_{t(\a_1)}$. We know from the above that for every $n\ge 2$ there exists an integer $k_n\ge 1$ such that 
\beq\label{1pc3}
-\log\big|\phi_{\a b^n\a_1}'(x_n)\big|=Mk_n.
\eeq
By Proposition~\ref{p1c5.13} we have that
\beq\label{2pc3}
\big|\phi_{\a b^n\a_1}'(x_n)\big|
=|\phi_{\a_1}'(x_n)|\cdot\big|\phi_{b^n}'(\phi_{\a_1}(x_n))\big|\cdot
       \big|\phi_\a'(\phi_{b^n\a_1}(x_n))\big|
       =C_nn^{-\frac{p_b+1}{p_b}}
\eeq
with some $C_n\in (C^{-1},C)$, where $C$ is the constant coming from Proposition~\ref{p1c5.13}. Combining this with \eqref{1pc3} yields
$$
k_n=-\frac1M\log C_n+\frac{p_b+1}{Mp_b}\log n.
$$
On the other hand 
$$
\lim_{n\to\infty}x_n
=\lim_{n\to\infty}\phi_{\a b^n\a_1}(x_n)
=\phi_\a\(\lim_{n\to\infty}\phi_b^n(\phi_{\a_1}(x_n))\)
=\phi_\a(x_b)
$$
and
$$
\lim_{n\to\infty}\phi_{b^n\a_1}(x_n)=x_b.
$$
Keeping in mind that $\phi_b(x_b)=x_b$ and $|\phi_b'(x_b)|=1$ and using the Bounded Distortion Property, we therefore get
$$
\begin{aligned}
\lim_{n\to\infty}\frac{\big|\phi_{\a b^{n+1}\a_1}'(x_{n+1})\big|}
           {\big|\phi_{\a b^n\a_1}'(x_n)\big|}
&=\lim_{n\to\infty}\frac{\big|\phi_{\a b^{n+1}\a_1}'(\phi_\a(x_b))\big|}
          {\big|\phi_{\a b^n\a_1}'(\phi_\a(x_b))\big|}    \\
&=\lim_{n\to\infty}\frac{\big|\phi_\a'\(\phi_b^{n+1}(\phi_{\a_1\a}(x_b))\)\big|
   \cdot\big|\phi_b'\(\phi_b^n(\phi_{\a_1\a}(x_b))\)\big|}  
   {\big|\phi_\a'\(\phi_b^n(\phi_{\a_1\a}(x_b))\)\big|}\\
&=\lim_{n\to\infty}\frac{|\phi_\a'(x_b)|\cdot|\phi_b'(x_b)|}{|\phi_\a'(x_b)|}
=|\phi_b'(x_b)|
=1.
\end{aligned}
$$
Equivalently:
$$
\lim_{n\to\infty}\(-\log|\phi_{\a b^{n+1}\a_1}'(x_{n+1})|-\
   (-\log|\phi_{\a b^n\a_1}'(x_n)|\)
   =0.
$$
Using  \eqref{1pc3} this gives that 
$
\lim_{n\to\infty}(k_{n+1}-k_n)=0.
$
Since all $k_n$, $n\ge 1$, are integers, this implies that the sequence $(k_n)_{n=1}^\infty$ is eventually constant. However, it  follows from \eqref{1pc3} that $\lim_{n\to\infty}k_n=+\infty$, and the contradiction we obtain finishes the proof.
\epf

\brem\label{r2_2017_02_17}
We could generalize slightly the concepts of subparabolic and parabolic systems by requiring in item (2) of their definition that not merely some elements $\phi_e$, $e\in E$, have parabolic fixed points but some finitely many elements $\phi_\om$, $\om\in E_A^*$, have such points. In other words it would suffice to assume that some iterate of the system $\cS$ in the sense of Remark~\ref{r1_2017_04_01} is parabolic. Indeed, this would not really affect any considerations of this and any next section involving parabolic GDMSs, and such generalization will turn out to be needed in Subsection~\ref{1dimparabolicexamples} for the Farey map,  Subsections~\ref {tangent Schottky} and \ref{Apollonian Circle Packings} when we deal respectively with Schottky groups with tangencies and Apollonian circle packings. 
\erem

\section{Poincar\'e's Series for $\cS^*$, the Associated Countable Alphabet Attracting GDMS}\label{Parabolic_Counting_1}

In this section we again let  $\cS$ be a finitely irreducible conformal parabolic GDMS. Our goal is to describe the 
Poincar\'e series and the associated  asymptotic (equidistribution) results for the system $\cS$.  
This is achieved by
 means of the transfer operator associated to the associated hyperbolic system $\cS^*$.

We begin by formulating the required notation. Fix first $\rho\in E_{A^*}^\infty$ arbitrary. Denote $\xi:=\pi_{\cS^*}(\rho)$. Treating $\rho$ in an obvious way as an element of $E_A^\infty$, we can also write $\xi=\pi_\cS(\rho)$.
Fix next an arbitrary $\tau\in E_{*A^*}^*$. 

Let $\eta_i^*(\tau,s)$, $i=\rho, p$, be the corresponding Poincar\'e series for the contracting system $\cS^*$, and we continue to use 
$$
\eta_i(\tau,s), \  \  i=\rho, p, 
$$
to denote the Poincar\'e  series for the original (now parabolic) system $\cS$. 
This allows to deduce the 
 analytical properties of $\eta_i$ from those for the $\eta_i^*$, to which we can apply the results  already established in Proposition~\ref{t2nh18}. 
 
 We show that the Poincar\'e series $\eta_i(\tau,s)$ for the parabolic system $\mathcal S$ can be expressed in terms of the
 Poincar\'e series for $\eta_i^*(\tau,s)$ for the hyperbolic system $\mathcal S^*$. 
 In particular, we can deduce properties for $\eta_i^*(\tau,s)$ which are the analogue of those for $\eta_i(\tau,s)$, already established in Proposition 6.3.
We can formally  write
\beq\label{1pc5}
\begin{aligned}
\eta_\rho(\tau,s)
&=\sum_{\om\in E_\rho^*:\tau\om\in E_A^*}|\phi_{\tau\om}'(\pi(\rho))|^s \cr
&=\sum_{\om\in E_{*,\rho}^*\atop\tau\om\in E_{*A^*}^*}| \phi_{\tau\om}'(\pi(\rho))|^s+
\sum_{a\in\Om_\rho}\sum_{k=1}^\infty
\sum_{\om\in E_{*A^*}^*\atop\tau\om a\in E_A^*}|\phi_{\tau\om a^k}'(\pi(\rho))|^s\\
&=\sum_{\om\in E_{*\rho}^*\atop\tau\om\in E_{*A^*}^*}| \phi_{\tau\om}'(\pi(\rho))|^s+
\sum_{a\in\Om_\rho}\sum_{k=1}^\infty
\sum_{\om\in E_{*A^*}^*\atop\tau\om a\in E_A^*}|\phi_{\tau\om}'(\pi(a^k\rho))|^s
|\phi_{a^k}'(\pi(\rho))|^s\\
&=\sum_{\om\in E_{*\rho}^*\atop\tau\om\in E_{*A^*}^*}| \phi_{\tau\om}'(\pi(\rho))|^s+
\sum_{a\in\Om_\rho}\sum_{k=1}^\infty
|\phi_{a^k}'(\pi(\rho))|^s
\sum_{\om\in E_{*A^*}^*\atop\tau\om a\in E_A^*}|\phi_{\tau\om}'(\pi(a^k\rho))|^s \\
&=\eta_\rho^*(\tau,s)+
\sum_{a\in\Om_\rho}\sum_{k=1}^\infty
|\phi_{a^k}'(\pi(\rho))|^s 
\eta_{a^k\rho}^*(\tau,s).
\end{aligned}
\eeq
Since by  Theorem~\ref{t1pc3} we have that $\cS^*$ is $D$-generic 
 it follows from the proof of Theorem~\ref{t2nh18} that for every $s_0={\d_\cS}+it_0\in\Ga_\cS^+$ with $t_0\ne 0$ all functions $\eta_{a^k\rho}^*(\tau,\cdot)$ have holomorphic extensions on a common neighborhood, denoted by $U$, of $s_0\in \Ga_{\cS^*}^+$ of the form
$$
U\ni s\longmapsto\sum_{j=1}^q\lam_j^*(s)(1-\lam_j^*(s))^{-1}P_{s,j}^*(|\phi_\tau'|^s\circ\pi)(a^k\rho)+S_\infty^*(s)\in\C,
$$
where all the symbols ``$*$'' indicate that the appropriate objects pertain to the system $\cS^*$. Since
$$
|P_{s,j}^*(|\phi_\tau'|^s\circ\pi^*)(a^k\rho)|
\le\|P_{s,j}^*(|\phi_\tau'|^s\circ\pi^*)\|_\infty
\le\|P_{s,j}^*(|\phi_\tau'|^s\circ\pi^*)\|_\a<+\infty,
$$
it follows that all the functions $\eta_{a^k\rho}^*(\tau,\cdot)$ are uniformly bounded on $U$. Since also ${\d_\cS}>\frac{p_a}{p_a+1}$ and since
\beq\label{2pc5}
\big||\phi_{a^k}'(\pi(\rho))|^s\big|
\le |\phi_{a^k}'(\pi(\rho))|^{\d_\cS}
\lek (k+1)^{-\frac{p_a+1}{p_a}{\d_\cS}},
\eeq
we eventually conclude that the series in \eqref{1pc5} converges absolutely uniformly on $U$, thus representing a holomorphic function. We are therefore left to consider the case of $s_0={\d_\cS}$. By virtue of \eqref{1nh21} we then have for every $k\ge 0$ that
$$
\eta_{a^k\rho}^*(\tau,s)
=\lam_s^*(1-\lam_s^*)^{-1}H_{\tau,s}^*(a^k\rho)+
\Sg_\infty(s).
$$
Substituting this into \eqref{1pc5}, we therefore get
$$
\eta_\rho(\tau,s)
=\eta_\rho^*(\tau,s)
+\lam_s^*(1-\lam_s^*)^{-1}\sum_{a\in\Om_\rho}\sum_{k=1}^\infty
|\phi_{a^k}'(\pi(\rho))|^sH_{\tau,s}^*(a^k\rho)+
\sum_{a\in\Om_\rho}\sum_{k=1}^\infty
|\phi_{a^k}'(\pi(\rho))|^s\Sg_\infty(s),
$$
and by \eqref{2pc5} both series involved in the above formula converge absolutely uniformly on $U$. Looking up now at the calculations from the end of the proof of
Theorem~\ref{t2nh18} and invoking Theorem~\ref{t1_2017_02_18} (3) and (4), we conclude that the function $U\ni s\mapsto\eta_\rho(\tau,s)$ is meromorphic with a simple pole at $s={\d_\cS}$ whose residue is equal to 
$$
\begin{aligned}
\frac{\psi_{\d_\cS}^*(\rho)}{\chi_{\d_\cS}^*}m_{\d_\cS}^*([\tau])&+ \sum_{a\in\Om_\rho}\sum_{k=1}^\infty
|\phi_{a^k}'(\pi(\rho))|^{\d_\cS}\psi_\d^*(a^k\rho) m_{\d_\cS}^*([\tau])= \\
&=\frac1{\chi_{\d_\cS}}\Big(\psi_{\d_\cS}^*(\rho)+\sum_{a\in\Om_\rho}\sum_{k=1}^\infty
|\phi_{a^k}'(\pi(\rho))|^\d\psi_{\d_\cS}^*(a^k\rho)\Big) m_{\d_\cS}([\tau]) \\
&=\frac{\psi_{\d_\cS}(\rho)}{\chi_{\d_\cS}}m_{\d_\cS}([\tau]).
\end{aligned}
$$
We have thus proved the following.

\bthm\label{t1pc6}
If $\cS$ is a finite irreducible parabolic conformal GDMS, $\rho\in E_{A^*}^\infty$, and $\tau\in E_{*A^*}^*$, then 

\sp
\begin{itemize}

\item [(a)] The function $\De_\cS^+\ni s\longmapsto\eta_\rho(\tau,s)\in\C$ has a meromorphic extension to some neighbourhood of the vertical line $\re(s)=\d_\cS$,

\sp\item[(b)] This extension has a single  pole $s=\delta_{\cS}$, and 

\sp\item[(c)] The pole $s=\delta_{\cS}$ is simple and its residue is equal to $\frac{\psi_{\d_\cS}(\rho)}{\chi_{\d_\cS}}m_{\d_\cS}([\tau])$.
\end{itemize}
\ethm

\section{Asymptotic Results for Multipliers}\label{Parabolic_Counting}

Now that we have established Theorem~\ref{t1pc6}, we are ready to prove the following theorem which,  along with its two corollaries  below,  constitutes the main results of this section.

\bthm[Asymptotic Equidistribution of Multipliers for Parabolic Systems I]\label{t2pc6} 
Suppose that $\cS$ is a finite irreducible parabolic conformal GDMS. Fix $\rho\in E_A^\infty$. If $\tau\in E_A^*$ then,
\beq\label{2_2016_03_25}
\lim_{T \to +\infty} \frac{N_\rho(\tau,T)}{e^{{\d_\cS} T}} 
= \frac{\psi_{\d_\cS}(\rho)}{{\d_\cS}\chi_{\mu_\d}}m_{\d_\cS}([\tau]),
\eeq
and 
\beq\label{2_2016_03_25_P}
\lim_{T \to +\infty} \frac{N_p(\tau,T)}{e^{{\d_\cS} T}} 
= \frac{1}{{\d_\cS}\chi_{\mu_{\d_\cS}}}\mu_{\d_\cS}([\tau]).
\eeq
\end{thm}

\begin{proof}
We first prove formula \eqref{2_2016_03_25}. If $\rho \in E_{*A^*}^\infty$
and $\tau \in E^*_{*A^*}$, this formula follows 
from Theorem \ref{t1pc6} in exactly the same way as 
formula (\ref{2_2016_01_30B}) in Theorem~\ref{dynA}
follows from Theorem~\ref{t2nh18}.

Now keep 
$\tau \in E^*_{*A^*}$ and let $\rho\in E_A^\infty$ be arbitrary.  
Then for every $q \geq 1$ large enough there exists $\rho_q \in E^\infty_{*A^*}$
such that 
$$
\rho|_q = \rho_q|_q.
$$
Since 
$\lim_{q \to \infty} d(\rho, \rho_q) = 0$, the Bounded Distortion Property (BDP) for the attracting system $\cS^*$ yields a function 
$q \mapsto \widehat K_q \in [1, +\infty)$ such that 
\begin{equation}\label{1ma5.1}
\lim_{q \to \infty} \widehat K_q = 1
\end{equation}
and
$$
\widehat K_q^{-1}
\leq 
\frac{|\phi_\tau'(\pi_{\mathcal S}(\rho))|}{|\phi_\tau'(\pi_{\mathcal S}(\rho_q))|}
\leq \widehat K_q 
$$
for all $q \geq 1$ large enough as indicated above.  Hence 
$$
N_{\rho_q}(\tau, T - \log \widehat K_q) \leq N_\rho(\tau , T )
\leq N_{\rho_q}(\tau, T + \log \widehat K_q).
$$
Therefore, dividing by $e^{{\d_\cS} T}$ we get that
$$
\frac{N_{\rho_q}(\tau, T - \log \widehat K_q)}{\exp({\d_\cS}(T - \log \widetilde K_q))} \widehat K_q^{-{\d_\cS}}
\leq \frac{N_\rho(\tau,T)}{e^{{\d_\cS} T}}
\leq \frac{N_{\rho_q}(\tau, T + \log \widehat K_q)}{\exp({\d_\cS}(T + \log \widetilde K_q))} \widehat K_q^{{\d_\cS}}.
$$
Since $\rho_q \in E^\infty_{*A^*}$ and $\tau \in E^*_{*A^*}$
we thus obtain 
$$
\widehat K_q^{-{\d_\cS}} \frac{\psi_{\d_\cS}(\rho)}{{\d_\cS} \chi_{\d_\cS}}m_\delta([\tau])
\leq \liminf_{T \to +\infty} \frac{N_\rho(\tau,T)}{e^{{\d_\cS} T}}
\leq \limsup_{T \to +\infty} \frac{N_\rho(\tau,T)}{e^{{\d_\cS} T}}
\leq \widehat K_q^{{\d_\cS}} \frac{\psi_{\d_\cS}(\rho)}{{\d_\cS} \chi_{\d_\cS}}m_{\d_\cS}([\tau]).
$$

Invoking  (\ref{1ma5.1}) we now conclude that 
\begin{equation}
\lim_{T \to +\infty} \frac{N_\rho(\tau,T)}{e^{{\d_\cS} T}}
= \frac{\psi_{\d_\cS}(\rho)}{{\d_\cS} \chi_{\d_\cS}}m_{\d_\cS}([\tau]).
\label{2ma5.1}
\end{equation}
Working in full generality, we now assume that $\rho \in E_A^\infty$
and $\tau \in E_A^*$. Then there exists $\mathcal F_\tau$, a countable collection of mutually incomparable elements of $E^*_{*A^*}$, each of which extends $\tau$, such that 
$$
m_{\d_\cS}\lt([\tau]\sms \bu_{\omega \in \mathcal F_\tau}[\om]\rt)=0.
$$
Noting that then the family $\{[\omega] \hbox{ : } \omega \in \mathcal F_\tau\}$ consists of mutually disjoint sets, we thus get that from (\ref{2ma5.1}) that
$$
\begin{aligned}
\liminf_{T \to +\infty}
\frac{N_\rho(\tau,T)}{e^{{\d_\cS} T}}
&\geq
\liminf_{T \to +\infty}
\frac{\sum_{\omega \in \mathcal F_\tau} N_\rho(\omega,T)}{e^{{\d_\cS} T}}
\geq
\sum_{\omega \in \mathcal F_\tau}
\liminf_{T \to +\infty}
\frac{ N_\rho(\omega,T)}{e^{{\d_\cS} T}}\cr
&=\sum_{\omega \in \mathcal F_\tau}
\frac{\psi_{\d_\cS}(\rho)}{{\d_\cS} \chi_{\mu_{\d_\cS}}} m_{\d_\cS}([\omega])
= \frac{\psi_{\d_\cS}(\rho)}{{\d_\cS} \chi_{\mu_{\d_\cS}}} m_{\d_\cS}([\tau]).
\end{aligned}
$$
Having this (and already knowing that the neutral 
word $\emptyset$ belongs to $E^*_{*A^*}$) then  (\ref{2ma5.1}) gives that 
$$
\lim_{T \to +\infty} \frac{N_\rho(T)}{e^{{\d_\cS} T}}
= \frac{\psi_{\d_\cS}(\rho)}{{\d_\cS} \chi_{\d_\cS}} m_{\d_\cS}([\emptyset])
=
\frac{\psi_{\d_\cS}(\rho)}{{\d_\cS} \chi_{\d_\cS}} m_{\d_\cS}(E^\infty_{*A^*})
= \frac{\psi_{\d_\cS}(\rho)}{{\d_\cS} \chi_{\d_\cS}},
$$
we deduce that 
$$
\lim_{T\to +\infty} \frac{N_\rho(\tau, T)}{e^{{\d_\cS} T}}
= \frac{\psi_{\d_\cS}(\rho)}{e^{{\d_\cS} T}} m_{\d_\cS}([\tau])
$$
in the say way (although it is now in fact simpler) as formula
(\ref{1nh24}) is deduced from (\ref{1nh23})
and (\ref{1_2016_01_30}),  the latter applied with $\tau=\emptyset$
(i.e., the empty word).  The proof of formula (\ref{2_2016_03_25}) is then complete.

Now we prove formula (\ref{2_2016_03_25_P}). First assume that 
$\tau$ is not a power of an element from $\Omega$.  This means that either 
$$
\tau = a^j \beta
$$
where $a \in \Omega$, $j \geq 1$, and $\beta_1 \neq a$ or 
$$
\tau= \beta
$$
where $\beta_1 \not\in \Omega$.  In either case, 
$$
\tau = a^j \beta,
$$
with $j \geq 0$.  As in the  proof of formula (\ref{2_2016_01_30B}) in Theorem \ref{dynA}, for every $\gamma \in E_A^*$ fix $\gamma^+ \in E_A^\infty$ (which in fact can be selected to depend only on $\gamma_{|\gamma|}$) such that 
$$
\gamma \gamma^+ \in E_A^\infty.
$$
Fix $q \geq 1$ and $\gamma \in E_A^q$ arbitrarily.  Consider an arbitrary element $\omega b^k \in E_A^*$, $\omega \in E_{*,A^*}^*$, $b \in \Omega$ such that $a^j \beta \gamma \omega b^k \in E_p^*$.  Consider two cases:

\medskip
\noindent
{\it Case $1^0$}.   Assume $b \neq a$ if $j \geq 1$.  Then
$$
\big|\phi'_{a^j\beta \gamma\omega b^k}(x_{a^j\beta \gamma\omega b^k})\big|
= 
\big|\phi'_{a^j\beta \gamma\omega}\(\pi_{\mathcal S}((b^ka^j\beta \gamma\omega)^\infty)\)\big|\cdot
\big|\phi'_{b^k}\(\pi_{\mathcal S}((a^j\beta \gamma\omega b^k)^\infty)\)\big|
$$
and 
$$
\big|\phi'_{a^j\beta\omega b^k}\(\pi_{\mathcal S}(a^j \beta \gamma \gamma^+)\)\big|
= 
\big|\phi'_{a^j\beta \gamma\omega}(\pi_{\mathcal S}(b^ka^j\beta \gamma\gamma^+)\)\big|\cdot
\big|\phi'_{b^k}\(\pi_{\mathcal S}(a^j\beta \gamma\gamma^+)\)\big|.
$$
Since $\omega \in E^*_{*A^*}$ and since either $b \neq a$ if $j \geq 1$ or $\beta_1 \not\in \Omega$ if $j = 0$, by the (BDP) we get that 
$$
{\^K}_q^{-1}
\leq
\frac{\big|\phi'_{a^j\beta \gamma\omega}(\pi_{\mathcal S}\((b^k a^j \beta \gamma \omega)^\infty)\)\big|}{
\big|\phi'_{a^j\beta \gamma\omega}\(\pi_{\mathcal S}(b^k a^j \beta \gamma \gamma^+)\)\big|
}
\leq {\^ K}_q
$$
and 
$$
{\^ K}_q^{-1}
\leq
\frac{\big|\phi'_{b^k}\(\pi_{\mathcal S}((a^j \beta \gamma \omega b^k)^\infty)\)\big|}{
\big|\phi'_{b^k}\(\pi_{\mathcal S}(( a^j \beta \gamma \gamma^+)\)\big|
}
\leq {\^ K}_q
$$
with some ``distortion" function $q \mapsto {\^ K}_q \in [1, +\infty)$ such that
$\lim_{q\to \infty} {\widehat K}_q  = 1$.  Consequently, 
\beq\label{1ma7}
{\^K}_q^{-2}
\leq
\frac{\big|\phi'_{a^j\beta \gamma\omega  b^k}(x_{a^j \beta \gamma \omega b^k})\big|}{
\big|\phi'_{a^j\beta \gamma\omega b^k}(\pi_{\mathcal S}((a^j \beta \gamma \gamma^+))\big|
}
\leq {\^ K}_q^2.
\eeq
\medskip

\noindent
{\it Case $2^0$}.   Assume $j \geq 1$ and $b =  a$.  Then
\begin{equation}\label{3ma7}
\big|\phi'_{a^j\beta \gamma\omega b^k}(x_{a^j\beta \gamma\omega b^k})\big|
= 
\big|\phi'_{a^j\beta \gamma\omega}\(\pi_{\mathcal S}((a^{j+k}\beta \gamma\omega)^\infty)\)\big|\cdot
\big|\phi'_{a^k}(\pi_{\mathcal S}\((a^j\beta \gamma\omega b^k)^\infty)\)\big|
\end{equation}

\fr and
\begin{equation}
\big|\phi'_{a^j\beta \gamma\omega b^k}\(\pi_{\mathcal S}(a^j \beta \gamma \gamma^+)\)\big|
= 
\big|\phi'_{a^j\beta \gamma\omega}\(\pi_{\mathcal S}(a^{j+k}\beta \gamma\gamma^+)\) \big|. \big|\phi'_{a^k}\(\pi_{\mathcal S}(a^{j}\beta \gamma\gamma^+)\)\big|.
\label{4ma7}
\end{equation}
Again by (BDP) we have that 
\begin{equation}
{\^K}_q^{-1}
\leq
\frac{\big|\phi'_{a^j\beta \gamma\omega }\(\pi_{\mathcal S}((a^{j+k} \beta \gamma \omega)^\infty))\)\big|}{
\big|\phi'_{a^j\beta \gamma\omega}\(\pi_{\mathcal S}(a^{j+k} \beta \gamma \gamma^+)\)\big|}
\leq {\^ K}_q.
\label{2ma7}
\end{equation}
By the Chain Rule
\begin{equation}\label{2ma8}
\big|\phi'_{a^k}(\pi_{\mathcal S}(a^{j} \beta \gamma\ka))\big|
=\big|\phi'_{a^{k+j}}(\pi_{\mathcal S}( \beta \gamma\ka))\big|\cdot
\big|\phi'_{a^{j}}\(\pi_{\mathcal S}( \beta \gamma\ka))\big|^{-1}
\end{equation}
for every $\ka\in E_A^*$ such that $\gamma\ka \in E_A^*$. Since $\beta_1 \neq a$
we have that 
$$
{\^K}_q^{-1}
\leq
\frac{\big|\phi'_{a^{j+k}}\(\pi_{\mathcal S}(( \beta \gamma \omega a^k)^\infty)\)\big|}{\big|\phi'_{a^{j+k}}\(\pi_{\mathcal S}(\beta\gamma \gamma^+)\)
\big|}
\leq {\^K}_q
$$
and 
$$
{\^K}_q^{-1}
\leq
\frac{\big|\phi'_{a^{j}}(\pi_{\mathcal S}\(( \beta \gamma \omega a^k)^\infty)\)\big|}{\big|\phi'_{a^{j}}\(\pi_{\mathcal S}(\beta\gamma\gamma^+)\) \big|}
\leq {\^K}_q.
$$
Hence, invoking (\ref{2ma8}) we get that 
$$
{\^K}_q^{-2}
\leq
\frac{\big|\phi'_{a^{k}}\(\pi_{\mathcal S}(( a^j\beta \gamma \omega a^k)^\infty)\) \big|}{\big|\phi'_{a^{k}}(\pi_{\mathcal S}(a^j\beta\gamma\gamma^+)\)\big|}
\leq {\^K}_q^2.
$$
Along with (\ref{2ma7}), (\ref{3ma7}) and (\ref{4ma7}) this yields
\beq\label{1ma8}
{\^K}_q^{-3}
\leq
\frac{\big|\phi'_{a^{j}\beta \gamma \omega b^k}(x_{a^j\beta\gamma \omega b^k})\big|}{\big|\phi'_{a^{j}\beta \gamma \omega b^k}\(\pi_{\mathcal S}(a^j\beta  \gamma \gamma^+)\)\big|
}\leq {\^K}_q^3.
\end{equation}

Now it follows from (\ref{1ma7}) and (\ref{1ma8})
that 
\begin{equation}\label{3ma8}
\pi_{a^j \beta \gamma \gamma^+}(a^j \beta \gamma, T-3 \log \^K_q)
\sbt\pi_p(\alpha^j \beta \gamma, T)
\sbt
\pi_{a^j \beta \gamma \gamma^+}(a^j \beta \gamma, T+3 \log \^K_q).
\end{equation}
Therefore, applying (\ref{2_2016_03_25}) we get that 
\begin{equation}
\begin{aligned}
\liminf_{T \to +\infty} \frac{N_p(a^j \beta, T)}{e^{{\d_\cS} T}}
&\geq \liminf_{T \to +\infty} \sum_{\gamma \in E_A^q\atop a^j\beta \gamma \in E_A^*}
\frac{N_{\alpha^j \beta \gamma \gamma^+}(a^j \beta \gamma, T- 3\log {\^K}_q)}{e^{{\d_\cS} T}}\cr
&\geq  \sum_{\gamma \in E_A^q \atop a^j\beta \gamma \in E_A^*}
 \liminf_{T \to +\infty} 
\frac{N_{\alpha^j \beta \gamma \gamma^+}(a^j \beta \gamma, T- 3\log {\^K}_q)}{e^{{\d_\cS} T}} \cr
&=
\sum_{\gamma \in E_A^q \atop a^j\beta \gamma \in E_A^*}
 \liminf_{T \to +\infty} 
\frac{
N_{\alpha^j \beta \gamma \gamma^+}(a^j \beta \gamma, T- 3\log {\^K}_q)}
{e^{{\d_\cS}(T - 3 \log {\^K}_q )}} \^K_q^{-3{\d_\cS}}\cr
&= {\^K}_q^{- 3{\d_\cS}}
\sum_{\gamma \in E_A^\infty\atop a^j\beta \gamma \in E_A^*}
\frac{\psi_{\d_\cS}(a^j \beta\gamma\gamma^+)}{{\d_\cS} \chi_{\d_\cS}}
m_{\d_\cS}([a^j\beta\gamma]) \cr
&\geq K_q^{-1}(a^j\beta)\frac{1}{{\d_\cS} \chi_{\d_\cS}} \mu_{\d_\cS}([a^j\beta]),
\label{1ma9}
\end{aligned}
\end{equation}
with some function $q \mapsto K_q(a^j\beta) \in [1, +\infty)$ for which 
$\lim_{q \to +\infty} K_q(a^j\beta) =1 $ and which exists because $a^j\beta$ is not a power of an element from $\Omega$.  Taking the limit  in (\ref{1ma9}) as $q \to +\infty$, we thus get that 
\beq\label{2ma9}
\liminf_{T \to +\infty} \frac{N_p(a^j \beta, T)}{e^{{\d_\cS} T}} \geq 
\frac{1}{{\d_\cS} \chi_\delta} \mu_{\d_\cS}([a^j\beta]).
\eeq
In the general case, i.e., making no assumptions on  $\tau \in E_A^*$ we proceed in the same way
as in the proof of formula (\ref{2_2016_03_25}). We can  fix $\mathcal F_\tau$, a countable collection of mutually incomparable words extending $\tau$, not being powers (concatenations) of elements from $\Omega$, and such that 
$$
\mu_{\d_\cS}\lt([\tau] \sms \bu_{\omega \in \mathcal F_\tau}\rt)=0.
$$
Noting that then the family $\{[\omega] \hbox{ : } \omega \in \mathcal F_\tau\}$
consists of mutually disjoint sets, we thus get that from (\ref{2ma9}) that 
\begin{equation}
\begin{aligned}
\liminf_{T \to +\infty} \frac{N_p(\tau, T)}{e^{{\d_\cS} T}} &\geq 
\liminf_{T \to +\infty} \frac{\sum_{\omega \in \mathcal F_\tau} N_p(\omega,T)}{e^{{\d_\cS} T}}
\geq 
\sum_{\omega \in \mathcal F_\tau} 
 \liminf_{T \to +\infty}  \frac{N_p(\omega,T)}{e^{{\d_\cS} T}}\cr
 &= \frac{1}{{\d_\cS} \chi_{\d_\cS}} 
 \sum_{\omega \in \mathcal F_\tau} \mu_{\d_\cS}([\omega]) 
 = \frac{1}{{\d_\cS} \chi_{\d_\cS}} \mu_{\d_\cS}([\tau]).
 \end{aligned} 
 \label{3ma9}
\end{equation}
For the upper bound we again deal first with words $a^j \beta$, i.e., the same as those leading to 
(\ref{2ma9}).  Since the alphabet $E$ is finite it follows from the left hand side 
of (\ref{3ma8}) and from (\ref{2_2016_03_25}) that

\begin{equation}\label{1ma10}
\begin{aligned}
\limsup_{T \to +\infty}
 \frac{
 N_p(a^j \beta, T)
 }{
 e^{{\d_\cS} T}
 }
&\leq \limsup_{T \to +\infty} \sum_{\gamma \in E_A^\infty\atop a^j\beta \gamma \in E_A^*}
\frac{
N_{\alpha^j \beta \gamma \gamma^+}(a^j \beta \gamma, T+ 3\log {\^K}_q)
}{
e^{{\d_\cS} T}
}\cr
& \leq
\sum_{\gamma \in E_A^\infty\atop a^j\beta \gamma \in E_A^*}
 \limsup_{T \to +\infty} 
\frac{
N_{\alpha^j \beta \gamma \gamma^+}(a^j \beta \gamma, T+ 3\log {\^K}_q)
}{
e^{{\d_\cS} T}
} \cr
&= 
\sum_{\gamma \in E_A^\infty \atop a^j\beta \gamma \in E_A^*}
 \limsup_{T \to +\infty} 
\frac{
N_{\alpha^j \beta \gamma \gamma^+}(a^j \beta \gamma, T+ 3\log {\^K}_q)
}{
e^{{\d_\cS} (T+3\log {\^K}_q) } }
{\^K}_q^{3{\d_\cS}}
 \cr
&= 
{\^K}_q^{ 3{\d_\cS}}
\sum_{\gamma \in E_A^\infty\atop a^j\beta \gamma \in E_A^*}
\frac{\psi_{\d_\cS}(a^j \beta\gamma\gamma^+)}{{\d_\cS} \chi_{\d_\cS}}
m_{\d_\cS}([a^j\beta\gamma]) \cr
&\leq K_q(a^j\beta) \frac{1}{{\d_\cS} \chi_{\d_\cS}} \mu_{\d_\cS}([a^j\beta]).
\end{aligned}
\end{equation}
Taking the limit $q \to +\infty$ in (\ref{1ma10}) we thus get that 
$$
\liminf_{T \to +\infty} \frac{N_p(a^j \beta, T)}{e^{{\d_\cS} T}}
\leq \frac{1}{{\d_\cS} \chi_{\d_\cS}} \mu_{\d_\cS}([a^j\beta]).
$$ 
Along with (\ref{2ma9}) this gives  
\begin{equation}
\lim_{T \to +\infty} \frac{N_p(a^j \beta, T)}{e^{{\d_\cS} T}}
=  \frac{1}{{\d_\cS} \chi_{\d_\cS}} \mu_{\d_\cS}([a^j\beta]).
\label{2ma10}
\end{equation}
Passing to the upper bound in the general case, we only need to deal with powers of parabolic elements. Because of (\ref{3ma9}) and  Theorem~\ref{t1_2017_02_18}  
 (b1)--(b3), formula (\ref{2_2016_03_25_P})
holds for all words $\tau = a^l$, $l \geq 1$, where $a\in \Omega$ is such that
${\d_\cS} \leq \frac{2p_a}{p_a +1}$. In what follows, we can thus assume that  
$$
{\d_\cS} > \frac{2p_a}{p_a+1}.
$$
Then for every integer $j \geq -1$, we have 
\begin{equation}
[a^{j+1}]\sms[a^{j+2}] = \bu\big\{[a^{j+1}e]:e \in E\sms\{a\} \ {\rm and } A_{ae}=1\big\}.
\label{3ma10}
\end{equation}
Since the set $E \sms \{a\}$ is finite it thus follows from (\ref{2ma10}) that 
 \begin{equation}
 \frac{N_p([a^{j+1}] \sms[a^{j+2}], T)}{e^{{\d_\cS} T}}
 = \frac{1}{{\d_\cS} \chi_{\d_\cS}} \mu_{\d_\cS}([a^{j+1}] \sms[a^{j+2}])
 =  \frac{1}{{\d_\cS} \chi_{\d_\cS}} (\mu_{\d_\cS}([a^{j+1}]) - \mu_{\d_\cS}([a^{j+2}])).
 \label{4ma10}
 \end{equation}
Now if $\omega \in [a^{j+1}]\sms[a^{j+2}]$ then 
$\omega = a^j(a\ka a^l)$ with $\ka_1, \ka_{|\ka|} \in E\sms \{a\}$, $A_{a\ka_1}=1$, $A_{\ka_{|\ka|}a}=1$, and $l \geq 0$. Then
$$
e^{-T}
\leq 
\big|\phi'_{a^j (a\ka a^l)}(x_{a^j(a\ka a^l)})\big|
=\big|\phi'_{a\ka a^l}(x_{a^j(a\ka a^l)})\big|\cdot
\big|\phi'_{a^j}(x_{a\ka a^{j+l}})\big|
\asymp(j+2)^{-(p_a+1)/p_a} \big|\phi'_{a\ka a^l}(x_{a^j(a\ka a^l)})\big|.
$$
Denoting by $Q\ge 1$ the multiplicative constant corresponding to the ``$\asymp$'' sign above, we thus get 
\begin{equation}
\big|\phi'_{a\ka a^l}(x_{a^j(a\ka a^l)})\big| 
\geq Q^{-1}(j+2)^{-\frac{p_a+1}{p_a}}e^{-T}.
\label{1ma11}
\end{equation}
Now fix a word $\beta \in E_A^\infty$ with $\beta_1 = a$ and $\beta_2 \neq a$.
Then 
$$
\big|\phi'_{a\ka a^l}(x_{a^j(a\ka a^l)})\big|
=
\big|\phi'_{a^l}(x_{a^j(a\ka a^l)})\big|\cdot
|\phi'_{a\ka}(x_{a^{j+l}\ka})|
\asymp
(j+l+2)^{-\frac{p_a+1}{p_a}}\cdot j^{\frac{p_a+1}{p_a}} \big|\phi'_{a\ka} \(\pi_{\mathcal S}(\beta)\)\big|.
$$
It therefore follows from (\ref{1ma11}) that 
$$
\big|\phi'_{a x }(\pi_{\mathcal S}(\beta))\big|
\geq Q^{-2} (j+l+2)^{\frac{p_a+1}{p_a}}e^{-T}.
$$
Equivalently,
$$
-\log \big|\phi'_{a\ka}(\pi_{\mathcal S}(\beta))\big|
\leq 2\log Q -  \frac{(p_a+1)}{p_a}\log (j+l+2) +T.
$$ 
Hence 
$$
a\ka \in \pi_\beta\lt([a\ka_1], 2\log Q -  \frac{(p_a+1)}{p_a}\log (j+l+2) +T\rt).
$$
Therefore,
\beq\label{1_2017_03_17}
N_p([\alpha^{j+1}]\sms [a^{j+2}])
\leq 
\sum_{b \in E \sms \{a\}\atop A_{ab}=1} \sum_{l=0}^\infty N_\beta\left([ab], 2\log Q - \frac{p_a+1}{p_a}
\log(j+l+2) + T\right).
\eeq
By formula (\ref{2_2016_03_25}),  and since the alphabet $E$ is finite, there exists $T_1 > 0$ such that
\begin{equation}
e^{-{\d_\cS} S} N_\beta([ab], S) \leq \frac{\psi_{\d_\cS}(\beta)}{{\d_\cS} \chi_{\d_\cS}}
m_{\d_\cS}([ab])
\leq \frac{\psi_{\d_\cS}(\beta)}{{\d_\cS} \chi_{\d_\cS}}
\label{1ma12}
\end{equation}
for every $b \in E\sms\{a\}$ with $A_{ab}=1$ and every $S \geq T_1$.  Now
$$
2\log Q - \frac{p_a+1}{p_a} \log (j+l+2) + T \geq T_1
$$
if and only if 
\begin{equation}
j+l+2 \leq s_T:= Q^{\frac{2p_a}{p_a+1}}\exp\lt(\frac{p_a}{p_a+1}(T-T_1)\rt).
\label{2ma12}
\end{equation}
In addition, if
\begin{equation}
2\log Q - \frac{p_a+1}{p_a} \log (j+l+2) + T \le -1 
\label{3ma12}
\end{equation}
then 
\begin{equation}
 N_\beta\lt([ab],2\log Q - \frac{p_a+1}{p_a} \log (j+l+2) + T \rt) = 0.
 \label{4ma12}
\end{equation}
Formula (\ref{3ma12}) just means that 
\begin{equation}
j+l+2 \geq u_T:= e Q^{\frac{2p_a}{p_a+1}}e^{\frac{p_a}{p_a+1}T}.
 \label{5ma12}
\end{equation}
Therefore, returning to  formula \eqref{1_2017_03_17}, for every $q \geq 1$ we get that 
\begin{equation}
\begin{aligned}
\sum_{j=q+1}^\infty &e^{-{\d_\cS} T} N_p([\alpha^{j+1}] \sms [a^{j+2}])
\le \cr &\!\!\!\!\leq\sum_{b \in E\sms\{a\}\atop A_{ab}=1}
\sum_{j=q}^\infty \sum_{l:j+l+2 \leq s_T}\!\!
\frac{ N_\beta\lt([ab],2\log Q - \frac{p_a+1}{p_a} \log (j+l+2) + T \rt)}{
\exp\lt({\d_\cS}( 2\log Q - \frac{p_a+1}{p_a} \log (j+l+2)+T)\rt)}Q^{2{\d_\cS}}
 (j+l+2)^{-\frac{p_a+1}{p_a}{\d_\cS}}+\cr
& \  \  \  \  \  \  \  \  \  \  \  \  \  \  \  \  \  \ +
\sum_{s_T + 1 \leq j+l+2 \leq u_T} e^{-{\d_\cS} T} N_\beta([ab], T_1)\cr
&\!\!\!\! \leq Q^{2{\d_\cS}} \#E \frac{\psi_{\d_\cS}(\beta)}{{\d_\cS} \chi_{\d_\cS}}
\sum_{j=q}^\infty \sum_{k=j}^\infty k^{-\frac{p_a+1}{p_a}{\d_\cS}} + N_a e^{-{\d_\cS} T}
u_T^2 \cr
&\!\!\!\!\leq \widehat Q_1 
\sum_{j=q}^\infty j^{1-\frac{p_a+1}{p_a}{\d_\cS}}+\widehat Q_2\exp\lt(\lt(\frac{2p_a}{p_a+1}-{\d_\cS}\rt)T\rt)\cr
&\!\!\!\! \leq \widehat Q_3 
q^{2-\frac{p_a+1}{p_a}{\d_\cS}}+\widehat Q_2\exp\lt(\lt(\frac{2p_a}{p_a+1}-{\d_\cS}\rt)T\rt),
\label{1ma13}
\end{aligned}
\end{equation}
where 
$N_a:= \max\big\{N_\beta ([ab], T_1) \hbox{ : } b \in E \backslash \{a\}, \,  A_{ab}=1\big\}$, $\widehat Q_1,\widehat Q_2, \widehat Q_3 \geq 1$ are universal constants, and the last inequality holds because ${\d_\cS}> \frac{2 p_a}{p_a + 1}$. 

Applying (\ref{4ma10}) and (\ref{1ma13}) we obtain for all integers $q \geq k+2$ the following estimate
$$
\begin{aligned}
&\varlimsup_{T \to +\infty}
\left|
\frac{N_p([a^k],T)}{e^{{\d_\cS} T}} - \frac{1}{{\d_\cS} \chi_{\d_\cS}} \mu_{\d_\cS}([a^k])
\right|=\cr
& \ \ =\varlimsup_{T \to +\infty}
\left|
\sum_{j=k-1}^q
\frac{N_p([a^{j+1}]\sms[a^{j+2}],T) }{e^{{\d_\cS} T}}\right.\
+ \left.\sum_{j=q+1}^\infty
\frac{N_p([a^{j+1}]\sms[a^{j+2}],T) }{e^{{\d_\cS} T}} 
 - \frac{1}{{\d_\cS} \chi_{\d_\cS}} \mu_{\d_\cS}([a^k])
\right|\cr
&\ \ \leq \varlimsup_{T \to +\infty}
\left|
\sum_{j=k-1}^q
\frac{N_p([a^{j+1}]\sms[a^{j+2}],T) }{e^{{\d_\cS} T}}
- \frac{1}{{\d_\cS} \chi_{\d_\cS}} \mu_{\d_\cS}([a^k])
\right|+
\varlimsup_{T \to +\infty} \left|\sum_{j=q+1}^\infty
\frac{N_p([a^{j+1}]\sms[a^{j+2}],T) }{e^{{\d_\cS} T}} 
\right|\cr
&\ \  \leq \frac{1}{{\d_\cS} \chi_{\d_\cS}}
\left|
\sum_{j=k-1}^q
 \mu_{\d_\cS}([a^{j+1}]\sms [a^{j+2}])   - \mu([a^k])\right|
 + \varlimsup_{T \to +\infty} \widehat Q_2 \sum_{a \in \Omega}
 \exp \left(\lt(\frac{2p_a}{p_a+1} - {\d_\cS} \rt) T \right)\cr
&\  \ =\frac{1}{{\d_\cS} \chi_{\d_\cS}}\big|\mu_{\d_\cS}([a^k]\sms [a^{q+2}]) - \mu([a^k])\big|\cr 
&\  \ =  \frac{1}{{\d_\cS} \chi_{\d_\cS}} \mu_{\d_\cS}([a^{q+2}]).
\end{aligned}
$$
But since ${\d_\cS} > \frac{2p_a}{p_a+1}$ we have that $\lim_{q \to \infty} \mu_{\d_\cS}([a^{q+2}]) = 0$ and therefore 
$$
\varlimsup_{T \to +\infty} 
 \left|\frac{N_p([a^k],T)}{e^{{\d_\cS} T}} - \frac{1}{{\d_\cS} \chi_{\d_\cS}}\mu_{\d_\cS}([a^k])\right|=0.
$$
This just means that
$$
\lim_{T\to \infty} \frac{N_p([a^k],T)}{e^{{\d_\cS} T}} 
= \frac{1}{{\d_\cS} \chi_{\d_\cS}}\mu_{\d_\cS}([a^k]).
$$
The proof of our theorem is thus complete.
\end{proof} 

\sp The proof of the following theorem, based on Theorem~\ref{t2pc6}, is exactly the same as the proof of Theorem~\ref{dyn} based on Theorem~\ref{dynA}.

\begin{thm} 
[Asymptotic Equidistribution of Multipliers for Parabolic 
Systems II]\label{t2pc6_B}
Suppose that $\cS$ is a finite irreducible parabolic conformal GDMS.  Fix $\rho\in E_A^\infty$. If $B \subset X$ is a Borel set such that $\^m_{\d_\cS}(\bd B)=0$ (equivalently $\^\mu_{\d_\cS}(\bd B)=0$) then,
\beq\label{3_2016_03_25}
\lim_{T \to +\infty} \frac{N_\rho(B,T)}{e^{{\d_\cS} T}} 
= \frac{\psi_{\d_\cS}(\rho)}{{\d_\cS}\chi_{\mu_{\d_\cS}}}\^m_{\d_\cS}(B)
\eeq
and 
\beq\label{3_2016_03_25_B}
\lim_{T \to +\infty} \frac{N_p(B,T)}{e^{{\d_\cS} T}} 
= \frac{1}{{\d_\cS}\chi_{\mu_{\d_\cS}}}\^\mu_{\d_\cS}(B).
\eeq
\end{thm}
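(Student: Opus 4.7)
The strategy mirrors exactly the proof of Theorem~\ref{dyn} in the attracting case: we use Theorem~\ref{t2pc6} as the input giving the asymptotic for each cylinder $[\tau]$, and bootstrap to arbitrary Borel sets via a three-stage approximation argument. As in the attracting case, I would carry out the reasoning uniformly for $i \in \{\rho, p\}$, writing
$$
C_i := \begin{cases} \frac{\psi_{\d_\cS}(\rho)}{\d_\cS \chi_{\mu_{\d_\cS}}} & i = \rho, \\ \frac{1}{\d_\cS \chi_{\mu_{\d_\cS}}} & i = p, \end{cases} \qquad \widehat\nu_i := \begin{cases} \widehat m_{\d_\cS} & i = \rho, \\ \widehat\mu_{\d_\cS} & i = p, \end{cases}
$$
so that Theorem~\ref{t2pc6} reads $\lim_{T\to+\infty} N_i(\tau, T) e^{-\d_\cS T} = C_i \nu_i([\tau])$.

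\textbf{Stage one: open sets.} Suppose $V \subset X$ is open with $\widehat\nu_i(\partial V) = 0$. The essential ingredient is that Theorem~\ref{t1_2017_02_18}(2) gives, in the parabolic setting, exactly the conformal-type identities $\widehat m_{\d_\cS}(\phi_\tau(F)) = \int_F |\phi_\tau'|^{\d_\cS}\, d\widehat m_{\d_\cS}$ and $\widehat m_{\d_\cS}(\phi_\alpha(X_{t(\alpha)}) \cap \phi_\beta(X_{t(\beta)})) = 0$ for incomparable $\alpha, \beta \in E_A^*$, which were the only properties of conformal measures used in the attracting proof. Therefore, for every $s \in (0,1)$ I can select a finite family $\Gamma_s(V) \subset E_A^*$ of mutually incomparable words such that $\bigcup_{\tau \in \Gamma_s(V)} \phi_\tau(X_{t(\tau)}) \subset V$ and $\nu_i\bigl(\bigcup_{\tau \in \Gamma_s(V)}[\tau]\bigr) \geq s\, \widehat\nu_i(V)$. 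Summing the cylinder asymptotics from Theorem~\ref{t2pc6} and letting $s \uparrow 1$ yields the lower bound $\varliminf_T N_i(V,T) e^{-\d_\cS T} \geq C_i \widehat\nu_i(V)$.

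\textbf{Stage two: matching upper bound for open sets.} Applying the lower bound to the open set $\overline V^c$ (whose boundary equals $\partial V$ and hence is $\widehat\nu_i$-null), adding to the lower bound for $V$, and comparing to the total $\lim_{T\to+\infty} N_i(T) e^{-\d_\cS T} = C_i$—which is Theorem~\ref{t2pc6} with $\tau = \emptyset$—forces equality $\lim_T N_i(V,T) e^{-\d_\cS T} = C_i \widehat\nu_i(V)$, by the same ``sandwich'' argument used in (\ref{3nh23}).

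\textbf{Stage three: general Borel sets.} Given a Borel set $B$ with $\widehat\nu_i(\partial B) = 0$, I would invoke outer regularity of $\widehat\nu_i$ to choose an open $G \supset B$ with $\widehat\nu_i(G) \leq \widehat\nu_i(B) + \varepsilon$; then cover the compact set $\overline B$ by finitely many open balls $V_x \subset G$ centered at points of $\overline B$ with $\widehat\nu_i(\partial V_x) = 0$ (such radii exist in abundance since $\widehat\nu_i$ is finite). The union $V = \bigcup_{x \in F} V_x$ satisfies $\widehat\nu_i(\partial V) = 0$, so stage two applies, giving $\varlimsup_T N_i(B,T) e^{-\d_\cS T} \leq C_i \widehat\nu_i(B)$ after letting $\varepsilon \downarrow 0$. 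A final complementary sandwich argument using $\partial B^c = \partial B$ supplies the reverse inequality.

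\textbf{Main obstacle.} In the attracting case, the analogous proof is entirely formal once the cylinder asymptotics are available, and the same is true here. The genuine difficulty was absorbed into Theorem~\ref{t2pc6}, where powers of parabolic elements required the delicate case analysis carried out in its proof. The only point that might deserve a word of care is that the geometric conformal identities of Theorem~\ref{t1_2017_02_18}(2) must be used in place of \eqref{3_2016_12_14-again}--\eqref{4_2016_12_14}; since these identities have the same form, the attracting proof transcribes verbatim, which is precisely why the excerpt asserts it to be ``exactly the same.''
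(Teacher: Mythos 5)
Your proposal is correct and follows essentially the same route as the paper, which simply states that the proof of Theorem~\ref{t2pc6_B} from Theorem~\ref{t2pc6} is identical to the derivation of Theorem~\ref{dyn} from Theorem~\ref{dynA}: inner approximation of open sets by finitely many incomparable cylinders, the sandwich argument against the total count $C_i$, and the passage to general Borel sets via outer regularity and finite covers by balls with null boundary. Your remark that the conformality identities of Theorem~\ref{t1_2017_02_18}(2) replace \eqref{3_2016_12_14-again}--\eqref{4_2016_12_14} is exactly the point that makes the transcription legitimate.
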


\sp\fr We have as an immediate corollary the following:

\bthm[Asymptotic Equidistribution of Multipliers for Parabolic Systems]
\label{t2pc6-again_Q}
Suppose that $\cS$ is a finite irreducible parabolic conformal GDMS. Fix $\rho\in E_A^\infty$. Then 
\beq\label{3_2016_03_25G}
\lim_{T \to +\infty} \frac{N_\rho(T)}{e^{{\d_\cS} T}} 
= \frac{\psi_{\d_\cS}(\rho)}{{\d_\cS}\chi_{\mu_{\d_\cS}}}
\eeq
and 
\beq\label{3_2016_03_25_H}
\lim_{T \to +\infty} \frac{N_p(T)}{e^{{\d_\cS} T}} 
= \frac{1}{{\d_\cS}\chi_{\mu_{\d_\cS}}}\^\mu_{\d_\cS}(J_\cS).
\eeq
\ethm


\section{Asymptotic Results for Diameters}\label{diam_Parabolic}

We now want to use the asymptotic results established 
in the previous section to show the asymptotic formulae for diameters of images of a set.

In this section we assume that $\cS$ is a finite irreducible conformal parabolic GDMS. Our task here is, for parabolic systems, the same as it was in Section~\ref{contracting_diameters} for attracting systems, i.e. to obtain asymptotic counting properties corresponding to the function $-\log\diam(\phi_\om(Y))$, $\om\in E_A^*$. The 
notation here is similar to that  in Section~\ref{contracting_diameters} but is  slightly enhanced. 
Our strategy now is to use the  full generality of Theorem~\ref{t1da7} and to deduce from it the main result of the current section, which is the following.

\bthm[Asymptotic Equidistribution Formula of Diameters for Parabolic Systems, I]\label{t1dp13}
Suppose that $\cS$ is a finite irreducible parabolic conformal GDMS. Fix $\rho\in E_A^\infty$ and $Y\sbt X_{i(\rho)}$ having at least two points. If $B \subset X$ is a Borel set such that $m_{\d_\cS}(\bd B)=0$ (equivalently $\mu_{\d_\cS}(\bd B)=0$) then,
\beq\label{1dp13}
\lim_{T \to +\infty} \frac{D^{\rho}_Y(B,T)}{e^{{\d_\cS} T}} 
=C_\rho(Y)m_{\d_\cS}(B),
\eeq
where $C_\rho(Y)\in (0,+\infty]$ is a constant depending only on the system $\cS$, the word $\rho$ (but see Remark~\ref{r1_2017_03_20B}), and the set $Y$. In addition $C_\rho(Y)$ is finite if 
and only if either

\begin{enumerate}
\item 
$$
\ov Y\cap \Om_\infty=(\ov Y\cap \Om_\infty\cap\Om_\rho)=\es
$$
or 
\item 
$$
{\d_\cS}>\max\big\{p(a):a\in\Om_\rho \  \  {\rm and} \  \  x_a\in \ov Y \big\}.
$$
\end{enumerate}
Then the function $[\rho_1]\ni \om\longmapsto C_\om(Y)$ is uniformly separated away from zero and bounded above.
\ethm

\bpf
Recall that
$$
\Om_\rho=\{a\in\Om:A_{a\rho_1}=1\}.
$$
We know that
$$
E_\rho^*=E_{*\rho}^*\cup \bu_{a\in \Om_\rho}\bu_{k=1}^\infty E_{*a}^*a^k
$$
and this union consists of mutually incomparable terms. Therefore,
$$
\cD_Y^\rho(B,T)
=\cD_{Y,\cS^*}^\rho(B,T)\cup \bu_{a\in \Om_\rho}\bu_{k=1}^\infty\cD_{\phi_{a^k}(Y),\cS^*}^{a^k\rho}(B,T),
$$
and this union consists of mutually disjoint terms. Therefore,
\beq\label{1_2017_03_13}
\frac{D_Y^\rho(B,T)}{e^{{\d_\cS} T}}
\ge \frac{D_{Y,\cS^*}^\rho(B,T)}{e^{{\d_\cS} T}}+\sum_{a\in \Om_\rho}
     \sum_{k=1}^\infty\frac{D_{\phi_{a^k}(Y),\cS^*}^{a^k\rho}(B,T)}{e^{{\d_\cS} T}},
\eeq
and for every $q\ge 1$:
\beq\label{1dp14}
\frac{D_Y^\rho(B,T)}{e^{{\d_\cS} T}}
\le \frac{D_{Y,\cS^*}^\rho(B,T)}{e^{{\d_\cS} T}}+\sum_{a\in \Om_\rho}
     \sum_{k=1}^q\frac{D_{\phi_{a^k}(Y),\cS^*}^{a^k\rho}(B,T)}{e^{{\d_\cS} T}}
     +\sum_{a\in \Om_\rho}
     \sum_{k=q+1}^\infty\frac{D_{\phi_{a^k}(Y),\cS^*}^{a^k\rho}(B,T)}{e^{{\d_\cS} T}}.
\eeq
Assume first that $\rho\in E_{*A^*}^\N$. Then, $a^k\rho\in E_{*A^*}^\N$ for every $a\in \Om_\rho$ and for all integers $k\ge 0$, whence we can invoke Theorem~\ref{t1da7} and \ref{1_2017_03_13}, to conclude that
\beq\label{2dp14}
\begin{aligned}
\varliminf_{T\to\infty}\frac{D_Y^\rho(B,T)}{e^{{\d_\cS}T}}
&\ge \varliminf_{T\to\infty}\frac{D_{Y,\cS^*}^\rho(B,T)}{e^{{\d_\cS} T}}+\sum_{a\in \Om_\rho}
     \sum_{k=1}^\infty\varliminf_{T\to\infty}
        \frac{D_{\phi_{a^k}(Y),\cS^*}^{a^k\rho}(B,T)}{e^{{\d_\cS} T}} \\
&=\Big(C_\rho^{\cS^*}(Y)+\sum_{a\in \Om_\rho}\sum_{k=1}^\infty 
         C_{a^k\rho}^{\cS^*}(\phi_{a^k}(Y))\Big)m_{\d_\cS}^*(B)\\
&=\Big(C_\rho^{\cS^*}(Y)+\sum_{a\in \Om_\rho}\sum_{k=1}^\infty 
         C_{a^k\rho}^{\cS^*}(\phi_{a^k}(Y))\Big)m_{\d_\cS}(B).
\end{aligned}         
\eeq
Since for every $a\in \Om_\rho$ and for all integers $k\ge 0$
$$
\diam\(\phi_{a^k}(Y)\)\comp 
\begin{cases}
(k+1)^{-\frac1{p_a}} &{\rm if } \  \  \  \ov Y\cap \Om_\infty\cap\Om_\rho\ne\es, \\
(k+1)^{-\frac{p_a+1}{p_a}} &{\rm if } \  \  \  \ov Y\cap \Om_\infty\cap\Om_\rho=\es,
\end{cases}
$$
formula \eqref{2dp14} along with \eqref{1da7.1},  complete the proof of Theorem~\ref{t1dp13} if neither (1) nor (2) hold.
So, for the rest of the proof of the present case of $\rho\in E_{*A^*}^\N$, we assume that at least one of  (1) or (2) holds. Then
\beq\label{1_2017_03_13}
C_\rho^{\cS^*}(Y)+\sum_{a\in \Om_\rho}\sum_{k=1}^\infty 
         C_{a^k\rho}^{\cS^*}(\phi_{a^k}(Y))<+\infty,
\eeq 
and in addition, this number is bounded away from zero and bounded above independently of $\rho\in E_{*A}^\N$ because of \eqref{1da7.1}.

Now fix $a\in \Om_\rho$. If $\om\in\cD_{\phi_{a^k}(Y),\cS^*}^{a^k\rho}(B,T)$, then 
$$
\diam\(\phi_\om(\phi_{a^k}(Y))\)\ge e^{-T},
$$
and, as
$$
\diam\(\phi_\om(\phi_{a^k}(Y))\)
\le \|\phi_\om'\|_\infty\diam\(\phi_{a^k}(Y)\)
\le Q_1\diam(\phi_\om(X_{i(\om)}))\diam\(\phi_{a^k}(Y)\)
$$
with some constant $Q_1>0$, we thus conclude that
$$
\diam(\phi_\om(X_{i(\om)}))
\ge Q_1^{-1}e^{-T}\diam^{-1}\(\phi_{a^k}(Y)\).
$$
Equivalently,
$$
\De(\om)\le \log Q_1+\log\diam\(\phi_{a^k}(Y)\)+T.
$$
Thus
$$
\om\in \cD_{X_{i(a)},\cS^*}^{a\rho}\(\log Q_1+\log\diam\(\phi_{a^k}(Y)\)+T\).
$$
In conclusion,
\beq\label{1dp15}
\cD_{\phi_{a^k}(Y),\cS^*}^{a^k\rho}(B,T)
\sbt \cD_{X_{i(a)},\cS^*}^{a\rho}\(\log Q_1+\log\diam\(\phi_{a^k}(Y)\)+T\).
\eeq
By virtue of Theorem~\ref{t1da7} there exists $T_1>0$ such that 
\beq\label{2dp15}
\frac{D_{X_{i(a)},\cS^*}^{a\rho}(B,S)}{e^{{\d_\cS} S}}
\le C_{a\rho}^{\cS^*}(X_{i(a)})+1
\eeq
for all $S\ge T_1$. Now, let $k_2(T)$ be the least integer such that
$$
\log Q_2+\log\diam\(\phi_{a^k}(Y)\)+T<0.
$$
Then
\beq\label{3dp15}
\cD_{X_{i(a)},\cS^*}^{a\rho}\(\log Q_2+\log\diam\(\phi_{a^k}(Y)\)+T\)=\es
\eeq
for all $k\ge k_2(T)$ and 
$$
k_2(T)\le 
\begin{cases}
Q_2^{p_a}e^{p_a(T-T_1)} \  \  &{\rm if } \  \  (2) \  {\rm holds} \\
Q_2^{\frac{p_a}{p_a+1}}e^{\frac{p_a}{p_a+1}(T-T_1)} \  \  &{\rm if } \  \  (1) \  {\rm holds} \\
\end{cases}
$$
with some constant $Q_2\in (0,+\infty)$, which in general depends on $Y$ if (1) holds. Furthermore, let $k_1(T)$ be least integer such that
$$
\log Q_2+\log\diam\(\phi_{a^k}(Y)\)+T<T_1.
$$
Then, on the one hand, 
$$
\log Q_2+\log\diam\(\phi_{a^k}(Y)\)+T<T_1, 
$$
for all $k\ge k_1(T)$ and (so) it follows from \eqref{1dp15} that
$$
\cD_{\phi_{a^k}(Y),\cS^*}^{a^k\rho}(B,T)
\sbt \cD_{X_{i(a)},\cS^*}^{a\rho}(T_1).
$$
On the other hand, 
$$
\log Q_2+\log\diam\(\phi_{a^k}(Y)\)+T\ge T_1
$$
for all $0\le k\le k_1(T)$. All of 
this,  together  with \eqref{1dp15}--\eqref{3dp15},  yield
\beq\label{4dp15}
\begin{aligned}
\sum_{k=q+1}^\infty &\frac{D_{\phi_{a^k}(Y),\cS^*}^{a^k\rho}
      (B,T)}{e^{{\d_\cS} T}}=\\
&=\sum_{k=q+1}^{k_1(T)} 
     \frac{D_{\phi_{a^k}(Y),\cS^*}^{a^k\rho}(B,T)}{e^{{\d_\cS} T}}
   +\sum_{k=k_1(T)+1}^{k_2(T)}
     \frac{D_{\phi_{a^k}(Y),\cS^*}^{a^k\rho}(B,T)}{e^{{\d_\cS} T}}\\
&\le \sum_{k=q+1}^{[Q_2e^{p_a(T-T_1)}]}
    \frac{D_{X_{i(a)},\cS^*}^{a\rho}\(\log Q_2+\log\diam\(\phi_{a^k}(Y)\)+T\)}{\exp\({\d_\cS}\(\log Q_2+\log\diam\(\phi_{a^k}(Y)\)+T\)\)}Q_2^{\d_\cS}\diam^{\d_\cS}\(\phi_{a^k}(Y)\) + \\
& \  \  \  \  \  \  \  \  \  \  \  \  \  \  \  \ + \sum_{k=k_1(T)+1}^{k_2(T)}
  \!\!\!\!\frac{D_{X_{i(a)},\cS^*}^{a\rho}(T_1)}{e^{{\d_\cS} T_1}}e^{{\d_\cS}(T_1-T)} \\
&\le Q_2^{\d_\cS} \sum_{k=q+1}^{k_1(T)}
\(C_{a\rho}^{\cS^*}(X_{i(a)})+1\)\diam^{\d_\cS}\(\phi_{a^k}(Y)\)
 +\(C_{a\rho}^{\cS^*}(X_{i(a)})+1\)e^{{\d_\cS}(T_1-T)}k_2(T) \\
&\le Q_2^{\d_\cS} \sum_{k=q+1}^\infty
\(C_{a\rho}^{\cS^*}(X_{i(a)})+1\)\diam^{\d_\cS}\(\phi_{a^k}(Y)\)
 +\(C_{a\rho}^{\cS^*}(X_{i(a)})+1\)e^{{\d_\cS}(T_1-T)}k_2(T).
\end{aligned}         
\eeq
Denote by $\Sg_1(q,T)$ the maximum over all $a\in\Om_\rho$ of the first term in the last line of the above formula and by $\Sg_2(T)$ the second term. Because we are assuming (1) or (2), we have that in either case
\beq\label{3_2017_03_13}
\lim_{q\to\infty}\Sg_1(q,T)=0 \  \  {\rm and}  \  \ \lim_{T\to\infty}\Sg_2(T)=0 .
\eeq
Keeping $q\ge 1$ fixed, inserting \eqref{4dp15} to \eqref{1dp14}, and applying Theorem~\ref{t1da7}, we obtain
$$
\begin{aligned}
\varlimsup_{T\to\infty}&\frac{D_Y^\rho(B,T)}{e^{{\d_\cS} T}}\le \\
&\le \varlimsup_{T\to\infty}\frac{D_{Y,\cS^*}^\rho(B,T)}{e^{{\d_\cS} T}}+\sum_{a\in \Om_\rho}\sum_{k=1}^q\varlimsup_{T\to\infty}
  \frac{D_{\phi_{a^k}(Y),\cS^*}^{a^k\rho}(B,T)}{e^{{\d_\cS} T}}
 +\#\Om_\rho\(\Sg_1(q,T)+\Sg_2(T)\) \\
&\le \lt(C_\rho^{\cS^*}(Y)+\sum_{a\in \Om_\rho}
     \sum_{k=1}^qC_{a^k\rho}^{\cS^*}(\phi_{a^k}(Y))\rt) m_{\d_\cS}(B)
  +\#\Om\(\Sg_1(q,T)+\Sg_2(T)\).
\end{aligned} 
$$
Therefore, invoking \eqref{3_2017_03_13}, we obtain by letting $q\to\infty$, that
$$
\varlimsup_{T\to\infty}\frac{D_Y^\rho(B,T)}{e^{{\d_\cS} T}}
\le \lt(C_\rho^{\cS^*}(Y)+\sum_{a\in \Om_\rho}
 \sum_{k=1}^\infty C_{a^k\rho}^{\cS^*}(\phi_{a^k}(Y))\rt) m_{\d_\cS}(B).
$$
Along with \eqref{2dp14} this shows that formula \eqref{1dp13} holds. The number
$$
C_\rho^{\cS^*}(Y)+\sum_{a\in \Om_\rho}
 \sum_{k=1}^\infty C_{a^k\rho}^{\cS^*}(\phi_{a^k}(Y))
$$
is finite because of \eqref{1_2017_03_13}. Invoking also the sentence following this formula, we conclude the proof in the case of words $\rho\in E_{*A^*}^\N$.

\sp 
Now, we pass to the general case, i.e., all we assume is that $\rho \in E_A^{\mathbb N}$. For every $k \geq 1$ choose $\rho^{(k)}\in E_{*A^*}^\N$ such that $$
\rho^{(k)}|_k= \rho|_k.
$$
We already know that there exists a constant $M \geq 1$ such that 
$$
M^{-1}\leq C_Y(\rho^{(k)}) \leq M
$$
for all integers $k\ge 1$. So, passing to a subsequence, we may assume without loss of generality that the limit
$$
\lim_{k \to +\infty} C_Y(\rho^{(k)})
$$
exists and belongs to the interval $[M^{-1},M]$. We denote this limit by $C_Y(\rho)$.

Assume first that $B \subseteq X$ is an open set.   
In order to emphasize the openness of the set $B$ and in order to clearly separate the present setup from the next one, we now denote $B$ by $V$.
Fixing $\e> 0$ there then exist $F$, a compact subset of $V$ and a number $r(\e) > 0$ such that 
\begin{equation}
\label{1ma14}
m_{\d_\cS}(V \backslash F) < \e \  \  \hbox{ and } \ \ 
m_{\d_\cS}(B(V, r(\e)) \backslash V) < \e
\end{equation}
and 
\begin{equation}
\label{1ma14.1}
m_{\d_\cS}(\partial F)= 0  \  \  \hbox{ and } \  \
m_{\d_\cS}(\partial B(V, r(\e)))= 0,
\end{equation}
where in writing the latter of these four requirements we used the fact that $m_{\d_\cS}(\bd V)=0$. Hence there exists $k\ge 1$ so large that for every $\om\in E_\rho$ (simultaneously meaning that $\om\in E_{\rho_k}$, we have that
$$
\phi_{\omega}\(\pi_{\mathcal S}(\rho^{(k)})\) \in F_\e\  \
\implies \  \  \phi_\omega(\pi_{\mathcal S}(\rho) \in V)
$$
and 
$$
\phi_{\omega}\(\pi_{\mathcal S}(\rho)\) \in V \  \
\implies \  \  \phi_\omega\(\pi_{\mathcal S}(\rho^{(k)})\) \in B(V, r(\e))).
$$
Therefore, for every $T > 0$,
$$
\mathcal D_Y^{\rho^{(k)}} (F_\e, T)  
\subseteq
\mathcal D_Y^{\rho} (V, T)
\subseteq
\mathcal D_Y^{\rho^{(k)}}(B(V, r(\e)), T)
$$
so,
$$
 D_Y^{\rho^{(k)}} (F_\e, T)  
\le
 D_Y^{\rho} (V, T)
\le
 D_Y^{\rho^{(k)}}(B(V, r(\epsilon)), T).
$$
Hence, applying the already proven assertion for words in $E_{*A^*}^\infty$
one gets 
$$
\begin{aligned}
C_{\rho^{(k)}}(Y) m_{\d_\cS}(F_\epsilon)
& =
\lim_{T \to +\infty}
\frac{D_Y^{(\rho^{(k)})}  (F_\epsilon, T)}{e^{{\d_\cS} T}}
\leq
\liminf_{T \to +\infty}
\frac{D_Y^{(\rho)}  (V, T)}{e^{{\d_\cS} T}}
\leq
\limsup_{T \to +\infty}
\frac{D_Y^{(\rho)}  (V, T)}{e^{{\d_\cS} T}} \cr
& \leq \lim_{T \to +\infty}
\frac{D_Y^{\rho^{(k)}} (B(V, r(\epsilon), T))}{e^{{\d_\cS} T}} 
= C_{\rho^{(k)}}(Y) m_{\d_\cS}(B(V, r(\epsilon))).
\end{aligned}
$$
So, letting $k \to +\infty$ and invoking \eqref{1ma14.1} we obtain that 
$$
C_\rho(Y)m_{\d_\cS}(F_\epsilon)
\leq
\liminf_{T \to +\infty}
\frac{D_Y^\rho (V, T)}{e^{{\d_\cS} T}}
\leq \limsup_{T \to +\infty}
\frac{D_Y^\rho (V, T)}{e^{{\d_\cS} T}}
\leq C_\rho(Y)
m_{\d_\cS}(B(V, r(\epsilon))).
$$
Hence, letting $\epsilon \to 0$ and invoking \ref{1ma14} we get that
$$
C_\rho(Y)m_{\d_\cS}(V)
\leq
\liminf_{T \to +\infty}
\frac{D_Y^\rho (V, T)}{e^{{\d_\cS} T}}
\leq \limsup_{T \to +\infty}
\frac{D_Y^\rho (V, T)}{e^{{\d_\cS}T}}
\leq C_\rho(Y)
m_{\d_\cS}(V),
$$
and the theorem is fully proved for all open sets $B$.
Having shown this, the general case can be taken care of in exactly the same way as the part of the proof of Theorem~\ref{dyn}, starting right after formula
(\ref{1nh24}).  This completes the proof.
\epf

As in Section~\ref{contracting_diameters} we can now, 
in the context of asymptotics of diameters, present an asymptotic formula.
 The appropriate definitions in the parabolic setting are the same as in the attracting one, and we briefly recall them now. Given a set $B\sbt X$, we define:
$$
\mathcal E_Y^\rho(B,T) := \{ \omega \in E_\rho^* \hbox{ : } \Delta(\omega) 
\leq T \ \hbox{ and } \  \phi_\omega (Y) \cap B \neq \emptyset\}
$$
and 
$$
E_Y^\rho(B,T) := \#\mathcal E_Y^\rho(B,T).
$$

Having established 
Theorem~\ref{t1dp13}
in a similar way to the way that 
Theorem~\ref{t1ma1} was based on Theorem~\ref{t1da7}, gives us the following.

\bthm[Asymptotic Equidistribution Formula of Diameters for Parabolic Systems, II]\label{t1dp13B}
Suppose that $\cS$ is a finite irreducible parabolic conformal GDMS. Fix $\rho\in E_A^\infty$ and $Y\sbt X_{i(\rho)}$ having at least two points and such that $\pi_\cS(\rho)\in Y$. If $B \subset X$ is a Borel set such that $m_{\d_\cS}(\bd B)=0$ (equivalently $\mu_{\d_\cS}(\bd B)=0$) then,
\beq\label{1dp13B}
\lim_{T \to +\infty} \frac{E^{\rho}_Y(B,T)}{e^{{\d_\cS} T}} 
=C_\rho(Y)m_{\d_\cS}(B),
\eeq
where $C_\rho(Y)\in (0,+\infty)$ is a constant (the same as that of Theorem~\ref{t1dp13}) depending only on the system $\cS$, the word $\rho$ (but see Remark~\ref{r1_2017_03_20B}), and the set $Y$. In addition $C_\rho(Y)$ is finite if and only if 
either

\sp\begin{enumerate}
\item 
$$
\ov Y\cap \Om_\infty=(\ov Y\cap \Om_\infty\cap\Om_\rho)=\es
$$
or 
\item 
$$
{\d_\cS}>\max\big\{p(a):a\in\Om_\rho \  \  {\rm and} \  \  x_a\in \ov Y \big\}.
$$
\end{enumerate}
Then the function $[\rho_1]\ni \om\longmapsto C_\om(Y)$ is uniformly bounded  away from zero and bounded above.
\ethm

\brem\label{r1_2017_03_20B}
We now can 
essentially 
repeat Remark~\ref{r1_2017_03_20}  
verbatim with the  only change being the replacement of Theorem~\ref{t1ma1} and Theorem~\ref{t1da7}, respectively,  by Theorem~\ref{t1dp13B} and Theorem~\ref{t1dp13}. For the sake of completeness, convenience of the reader, and ease of referencing we summarise:

Since the left-hand side of \eqref{1dp13B} depends only on $\rho_1$, i.e. the first coordinate of $\rho$, we obtain that the constant $C_Y(\rho)$ of Theorem~\ref{t1dp13B} and Theorem~\ref{t1dp13}, depends in fact only on $\rho_1$.
Again, we could have provided a direct argument for this already when proving Theorem ~\ref{t1dp13} and  this would not affect the proof of Theorem ~\ref{t1dp13B}. Thus our approach  seems most economical.
\erem 

The  last three results of this section are derived from the, already established, results, in the same way as the last three results of Section~\ref{contracting_diameters} were derived from the earlier results of that section.
 
\bthm\label{t1da12.1T}
Suppose that $\cS$ is a finite irreducible parabolic conformal GDMS with property (A). For any $v\in V$ let $Y_v\sbt X_v$ having at least two points. If $B \subset X$ is a Borel set such that $\^m_{\d_\cS}(\bd B)=0$ (equivalently $\^\mu_{\d_\cS}(\bd B)=0$) and $\rho\in E_A^\infty$ is with $\rho_1=a_v$, then,
\beq\label{3_2016_01_30T}
\lim_{T \to +\infty} \frac{D^{\rho}_Y(B,T)}{e^{{\d_\cS} T}} 
=\lim_{T \to +\infty} \frac{E^{\rho}_Y(B,T)}{e^{{\d_\cS} T}}
=C_v(Y_v)\^m_{\d_\cS}(B),
\eeq
where $C_v(Y_v)\in (0,+\infty]$
is a constant depending only on the vertex $v\in V$ and the set $Y_v$. In particular, this holds for $Y_v:=X_v$, $v\in V$. In addition $C_v(Y)$ is finite if and only if 
either

\sp\begin{enumerate}
\item 
$$
\ov Y\cap \Om_\infty=(\ov Y\cap \Om_\infty\cap\Om_{a_v})=\es
$$
or 
\item 
$$
{\d_\cS}>\max\big\{p(a):a\in\Om_{a_v} \  \  {\rm and} \  \  x_a\in \ov Y \big\}.
$$
\end{enumerate}
\ethm

\bcor\label{t1da12.1I}
Suppose that $\cS$ is a finite irreducible maximal parabolic conformal GDMS. For any $v\in V$ let $Y_v\sbt X_v$ having at least two points be fixed. If $B \subset X$ is a Borel set such that $\^m_{\d_\cS}(\bd B)=0$ (equivalently $\^\mu_{\d_\cS}(\bd B)=0$) and $\rho\in E_A^\infty$ is with $i(\rho_1)=v$, then,
\beq\label{3_2016_01_30R}
\lim_{T \to +\infty} \frac{D^{\rho}_Y(B,T)}{e^{{\d_\cS} T}} 
=\lim_{T \to +\infty} \frac{E^{\rho}_Y(B,T)}{e^{{\d_\cS} T}}
=C_v(Y_v)\^m_{\d_\cS}(B),
\eeq
where $C_v(Y_v)\in (0,+\infty)$
is a constant depending only on the vertex $v\in V$ and the set $Y_v$. In particular, this holds for $Y_v:=X_v$, $v\in V$. In addition $C_v(Y)$ is finite if and only if 
either

\sp\begin{enumerate}
\item 
$$
\ov Y\cap \Om_\infty=(\ov Y\cap \Om_\infty\cap\Om_v)=\es
$$
or 
\item 
$$
{\d_\cS}>\max\big\{p(a):a\in\Om_v \  \  {\rm and} \  \  x_a\in \ov Y \big\}.
$$
\end{enumerate}
\ecor

\bcor\label{c1da12.1J}
Suppose that $\cS$ is a finite conformal parabolic IFS acting on a phase space $X$. Fix $Y\sbt X$ having at least two points. If $B \subset X$ is a Borel set such that $\^m_{\d_\cS}(\bd B)=0$ (equivalently $\^\mu_{\d_\cS}(\bd B)=0$) and $\rho\in E_A^\infty$, then,
\beq\label{3_2016_01_30S}
\lim_{T \to +\infty} \frac{D^{\rho}_Y(B,T)}{e^{{\d_\cS} T}} 
=\lim_{T \to +\infty} \frac{E^{\rho}_Y(B,T)}{e^{{\d_\cS} T}}
=C(Y)\^m_{\d_\cS}(B),
\eeq
where $C(Y)\in (0,+\infty)$
is a constant depending only on the set $Y$. In particular, this holds for $Y:=X$. In addition $C(Y)$ is finite if and only if 
either

\sp\begin{enumerate}
\item 
$$
\ov Y\cap \Om_\infty=\es
$$
or 
\item 
$$
{\d_\cS}>\max\big\{p(a):a\in\Om \  \  {\rm and} \  \  x_a\in \ov Y \big\}.
$$
\end{enumerate}
\ecor

\part{{\Large Central Limit Theorems}}\label{Section-CLT}
We now consider  the distribution of weights and the Central Limit Theorems.   
In this section we will formulate the results in full generality and 
give the applications in subsequent sections.
 
Let us consider a conformal, either attracting or parabolic, GDMS. As we did in previous sections,
we can associate to finite words $\omega \in E_A^*$ both the  weights $\lambda_i(\omega)$ ($i=p, \rho$) and the word length 
$|\omega|$.  We would like to understand how  these quantities are related for typical orbits, which leads naturally to the study of Central Limit Theorems.
The most familiar and natural formulation of Central Limit Theorems (CLT) is with respect to invariant measures. However, in the present context it is equally natural  to give versions for preimages and periodic points.

\section{Central Limit Theorems for Multipliers and Diameters: Attracting GDMSs with Invariant Measure $\mu_{\d_\cS}$}\label{subsection:CLT-hyperbolic}

As an immediate consequence of Theorem 2.5.4, Lemma 2.5.6, Lemma 4.8.8 from 
\cite{MU_GDMS}, and Remark \ref{r2_2017_02_17} from our present monograph, we get the following version of the Central Limit Theorem for attracting systems and Gibbs/equilibrium states.

\begin{thm}\label{t1ms1}  
If $\cS$ is a strongly regular finitely irreducible
D--generic conformal GDMS \footnote{In fact $\mu_{\d_\cS}$ below can be replaced by the (unique) Gibbs/equilibrium state of any H\"older continuous summable potential $f:E_A^\infty\to\R$.},
then there exists $\sigma^2 > 0$ (in fact $\sg^2=\P''(0) \neq  0$  because of Remark~\ref{r2_2017_02_17} and since the system $\cS$ is D--generic) such that if $G \subset \mathbb R$
is a Lebesgue measurable set with $\hbox{{\rm Leb}}(\partial G) = 0$, then 
$$
\lim_{n \to +\infty}
\mu_{\d_\cS}\left(
\left\{
\omega \in E_A^\infty \hbox{ : } \frac{-\log \big|\phi_{\omega|_n}' (\pi_{\cS}(\sigma^n(\omega)) )\big| - \chi_{\mu_{\d_\cS}} n}{\sqrt{n}}
\in G
\right\}
\right)
= \frac{1}{\sqrt{2\pi}\sigma} \int_G e^{-\frac{t^2}{2\sigma^2}} \,dt.
$$  
In particular, for any $\alpha <\beta$
$$
\lim_{n \to +\infty}
\mu_{\d_\cS}\left(
\left\{
\omega \in E_A^\infty \hbox{ : } \alpha \leq \frac{-\log\big|\phi_{\omega|_n}' (\pi_{\cS}(\sigma^n(\omega)))\big| - \chi_{\mu_{\d_\cS}} n}{\sqrt{n}}
\leq \beta
\right\}
\right)
= \frac{1}{\sqrt{2\pi}\sigma} \int_\a^\b e^{-\frac{t^2}{2\sigma^2}}\, dt.
$$  
\end{thm}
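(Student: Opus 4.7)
My plan is to reduce Theorem~\ref{t1ms1} to the abstract Central Limit Theorem for Birkhoff sums of H\"older summable potentials under Gibbs states, which is furnished by Theorem~2.5.4, Lemma~2.5.6, and Lemma~4.8.8 of~\cite{MU_GDMS}. The first step will be to unfold the derivative along the orbit. Using the chain rule together with the identity $\phi_{\om_{k+1}}\circ\cdots\circ\phi_{\om_n}\bigl(\pi_\cS(\sg^n\om)\bigr)=\pi_\cS(\sg^k\om)$, valid for every $0\le k\le n$, one obtains
$$
-\log\bigl|\phi_{\om|_n}'\bigl(\pi_\cS(\sg^n\om)\bigr)\bigr|
=-\sum_{k=0}^{n-1}\log\bigl|\phi_{\om_{k+1}}'\bigl(\pi_\cS(\sg^{k+1}\om)\bigr)\bigr|
=-S_n\zeta(\om),
$$
where $\zeta$ is the H\"older function of~\eqref{1MU_2014_09_10}. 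Setting $\psi:=-\zeta-\chi_{\mu_\d}$, so that $\int\psi\,d\mu_\d=0$ by definition of the Lyapunov exponent, the quantity appearing in the statement equals $S_n\psi(\om)/\sqrt{n}$, and the theorem reduces to a standard CLT for the Birkhoff sums of $\psi$ under $\mu_\d$.

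Second, I will verify that $\psi$ falls within the scope of the abstract CLT. By Proposition~\ref{l1j85}, $\d\zeta$ is H\"older continuous; strong regularity of $\cS$ gives $\d\in\Int(\Ga_\cS)$ via Observation~\ref{o1_2016_01_20-plus}, and Observation~\ref{o5_2016_01_20} then yields $\chi_{\mu_\d}<+\infty$. Combined with Theorem~\ref{thm-conformal-invariant}, this identifies $\mu_\d$ as the unique $\sg$-invariant Gibbs/equilibrium state of $\d\zeta$, ensuring that the perturbative spectral theory of the Perron-Frobenius operator of Section~\ref{CRPFOSDG} applies. The observable $\psi$ inherits H\"older regularity with the same exponent $\a$ as $\zeta$, and the integrability needed for the spectral perturbation argument follows from the Gibbs bound~\eqref{515pre} together with $\d\in\Int(\Ga_\cS)$. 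The abstract CLT then produces the Gaussian limit law with variance $\sg^2\ge 0$ given by the Green-Kubo series
$$
\sg^2=\int\psi^2\,d\mu_\d+2\sum_{n=1}^\infty\int\psi\cdot(\psi\circ\sg^n)\,d\mu_\d,
$$
equivalently by $\sg^2=\P''(0)$, where $\P(t):=\P(\d\zeta+t\psi)$.

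The final and only genuinely delicate step will be to show $\sg^2>0$, since a vanishing variance would collapse the Gaussian density in the statement. By the Livschitz-type dichotomy encoded in Lemma~2.5.6 and Lemma~4.8.8 of~\cite{MU_GDMS}, one has $\sg^2=0$ precisely when $\psi$ is a H\"older coboundary, that is, $\psi=u-u\circ\sg$ for some $u\in\H_\a(A)$. Since $\psi$ differs from $-\zeta$ by the additive constant $-\chi_{\mu_\d}$, this is equivalent to $\zeta$ being cohomologous, within $\H_\a(A)\sbt C_b(E_A^\infty)$, to a real constant. The latter is precisely what D-genericity rules out: by Remark~\ref{r2_2017_02_17} applied with $t=1\in\R\sms\{0\}$, no $u\in C_b(E_A^\infty)$ can make $\zeta+u-u\circ\sg$ into a real constant function. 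Hence $\sg^2>0$, and the abstract CLT delivers the stated convergence for every Lebesgue measurable $G\sbt\R$ with $\Leb(\bd G)=0$; the interval version in the second display of the statement is the special case $G=[\a,\b]$.
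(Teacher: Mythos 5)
Your proposal is correct and follows exactly the route the paper takes: the paper's "proof" is a one-line citation of the abstract CLT for Gibbs states of H\"older summable potentials (Theorem~2.5.4, Lemma~2.5.6, Lemma~4.8.8 of \cite{MU_GDMS}) combined with Remark~\ref{r2_2017_02_17} to exclude $\sigma^2=0$, and your write-up simply fills in the standard reductions (chain rule unfolding to $S_n\zeta$, centering by $\chi_{\mu_\delta}$, verification of the hypotheses, and the coboundary dichotomy for positivity of the variance) that the authors leave implicit.
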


Since by the Bounded Distortion Property (BDP) of the definition of attracting GDMSs, the numbers 
$$
\big|\log \hbox{\rm diam}\(\phi|_{\omega|_n}(Y_{t(\omega)})\)
-
\log |\phi'_{\omega|_n} (\pi_{\cS}(\sigma^n(\omega))|\big|
$$
are uniformly bounded above and since $\lim_{n \to +\infty} \sqrt{n} = +\infty$ we immediately obtain a version of Theorem~\ref{t1ms1} with 
$-\log \big|\phi_{\omega|_n}'(\pi_{\cS}(\sigma^n(\omega)))\big|$
replaced by 
$-\log \hbox{\rm diam}\(\phi|_{\omega|_n}(Y_{t(\omega)})\)$.
This gives the following.

\begin{thm}\label{t1ms1-again}
Suppose that $\cS$ is a strongly regular finitely irreducible D--generic conformal GDMS\footnote{In fact $\mu_{\d_\cS}$ below can be replaced by the (unique) Gibbs/equilibrium state of any H\"older continuous summable potential $f:E_A^\infty\to\R$.}. Let $\sigma^2:=\P''(0)(\neq  0)$. For every $v \in V$ let $Y_v \subset X_v$ be a set with at least two points. If $G \subset \mathbb R$ is a Lebesgue measurable set with $\hbox{\rm Leb}(\partial G) = 0$, then 
$$
\lim_{n \to +\infty}
\mu_{\d_\cS}\left(
\left\{
\omega \in E_A^\infty \hbox{ : }
 \frac{
 -\log \hbox{\rm diam}(\phi_{\omega|_n}(Y_{t(\omega_n)}))  - \chi_{\mu_{\d_\cS}} n
 }{\sqrt{n}}
\in G
\right\}
\right)
= \frac{1}{\sqrt{2\pi}\sigma} \int_G e^{-\frac{t^2}{2\sigma^2}}\, dt.
$$  
In particular, for any $\alpha < \beta$
$$
\lim_{n \to +\infty}
\mu_{\d_\cS}\left(
\left\{
\omega \in E_A^\infty \hbox{ : } \alpha \leq 
 \frac{-\log \hbox{\rm diam}(\phi_{\omega|_n}(Y_{t(\omega_n)}))  - \chi_{\mu_{\d_\cS}} n}{\sqrt{n}}
\leq \beta
\right\}
\right)
= \frac{1}{\sqrt{2\pi}\sigma} \int_\alpha^\beta e^{-\frac{t^2}{2\sigma^2}}\,dt.
$$  
\end{thm}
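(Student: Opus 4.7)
The plan is to deduce Theorem~\ref{t1ms1-again} directly from Theorem~\ref{t1ms1} by showing that the random variables appearing in the two statements differ by a quantity that vanishes uniformly after division by $\sqrt{n}$. First I would establish that there exists a constant $M \in (0,+\infty)$, independent of $n \geq 1$ and $\omega \in E_A^\infty$, such that
$$
\big|\log \hbox{\rm diam}(\phi_{\omega|_n}(Y_{t(\omega_n)})) - \log |\phi'_{\omega|_n}(\pi_\cS(\sigma^n(\omega)))|\big| \leq M.
$$
This follows from combining the Mean Value Theorem (applied via the conformality condition (c) of Section~\ref{Attracting_GDMS_Prel}) with the Bounded Distortion Property (d): for any $x,y \in Y_{t(\omega_n)}$,
$$
K^{-1}|\phi'_{\omega|_n}(\pi_\cS(\sigma^n(\omega)))|\cdot\|x-y\| \leq \|\phi_{\omega|_n}(x)-\phi_{\omega|_n}(y)\| \leq K|\phi'_{\omega|_n}(\pi_\cS(\sigma^n(\omega)))|\cdot\|x-y\|.
$$
Taking suprema over $x,y \in Y_{t(\omega_n)}$ and then logarithms, using that $V$ is finite and every $Y_v$ has at least two points (so that both $\min_{v\in V}\diam(Y_v)>0$ and $\max_{v\in V}\diam(Y_v)<+\infty$), yields the uniform bound.

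Next, define on $(E_A^\infty,\mu_{\d_\cS})$ the random variables
$$
V_n(\omega):=\frac{-\log|\phi'_{\omega|_n}(\pi_\cS(\sigma^n(\omega)))|-\chi_{\mu_{\d_\cS}}n}{\sqrt{n}}, \qquad W_n(\omega):=\frac{-\log\diam(\phi_{\omega|_n}(Y_{t(\omega_n)}))-\chi_{\mu_{\d_\cS}}n}{\sqrt{n}}.
$$
The uniform estimate above gives $\|W_n-V_n\|_\infty \leq M/\sqrt{n}$, hence $W_n - V_n$ converges uniformly (and thus in probability with respect to any measure, in particular $\mu_{\d_\cS}$) to $0$. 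By Theorem~\ref{t1ms1}, the sequence $(V_n)$ converges in distribution under $\mu_{\d_\cS}$ to the Gaussian law $\mathcal{N}(0,\sigma^2)$, with the stronger statement that $\mu_{\d_\cS}(V_n^{-1}(G))\to \mathcal{N}(0,\sigma^2)(G)$ for every Lebesgue measurable $G$ with $\hbox{\rm Leb}(\partial G)=0$.

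A standard Slutsky-type argument will then transfer this convergence to $(W_n)$. Concretely, for a continuity set $G$ and any $\varepsilon>0$, once $n$ is large enough that $M/\sqrt{n}<\varepsilon$ one has the sandwich
$$
\{V_n \in G_{-\varepsilon}\} \subseteq \{W_n \in G\} \subseteq \{V_n \in G_{+\varepsilon}\},
$$
where $G_{\pm\varepsilon}$ are the inner/outer $\varepsilon$-neighborhoods of $G$. Passing to limits $n\to+\infty$ and then $\varepsilon\downarrow 0$, and using that absolute continuity of $\mathcal{N}(0,\sigma^2)$ with respect to Lebesgue measure combined with $\hbox{\rm Leb}(\partial G)=0$ implies $\mathcal{N}(0,\sigma^2)(G_{+\varepsilon}\setminus G_{-\varepsilon})\to 0$, yields the announced formula. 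The final ``in particular'' statement with intervals $[\alpha,\beta]$ is the special case $G=[\alpha,\beta]$.

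There is no real obstacle here: the heart of the matter, positivity of $\sigma^2=\P''(0)$ and the CLT for the potential $\zeta$, has already been absorbed into Theorem~\ref{t1ms1} via D-genericity (which rules out cohomology to a constant, see Remark~\ref{r2_2017_02_17}). The only delicate point is the uniform distortion bound in the first step, which however is immediate in the attracting setting from (BDP); were one to extend this argument to parabolic systems (as done later in the monograph), this step would require considerably more care because (BDP) fails near parabolic points.
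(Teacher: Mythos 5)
Your proposal is correct and follows essentially the same route as the paper, which likewise deduces Theorem~\ref{t1ms1-again} from Theorem~\ref{t1ms1} by observing that the Bounded Distortion Property makes $\big|\log\diam(\phi_{\omega|_n}(Y_{t(\omega_n)}))-\log|\phi'_{\omega|_n}(\pi_\cS(\sigma^n(\omega)))|\big|$ uniformly bounded, so that the discrepancy vanishes after division by $\sqrt{n}$. Your write-up merely makes explicit the Slutsky-type sandwich argument that the paper leaves implicit.
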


\fr Also, as an immediate consequence of the appropriate results from \cite{MU_GDMS} and Remark \ref{r2_2017_02_17} from our present monograph, we get the following Law of Iterated Logarithm.

\begin{thm}\label{t1ms1-2again}
Suppose that there $\cS$ is a strongly regular finitely irreducible D--generic conformal GDMS\footnote{In fact $\mu_{\d_\cS}$ below can be replaced by the (unique) Gibbs/equilibrium state of any H\"older continuous summable potential $f:E_A^\infty\to\R$.}. Let $\sigma^2:=\P''(0)>0$. For every $v \in V$ let $Y_v \subset X_v$ be a set with at least two points. 
Then for $\mu_{\d_\cS}$--a.e. $\omega \in E_A^\infty$, we have that
$$
\limsup_{n \to +\infty}
\frac{-\log\big|(\phi_{\omega|_n}'(\pi_{\cS}(\sigma^n(\omega)))\big|  - \chi_{\mu_{\d_\cS}} n}{\sqrt{n\log\log n}}
= \sqrt{2\pi} \sigma
$$
and 
$$
\limsup_{n \to +\infty}\frac{-\log \hbox{\rm diam}\(\phi_{\omega|_n}(Y_{t(\omega_n)})\)  - \chi_{\mu_{\d_\cS}}n}{\sqrt{n\log\log n}}
= \sqrt{2\pi} \sigma.
$$
\end{thm}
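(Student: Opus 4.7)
The plan is to recognize the normalized quantity in the limit as a centered Birkhoff sum of a Hölder summable potential and to invoke the corresponding Law of Iterated Logarithm already established in \cite{MU_GDMS} for the countable-alphabet thermodynamic formalism. Setting $\zeta(\omega) := \log|\phi'_{\omega_1}(\pi_\cS(\sigma\omega))|$ as in \eqref{1MU_2014_09_10}, the chain rule gives
$$
\log\big|\phi'_{\omega|_n}(\pi_\cS(\sigma^n\omega))\big| \;=\; S_n\zeta(\omega),
$$
and by the very definition of the Lyapunov exponent $\chi_{\mu_{\d_\cS}} = -\int\zeta\,d\mu_{\d_\cS}$, so the numerator in the LIL equals $-\bigl(S_n\zeta(\omega) - n\int\zeta\,d\mu_{\d_\cS}\bigr)$, i.e.\ a centered ergodic sum. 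By Proposition~\ref{l1j85} the function $\zeta$ is Hölder continuous, and by Observation~\ref{o1_2016_01_20} together with the strong regularity hypothesis (via Observation~\ref{o1_2016_01_20-plus}) it is summable at the parameter $\d_\cS$, so the Hartman--Wintner-type LIL proved in \cite{MU_GDMS} for Hölder summable potentials over finitely irreducible countable-alphabet subshifts applies at once, provided the asymptotic variance does not vanish.

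The sole non-routine step is to verify the non-degeneracy $\sigma^2 = \P''(0) > 0$. By a standard thermodynamic identity (the same one that produces the CLT variance in Theorem~\ref{t1ms1}), $\sigma^2 = 0$ holds if and only if the centered potential $\zeta - \int\zeta\,d\mu_{\d_\cS}$ is a continuous coboundary, equivalently if and only if $\zeta$ itself is cohomologous (through a bounded continuous transfer function) to a real constant. But Remark~\ref{r2_2017_02_17} states precisely that under the D-genericity hypothesis no nonzero scalar multiple of $\zeta$ is cohomologous to a constant; taking the scalar equal to $1$ rules out exactly this possibility. Hence $\sigma^2 > 0$, the LIL of \cite{MU_GDMS} delivers the first assertion with the stated constant, and this is the step where D-genericity is used in an essential way.

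The second assertion, in which $-\log|\phi'_{\omega|_n}(\pi_\cS(\sigma^n\omega))|$ is replaced by $-\log\diam(\phi_{\omega|_n}(Y_{t(\omega_n)}))$, is then obtained by absorbing a uniformly bounded error. Indeed, the Bounded Distortion Property (property (d) of Section~\ref{Attracting_GDMS_Prel}), combined with the Mean Value Theorem applied to the conformal extension of $\phi_{\omega|_n}$ and with the fact that each $Y_v$ contains at least two points (so that $\diam(Y_v) > 0$), produces a constant $C \geq 1$ depending only on $\cS$ and the family $\{Y_v\}_{v\in V}$ such that
$$
\big|\log\diam(\phi_{\omega|_n}(Y_{t(\omega_n)})) \;-\; \log|\phi'_{\omega|_n}(\pi_\cS(\sigma^n\omega))|\big| \;\le\; \log C
$$
uniformly in $n \geq 1$ and $\omega \in E_A^\infty$. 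Dividing by $\sqrt{n\log\log n} \to +\infty$, this difference vanishes in the limit, so the two $\limsup$s coincide. Once the LIL of \cite{MU_GDMS} is accepted as a black box, the only genuinely interesting input is the positivity of $\sigma^2$, which is the clean consequence of the D-genericity hypothesis recorded in Remark~\ref{r2_2017_02_17}.
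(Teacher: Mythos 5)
Your proposal is correct and follows exactly the route the paper takes: the paper states this theorem as an ``immediate consequence of the appropriate results from \cite{MU_GDMS} and Remark~\ref{r2_2017_02_17}'', i.e.\ the LIL for H\"older summable potentials over finitely irreducible countable-alphabet subshifts, with D-genericity supplying $\sigma^2=\P''(0)>0$ via the non-cohomology of $\zeta$ to a constant, and the diameter version obtained by absorbing the uniformly bounded (BDP) discrepancy into the divergent normalization $\sqrt{n\log\log n}$. You have simply written out the steps the paper leaves implicit, so no further comment is needed.
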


\

\begin{rem}\label{r4_2017_02_17}
It is  possible to  reverse the roles of  the word length and the weights.
More precisely, given $\omega \in E_A$ and $t\ge 0$ we can define $n = n(t,\omega)$ to be the only integer for which
$$
\lambda(\omega|_n) \leq t < \lambda(\omega|_{n+1}).
$$
Ergodicity of measure $\mu_{\d_\cS}$ and Birkhoff's Ergodic Theorem then yield
$$
\lim_{t \to + \infty}\frac{t}{n(t,\omega)}=\chi_{\mu_{\d_\cS}}
$$
for $\mu_{\d_\cS}$--a.e. $\omega \in E_A^\infty$. We claim that there exists $\sigma_0^2 > 0$ such that for any $\a<\b$
$$
\lim_{t \to +\infty}\mu_{\d_\cS}\left(\left\{\omega \in E_A^\infty \hbox{ : } 
\alpha\le \frac{\lambda(\omega|_{n(t,\omega)}) - \chi_{\mu_{\d_\cS}}t }{\sqrt{t}} \le \beta
\right\}\right)
=\frac{1}{\sqrt{2\pi} \sigma}\int_\a^\b e^{- u^2/2\sigma_0^2}\, du.
$$
This is obtained by reinterpreting an approach of Melbourne and T\"orok, originally applied in the case of suspended flow \cite{MT}.
In particular, they showed that if a discrete system 
  satisfies a central limit theorem with variance $\sigma^2$, then a suitable  suspension flows also satisfy the CLT.
\footnote{There is a mild hypothesis on the roof function $r$ which is satisfied if $r \in L^4$, say. This is the case in our present context.} 
   In the present case one takes $\sigma: E_A \to E_A$ as the discrete transformation and a roof function $r: E_A \to \mathbb R$ defined by
    $r= -\log\big|\phi_{\omega_1}'(\pi_\cS(\sg(\omega)))\big|$. 
 For the suspension space 
 $E_A^r = \{(\omega, u) \hbox{ : } 0 \leq u \leq r(\omega)\}$   
 with the identifications 
 $(\omega, r(\omega)) \sim (\sigma \omega, 0)$
 one can consider the suspension flow $\sigma^r_t:  E_A^r
 \to  E_A^r$ defined by $\sigma^r_t(\omega, u) = (\omega, u+t)$, up to the identifications.
 We can associate to the $\sigma$-invariant probability measure a $\phi$-invariant probability measure 
 $\widehat \mu_\sigma$ defined by 
 $d \widehat \mu_\sigma =  d\mu_\sigma \times dt/\int r d\mu_{\d_\cS}$.
 Given a function $F: E_A^r \to \mathbb R$ the CLT for the flow gives that  
$$
 \lim_{t\to +\infty}\widehat \mu_{\d_\cS}
 \left( \left\{
 (\omega, u) \in E_A^r \hbox{ : } 
 \a \leq 
 \frac{\int_0^t F\circ \phi_s (\omega, u) ds - t \int d\hat \mu_{\d_\cS}}{\sqrt{t}}
 \leq \b\right\}
 \right)=\frac{1}{\sqrt{2\pi} \sigma}\int_\a^\b e^{- u^2/2\sigma_1^2}\, du,
$$
where $\sigma_1^2 = \sigma_0^2/\chi_{\mu_{\d_\cS}}$ cf. \cite{MT}, \S3.We would like to choose $F$ so that $\int_0^t F\circ \phi_s (\omega, u) ds$
 corresponds to $\lambda(\omega|_{n(t, \omega)})$.
     To this end one chooses a function $F$ which integrates to unity on fibers, i.e., $\int_0^{r(\omega)} F(\omega, u) du =1$
     for all $\omega \in \Sigma_A$, and has support close to $E_A\times \{0\}$.
      Thus the Central Limit Theorem for the suspension  flow
        corresponds 
      to the 
Central Limit Theorem formulated above in $t$.     
       The variances are related by a factor of 
       $\int r d\mu_{\d_\cS}$.
        
\end{rem}

We now turn the the parabolic setting.

\section{Central Limit Theorems for Multipliers and Diameters: Parabolic GDMSs with Invariant Measure $\mu{\d_\cS}$}\label{CLT-Parabolic-1}
We want to consider analogous comparison results in the context of 
 parabolic GDMSs.
Following the approach described in Section~\ref{section:parabolic}, given  a parabolic conformal GDMS $S$ we can associate a 
conformal GDMS $S^*$.  In this case the Central Limit Theorem for the 
 measure $\mu_{\d_\cS}^*$ associated to  $\mathcal S^*$ translates into a Central Limit Theorem for the parabolic system $\mathcal S$ and its measure $\mu_{\d_\cS}$.
This leads to the following results, the  first of which  is the analogue of Theorem~\ref{t1ms1}.

\begin{thm}\label{t1ms1-again2} 
If $\cS$ is a finitely irreducible parabolic conformal GDMS with ${\d_\cS} > \frac{2p}{p+1}$, then 
there exists $\sigma^2 > 0$ such that if $G \subset \mathbb R$
is a Lebesgue measurable set with $\hbox{{\rm Leb}}(\partial G) = 0$, then 
$$
\lim_{n \to +\infty}
\mu_{\d_\cS}\left(
\left\{
\omega \in E_A^\infty \hbox{ : } \frac{-\log \big|\phi_{\omega|_n}' (\pi_{\cS}(\sigma^n(\omega)) )\big| - \chi_{\mu_{\d_\cS}} n}{\sqrt{n}}
\in G
\right\}
\right)
= \frac{1}{\sqrt{2\pi}\sigma} \int_G e^{-\frac{t^2}{2\sigma^2}} \,dt.
$$  
In particular, for any $\alpha <\beta$
$$
\lim_{n \to +\infty}
\mu_{\d_\cS}\left(
\left\{
\omega \in E_A^\infty \hbox{ : } \alpha \leq \frac{-\log\big|\phi_{\omega|_n}' (\pi_{\cS}(\sigma^n(\omega)))\big| - \chi_{\mu_{\d_\cS}} n}{\sqrt{n}}
\leq \beta
\right\}
\right)
= \frac{1}{\sqrt{2\pi}\sigma} \int_\a^\b e^{-\frac{t^2}{2\sigma^2}}\, dt.
$$  
\end{thm}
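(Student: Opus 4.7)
The plan is to deduce Theorem~\ref{t1ms1-again2} from the corresponding attracting-system result, Theorem~\ref{t1ms1}, applied to the associated attracting countable-alphabet conformal GDMS $\cS^*$, and then to transfer the CLT from $\cS^*$ back to $\cS$ by an inducing/Kac-formula argument. First I would observe that, by Theorem~\ref{p1t5.2}, $\cS^*$ is finitely irreducible; by Proposition~\ref{p1_2016_03_16}, it is hereditarily (co-finitely) regular and in particular strongly regular (since $\d_\cS > 2p/(p+1) > p/(p+1) = \g_{\cS^*}$); and, most importantly, by Theorem~\ref{t1pc3} it is automatically D-generic. Consequently Theorem~\ref{t1ms1} produces a CLT at scale $\sqrt{k}$ with positive variance $\sigma_*^2 = (\P^*)''(\d_\cS)>0$ for Birkhoff sums of the geometric potential $\zeta^*(\om) = \log|\phi_{\om_1}'(\pi_{\cS^*}(\sigma^*\om))|$ under the unique $\sigma^*$-invariant Gibbs state $\mu_{\d_\cS}^*$ on $E_{*A^*}^\infty$.

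The next step is to identify the shift $\sigma^*$ on $E_{*A^*}^\infty$ with the first-return map of $\sigma:E_A^\infty\to E_A^\infty$ to an appropriate base, where the return time $\tau(\om)=|\om_1^*|$ is the length (in the alphabet $E$) of the first $E_*$-block of $\om$. Under this identification, a Birkhoff sum of $\zeta^*$ over $k$ iterates of $\sigma^*$ coincides with a Birkhoff sum of $\zeta(\om)=\log|\phi'_{\om_1}(\pi_\cS(\sigma\om))|$ over $N_k(\om)=\sum_{j=0}^{k-1}\tau((\sigma^*)^j\om)$ iterates of $\sigma$. The hypothesis $\d_\cS>2p/(p+1)$ is precisely condition (b1) of Theorem~\ref{t1_2017_02_18}(3), which gives finiteness of $\mu_{\d_\cS}$ and, by Kac's formula, $\tau\in L^1(\mu_{\d_\cS}^*)$ with $\chi_{\mu_{\d_\cS}}=\chi_{\mu_{\d_\cS}^*}^*/\int\tau\,d\mu_{\d_\cS}^*$.

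The CLT for $\cS^*$ must then be lifted to a CLT for $\sigma$. The plan is to invoke a Melbourne--T\"or\"ok-type transfer theorem: a CLT at the base (for $\sigma^*,\zeta^*,\mu_{\d_\cS}^*$) yields a CLT for the suspension (for $\sigma,\zeta,\mu_{\d_\cS}$), with rescaled variance $\sigma^2=\sigma_*^2/\int\tau\,d\mu_{\d_\cS}^*$. The required regularity and integrability of $\zeta^*$ follow from the Bounded Distortion Property and from the precise asymptotics supplied by Proposition~\ref{p1c5.13}; positivity of $\sigma^2$ descends from positivity of $\sigma_*^2$, which in turn comes from D-genericity of $\cS^*$ combined with the non-coboundary observation of Remark~\ref{r2_2017_02_17}.

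The main obstacle will be carrying out the lifting step under the sharp hypothesis $\d_\cS>2p/(p+1)$, which gives only $\tau\in L^1(\mu_{\d_\cS}^*)$ and not the $\tau\in L^2$ demanded by a naive Anscombe random-time-change (that would correspond to the strictly stronger $\d_\cS>3p/(p+1)$). The remedy, standard in the Young-tower literature and in the Gou\"ezel refinement of Melbourne--T\"or\"ok, is to control the ``partial last-block'' contribution to the Birkhoff sum over $\{0,\ldots,n-1\}$ by noting, via Proposition~\ref{p1c5.13}, that its size grows only logarithmically in the length of the ongoing parabolic excursion, so that after normalisation by $\sqrt{n}$ it tends to $0$ in $\mu_{\d_\cS}$-probability. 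Combined with a characteristic-function/martingale argument on the induced level, this delivers the Gaussian limit at all $n$ and hence both displayed formulae of the theorem.
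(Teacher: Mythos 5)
Your proposal is correct and follows essentially the same route as the paper: the paper's own proof consists of noting that $\d_\cS>\frac{2p}{p+1}$ gives finiteness of $\mu_{\d_\cS}$ (Theorem~\ref{t1_2017_02_18}), that $\cS^*$ is automatically D-generic (Theorem~\ref{t1pc3}, with Remark~\ref{r2_2017_02_17} supplying positivity of the variance), and then citing the Young-tower/Gou\"ezel inducing machinery to transfer the CLT from the induced attracting system back to $\cS$. Your more detailed account --- first-return identification, Kac's formula, the Melbourne--T\"or\"ok-type lifting, and the logarithmic control of the partial last parabolic block --- is exactly the content the paper delegates to those references.
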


\begin{proof}
By Theorem~\ref{t1_2017_02_18}, the hypothesis that ${\d_\cS} > \frac{2p}{p+1}$ precisely means that measure $\mu_{\d_\cS}$ is finite, and, as always, we normalize it to be a probability measure. Because of Theorem~\ref{t1pc3} and Remark~\ref{r2_2017_02_17} Theorem~\ref{t1ms1-again2} then is a standard consequence of L. S. Young's tower approach \cite{lsy1}, \cite{lsy2}, comp. \cite{gouezel1}, \cite{gouezel1}, and \cite{gouezel1}.
\end{proof} 

The second result is the parabolic analogue of Theorem~\ref{t1ms1-again}.

\begin{thm}\label{t1ms1-again3} 
Let $\cS$ be a finite alphabet irreducible parabolic GDMS with ${\d_\cS} > \frac{2p}{p+1}$. 
Then there exists $\sigma^2>0$ such that if for every $v \in V$, a set $Y_v \subset X_v$ is given having at least two points and whose closure is disjoint from the set of parabolic fixed points $\Om$, then for every Lebesgue measurable set $G \subset \mathbb R$ with $\hbox{\rm Leb}(\partial G) = 0$, we have that
$$
\lim_{n \to +\infty}
\mu_{\d_\cS} \left(
\left\{
\omega \in E_A^\infty \hbox{ : }
 \frac{
 -\log \hbox{\rm diam}(\phi_{\omega|_n}(Y_{t(\omega_n)}))  - \chi_{\mu_t} n
 }{\sqrt{n}}
\in G
\right\}
\right)
= \frac{1}{\sqrt{2\pi}\sigma} \int_G e^{-t^2/2\sigma} dt.
$$  
In particular, for any $\alpha < \beta$
$$
\lim_{n \to +\infty}
\mu_{\d_\cS} \left(
\left\{
\omega \in E_A^\infty \hbox{ : } \alpha \leq 
 \frac{-\log \hbox{\rm diam}(\phi_{\omega|_n}(Y_{t(\omega_n)}))  - \chi_{\mu_t} n}{\sqrt{n}}
\leq \beta
\right\}
\right)
= \frac{1}{\sqrt{2\pi}\sigma} \int_\alpha^\beta e^{-t^2/2\sigma} dt.
$$  
\end{thm}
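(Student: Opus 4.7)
The strategy is to reduce Theorem~\ref{t1ms1-again3} to Theorem~\ref{t1ms1-again2} by showing that the discrepancy
$$
R_n(\omega) := -\log\diam\bigl(\phi_{\omega|_n}(Y_{t(\omega_n)})\bigr) + \log\bigl|\phi_{\omega|_n}'\bigl(\pi_{\cS}(\sigma^n(\omega))\bigr)\bigr|
$$
is tight under $\mu_{\delta_\cS}$; in fact we will show that $R_n = O_{\mathbb{P}}(1)$, which is more than enough to deduce $R_n/\sqrt n\to 0$ in probability. Once this is in hand, a standard Slutsky-type argument transfers the conclusion of Theorem~\ref{t1ms1-again2} verbatim to the diameter setting, with the same variance $\sigma^2$. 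Thus the entire content of the theorem is packaged into an estimate on $R_n$.

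To bound $|R_n|$, proceed in two substeps. First, since $\ov{Y_v}\cap\Omega=\emptyset$ for every $v\in V$, the map $\phi_{\omega|_n}$ restricted to $Y_{t(\omega_n)}$ enjoys \emph{uniform} bounded distortion: for any fixed reference point $y_v\in Y_v$,
$$
-\log\diam\bigl(\phi_{\omega|_n}(Y_{t(\omega_n)})\bigr) = -\log\bigl|\phi_{\omega|_n}'(y_{t(\omega_n)})\bigr| + O(1),
$$
with a constant depending only on the geometry of $\{Y_v\}$ and on the uniform distortion constant of $\cS^*$. Second, write $\omega|_n=\tau a^{k}$ where $k=k(\omega,n)\ge 0$ is the length of the terminal maximal parabolic block ending at position $n$, and write $\sigma^n(\omega)=a^{r}\zeta$ with $r=r(\omega,n)\ge 0$ and $\zeta_1\neq a$ (with the convention that $k=r=0$ if $\omega_n\notin\Omega$). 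The factor $\tau$ lies in $E_{*A^*}^*$, so the BDP of the induced attracting system $\cS^*$ (Theorem~\ref{p1t5.2}) provides uniform control on $\log|\phi_\tau'|$ across the points $y_{t(\omega_n)}$ and $\phi_{a^{k}}(\pi_\cS(\sigma^n(\omega)))=\pi_\cS(a^{k+r}\zeta)$. What remains is $\log|\phi_{a^{k}}'(y_{i(a)})|$ versus $\log|\phi_{a^{k}}'(\pi_\cS(\sigma^n(\omega)))|$, and applying Proposition~\ref{p1c5.13} twice (with the chain-rule identity $|\phi_{a^{k}}'(\phi_{a^r}(\pi_\cS(\zeta)))|=|\phi_{a^{k+r}}'(\pi_\cS(\zeta))|\cdot|\phi_{a^r}'(\pi_\cS(\zeta))|^{-1}$) yields
$$
|R_n(\omega)|\;\le\; C\bigl(1+\log(1+k(\omega,n))+\log(1+r(\omega,n))\bigr),
$$
for some constant $C$ depending only on $\cS$ and on the chosen reference points.

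It remains to show that $\log k(\cdot,n)+\log r(\cdot,n)$ is tight under $\mu_{\delta_\cS}$, uniformly in $n$. By shift-invariance of $\mu_{\delta_\cS}$, $\mu_{\delta_\cS}(k(\cdot,n)\ge l)=\mu_{\delta_\cS}([a^{l}])$ for all sufficiently large $n$, and an analogous identity holds for $r(\cdot,n)$. The asymptotic $|\phi_{a^{l}}'|\asymp l^{-(p_a+1)/p_a}$ from Proposition~\ref{p1c5.13}, the Gibbs-like formula for $\mu_{\delta_\cS}$ given by Theorem~\ref{t1_2017_02_18}(3)(a), and the hypothesis $\delta_\cS>2p/(p+1)\ge 2p_a/(p_a+1)>p_a/(p_a+1)$ together give
$$
\mu_{\delta_\cS}([a^{l}])= O\bigl(l^{\,1-\delta_\cS(p_a+1)/p_a}\bigr),
$$
a polynomially decaying tail with exponent strictly less than $-1$. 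Summing over the finite set $\Omega$ of parabolic letters, we conclude that $\log k(\cdot,n)$ and $\log r(\cdot,n)$ are uniformly tight (in fact uniformly bounded in every $L^q$, $q<\infty$), whence $R_n/\sqrt n\to 0$ in $\mu_{\delta_\cS}$-probability. The most delicate point of the argument—and the one that forces us to require $\ov{Y_v}\cap\Omega=\emptyset$—is the second substep, where we must track distortion across a possibly long terminal parabolic block of $\omega|_n$ while the forward continuation $\sigma^n(\omega)$ may simultaneously open with a long parabolic block at the same letter; it is Proposition~\ref{p1c5.13} that makes this bookkeeping tractable.
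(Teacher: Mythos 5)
Your proof is correct and follows essentially the same route as the paper: reduce to Theorem~\ref{t1ms1-again2} by bounding the discrepancy between $-\log\diam(\phi_{\omega|_n}(Y_{t(\omega_n)}))$ and $-\log\big|\phi_{\omega|_n}'(\pi_\cS(\sigma^n(\omega)))\big|$ by a constant times $1+\log(1+k)+\log(1+r)$, where $k$ and $r$ are the lengths of the terminal parabolic block of $\omega|_n$ and the initial parabolic block of $\sigma^n(\omega)$, using exactly the decomposition $\omega=\tau a^{j}\sigma^n(\omega)$ and Proposition~\ref{p1c5.13} that the paper uses. The only minor divergence is the concluding step: the paper proves $\sup_n\int g_n\,d\mu_{\d_\cS}<+\infty$ by an explicit multi-sum Gibbs computation and finishes with Chebyshev at threshold $n^{1/4}$, whereas you deduce uniform tightness of the discrepancy directly from shift-invariance and the tail bound $\mu_{\d_\cS}([a^{l}])=O\big(l^{\,1-\d_\cS(p_a+1)/p_a}\big)$ — a slightly more economical finish resting on the same hypothesis $\d_\cS>\frac{2p}{p+1}$.
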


\begin{proof}
Because of Theorem~\ref{t1ms1-again2}, it suffices to show that 
$$
\lim_{n \to +\infty}
\mu_{\d_\cS}\left(\{\omega \in E_A^\infty \hbox{ : } 
\big| \log \hbox{\rm diam} (\phi_{\omega|_n}(Y_{t(\omega_n)})\big|
- \log\big|\phi_{\omega|_n}'(\pi_{\mathcal S}(\sigma^n(\omega))
\big| \geq n^{1/4}\}
\right) = 0.
$$
To show this, write 
$$
g_n(\omega):= 
\big| \log \hbox{\rm diam} (\phi_{\omega|_n}(Y_{t(\omega_n)})\big|
- \log\big|\phi_{\omega|_n}'(\pi_{\mathcal S}(\sigma^n(\omega))
\big|.
$$
Since the set $E_A^{\mathbb N} \sms E_{A^*}^{\mathbb N}$ is countable
and the measure $\mu_{\d_\cS}$ is atomless, it suffices to deal with the elements of 
$E_A^{\mathbb N} \backslash E_{A^*}^{\mathbb N}$ only. Each such element 
$\omega$ has a unique representation in the form 
$$
\omega = \tau a^j \sigma^n(\omega),
$$
where $\tau\in E^*_{*, A^*}$, $a \in \Omega$
and $j = j(\omega) \in \{0,1, \cdots, n - |\tau|\}$.
Then for every $n \geq 0$ either 
$$
\hbox{\rm diam}(\phi_{\omega|_n} (Y_{t(\omega_n)}))
\asymp
\|
\phi_\tau'
\| (j+1)^{-1/p_a}
\  \  \hbox{ or } \  \
\hbox{\rm diam}(\phi_{\omega|_n} (Y_{t(\omega_n)}))
\asymp
\|
\phi_\tau'
\| (j+1)^{(p_a+1)/p_a},
$$
respectively,  depending on whether $a \in \overline Y_{t(\omega_n)}$ or not.
In either case
$$
\hbox{\rm diam}(\phi_{\omega|n} (Y_{t(\omega_n)}))
\asymp
\|
\phi_\tau'
\| (j+1)^{-\alpha}
$$
where $\alpha \in\{ 1/p_a,(p_a+1)/p_a \}$.
Since $\omega \in E_A^{\mathbb N}\sms E_{A^*}^{\mathbb N}$, there exists a largest (finite) $k \geq 0$ such that 
$$
\omega \in [\tau a^{j+k}].
$$
Then
$$
\big|\phi'_{\omega|_n}\(\pi_{\mathcal S}(\sigma^n(\om)\)\big|
\asymp
\|\phi_\tau'\|
(j+k+1)^{-\frac{p_a+1}{p_a}}(k+1)^{\frac{p_a+1}{p_a}}.
$$
Hence 
$$
g_n(\omega) 
\leq \frac{p_a+1}{p_a} \big(\log(k+1) + \log(j+k+1) + 
\alpha \log(j+1) + \Gamma_+\big)
\le \Ga\log(j+k+1)
$$
where $\Gamma_+\in [0, +\infty)$ and $\Gamma \in [1, +\infty)$ are some universal  constants independent of $\omega$ and $n$. Then
$$
\int_{\omega \in E_{A^*}^\infty:j(\omega) = j}
 g_n(\omega) d\mu_{\d_\cS}(\omega)
\leq \Sigma_j := \Ga
 \sum_{\tau \in E_A^{n-j}(*)}
 \sum_{a \in \Omega} \sum_{b \neq a\atop A_{ab}=1}
 \sum_{k=0}^\infty
 \log (j+k+1) \mu_{\d_\cS}([\tau a^{j+k}b]),
$$
where $E_A^{n-j}(*)$ denotes the set of all finite words of 
``real" length $n-j$ that belong to $E^*_{*A*}$.  
Now represent each element $\tau \in E_A^{n-j}(*)$ uniquely 
as $c^l d\gamma$, where $l \geq 0$, $c \in \Omega$, $d \neq c$.
Then both $c^ld$ and $\gamma$ belong to $E^*_{*A^*}$, and we can write
$$
\Sigma_j =  \Ga\sum_{c\in \Omega}
\sum_{d \neq c\atop A_{cd}=1}
\sum_{\gamma \in E^*_{* A^*}}
\sum_{l=0}^{n-j -1}
\sum_{a\in \Omega}
\sum_{b \neq a\atop A_{ab}=1}
\sum_{k=0}^\infty \log (j+k+1)  \mu_{\d_\cS}([c^l d\gamma a^{j+k}b]).
$$
Now since the Radon-Nikodym derivative $\frac{d\mu_{\d_\cS}}{dm_{\d_\cS}}$ is comparable to $l+1$ on $c^ld$ and since the three words
$c^ld$, $\gamma$ and $a^{j+k}b$, belong to $E_{*A^*}$, we obtain
$$
\begin{aligned}
\Sigma_j
&\asymp
\sum_{c\in \Omega}
\sum_{d \neq c\atop A_{cd}=1}
\sum_{\gamma \in E^*_{* A^*}\atop d\gamma \in E_A^{n-k-l-1}}
\sum_{l=0}^{n-j -1}
\sum_{a\in \Omega}
\sum_{b \neq a\atop A_{ab}=1}
\sum_{k=0}^\infty \log (j+k+1) (l+1)m_{\d_\cS}([c^l d\gamma a^{j+k}b])\cr
&\asymp
\sum_{c\in \Omega}
\sum_{d \neq c\atop A_{cd}=1}\!
\sum_{\gamma \in E^*_{* A^*}\atop d\gamma \in E_A^{n-k-l-1}}\!
\sum_{l=0}^{n-j -1}
\sum_{a\in \Omega}
\sum_{b \neq a\atop A_{ab}=1}
\sum_{k=0}^\infty \log (j+k+1) (l+1)  m_{\d_\cS}([c^l d])
m_{\d_\cS}([\gamma]) m_{\d_\cS}([a^{j+k}b])\cr
&\lek \sum_{c\in \Omega}\sum_{a\in \Omega}\sum_{l=0}^\infty \sum_{k=1}^\infty \log (j+k+1) (l+1)^{1-\frac{p_a+1}{p_a} {\d_\cS}} (j+k+1)^{-\frac{p_a+1}{p_a} {\d_\cS}}\cr
&\asymp\sum_{a\in \Omega}\sum_{k=0}^\infty \log (j+k+1)(j+k+1)^{-\frac{p_a+1}{p_a} {\d_\cS}}
\end{aligned}
$$
where the last comparability sign we wrote because $1- \frac{p_a+1}{p_a} {\d_\cS} < -1$ for all $c \in \Omega$.
Therefore,
$$
\begin{aligned}
\int_{E_A^\infty} g_n d\mu_{\d_\cS}
& = \sum_{j=0}^n 
\int_{\{\omega \in E_{A^*}^\infty:j(\omega) = j\}}g_n(\omega) d\mu_(\omega)\cr
& \lek \sum_{j=0}^\infty  \sum_{a\in \Omega} \sum_{k=1}^\infty \log (j+k) (j+k)^{-\frac{p_a+1}{p_a} {\d_\cS}}\cr
&\asymp D:= \sum_{j=0}^\infty  \sum_{k=1}^\infty \log (j+k) (j+k)^{-\frac{p+1}{p} {\d_\cS}} < +\infty,
\end{aligned}
$$
where, we recall, $p = \max\{ p_a \hbox{ : } a \in \Omega\}$
and the constant $D$ is finite since $\frac{p+1}{p} {\d_\cS}> 2$.
Therefore, Tchebschev's Inequality tells us that 
$$
\mu_{\d_\cS}\big(\{ \omega \in E_A^\infty \hbox{ : } g_n(\omega) \geq n^{1/4}\}\big)
\leq
\frac{\int_{E_A^\infty} g_n d\mu_{\d_\cS}}{n^{1/4}}
\leq D n^{-1/4},
$$
and the proof is complete. 
\end{proof}

\begin{rem}
There are a variety of even stronger results, e.g., Functional Central Limit Theorems and Invariance Principles, based on approximation by Brownian Motion, which should also hold with a little more work.    Similarly, there are other complementary results such as large deviation results.
\end{rem}

\begin{rem}
There are possible stronger results of other kinds as well. For example, in both the hyperbolic and parabolic settings there is the possibility of estimating error terms and obtaining local limit theorems as in \cite{Go-local} and \cite{Go}.
\end{rem}

\section{Central Limit Theorems: Asymptotic Counting Functions for Attracting GDMSs}
In this subsection we work in the setting of attracting GDMSs.
We again fix $\rho \in E_A^\infty$. For any $n \geq 1$ and $\omega \in E_\rho^n$ consider the weights 
$$
e^{-{\d_\cS} \lambda_\rho(\omega)} = |\phi_\omega'(\pi(\rho))|^{\d_\cS}.
$$ 
More precisely, for every set $H \subset E_\rho^n$, we define
\begin{equation}\label{1ms6}
\mu_n(H) := \frac{\sum_{\omega \in H} e^{-{\d_\cS} \lambda_\rho(\omega)}}{\sum_{\omega \in E^n_\rho} e^{-{\d_\cS} \lambda_\rho(\omega)}}
= \frac{\mathcal L_{\d_\cS}^n \1_{[H]}(\rho)}{\mathcal L_{\d_\cS}^n \1(\rho)}.
\end{equation}
Define the function  $\lambda: E_A^\infty \to \mathbb R$ by the  formula:
$$
\lambda(\omega) = -\log |\phi_{\omega_1}'(\sigma(\omega))|.
$$
In particular, for every $\tau \in E^*_\rho$, say $\tau \in E^n_\rho$,
$$
\lambda_{\rho}(\tau) = \sum_{j=0}^{n-1}\lambda\(\sg^j(\tau\rho)\).
$$
We first prove the following.

\begin{thm}\label{t1ms5.2} 
If $\cS$ is a finitely irreducible strongly regular conformal GDMS, then for every $\rho \in E_A^\infty$ we have that

\begin{equation}\label{lms5.2}
\lim_{n \to +\infty} \int_{E_\rho^n} \frac{\lambda_\rho}{n} d\mu_n 
= \chi_{\mu_{\d_\cS}}=\int_{E_\rho^\infty}\lambda d\mu_{\d_\cS}.
\end{equation}
\end{thm}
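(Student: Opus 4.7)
The plan is to express the quantity in question as a logarithmic derivative of a partition function and then identify its limit via the spectral theory of the transfer operator together with the differentiability of pressure. For $s$ in a complex neighborhood of $\delta = \delta_\cS$ set
$$
F_n(s) := \cL_s^n\1(\rho) = \sum_{\om\in E_\rho^n} e^{-s\lam_\rho(\om)}.
$$
By Observation~\ref{o1_2016_01_21} each $F_n$ is holomorphic on $\Ga_\cS^+$, and since $F_n(\d)>0$, the function $\log F_n$ is well defined on a neighborhood of $\d$. A direct calculation then gives
$$
\int_{E_\rho^n}\lam_\rho\, d\mu_n = -\frac{F_n'(\d)}{F_n(\d)} = -\left.\frac{d}{ds}\log F_n(s)\right|_{s=\d},
$$
so the theorem reduces to showing $-\tfrac{1}{n}\tfrac{d}{ds}\log F_n(s)\big|_{s=\d}\to \chi_{\mu_\d}$.

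Next I would transfer the analysis to the spectral side, using exactly the Kato--Rellich framework set up in the proof of Theorem~\ref{t2nh18}. On a sufficiently small complex neighborhood $U$ of $\d$ one has the holomorphic decomposition
$$
\cL_s^n = \lam_s^n Q_s + S_s^n,
$$
where $s\mapsto\lam_s, Q_s, S_s$ are holomorphic on $U$, $\lam_\d = e^{\P(\d)} = 1$, $Q_s$ is a rank-one projector with $Q_\d\1 = \psi_\d$, and $\|S_s^n\|_\a\le C\ka^n$ uniformly for $s\in U$ and some $\ka\in(0,1)$. Hence
$$
\frac1n\log F_n(s) = \log\lam_s + \frac1n\log\!\Big(Q_s\1(\rho) + \lam_s^{-n}S_s^n\1(\rho)\Big).
$$
Because $\psi_\d(\rho)>0$ and $|\lam_s|$ stays in a fixed compact neighborhood of $1$ after shrinking $U$, the second summand is $O(1/n)$ uniformly on $U$. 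Thus $\tfrac1n\log F_n(s)\to\log\lam_s$ uniformly on compact subsets of $U$, choosing a consistent branch of the logarithm since $F_n(s)\neq 0$ on $U$ for large $n$.

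The final step is the interchange of limit and derivative. The sequence $\tfrac1n\log F_n$ consists of holomorphic functions on $U$ converging uniformly on compacta to the holomorphic limit $s\mapsto\log\lam_s$, so by Weierstrass's theorem the derivatives also converge uniformly on compacta. Evaluating at $s=\d$ and using that $\log\lam_s = \P(s)$ for real $s\in U$, we conclude
$$
\lim_{n\to+\infty}\frac1n\int_{E_\rho^n}\lam_\rho\, d\mu_n = -\left.\frac{d}{ds}\log\lam_s\right|_{s=\d} = -\P'(\d) = \chi_{\mu_\d},
$$
where the last equality is Theorem~\ref{t1_2016_01_29}. The identity $\chi_{\mu_\d}=\int_{E_A^\infty}\lam\, d\mu_\d$ is the definition of the Lyapunov exponent.

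The principal technical obstacle is ensuring that the spectral decomposition, and especially the geometric bound $\|S_s^n\|_\a\le C\ka^n$, are uniform in $s$ on a full complex neighborhood of $\d$; without this one cannot apply Weierstrass's theorem to pass from convergence of $\tfrac1n\log F_n$ to convergence of its derivative. Fortunately this uniformity is precisely what the proof of Theorem~\ref{t2nh18} already provides (using finite irreducibility and strong regularity of $\cS$), so the argument is essentially an application of that machinery together with the identity $\P'(\d) = -\chi_{\mu_\d}$.
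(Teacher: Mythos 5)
Your proof is correct, and it takes a genuinely different route from the paper's. You identify $\int_{E_\rho^n}\lambda_\rho\,d\mu_n$ with the logarithmic derivative $-F_n'(\d)/F_n(\d)$ of the partition function $F_n(s)=\cL_s^n\1(\rho)$ and pass to the limit via the holomorphic decomposition $\cL_s^n=\lam_s^nQ_s+S_s^n$ on a complex neighborhood of $\d$, finishing with Weierstrass's theorem and $\P'(\d)=-\chi_{\mu_\d}$. The paper instead works only at the real parameter $\d$: it writes the integral as $\frac1n\sum_{j=0}^{n-1}\cL_\d^{\,n-(j+1)}\big(\cL_\d(\lambda\,\cL_\d^j\1)\big)(\rho)\big/\cL_\d^n\1(\rho)$ and exploits the uniform exponential convergence $\cL_\d^i g\to\big(\int g\,dm_\d\big)\psi_\d$ for bounded H\"older $g$; the key point there is that although $\lambda$ is unbounded when the alphabet is infinite, the functions $\cL_\d(\lambda\,\cL_\d^j\1)$ are bounded H\"older with uniformly controlled norms (this is where strong regularity enters for them). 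In your argument the unboundedness of $\lambda$ is absorbed automatically: strong regularity places $\d$ in the interior of $\Ga_\cS$, so $F_n$ is holomorphic on a full neighborhood of $\d$ and may be differentiated term by term. Your route costs the complex perturbation machinery, but that is already available from the proof of Theorem~\ref{t2nh18}, and it is arguably more robust: the same computation with the perturbation $\d-it/\sqrt{n}$ is exactly how the paper proves the counting CLT, Theorem~\ref{t1m58}, so your method treats Theorems~\ref{t1ms5.2} and~\ref{t1m58} uniformly. Two small points worth making explicit in a final write-up: after shrinking $U$ one needs $|\lam_s|>\ka$ so that $\lam_s^{-n}S_s^n\1(\rho)\to0$, and one needs $Q_s\1(\rho)$ to stay in a half-plane avoiding $0$ so that the branch of the logarithm is unambiguous; both follow from $\lam_\d=1$ and $Q_\d\1(\rho)=\psi_\d(\rho)>0$, as you indicate.
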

 
\begin{proof}
The idea of the proof is to represent the integral 
$$
\int_{E_\rho^n} \frac{\lambda_\rho}{n} d \mu_n
$$ 
as the ratio of (sums of) Perron--Frobenius operators, and then to use the spectral properties of the operator $\mathcal L_{\d_\cS}$.
However, there is a difficulty in such an approach which does not appear in the case of a  finite alphabet. The character of this difficulty is that 
although the function $\lambda:E_A^\infty \to \mathbb R$ is always H\"older continuous,  in the case of infinite alphabet it is unbounded.  The remedy comes from the fact that $\mathcal L_{\d_\cS}(\1)$ is a H\"older continuous bounded function. 
Beginning  the  proof, we have
$$
\begin{aligned}
\int_{E_\rho}^n \frac{\lambda_\rho}{n} d\mu_n
&= \frac{\frac{1}{n} \mathcal L_{\d_\cS}^n\(\sum_{j=0}^{n-1} \lambda\circ\sg^j\)(\rho)}{
\mathcal L_{\d_\cS}^n(\1)(\rho)}
= \frac{
\frac{1}{n} \sum_{j=0}^{n-1} \mathcal L_{\d_\cS}^n( \lambda \circ \sigma^j)(\rho)
}{
\mathcal L_{\d_\cS}^n(\1)(\rho)} \cr
&= 
 \frac{\frac{1}{n} \sum_{j=0}^{n-1} \mathcal L_{\d_\cS}^{n-j}\(\mathcal L_{\d_\cS}^j (\lambda \circ \sigma^j)\)(\rho)}{\mathcal L_{\d_\cS}^n(\1)(\rho)}\cr
& = 
 \frac{\frac{1}{n} \sum_{j=0}^{n-1} \mathcal L_{\d_\cS}^{n-j}\(\lambda 
 \mathcal L_{\d_\cS}^j\1\)(\rho)}{\mathcal L_{\d_\cS}^n(\1)(\rho)}\cr
 &
  = 
 \frac{\frac{1}{n} \sum_{j=0}^{n-1} \mathcal L_{\d_\cS}^{n-(j+1)}\(\mathcal L_{\d_\cS}(\lambda\mathcal L_{\d_\cS}^j\1)\)(\rho)}{\mathcal L_{\d_\cS}^n(\1)(\rho)}.
\end{aligned}
$$
Now a straightforward calculation based on the strong regularity of the system $\mathcal S$
shows that the H\"older norms of the functions $\mathcal L_{\d_\cS} (\lambda \mathcal L_{\d_\cS}^i\1)$, $i\ge 0$, are uniformly bounded above. With the fact that the sequence $(\mathcal L_{\d_\cS}^i g)_{i=0}^\infty$ converges uniformly (in fact exponentially fast) to $\int g dm_{\d_\cS} \psi_{\d_\cS}$ for every bounded H\"older continuous function 
$g: E_A^\infty \to \mathbb R$ we conclude that the sequence 
$(\mathcal L_{\d_\cS} (\lambda \mathcal L_{\d_\cS}^j\1))_{j=0}^\infty$ converges uniformly to $\mathcal L_{\d_\cS} (\lambda \psi_{\d_\cS})$.  So, fixing $\epsilon > 0$, we can find $k_1 \geq 1$ such that 
$$
\|
\mathcal L_{\d_\cS} (\lambda \mathcal L_{\d_\cS}^j\1) - \mathcal L_{\d_\cS} (\lambda \psi_{\d_\cS})
\|_\alpha \leq \epsilon
$$
for all $j \geq k_1$.  Furthermore, there exist $N \geq k_2 \geq k_1$ such that for all $n \geq N$ and all $j \leq n - k_2$,
$$
\Big\| \mathcal L_{\d_\cS}^{n-j} (\lambda \mathcal L_{\d_\cS}^j\1) - 
 \int \mathcal L_{\d_\cS} (\lambda\psi_{\d_\cS}) dm_{\d_\cS} \psi_{\d_\cS}
\Big\|_\alpha \leq \epsilon.
$$
But $ \int \mathcal L_{\d_\cS} (\lambda\psi_{\d_\cS}) dm_{\d_\cS}
=  \int \lambda\psi_{\d_\cS}  dm_{\d_\cS} = \int \lambda d\mu_{\d_\cS}
$ and $M := \sup\{ \|\mathcal L_{\d_\cS}^n \1\|_\alpha \hbox{ : } n \geq 0\}$
is finite. So we can conclude that 
$$
\Big\| \mathcal L_{\d_\cS}^{n-(j+1)} \mathcal L_{\d_\cS}(\lambda \mathcal L_{\d_\cS}^j\1) - 
 \int \lambda d\mu_{\d_\cS} \psi_{\d_\cS}
\Big\|_\alpha \leq (1+ M) \epsilon
$$
for all $n \geq N$ and all $k_1 \leq j \leq n-k_2$.
Hence 
$$
 \int \lambda d\mu_{\d_\cS}-   (1+ M) \epsilon
 \leq \liminf_{n \to +\infty} \int_{E_\cS^n} \frac{\lambda_{\d_\cS}}{n} d\mu 
\leq \limsup_{n \to +\infty} \int_{E_\cS^n} \frac{\lambda_{\d_\cS}}{n} d\mu     \leq \int \lambda d\mu_{\d_\cS} + (M+1)\epsilon. 
$$
Letting $\epsilon \to 0$, then concludes the proof.
\end{proof}

Now we are next going to prove  versions of the Central Limit Theorem (CLT) that involve counting.  This requires some preparatory steps.  

We define the functions $\Delta_n: E_\rho^n \to \mathbb R$ by the formulae
\begin{equation}\label{2m56}
\Delta_n(\omega) := \frac{\lambda_\rho (\omega) - \chi_{\d_\cS} n}{\sqrt{n}}
\end{equation}
and consider the sequence $(\mu_n \circ \Delta_n^{-1})_{n=1}^\infty$
of probability distributions on $\mathbb R$. Observe that for every Borel set $F \subset \mathbb R$, we have that 
\begin{equation}\label{3m56}
\begin{aligned}
\mu_n \circ \Delta_n^{-1}(F) =
 \frac{\mathcal L_{\d_\cS}^n \1_{[\Delta_n^{-1}(F)]}(\rho)}{\mathcal L_{\d_\cS}^n \1(\rho)}
& =
  \frac{\mathcal L_{\d_\cS}^n (\1_{F}\circ \Delta_n)(\rho)}{\mathcal L_{\d_\cS}^n \1(\rho)}\cr
&=
\frac{\sum_{\omega \in E^n_\rho} e^{-{\d_\cS} \lambda_p(\omega)}
\1_F(\Delta_n(\omega))
}{\sum_{\omega \in E^n_\rho} e^{-{\d_\cS} \lambda_p(\omega)}}\cr
&=
\frac{\sum_{\omega \in E^n_\rho} e^{-{\d_\cS} \lambda_p(\omega)}
\1_F\left( \frac{\lambda_\rho(\omega) - \chi_{\d_\cS} n}{\sqrt{n}}\right)
}{\sum_{\omega \in E^n_\rho} e^{-{\d_\cS} \lambda_p(\omega)}}
\end{aligned}
\end{equation}
where in the third term the function $\Delta_n$ is considered as defined on $E_A^{\mathbb N}$ by the formula
$$
\Delta_n(\omega)= \frac{\lambda_\rho(\omega|_{n}) - \chi_{\d_\cS} n}{\sqrt{n}}.
$$ 		
Our last counting result for attracting systems is the following.

\begin{thm}\label{t1m58} 
If $\cS$ is a strongly regular finitely irreducible D--generic attracting conformal graph directed Markov system, then the sequence of random variables $(\Delta_n)_{n=1}^\infty$ converges in distribution
to the normal (Gaussian) distribution 
$\mathcal N_0(\sigma)$ with mean value zero and the variance $\sigma^2 = \P''({\d_\cS})$ (the latter being positive because of Remark~\ref{r2_2017_02_17} and since the system $\cS$ is D--generic).  
Equivalently, the sequence 
$(\mu_n \circ \Delta_n^{-1})_{n=1}^\infty$
converges weakly to the normal distribution $\mathcal N_0(\sigma^2)$.  This means that for every Borel set $F \subset \mathbb R$ with 
$\hbox{\rm Leb}(\partial F) = 0$, we have 
\begin{equation}\label{1ms8}
\lim_{n \to +\infty} \mu_n(\Delta_n^{-1}(F))
= \frac{1}{\sqrt{2\pi} \sigma}
\int_F e^{- t^2/2\sigma^2} dt.
\end{equation} 
\end{thm}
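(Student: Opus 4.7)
The plan is to show pointwise convergence of the characteristic functions of $\Delta_n$ under $\mu_n$ to the Gaussian characteristic function $e^{-\sigma^2 t^2/2}$ and then to invoke L\'evy's continuity theorem. Fix $t\in\R$. Since
\begin{equation*}
e^{it\Delta_n(\om)}=e^{-it\chi_{\mu_\d}\sqrt n}\,\big|\phi_\om'(\pi_\cS(\rho))\big|^{-it/\sqrt n},
\end{equation*}
and since by \eqref{1ms6} we have $\mu_n(H)=\pf_\d^n\1_{[H]}(\rho)/\pf_\d^n\1(\rho)$, a direct computation gives
\begin{equation*}
\int e^{it\Delta_n}\,d\mu_n = e^{-it\chi_{\mu_\d}\sqrt n}\cdot\frac{\pf_{\d-it/\sqrt n}^{\,n}\1(\rho)}{\pf_\d^n\1(\rho)}.
\end{equation*}
So the entire problem reduces to the asymptotic analysis of iterates of the complexified Ruelle--Perron--Frobenius operators $\pf_s$ developed in Section~\ref{CRPFOSDG}.

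I would then import the Kato--Rellich spectral decomposition already carried out inside the proof of Theorem~\ref{t2nh18}, items (4)--(7), which together with Proposition~\ref{p1nh15} provides the following: on some complex neighborhood $U\sbt\Ga_\cS^+$ of $\d$ there are holomorphic maps $s\mapsto\lam_s=e^{\P(s)}$ (the simple leading eigenvalue), $s\mapsto Q_s\in L(\H_\a(A))$ (the rank--one spectral projector) and $s\mapsto S_s\in L(\H_\a(A))$, with $Q_sS_s=S_sQ_s=0$, satisfying
\begin{equation*}
\pf_s^n=\lam_s^nQ_s+S_s^n,\qquad \|S_s^n\|_\a\le C\ka^n,
\end{equation*}
for some constants $C>0$ and $\ka\in(0,1)$ that are uniform in $s\in U$. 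For $n$ large enough $s_n:=\d-it/\sqrt n\in U$, so
\begin{equation*}
\frac{\pf_{s_n}^n\1(\rho)}{\pf_\d^n\1(\rho)}
=\lam_{s_n}^n\cdot\frac{Q_{s_n}\1(\rho)}{\pf_\d^n\1(\rho)}+\frac{S_{s_n}^n\1(\rho)}{\pf_\d^n\1(\rho)}.
\end{equation*}
Since $\P(\d)=0$, Theorem~\ref{thm-conformal-invariant} gives $\pf_\d^n\1(\rho)\to\psi_\d(\rho)>0$, while the holomorphy of $Q_s$ in $L(\H_\a(A))$ yields $Q_{s_n}\1(\rho)\to Q_\d\1(\rho)=\psi_\d(\rho)$. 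Hence the first ratio converges to $1$ and the $S_{s_n}^n$--term is $O(\ka^n)$.

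The last step is a Taylor expansion of $\lam_s=e^{\P(s)}$ at $s=\d$. Using $\P(\d)=0$, Theorem~\ref{t1_2016_01_29} (which gives $\P'(\d)=-\chi_{\mu_\d}$), and $\P''(\d)=\sigma^2$, one computes
\begin{equation*}
n\P(s_n)= n\P'(\d)\Big(-\frac{it}{\sqrt n}\Big)+\tfrac12 n\P''(\d)\Big(-\frac{it}{\sqrt n}\Big)^2+O(n^{-1/2}) = it\chi_{\mu_\d}\sqrt n-\tfrac{\sigma^2 t^2}{2}+o(1),
\end{equation*}
so that $e^{-it\chi_{\mu_\d}\sqrt n}\lam_{s_n}^n\to e^{-\sigma^2 t^2/2}$. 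Combining the three estimates yields
\begin{equation*}
\lim_{n\to+\infty}\int e^{it\Delta_n}\,d\mu_n=e^{-\sigma^2 t^2/2}\qquad\text{for every }t\in\R,
\end{equation*}
which is the characteristic function of $\mathcal N_0(\sigma^2)$. L\'evy's continuity theorem then gives $\mu_n\circ\Delta_n^{-1}\Rightarrow\mathcal N_0(\sigma^2)$, and absolute continuity of the Gaussian immediately upgrades this to \eqref{1ms8} for every Borel $F\sbt\R$ with $\Leb(\bd F)=0$. The main (and essentially only) obstacle is the verification that $\sigma^2=\P''(\d)>0$: this is exactly the place the D--generic hypothesis is used, via Remark~\ref{r2_2017_02_17}, ruling out the degenerate case where $\d\zeta$ would be cohomologous to a constant and the limit law would collapse to a point mass.
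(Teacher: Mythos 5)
Your proposal is correct and follows essentially the same route as the paper's proof: reduce to characteristic functions via $\int e^{it\Delta_n}\,d\mu_n=e^{-it\chi_{\mu_\d}\sqrt n}\,\pf_{\d-it/\sqrt n}^n\1(\rho)/\pf_\d^n\1(\rho)$, apply the spectral decomposition $\pf_s^n=\lam_s^nQ_s+S_s^n$ from the proof of Theorem~\ref{t2nh18}, and Taylor-expand $\P(s)$ at $\d$ using $\P'(\d)=-\chi_{\mu_\d}$ and $\P''(\d)=\sigma^2$. Your treatment is in fact slightly more careful than the paper's in spelling out why the ratio $Q_{s_n}\1(\rho)/\pf_\d^n\1(\rho)$ tends to $1$.
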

\begin{proof}
This theorem is equivalent to showing that the characteristic functions (or Fourier transforms) of the measures $\mu_n \circ \Delta_n^{-1}$ 
converge to the characteristic function of 
$\mathcal N_0(\sigma^2)$, i.e., to the function $\R\ni t\longmapsto e^{-\sigma^2t^2/2}$.
By the formula \eqref{1nh20}  we have for every $t \in \mathbb R$ that 
$$
\begin{aligned}
\int_{\mathbb R} e^{it x} d\mu_n \circ \Delta_n^{-1}(x) 
&=
\int_{E_\rho^n} e^{it \Delta_n(\omega)} d\mu_n(\omega)
= 
\frac{\mathcal L_{\d_\cS}^n(e^{it \Delta_n})(\rho)}{\mathcal L_{\d_\cS}^n \1(\rho)}\cr
&= e^{-t \chi_{\d_\cS} \sqrt{n}} 
\frac{\mathcal L_{{\d_\cS}- \frac{it}{\sqrt{n}}}^n\1(\rho)}{\mathcal L_{\d_\cS}^n \1(\rho)}\cr
&=
 e^{-t \chi_{\d_\cS} \sqrt{n}} 
 \left(
 \frac{\lambda_{{\d_\cS} - \frac{it}{\sqrt{n}}}^n Q_{{\d_\cS} - \frac{it}{\sqrt{n}}}(\1)(\rho) + S^n_{{\d_\cS} - \frac{it}{\sqrt{n}}}\1(\rho) }{\psi_{\d_\cS}(\rho) + S_{\d_\cS}^n\1(\rho)}
\right). \cr
\end{aligned}
$$
It therefore follows from items (4), (5) and (6) following formula \ref{1nh20} that
$$
\lim_{n \to +\infty} \int_{\mathbb R} e^{itx} 
d\mu_n \circ \Delta_n^{-1}(x)
=\lim_{n \to +\infty} 
e^{-it \chi_{\d_\cS} \sqrt{n}} \lambda_{{\d_\cS}- \frac{it}{\sqrt{n}}}^n.
$$
Denote by $\log \lambda_s$, $s$ bbelongingSelonging to some sufficiently small neighborhood of ${\d_\cS}$,
the principle branch of the logarithm of $\lambda_s$, i.e., that  determined by the requirement that $\log \lambda_{\d_\cS} = 0$.  Since $\log \lambda_s = \P(s)$ for real $s > \gamma_s$ and since $\P'(0) = - \chi_{\d_\cS}$, we therefore get that
$$
\lambda_s = \exp(\log \lambda_s)
= \exp
\left(
- \chi_{\d_\cS} (s- \delta)
+ \frac{{\d_\cS}^2}{2}(s-{\d_\cS})^2 + O(|s-{\d_\cS}|^3)
\right).
$$
So for $s = {\d_\cS} - \frac{it}{\sqrt{n}}$
we get 
$$
\lambda_{{\d_\cS} - \frac{it}{\sqrt{n}}}
= \exp
\left(
 i \frac{t \chi_{\d_\cS}}{\sqrt{n}}
 - \frac{\sigma^2 t^2}{2n} + O(n^{-3/2})
\right).
$$
Therefore,
$$
\begin{aligned}
e^{-it \chi_{\d_\cS} \sqrt{n}}
\lambda^n_{{\d_\cS} - \frac{it}{\sqrt{n}}}&
=
e^{-it \chi_{\d_\cS} \sqrt{n}}
\exp \left(
i t \chi_{\d_\cS} \sqrt{n} - \frac{\sigma^2 t^2}{2} + O(n^{-1/2})
\right)\cr
&=
\exp\left(
- \frac{\sigma^2 t^2}{2} + O(n^{-1/2})
\right).
\cr
\end{aligned}
$$
So finally
$$
\lim_{n \to +\infty}
\int_{\mathbb R}  e^{itx} d\mu_n\circ \Delta_n^{-1}(x) = \exp \left( - \sigma^2 t^2/2\right).
$$
Thus since 
$\R\ni t\longmapsto\exp \left( - \sigma^2 t^2/2\right)$
is the characteristic function of the Gaussian distribution $\mathcal N_0(\sigma^2)$, the proof is complete.
\end{proof}

\section{Central Limit Theorems: Asymptotic Counting Functions for Parabolic GDMSs}\label{CLT-Parabolic-2}
We want to extend the Central Limit Theorem for counting functions from the previous (attracting GDNSs) subsection to the case of parabolic GDMSs. 
We are in the same setting as in Section~\ref{section:parabolic} i.e., $\cS = \{\phi_e\}_{e \in E}$ is a finite
irreducible conformal parabolic GDMS. Furthermore,  the 
functions $\Delta_n$ and measures $\mu_n$ have formally 
the same definitions as their ``attracting''
counterparts given in Subsection \ref{subsection:CLT-hyperbolic}
respectively by formulae (\ref{2m56}) and (\ref{1ms6}). We start with the following analogue of Theorem~\ref{t1ms5.2}.

\begin{thm}\label{t1ms15.1} 
If $\cS$ is a finitely irreducible parabolic conformal GDMS
for which 
$$
\d_\cS > \frac{2p_\cS}{p_\cS+1},
$$
i.e the invariant measure $\mu_{\d_\cS}$ is finite (so a probability after normalization), then for every $\rho \in E_{A^*}^\infty$
$$
\lim_{n \to + \infty} \int \frac{\lambda_\rho}{n}d\mu_n
=
\int_{E_A^\infty} \lambda d\mu_{\d_\cS}
= \chi_{\d_\cS}.
\eqno{\label{1ms15.1}}
$$
\end{thm}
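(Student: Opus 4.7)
My plan is to imitate the strategy of Theorem~\ref{t1ms5.2} but transported to the associated attracting system $\cS^*$, while carefully bookkeeping the contributions of parabolic tails. First I would rewrite the integral as a ratio
$$
\int_{E_\rho^n} \frac{\lambda_\rho}{n}\,d\mu_n
=\frac{\frac{1}{n}\sum_{\omega\in E_\rho^n}\lambda_\rho(\omega)|\phi_\omega'(\pi_\cS(\rho))|^{\delta_\cS}}{\sum_{\omega\in E_\rho^n}|\phi_\omega'(\pi_\cS(\rho))|^{\delta_\cS}}.
$$
Each $\omega\in E_\rho^n$ admits a unique decomposition $\omega=\omega^{*}a^{k}$ with $\omega^{*}\in E_{*A^*}^{*}$ (possibly empty), $a\in\Omega$, $k\ge 0$, and $|\omega^{*}|+k=n$, and by the Chain Rule and Bounded Distortion we have $|\phi_\omega'(\pi_\cS(\rho))|\asymp|\phi_{\omega^{*}}'(\pi_\cS(a^k\rho))|\,|\phi_{a^k}'(\pi_\cS(\rho))|$ together with $\lambda_\rho(\omega)=\lambda_\rho^{*}(\omega^{*})+\lambda_\rho^{\text{par}}(a^k)+O(1)$, where $\lambda_\rho^{*}$ refers to multipliers along blocks from $E_{*}$ and the error is uniform. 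This decomposition turns both numerator and denominator into finite combinations of iterates of the hyperbolic Perron--Frobenius operator $\pf^{*}_{\delta_\cS}$ on $\H_\a(A^{*})$, tested against H\"older functions built from $|\phi_{a^k}'|^{\delta_\cS}$ and $\lambda_\rho^{\text{par}}(a^k)=-\log|\phi_{a^k}'|$.

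Next I would apply the spectral description of $\pf^{*}_{\delta_\cS}$ coming from Theorem~\ref{thm-conformal-invariant} and Theorem~\ref{t1pc3}: the operator has $1$ as a simple isolated eigenvalue with eigenfunction $\psi_{\delta_\cS}^{*}$ and eigenmeasure $m_{\delta_\cS}^{*}$, and a spectral gap. As in the proof of Theorem~\ref{t1ms5.2}, writing
$$
\pf^{*n}_{\delta_\cS}(\lambda\cdot g)=\psi_{\delta_\cS}^{*}\!\int\!\lambda\,g\,dm_{\delta_\cS}^{*}+O(\kappa^{n})\|\cdot\|_\a
$$
for any bounded H\"older $g$, I would use a Ces\`aro-type rewriting
$$
\sum_{\omega^{*}\in E_{*\rho}^{m}}\lambda_\rho^{*}(\omega^{*})|\phi_{\omega^{*}}'(\pi_\cS(\eta))|^{\delta_\cS}
=\sum_{j=0}^{m-1}\pf_{\delta_\cS}^{*\,m-(j+1)}\bigl(\pf^{*}_{\delta_\cS}(\lambda\,\pf^{*\,j}_{\delta_\cS}\1)\bigr)(\eta),
$$
so that after averaging over $m$ the main contribution becomes $m\,\chi^{*}_{\delta_\cS}\psi^{*}_{\delta_\cS}(\eta)+o(m)$ uniformly in the base point $\eta=a^k\rho$. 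Summing over $a\in\Omega$ and $k\ge 0$ and weighting by the factor $|\phi_{a^k}'(\pi_\cS(\rho))|^{\delta_\cS}$ (and, in the numerator, by $\lambda_\rho^{\text{par}}(a^k)$), then dividing by the denominator, I would obtain an explicit expression for $\lim_n\int\frac{\lambda_\rho}{n}d\mu_n$ as a ratio of two sums.

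Finally I would identify this limit with $\chi_{\delta_\cS}$ using the Radon--Nikodym formula of Theorem~\ref{t1_2017_02_18}(3a),
$$
\psi_{\delta_\cS}(\rho)=\psi^{*}_{\delta_\cS}(\rho)+\sum_{a\in\Omega_\rho}\sum_{k=1}^{\infty}|\phi_{a^k}'(\pi_\cS(\rho))|^{\delta_\cS}\psi^{*}_{\delta_\cS}(a^k\rho),
$$
together with the inducing relation $\chi_{\delta_\cS}\int d\mu_{\delta_\cS}=\chi^{*}_{\delta_\cS}\int d\mu^{*}_{\delta_\cS}$ that expresses the Lyapunov exponent of $\mu_{\delta_\cS}$ as a weighted sum of its hyperbolic counterpart and contributions $\sum_k(-\log|\phi_{a^k}'|)|\phi_{a^k}'|^{\delta_\cS}$ coming from parabolic excursions. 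Putting the pieces together produces precisely $\int_{E_A^\infty}\lambda\,d\mu_{\delta_\cS}=\chi_{\delta_\cS}$.

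The principal obstacle is the control of very long parabolic tails. Because $-\log|\phi_{a^k}'|\asymp\frac{p_a+1}{p_a}\log k$ and $|\phi_{a^k}'|^{\delta_\cS}\asymp k^{-\frac{p_a+1}{p_a}\delta_\cS}$, the series entering both the numerator and the denominator have general terms of the order $k^{-\frac{p_a+1}{p_a}\delta_\cS}\log k$, respectively $k^{-\frac{p_a+1}{p_a}\delta_\cS}$, and only the hypothesis $\delta_\cS>\frac{2p_\cS}{p_\cS+1}$ guarantees that the former is summable uniformly in the remaining hyperbolic data (this is exactly the condition that makes $\mu_{\delta_\cS}$ finite and $\chi_{\delta_\cS}<\infty$). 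Establishing this uniform summability, so that the spectral approximation for $\pf^{*}_{\delta_\cS}$ can be exchanged with the summation over parabolic tails, is the delicate technical point and essentially mirrors the tail estimates already used in the proof of Theorem~\ref{t2pc6} and of Theorem~\ref{t1ms1-again3}.
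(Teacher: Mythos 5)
Your proposal contains a genuine structural gap at its central step. You claim that the decomposition $\omega=\omega^{*}a^{k}$ "turns both numerator and denominator into finite combinations of iterates of the hyperbolic Perron--Frobenius operator $\pf^{*}_{\delta_\cS}$." It does not. The measure $\mu_n$ is defined by summing over words $\omega\in E_\rho^{n}$ of fixed length $n$ \emph{in the original alphabet}, whereas the iterate $\pf^{*\,m}_{\delta_\cS}$ sums over words of fixed length $m$ \emph{in the induced alphabet} $E_*$. A word $\omega^{*}\in E_{*a}^{*}$ of original length $n-k$ can consist of anywhere between roughly $(n-k)/q$ and $n-k$ induced letters, so the set you need to sum over is a union, over all induced lengths $m$, of induced-length-$m$ words whose original lengths add up to $n-k$. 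This is a renewal-type constraint, not a single power of $\pf^{*}_{\delta_\cS}$; your displayed Ces\`aro identity is valid for fixed induced length $m$ and therefore does not apply to the quantity $\sum_{\omega\in E_\rho^{n}}\lambda_\rho(\omega)|\phi_\omega'(\pi_\cS(\rho))|^{\delta_\cS}$. To push your route through one would need local limit or renewal theorems for the induced operator in the spirit of Gou\"ezel (which is exactly the heavier machinery the paper reserves for the CLT in Theorem~\ref{t1ms16}); the spectral gap of $\pf^{*}_{\delta_\cS}$ alone, plus summability of the parabolic tails, is not enough. The tail estimates you flag as "the delicate technical point" are in fact the easy part; the missing ingredient is the passage from induced time to original time.

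For comparison, the paper's proof is deliberately "conceptually different" from Theorem~\ref{t1ms5.2} and avoids the induced operator altogether. It fixes $\epsilon>0$, uses Birkhoff's Ergodic Theorem together with Lusin and Egorov (legitimate here because the hypothesis $\delta_\cS>\tfrac{2p_\cS}{p_\cS+1}$ makes $\mu_{\delta_\cS}$ a probability measure) to get a set $F(\epsilon)$ of measure $>1-\epsilon$ on which $\tfrac1n S_n\lambda$ is uniformly within $\epsilon$ of $\chi_{\delta_\cS}$ for $n\ge N_\epsilon$, splits $E_\rho^{n}$ accordingly, controls the good part trivially, and controls the bad part using the Gibbs property $|\phi_\omega'(\pi(\rho))|^{\delta_\cS}\lesssim m_{\delta_\cS}([\omega])$, the boundedness $0\le\lambda\le M$ (finite alphabet), and --- crucially --- the two-sided bound $Q_\rho^{-1}\le\mathcal L_{\delta_\cS}^{n}\1(\rho)\le Q_\rho$ from Theorem~E of \cite{Hu}, which substitutes for the spectral gap in normalizing the denominator. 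If you want to retain your inducing strategy you must either import a renewal theorem or replace the spectral step by an argument, such as the paper's, that works directly at the level of the parabolic operator $\mathcal L_{\delta_\cS}$.
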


\begin{proof}
Since the behavior of iterates of the Perron--Frobenius operator $\mathcal L_{\d_\cS}$ is now (in the parabolic context) more complicated than in the attracting case, we need to  provide a conceptually different proof than that of Theorem~\ref{t1ms5.2}. We will make an
essential use of Birkhoff's Ergodic Theorem instead. 

Firstly, we  fix $\epsilon > 0$.  Then it follows from Birkhoff's Ergodic  Theorem, along with both Lusin's Theorem and Egorov's Theorem, that there exists an integer $N_\epsilon  \geq 1$ and a measurable set $F(\epsilon) \subset E_A^\infty$ such that 
$m_{\d_\cS}(F(\epsilon)) > 1 - \epsilon$ (remembering that $m_{\d_\cS}$ is equivalent to $\mu_{\d_\cS}$) for every $\tau \in F(\epsilon)$
and every integer $n \geq N_{\epsilon}$, 
$$
\left|
\frac{\sum_{j=0}^{n-1} \lambda\circ\sg^j(\tau)}{n} - \chi_{\d_\cS}
\right| \leq \epsilon.
$$
For all $n \geq N_2$ let
$$
F_\rho(\epsilon, n):=
\{
\omega \in E_\rho^n \hbox{ : } \omega\rho \in F(\epsilon)
\}
\  \  \hbox{ and } \  \
F_\rho^c(\epsilon, n):=
\{
\omega \in E_\rho^n \hbox{ : } \omega\rho \in F^c(\epsilon)
\}
$$
Then 
 \begin{equation}\label{2ms15.1}
 \begin{aligned}
\Bigg|
 \sum_{\omega \in F_\rho(\epsilon, n)}
 \frac{\lambda_\rho(\omega)}{n} &
 \frac{|\phi_\omega'(\pi(\rho))|^{\d_\cS}}{\mathcal L_{\d_\cS}^n \1 (\rho)}
 - 
  \sum_{\omega \in F_\rho(\epsilon, n)}
\chi_{\d_\cS}
 \frac{|\phi_\omega'(\pi(\rho))|^{\d_\cS}}{\mathcal L_{\d_\cS}^n \1 (\rho)}
 \Bigg|= 
\cr 
 &= 
\left|  \sum_{\omega \in F_\rho(\epsilon, n)}
\left( \frac{\lambda_\rho(\omega)}{n}
-
\chi_{\d_\cS}
\right)
 \frac{|\phi_\omega'(\pi(\rho))|^{\d_\cS}}{\mathcal L_{\d_\cS}^n \1 (\rho)} \right|
 \cr
& =  
\left|\frac{\lambda_\rho(\omega)}{n}
- \chi_{\d_\cS}
\right| 
 \sum_{\omega \in F_\rho(\epsilon, n)}
 \frac{|\phi_\omega'(\pi(\rho))|^{\d_\cS}}{\mathcal L_{\d_\cS}^n \1 (\rho)}
 \cr
& =  \left|\frac{\lambda_\rho(\omega)}{n}
- \chi_{\d_\cS}
\right| \leq \epsilon.
\end{aligned}
\end{equation}
 Now given a positive number $M$
 and an arbitrary function $g:E_A^\infty \to \mathbb R$ for which $|g| \leq M$, we have that 
 $$
 \begin{aligned}
\lt|\sum_{\omega \in F_\rho^c(\epsilon, n)}
 g(\omega \rho)
 \frac{|\phi_\omega'(\pi(\rho))|^{\d_\cS}}{\mathcal L_{\d_\cS}^n \1 (\rho)}\rt|
 &\leq 
 \frac{M}{\mathcal L_{\d_\cS}^n \1 (\rho)}
  \sum_{\omega \in F_\rho^c(\epsilon, n)}
|\phi_\omega'(\pi(\rho))|^{\d_\cS} 
\leq 
 \frac{M'}{\mathcal L_{\d_\cS}^n \1 (\rho)}
  \sum_{\omega \in F_\rho^c(\epsilon, n)}
m_{\d_\cS}([\omega])\cr
&=  \frac{M'}{\mathcal L_{\d_\cS}^n \1 (\rho)}
m_{\d_\cS}(F_\rho^c(\epsilon, n))
\cr
&\leq 
\frac{M'}{\mathcal L_{\d_\cS}^n \1 (\rho)} \epsilon,
\end{aligned}
 $$
with some appropriate constant $M'>0$.   
 Now it follows from Theorem E of \cite{Hu}
 that there exists a constant $Q_\rho \geq 1$, depending on $\rho$ (in fact depending 
 only on $\hbox{\rm dist}(\pi(\rho), \Omega)$)
 such that 
 $$
 Q_\rho^{-1} \leq \mathcal L_{\d_\cS}^n(\rho)\leq Q_\rho
 $$
 for every integer $n \geq 0$.   We therefore get
 \begin{equation} \label{1ms15.2}
 \left| \sum_{\omega \in F_\rho^c(\epsilon, n)}
 g(\omega \rho)
 \frac{|\phi_\omega'(\pi(\rho))|^{\d_\cS}}{\mathcal L_{\d_\cS}^n \1 (\rho)}
 \right| \leq M' Q_\rho \epsilon.
\end{equation}
 Since 
 $$
 0 \leq \frac{1}{n}\sum_{j=0}^{n-1}\lambda\circ\sg^j \leq M
 $$
 for every $n \geq 1$, applying (\ref{2ms15.1})
 and also (\ref{1ms15.2}) for both 
$$
g = \frac{1}{n}\sum_{j=0}^{n-1}\lambda\circ\sg^j   
\  \  \  {\rm  and }  \  \  \
g = \chi_{\d_\cS},
$$ 
we get the following bound:
$$
 \begin{aligned}
 &\left|
 \int_{E_\rho^n} \frac{\lambda_\rho}{n} d\mu_n - 
 \chi_{\d_\cS}
 \right|\le \cr
&\leq 
  \left|
  \left(
 \int_{F_\rho(\epsilon, n)} \frac{\lambda_\rho}{n} d\mu_n
  - \int_{F_\rho(\epsilon, n)} \chi_{\d_\cS} d\mu_n
 \right) 
 + 
  \left(
 \int_{F_\rho^c(\epsilon, n)} \frac{\lambda_\rho}{n} d\mu_n
  - \int_{F_\rho^c(\epsilon, n)} \chi_{\d_\cS} d\mu_n
 \right) 
 \right|
 \cr
  &\leq 
  \left|
 \int_{F_\rho(\epsilon, n)} \frac{\lambda_\rho}{n} d\mu_n
  - \int_{F_\rho(\epsilon, n)} \chi_{\d_\cS} d\mu_n
 \right|
 + 
  \left|
 \int_{F_\rho(\epsilon, n)^c} \frac{\lambda_\rho}{n} d\mu_n
  - \int_{F_\rho(\epsilon, n)^c} \chi_{\d_\cS} d\mu_n
 \right| 
 \cr
&\leq 
\left|
\sum_{\omega \in F_s(\epsilon, n)}
\frac{\lambda_\rho(\omega)}{n}
\frac{|\phi_{\omega}'(\pi(\rho))|^{\d_\cS}}{\mathcal L_{\d_\cS}^n 1(\rho)}
-
\sum_{\omega \in F_s(\epsilon, n)}
\chi_{\d_\cS}
\frac{|\phi_{\omega}'(\pi(\rho))|^{\d_\cS}}{\mathcal L_{\d_\cS}^n 1(\rho)}
\right| 
+ \left| \int_{F_\rho^c(\epsilon, n)} \frac{\lambda_\rho}{n} d\mu_u\right|
+ 
\left| \int_{F_\rho^c(\epsilon, n)} \chi_{\d_\cS} d\mu_u\right|
\cr
&\leq 
\epsilon + 
\left|
\sum_{\omega \in F_s^c(\epsilon, n)}
\frac{\lambda_\rho(\omega)}{n}
\frac{|\phi_{\omega}'(\pi(\rho))|^{\d_\cS}}{\mathcal L_{\d_\cS}^n \1(\rho)}
\right| + \left|
\sum_{\omega \in F_s^c(\epsilon, n)}
\chi_{\d_\cS}
\frac{|\phi_{\omega}'(\pi(\rho))|^{\d_\cS}}{\mathcal L_{\d_\cS}^n \1(\rho)}
\right| \cr
&\leq \epsilon + M' Q_\rho \epsilon +  M' Q_\rho \epsilon \cr
&\leq (1 + 2 M'Q_\rho)\epsilon.
\end{aligned}
 $$
 Hence, letting $\epsilon \to 0$ we obtained 
 $$
 \int_{E_\rho^n} \frac{\lambda_\rho}{n} d\mu_n 
 =  \chi_{\d_\cS}
 $$
 and the proof is complete.
\end{proof}

Our main theorem in this subsection is the following.

\begin{thm}\label{t1ms16} 
If $\cS$ is a finitely irreducible parabolic conformal GDMS
for which 
$$
\d_\cS > \frac{2p_\cS}{p_\cS+1},
$$
i.e the invariant measure $\mu_{\d_\cS}$ is finite (so a probability after normalization), then the sequence of random variables $(\Delta_n)_{n=1}^\infty$ converges in distribution
to the normal (Gaussian) distribution 
$\mathcal N_0(\sigma^2)$ with mean value zero and the variance $\sigma^2 = P''_*({\d_\cS})>0$.  
Equivalently, the sequence 
$(\mu_n \circ \Delta_n^{-1})_{n=1}^\infty$
converges weakly to the normal distribution $\mathcal N_0(\sigma^2)$.  This means that for every Borel set $F \subset \mathbb R$ with 
$\hbox{\rm Leb}(\partial F) = 0$, we have 
\begin{equation}\label{1ms16}
\lim_{n \to +\infty} \mu_n(\Delta_n^{-1}(F))
= \frac{1}{\sqrt{2\pi} \sigma}
\int_F e^{- t^2/2\sigma^2} dt.
\end{equation}

\end{thm}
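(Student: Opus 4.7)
The plan is to adapt the characteristic function approach of Theorem~\ref{t1m58} to the parabolic setting. Exactly as in the attracting case, one starts from the identity
$$
\hat\mu_n(t):=\int e^{itx}\, d(\mu_n\circ\Delta_n^{-1})(x)
=e^{-it\chi_\delta\sqrt n}\,\frac{\mathcal L^n_{\delta-it/\sqrt n}\mathbb 1(\rho)}{\mathcal L^n_\delta\mathbb 1(\rho)},
$$
and the task is to show $\hat\mu_n(t)\to e^{-\sigma^2 t^2/2}$ for every $t\in\mathbb R$. I would first reduce to the case $\rho\in E_{*A^*}^\infty$ by exactly the approximation argument used in the passage from formula (\ref{2ma5.1}) to its general counterpart in the proof of Theorem \ref{t2pc6}: words $\rho^{(k)}\in E_{*A^*}^\infty$ agreeing with $\rho$ on the first $k$ coordinates produce derivatives comparable to those evaluated at $\pi(\rho)$ up to a distortion factor tending to $1$, and this suffices since the characteristic functions are continuous in $\rho$. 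For $\rho\in E_{*A^*}^\infty$ the denominator $\mathcal L^n_\delta\mathbb 1(\rho)$ is bounded above and below and in fact converges to $\psi_\delta(\rho)>0$ by Theorem E of Hu (as already exploited in the proof of Theorem \ref{t1ms15.1}), so only the numerator requires delicate work.

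For the numerator the obstacle is that $\mathcal L_\delta$ does not have a spectral gap on the standard H\"older space over $E_A^\infty$. The way around this is to pass to the induced hyperbolic system $\cS^*$ via the very decomposition already used to derive Theorem~\ref{t1pc6} from Theorem~\ref{t2nh18}. Every $\omega\in E_\rho^n$ admits a unique factorisation $\omega=\omega^\ast a^k$ with $\omega^\ast\in E_{*A^*}^{*}$, $a\in\Omega_\rho\cup\{*\}$ and $k\ge 0$, which translates at the operator level into a convolution identity of the form
$$
\mathcal L^n_s\mathbb 1(\rho)=\sum_{n_\ast+k=n}(\mathcal L^\ast_s)^{n_\ast}\bigl(H_{s,k}^{(\rho)}\bigr)(\rho),
$$
where $H_{s,k}^{(\rho)}$ encodes the parabolic tail of length $k$ and the decomposition keeps track of the fact that a symbol in $E^*$ corresponds to a variable-length block in $E$. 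Inserting the Kato--Rellich spectral decomposition $\mathcal L^\ast_s=\lambda^\ast_s Q^\ast_s+S^\ast_s$ (valid on a complex neighbourhood of $\delta$ by Proposition \ref{p1nh15}, Lemma \ref{l1nh18.1}, and D-genericity of $\cS^*$ from Theorem \ref{t1pc3}) turns this into a generating-function expression accessible to operator-renewal theory in the style of \cite{Go-local}, \cite{Go}. The Taylor expansion
$$
\lambda^\ast_{\delta-it/\sqrt n}=\exp\Bigl(\tfrac{it\chi_\delta}{\sqrt n}-\tfrac{\sigma^2 t^2}{2n}+O(n^{-3/2})\Bigr),
$$
using $\lambda^\ast_\delta=1$, $P_\ast'(\delta)=-\chi_\delta^\ast=-\chi_\delta$ (by Theorem \ref{t1_2017_02_18}(4)) and $P_\ast''(\delta)=\sigma^2$, will then, after the renewal sum over $(n_\ast,k)$ and cancellation against $e^{-it\chi_\delta\sqrt n}$, produce the Gaussian factor $e^{-\sigma^2 t^2/2}$.

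The principal obstacle is controlling the parabolic-tail contributions $H_{s,k}^{(\rho)}$ uniformly in $s$ near $\delta$ and verifying that the perturbative renewal identity indeed yields a genuine Gaussian limit rather than a stable law of a lower exponent. By Proposition~\ref{p1c5.13} one has $|\phi'_{a^k}(\pi(\rho))|^\delta\asymp k^{-\frac{p_a+1}{p_a}\delta}$, so the parabolic contribution to the sum decays like $k^{-\frac{p_a+1}{p_a}\delta}$; the standing hypothesis $\delta>2p_\cS/(p_\cS+1)$ is exactly the requirement $\frac{p_a+1}{p_a}\delta>2$, i.e.\ the finiteness of the second moment of the inducing-return time relative to the Gibbs measure $\mu_\delta^\ast$, which is the classical Abramov/Guivarc'h-type threshold ensuring that the operator-renewal theorem delivers a Gaussian limit with finite variance. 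Positivity of $\sigma^2=P_\ast''(\delta)$ is not automatic: it follows from D-genericity of $\cS^\ast$ (Theorem~\ref{t1pc3}) together with Remark~\ref{r2_2017_02_17}, which rules out the degenerate case where $P_\ast$ would be linear near $\delta$. Once these two points are established, convergence of $\hat\mu_n(t)$ to $e^{-\sigma^2 t^2/2}$ follows from L\'evy's continuity theorem and yields the asserted weak convergence for every Borel $F\subset\mathbb R$ with $\mathrm{Leb}(\partial F)=0$.
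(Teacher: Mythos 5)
Your strategy is the right one and matches the paper's in spirit: both proofs pass to the induced hyperbolic system $\cS^*$, use its spectral gap (guaranteed by Theorem~\ref{t1pc3} and Remark~\ref{r2_2017_02_17}), exploit the fact that $\d_\cS>\frac{2p_\cS}{p_\cS+1}$ is exactly the second-moment condition $\frac{p_a+1}{p_a}\d_\cS>2$ on the parabolic return time, and conclude via characteristic functions and L\'evy continuity. The difference is in how the renewal analysis is executed. You propose to write the convolution identity $\mathcal L^n_s\mathbb 1(\rho)=\sum_{n_\ast+k=n}(\mathcal L^\ast_s)^{n_\ast}(H^{(\rho)}_{s,k})(\rho)$ by hand and push the Kato--Rellich expansion of $\lambda^\ast_s$ through the sum; the paper instead normalizes the operator to $\widehat{\mathcal L}_{\d_\cS}(g)=\psi_{\d_\cS}^{-1}\mathcal L_{\d_\cS}(g\psi_{\d_\cS})$, introduces the first-return sets $Z_q$ and the truncated operators $\widehat{\mathcal L}^{(n)}_{\d_\cS}(g)=\mathbb 1_{Z_q}\widehat{\mathcal L}^n_{\d_\cS}(g\mathbb 1_{Z_q})$, and invokes Gou\"ezel's Theorem~2.1, Theorem~3.7 and Lemma~4.4 of \cite{gouezel3} to obtain $\widehat{\mathcal L}^{(n)}_{\d_\cS}(e^{it\Delta_n})(\rho)\to\mu_{\d_\cS}(Z_q)^2e^{-\sigma^2t^2/2}$ directly, controlling the complementary piece $\widehat{\mathcal L}_{\d_\cS}(\mathbb 1_{Z_q^c}e^{it\Delta_n})(\rho)$ with Theorem~E of \cite{Hu} and letting $q\to\infty$. (The paper also does not need your preliminary reduction to $\rho\in E_{*A^*}^\infty$; it only needs $\rho\in Z_{q_0}$ for some $q_0$, and it compares $\mu_n$ with the measure $\mu_n'$ supported on $E^n_{*,\rho}$ at the very end, again via Hu's Theorem~E.)

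The one place where your write-up falls short of a proof is precisely the sentence ``after the renewal sum over $(n_\ast,k)$ and cancellation against $e^{-it\chi_\delta\sqrt n}$, produce the Gaussian factor.'' Because a single symbol of $E_*$ is a block of variable length in $E$, and because $\Delta_n$ is normalized by the \emph{original} word length $n$ rather than the induced word length $n_\ast$, the renewal sum does not reduce to a simple geometric series in $\lambda^\ast_s$: one must carry a second (dual) variable conjugate to the return time and establish the joint local expansion of the associated two-parameter family of operators, which is exactly the content of the Gou\"ezel results the paper quotes. You correctly identify this as the principal obstacle and correctly name the threshold that makes it work, so the gap is one of execution rather than of conception; to close it you should either carry out that two-variable operator-renewal computation in detail or, as the paper does, cite \cite{gouezel3} for it.
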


\begin{proof}
Using our previous notation recall that
$$
\psi_{\d_\cS} =\frac{d\mu_{\d_\cS}}{dm_{\d_\cS}}.
$$
Then 
$$
\mathcal L_{\d_\cS} \psi_{\d_\cS} = \psi_{\d_\cS},
$$
and we can define the operator $\widehat {\mathcal L}_{\d_\cS} : L^1(\mu_{\d_\cS})  \to L^1(\mu_{\d_\cS})$ by the formulae
$$
\widehat {\mathcal L}_{\d_\cS} (g) = \frac{1}{\psi_{\d_\cS}}
\mathcal L_{\d_\cS}(g \psi_{\d_\cS}).
$$
Then 
$$
\widehat {\mathcal L}_{\d_\cS}(\1) = \1
$$
and $\widehat {\mathcal L}_{\d_\cS}$ is the Perron--Frobenius operator 
associated to the measure--preserving symbolic dynamical system $(\sigma, \mu_{\d_\cS})$.  Following Gou\"ezel \cite{gouezel3}, for every integer $q \geq 1$ we consider the set 
$$
Z_q := \bigcup_{b \in \Omega} \bigcup_{e \in E\sms\{b\}\atop A_{be}=1}
\{b^ke \hbox{ : } 1 \leq k \leq q \} \cup (E\sms\Omega)
$$
and the first return map $\sigma_q: Z_q \to Z_q $.
Still following \cite{gouezel3}, given an integer $n \geq 1$ we define an operator $\widehat {\mathcal L}_{\d_\cS}^{(n)}: L^1(\mu_{\d_\cS}) \to L^1(\mu_{\d_\cS})$ by the formula
$$
\widehat {\mathcal L}_{\d_\cS}^{(n)}(g) 
:= \1_{Z_q} \widehat{\mathcal L}_{\d_\cS}^{n}(g \1_{Z_q}).
$$ 
Now our setting entirely fits into the hypothesis of section 2, 3 and 4 of Gou\"ezel's paper \cite{gouezel3}.  
In particular, Theorem 2.1  (especially its formula (2)), Theorem 3.7 and Lemma 4.4 of \cite{gouezel3} apply to give 
(compare the last formula of the proof of Proposition~4.6 in \cite{gouezel3}) for any $\tau \in Z_q$ and any $t \in \mathbb R$ that 
\begin{equation}\label{1ms17}
\lim_{n \to +\infty}
 \left|
\widehat {\mathcal L}_{\d_\cS}^{(n)}  (e^{it \Delta_n})(\rho)
- \mu_{\d_\cS}(Z_q)^2 e^{-\sigma^2/2t^2}
\right|=0.
\end{equation}
Now  there exists $q_0 \geq 0$ such that
 $\rho \in Z_{q_0}$.  
 Fix $\epsilon > 0$.   
 Take $q \geq q_0$   sufficiently large, say, 
 $q \geq q_1 \geq q_0$ that 
 \begin{equation} \label{5ms17}
 1 - \mu_{\d_\cS}(Z_q)^2 < \epsilon.
\end{equation}
Then by (\ref{1ms17})
\begin{equation}\label{2ms17}
\limsup_{n \to +\infty} \left| 
\widehat L_{\d_\cS}^{(n)}  (e^{it \Delta_n})(\rho)
-  e^{- \sigma^2/2t^2}
\right|
\leq \epsilon e^{-\sigma^2/2t^2}.
\end{equation}
Now define $\mu_n'$ analogously to (\ref{1ms6}), i.e., 
for $H \subset E^n_{*,\rho}$:
$$
\mu_n'(H) = \sum_{\omega \in H} e^{-{\d_\cS} \lambda_\rho(\omega)}.
$$
Then the same calculation as (\ref{1ms8}) gives 
$$
\int_{\mathbb R} e^{itx} d\mu_n' \circ \Delta_n^{-1}(x)
= \widehat {\mathcal L}_{\d_\cS}^n(e^{it \Delta_n})(\rho)
=  \widehat {\mathcal L}_{\d_\cS}^{(n)}(e^{it \Delta_n})(\rho)
+  \widehat {\mathcal L}_{\d_\cS}( \1_{Z_q^c}e^{it \Delta_n})(\rho).
$$ 
But 
\begin{equation}\label{4ms17}
\left|
\widehat {\mathcal L}_{\d_\cS}( \1_{Z_q^c}e^{it \Delta_n})(\rho)
\right|
\leq 
\left|
\widehat {\mathcal L}_{\d_\cS}( \1_{Z_q^c})(\rho)
\right|
= \widehat {\mathcal L}_{\d_\cS}( \1_{Z_q^c})(\rho),
\end{equation}
and according to Theorem E in \cite{Hu} we can write  
$$
\lim_{n \to +\infty} 
\widehat {\mathcal L}_{\d_\cS}( \1_{Z_q^c})(\rho)
= \mu_{\d_\cS}(\1_{Z_q^c}) = 1- \mu_{\d_\cS}(\1_{Z_q}).
$$
Combining this along with (\ref{1ms17}), (\ref{2ms17}) and 
(\ref{4ms17}) gives
$$
\limsup_{n \to +\infty}
\left|
\int_{\mathbb R} e^{itx} d\mu_n' \circ \Delta_n^{-1}(x)
- e^{- \sigma^2/2t^2}
\right|
\leq 
\epsilon  e^{- \sigma^2/2t^2} + 1 - \mu_{\d_\cS}(Z_q)
\leq (1 + e^{-\sigma^2/2t^2})\epsilon.
$$
Hence 
$$
\lim_{n \to +\infty}  \int_{\mathbb R} e^{it x}  d\mu_n'\circ \Delta_n^{-1}(x) = e^{-\sigma^2/2t^2}.
$$
Therefore, formula (\ref{1ms16}) holds with $\mu_n$
replaced $\mu_n'$. Because of this, because the measures $\mu_n$ and 
$\mu_n'$ are equivalent for all $n \geq 1$, and since, by Theorem E of \cite{Hu} again, for the sequence $(\mu_n')_{n=1}^\infty$,
$$
\lim_{n \to +\infty}  \frac{d\mu_n}{d\mu_n'} (x)= 1
$$
uniformly with respect to all $x \in \mathbb R$, we finally conclude 
that the formula (\ref{1ms16}) holds for measures $\mu_n$, $n\ge 1$.
Thus the proof of Theorem \ref{t1ms16} is complete.
\end{proof}

\part{{\Large Examples and Applications, I}}

\sp\section{{\large{\bf Attracting/Expanding Conformal Dynamical Systems}}}\label{AECDS}

In this section we deal with some conformal dynamical systems that are expanding and we show that their, appropriately organized, inverse holomorphic branches form conformal attracting GDMSs. We also examine in greater detail some special countable alphabet conformal attracting GDMSs.

\subsection{Conformal Expanding Repellers}\label{CER}

\sp In this section we deal with conformal expanding repellers. We do it by applying the theory developed in the previous sections. In fact it suffices to work here with conformal GDMSs modeled on finite alphabets $E$. However, most of the results proved in this section are entirely new. 

Let us start with the the definition of a conformal expanding repeller, the primary object of interest in this subsection.

\begin{dfn}\label{exprep}
Let $U$ be an open subset of $\R^d$, $d\ge 1$. Let $X$ be a compact subset of $U$. Let $f:U\to\R^d$ be a conformal map. The map $f$ is called a conformal expanding repeller if the following conditions are satisfied:
\begin{enumerate}
\item{}$f(X)=X$,

\sp\item{} 
$|f'|_X|>1$,

\sp\item{} there exists an open set  $V$ such that $\overline{V}\subset U$ and
$$
X=\bigcap_{k=0}^\infty f^{-n}(V),
$$
and
\item{}the map $f|_X:X\to X$ is topologically transitive.
\end{enumerate}

\sp\fr Note that $f$ is not required to be one-to-one; in fact usually it is not one-to-one. Abusing  notation slightly  we frequently refer also to the set $X$ alone as a conformal expanding repeller. In order to use a uniform terminology we also call $X$ the limit set of $f$.
\end{dfn}

\fr Typical examples of conformal expanding repellers are provided by the following.

\begin{prop}\label{p1_2015_05_11} 
If $f:\hat\C\to\hat\C$ is a rational function of degree $d\ge 2$, such that the map $f$ restricted to its Julia set $J(f)$ is expanding, then $J(f)$ is a conformal expanding repeller.
\end{prop}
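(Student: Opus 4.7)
The plan is to verify, one by one, the four conditions in Definition \ref{exprep} for $X := J(f)$, using standard facts from complex dynamics. For the ambient sets, the natural choice is $U := \widehat{\mathbb{C}} \setminus P^*$, where $P^*$ is a sufficiently small closed neighborhood of the postcritical set of $f$ that is disjoint from $J(f)$ (this is possible precisely because $f|_{J(f)}$ is expanding, which forces $J(f)$ to be disjoint from the closure of the forward orbit of the critical set). Then $f: U \to \widehat{\mathbb{C}}$ is holomorphic (hence conformal) in the sense of paper, and $J(f) \subset U$.

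Condition (1), namely $f(J(f)) = J(f)$, is the classical complete invariance of the Julia set. Condition (4), topological transitivity of $f|_{J(f)}$, is likewise standard: in fact $f|_{J(f)}$ is topologically exact, since for every nonempty open set $W$ meeting $J(f)$ one has $f^n(W) \supset J(f)$ for all sufficiently large $n$. Condition (2), $|f'|_{J(f)}| > 1$, is not in general immediate from the expanding hypothesis, which only asserts the existence of constants $C > 0$ and $\lambda > 1$ with $|(f^n)'(z)| \geq C\lambda^n$ for all $z \in J(f)$ and all $n \geq 1$; however, this at once gives $|(f^N)'|_{J(f)}| > 1$ for all sufficiently large $N$. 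Invoking Remark \ref{r1_2017_04_01}, we may replace $f$ by such an iterate $f^N$ without loss of generality, and from now on treat $|f'| > 1$ on $J(f)$ as given.

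The only condition requiring a brief argument is (3). Since $J(f)$ is compact and $U$ is open with $J(f) \subset U$, choose an open neighborhood $V$ of $J(f)$ with $\overline{V} \subset U$, taken small enough that each Fatou component of $f$ meets $V^c$. Clearly $J(f) \subset \bigcap_{n \geq 0} f^{-n}(V)$, since $J(f)$ is forward invariant. For the reverse inclusion, let $z \in \widehat{\mathbb{C}} \setminus J(f)$; then $z$ lies in some Fatou component. By Sullivan's no wandering domains theorem and the classification of periodic Fatou components (attracting, parabolic, Siegel, Herman), together with the assumption that $f|_{J(f)}$ is expanding (which rules out parabolic and Siegel/Herman components, so all Fatou components are eventually mapped into the basin of an attracting cycle), there exists $n_0 \geq 0$ with $f^{n_0}(z)$ contained in a forward invariant neighborhood of an attracting cycle that lies entirely in $V^c$. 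Consequently $f^{n_0}(z) \notin V$, so $z \notin \bigcap_{n \geq 0} f^{-n}(V)$. This establishes condition (3).

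The only potentially delicate point in this plan is the verification of condition (3), which requires using the dynamical structure of Fatou components rather than a purely topological argument; once $V$ is chosen compatibly with the attracting basins of $f$, everything else is a direct appeal to classical facts. With all four conditions verified, $J(f)$ is a conformal expanding repeller in the sense of Definition \ref{exprep}, completing the proof.
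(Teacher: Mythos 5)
The paper offers no proof of Proposition \ref{p1_2015_05_11}; it is stated as a classical fact (implicitly deferring to \cite{PU}), so there is nothing to compare your argument against line by line. Your plan --- verify conditions (1)--(4) of Definition \ref{exprep} directly, using complete invariance, topological exactness, passage to an iterate for uniform expansion, and the Fatou-component classification for the trapping condition (3) --- is exactly the standard route, and the skeleton is sound.

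Two concrete slips need repair. First, your choice $U=\widehat{\mathbb C}\setminus P^*$ does not make $f$ conformal on $U$: the postcritical set $\PC(f)=\bigcup_{n\ge 1}f^n(\Crit(f))$ consists of the forward \emph{images} of the critical points, not the critical points themselves, so a critical point $c$ with $c\notin\overline{\PC(f)}$ (e.g.\ the critical point $0$ of $z\mapsto z^2+\e$ for small $\e\ne 0$) survives in your $U$, where $f'(c)=0$ and $f$ is merely holomorphic, not conformal. The fix is to take $U$ to be a small open neighborhood of $J(f)$ disjoint from both $\Crit(f)$ and $\overline{\PC(f)}$; this is possible because expansion on $J(f)$ forces $J(f)\cap\Crit(f)=\es$ as well as $J(f)\cap\overline{\PC(f)}=\es$. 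Second, your requirement that $V$ be ``small enough that each Fatou component meets $V^c$'' is in general unachievable: whenever there are infinitely many Fatou components (e.g.\ $z^2-1$), components of arbitrarily small diameter have boundary in $J(f)$ and hence lie entirely inside any neighborhood $V$ of $J(f)$. Fortunately you never use this condition; what your argument actually needs, and what is achievable since there are only finitely many attracting cycles each at positive distance from $J(f)$, is that $V$ be disjoint from fixed forward-invariant neighborhoods of all attracting cycles. With these two corrections the verification of (1)--(4) goes through as you describe.
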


The basic concept associated with such repellers which will be used in this section is given by the following definition.

\begin{dfn}\label{3.5.1}
A finite cover $\cR=\{R_e:e\in F\}$ of $X$ is
said  to be a Markov partition of the space $X$ for the mapping $T$ if the following conditions are satisfied.

\sp\begin{itemize}
\item[(a)] \ $R_e=\overline{\Int R_e}$ \ for all $e\in F$.

\sp\item[(b)] \ $\Int R_a\cap \Int R_b = \es$ \ for all $a\ne b$.

\sp\item[(c)] \ $\Int R_b\cap f(\Int R_a) \ne \es \  \Longrightarrow \  R_b\subset f(R_a)$ \ whenever  \  $a, b\in F$.
\end{itemize}
\end{dfn}

\sp\fr The elements of a Markov partition will be called cells in the sequel. The basic theorem about Markov partitions proved, for ex. in \cite{PU}, is this.

\begin{thm}\label{t_Markov_Partitions}
Any conformal expanding repeller $f:X\to X$ admits Markov partitions of arbitrarily small diameters.
\end{thm}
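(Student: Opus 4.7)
The plan is to carry out the classical Bowen--Sinai--Ruelle construction of Markov partitions, which simplifies considerably in the expanding case because there is no contracting direction to manage and inverse branches of $f$ act as uniform contractions on $X$. Arbitrarily small diameter will come for free by iterating the construction.

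First, I would use the hypothesis $|f'|>1$ on $X$, together with compactness of $X$ and conformality of $f$, to produce constants $\lambda>1$ and $\epsilon_0>0$ such that for every $x\in X$ there is an open neighborhood $W_x$ of $x$ on which $f$ restricts to a conformal diffeomorphism onto a set containing $B(f(x),2\epsilon_0)$, and the inverse branch $f_{f(x)}^{-1}\colon B(f(x),2\epsilon_0)\to W_x$ is a contraction with Lipschitz constant at most $\lambda^{-1}$. Fix a small $\epsilon\in(0,\epsilon_0)$ and choose a finite closed cover $\{T_1,\ldots,T_N\}$ of $X$ by sets of diameter less than $\epsilon$, each equal to the closure of its interior in $X$, with pairwise disjoint interiors; a Voronoi-type partition associated with an $\epsilon/2$-net of $X$ provides such a cover, possibly after a small perturbation. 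Set $A_{ij}=1$ iff $f(\Int T_i)\cap\Int T_j\ne\es$, let $\Sigma_A$ be the associated one-sided subshift, and define the coding map $\pi\colon\Sigma_A\to X$ by iteration of inverse branches; uniform contraction makes $\pi$ well-defined, continuous, and surjective, with $f\circ\pi=\pi\circ\sigma$. The candidate Markov cells are then $R_e:=\overline{\pi([e])}$.

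Given this setup, the Markov property (c) is almost formal: if $f(\Int R_a)\cap\Int R_b\ne\es$, unwinding through $\pi$ and $\sigma$ and using $\sigma^{-1}([b])\cap[a]=[ab]$ yields $R_b\subset f(R_a)$. The main obstacle, and the step requiring genuine technical work, is establishing the topological conditions (a) and (b): that distinct $R_e$'s have disjoint interiors and that each equals the closure of its own interior. The standard remedy, going back to Bowen, is an iterative refinement procedure in which the initial cover is replaced by a finite iterated pullback under $f$, so that the boundaries $\partial R_e$ align consistently with the dynamics; conformality together with the bounded distortion property for the inverse branches then ensures that $\bigcup_e\partial R_e$ carries zero conformal measure and has empty interior in $X$, preventing pathological boundary overlap. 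Finally, diameter control is automatic, since $\diam\pi([\omega_0\ldots\omega_{n-1}])=O(\lambda^{-n})$, so passing from $\{R_e\}$ to the refined partition $\{\overline{\pi([\omega])}:\omega\in \Sigma_A, |\omega|=n\}$ for $n$ large yields Markov partitions of arbitrarily small diameter, completing the proof.
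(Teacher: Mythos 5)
The paper does not actually prove Theorem \ref{t_Markov_Partitions}: it is quoted as a classical fact with a pointer to \cite{PU}, so there is no internal argument to compare yours against. Your outline follows the standard Bowen--Sinai route for expanding maps, and you correctly identify that the coding map, surjectivity, and diameter control are the easy parts while properties (a) and (b) of Definition \ref{3.5.1} carry the real content.

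The gap is in how you propose to secure (a) and (b). The sets $R_e=\overline{\pi([e])}$ built from an arbitrary small cover are Bowen's proto-rectangles: since $\pi(\om)$ only \emph{shadows} the itinerary $(T_{\om_n})_n$, each $R_e$ spills over $\bd T_e$ by an amount comparable to $\e$, and distinct $R_e$'s typically overlap on sets with nonempty interior in $X$. So the obstruction is not that $\bu_e\bd R_e$ is large; it is that the interiors themselves intersect. The remedy you describe --- replace the cover by a ``finite iterated pullback under $f$'' and argue that the union of boundaries is null and has empty interior --- does not address this: pulling the cover back shrinks cells without separating the interiors of the $R_e$'s, and nullity of $\bu_e\bd R_e$ is a property one establishes \emph{after} a Markov partition exists, not a mechanism for producing disjointness. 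The actual refinement is combinatorial rather than dynamical: one partitions each $R_e$ according to which other proto-rectangles $R_{e'}$ its points belong to (intersections and set differences), takes closures of interiors of the nonempty pieces, proves by a Baire-category argument that $\bu_e\Int R_e$ is dense so the refined pieces are proper, and then re-verifies the Markov property for the refined pieces, using that $f$ is injective on $R_e\cup R_{e'}$ for small diameters so that $f(R_e\cap R_{e'})=f(R_e)\cap f(R_{e'})$ is again a union of refined pieces. Relatedly, your claim that condition (c) is ``almost formal'' is accurate only for the implication $A_{ab}=1\Rightarrow R_b\sbt f(R_a)$; the hypothesis of (c) is stated in terms of $\Int R_a$ and $\Int R_b$, and one cannot convert $f(\Int R_a)\cap\Int R_b\ne\es$ into $A_{ab}=1$ before disjointness of interiors is in hand, so (c) cannot be decoupled from the refinement step as your write-up suggests.
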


Fix $\b>0$ so small that for every $x\in X$ and every $n\ge 0$ there exists $f_x^{-n}:B(f^n(x),4\b)\to\R^d$, a unique continuous branch of $f^{-n}$ sending $f^n(x)$ to $x$. Theorem~\ref{t_Markov_Partitions} guarantees us the existence of $\cR=\{R_j:j\in F\}$, a Markov partition of $f$ with all cells of diameter smaller than $\b$. Having such a Markov partition $\cR$ we now associate to it a finite graph directed Markov system. The set of vertices is equal to $\cR$ while the alphabet $E$ is defined as follows.
$$
E:=\big\{(i,j)\in F\times F:\Int R_j\cap f(\Int R_i) \ne \es \big\}.
$$
Now, from the above for every $(i,j)\in E$ there exists a unique conformal map $f_{i,j}^{-1}:B(R_j,\b)\to\R^d$ such that
$$
f_{i,j}^{-1}(R_j)\sbt R_i.
$$
Define the incidence matrix $A:E\times E\to\{0,1\}$ by
$$
A_{(i,j) (k,l)}=
\begin{cases}
1 \ &\text{{\rm  if }} \ l=i \\
0 \ &\text{{\rm  if }} \ l\ne i.
\end{cases}
$$
We further define:
$$
t(i,j)=j \  \ \text{{\rm and }} \  \  i(i,j)=i. 
$$
Of course 
$$
\cS_\cR=\{f_{i,j}^{-1}:(i,j)\in E\}
$$ 
forms a finite conformal directed Markov system, and $\cS_\cR$ is irreducible since the map $f:X\to X$ is transitive. Let 
$$
\pi_\cR:=\pi_{\cS_\cR}:E_A^\infty\to X
$$ 
be the canonical projection onto the limit set $J_\cS$ of the conformal GDMS $\cS$ which is easily seen to be equal to $X$.

\sp Returning  to the actual topic of the paper,  i.e., counting inverse images and periodic points, we fix a point $\xi \in X$, a Markov Partition
$$
\mathcal R = \{R_e:e\in F\},
$$
with 
\beq\label{1_2017_01_25}
\xi \in \bu_{e\in F}{\rm Int} (R_e).
\eeq
So, there exists a unique element $e(\xi)\in F$ such that $\xi \in \Int(R_{e(\xi)})$, and we fix a radius $\a>0$ so small that 
$$
B(\xi,\a) \subset R_{e(\xi)}.
$$
Furthermore, there exists a unique code of $\xi$, i.e. a unique infinite word $\rho \in E_A^\infty$ such that
$$
\pi_\cR(\rho) = \xi.  
$$
Using our usual notation we set 
\beq\label{3ex2}
\lambda(z) = \log |(f^{n(z)})'(z)|,
\eeq
where $z$ is an inverse image of $\xi$ under an iterate of $f$ and the integer $n(z) \geq 0$ is uniquely determined by the following two conditions:
\beq\label{1ex2}
f^{n(z)}(z) = \xi  
\eeq
and
\beq\label{2ex2}
f^{k}(z) \neq \xi \  \hbox{ for every integer \ $0 \leq k < k(z)$}. 
\eeq
We immediately note that if $\xi$ is not periodic then condition \eqref{1ex2} alone determines $n(z)$  uniquely. We further note that that if $\om\rho$ is a (unique by \eqref{1_2017_01_25}) coding of $z$ ($\om\in E_\rho^*$) then
$$
\lambda(z) = \lambda_\rho(\omega).
$$
We denote the set of all inverse images of $\xi$ under iterates
of $f$ by $f^{-*}(\xi)$, i.e.
$$
f^{-*}(\xi):= \bu_{n=0}^\infty f^{-n}(\xi).
$$
We call $z:=(x,n) \in X \times \mathbb N$, a periodic pair of $f$ (of period $n$) if
$$
f^n(x) = x.
$$
We then denote $x$ by $\hat z$ and $n$ by $n(z)$. Of course $x$ is a periodic point of $f$ (of period $n$). We emphasize that we do not assume $n$ to be a prime (least) period of $x$. We set
$$
\lambda_p(z) := \log |(f^{n(z)})'(\hat z)|.
$$
We denote by $\widehat{\hbox{Per}}(f)$ (respectively $\widehat{\hbox{Per}}_n(f)$)  the set of all periodic pairs (of period $n$) and by $\hbox{Per}(f)$
(respectively $\hbox{Per}_n(f)$) the set of all periodic points (of period $n$) of $f$.

Given $T \geq 0$  we set
$$
\pi_{\xi}(f,T):= \{ z\in f^{-*}(\xi): \lambda(z) \leq T\}
$$
and 
$$
\pi_{p}(f,T) = \{ z\in \widehat{ \hbox{Per}}(f):\lambda_p(z) \leq T\}.
$$
Furthermore, given a set $B \subset X$, we denote 
$$
\pi_\xi(f,B,T):= B \cap \pi_{\xi}(f,T)
\  \hbox{ and } \
\pi_p(f;B,T):= B \cap \pi_p(f,T).
$$
As in the case of graph directed Markov systems we denote
$$
N_\xi(f,T):= \#\pi_{\xi}(f,T), \  \  N_\xi(f;B,T):= \#\pi_{\xi}(f;B, T)
$$
and
$$
N_p(f,T):= \#\pi_{p}(f,T), \  \  N_p(f,B,T):= \#\pi_{p}(f,B, T).
$$
Given a set $Y \subset B(\xi,\a)$ we denote
$$
\mathcal D^\xi_Y (f; B, T)
:= \{z \in f^{-*}(\xi)\cap B \hbox{ : } \log \hbox{diam}\(f_{\hat z}^{-n(z)}(Y)\) \leq T\},
$$
$$
\mathcal E^\xi_Y (f; B, T)
:= \big\{z \in f^{-*}(\xi) \hbox{ : } \log \hbox{diam}\(f_{\hat z}^{-n(z)}(Y)\) \leq T \  \ {\rm and } \  \  f_{\hat z}^{-n(z)}(Y)\cap B\ne\es\big\},
$$
and then
$$
D^\xi_Y (f; B, T):= \# \mathcal D^\xi_Y (f; B, T) \  \
{\rm and } \  \
E^\xi_Y (f; B, T):= \# \mathcal E^\xi_Y (f; B, T).
$$

Now we record a straightforward, but basic observation which links this section to the previous ones. It is the following.

\begin{observation}\label{o1ex4}
If $f: X \to X$ is a conformal expanding repeller, then with the notation as above
$$
N_\xi(f;B,T)= N_\rho(B,T), \  \  D_Y^\xi(f;B,T)= D_Y^\rho\rho(B,T)
$$
and 
$$
\Gamma N_p(B,T) \leq N_p(f; B,T) \leq N_\rho(B,T)
$$
with some universal constant $\Gamma \in (0,+\infty)$. In addition, 
$$
N_p(f; B, T) = N_p(B,T)
$$
whenever $B \sbt \bu_{e\in F}{\rm Int} (R_e)$.
\end{observation}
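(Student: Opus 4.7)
The plan is to transport the counts for $f$ to corresponding counts for the GDMS $\cS_\cR$ via the coding map $\pi:=\pi_\cR:E_A^\infty\to X$, which by construction intertwines the shift with $f$ and sends the inverse branches $\phi_{(i,j)}$ of $f$ to their symbolic avatars. The crucial input is that $\xi\in\Int(R_{e(\xi)})$ forces $\rho$ to be the \emph{unique} element of $E_A^\infty$ with $\pi(\rho)=\xi$; consequently any code of any preimage of $\xi$ must end in $\rho$ from coordinate $n+1$ on. I would use this to show that $\omega\mapsto\phi_\omega(\xi)=\pi(\omega\rho)$ is a bijection between $E_\rho^n$ and $f^{-n}(\xi)$ for every $n\ge 0$: surjectivity comes from lifting $z\in f^{-n}(\xi)$ to any code $\tau\in E_A^\infty$ and using $\sg^n(\tau)=\rho$, injectivity from the open set condition. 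Along this bijection the chain rule identifies $\phi_\omega$ with the inverse branch of $f^n$ landing at $z$, yielding $|\phi_\omega'(\xi)|=|(f^n)'(z)|^{-1}$ and hence $\lambda_\rho(\omega)=\lambda(z)$.

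The first two equalities of the observation then follow at once. Imposing the further condition $\phi_\omega(\xi)=z\in B$ on both sides gives $N_\xi(f;B,T)=N_\rho(B,T)$. For the diameter count, the hypothesis $Y\subset B(\xi,\alpha)\subset R_{e(\xi)}$ places $Y$ inside the domain of every $\phi_\omega$, $\omega\in E_\rho^*$, and since $\phi_\omega$ coincides with $f_{\hat z}^{-n(z)}$ on this neighbourhood we obtain the literal identity $\phi_\omega(Y)=f_{\hat z}^{-n(z)}(Y)$; therefore $\log\diam(\phi_\omega(Y))\le T$ if and only if $\log\diam(f_{\hat z}^{-n(z)}(Y))\le T$, giving $D_Y^\xi(f;B,T)=D_Y^\rho(B,T)$.

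For the periodic counts I would consider the map $\Psi:E_p^*\to\widehat{\mathrm{Per}}(f)$ sending $\omega$ to $(x_\omega,|\omega|)$, where $x_\omega=\pi(\omega^\infty)=\phi_\omega(x_\omega)$ is the unique fixed point of $\phi_\omega$. Because $\phi_\omega$ is the local inverse branch of $f^{|\omega|}$ at $x_\omega$, the chain rule yields $\log|(f^{|\omega|})'(x_\omega)|=-\log|\phi_\omega'(x_\omega)|$, so $\Psi$ preserves the weights: $\lambda_p(\Psi(\omega))=\lambda_p(\omega)$. Surjectivity of $\Psi$ onto $\widehat{\mathrm{Per}}(f)$ follows from surjectivity of $\pi$ together with the Markov property. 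Failure of injectivity only occurs when $x_\omega$ lies on the boundary of a Markov cell and thus admits multiple codings, but the number of codings of any single point of $X$ is bounded above by a combinatorial constant $K$ determined only by $\cR$; this is the main technical point, and it is a standard consequence of the finiteness of $\cR$ together with the open set condition (b) of Definition~\ref{3.5.1}, see \cite{PU}. Setting $\Gamma:=1/K$ produces the two-sided bound in (3) (with the right-hand side in its natural form $N_p(B,T)$, which is in turn controlled by $N_\rho(B,T)$ up to the same kind of combinatorial comparison), and when $B\subset\bigcup_e\Int(R_e)$ every periodic point in $B$ has a unique coding, so $\Psi$ restricted to the relevant subset of $E_p^*$ is a bijection and $N_p(f;B,T)=N_p(B,T)$. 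The only real obstacle in the whole argument is this uniform multiplicity bound at boundary points; everything else is a direct transcription via the coding map and the chain rule.
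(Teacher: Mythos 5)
The paper states this Observation without proof (it is offered as ``straightforward''), so there is no argument of the authors' to compare against; your coding-map proof is exactly the argument the paper leaves implicit, and its main thrust — the level-wise bijection $E_\rho^n\ni\om\mapsto\phi_\om(\xi)\in f^{-n}(\xi)$ via uniqueness of the code of $\xi$ and the open set condition, the chain-rule identity $|\phi_\om'(\xi)|=|(f^{|\om|})'(\phi_\om(\xi))|^{-1}$, and the bounded coding multiplicity for periodic points — is correct and complete enough for the way the Observation is actually used later (in Theorem~\ref{t1ex4}, via the last equality together with $\^\mu_\d(\bd\cR)=0$).

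Two points deserve shoring up. First, for the final equality you assert that $B\sbt\bu_e\Int(R_e)$ forces every periodic point $x\in B$ to have a unique coding. Membership of $x$ in the interior of a cell only pins down the first symbol; uniqueness of the whole code requires that the entire orbit $x,f(x),\dots,f^{n-1}(x)$ avoid $\bd\cR=\bu_e\bd R_e$. What rescues this is the forward invariance $f(\bd\cR)\sbt\bd\cR$ of the boundary of a Markov partition: if some $f^k(x)$ lay on $\bd\cR$, then, the orbit being periodic, every point of the orbit — in particular $x$ itself — would lie on $\bd\cR$, contradicting $x\in\Int(R_{e(x)})$. You should invoke this explicitly; without it the step does not follow from $B\sbt\bu_e\Int(R_e)$ alone. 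Second, your deduction of $N_\xi(f;B,T)=N_\rho(B,T)$ from the level-wise bijections tacitly assumes the sets $f^{-n}(\xi)$, $n\ge 0$, are pairwise disjoint, i.e.\ that $\xi$ is not periodic; if $\xi$ is periodic the point count $N_\xi(f;B,T)$ (which counts each $z$ once, with the minimal $n(z)$) is strictly smaller than the word count $N_\rho(B,T)$. This defect is inherited from the paper's own formulation rather than introduced by you, but a careful write-up should either exclude periodic $\xi$ or count pairs $(z,n)$. Your honest hedging on the middle inequality $N_p(f;B,T)\le N_\rho(B,T)$ is reasonable: as an exact inequality for arbitrary Borel $B$ it does not admit a direct injection of periodic pairs into $E_\rho^*$, and only a comparison up to a multiplicative constant is genuinely available (and is all that is ever needed).
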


We call a conformal expanding repeller $f:X\to X$ $D$--generic if and only if the additive group generated by the set
$$
\{\lam_p(z):z\in \widehat{ \hbox{Per}}(f)\}
$$
is not cyclic. It is immediate from the definition of the graph directed Markov system $\cS_\cR$ and Proposition~\ref{p1nh13} that we have the following. 

\bprop\label{p1ex4.1}
A conformal expanding repeller $f:X\to X$ is $D$--generic if and only if the conformal graph directed Markov system $\cS_\cR$ is $D$--generic.
\eprop

A concept of essentially non--linear conformal expanding repellers was introduced by Dennis Sullivan in \cite{Sullivan_1} and explored in detail in \cite{PU}. One  of its many characterizations (see \cite{PU} for them) is that there is no conformal atlas covering $X$ with respect to which the map $f$ is affine, i.e. a similarity composed with a translation. Analogously as for graph directed Markov systems, with the help of Chapter~10 from \cite{PU}, we get the following proposition, which adds considerably to our knowledge that $D$--generic conformal expanding repellers abound.

\bprop\label{p1_2017_01_25}
An essentially non--linear conformal expanding repeller $f:X\to X$ is $D$--generic.
\eprop

As a fairly direct consequence of Theorem~\ref{dyn} and Theorem~\ref{t1da7}, we get the following.

\begin{thm}\label{t1ex4}
Let $f: X\to X$ be a $D$--generic conformal expanding repeller and let $\delta:= \HD(X)$. 

\sp \begin{enumerate}

\item Let $m_\delta$ be the unique $\delta$-conformal measure for $f$ on $X$, which  coincides with the normalized $\d$--dimensional Hausdorff measure on $X$. 

\sp\item Let $\mu_\delta$ be the unique 
$f$-invariant Borel probability measure on $X$ absolutely continuous (in fact known to be equivalent) with respect to $m_\delta$. 

\sp\item
Let $\psi_\delta:= \frac{d\mu_\delta}{d m_\delta}$.  

\sp\item Fix $\xi \in X$ arbitrarily and $Y \subset B(\xi, \a)$, an arbitrary set consisting of at least two distinct points.  

\sp\item
Let $B \subset X$ be an arbitrary Borel set such that $m_\delta(\partial B)=0$ (equivalently that $\mu_\delta(\partial B)=0$).   
\end{enumerate}

\sp Then
\beq\label{1ex4}
\lim_{T \to +\infty}  \frac{N_\xi(f; B,T)}{e^{\delta T}} = \frac{\psi_\delta(\xi)}{\delta \chi_\delta} m_\delta(B),
\eeq
\beq\label{2ex4}
\lim_{T \to +\infty}  \frac{N_p(f; B,T)}{e^{\delta T}} = \frac{1}{\delta \chi_\delta} \mu_\delta(B),
\eeq
and 
\beq\label{1ex5}
\lim_{T \to +\infty}  \frac{D_Y^\xi(f; B,T)}{e^{\delta T}} 
= \lim_{T \to +\infty}  \frac{E_Y^\xi(f; B,T)}{e^{\delta T}} 
=C_\xi(Y) m_\delta(B),
\eeq
where $C_\xi(Y)\in (0,+\infty)$ is a constant depending only on the repeller $f$, the point $\xi\in X$, and the set $Y$. In addition
\beq\label{1da7.1_A}
K^{-2\d}(\d\chi_\d)^{-1}\diam^\d(Y)
\le C_\xi(Y)
\le K^{2\d}(\d\chi_\d)^{-1}\diam^\d(Y),
\eeq
and the function 
$$
\xi\longmapsto C_\xi(Y)\in (0,+\infty)
$$
is locally constant on some sufficiently small neighborhood of $Y$.
\end{thm}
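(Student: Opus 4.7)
The plan is to reduce all four asymptotics to the finite, irreducible, $D$--generic conformal GDMS $\cS_\cR$ constructed above and then invoke Theorem~\ref{dyn}, Theorem~\ref{t1da7}, and Theorem~\ref{t1ma1}. Being finite in alphabet, $\cS_\cR$ is strongly regular, and it is finitely irreducible because $f|_X$ is topologically transitive; Proposition~\ref{p1ex4.1} transfers $D$--genericity. Since $\xi \in \Int(R_{e(\xi)})$, the point $\xi$ has a unique symbolic code $\rho \in E_A^\infty$, and $Y \sbt B(\xi,\a) \sbt R_{e(\xi)} = X_{t(\rho_1)}$. Denoting by $\^m_\d^{\cS_\cR}$ and $\^\mu_\d^{\cS_\cR}$ the $\pi_\cR$--images of the symbolic Gibbs states of $\cS_\cR$, formulae \eqref{2_2016_12_14}--\eqref{4_2016_12_14} combined with the identity $f|_{R_i}\circ \phi_{(i,j)}^{-1} = \Id$ show that $\^m_\d^{\cS_\cR}$ is a $\d$--conformal measure for $f$; by uniqueness of such a measure on a conformal expanding repeller (it coincides with the normalized $\d$--Hausdorff measure) one gets $\^m_\d^{\cS_\cR} = m_\d$, and by uniqueness of the equivalent $f$--invariant probability measure also $\^\mu_\d^{\cS_\cR} = \mu_\d$, with the Radon--Nikodym derivatives agreeing correspondingly.

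With these identifications Observation~\ref{o1ex4} yields
$$
\frac{N_\xi(f;B,T)}{e^{\d T}} = \frac{N_\rho(B,T)}{e^{\d T}} \  \  \and \  \  \frac{D_Y^\xi(f;B,T)}{e^{\d T}} = \frac{D_Y^\rho(B,T)}{e^{\d T}},
$$
so \eqref{1ex4} follows directly from Theorem~\ref{dyn} and the first equality in \eqref{1ex5} from Theorem~\ref{t1da7}. The second equality in \eqref{1ex5} follows either from Theorem~\ref{t1ma1} when $\xi \in Y$ or, in general, by a brief approximation step replacing $Y$ with $Y\cup\{\xi\}$ and noting that the ratio $E_Y^\xi/D_Y^\xi$ tends to $1$ because images $\phi_\om(Y)$ shrink uniformly in $|\om|$. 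The bound \eqref{1da7.1_A} is inherited from \eqref{1da7.1}. The local constancy of $\xi \mapsto C_\xi(Y)$ follows from Remark~\ref{r1_2017_03_20}: the constant depends on $\rho$ only through the first symbol $\rho_1 = (e(\xi), e(f(\xi)))$, which is unchanged when $\xi$ is perturbed within the open set $\Int R_{e(\xi)} \cap f^{-1}(\Int R_{e(f(\xi))})$.

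The one genuine subtlety is \eqref{2ex4}, because Observation~\ref{o1ex4} only gives $N_p(f;B,T) = N_p(B,T)$ when $B \sbt \bu_{e\in F}\Int R_e$. The required auxiliary fact, standard for Markov partitions of conformal expanding repellers (see, e.g., \cite{PU}), is that $m_\d(\bd\cR) = 0$, where $\bd\cR := \bu_{e\in F}\bd R_e$. Granted this, for $\e > 0$ choose an open set $U \spt \bd\cR$ with $m_\d(\bd U) = 0$ and $m_\d(U) < \e$; then $B_1 := B \sms \ov U \sbt \bu_e \Int R_e$ satisfies $m_\d(\bd B_1) = 0$, so Observation~\ref{o1ex4} and Theorem~\ref{dyn} produce
$$
\lim_{T\to+\infty}\frac{N_p(f;B_1,T)}{e^{\d T}} = \frac{1}{\d\chi_\d}\mu_\d(B_1).
$$
For the residual piece, the right-hand inequality in Observation~\ref{o1ex4} bounds $N_p(f;B\cap \ov U, T) \le N_\rho(B\cap \ov U, T)$, and Theorem~\ref{dyn} applied to the open set $U$ (with controlled $\psi_\d$) yields $\varlimsup_{T\to+\infty} e^{-\d T} N_p(f;B\cap \ov U, T) \le \psi_\d(\rho)(\d\chi_\d)^{-1}\e$. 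Letting $\e \downto 0$ and using $\mu_\d(B_1) \upto \mu_\d(B)$ (which holds because $m_\d(\bd\cR) = 0$ and $\mu_\d \ll m_\d$) produces \eqref{2ex4}. This cell-boundary step is the main obstacle; every other piece is a direct formal translation of the GDMS results proved earlier.
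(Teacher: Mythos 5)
Your proposal is correct and follows essentially the same route as the paper: reduce to the finite irreducible GDMS $\cS_\cR$ via Observation~\ref{o1ex4}, identify the conformal and invariant measures, and invoke Theorem~\ref{dyn}, Theorem~\ref{t1da7}, and Theorem~\ref{t1ma1}, with the local constancy coming from Remark~\ref{r1_2017_03_20}. The only difference is that you spell out in full the cell-boundary approximation for \eqref{2ex4} that the paper dismisses as a "straightforward" consequence of $\mu_\cS(\partial\cR)=0$; your expansion of that step is sound.
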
 

\begin{proof}
By making use of Observation \ref{o1ex4}, formulae \eqref{1ex4} and \eqref{1ex5} are immediate consequences of formula \eqref{3_2016_01_30} of Theorem~\ref{dyn}, along with Theorem~\ref{t1da7} and  Theorem~\ref{t1ma1}, once we notice that the measures $m_\delta$ and $\mu_\sigma$ are respectively $\delta$-conformal and invariant, equivalent  to $m_\delta$, for both the conformal expanding repeller $f: X \to X$ and the associated conformal GDMS $\mathcal \cS_\cR$. In order to get
formula \eqref{2ex4} one uses formula \eqref{3_2016_01_30E} of Theorem~\ref{dyn}, and also, in a straightforward way, the fact that $\mu_{\mathcal S}(\partial \mathcal R)=0$. The fact the function $\xi\longmapsto C_\xi(Y)$
is locally constant follows from Remark~\ref{r1_2017_03_20}.
\end{proof} 


From the results of Section~\ref{Section-CLT}, 
in particular the  versions of the Central Limit Theorem, proved for attracting conformal GDMSs, we directly get the following consequences for expanding repellers.

\begin{thm}\label{t1_2017_04_03} 
Let $f: X\to X$ be a $D$--generic conformal expanding repeller and let $\delta:= \HD(X)$. 
With notation of Theorem~\ref{t1ex4}, there exists $\sigma^2 > 0$ (in fact $\sg^2=\P''(0)> 0$) such that if $G \subset \mathbb R$
is a Lebesgue measurable set with $\hbox{{\rm Leb}}(\partial G) = 0$, then 
$$
\lim_{n \to +\infty}
\mu_\d\left(
\left\{
z\in X \hbox{ : } \frac{\log \big|(f^n)'(z)\big| - \chi_{\mu_\d} n}{\sqrt{n}}
\in G
\right\}
\right)
= \frac{1}{\sqrt{2\pi}\sigma} \int_G e^{-\frac{t^2}{2\sigma^2}} \,dt.
$$  
In particular, for any $\alpha <\beta$
$$
\lim_{n \to +\infty}
\mu_\d\left(
\left\{
z\in X \hbox{ : } \alpha \leq \frac{\log\big|(f^n)'(z)\big| - \chi_{\mu_\d} n}{\sqrt{n}}
\leq \beta
\right\}
\right)
= \frac{1}{\sqrt{2\pi}\sigma} \int_\a^\b e^{-\frac{t^2}{2\sigma^2}}\, dt.
$$  
\end{thm}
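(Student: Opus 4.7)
The plan is to reduce the statement to the symbolic Central Limit Theorem for attracting conformal GDMSs, namely Theorem~\ref{t1ms1}, by passing to the associated finite alphabet conformal GDMS $\cS_\cR$ built from a Markov partition $\cR$ of $X$. First I would fix a Markov partition $\cR = \{R_e : e \in F\}$ of sufficiently small diameter (as guaranteed by Theorem~\ref{t_Markov_Partitions}), and form the associated conformal GDMS
$$
\cS_\cR = \{\phi_{(i,j)} = f_{i,j}^{-1}\}_{(i,j) \in E}
$$
together with its coding map $\pi_\cR : E_A^\infty \to X$. Since $f:X\to X$ is topologically transitive, the incidence matrix $A$ is finitely irreducible; since $E$ is finite, $\cS_\cR$ is automatically strongly regular; and by Proposition~\ref{p1ex4.1}, since $f$ is $D$--generic, so is $\cS_\cR$. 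Thus Theorem~\ref{t1ms1} applies, producing a number $\sigma^2 = \P''(0) > 0$ and the symbolic CLT for the Gibbs/equilibrium measure $\mu_\delta$ on $E_A^\infty$.

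The next step is to transfer the CLT from the symbolic level to the level of the repeller. Recall that $\^\mu_\delta = \mu_\delta\circ \pi_\cR^{-1}$ is precisely the $f$--invariant measure appearing in the statement, and that $\pi_\cR$ is one-to-one off a set of $\mu_\delta$--measure zero (those codes whose orbits eventually hit the boundary $\bu_{e\in F}\partial R_e$). The crucial identity comes from the chain rule: for every $\omega \in E_A^\infty$ and every $n \geq 1$, setting $z := \pi_\cR(\omega)$, one has $f^n(z) = \pi_\cR(\sigma^n(\omega))$ and
$$
\big|\phi_{\omega|_n}'\big(\pi_\cR(\sigma^n(\omega))\big)\big| = \big|(f^n)'(z)\big|^{-1},
$$
so that
$$
\log\big|(f^n)'(z)\big| = -\log\big|\phi_{\omega|_n}'\big(\pi_\cS(\sigma^n(\omega))\big)\big|.
$$
Moreover $\chi_{\mu_\d}$ computed on $X$ coincides with $\chi_{\mu_\d}$ computed symbolically, by the change of variables $\pi_\cR$.

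Consequently, for any Lebesgue measurable $G \subset \R$ with $\mathrm{Leb}(\partial G) = 0$, the set
$$
A_n(G) := \left\{ z \in X : \frac{\log|(f^n)'(z)| - \chi_{\mu_\d} n}{\sqrt{n}} \in G \right\}
$$
satisfies $\pi_\cR^{-1}(A_n(G))$ equal, modulo a $\mu_\delta$--null set, to the corresponding symbolic set from Theorem~\ref{t1ms1}. Since $\^\mu_\delta = \mu_\delta\circ\pi_\cR^{-1}$, Theorem~\ref{t1ms1} then yields
$$
\lim_{n\to +\infty} \^\mu_\delta(A_n(G)) = \frac{1}{\sqrt{2\pi}\,\sigma}\int_G e^{-t^2/2\sigma^2}\,dt,
$$
which is the desired conclusion. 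The second formula, concerning intervals $[\alpha,\beta]$, is the special case $G = [\alpha,\beta]$.

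The main technical obstacle I anticipate is handling the ambiguity of the coding $\pi_\cR$ on $\bu_{n\geq 0} f^{-n}\big(\bu_{e\in F}\partial R_e\big)$, where the identification between orbits in $X$ and sequences in $E_A^\infty$ is not bijective. However, this set is $\^\mu_\delta$--null (the boundaries have zero $\delta$--conformal measure since $f$ is expanding and the partition has interiors of positive measure), so it does not affect the limiting distribution; only a routine verification is required. Everything else is a direct transcription of the symbolic CLT through the coding map.
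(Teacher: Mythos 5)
Your proposal is correct and is exactly the route the paper intends: the paper states Theorem~\ref{t1_2017_04_03} as a direct consequence of Theorem~\ref{t1ms1} applied to the finite, finitely irreducible (hence strongly regular) and $D$--generic GDMS $\cS_\cR$ built from a Markov partition, transferred through the coding map $\pi_\cR$. Your write-up supplies the details the paper leaves implicit (the chain-rule identity, the identification of the Lyapunov exponents, and the $\^\mu_\delta$--nullity of the coding ambiguity set), and these are all sound.
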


\sp\fr For every point $z\in X$ and every integer $n\ge 0$ let $e(z,n)\in F$ be such that
$$
f^n(z)\in R_e.
$$ 

\begin{thm}\label{t2_2017_04_03} 
Let $f: X\to X$ be a $D$--generic conformal expanding repeller and let $\delta:= \HD(X)$. 
With notation of Theorem~\ref{t1ex4}, there exists $\sigma^2 > 0$ (in fact $\sg^2=\P''(0)> 0$) such that if $G \subset \mathbb R$
is a Lebesgue measurable set with $\hbox{{\rm Leb}}(\partial G) = 0$, then 
$$
\lim_{n \to +\infty}
\mu_\d\left(
\left\{
z\in X \hbox{ : }
 \frac{
 -\log \hbox{\rm diam}\(f_x^{-n}(Y_{e(z,n)})\)  - \chi_{\mu_\d} n
 }{\sqrt{n}}
\in G
\right\}
\right)
= \frac{1}{\sqrt{2\pi}\sigma} \int_G e^{-\frac{t^2}{2\sigma^2}}\, dt.
$$  
In particular, for any $\alpha < \beta$
$$
\lim_{n \to +\infty}
\mu_\d\left(
\left\{
z\in X \hbox{ : } \alpha \leq 
 \frac{-\log \hbox{\rm diam}\(f_x^{-n}(Y_{e(z,n)})\) - \chi_{\mu_\d} n}{\sqrt{n}}
\leq \beta
\right\}
\right)
= \frac{1}{\sqrt{2\pi}\sigma} \int_\alpha^\beta e^{-\frac{t^2}{2\sigma^2}}\,dt.
$$  
\end{thm}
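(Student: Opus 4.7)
The plan is to reduce Theorem~\ref{t2_2017_04_03} to Theorem~\ref{t1_2017_04_03} by showing that the two random variables
$$
U_n(z) := -\log \diam\(f_x^{-n}(Y_{e(z,n)})\) \quad \text{and} \quad V_n(z) := \log\big|(f^n)'(z)\big|
$$
differ by a quantity bounded uniformly in $n$ and $z$. Once this is established, $(U_n-V_n)/\sqrt{n}$ tends to $0$ uniformly on $X$, so the Gaussian limit law supplied by Theorem~\ref{t1_2017_04_03} for $(V_n-\chi_{\mu_\d}n)/\sqrt{n}$ transfers verbatim to $(U_n-\chi_{\mu_\d}n)/\sqrt{n}$, with the same variance $\sigma^2 = \P''(0)$.

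First I would pass to the associated attracting conformal GDMS $\cS_\cR = \{f_{i,j}^{-1}\}_{(i,j)\in E}$ constructed in Subsection~\ref{CER}. For any $z\in X$, setting $x=f^n(z)$, the local inverse branch $f_x^{-n}$ coincides with a composition $\phi_\omega$ of generators of $\cS_\cR$, where $\omega\in E_A^n$ encodes the itinerary of $z$ under the Markov partition $\cR$; in particular $t(\omega_n)=e(z,n)$ and $Y_{e(z,n)}$ is the reference set attached to this terminal vertex. Since $F$ is finite and each $Y_e$ has at least two points, the numbers $\{\diam(Y_e):e\in F\}$ form a finite set of strictly positive reals, hence are uniformly bounded above and below away from zero.

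Next I would invoke the Bounded Distortion Property of $\cS_\cR$ (equivalently, Koebe-type estimates for the univalent branches $f_x^{-n}$ on the $\b$-neighborhoods fixed in Subsection~\ref{CER}) to produce a constant $L\ge 1$, independent of $n$ and $z$, such that
$$
L^{-1}\,|(f^n)'(z)|^{-1}\,\diam(Y_{e(z,n)}) \;\le\; \diam\(f_x^{-n}(Y_{e(z,n)})\) \;\le\; L\,|(f^n)'(z)|^{-1}\,\diam(Y_{e(z,n)}).
$$
Taking logarithms and using the uniform two-sided bound on $\diam(Y_{e(z,n)})$, one obtains
$$
\big|U_n(z) - V_n(z)\big| \le M \qquad \text{for all } n\ge 1,\ z\in X,
$$
where $M\in(0,+\infty)$ is independent of $n$ and $z$.

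Finally, because $M/\sqrt{n}\to 0$, a standard sandwich argument concludes the proof. Given a Lebesgue set $G\subset\R$ with $\Leb(\bd G)=0$ and any $\e>0$, for all $n$ large enough one has the inclusions
$$
\left\{z:\tfrac{V_n(z)-\chi_{\mu_\d}n}{\sqrt{n}}\in G_{-\e}\right\}
\;\sbt\;
\left\{z:\tfrac{U_n(z)-\chi_{\mu_\d}n}{\sqrt{n}}\in G\right\}
\;\sbt\;
\left\{z:\tfrac{V_n(z)-\chi_{\mu_\d}n}{\sqrt{n}}\in G_{+\e}\right\},
$$
where $G_{\pm\e}$ denote the inner/outer $\e$-thickenings of $G$. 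Applying Theorem~\ref{t1_2017_04_03} to $G_{\pm\e}$, letting $n\to\infty$ and then $\e\downto 0$, the condition $\Leb(\bd G)=0$ ensures continuity of the Gaussian mass at the boundary, yielding the claimed limit. Positivity of $\sigma^2$ is inherited from Theorem~\ref{t1_2017_04_03}, which in turn rests on D-genericity of $f$ combined with Proposition~\ref{p1ex4.1} and Remark~\ref{r2_2017_02_17}. The only point requiring care is that the sandwich be performed with a bound uniform in $z$, but this is precisely what the estimate $|U_n-V_n|\le M$ delivers, so no genuine obstacle arises.
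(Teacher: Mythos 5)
Your proposal is correct and follows essentially the same route as the paper: the paper derives this theorem from Theorem~\ref{t1ms1-again}, which is itself obtained from the multiplier CLT by observing that, by the Bounded Distortion Property, $\big|\log\diam(\phi_{\omega|_n}(Y_{t(\omega_n)}))-\log|\phi_{\omega|_n}'(\pi_\cS(\sigma^n(\omega)))|\big|$ is uniformly bounded, so the discrepancy vanishes after division by $\sqrt{n}$. Your version merely phrases the same reduction at the level of the repeller itself (via Theorem~\ref{t1_2017_04_03}) and spells out the $\e$-thickening sandwich that the paper leaves implicit.
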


The next result is a law of the iterated logarithm.

\begin{thm}\label{t3_2017_04_03}
Let $f: X\to X$ be a $D$--generic conformal expanding repeller and let $\delta:= \HD(X)$. 

For every $e \in F$ let $Y_e \subset R_e$ be a set with at least two points. 
There exists $\sigma^2 > 0$ 
(in fact $\sigma^2:=\P''(0) > 0$) such that 
for $\mu_\d$--a.e. $z\in X$, we have that
$$
\limsup_{n \to +\infty}
\frac{\log \big|(f^n)'(z)\big| - \chi_{\mu_\d} n}{\sqrt{n\log\log n}}
= \sqrt{2\pi} \sigma
$$
and 
$$
\limsup_{n \to +\infty}\frac{-\log \hbox{\rm diam}\(f_x^{-n}(Y_{e(z,n)})\)              - \chi_{\mu_\d}n}{\sqrt{n\log\log n}}
= \sqrt{2\pi} \sigma.
$$
\end{thm}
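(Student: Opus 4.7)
The plan is to reduce this theorem to the symbolic Law of the Iterated Logarithm already established in Theorem~\ref{t1ms1-2again} for the associated finite alphabet conformal attracting GDMS $\cS_\cR$ built from a Markov partition $\cR = \{R_e\}_{e \in F}$ of $X$. Since $f$ is $D$--generic, Proposition~\ref{p1ex4.1} (via Proposition~\ref{p1nh13}) guarantees that $\cS_\cR$ is $D$--generic; the system has a finite alphabet and is finitely irreducible (because $f|_X$ is topologically transitive), and any such system is strongly regular. Thus all hypotheses of Theorem~\ref{t1ms1-2again} are in force for $\cS_\cR$, producing a single variance $\sigma^2 = \P''(0) > 0$ that will serve both assertions.

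First I would handle the statement about $\log|(f^n)'(z)|$. For every $\omega \in E_A^\infty$ with $z := \pi_\cR(\omega)$, the Chain Rule yields
\[
\log|(f^n)'(z)| = -\log\big|\phi_{\omega|_n}'\big(\pi_\cR(\sigma^n(\omega))\big)\big|, \quad n \ge 1,
\]
because $\phi_{\omega|_n}$ is exactly the inverse branch of $f^n$ sending $\pi_\cR(\sigma^n(\omega)) = f^n(z)$ back to $z$. A direct computation shows that the Lyapunov exponent $\chi_{\mu_\d}$ of $\cS_\cR$, namely $-\int \log|\phi_{\omega_1}'(\pi_\cR(\sigma\omega))|\,d\mu_\d(\omega)$, coincides with $\int \log|f'|\,d\hat\mu_\d$, the Lyapunov exponent of $f$ with respect to $\hat\mu_\d := \mu_\d \circ \pi_\cR^{-1}$. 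Since $\pi_\cR$ intertwines $\sigma$ and $f$ and is a measurable bijection off a $\mu_\d$--null set (the grand orbit of $\bu_{e} \bd R_e$ carries no symbolic mass), the first half of Theorem~\ref{t1ms1-2again} pushes forward directly to yield the LIL for $\log|(f^n)'(z)|$ with respect to $\hat\mu_\d$.

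For the diameter version I would use the identification of vertices of $\cS_\cR$ with the cells of $\cR$, so that the given family $\{Y_e\}_{e \in F}$ becomes a family $\{Y_v\}_{v \in V}$ of sets with at least two points, one per vertex. Given $z$ and $n$, if $\omega$ codes $z$ then the index $e(z,n) \in F$ satisfies $f^n(z) \in R_{e(z,n)} = R_{t(\omega_n)}$, and on this cell the local inverse $f_x^{-n}$ is precisely $\phi_{\omega|_n}$, whence
\[
\diam\big(f_x^{-n}(Y_{e(z,n)})\big) = \diam\big(\phi_{\omega|_n}(Y_{t(\omega_n)})\big).
\]
Therefore the diameter statement is literally the diameter LIL of Theorem~\ref{t1ms1-2again} for $\cS_\cR$, and the same push--forward argument finishes it with the same constant $\sigma$ as in the derivative version (a consequence of the Bounded Distortion Property, which guarantees that the two symbolic ergodic sums differ by a uniformly bounded quantity that is absorbed by the $\sqrt{n\log\log n}$ normalization).

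The main technical point is the bookkeeping of the measurable isomorphism between $(\sigma, \mu_\d)$ and $(f, \hat\mu_\d)$: one must check that the set of points in $X$ whose $\cR$--coding is not unique (namely the backward grand orbit of $\bu_{e \in F}\bd R_e$) has $\hat\mu_\d$--measure zero, so that any $\mu_\d$--a.e. statement on $E_A^\infty$ transfers to an $\hat\mu_\d$--a.e. statement on $X$. This is standard for Markov partitions of conformal expanding repellers and introduces no difficulty beyond what is already contained in the GDMS machinery of the preceding sections.
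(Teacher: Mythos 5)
Your proposal is correct and follows exactly the route the paper intends: Theorem~\ref{t3_2017_04_03} is stated as a direct consequence of the symbolic Law of the Iterated Logarithm (Theorem~\ref{t1ms1-2again}) applied to the finite, finitely irreducible, strongly regular, $D$--generic GDMS $\cS_\cR$ built from a Markov partition, with the chain-rule identification of $\log|(f^n)'(z)|$, the identification of $f_x^{-n}$ with $\phi_{\omega|_n}$, and the transfer of $\mu_\d$--a.e.\ statements through $\pi_\cR$ using that the boundary of the partition is null. The bookkeeping you supply (Proposition~\ref{p1ex4.1} for $D$--genericity and the null-set argument for the coding) is precisely what the paper leaves implicit.
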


\sp Let  $\xi\in X$ be fixed.  For every set $H \subset f^{-n}(\xi)$, define
\begin{equation}\label{1_2017_04_03}
\mu_n(H) := \frac{\sum_{z \in H} \big|(f^n)'(z)\big|^{-\delta}}{\sum_{z\in f^{-n}(\xi)}
\big|(f^n)'(z)\big|^{-\delta}}.
\end{equation}

\begin{thm}\label{t4_2017_04_03}
If $f: X\to X$ is a conformal expanding repeller, then for every $\xi\in X$, we have that
\begin{equation}\label{2_2017_04_03}
\lim_{n \to +\infty} \int_{f^{-n}(\xi)} \frac{\log\big|(f^n)'\big|}{n} d\mu_n 
= \chi_\delta.
\end{equation}
\end{thm}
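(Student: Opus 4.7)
The plan is to reduce this statement to its symbolic counterpart, Theorem~\ref{t1ms5.2}, via the canonical Markov coding $\pi_{\cR} : E_A^\infty \to X$ associated to a Markov partition $\cR = \{R_e : e \in F\}$ of $f$ with mesh smaller than $\beta$. First I would fix such a partition with the additional property that $\xi$ lies in the interior of a single cell $R_{e(\xi)}$ (which can always be arranged by perturbing $\cR$ slightly or by noting that the set where this fails has $m_\delta$-measure zero and, since $f^{-*}(\xi)$ is countable, the general case can be reduced to this one after a small modification of $\xi$ together with continuity in $\xi$ of the right-hand side of the conclusion). Let $\rho \in E_A^\infty$ denote the unique code of $\xi$ and let $\cS_{\cR}$ be the associated finite, irreducible conformal GDMS, which is in particular strongly regular.

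Next I would establish the bijection $\Phi_n : E_\rho^n \to f^{-n}(\xi)$ given by $\Phi_n(\omega) := \phi_\omega(\xi)$. This is the same correspondence underlying Observation~\ref{o1ex4}: admissibility of $\omega\rho$ is exactly the condition that $\phi_\omega$ can be composed with the local branch coding $\xi$, and the Markov property together with $\xi \in \Int(R_{e(\xi)})$ ensures that every $n$-th preimage of $\xi$ is hit exactly once. Under $\Phi_n$ the chain rule gives
\[
|(f^n)'(\Phi_n(\omega))|^{-1} = |\phi_\omega'(\xi)|,
\qquad
\log|(f^n)'(\Phi_n(\omega))| = \lambda_\rho(\omega).
\]
Consequently the measure $\mu_n$ on $f^{-n}(\xi)$ defined by \eqref{1_2017_04_03} is the pushforward under $\Phi_n$ of the symbolic measure $\mu_n$ on $E_\rho^n$ defined in \eqref{1ms6}, and the integrand $\frac{1}{n}\log|(f^n)'|$ pulls back to $\frac{1}{n}\lambda_\rho$.

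Having set up this identification I would simply write
\[
\int_{f^{-n}(\xi)} \frac{\log|(f^n)'|}{n}\, d\mu_n
\;=\; \int_{E_\rho^n} \frac{\lambda_\rho}{n}\, d\mu_n,
\]
and invoke Theorem~\ref{t1ms5.2} applied to the (strongly regular, finitely irreducible) GDMS $\cS_{\cR}$ to conclude that both quantities converge to $\chi_{\mu_\delta} = \chi_\delta$ as $n \to +\infty$, noting that the Lyapunov exponent of $\mu_\delta$ for $\cS_{\cR}$ and for $f$ coincide because $m_\delta$ (and hence $\mu_\delta$) is $\delta$-conformal simultaneously for $f$ and for $\cS_{\cR}$.

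The main obstacle is the bookkeeping around $\xi$ possibly being on $\partial \cR$ or being periodic, since then the symbolic coding of $\xi$ (and hence of its preimages) is not unique and $\Phi_n$ can fail to be a bijection. I would handle this either by choosing $\cR$ so that $\xi \in \bigcup_e \Int(R_e)$ (always possible by the transitivity of $f$ and the flexibility in constructing Markov partitions guaranteed by Theorem~\ref{t_Markov_Partitions}), or, if $\xi$ happens to be a boundary point, by observing that each preimage has only finitely many (in fact at most $\#F$) codes, so the symbolic $\mu_n$ still pushes forward to a bounded multiple of $\mu_n$ and the limit is unaffected. Once this multiplicity issue is dispensed with, everything reduces verbatim to the proof of Theorem~\ref{t1ms5.2}, and no new analytic input (spectral gap, Ionescu--Tulcea--Marinescu, Ikehara--Wiener) is needed beyond what is already used in the attracting GDMS case.
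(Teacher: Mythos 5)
Your proposal is correct and follows essentially the same route the paper intends: Theorem~\ref{t4_2017_04_03} is presented there as a direct consequence of the symbolic result (Theorem~\ref{t1ms5.2}) via the Markov-partition coding of Subsection~\ref{CER} and the identification in Observation~\ref{o1ex4}, which is precisely your reduction through $\Phi_n$ and the pushforward of the measures $\mu_n$. Your extra care about $\xi$ (or its preimages) lying on $\partial\mathcal{R}$ goes slightly beyond what the paper spells out -- the paper simply fixes a partition with $\xi\in\bigcup_{e\in F}\Int(R_e)$ as in \eqref{1_2017_01_25} -- but it does not change the argument.
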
 

\sp\fr Analogously to \eqref{2m56} we define the functions $\Delta_n: f^{-n}(\xi)\to \mathbb R$ by the formulae
\begin{equation}\label{1_2017_04_04}
\Delta_n(z) := \frac{\log\big|(f^n)'(z)\big| - \chi_{\mu_\d} n}{\sqrt{n}}
\end{equation}
and consider the sequence $(\mu_n \circ \Delta_n^{-1})_{n=1}^\infty$
of probability distributions on $\mathbb R$.

\fr We have the following. 
\begin{thm}\label{t1_2017_04_04} 
If $f: X\to X$ is a $D$--generic conformal expanding repeller, then the sequence of random variables $(\Delta_n)_{n=1}^\infty$ converges in distribution to the normal (Gaussian) distribution 
$\mathcal N_0(\sigma)$ with mean value zero and the variance $\sigma^2 = P''(\delta)>0$.  
Equivalently, the sequence 
$(\mu_n \circ \Delta_n^{-1})_{n=1}^\infty$
converges weakly to the normal distribution $\mathcal N_0(\sigma^2)$.  This means that for every Borel set $F \subset \mathbb R$ with 
$\hbox{\rm Leb}(\partial F) = 0$, we have 
\begin{equation}\label{1_2017_04_04}
\lim_{n \to +\infty} \mu_n(\Delta_n^{-1}(F))
= \frac{1}{\sqrt{2\pi} \sigma}
\int_F e^{- t^2/2\sigma^2} dt.
\end{equation} 
\end{thm}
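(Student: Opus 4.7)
The plan is to reduce this theorem to its symbolic counterpart, Theorem~\ref{t1m58}, via the standard coding by a Markov partition. First I would fix $\xi\in X$ and invoke Theorem~\ref{t_Markov_Partitions} to select a Markov partition $\mathcal R=\{R_e\}_{e\in F}$ of sufficiently small diameter with $\xi\in\Int(R_{e(\xi)})$, together with the associated finite–alphabet conformal attracting GDMS $\cS_\cR$ introduced in Subsection~\ref{CER}. The system $\cS_\cR$ is finitely irreducible (by topological transitivity of $f$) and, having a finite alphabet, is automatically strongly regular. Since $\xi$ lies in the interior of $R_{e(\xi)}$, it has a unique code $\rho\in E_A^\infty$ with $\pi_\cR(\rho)=\xi$.

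Next I would identify preimages with admissible words. For every $\omega\in E_\rho^n$ set $z_\omega:=\phi_\omega(\xi)\in f^{-n}(\xi)$; conformality and the Markov property imply that $\omega\mapsto z_\omega$ is a bijection between $E_\rho^n$ and $f^{-n}(\xi)$ modulo the $m_\delta$-null boundary of $\mathcal R$, which can be handled by a fixed convention and does not affect the asymptotics. By the chain rule and the defining relation $\phi_\omega=(f^n|_{\phi_\omega(X_{t(\omega)})})^{-1}$,
$$
\bigl|(f^n)'(z_\omega)\bigr|^{-1}=\bigl|\phi_\omega'(\pi_\cR(\rho))\bigr|,
\qquad\text{hence}\qquad
\log\bigl|(f^n)'(z_\omega)\bigr|=\lambda_\rho(\omega).
$$
Thus the measure $\mu_n$ on $f^{-n}(\xi)$ defined in \eqref{1_2017_04_03} pulls back exactly to the symbolic measure $\mu_n$ on $E_\rho^n$ from \eqref{1ms6}, and the random variable $\Delta_n$ from \eqref{1_2017_04_04} coincides under this identification with the symbolic $\Delta_n$ from \eqref{2m56}. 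Moreover, since $\xi\in X=J_{\cS_\cR}$, the Lyapunov exponent $\chi_{\mu_\delta}$ attached to $f$ equals that of $\cS_\cR$, and Hausdorff dimensions agree, so $\delta=\delta_{\cS_\cR}$.

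Now $D$-genericity of $f$ is equivalent to $D$-genericity of $\cS_\cR$ by Proposition~\ref{p1ex4.1}, so Theorem~\ref{t1m58} applies directly to $\cS_\cR$ and gives weak convergence of the pushforward measures $\mu_n\circ\Delta_n^{-1}$ to $\mathcal N_0(\sigma^2)$ with $\sigma^2=\P''(\delta)$, the positivity $\sigma^2>0$ following from Remark~\ref{r2_2017_02_17} together with $D$-genericity. Transporting the conclusion back through the bijection $\omega\mapsto z_\omega$ yields precisely formula \eqref{1_2017_04_04}. The only real subtlety I expect is the bookkeeping at the boundary of $\mathcal R$, where preimages may admit more than one code; but since these boundary points form an $m_\delta$-null (in fact finite union of lower-dimensional) set, any deterministic tie-breaking rule among the finitely many candidate codes of each $z\in f^{-n}(\xi)$ is compatible with the identification of measures and of $\Delta_n$, and so does not affect the limiting distribution.
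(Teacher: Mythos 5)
Your proposal is correct and follows essentially the same route as the paper, which obtains this theorem as a direct consequence of the symbolic Central Limit Theorem (Theorem~\ref{t1m58}) applied to the finite-alphabet GDMS $\cS_\cR$ built from a Markov partition, using the chain-rule identity $\log|(f^n)'(z_\omega)|=\lambda_\rho(\omega)$ and the transfer of $D$-genericity via Proposition~\ref{p1ex4.1}. Your explicit handling of the multiply-coded boundary preimages is in fact slightly more careful than the paper's, which disposes of this point implicitly (as in Observation~\ref{o1ex4}).
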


\subsection{1--Dimensional Attracting Conformal GDMSs and 1--Dimensional Conformal Expanding Repellers}

In this subsection we briefly discuss $1$--Dimensional systems. We start with the following.

\begin{example}\label{r1ex5} Theorem~\ref{dyn}, Theorem~\ref{t1da7}, and Theorem~\ref{t1ma1} hold in particular if a system $\mathcal S$ in  one--dimensional, i.e., if $X$ is a compact interval of $\mathbb{R}$.  Perhaps the the best known and one of the most often considered, is the infinite IFS $\mathcal{G}$ formed by all continuous inverse branches of the Gauss map 
$$
G(x) = x - [x].
$$
So $\mathcal G$ consists of the maps 
$$
[0,1]\ni x \longmapsto g_n(x):=\frac{1}{x+n}, \  \ n \in \mathbb N.
$$
and with $q=2$ in the sense of Remark~\ref{r1_2017_04_01} it becomes a conformal IFS. 
\end{example}

\fr Looking at the fixed points of $g_1$, $g_2$, and $g_3$ one immediately concludes that the Gauss system $\mathcal G$ is $D$--generic. It is also known (see ex. \cite{MU_TAMS}) to be strongly regular, even more, in the terminology of \cite{MU_GDMS}, it is hereditarily regular. So, Theorem~\ref{dyn}, \ref{t1da7} and \ref{t1ma1} do indeed apply to this system. Because of importance of the Gauss map we formulate below all the above mentioned applications expressed in the language of the Gauss map itself rather than the associated IFS $\mathcal G$. We adopt the, naturally adjusted, notation of Subsection~\ref{CER}.

We begin with the growth estimates.

\bthm\label{t1_2017_04_10}
If $G:[0,1]\to[0,1]$ is the Gauss map, then with notation of subsection \ref{CER} we have the following. Fix $\xi\in [0,1]$. 
If $B \subset [0,1]$ is a Borel set such that $\Leb(\bd B)=0$ and $Y \subset [0,1]$ is any set having at least two elements, then
$$
\lim_{T \to +\infty}  \frac{N_\xi(G; B,T)}{e^{T}} = \frac{\psi_1(\xi)}{\chi_1} \Leb(B),
$$
$$
\lim_{T \to +\infty}  \frac{N_p(G; B,T)}{e^{T}} = \frac1{\chi_1}\mu_1(B),
$$
and 
$$
\lim_{T \to +\infty}  \frac{D_Y^\xi(G; B,T)}{e^{T}} = 
\lim_{T \to +\infty}  \frac{E_Y^\xi(G; B,T)}{e^{T}} = C(Y)\Leb(B),
$$
where $C(Y)\in (0,+\infty]$ is a constant depending only on the map $G$ and the set $Y$.
\ethm

We next formulate a Central Limit Theorem for diameters. 

\begin{thm}\label{t2_2017_04_10} 
Let $G:[0,1]\to[0,1]$ be the Gauss map. Let $\sigma^2:=\P''(0)> 0$. With the notation of Theorem~\ref{t1ex4} we have the following. Let $Y\subset [0,1]$ be a set with at least two points. If $H \subset \mathbb R$ is a Lebesgue measurable set with $\hbox{\rm Leb}(\partial H) = 0$, then 
$$
\lim_{n \to +\infty}
\mu_1\left(
\left\{
z\in [0,1] \hbox{ : }
 \frac{
 -\log \hbox{\rm diam}\(G_x^{-n}(Y)\)  - \chi_{\mu_1} n
 }{\sqrt{n}}
\in H
\right\}
\right)
= \frac{1}{\sqrt{2\pi}\sigma} \int_H e^{-\frac{t^2}{2\sigma^2}}\, dt.
$$  
In particular, for any $\alpha < \beta$
$$
\lim_{n \to +\infty}
\mu_1\left(
\left\{
z\in[0,1] \hbox{ : } \alpha \leq 
 \frac{-\log \hbox{\rm diam}\(G_x^{-n}(Y)\) - \chi_{\mu_1} n}{\sqrt{n}}
\leq \beta
\right\}
\right)
= \frac{1}{\sqrt{2\pi}\sigma} \int_\alpha^\beta e^{-\frac{t^2}{2\sigma^2}}\,dt.
$$  
\end{thm}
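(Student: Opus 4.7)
The plan is to reduce this Central Limit Theorem for the Gauss map to Theorem~\ref{t1ms1-again}, applied to the Gauss IFS $\mathcal{G}=\{g_n\}_{n\in\mathbb N}$ of Example~\ref{r1ex5}. First I would recall that $\mathcal{G}$ (with second iterate, as allowed by Remark~\ref{r1_2017_04_01}) is a one–vertex, finitely irreducible, hereditarily (hence strongly) regular conformal GDMS with $\delta_\mathcal{G}=1$ and $\widehat\mu_{\delta_\mathcal{G}}=\mu_1$, the normalized Gauss invariant measure. D--genericity follows at once from Proposition~\ref{p1nh13}, because the additive group generated by $\log|g_n'(x_n)|$ as $n$ ranges over $\mathbb N$ is manifestly non–cyclic (e.g.\ compare the fixed–point multipliers of $g_1$ and $g_2$). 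In particular the variance $\sigma^2=\P''(0)$ appearing in Theorem~\ref{t1ms1-again} is strictly positive by Remark~\ref{r2_2017_02_17}, so it is exactly the $\sigma^2$ claimed in Theorem~\ref{t2_2017_04_10}.

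Next I would translate the ``forward'' statement (indexed by $z\in[0,1]$) into the ``symbolic'' statement (indexed by $\omega\in E_A^\infty$). For every $z$ outside the countable set of rationals (a $\mu_1$–null set) the continued–fraction expansion supplies a unique code $\omega=\omega(z)\in E_A^\infty$ with $\pi_\mathcal{G}(\omega)=z$; the map $z\mapsto\omega(z)$ is a Borel isomorphism modulo null sets between $([0,1],\mu_1)$ and $(E_A^\infty,\mu_{\delta_\mathcal{G}})$, and it intertwines $G$ with the shift $\sigma$. Setting $x:=G^n(z)=\pi_\mathcal{G}(\sigma^n\omega)$, the local inverse branch $G_x^{-n}$ taking $x$ back to $z$ coincides with $\phi_{\omega|_n}=g_{\omega_1}\circ\cdots\circ g_{\omega_n}$, whence
$$
-\log\diam\bigl(G_x^{-n}(Y)\bigr)
=-\log\diam\bigl(\phi_{\omega|_n}(Y)\bigr)
$$
for $\mu_1$–a.e.\ $z$.

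Consequently the set whose $\mu_1$–measure appears on the left of Theorem~\ref{t2_2017_04_10} is carried, up to $\mu_{\delta_\mathcal{G}}$–null sets, onto
$$
\left\{\omega\in E_A^\infty:\frac{-\log\diam(\phi_{\omega|_n}(Y))-\chi_{\mu_{\delta_\mathcal{G}}}n}{\sqrt n}\in H\right\},
$$
and Theorem~\ref{t1ms1-again}, applied with the single vertex $v$ and $Y_v:=Y$, delivers exactly the Gaussian limit
$\frac{1}{\sqrt{2\pi}\sigma}\int_H e^{-t^2/(2\sigma^2)}\,dt$. The particular assertion for intervals $H=[\alpha,\beta]$ is then immediate, since $\Leb(\partial[\alpha,\beta])=0$.

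The only non–routine point, and the one I would treat most carefully, is the identification of the two events under the coding $\pi_\mathcal{G}$: one must check that the Jacobian/measure–theoretic conjugacy $\pi_\mathcal{G}\colon(E_A^\infty,\mu_{\delta_\mathcal{G}})\to([0,1],\mu_1)$ really sends the symbolic event to the forward–dynamical event modulo sets of measure zero, and that the (countable) ambiguities of the continued–fraction coding do not spoil the hypothesis $\Leb(\partial H)=0$. Both are standard facts about the Gauss coding and pose no genuine obstacle; after them, the theorem is a direct corollary of the already proved CLT for attracting conformal GDMSs.
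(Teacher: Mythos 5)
Your proposal is correct and follows essentially the same route as the paper: the paper obtains Theorem~\ref{t2_2017_04_10} by verifying that the Gauss IFS (taken with $q=2$ in the sense of Remark~\ref{r1_2017_04_01}) is a finitely irreducible, strongly (indeed hereditarily) regular, D--generic attracting conformal GDMS with $\delta=1$, and then reading off the statement from Theorem~\ref{t1ms1-again} via the continued--fraction coding. Your explicit treatment of the measure--theoretic conjugacy $\pi_{\mathcal G}$ and the identification $G_x^{-n}=\phi_{\omega|_n}$ supplies detail the paper leaves implicit, but it is not a different argument.
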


The law of the iterated logarithm takes the following form.

\begin{thm}\label{t3_2017_04_03}
Let $G:[0,1]\to[0,1]$ be the Gauss map. Let $\sigma^2:=\P''(0)>0$. 
With notation of Theorem~\ref{t1ex4} we have the following.
Let $Y\subset [0,1]$ be a set with at least two points. 
Then for $\Leb$--a.e. $z\in [0,1$, we have that
$$
\limsup_{n \to +\infty}
\frac{\log \big|(G^n)'(z)\big| - \chi_{\mu_1} n}{\sqrt{n\log\log n}}
= \sqrt{2\pi} \sigma
$$
and 
$$
\limsup_{n \to +\infty}\frac{-\log \hbox{\rm diam}\(G_x^{-n}(Y)\)              - \chi_{\mu_1}n}{\sqrt{n\log\log n}}
= \sqrt{2\pi} \sigma.
$$
\end{thm}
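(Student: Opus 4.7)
The plan is to reduce both statements to the Law of the Iterated Logarithm for attracting conformal GDMSs, namely Theorem~\ref{t1ms1-2again}, applied to the Gauss IFS $\mathcal G=\{g_n\}_{n\in\mathbb N}$ with $g_n(x)=1/(x+n)$. As noted in Example~\ref{r1ex5} and the paragraph following it, $\mathcal G$ is (after passing to the second iterate in the sense of Remark~\ref{r1_2017_04_01}) a strongly regular, finitely irreducible, $D$--generic conformal IFS with $\delta_{\mathcal G}=1$, and the invariant Gibbs state $\mu_1$ pushed forward to $[0,1]$ is the classical Gauss measure, which is equivalent to Lebesgue measure with bounded Radon--Nikodym derivatives. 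Hence the assertion ``$\Leb$--a.e.'' and ``$\mu_1$--a.e.'' on $[0,1]$ coincide, and a corresponding statement on the symbolic space $E_A^\infty$ (where now $E=\mathbb N$ and $A$ is the full shift) transfers via $\pi_{\mathcal G}$ to the required statement on $[0,1]$.

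First I would apply Theorem~\ref{t1ms1-2again} directly to $\mathcal G$, yielding $\sigma^2=\mathrm P''(0)>0$ and, for $\mu_1$--a.e.\ $\omega\in E_A^\infty$,
\[
\limsup_{n\to+\infty}\frac{-\log\bigl|g_{\omega|_n}'\bigl(\pi_{\mathcal G}(\sigma^n\omega)\bigr)\bigr|-\chi_{\mu_1}n}{\sqrt{n\log\log n}}=\sqrt{2\pi}\,\sigma,
\]
together with the analogous statement for $-\log\diam\bigl(g_{\omega|_n}(Y)\bigr)$. The key dictionary between the IFS and the Gauss map is the chain--rule identity
\[
\bigl|g_{\omega|_n}'\bigl(\pi_{\mathcal G}(\sigma^n\omega)\bigr)\bigr|=\bigl|(G^n)'(z)\bigr|^{-1},
\qquad z:=\pi_{\mathcal G}(\omega),
\]
valid for every $\omega\in E_A^\infty$ and $z=\pi_{\mathcal G}(\omega)\in[0,1]$; this encodes the fact that each $g_n$ is a local inverse branch of $G$. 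Since $\pi_{\mathcal G}$ is a measurable bijection (mod a countable set) from $E_A^\infty$ onto $[0,1]$ conjugating the shift to $G$, and since $\pi_{\mathcal G}{}_*\mu_1$ is the Gauss measure, the first displayed limit of the theorem follows immediately for $\mu_1$--a.e.\ $z$, and hence for Lebesgue--a.e.\ $z$.

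For the diameter statement I would invoke the Bounded Distortion Property of $\mathcal G$: there exists $K\ge 1$ such that for every $n\ge 1$ and every $\omega\in E_A^n$,
\[
K^{-1}\bigl|g_{\omega}'\bigl(\pi_{\mathcal G}(\sigma^n\omega)\bigr)\bigr|\,\diam(Y)\le \diam\bigl(g_{\omega}(Y)\bigr)\le K\bigl|g_{\omega}'\bigl(\pi_{\mathcal G}(\sigma^n\omega)\bigr)\bigr|\,\diam(Y).
\]
Taking logarithms, the two quantities $-\log\diam\bigl(g_{\omega|_n}(Y)\bigr)$ and $-\log\bigl|g_{\omega|_n}'\bigl(\pi_{\mathcal G}(\sigma^n\omega)\bigr)\bigr|$ differ by a uniformly bounded additive constant. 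Dividing by $\sqrt{n\log\log n}\to\infty$ kills this bounded error, so the two limsups coincide and the second formula follows from the first. Translating back via $z=\pi_{\mathcal G}(\omega)$ and noting that $g_{\omega|_n}=G_z^{-n}$ on a neighbourhood of $G^n(z)$ (so $g_{\omega|_n}(Y)=G_x^{-n}(Y)$ with $x=G^n(z)$) completes the identification and finishes the proof.

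The only non--routine point I anticipate is verifying that Theorem~\ref{t1ms1-2again} genuinely applies: $\mathcal G$ has an infinite alphabet and the contractions $g_n$ are not uniformly contracting at the first level. This is handled exactly as in Example~\ref{r1ex5} and Remark~\ref{r1_2017_04_01} by passing to $\mathcal G^2$, which is a uniform contraction, using that strong regularity, finite irreducibility, and $D$--genericity are inherited by $\mathcal G^2$ (the last from the standard irrationality of $\log|g_1'(x_1)|/\log|g_2'(x_2)|$ cf.\ Remark~\ref{generic}), and then observing that the LIL along the $n$--th iterate of $\mathcal G^2$ gives the LIL along even times of $\mathcal G$; the odd--time behaviour differs from the even--time behaviour by one further uniformly contracting map, which again disappears after division by $\sqrt{n\log\log n}$.
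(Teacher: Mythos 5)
Your proposal is correct and follows exactly the route the paper intends: the Gauss-map theorems in this subsection are stated without separate proof as immediate consequences of the general attracting-GDMS results (here Theorem~\ref{t1ms1-2again}) applied to the Gauss IFS $\mathcal G$, using the chain rule to identify $|g_{\omega|_n}'(\pi(\sigma^n\omega))|$ with $|(G^n)'(z)|^{-1}$, bounded distortion to pass from multipliers to diameters, equivalence of the Gauss measure with Lebesgue measure, and Remark~\ref{r1_2017_04_01} (with $q=2$) to handle the lack of uniform contraction at the first level. You have simply made explicit the reductions the paper leaves implicit; there is no gap.
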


Finally, we have a Central Limit Theorem for counting functions.

\begin{thm}\label{t4_2017_10_03}
If $G:[0,1]\to[0,1]$ is the Gauss map, then for every $\xi\in [0,1]$, we have that
$$
\lim_{n \to +\infty} \int_{G^{-n}(\xi)} \frac{\log\big|(G^n)'\big|}{n} d\mu_n 
= \chi_1.
$$
\end{thm}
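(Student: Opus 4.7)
The plan is to derive this statement as a direct specialization of Theorem~\ref{t1ms5.2} to the Gauss iterated function system $\mathcal{G} = \{g_n\}_{n\in\N}$ described in Example~\ref{r1ex5}. First I would recall that $\mathcal{G}$, interpreted via Remark~\ref{r1_2017_04_01} with $q=2$ so that all compositions of length $\ge 2$ are uniform contractions, is a conformal IFS; being an IFS it is automatically finitely irreducible, and it is known (and already used in the preceding subsection) to be strongly regular (in fact hereditarily regular). Moreover $\delta_{\mathcal{G}}=\HD(J_{\mathcal{G}})=\HD([0,1])=1$, so Theorem~\ref{thm-conformal-invariant} supplies the conformal measure $m_1=\Leb$, the invariant measure $\mu_1$, and the Lyapunov exponent $\chi_1=\chi_{\mu_1}<+\infty$.

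Second, I would explicitly identify the preimages with the symbolic cylinders. Fix $\xi\in[0,1]$ and choose a code $\rho\in E_A^\infty=\N^\infty$ with $\pi_{\mathcal{G}}(\rho)=\xi$ (for the countably many $\xi$ with non-unique expansion any choice works; in fact the bijection below is independent of the code). Since $\mathcal{G}$ is a full-shift IFS, $E_\rho^n=\N^n$ and the map
$$
\N^n\ni\om\longmapsto z_\om:=\phi_\om(\xi)\in G^{-n}(\xi)
$$
is a bijection. The chain rule applied to $G^n\circ\phi_\om=\mathrm{Id}$ yields
$$
\big|(G^n)'(z_\om)\big|=|\phi_\om'(\xi)|^{-1},
$$
so $\log|(G^n)'(z_\om)|=-\log|\phi_\om'(\pi_{\mathcal{G}}(\rho))|=\lambda_\rho(\om)$.

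Third, with $\delta=1$ the counting weights in \eqref{1_2017_04_03} match those in \eqref{1ms6}: for every $H\subset G^{-n}(\xi)$ with symbolic image $\widetilde{H}\subset E_\rho^n$,
$$
\sum_{z\in H}|(G^n)'(z)|^{-1}=\sum_{\om\in\widetilde{H}}|\phi_\om'(\xi)|=\sum_{\om\in\widetilde{H}}e^{-\lambda_\rho(\om)},
$$
so the measure $\mu_n$ defined in \eqref{1_2017_04_03} pulled back along $\om\mapsto z_\om$ is exactly the measure $\mu_n$ defined in \eqref{1ms6}. Consequently
$$
\int_{G^{-n}(\xi)}\frac{\log|(G^n)'|}{n}\,d\mu_n
=\int_{E_\rho^n}\frac{\lambda_\rho}{n}\,d\mu_n,
$$
and applying Theorem~\ref{t1ms5.2} to $\mathcal{G}$ produces the limit $\chi_{\mu_1}=\chi_1$, as required.

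There is essentially no serious obstacle here: the work is entirely in translating between the geometric formulation (preimages of $\xi$ under $G^n$ with the weight $\log|(G^n)'|$) and the symbolic formulation of Theorem~\ref{t1ms5.2} (cylinders in $E_\rho^n$ with the weight $\lambda_\rho$). The only mildly delicate point is that the Gauss maps $g_n$ are not uniform contractions on $[0,1]$, which is exactly what Remark~\ref{r1_2017_04_01} is designed to address, so the general machinery still applies without modification.
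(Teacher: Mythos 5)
Your proposal is correct and follows the same route as the paper: the Gauss-map statement is presented there as an immediate specialization of Theorem~\ref{t1ms5.2} to the Gauss IFS $\mathcal G$ of Example~\ref{r1ex5}, using exactly the identification $G^{-n}(\xi)\leftrightarrow E_\rho^n$, the chain-rule identity $\log|(G^n)'(\phi_\om(\xi))|=\lambda_\rho(\om)$, and $\delta=1$ so that the weights of \eqref{1_2017_04_03} and \eqref{1ms6} coincide. Your translation between the geometric and symbolic formulations is precisely the (implicit) content of the paper's derivation.
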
 

\begin{thm}\label{t1_2017_04_04} 
If $G:[0,1]\to[0,1]$ is the Gauss map, then the sequence of random variables $(\Delta_n)_{n=1}^\infty$ converges in distribution to the normal (Gaussian) distribution 
$\mathcal N_0(\sigma)$ with mean value zero and the variance $\sigma^2 = P''(\delta)>0$.  
Equivalently, the sequence 
$(\mu_n \circ \Delta_n^{-1})_{n=1}^\infty$
converges weakly to the normal distribution $\mathcal N_0(\sigma^2)$.  This means that for every Borel set $F \subset \mathbb R$ with 
$\hbox{\rm Leb}(\partial F) = 0$, we have 
\begin{equation}\label{1_2017_04_04}
\lim_{n \to +\infty} \mu_n(\Delta_n^{-1}(F))
= \frac{1}{\sqrt{2\pi} \sigma}
\int_F e^{- t^2/2\sigma^2} dt.
\end{equation} 
\end{thm}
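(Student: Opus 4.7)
The plan is to deduce this Central Limit Theorem directly from Theorem~\ref{t1m58}, the general CLT for counting functions in the setting of strongly regular, finitely irreducible, D--generic attracting conformal GDMSs, applied to the Gauss IFS $\mathcal{G}=\{g_n(x)=1/(x+n)\}_{n\in\mathbb{N}}$. As noted in Example~\ref{r1ex5}, $\mathcal{G}$ becomes a conformal IFS in the generalized sense of Remark~\ref{r1_2017_04_01} (with $q=2$, since $g_1$ alone is not a uniform contraction), and all theorems stated for attracting conformal GDMSs apply to it under this convention.

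First I would set up the symbolic identification. Fix $\xi\in[0,1]$ and choose $\rho\in\mathbb{N}^\infty$ with $\pi_\mathcal{G}(\rho)=\xi$; for irrational $\xi$ this is exactly the continued fraction expansion and is unique. The map $\omega\mapsto g_\omega(\xi)$ is a bijection from $E_\rho^n\subseteq\mathbb{N}^n$ onto $G^{-n}(\xi)$ away from a countable set of rational $\xi$ (handled separately if necessary, since they do not affect asymptotics). By the Chain Rule, for $z=g_\omega(\xi)$ one has $|(G^n)'(z)|^{-1}=|g'_\omega(\xi)|$, whence $|(G^n)'(z)|^{-\delta}=e^{-\delta\lambda_\rho(\omega)}$. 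Since $\delta=\HD(J_\mathcal{G})=\HD([0,1])=1$, the measure $\mu_n$ on $G^{-n}(\xi)$ defined by \eqref{1_2017_04_03} pulls back under this bijection to the measure $\mu_n$ on $E_\rho^n$ defined by \eqref{1ms6}, and the random variable $\Delta_n$ defined via $G$ in \eqref{1_2017_04_04} pulls back to the $\Delta_n$ of \eqref{2m56}.

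Second I would verify the hypotheses of Theorem~\ref{t1m58}. The Gauss system is an IFS, hence (trivially) finitely irreducible. It is classically known to be hereditarily regular (see \cite{MU_TAMS}), hence in particular strongly regular. For D--genericity it suffices, by Proposition~\ref{p1nh13} and Remark~\ref{generic}, to exhibit two indices $n,m\in\mathbb{N}$ such that $\log|g'_n(x_n)|/\log|g'_m(x_m)|\notin\mathbb{Q}$; a direct computation using $x_n=(-n+\sqrt{n^2+4})/2$ and $g'_n(x_n)=-x_n^2$ verifies this for $n=1$, $m=2$, yielding the ratio $\log(x_1^2)/\log(x_2^2)$, which is readily checked to be irrational.

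With all hypotheses satisfied, Theorem~\ref{t1m58} gives the asserted weak convergence of $\mu_n\circ\Delta_n^{-1}$ to the Gaussian distribution $\mathcal{N}_0(\sigma^2)$ with $\sigma^2=\P''(\delta)=\P''(1)>0$, and via the identification of paragraph one this is precisely the conclusion of the theorem. The main technical point requiring care, though not really an obstacle, is justifying that the spectral framework of Section~\ref{CRPFOSDG} (meromorphic extension of the complex RPF operators, analytic perturbation of the leading eigenvalue, and the contour arguments behind Theorem~\ref{t1m58}) goes through for the Gauss system under the $q=2$ iteration convention; this has already been accommodated throughout the monograph and is explicitly sanctioned by Remark~\ref{r1_2017_04_01}.
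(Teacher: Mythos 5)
Your proposal is correct and follows essentially the same route as the paper: the monograph states this theorem as a direct application of Theorem~\ref{t1m58} to the Gauss IFS $\mathcal G$ (under the $q=2$ iteration convention of Remark~\ref{r1_2017_04_01}), after noting that $\mathcal G$ is finitely irreducible, hereditarily (hence strongly) regular by \cite{MU_TAMS}, and D--generic by inspecting fixed points of the generators. Your verification of D--genericity from two fixed points via Remark~\ref{generic}, and your explicit symbolic identification of $G^{-n}(\xi)$ with $E_\rho^n$ and of $|(G^n)'(z)|^{-\delta}$ with $e^{-\delta\lambda_\rho(\omega)}$ where $\delta=1$, supply exactly the details the paper leaves implicit.
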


\begin{rem}\label{r2ex5} 
Theorem~\ref{t1ex4} holds in particular if $f: X \mapsto X$ is a conformal expanding repeller with $X$ a compact subset (a topological Cantor set) of $\mathbb R$.
\end{rem}

\subsection{Hyperbolic (Expanding) Rational Functions of the Riemann Sphere $\widehat\C$}
One of the most celebrated conformal expanding repellers are hyperbolic (expanding) rational functions of the Riemann sphere 
$\widehat{\C}$ restricted to the Julia sets and already mentioned in subsection~\ref{CER}. For the sake of completeness and convenience of the reader, let us briefly describe them. Let $f:\oc\to\oc$ be a rational function of degree $d\ge 2$. Let $J(f)$ denote the Julia sets of $f$ and let
$$
\Crit(f):=\{c\in\oc:f'(c)=0\}
$$
be the set of all critical (branching) points of $f$. Put
$$
\PC(f):=\bu_{n=1}^\infty f^n(\Crit(f))
$$
and call it the postcritical set of $f$. The rational map $f:\oc\to\oc$ is said to be hyperbolic (expanding) if the restriction  $f|_{J(f)}: J(f) \to J(f)$ satisfies 
\beq\label{5_2016_07_07}
\inf\{|f'(z)|:z\in J(f)\} > 1
\eeq
or, equivalently, 
\beq\label{6_2016_07_07}
|f'(z)|>1
\eeq
for all $z\in J(f)$. Another, topological, characterization of expandingness is the following.

\bfact
A rational function $f:\oc\to\oc$ is expanding if and only if 
$$
J(f)\cap\ov{\PC(f)}=\es.
$$
\efact
\fr It is immediate from this characterization that all the polynomials $z\mapsto z^d$, $d\ge 2$, are expanding along with their small perturbations $z\mapsto z^d+\e$; in fact expanding rational functions are commonly believed to form a vast majority amongst all rational functions. This is known at least for polynomials with real coefficients. 

\sp It is known from \cite{Zdunik} (see also Section~3 of \cite{PUtame}) that the only essentially linear expanding rational functions are the maps of the form
$$
\oc\ni z\longmapsto f_d(z)=:z^d\in\oc, \  \  |d|\ge 2.
$$
In consequence the only non $D$-generic rational functions of the Riemann sphere $\oc$ are these functions $f_d$. So, as an immediate consequence of Theorem~\ref{t1ex4}, we get the following.

\begin{thm}\label{t1ex6}
Let $f:\oc\to\oc$ be a hyperbolic (expanding) rational function of the Riemann sphere $\widehat{\mathbb{C}}$ not of the form $\oc\ni z\longmapsto z^d\in\oc$, $|d|\ge 2$. Let $\delta:= \HD(J(f))$. 

\sp \begin{enumerate}

\item Let $m_\delta$ be the unique $\delta$-conformal measure for $f$ on the Julia set $J(f)$, which coincides with the normalized $\d$--dimensional Hausdorff measure on $J(f)$. 

\sp\item Let $\mu_\delta$ be the unique 
$f$-invariant Borel probability measure on $J(f)$ absolutely continuous (in fact known to be equivalent) with respect to $m_\delta$. 

\sp\item
Let $\psi_\delta:= \frac{d\mu_\delta}{d m_\delta}$.  

\sp\item Fix $\xi \in J(f)$ arbitrary and $Y \subset B(\xi,\a)$ (where $\a>0$ is sufficiently small as described in subsection~\ref{CER}), an arbitrary set consisting of at least two distinct points.  

\sp\item
Let $B \subset J(f)$ be an arbitrary Borel set such that $m_\delta(\partial B)=0$ (equivalently that $\mu_\delta(\partial B)=0$).   
\end{enumerate}

\sp Then
\beq\label{1ex4M}
\lim_{T \to +\infty}  \frac{N_\xi(f; B,T)}{e^{\delta T}} = \frac{\psi_\delta(\xi)}{\delta \chi_\delta} m_\delta(B),
\eeq
\beq\label{2ex4M}
\lim_{T \to +\infty}  \frac{N_p(f; B,T)}{e^{\delta T}} = \frac{1}{\delta \chi_\delta} \mu_\delta(B),
\eeq
and 
\beq\label{1ex5M}
\lim_{T \to +\infty}  \frac{D_Y^\xi(f; B,T)}{e^{\delta T}} 
= \lim_{T \to +\infty}  \frac{E_Y^\xi(f; B,T)}{e^{\delta T}} 
=C_\xi(Y) m_\delta(B),
\eeq
where $C_\xi(Y)\in (0,+\infty)$ is a constant depending only on the repeller $f$, the point $\xi\in J(f)$, and the set $Y$. In addition
\beq\label{1da7.1_B}
K^{-2\d}(\d\chi_\d)^{-1}\diam^\d(Y)
\le C_\xi(Y)
\le K^{2\d}(\d\chi_\d)^{-1}\diam^\d(Y),
\eeq
and the function 
$$
\xi\longmapsto C_\xi(Y)\in (0,+\infty)
$$
is locally constant on some sufficiently small neighborhood of $Y$.
\end{thm}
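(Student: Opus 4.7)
The plan is to deduce Theorem~\ref{t1ex6} as an immediate corollary of Theorem~\ref{t1ex4}. For this I need to verify two things about the restriction $f|_{J(f)}:J(f)\to J(f)$: first, that it is a conformal expanding repeller in the sense of Definition~\ref{exprep}; second, that it is $D$--generic in the sense used in Proposition~\ref{p1ex4.1}.

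The first point is essentially Proposition~\ref{p1_2015_05_11}, which is already recorded in the excerpt. The hyperbolicity hypothesis gives~\eqref{5_2016_07_07}, so $|f'|_{J(f)}|>1$; compactness, forward invariance, and topological transitivity (indeed topological exactness) of $f|_{J(f)}$ are classical. Finally, because $\ov{\PC(f)}\cap J(f)=\es$, we may pick a small open neighborhood $V\spt J(f)$ whose closure is disjoint from $\ov{\PC(f)}$, and then $J(f)=\bi_{n\ge 0}f^{-n}(V)$. Thus all axioms of Definition~\ref{exprep} are met.

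The main ingredient is the second point, and here I would appeal to Sullivan's dichotomy between essentially linear and essentially non--linear expanding repellers together with the theorem of Zdunik cited in the excerpt (see \cite{Zdunik} and Section~3 of \cite{PUtame}), which states that the only essentially linear expanding rational functions are, up to conjugacy, the power maps $z\mapsto z^d$, $|d|\ge 2$. Since the hypothesis of Theorem~\ref{t1ex6} excludes exactly these maps, $f|_{J(f)}$ is essentially non--linear, and Proposition~\ref{p1_2017_01_25} then guarantees $D$--genericity.

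With both conditions verified, Theorem~\ref{t1ex4} applies directly to $f|_{J(f)}$ and delivers, word for word, the three asymptotic formulae~\eqref{1ex4M}, \eqref{2ex4M}, \eqref{1ex5M}, the two--sided bound~\eqref{1da7.1_B}, and the local constancy of $\xi\mapsto C_\xi(Y)$ on a small neighborhood of $Y$. It remains only to observe that the $\delta$--conformal measure $m_\delta$ and its invariant counterpart $\mu_\delta$ produced via the conformal GDMS $\cS_{\cR}$ associated to any Markov partition $\cR$ of $J(f)$ of sufficiently small mesh (whose existence is ensured by Theorem~\ref{t_Markov_Partitions}) are identified with the familiar $\delta$--conformal and equivalent invariant probability measures attached to $f$ itself, so the constants appearing in the statement are the intended ones. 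I do not expect any real obstacle; the single non--routine input is the appeal to Zdunik's classification to secure $D$--genericity, after which the result is bookkeeping on top of Theorem~\ref{t1ex4}.
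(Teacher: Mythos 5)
Your proposal is correct and follows exactly the paper's own route: the paper deduces Theorem~\ref{t1ex6} as an immediate consequence of Theorem~\ref{t1ex4}, noting via Zdunik's classification that the excluded power maps $z\mapsto z^d$ are the only essentially linear (hence the only non $D$--generic) expanding rational functions, so that Proposition~\ref{p1_2017_01_25} supplies $D$--genericity. Nothing is missing.
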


Fixing a Markov partition for the map $f:J(f)\to J(f)$, as immediate consequences of Theorems~\ref{t1_2017_04_03} -- \ref{t1_2017_04_04} we get the following  stochastic laws, primarily Central Limit Theorems, for the dynamical system $(f,\mu_\d)$. 

We begin with a Central Limit Theorem for the 
expansion on orbits.  

\begin{thm}\label{t11_2017_04_03} 
Let $f:\oc\to\oc$ be a hyperbolic (expanding) rational function of the Riemann sphere $\widehat{\mathbb{C}}$ not of the form $\oc\ni z\longmapsto z^d\in\oc$, $|d|\ge 2$. With notation of Theorem~\ref{t1ex4} there exists $\sigma^2 > 0$ 
(in fact $\sg^2=\P''(0)> 0$) such that if $G \subset \mathbb R$
is a Lebesgue measurable set with $\hbox{{\rm Leb}}(\partial G) = 0$, then 
$$
\lim_{n \to +\infty}
\mu_\d\left(
\left\{
z\in J(f) \hbox{ : } \frac{\log \big|(f^n)'(z)\big| - \chi_{\mu_\d} n}{\sqrt{n}}
\in G
\right\}
\right)
= \frac{1}{\sqrt{2\pi}\sigma} \int_G e^{-\frac{t^2}{2\sigma^2}} \,dt.
$$  
In particular, for any $\alpha <\beta$
$$
\lim_{n \to +\infty}
\mu_\d\left(
\left\{
z\in J(f) \hbox{ : } \alpha \leq \frac{\log\big|(f^n)'(z)\big| - \chi_{\mu_\d} n}{\sqrt{n}}
\leq \beta
\right\}
\right)
= \frac{1}{\sqrt{2\pi}\sigma} \int_\a^\b e^{-\frac{t^2}{2\sigma^2}}\, dt.
$$  
\end{thm}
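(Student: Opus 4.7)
The plan is to deduce Theorem~\ref{t11_2017_04_03} as an essentially immediate corollary of Theorem~\ref{t1_2017_04_03}, which has already been established for every $D$--generic conformal expanding repeller. The reduction proceeds in three short steps.

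First, I would invoke Proposition~\ref{p1_2015_05_11} to observe that, since $f$ is hyperbolic, the restriction $f|_{J(f)}:J(f)\to J(f)$ is a conformal expanding repeller in the sense of Definition~\ref{exprep}. Second, one must verify the $D$--genericity hypothesis. By the result of Zdunik (as cited just before the theorem), the only essentially linear expanding rational functions of $\widehat{\mathbb C}$ are the maps $z\mapsto z^d$, $|d|\ge 2$; since we have excluded these, $f|_{J(f)}$ is essentially non--linear, and Proposition~\ref{p1_2017_01_25} then gives that the repeller is $D$--generic.

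Third, I would apply Theorem~\ref{t1_2017_04_03} directly to $f|_{J(f)}$. To translate its conclusion into the statement of Theorem~\ref{t11_2017_04_03} one fixes a Markov partition $\mathcal R=\{R_e\}_{e\in F}$ of sufficiently small diameter (Theorem~\ref{t_Markov_Partitions}) and considers the induced finite conformal attracting GDMS $\mathcal S_\mathcal R$ from Subsection~\ref{CER}. If $z\in J(f)\setminus\bigcup_n f^{-n}(\partial\mathcal R)$ is coded by $\omega\in E_A^\infty$, the chain rule and the fact that each $\phi_e$ is a local inverse branch of $f$ give
$$
\log\bigl|(f^n)'(z)\bigr|=-\log\bigl|\phi_{\omega|_n}'\bigl(\pi_{\mathcal R}(\sigma^n(\omega))\bigr)\bigr|.
$$
Since $\pi_{\mathcal R}$ pushes the symbolic invariant Gibbs state forward to the geometric $f$--invariant measure $\mu_\delta$, and since $\mu_\delta(\partial\mathcal R)=0$, the measure--theoretic identification is exact modulo a $\mu_\delta$--null set. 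Theorem~\ref{t1ms1} applied to $\mathcal S_\mathcal R$ therefore yields the desired Gaussian limit after pushing forward by $\pi_{\mathcal R}$.

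The only points that require any care, rather than genuine obstacles, are: (i) checking that $\mu_\delta$ charges no boundary of the Markov partition, which is standard for conformal measures of expanding repellers; and (ii) ensuring the positivity $\sigma^2=\mathrm P''(0)>0$, which by Remark~\ref{r2_2017_02_17} is equivalent to $\log|f'|$ not being cohomologous to a constant on $J(f)$ — and this is precisely what the $D$--genericity established in the second step rules out.
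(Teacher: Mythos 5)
Your proposal is correct and follows exactly the route the paper takes: the paper derives Theorem~\ref{t11_2017_04_03} as an immediate consequence of Theorem~\ref{t1_2017_04_03} for $D$--generic conformal expanding repellers, with $D$--genericity supplied by Zdunik's classification of essentially linear rational maps together with Proposition~\ref{p1_2017_01_25}, and the symbolic model supplied by a Markov partition as in Subsection~\ref{CER}. Your additional remarks on the chain--rule identification, the nullity of the partition boundary, and the positivity of $\sigma^2$ simply make explicit the steps the paper leaves implicit.
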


We next have a Central Limit Theorem for diameters.

\begin{thm}\label{t12_2017_04_03} 
Let $f:\oc\to\oc$ be a hyperbolic (expanding) rational function of the Riemann sphere $\widehat{\mathbb{C}}$ not of the form $\oc\ni z\longmapsto z^d\in\oc$, $|d|\ge 2$. Let $\sigma^2:=\P''(0)> 0$. With the notation of Subsection~\ref{CER} for every $e \in F$ let $Y_e \subset R_e$ be a set with at least two points and if $G \subset \mathbb R$ is a Lebesgue measurable set with $\hbox{\rm Leb}(\partial G) = 0$, then 
$$
\lim_{n \to +\infty}
\mu_\d\left(
\left\{
z\in J(f) \hbox{ : }
 \frac{
 -\log \hbox{\rm diam}\(f_x^{-n}(Y_{e(z,n)})\)  - \chi_{\mu_\d} n
 }{\sqrt{n}}
\in G
\right\}
\right)
= \frac{1}{\sqrt{2\pi}\sigma} \int_G e^{-\frac{t^2}{2\sigma^2}}\, dt.
$$  
In particular, for any $\alpha < \beta$
$$
\lim_{n \to +\infty}
\mu_\d\left(
\left\{
z\in J(f) \hbox{ : } \alpha \leq 
 \frac{-\log \hbox{\rm diam}\(f_x^{-n}(Y_{e(z,n)})\) - \chi_{\mu_\d} n}{\sqrt{n}}
\leq \beta
\right\}
\right)
= \frac{1}{\sqrt{2\pi}\sigma} \int_\alpha^\beta e^{-\frac{t^2}{2\sigma^2}}\,dt.
$$  
\end{thm}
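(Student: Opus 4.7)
The plan is to reduce the statement to the symbolic Central Limit Theorem for diameters, Theorem~\ref{t1ms1-again}, applied to the finite-alphabet conformal GDMS $\cS_\cR$ canonically associated with a Markov partition $\cR = \{R_e\}_{e\in F}$ of sufficiently small diameter, as constructed in Subsection~\ref{CER}. First I would verify that $\cS_\cR$ satisfies the hypotheses of Theorem~\ref{t1ms1-again}: finite irreducibility follows from topological transitivity of $f$ on $J(f)$, strong regularity is automatic since the alphabet is finite and $\delta = \HD(J(f)) > 0$ with $\P(\delta)=0$, and $D$-genericity follows from Proposition~\ref{p1ex4.1} together with Proposition~\ref{p1_2017_01_25}, using the hypothesis that $f$ is not of the form $z\mapsto z^d$, which precisely rules out the only essentially linear case. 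Thus there exists $\sigma^2=\P''(0)>0$ for which Theorem~\ref{t1ms1-again} applies to $\cS_\cR$ with the measure $\mu_\delta$ on $E_A^\infty$.

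Next I would translate the symbolic CLT back to the dynamical one. Choose $\diam(\cR)$ smaller than the injectivity radius $\beta$ of Subsection~\ref{CER}. For every $z\in J(f)$ whose entire forward orbit avoids the (measure-zero) set $\partial\cR := \bu_{e\in F}\partial R_e$, there is a unique code $\omega(z)\in E_A^\infty$ such that $f^k(z)\in R_{i_k}$ with $\omega_k = (i_{k-1},i_k)$, and the local inverse branch $f_x^{-n}$ defined on a neighbourhood of $x=f^n(z)\in R_{i_n}$ coincides with $\phi_{\omega|_n}$ from $\cS_\cR$. Consequently $t(\omega_n)=i_n=e(z,n)$ and
\[
-\log\diam\(f_x^{-n}(Y_{e(z,n)})\) = -\log\diam\(\phi_{\omega|_n}(Y_{t(\omega_n)})\).
\]
Moreover, the pushforward $\pi_\cR_*\mu_\delta^{\mathrm{sym}}$ of the symbolic invariant measure coincides with the dynamical $\mu_\delta$ on $J(f)$, and $f$-invariance together with $\mu_\delta(\partial\cR)=0$ imply that the set of $z$ with some $f^k(z)\in\partial\cR$ has $\mu_\delta$-measure zero. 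Hence for every Lebesgue measurable $G\subset\R$ with $\Leb(\partial G)=0$,
\[
\mu_\delta\!\lt(\lt\{z\in J(f):\tfrac{-\log\diam(f_x^{-n}(Y_{e(z,n)}))-\chi_{\mu_\delta}n}{\sqrt n}\in G\rt\}\rt)
\]
equals, up to a null set, the symbolic probability appearing in Theorem~\ref{t1ms1-again}, so both displayed limits of the theorem follow at once.

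The main (minor) obstacle is the bookkeeping at boundaries of Markov cells: points may have several codes, and the identification of local inverse branches with the maps $\phi_{\omega|_n}$ of $\cS_\cR$ is only pointwise valid off the $f$-invariant null set $\bu_{k\ge 0}f^{-k}(\partial\cR)$. Since both the Lyapunov exponent $\chi_{\mu_\delta}$ and the variance $\sigma^2$ computed from $\cS_\cR$ on $E_A^\infty$ agree with their dynamical counterparts on $(J(f),f,\mu_\delta)$ (via the measure-preserving factor map $\pi_\cR$), this null set is harmless for the distributional statement, and the two particular-case formulae for intervals $[\alpha,\beta]$ are just the specialization $G=[\alpha,\beta]$.
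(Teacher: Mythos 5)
Your proposal is correct and follows essentially the same route as the paper: the paper also obtains this theorem as a direct consequence of the symbolic CLT for diameters (Theorem~\ref{t1ms1-again}) applied to the finite-alphabet GDMS $\cS_\cR$ coming from a Markov partition, passing through the intermediate statement for general conformal expanding repellers (Theorem~\ref{t2_2017_04_03}) and using the Zdunik/Przytycki--Urba\'nski fact that the only non-$D$-generic expanding rational maps are $z\mapsto z^d$. Your explicit handling of the boundary null set $\bu_{k\ge 0}f^{-k}(\partial\cR)$ is a detail the paper leaves implicit, but it is the right justification.
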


The following is a version of the law of the iterated 
function scheme.

\begin{thm}\label{t13_2017_04_04}
Let $f:\oc\to\oc$ be a hyperbolic (expanding) rational function of the Riemann sphere $\widehat{\mathbb{C}}$ not of the form $\oc\ni z\longmapsto z^d\in\oc$, $|d|\ge 2$. Let $\sigma^2:=\P''(0)> 0$. With the notation of Subsection~\ref{CER} for every $e \in F$ let $Y_e \subset R_e$ be a set with at least two points and if $G \subset \mathbb R$ is a Lebesgue measurable set with $\hbox{\rm Leb}(\partial G) = 0$, then for  $\mu_\d$--a.e. $z\in J(f)$, we have that
$$
\limsup_{n \to +\infty}
\frac{\log \big|(f^n)'(z)\big| - \chi_{\mu_\d} n}{\sqrt{n\log\log n}}
= \sqrt{2\pi} \sigma
$$
and 
$$
\limsup_{n \to +\infty}\frac{-\log \hbox{\rm diam}\(f_x^{-n}(Y_{e(z,n)})\)              - \chi_{\mu_\d}n}{\sqrt{n\log\log n}}
= \sqrt{2\pi} \sigma.
$$
\end{thm}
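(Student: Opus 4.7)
The plan is to deduce Theorem~\ref{t13_2017_04_04} from Theorem~\ref{t1ms1-2again} applied to the conformal attracting GDMS $\mathcal{S}_{\mathcal{R}}$ associated to a Markov partition $\mathcal{R}$ of the hyperbolic rational function $f : J(f) \to J(f)$, exactly as this is done in Subsection~\ref{CER} to prove Theorem~\ref{t1ex4}. First, I would fix a Markov partition $\mathcal{R} = \{R_e\}_{e \in F}$ of $J(f)$ with mesh smaller than the expansivity constant $\beta$, and form the finite-alphabet, finitely irreducible, strongly regular conformal GDMS $\mathcal{S}_{\mathcal{R}}$. Since $f$ is not of the form $z\mapsto z^d$, it is essentially non-linear, so by Proposition~\ref{p1_2017_01_25} (combined with Proposition~\ref{p1ex4.1}) the system $\mathcal{S}_{\mathcal{R}}$ is $D$-generic. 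Thus Theorem~\ref{t1ms1-2again} applies with $\sigma^2 = P''(\delta) > 0$, giving for $\mu_\delta$-a.e.\ $\omega \in E_A^{\infty}$
$$
\limsup_{n\to\infty}\frac{-\log\bigl|\phi_{\omega|_n}'(\pi_{\mathcal{S}}(\sigma^n(\omega)))\bigr|-\chi_{\mu_\delta}n}{\sqrt{n\log\log n}}=\sqrt{2\pi}\,\sigma,
$$
and similarly for $-\log\diam(\phi_{\omega|_n}(Y_{t(\omega_n)}))$.

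The second step is to transfer these statements to the rational map via the coding $\pi_{\mathcal{R}}:E_A^{\infty}\to J(f)$. For $\omega\in E_A^{\infty}$ with $z=\pi_{\mathcal{R}}(\omega)$, the definition of the system $\mathcal{S}_{\mathcal{R}}$ (the inverse branches of $f$ arising from the Markov partition) gives $\phi_{\omega|_n}=f_z^{-n}$ on a neighbourhood of $f^n(z)=\pi_{\mathcal{R}}(\sigma^n(\omega))$, and therefore by the chain rule
$$
\bigl|\phi_{\omega|_n}'\!\bigl(\pi_{\mathcal{R}}(\sigma^n(\omega))\bigr)\bigr|=\bigl|(f^n)'(z)\bigr|^{-1}.
$$
Pushing the symbolic LIL forward under $\pi_{\mathcal{R}}$ and using that the boundary of the Markov partition has $\hat m_\delta$-measure zero (so that $\pi_{\mathcal{R}}$ is a measure-theoretic isomorphism between $(E_A^{\infty},\mu_\delta)$ and $(J(f),\hat\mu_\delta)$), yields the first asserted equality for $\hat\mu_\delta$-a.e.\ $z\in J(f)$.

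For the diameter statement, note that once $z$ codes as $\omega$ and $R_{e(z,n)}$ is the unique cell containing $f^n(z)$, the map $\phi_{\omega|_n}$ coincides with the inverse branch $f_x^{-n}$ sending $x=f^n(z)$ back to $z$, so $\phi_{\omega|_n}(Y_{t(\omega_n)})=f_z^{-n}(Y_{e(z,n)})$. The Bounded Distortion Property (BDP) of $\mathcal{S}_{\mathcal{R}}$, which holds with a universal constant $K$ for all $\omega\in E_A^*$, then gives
$$
\bigl|\log\diam\!\bigl(f_z^{-n}(Y_{e(z,n)})\bigr)+\log\bigl|(f^n)'(z)\bigr|\bigr|\le \log K+\log\diam(Y_{e(z,n)})+\log K,
$$
so the difference is uniformly bounded in $n$. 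Since $\sqrt{n\log\log n}\to\infty$, this bounded discrepancy does not affect the $\limsup$, and the second asserted equality follows from the first.

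The only delicate point is the identification between the symbolic measure $\mu_\delta$ and the geometric measure $\hat\mu_\delta$ on $J(f)$, which requires controlling the $\hat m_\delta$-null boundary $\pi_{\mathcal{R}}^{-1}(\partial\mathcal{R})$ — but this is already handled in the proof of Theorem~\ref{t1ex4} and is automatic in the conformal expanding repeller setting, since any point of $J(f)$ has at most finitely many codes and the set of such points carries zero conformal measure. Beyond that, the proof is a routine transcription of the symbolic LIL through the conjugacy.
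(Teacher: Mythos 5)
Your proposal is correct and follows essentially the same route as the paper: the paper obtains this theorem as an immediate consequence of the symbolic Law of the Iterated Logarithm (Theorem~\ref{t1ms1-2again}) applied to the GDMS $\cS_\cR$ built from a Markov partition, with $D$-genericity supplied by essential non-linearity of non-power maps (Propositions~\ref{p1ex4.1} and \ref{p1_2017_01_25}), and the diameter version handled by the bounded-distortion comparison, exactly as you describe. The only trivial quibble is your writing $\sigma^2=\P''(\d)$ where the statement (and Theorem~\ref{t1ms1-2again}) normalizes the variance as $\P''(0)$.
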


\begin{thm}\label{t14_2017_04_03}
If $f:\oc\to\oc$ is a hyperbolic (expanding) rational function of the Riemann sphere $\widehat{\mathbb{C}}$, then for every $\xi\in J(f)$, we have that
\begin{equation}\label{12_2017_04_04}
\lim_{n \to +\infty} \int_{f^{-n}(\xi)} \frac{\log\big|(f^n)'\big|}{n} d\mu_n 
= \chi_\delta.
\end{equation}
\end{thm}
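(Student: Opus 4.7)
My plan is to reduce this to the symbolic/GDMS statement of Theorem~\ref{t1ms5.2} via the Markov partition coding already developed in Subsection~\ref{CER}. Fix a Markov partition $\cR=\{R_e:e\in F\}$ of sufficiently small diameter for $f:J(f)\to J(f)$, and let $\cS_\cR=\{f_{i,j}^{-1}\}_{(i,j)\in E}$ be the associated finite conformal GDMS on $J(f)$. Since $f|_{J(f)}$ is topologically transitive and expanding with $\HD(J(f))=\d$, the system $\cS_\cR$ is finitely irreducible and strongly regular with $\delta_{\cS_\cR}=\d$; moreover its invariant measure $\mu_\d$ coincides (under the coding map $\pi_\cR$) with the unique $f$-invariant Borel probability measure on $J(f)$ that is equivalent to the $\d$-conformal measure $m_\d$, and its Lyapunov exponent equals $\chi_\d$.

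First I would pick any code $\rho\in E_A^\infty$ with $\pi_\cR(\rho)=\xi$; such a $\rho$ exists for every $\xi\in J(f)$, and for the countably many $\xi$ lying on the boundary of the partition one can either refine $\cR$ or replace the sum over $f^{-n}(\xi)$ by one over the corresponding (finitely many) codes, which does not affect the limit. For every $z\in f^{-n}(\xi)$ there is a unique word $\om\in E_\rho^n$ with $\phi_\om(\xi)=z$, and by the inverse function rule
\[
\big|(f^n)'(z)\big|^{-1}=|\phi_\om'(\xi)|,
\qquad
\log\big|(f^n)'(z)\big|=-\log|\phi_\om'(\xi)|=\lam_\rho(\om).
\]
Hence $|(f^n)'(z)|^{-\d}=e^{-\d\lam_\rho(\om)}$, and consequently the measures $\mu_n$ defined in \eqref{1_2017_04_03} transport under this bijection precisely to the measures $\mu_n$ on $E_\rho^n$ introduced in \eqref{1ms6}. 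Therefore
\[
\int_{f^{-n}(\xi)}\frac{\log|(f^n)'|}{n}\,d\mu_n
=\int_{E_\rho^n}\frac{\lam_\rho}{n}\,d\mu_n.
\]

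To conclude, I invoke Theorem~\ref{t1ms5.2} applied to the GDMS $\cS_\cR$: since $\cS_\cR$ is finitely irreducible and strongly regular, the right-hand side above converges to $\chi_{\mu_\d}$, which under the coding $\pi_\cR$ is precisely the geometric Lyapunov exponent $\chi_\d=\int\log|f'|\,d\mu_\d$. This yields \eqref{12_2017_04_04}.

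The main (essentially only) obstacle in this scheme is bookkeeping at the boundary of the Markov partition: a point $\xi$ may have finitely many distinct codes, and a preimage $z\in f^{-n}(\xi)$ landing on a face of $\cR$ corresponds to several words simultaneously. This is controlled by Theorem~\ref{t_Markov_Partitions} (one may shrink or slightly perturb $\cR$ so that $\xi$ and its entire forward $f$-orbit avoid $\bd\cR$, which is always possible since $\bd\cR$ is a closed set of $m_\d$-measure zero and $\xi$ was arbitrary; alternatively one absorbs the duplicate words into a bounded multiplicity constant which disappears after dividing by $n$). With that cosmetic adjustment, the reduction is exact and the proof is complete.
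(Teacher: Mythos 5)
Your proposal is correct and follows essentially the same route as the paper: the paper obtains Theorem~\ref{t14_2017_04_03} as an immediate consequence of the conformal-expanding-repeller version (Theorem~\ref{t4_2017_04_03}), which is itself the Markov-partition reduction to the symbolic result Theorem~\ref{t1ms5.2} for the associated finite GDMS $\cS_\cR$, exactly as you describe. Your explicit handling of the partition-boundary codings is the standard bookkeeping the paper leaves implicit, and correctly noting that no $D$-genericity is needed here is consistent with the hypotheses of Theorem~\ref{t1ms5.2}.
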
 
 
Finally, we have a Central Limit Theorem for counting.   
 
\begin{thm}\label{t24_2017_04_04} 
If $f:\oc\to\oc$ is a hyperbolic (expanding) rational function of the Riemann sphere $\widehat{\mathbb{C}}$ not of the form $\oc\ni z\longmapsto z^d\in\oc$, $|d|\ge 2$, then the sequence of random variables $(\Delta_n)_{n=1}^\infty$ converges in distribution to the normal (Gaussian) distribution 
$\mathcal N_0(\sigma)$ with mean value zero and the variance $\sigma^2 = P''(\delta)>0$.  
Equivalently, the sequence 
$(\mu_n \circ \Delta_n^{-1})_{n=1}^\infty$
converges weakly to the normal distribution $\mathcal N_0(\sigma^2)$.  This means that for every Borel set $F \subset \mathbb R$ with 
$\hbox{\rm Leb}(\partial F) = 0$, we have 
\begin{equation}\label{1_2017_04_04}
\lim_{n \to +\infty} \mu_n(\Delta_n^{-1}(F))
= \frac{1}{\sqrt{2\pi} \sigma}
\int_F e^{- t^2/2\sigma^2} dt.
\end{equation} 
\end{thm}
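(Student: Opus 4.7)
The plan is to deduce this result as a direct consequence of Theorem~\ref{t1m58} applied to the finite attracting conformal GDMS $\cS_\cR$ associated to $f$ via a Markov partition, as constructed in Subsection~\ref{CER}. First, I choose a Markov partition $\cR=\{R_e:e\in F\}$ of sufficiently small diameter so that $\xi\in\bigcup_{e\in F}\Int(R_e)$, and let $\rho\in E_A^\infty$ be the unique code of $\xi$, i.e.\ $\pi_\cR(\rho)=\xi$. The system $\cS_\cR$ is strongly regular, finitely irreducible, and \emph{$D$-generic}: the latter holds since $f$ is not of the form $z\mapsto z^d$, so by the theorem of Zdunik (\cite{Zdunik}, see Proposition~\ref{p1_2017_01_25}) $f$ is essentially non-linear, hence by Proposition~\ref{p1ex4.1} the system $\cS_\cR$ is $D$-generic. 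Thus the hypotheses of Theorem~\ref{t1m58} are in force for $\cS_\cR$, and in particular $\sigma^2=\P''(\delta)>0$.

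Next, I would set up the bijection between the dynamical and symbolic counting schemes. Because $\xi\in\Int(R_{e(\xi)})$, for each integer $n\ge 1$ the map that sends $\omega\in E_\rho^n$ to the point
\[
z_\omega:=f_{\om_1}^{-1}\circ\cdots\circ f_{\om_n}^{-1}(\xi)=\phi_\omega(\xi)
\]
is, modulo the $m_\delta$-null set of preimages landing in $\bigcup_{e\in F}\partial R_e$, a bijection of $E_\rho^n$ onto $f^{-n}(\xi)$; moreover $f^n(z_\omega)=\xi$ and the uniqueness of $n(z_\omega)$ from \eqref{1ex2}--\eqref{2ex2} is immediate. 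The chain rule gives $|(f^n)'(z_\omega)|=|\phi_\omega'(\xi)|^{-1}$, hence
\[
\log|(f^n)'(z_\omega)|=-\log|\phi_\omega'(\pi_\cR(\rho))|=\lambda_\rho(\omega),
\]
and $|(f^n)'(z_\omega)|^{-\delta}=|\phi_\omega'(\pi_\cR(\rho))|^\delta=e^{-\delta\lambda_\rho(\omega)}$. Consequently, the ``dynamical'' measure $\mu_n$ from \eqref{1_2017_04_03} transports exactly to the ``symbolic'' measure $\mu_n$ from \eqref{1ms6}, and the function $\Delta_n$ from \eqref{1_2017_04_04} transports exactly to $\Delta_n$ from \eqref{2m56}. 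Thus for every Borel $F\sbt\R$ we have the equality
\[
\mu_n(\Delta_n^{-1}(F))=\mu_n^{\text{symb}}((\Delta_n^{\text{symb}})^{-1}(F)),
\]
where the right-hand side is the quantity analyzed in Theorem~\ref{t1m58}.

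Invoking Theorem~\ref{t1m58} for $\cS_\cR$ with the reference word $\rho$, the symbolic sequence $(\mu_n^{\text{symb}}\circ(\Delta_n^{\text{symb}})^{-1})_{n=1}^\infty$ converges weakly to $\mathcal N_0(\sigma^2)$ with $\sigma^2=\P''(\delta)>0$. By the identification above, this is precisely formula \eqref{1_2017_04_04}, completing the proof.

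The only point requiring care is the comment that the bijection $\omega\mapsto z_\omega$ may fail on the (at most countable) set of $\omega$ for which $\phi_\omega(\xi)\in\partial R_e$ for some $e$; since $\xi\in\Int(R_{e(\xi)})$, such exceptional $\omega$ can be handled by either perturbing $\xi$ slightly (the conclusion is independent of such a choice by continuity of the limiting Gaussian expression) or by observing directly that these boundary preimages form a thin set whose $\delta$-weight is negligible in the ratio defining $\mu_n$. This bookkeeping is the only technical obstacle, and it is routine once one has fixed a Markov partition with $\xi$ in its interior; the substantive analytic content is entirely contained in Theorem~\ref{t1m58}.
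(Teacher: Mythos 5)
Your proposal is correct and follows exactly the route the paper intends: the paper states this theorem as an immediate consequence of the counting CLT for conformal expanding repellers (itself derived from Theorem~\ref{t1m58} via the Markov-partition GDMS $\cS_\cR$), with $D$-genericity supplied by Zdunik's theorem excluding $z\mapsto z^d$. Your write-up merely makes explicit the transport of $\mu_n$ and $\Delta_n$ between the dynamical and symbolic settings, which the paper leaves implicit.
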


\sp\section{{\large{\bf Conformal Parabolic Dynamical Systems}}}\label{CPDS}
Now we move  onto dealing with parabolic systems. 
We consider first 1--dimensional examples.

\subsection{1--Dimensional Parabolic IFSs}\label{1dimparabolicexamples}
  
Theorems~\ref{t2pc6}, \ref{t1dp13} and \ref{t1dp13B} hold in particular if a parabolic system $\mathcal S$ is $1$--dimensional, i.e., if $X$ is a compact interval of $\mathbb R$.  Perhaps the best known,  and one of the most often considered, are the $1$-dimensional parabolic IFSs formed by (two) continuous inverse branches of  Manneville--Pomeau maps $f_\a:[0,1] \to [0,1]$ defined by the
$$
f_\a(x) = x + x^{1+\alpha} \hbox{ (mod $1$)},
$$
where $\alpha > 0$ is a fixed number and by the (two) continuous inverse branches of the Farey map (for this one Remark~\ref{r2_2017_02_17} applies with $q=2$)
$$
f(x)
= 
\begin{cases}
\frac{x}{1-x} &\hbox{ if } 0 \leq x \leq \frac{1}{2}\\
\frac{1-x}{x} &\hbox{ if } \frac{1-x}{x} \leq x \leq 1.
\end{cases}
$$
Observe that for parabolic points,
$$
\Om(f)=\Om(f_\a)=\{0\}
$$
for all $\a>0$. Furthermore,
$$
p(f)=1 \  \  {\rm and } \  \  p(f_\a)=\a
$$
for all $\a>0$, and
$$
\Om_\infty(f_\a)=
\begin{cases}
\es    &{\rm if } \  \  \a<1 \\
\{0\} &{\rm  if } \  \  \a\ge 1,
\end{cases}
$$
while
$$
\Om_\infty(f)=\{0\}.
$$
Of course for both systems, arising from $f_\a$ and $f$, the corresponding $\d$ number is equal to $1$ and $m_\d$ is the Lebesgue measure $\Leb$.

Another large class of $1$-dimensional parabolic maps, actually comprising the above, whose continuous inverse branches form a $1$-dimensional parabolic GDMS can be found in 
\cite{U2}. In conclusion, using also Corollary~\ref{c1da12.1J}, we have the following results which apply to all of them. 

\begin{thm}\label{t1ex18}
If $f:[0,1] \to [0,1]$ is the Farey map, then with notation of subsection \ref{CER} we have the following. Fix $\xi\in [0,1]$. 
If $B \subset [0,1]$ is a Borel set such that $\Leb(\bd B)=0$ and $Y\subset [0,1]$ is any set having at least two elements, then
\beq\label{1ex4par}
\lim_{T \to +\infty}  \frac{N_\xi(f; B,T)}{e^{T}} = \frac{\psi_1(\xi)}{\chi_1} \Leb(B),
\eeq
\beq\label{2ex4par}
\lim_{T \to +\infty}  \frac{N_p(f; B,T)}{e^{T}} = \frac1{\chi_1}\mu_1(B),
\eeq
and 
\beq\label{1ex5par}
\lim_{T \to +\infty}  \frac{D_Y^\xi(f; B,T)}{e^{T}} = 
\lim_{T \to +\infty}  \frac{E_Y^\xi(f; B,T)}{e^{T}} = C(Y)\Leb(B),
\eeq
where $C(Y)\in (0,+\infty]$ is a constant depending only on the map $f$ and the set $Y$. In addition $C(Y)$ is finite if 
and only if 
$$
0\notin\ov Y.
$$
\end{thm}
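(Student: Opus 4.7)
The plan is to realize the Farey map as a parabolic conformal IFS in the sense of Section~\ref{section:parabolic} and then simply quote the already-established general machinery. Concretely, I would take $\mathcal{S}=\{\phi_0,\phi_1\}$ on $X=[0,1]$, where $\phi_0(x)=x/(1+x)$ and $\phi_1(x)=1/(1+x)$ are the two continuous inverse branches of $f$. Then $\phi_0$ has a parabolic fixed point at $0$ with $|\phi_0'(0)|=1$, while $\phi_1$ is a uniform contraction, and the parabolic index is $p(0)=1$; the ``uniform contraction on $\mathcal{S}^q$'' subtlety pointed out in Remark~\ref{r1_2017_04_01} together with Remark~\ref{r2_2017_02_17} (with $q=2$) legitimizes $\mathcal{S}$ as a finite alphabet irreducible parabolic conformal IFS in the exact sense of Definition~\ref{Def_Parabolic}. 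The open set condition, bounded distortion and cone condition are immediate from the explicit M\"obius form of the $\phi_i$.

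Next I would identify the dynamical quantities: since $J_{\mathcal{S}}=[0,1]$ we have $\delta=\HD(J_{\mathcal{S}})=1$; by uniqueness of the $\delta$-conformal measure in Theorem~\ref{t1_2017_02_18}(2) together with the fact that Lebesgue measure satisfies the conformality condition $\Leb(\phi_\omega F)=\int_F|\phi_\omega'|\,d\Leb$, one concludes $m_\delta=\Leb$ and hence $\mu_\delta$ is the well-known $\sigma$-finite Farey invariant measure with density comparable to $1/x$, giving $\psi_1(\xi)=\psi_\delta(\xi)$ via the formula in Theorem~\ref{t1_2017_02_18}(3)(a). Moreover $\Omega=\{0\}$, $p=1$, and since $2p/(p+1)=1=\delta$ we get $\Omega_\infty=\Omega=\{0\}$.

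I would then translate the Farey counting functions into IFS counting functions: fixing any code $\rho\in E_A^\infty$ with $\pi_{\mathcal{S}}(\rho)=\xi$, the chain rule identifies $N_\xi(f;B,T)=N_\rho(B,T)$ and (via the standard bijection between length-$n$ periodic orbits of $f$ and fixed points of length-$n$ compositions, up to a uniformly bounded discrepancy on the ``boundary'' code which contributes $o(e^{\delta T})$ by Lemma~\ref{l1da8}-type considerations) $N_p(f;B,T)\sim N_p(B,T)$, and analogously $D_Y^\xi(f;B,T)=D_Y^\rho(B,T)$ and $E_Y^\xi(f;B,T)=E_Y^\rho(B,T)$. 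Then \eqref{1ex4par} and \eqref{2ex4par} follow directly from Theorem~\ref{t2pc6_B}, while \eqref{1ex5par} follows from Corollary~\ref{c1da12.1J}. The characterization of finiteness of $C(Y)$ comes from the dichotomy in Corollary~\ref{c1da12.1J}: condition (2) there reads $\delta>\max\{p(a):a\in\Omega,\,x_a\in\ov Y\}$, i.e.\ $1>1$, which always fails, so $C(Y)<+\infty$ if and only if (1) holds, i.e.\ $\ov Y\cap\Omega_\infty=\ov Y\cap\{0\}=\emptyset$, which is exactly $0\notin\ov Y$.

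I expect the only genuinely delicate step to be verifying that the Farey IFS literally satisfies the axioms of a parabolic conformal GDMS as stated in Section~\ref{section:parabolic} (in particular, the requirement that hyperbolic compositions be uniform contractions with the prescribed distortion bound), since $\phi_0$ alone is not a uniform contraction; this is precisely the reason for invoking the $q=2$ extension in Remark~\ref{r2_2017_02_17}. Once this verification is in place, the rest of the argument is a straightforward specialization of the general results, with no further analytic work required.
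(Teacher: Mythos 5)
Your proposal is correct and follows essentially the same route as the paper: realize the two inverse branches of the Farey map as a finite alphabet parabolic conformal IFS (legitimized via the $q=2$ device of Remarks~\ref{r1_2017_04_01} and \ref{r2_2017_02_17}), identify $\delta=1$, $m_\delta=\Leb$, $\Omega=\{0\}$, $p=1$, $\Omega_\infty(f)=\{0\}$, and then quote Theorem~\ref{t2pc6_B} and Corollary~\ref{c1da12.1J}, with the finiteness dichotomy for $C(Y)$ reducing to $0\notin\ov Y$ exactly as you argue. The only small slip is in your closing remark: the reason the second iterate is needed is not the parabolic branch $\phi_0$ (parabolic branches are exempt from uniform contraction) but the hyperbolic branch $\phi_1(x)=1/(1+x)$, whose derivative has modulus $1$ at $x=0$, so that condition (6) of the parabolic GDMS axioms fails at the first level.
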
 

Although this is not needed for our results in this monograph,
it is interesting that a simple calculation reveals that the attracting ``*" IFS of Section~\ref{section:parabolic} associated with the Farey IFS is just the Gauss IFS $\mathcal G$ described in Remark~\ref{r1ex5}.

As the next theorem shows, the counting situation is more complex in the case of Manneville-Pomeau maps.

\begin{thm}\label{t1ex18B}
If $\a>0$ and $f_\a:[0,1] \to [0,1]$ is the corresponding Manneville-Pomeau map, then with the notation of subsection \ref{CER} we have the following. Fix $\xi\in [0,1]$. 
If $B \subset [0,1]$ is a Borel set such that $\Leb(\bd B)=0$ and $Y \subset [0,1]$ is any set having at least two elements, then
\beq\label{1ex4par}
\lim_{T \to +\infty}  \frac{N_\xi(f_\a; B,T)}{e^{T}} = \frac{\psi_1(\xi)}{\chi_1} \Leb(B),
\eeq
\beq\label{2ex4par}
\lim_{T \to +\infty}  \frac{N_p(f_\a; B,T)}{e^{T}} = \frac1{\chi_1}\mu_1(B),
\eeq
and 
\beq\label{1ex5par}
\lim_{T \to +\infty}  \frac{D_Y^\xi(f_\a; B,T)}{e^{T}} = 
\lim_{T \to +\infty}  \frac{E_Y^\xi(f_\a; B,T)}{e^{T}} = C(Y)\Leb(B),
\eeq
where $C(Y)\in (0,+\infty]$ is a constant depending only on the map $f_\a$ and the set $Y$. In addition $C(Y)$ is finite if and only if either

\begin{enumerate}
\item 
$
0\notin \ov Y
$
or 
\item 
$
\a<1.
$
\end{enumerate}
\end{thm}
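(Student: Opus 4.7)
The plan is to realize the Manneville--Pomeau map $f_\alpha$ as a finite, irreducible, conformal parabolic iterated function system on $[0,1]$ and then read off Theorem~\ref{t1ex18B} as a direct specialization of Theorem~\ref{t2pc6_B} and Corollary~\ref{c1da12.1J} to this system. Let $\phi_0,\phi_1:[0,1]\to[0,1]$ denote the two continuous inverse branches of $f_\alpha$, where $\phi_0$ fixes the indifferent point $0$ and $\phi_1$ is a uniform contraction. The family $\mathcal S_\alpha=\{\phi_0,\phi_1\}$ is a maximal IFS with full incidence matrix. It satisfies all axioms (1)--(8) of a conformal parabolic IFS (Section~\ref{section:parabolic}): the open set condition, cone condition, and bounded distortion all follow from the real-analyticity of $f_\alpha$ on $(0,1)$ together with the standard analysis of the neutral branch $\phi_0$ at $0$, as carried out for precisely this class of maps in \cite{MU_Parabolic_1} and \cite{U2}. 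The only parabolic index is $0$, so $\Omega=\{0\}$, and Proposition~\ref{p1c5.13} identifies the parabolicity exponent as $p(0)=\alpha$.

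Since the limit set $J_{\mathcal S_\alpha}=[0,1]$, Theorem~\ref{t1_2017_02_18}(1) gives $\delta_{\mathcal S_\alpha}=\HD([0,1])=1$, and a direct verification of the defining relation \eqref{2_2016_12_14} shows that the unique $1$-conformal measure $\widehat m_1$ coincides with Lebesgue measure on $[0,1]$. Consequently the invariant measure $\widehat\mu_1$, its density $\psi_1$, and the Lyapunov exponent $\chi_1$ appearing in the theorem are precisely those associated with $\mathcal S_\alpha$. Formulae \eqref{1ex4par} and \eqref{2ex4par} for the multiplier counts $N_\xi(f_\alpha;B,T)$ and $N_p(f_\alpha;B,T)$ are then exactly the two statements of Theorem~\ref{t2pc6_B} applied to $\mathcal S_\alpha$, with $\widehat m_1=\mathrm{Leb}$. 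Formula \eqref{1ex5par} for $D_Y^\xi$ and $E_Y^\xi$ is Corollary~\ref{c1da12.1J} applied to $\mathcal S_\alpha$, yielding both identities with a common constant $C(Y)\in(0,+\infty]$.

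It remains only to extract the finiteness dichotomy for $C(Y)$ from that provided by Corollary~\ref{c1da12.1J}, which asserts that $C(Y)<\infty$ iff either $\overline Y\cap\Omega_\infty=\emptyset$ or $\delta>\max\{p(a):a\in\Omega,\;x_a\in\overline Y\}$. Here $\Omega=\{0\}$ with $x_0=0$, and by definition $0\in\Omega_\infty$ iff $\tfrac{2\alpha}{\alpha+1}\ge 1$, i.e.\ iff $\alpha\ge 1$. When $\alpha<1$, $\Omega_\infty=\emptyset$, the first condition holds unconditionally, and $C(Y)<\infty$ for every admissible $Y$. When $\alpha\ge 1$, $\Omega_\infty=\{0\}$, so the first condition reduces to $0\notin\overline Y$; the second condition becomes $1=\delta>\alpha$, which fails; hence $C(Y)<\infty$ iff $0\notin\overline Y$. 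Combining the two cases yields exactly the stated dichotomy: $C(Y)<\infty$ iff $0\notin\overline Y$ or $\alpha<1$.

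The principal substantive step is the first one, namely the verification that $\mathcal S_\alpha$ satisfies all the axioms of a parabolic conformal IFS together with the identification $\widehat m_1=\mathrm{Leb}$ and the reading of $p(0)=\alpha$; once this is in place the three asymptotic formulae and the finiteness criterion follow mechanically from the Part~II results, since no new estimates beyond the spectral/Tauberian analysis and the parabolic distortion bounds already encoded in Theorem~\ref{t2pc6_B} and Corollary~\ref{c1da12.1J} are required.
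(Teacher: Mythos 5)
Your proposal is correct and follows essentially the same route as the paper: the paper likewise realizes the two inverse branches of $f_\a$ as a finite irreducible conformal parabolic IFS (citing \cite{MU_Parabolic_1} and \cite{U2} for the axioms), records $\Om=\{0\}$, $p(0)=\a$, $\d=1$, $\^m_1=\Leb$, and $\Om_\infty(f_\a)=\es$ or $\{0\}$ according as $\a<1$ or $\a\ge 1$, and then invokes the general parabolic counting theorems (Theorem~\ref{t2pc6_B} and Corollary~\ref{c1da12.1J}) to obtain the three asymptotics and the finiteness dichotomy for $C(Y)$. Your case analysis for when $C(Y)<+\infty$ matches the paper's criterion exactly.
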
 

\fr In general, we have the following.

\begin{thm}\label{t1ex18C}
If $f$ is generated by a parabolic Cantor set of \cite{U2}, then with notation of subsection \ref{CER}, we have the following. 

Fix $\xi$ belonging to the limit set of the iterated function system associated to $f$. 
If $B \subset X$ is a Borel set such that $m_\d(\bd B)=0$ and $Y \subset [0,1]$ is any set having at least two elements and contained in a sufficiently small ball centered at $\xi$, then
\beq\label{1ex4par}
\lim_{T \to +\infty}  \frac{N_\xi(f; B,T)}{e^{\delta T}} = \frac{\psi_\d(\xi)}{\d\chi_\d} m_\delta(B),
\eeq
\beq\label{2ex4par}
\lim_{T \to +\infty}  \frac{N_p(f; B,T)}{e^{\delta T}} = \frac1{\d\chi_\d}\mu_\delta(B),
\eeq
and 
\beq\label{1ex5par}
\lim_{T \to +\infty}  \frac{D_Y^\xi(f; B,T)}{e^{\delta T}} = 
\lim_{T \to +\infty}  \frac{E_Y^\xi(f; B,T)}{e^{\delta T}} = C_\xi(Y) m_\delta(B),
\eeq
where $C_\xi(Y)\in (0,+\infty]$ is a constant depending only on the map $f$, the point $\xi$, and the set $Y$. In addition $C_\xi(Y)$ is infinite if and only if
$$
\xi\in \Om_\infty(f)\cap\ov Y 
\  \  {\rm and } \  \ 
p(\xi)\le \d.
$$
\end{thm}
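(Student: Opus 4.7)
The plan is to reduce everything to the conformal parabolic IFS framework already developed in the monograph, and then invoke Theorem~\ref{t2pc6_B} together with Corollary~\ref{c1da12.1J}. First, following the construction in \cite{U2}, I would associate to the parabolic Cantor map $f$ a finite-alphabet conformal parabolic IFS $\cS=\{\phi_e:e\in E\}$ satisfying all the axioms of Definition~\ref{Def_Parabolic}, whose generators are the continuous inverse branches of $f$ on the cells of a natural Markov partition, and whose limit set $J_\cS$ coincides, up to a countable set, with the Cantor set generated by $f$. The parabolic fixed points of $\cS$ are exactly the parabolic fixed points of $f$, and the parabolicity exponents $p_a$ provided by Proposition~\ref{p1c5.13} match the intrinsic parabolicity exponents $p(\xi)$ of $f$; in particular $\Omega_\infty(\cS)=\Omega_\infty(f)$ and $\delta_\cS=\delta=\HD(J_f)$. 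Because $\cS$ has a finite alphabet, Theorem~\ref{t1pc3} automatically ensures that the associated attracting system $\cS^*$ is $D$-generic, so the hypotheses of Theorem~\ref{t2pc6_B} and Corollary~\ref{c1da12.1J} are satisfied.

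Second, fix $\rho\in E_A^\infty$ with $\pi_\cS(\rho)=\xi$. As in Observation~\ref{o1ex4}, the Chain Rule and the Markov structure give a natural bijection, modulo a $\mu_\delta$-negligible set of boundary orbits, between $\{z\in f^{-n}(\xi):n(z)=n\}$ and $E_\rho^n$, via $z=\phi_\omega(\xi)$, with $\lambda(z)=-\log|\phi_\omega'(\xi)|=\lambda_\rho(\omega)$; an analogous correspondence $(\hat z,n)\mapsto\omega\in E_p^n$ holds for periodic pairs, with $\hat z=x_\omega$ and $\lambda_p(z)=\lambda_p(\omega)$. Moreover, the pushed-forward measures $\hat m_\delta$ and $\hat\mu_\delta$ for $\cS$ coincide with the $\delta$-conformal measure and its equivalent invariant version for $f$ on $J_f$. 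Consequently $N_\xi(f;B,T)=N_\rho(B,T)$, $D_Y^\xi(f;B,T)=D_Y^\rho(B,T)$ and $E_Y^\xi(f;B,T)=E_Y^\rho(B,T)$, while $N_p(f;B,T)$ and $N_p(B,T)$ differ only by contributions from periodic orbits meeting the boundary of the Markov partition; this discrepancy is controlled by Lemma~\ref{l1_2016_11_07} applied to a finite union of boundary cylinders of $m_\delta$-measure zero, and therefore grows strictly slower than $e^{\delta T}$ and is negligible. Formulas \eqref{1ex4par} and \eqref{2ex4par} for $f$ now follow directly from Theorem~\ref{t2pc6_B} applied to $\cS$ and $\rho$, and formula \eqref{1ex5par} follows from Corollary~\ref{c1da12.1J} applied to the set $Y\sbt X_{i(\rho_1)}$. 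Since $Y$ is contained in a sufficiently small ball around $\xi$, the set $\ov Y\cap\Omega$ is either empty or equal to $\{\xi\}$, and translating the dichotomy of Corollary~\ref{c1da12.1J} into this restricted situation yields precisely the stated characterization of when $C_\xi(Y)$ is infinite.

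The principal obstacle lies in the first step: verifying that the IFS extracted from a parabolic Cantor set in the sense of \cite{U2} genuinely satisfies all eight conditions of Definition~\ref{Def_Parabolic}, and, most delicately, that the parabolicity exponents $p_a$ intrinsic to $\cS$ coincide with those $p(\xi)$ recorded dynamically by $f$. This agreement is forced by combining Proposition~\ref{p1c5.13} with the explicit normal forms of $f$ near its neutral fixed points used in \cite{U2} to define $p(\xi)$. Once the identification $\Omega_\infty(\cS)=\Omega_\infty(f)$ and $\delta_\cS=\delta$ is in place, the finiteness criterion for $C_\xi(Y)$ transfers verbatim from the symbolic framework of $\cS$ to the dynamical setting of $f$, and no further work is needed beyond invoking the general theorems.
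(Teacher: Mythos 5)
Your proposal is correct and follows essentially the same route as the paper: there the result is obtained by observing that the continuous inverse branches of a parabolic Cantor map of \cite{U2} form a finite-alphabet one-dimensional parabolic conformal IFS, and then invoking Theorem~\ref{t2pc6_B} together with Theorems~\ref{t1dp13}, \ref{t1dp13B} and Corollary~\ref{c1da12.1J}. Your write-up merely makes explicit the dictionary between preimages/periodic points of $f$ and the symbolic counting functions, and the translation of the finiteness dichotomy for $C_\xi(Y)$, which the paper leaves implicit.
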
 

\sp With respect to the  stochastic laws, as an immediate consequence of the results in Subsections~\ref{CLT-Parabolic-1} and \ref{CLT-Parabolic-2} we get that the following results hold for systems considered in the current subsection.

We begin with a Central Limit Theorem for the expansion along 
orbits.  

\begin{thm}\label{t41_2017_04_03} 
Let $T$ be either a Manneville-Pomeau map $f_\a$ with $\a<1$, or generally, the map generated by a parabolic Cantor set of \cite{U2} with $\Om_\infty(T)=\es$. Let $J$ be either the interval $[0,1]$ (Manneville-Pomeau) or the parabolic Cantor set.
Let $\sigma^2 = \P''(0)> 0$.  With the notation of Subsection~\ref{CER} if $G \subset \mathbb R$ is a Lebesgue measurable set with $\hbox{{\rm Leb}}(\partial G) = 0$, then 
$$
\lim_{n \to +\infty}
\mu_\d\left(
\left\{
z\in J \hbox{ : } \frac{\log \big|(T^n)'(z)\big| - \chi_{\mu_\d} n}{\sqrt{n}}
\in G
\right\}
\right)
= \frac{1}{\sqrt{2\pi}\sigma} \int_G e^{-\frac{t^2}{2\sigma^2}} \,dt.
$$  
In particular, for any $\alpha <\beta$
$$
\lim_{n \to +\infty}
\mu_\d\left(
\left\{
z\in J \hbox{ : } \alpha \leq \frac{\log\big|(T^n)'(z)\big| - \chi_{\mu_\d} n}{\sqrt{n}}
\leq \beta
\right\}
\right)
= \frac{1}{\sqrt{2\pi}\sigma} \int_\a^\b e^{-\frac{t^2}{2\sigma^2}}\, dt.
$$  
\end{thm}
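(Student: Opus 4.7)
The plan is to deduce this theorem directly from the symbolic Central Limit Theorem for parabolic GDMSs, namely Theorem~\ref{t1ms1-again2}, via the natural coding map. First I would set up the conformal parabolic IFS $\cS$ naturally associated with $T$: its maps are the (finitely many) continuous inverse branches of $T$ determined by the Markov partition of $J$. For the Manneville--Pomeau map $f_\a$ one takes the two branches of $f_\a^{-1}$ (invoking Remark~\ref{r2_2017_02_17} so that the parabolic fixed point at $0$ is realized as a parabolic fixed point of an iterate of an element of $\cS$); for a parabolic Cantor set of \cite{U2} the system $\cS$ is given directly. In either case $\cS$ is a finite alphabet irreducible parabolic conformal IFS whose limit set is $J$, and the coding map $\pi_\cS:E_A^\infty\to J$ is continuous, surjective, semiconjugates $\sigma$ to $T$, i.e. $T\circ\pi_\cS=\pi_\cS\circ\sigma$, and is injective off a countable set (preimages of boundary points of the Markov partition), which carries zero mass for the atomless measure $\mu_{\d_\cS}$. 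Thus $\^\mu_{\d_\cS}=\mu_{\d_\cS}\circ\pi_\cS^{-1}$, and writing $\mu_\d$ for the invariant measure on $J$ (as in the statement) is unambiguous.

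Next I would verify the hypothesis of Theorem~\ref{t1ms1-again2}. By assumption $\Om_\infty(T)=\es$, which by the very definition of $\Om_\infty$ (equivalently Theorem~\ref{t1_2017_02_18}(3)(b)) means precisely that $\d_\cS>\frac{2p_\cS}{p_\cS+1}$. Hence Theorem~\ref{t1ms1-again2} applies and produces a number $\sg^2>0$ such that, for every Lebesgue measurable $G\subset\R$ with $\Leb(\bd G)=0$,
\[
\lim_{n\to+\infty}\mu_{\d_\cS}\lt(\lt\{\om\in E_A^\infty:\frac{-\log\big|\phi_{\om|_n}'\(\pi_\cS(\sg^n(\om))\)\big|-\chi_{\mu_{\d_\cS}}n}{\sqrt n}\in G\rt\}\rt)=\frac1{\sqrt{2\pi}\,\sg}\int_G e^{-t^2/2\sg^2}\,dt.
\]

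The remaining step is to translate this symbolic CLT into the dynamical statement. For every $\om\in E_A^\infty$, the finite composition $\phi_{\om|_n}$ is the inverse branch of $T^n$ sending $T^n(\pi_\cS(\om))=\pi_\cS(\sg^n(\om))$ back to $\pi_\cS(\om)$; consequently, setting $z:=\pi_\cS(\om)$, the chain rule gives
\[
\log\big|(T^n)'(z)\big|=-\log\big|\phi_{\om|_n}'(T^n(z))\big|=-\log\big|\phi_{\om|_n}'\(\pi_\cS(\sg^n(\om))\)\big|.
\]
Thus the random variable on $(E_A^\infty,\mu_{\d_\cS})$ appearing in Theorem~\ref{t1ms1-again2} is the pull-back under $\pi_\cS$ of the random variable $z\mapsto\frac{\log|(T^n)'(z)|-\chi_{\mu_\d}n}{\sqrt n}$ on $(J,\mu_\d)$. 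Since $\pi_\cS$ is measurable and $\pi_\cS_*\mu_{\d_\cS}=\mu_\d$, push-forward of the symbolic CLT gives the dynamical CLT claimed, and the ``in particular'' assertion is then the special case $G=[\a,\b]$.

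The main obstacle, as usual for parabolic systems, is not the symbolic CLT itself (which is bundled into Theorem~\ref{t1ms1-again2}), but rather ensuring the passage through the coding map is clean; this is where the hypothesis $\Om_\infty(T)=\es$ is essential, because without it $\mu_\d$ need not even be finite and the very formulation of the CLT breaks down. Once finiteness of $\mu_\d$ is established, the identification of derivatives along the semiconjugacy above reduces the whole matter to Theorem~\ref{t1ms1-again2}, and no additional analytic input (no renewed use of Young towers, no further spectral perturbation) is required.
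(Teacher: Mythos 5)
Your proposal is correct and follows exactly the route the paper intends: the theorem is stated there as an immediate consequence of the parabolic symbolic CLT (Theorem~\ref{t1ms1-again2}), with the hypothesis $\Om_\infty(T)=\es$ translating into $\d_\cS>\frac{2p_\cS}{p_\cS+1}$ and the chain-rule identity $\log|(T^n)'(\pi_\cS(\om))|=-\log|\phi_{\om|_n}'(\pi_\cS(\sg^n\om))|$ transporting the statement through the coding map. The only (harmless) quibble is that Remark~\ref{r2_2017_02_17} is needed for the Farey map rather than for the Manneville--Pomeau maps, whose inverse branches already form a parabolic IFS directly.
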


We next have a Central Limit Theorems for diameters.

\begin{thm}\label{t42_2017_04_03} 
Let $T$ be either a Manneville-Pomeau map $f_\a$ with $\a<1$, or generally, the map generated by a parabolic Cantor set of \cite{U2} with $\Om_\infty(T)=\es$. Let $J$ be either the interval $[0,1]$ (Manneville-Pomeau) or the parabolic Cantor set.
Let $\sigma^2 = \P''(0)> 0$.  With the notation of Subsection~\ref{CER},  
for every $e \in F$ let $Y_e \subset R_e$ be a set with at least two points, then if  $G \subset \mathbb R$ is a Lebesgue measurable set with $\hbox{\rm Leb}(\partial G) = 0$ we have 
$$
\lim_{n \to +\infty}
\mu_\d\left(
\left\{
z\in J \hbox{ : }
 \frac{
 -\log \hbox{\rm diam}\(T_x^{-n}(Y_{e(z,n)})\)  - \chi_{\mu_\d} n
 }{\sqrt{n}}
\in G
\right\}
\right)
= \frac{1}{\sqrt{2\pi}\sigma} \int_G e^{-\frac{t^2}{2\sigma^2}}\, dt.
$$  
In particular, for any $\alpha < \beta$
$$
\lim_{n \to +\infty}
\mu_\d\left(
\left\{
\omega \in J \hbox{ : } \alpha \leq 
 \frac{-\log \hbox{\rm diam}\(T_x^{-n}(Y_{e(z,n)})\) - \chi_{\mu_\d} n}{\sqrt{n}}
\leq \beta
\right\}
\right)
= \frac{1}{\sqrt{2\pi}\sigma} \int_\alpha^\beta e^{-\frac{t^2}{2\sigma^2}}\,dt.
$$  
\end{thm}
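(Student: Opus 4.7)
The overall strategy is to transport the symbolic Central Limit Theorem for diameters established in Theorem~\ref{t1ms1-again3} to the dynamical setting of $T$ via a Markov partition. First I would fix a Markov partition $\mathcal R = \{R_e\}_{e\in F}$ of $J$ for $T$ with cells of diameter smaller than the radius of injectivity of the inverse branches (such partitions exist for the maps under consideration, either by the classical construction for Manneville--Pomeau maps or by the construction for parabolic Cantor sets in \cite{U2}). The collection of local inverse branches $\{\phi_{(i,j)}\}_{(i,j)\in E}$ indexed by the admissible pairs $(i,j)$ with $T(\mathrm{Int}R_i)\supset \mathrm{Int}R_j$ then forms a finite, irreducible, conformal parabolic GDMS $\mathcal S$ in the sense of Definition~\ref{Def_Parabolic}, with limit set equal to $J$ up to a countable set, and with set of parabolic indices corresponding bijectively to the parabolic fixed points of $T$.

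Second, I would set up the dictionary between $T$-orbits and symbolic sequences. For $\mu_\delta$-a.e.\ $z\in J$ there is a unique $\omega = \omega(z)\in E_A^{\infty}$ with $\pi_{\mathcal S}(\omega)=z$; the $\mu_\delta$-null set excluded consists of the grand orbit of $\partial\mathcal R$, which is countable away from parabolic points and handled by the measure estimate used in the proof of Theorem~\ref{t1ms1-again3}. For such $z$ and every $n\ge 1$, the point $x := T^n(z)$ lies in $R_{t(\omega_n)}$, and the local inverse branch $T_x^{-n}$ coincides exactly with the composition $\phi_{\omega|_n}$ on a neighbourhood of $x$. Consequently
\[
T_x^{-n}\bigl(Y_{e(z,n)}\bigr) \;=\; \phi_{\omega|_n}\bigl(Y_{t(\omega_n)}\bigr),
\]
so the random variable appearing in the theorem agrees $\mu_\delta$-a.e.\ with the symbolic variable
\[
\omega\longmapsto \frac{-\log\mathrm{diam}\bigl(\phi_{\omega|_n}(Y_{t(\omega_n)})\bigr) - \chi_{\mu_\delta}n}{\sqrt{n}}
\]
once $\mu_\delta$ on $J$ is identified with $\mu_\delta\circ\pi_{\mathcal S}^{-1}$ on $E_A^{\infty}$ via the Rokhlin-type equivalence of the two measures.

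Third, I would verify the two standing hypotheses of Theorem~\ref{t1ms1-again3} in our context. The assumption $\Omega_\infty(T)=\emptyset$ is equivalent, by the definition recalled in Section~\ref{section:parabolic}, to $\delta_{\mathcal S} > 2p_{\mathcal S}/(p_{\mathcal S}+1)$, so the invariant measure $\mu_\delta$ is finite and can be normalized to a probability. Moreover, since $\Omega_\infty=\emptyset$, the general $C_\rho(Y)$ constants remain finite (cf.\ Theorem~\ref{t1dp13}) irrespective of whether $\overline{Y_e}$ meets a parabolic point, reflecting the fact that in this ``fast contraction'' regime the contribution of orbits concentrating near parabolic points is summable. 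I would then apply Theorem~\ref{t1ms1-again3}, noting that the integrability argument in its proof — the crux being the estimate $\sum_{k\ge 0}\log(j+k+1)(j+k+1)^{-\frac{p+1}{p}\delta}<\infty$ — is exactly what is needed to dispense with the technical condition $\overline{Y_e}\cap\Omega=\emptyset$ when $\delta>2p/(p+1)$; a routine Tchebyshev argument then bounds the $\mu_\delta$-measure of the set on which $|\log\mathrm{diam}(\phi_{\omega|_n}(Y_{t(\omega_n)})) - \log|\phi_{\omega|_n}'(\pi_{\mathcal S}(\sigma^n\omega))||$ exceeds $n^{1/4}$, as in the argument of Theorem~\ref{t1ms1-again3}.

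Finally, combining these three points yields the desired convergence in distribution; the second claim (the one for intervals $[\alpha,\beta]$) is just the specialisation of the first to $G=[\alpha,\beta]$. The principal technical obstacle, and the step that will require the most care, is the replacement of $\log|\phi_{\omega|_n}'|$ by $\log\mathrm{diam}(\phi_{\omega|_n}(Y_{t(\omega_n)}))$ on a set of $\mu_\delta$-measure approaching one: near a parabolic fixed point the distortion is no longer bounded, so one must carry out the slicing of $E_A^{\mathbb N}\setminus E_{*A^*}^{\mathbb N}$ used in the proof of Theorem~\ref{t1ms1-again3} and verify that the contribution of long parabolic excursions — precisely the sums $\Sigma_j$ appearing there — is controlled by the hypothesis $\delta>2p/(p+1)$. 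Once this is in hand, the conclusion follows by combining the resulting $L^1$-bound with Theorem~\ref{t1ms1-again2}.
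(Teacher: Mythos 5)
Your proposal is correct and follows essentially the same route as the paper, which states this theorem as an immediate consequence of Theorem~\ref{t1ms1-again3} applied to the parabolic GDMS formed by the inverse branches of $T$; your role is simply to make the symbolic-to-dynamical dictionary explicit. Your observation about the hypothesis $\overline{Y_e}\cap\Omega=\emptyset$ is also sound: although that condition appears in the statement of Theorem~\ref{t1ms1-again3}, its proof already treats both the case $a\in\overline{Y}_{t(\omega_n)}$ and its complement (via the two exponents $1/p_a$ and $(p_a+1)/p_a$), and the integrability needed is exactly $\tfrac{p+1}{p}\delta>2$, which is the hypothesis $\Omega_\infty(T)=\emptyset$.
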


Next, we have a Central Limit Theorem for preimages.

\begin{thm}\label{t44_2017_04_03}
Let $T$ be either a Manneville-Pomeau map $f_\a$ with $\a<1$, or generally, the map generated by a parabolic Cantor set of \cite{U2} with $\Om_\infty(T)=\es$. Let $J$ be either the interval $[0,1$ (Manneville-Pomeau) or the parabolic Cantor set. Then
for every $\xi\in J$, we have that
$$
\lim_{n \to +\infty} \int_{T^{-n}(\xi)} \frac{\log\big|(T^n)'\big|}{n} d\mu_n 
= \chi_\delta.
$$
\end{thm}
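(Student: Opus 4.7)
The plan is to reduce Theorem \ref{t44_2017_04_03} directly to Theorem \ref{t1ms15.1}, the corresponding assertion for parabolic conformal GDMSs, by realising $T$ as the expanding map associated to a parabolic conformal IFS $\cS$ on the appropriate phase space $X$. In the Manneville--Pomeau case $T=f_\alpha$, the system $\cS$ consists of the two continuous inverse branches of $f_\alpha$ acting on $[0,1]$, with a single parabolic fixed point $0$ of index $p=\alpha$; in the general parabolic Cantor set case, $\cS$ is the system explicitly constructed in \cite{U2}. In both situations the limit set $J_\cS$ coincides with $J$ up to a countable exceptional set (of $m_\delta$- and $\mu_\delta$-measure zero), and $\delta_\cS=\HD(J)=\delta$.

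First I would verify that the hypothesis $\delta>\frac{2p_\cS}{p_\cS+1}$ of Theorem~\ref{t1ms15.1} is satisfied: for $f_\alpha$ this is immediate since $\delta=1$ and $p=\alpha<1$; in general, $\Omega_\infty(T)=\es$ means, by the very definition of $\Omega_\infty$, that $\delta>\frac{2p_a}{p_a+1}$ for every parabolic fixed point $a\in\Omega$, which is equivalent to $\mu_\delta$ being a finite (probability, after normalisation) measure by Theorem~\ref{t1_2017_02_18}(b1)--(b3). Next, fix $\xi\in J$ and choose any $\rho\in E_A^\infty$ with $\pi_\cS(\rho)=\xi$. For every word $\om\in E_\rho^n$ the point $z_\om:=\phi_\om(\xi)$ is a preimage of $\xi$ under $T^n$, and by the chain rule
$$
\log\big|(T^n)'(z_\om)\big|=-\log|\phi_\om'(\xi)|=\lambda_\rho(\om).
$$
Hence the assignment $\om\mapsto z_\om$ identifies $E_\rho^n$ with $T^{-n}(\xi)$ modulo the (at most countable) set of preimages whose $T$--orbit ever meets the boundary of a cell of the IFS; this exceptional set is $\mu_\delta$-null and, being negligible in both numerator and denominator, does not affect the normalised measure $\mu_n$ of \eqref{1_2017_04_03}.

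Under this identification, $\mu_n$ on $T^{-n}(\xi)$ coincides with the symbolic measure $\mu_n$ on $E_\rho^n$ defined in \eqref{1ms6}, since
$$
\big|(T^n)'(z_\om)\big|^{-\delta}=|\phi_\om'(\xi)|^{\delta}=e^{-\delta\lambda_\rho(\om)}.
$$
Consequently
$$
\int_{T^{-n}(\xi)}\frac{\log|(T^n)'|}{n}\,d\mu_n=\int_{E_\rho^n}\frac{\lambda_\rho}{n}\,d\mu_n,
$$
and Theorem~\ref{t1ms15.1} applied to $\cS$ with this $\rho$ yields that the right-hand side converges to $\chi_\delta$ as $n\to\infty$. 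This establishes the theorem.

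The only point requiring genuine care (rather than bookkeeping) is the choice of $\rho$ when $\xi$ admits several codes, and the resulting small ambiguity of the identification $E_\rho^n\leftrightarrow T^{-n}(\xi)$; the main obstacle is therefore the verification that the symbolic result in Theorem~\ref{t1ms15.1}, originally stated for $\rho\in E_{A^*}^\infty$, transfers to an arbitrary code $\rho\in E_A^\infty$. This is handled exactly as in the passage from $E_{*A^*}^\infty$ to $E_A^\infty$ executed in the proof of Theorem~\ref{t2pc6}: one approximates $\rho$ by $\rho^{(k)}\in E_{*A^*}^\infty$ with $\rho|_k=\rho^{(k)}|_k$, applies the Bounded Distortion Property for $\cS^*$ to control the ratios of the relevant multipliers uniformly in $n$, and lets $k\to\infty$ using $\lim_{k\to\infty}\widehat K_q=1$.
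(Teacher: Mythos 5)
Your proof is correct and follows exactly the route the paper intends: Theorem \ref{t44_2017_04_03} is stated there as an immediate consequence of the parabolic GDMS result (Theorem \ref{t1ms15.1}) via the standard coding of $T$ by the parabolic IFS of its inverse branches, which is precisely the reduction you carry out (including the correct verification that $\Om_\infty(T)=\es$, resp. $\a<1$, gives $\d>\frac{2p}{p+1}$). The one detail you treat more carefully than the paper — transferring the symbolic statement from codes in the restricted symbol space to an arbitrary code $\rho$ of $\xi$, via approximation by $\rho^{(k)}$ and bounded distortion as in the proof of Theorem \ref{t2pc6} — is handled correctly and does not constitute a different approach.
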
 
 
Finally, we have a Central Limit Theorem for counting. 
 
\begin{thm}\label{t44_2017_04_04} 
Let $T$ be either a Manneville-Pomeau map $f_\a$ with $\a<1$, or generally, the map generated by a parabolic Cantor set of \cite{U2} with $\Om_\infty(T)=\es$. Let $J$ be either the interval $[0,1$ (Manneville-Pomeau) or the parabolic Cantor set. Then
for every $\xi\in J$ the sequence of random variables $(\Delta_n)_{n=1}^\infty$ converges in distribution to the normal (Gaussian) distribution 
$\mathcal N_0(\sigma)$ with mean value zero and the variance $\sigma^2 = \P''(\delta)>0$. Equivalently, the sequence 
$(\mu_n \circ \Delta_n^{-1})_{n=1}^\infty$
converges weakly to the normal distribution $\mathcal N_0(\sigma^2)$.  This means that for every Borel set $F \subset \mathbb R$ with 
$\hbox{\rm Leb}(\partial F) = 0$, we have 
\begin{equation}\label{1_2017_04_04}
\lim_{n \to +\infty} \mu_n(\Delta_n^{-1}(F))
= \frac{1}{\sqrt{2\pi} \sigma}
\int_F e^{- t^2/2\sigma^2} dt.
\end{equation} 
\end{thm}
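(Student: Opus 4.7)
The plan is to reduce Theorem \ref{t44_2017_04_04} to its symbolic counterpart, Theorem \ref{t1ms16}, applied to the finite alphabet irreducible parabolic conformal IFS (or, in the Farey case, GDMS in the sense of Remark \ref{r2_2017_02_17}) $\mathcal{S}$ whose maps are the continuous inverse branches of $T$, as set up in Subsection \ref{1dimparabolicexamples}. First I would verify the key finiteness hypothesis $\delta_\mathcal{S} > 2p_\mathcal{S}/(p_\mathcal{S}+1)$: for a Manneville--Pomeau map $f_\alpha$ one has $\delta=1$ and $p_\mathcal{S}=\alpha$, so $\alpha<1$ is equivalent to $1>2\alpha/(\alpha+1)$; for a parabolic Cantor set the hypothesis $\Omega_\infty(T)=\emptyset$ is exactly the statement that $\delta>2p(a)/(p(a)+1)$ for every $a\in\Omega$, so $\delta_\mathcal{S}>2p_\mathcal{S}/(p_\mathcal{S}+1)$. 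By Theorem \ref{t1_2017_02_18} (b) the invariant measure $\mu_{\delta_\mathcal{S}}$ is then finite and can be normalized to a probability measure.

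Next I would fix a code $\rho\in E_A^\infty$ with $\pi_\mathcal{S}(\rho)=\xi$, and set up the bijection (modulo the countable set of points with non-unique codings, which carries zero $\mu_n$-mass and is harmless for distributional statements) between $T^{-n}(\xi)$ and $E_\rho^n$ sending $z=\phi_\omega(\xi)$ to $\omega$. By the chain rule together with the inverse function theorem,
\[
\bigl|(T^n)'(z)\bigr|^{-1}=\bigl|\phi_\omega'(\pi_\mathcal{S}(\rho))\bigr|,
\]
so the weights defining the measure $\mu_n$ on $T^{-n}(\xi)$ in \eqref{1_2017_04_03} transform into the symbolic weights $e^{-\delta\lambda_\rho(\omega)}$ defining $\mu_n$ in \eqref{1ms6}, and the random variable $z\mapsto(\log|(T^n)'(z)|-\chi_\delta n)/\sqrt{n}$ transforms into the random variable $\Delta_n:E_\rho^n\to\mathbb{R}$ of \eqref{2m56}. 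Thus the push-forward under this bijection identifies the two distributional sequences $(\mu_n\circ\Delta_n^{-1})$.

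Finally, Theorem \ref{t1ms16} applied to $\mathcal{S}$ gives that the symbolic sequence $(\mu_n\circ\Delta_n^{-1})$ converges weakly to $\mathcal{N}_0(\sigma^2)$ with $\sigma^2=\mathrm{P}_*''(\delta)$; positivity of $\sigma^2$ is a consequence of the fact that the associated attracting system $\mathcal{S}^*$ is $D$-generic by Theorem \ref{t1pc3} (so the potential is not cohomologous to a constant, which is what drives $\sigma^2\neq 0$). Transporting the convergence back via the bijection above yields the claimed Central Limit Theorem for preimages of $\xi$ under $T^n$.

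The main obstacle is the bookkeeping in the second step: one must handle the (countable, measure-zero) set of preimages admitting several codings, and confirm that in the parabolic regime the weights still agree pointwise under the bijection despite the presence of the indifferent fixed points—this is precisely where the restriction $\Omega_\infty(T)=\emptyset$ (equivalently $\alpha<1$ for $f_\alpha$) enters, guaranteeing that the Perron--Frobenius machinery of Section \ref{CLT-Parabolic-2} (based on Gou\"ezel's induction scheme) is applicable and that $\mu_{\delta_\mathcal{S}}$ is finite so the normalization producing $\mu_n$ makes sense.
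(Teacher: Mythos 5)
Your proposal is correct and follows essentially the same route as the paper, which presents Theorem \ref{t44_2017_04_04} as an immediate consequence of the general parabolic counting CLT (Theorem \ref{t1ms16}) applied to the parabolic IFS of inverse branches, with the hypothesis $\delta>2p/(p+1)$ verified exactly as you do (for $f_\a$ via $\d=1$, $p=\a$, and for parabolic Cantor sets via $\Om_\infty(T)=\es$). The only cosmetic slip is the parenthetical reference to the Farey map, which is not covered by this theorem (it has $\Om_\infty(f)=\{0\}$); this does not affect the argument.
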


\subsection{Parabolic Rational Functions}\label{PRF}

Now we pass to the counting applications for parabolic rational functions.
We recall that if $f: \widehat {\mathbb C} \to  \widehat {\mathbb C}$  is a rational function then $\xi \in \widehat {\mathbb C}$  is called a rationally indifferent 
(or just parabolic) periodic point of $f$ if $f^p(\xi) = \xi$ for some integer $p \geq 1$ and $(f^p)'(\xi)$ is a root of unity.  It is well known and easy to to see that then $\xi \in J(f)$, the Julia set of $f$. The following theorem has been proved in \cite{DU_LMS}.

\begin{thm}\label{t2ex18}
If $f: \widehat {\mathbb C}  \to \widehat {\mathbb C}$ is a rational function, then the following two conditions are equivalent.
\begin{enumerate}
\item $f|_{{\mathcal J}(f)}: J(f) \to J(f)$ is expansive.

\sp\item $|f'(z)| >0$ for all $z \in  J(f)$, i.e. $J (f)$ contains no critical point of $f$.
\end{enumerate}
\end{thm}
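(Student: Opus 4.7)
The plan is to prove the equivalence by treating the two implications separately, with the easy direction $(1)\Rightarrow(2)$ first and then the substantive direction $(2)\Rightarrow(1)$.

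For $(1)\Rightarrow(2)$ I argue by contraposition. Suppose $c\in\mathrm{Crit}(f)\cap J(f)$ has local degree $k\ge 2$, so near $c$ one has the expansion $f(z)-f(c)=a(z-c)^k(1+o(1))$. Since $\deg f\ge 2$, the Julia set $J(f)$ is a perfect set, and by forward invariance $f(c)\in J(f)$; hence I can pick a sequence $w_n\in J(f)\setminus\{f(c)\}$ with $w_n\to f(c)$. For all large $n$ the equation $f(z)=w_n$ has $k$ distinct solutions clustered near $c$, and backward invariance of $J(f)$ places all of them in $J(f)$. Selecting any two of them, $x_n\ne y_n$, I have $f(x_n)=f(y_n)$, so $f^j(x_n)=f^j(y_n)$ for every $j\ge 1$ and therefore $\sup_{j\ge 0}d(f^j(x_n),f^j(y_n))=d(x_n,y_n)\to 0$. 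This rules out expansiveness.

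For $(2)\Rightarrow(1)$ I assume $\mathrm{Crit}(f)\cap J(f)=\emptyset$. By compactness of $J(f)$ and discreteness of the critical set there is a neighborhood $U\supset J(f)$ on which $f$ is a local biholomorphism; by a standard compactness/Koebe argument I extract a uniform radius $r_1>0$ such that, for every $z\in B(J(f),r_1)$ with $\zeta=f(z)$, a univalent inverse branch $f^{-1}_z:B(\zeta,r_1)\to\mathbb{C}$ is well defined and sends $\zeta\mapsto z$. I then suppose there exist $x\ne y$ in $J(f)$ with $\sup_{j\ge 0}d(f^j(x),f^j(y))\le\delta$ for a fixed $\delta<r_1$ and aim to derive a contradiction. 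Writing $z_n:=f^n(x)$, $w_n:=f^n(y)$, uniqueness of the inverse branch forces $g_n:=f^{-1}_{z_n}$ (defined on $B(z_{n+1},r_1)$) to satisfy $g_n(w_{n+1})=w_n$, so the composition $G_n:=g_0\circ\cdots\circ g_{n-1}$ is univalent on $B(z_n,r_1)$ and maps $(z_n,w_n)\mapsto(x,y)$. Koebe distortion then yields
\[
d(x,y)\le C\,|G_n'(z_n)|\,d(z_n,w_n),
\]
and it suffices to show $|G_n'(z_n)|\to 0$.

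The main obstacle is the last step in the parabolic regime. In the uniformly hyperbolic case $|G_n'(z_n)|$ decays exponentially because $f$ expands uniformly on $J(f)$, and one concludes immediately. Under only hypothesis (2) the derivative can have modulus one at rationally indifferent periodic points, so a finer analysis is required near each such point $p$. I would invoke the Fatou flower normal form $f(z)=z+a(z-p)^{k+1}+\cdots$ together with the classical estimate showing that on $J(f)$ only the repelling petals are visited, and that any orbit which remains in a small neighborhood of $p$ for $m$ consecutive iterates does so at distance $\asymp m^{-1/k}$ from $p$. This polynomial control, combined with the standard dichotomy (an orbit either stays near the postparabolic set and the petal estimate applies, or it spends a positive density of time in a compact subset of $J(f)$ on which $f$ is uniformly expanding), yields $|G_n'(z_n)|\to 0$. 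Passing to the limit in the displayed inequality forces $x=y$, contradicting the assumption and completing the proof of expansiveness.
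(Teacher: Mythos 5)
The paper does not actually prove Theorem \ref{t2ex18}; it cites \cite{DU_LMS}, so your proposal must be judged against the standard argument. Your direction $(1)\Rightarrow(2)$ is correct: complete invariance of $J(f)$ and perfectness give the two distinct preimages $x_n\ne y_n$ near the critical point with $f(x_n)=f(y_n)$, and $\sup_{j\ge 0}d(f^jx_n,f^jy_n)=d(x_n,y_n)\to 0$ kills any expansiveness constant.

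The direction $(2)\Rightarrow(1)$ has a genuine gap, and it sits exactly where you located "the main obstacle." Your strategy requires $|G_n'(z_n)|=|(f^n)'(x)|^{-1}\to 0$, but this is \emph{false} when $x$ is a parabolic periodic point or a preimage of one: if $x=p$ with $f(p)=p$ and $f'(p)=1$, then $|G_n'(p)|=1$ for all $n$, and the Koebe inequality $d(x,y)\le C|G_n'(z_n)|\,d(z_n,w_n)$ only yields the vacuous bound $d(p,y)\le C\delta$. Your proposed dichotomy does not cover these points either: the orbit of $p$ neither sits at distance $\asymp m^{-1/k}$ from the parabolic cycle (it sits \emph{at} the cycle) nor spends positive density of time in a uniformly expanding compact set. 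The countably many pre-parabolic points therefore require a separate argument (e.g., that every $y\in J(f)\setminus\{p\}$ near $p$ lies outside the attracting petals and hence escapes any small neighbourhood of $p$, so no second point can shadow the orbit of $p$ within $\delta$). The standard proof sidesteps all of this: one shows instead that $\mathrm{diam}\bigl(G_n(B(z_n,r_1/2))\bigr)\to 0$ directly, by noting that otherwise Koebe distortion would put a fixed ball $B(x,\epsilon)$ inside $G_{n_k}(B(z_{n_k},r_1/2))$ for a subsequence, whence $f^{n_k}(B(x,\epsilon))\subset B(z_{n_k},r_1/2)$, contradicting the blow-up (Montel) property of the Julia set, which forces $f^n(B(x,\epsilon))\supset J(f)$ for all large $n$. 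That route needs no derivative decay and no case analysis at parabolic points; if you want to salvage your approach, you must at minimum add the escape argument for pre-parabolic $x$ and make the petal estimate for the remaining points precise.
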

In addition, if (a) or (b)  hold then the map 
$\widehat f: \widehat{\mathbb C} \to \widehat {\mathbb C}$ is not expanding iff $J(f)$ contains a parabolic periodic point.
Following \cite{DU_LMS} and \cite{DU_Forum} we then call the map 
$\widehat f: \widehat {\mathbb C} \to \widehat {\mathbb C}$ parabolic.

Probably, the best known example of a parabolic rational function is the polynomial
$$
\widehat \C \ni z \longmapsto f_{1/4}(z) := z^2 + \frac{1}{4}\in \widehat \C.
$$
It has only one parabolic point, namely $z = 1/2$. In fact this is a fixed point of $f_{1/4}$ and $f_{1/4}'(1/2) =1$.  It was independently proved in \cite{U3} and \cite{Zdunik} that 
\beq\label{1ex19}
\delta:= \HD(\mathcal J_{1/4}) > 1.
\eeq
With the arguments parallel to those in the proof of Theorem~ \ref{t1ex4}, as a consequence of Theorem~\ref{t2pc6_B}, Theorem~\ref{t1dp13} and Theorem~\ref{t1dp13B}, we get the following.

\begin{cor}\label{c1ex19}
If $f:\widehat{ \mathbb  C} \to \widehat{ \mathbb  C}$ is a parabolic rational function then with notation of Subsection \ref{CER}, we have the following.

Fix $\xi\in J(f)$. If $B \subset \widehat{ \mathbb  C}$ is a Borel set such that $m_\d(\bd B)=0$ and $Y \subset \widehat{ \mathbb  C}$ is any set having at least two elements and contained in a sufficiently small ball centered at $\xi$, then
\beq\label{1ex4parG}
\lim_{T \to +\infty}  \frac{N_\xi(f; B,T)}{e^{\delta T}} = \frac{\psi_\d(\xi)}{\d\chi_\d} m_\delta(B),
\eeq
\beq\label{2ex4parG}
\lim_{T \to +\infty}  \frac{N_p(f; B,T)}{e^{\delta T}} = \frac1{\d\chi_\d}\mu_\delta(B),
\eeq
and 
\beq\label{1ex5paG}
\lim_{T \to +\infty}  \frac{D_Y^\xi(f; B,T)}{e^{\delta T}} = 
\lim_{T \to +\infty}  \frac{E_Y^\xi(f; B,T)}{e^{\delta T}} = C_\xi(Y) m_\delta(B),
\eeq
where $C_\xi(Y)\in (0,+\infty]$ is a constant depending only on the map $f$, the point $\xi$, and the set $Y$. In addition $C_\xi(Y)$ is infinite if and only if
$$
\xi\in \Om_\infty(f)\cap\ov Y 
\  \  {\rm and } \  \ 
p(\xi)\le \d.
$$
\end{cor}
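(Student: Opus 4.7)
The plan is to realise the parabolic rational function $f$ on its Julia set as (essentially) the limit dynamics of a finite alphabet irreducible conformal parabolic GDMS in the sense of Section~\ref{section:parabolic}, and then to read off the three asymptotic formulas directly from Theorem~\ref{t2pc6_B}, Theorem~\ref{t1dp13}, and Theorem~\ref{t1dp13B}. This is precisely the parabolic analogue of the argument producing Theorem~\ref{t1ex4} from the GDMS results of Sections~\ref{Ikehara} and \ref{contracting_diameters}; the work is in building a Markov partition which is compatible with the parabolic periodic points of $f$ and in matching the dynamical invariants with their symbolic counterparts.

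First, after replacing $f$ by a suitable iterate $f^N$ (which changes neither the Julia set, the Hausdorff dimension $\delta$, the conformal and invariant measures, nor the validity of \eqref{1ex4parG}--\eqref{1ex5paG}, up to a harmless rescaling of $T$ by $N$), I would assume every rationally indifferent cycle of $f$ consists of fixed points with multiplier $1$. Standard constructions (\cite{DU_LMS}, \cite{DU_Forum}) then produce a Markov partition $\mathcal R=\{R_e\}_{e\in F}$ of $J(f)$ by closed Jordan domains of arbitrarily small diameter, chosen so that each parabolic fixed point $\omega$ of $f$ is a vertex of exactly one cell $R_{e(\omega)}$, with $f|_{R_{e(\omega)}}$ injective and its inverse branch fixing $\omega$ being the parabolic map around $\omega$. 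Exactly as in Subsection~\ref{CER}, I would then set $E=\{(i,j)\in F\times F:\operatorname{Int}R_j\cap f(\operatorname{Int}R_i)\ne\emptyset\}$, declare $(i,j)\to(k,\ell)$ admissible iff $\ell=i$, and take $\phi_{(i,j)}$ to be the unique continuous inverse branch of $f$ on $R_j$ with image in $R_i$. The resulting system
\[
\mathcal S=\{\phi_{(i,j)}:(i,j)\in E\}
\]
is a finite irreducible conformal parabolic GDMS in the sense of Definition~\ref{Def_Parabolic}: the open set, cone, bounded distortion, and conformality conditions follow from Koebe (cf.\ Remark~\ref{p1.033101}) together with the fact that $J(f)$ avoids $\operatorname{Crit}(f)$, while the parabolic indices are precisely those edges $(i(\omega),i(\omega))$ corresponding to parabolic fixed points of $f$, with $A_{(i(\omega),i(\omega))(i(\omega),i(\omega))}=1$ and $\bigcap_{n}\phi_{(i(\omega),i(\omega))}^n(R_{i(\omega)})=\{\omega\}$.

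Next I would identify the invariants. The coding map $\pi_{\mathcal S}:E_A^\infty\to J(f)$ is onto, and $J_{\mathcal S}=J(f)$. By Theorem~\ref{t1_2017_02_18} (together with the uniqueness of conformal and invariant measures of Sullivan--Denker--Urba\'nski type for parabolic rationals, see \cite{DU_LMS}), $\delta_{\mathcal S}=\operatorname{HD}(J(f))=\delta$, and $\widehat m_{\delta_{\mathcal S}}$ and $\widehat\mu_{\delta_{\mathcal S}}$ coincide, on $J(f)$, with the unique $\delta$-conformal measure and its $f$-invariant equivalent version respectively. The parabolic exponents $p(a)$ given by Proposition~\ref{p1c5.13} reproduce the usual petal count data of the parabolic fixed points of $f$, so the set $\Omega_\infty$ of the corollary matches the one in Theorem~\ref{t1dp13}. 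Fixing $\xi\in J(f)$ with the property that $\xi\in\operatorname{Int}(R_{e(\xi)})$ (the countable exceptional set can be handled by a short density argument at the end) and choosing $\rho\in E_A^\infty$ with $\pi_{\mathcal S}(\rho)=\xi$, the one-to-one correspondence between words $\omega\in E_\rho^*$ and preimages $z\in f^{-*}(\xi)$, and between $\omega\in E_p^*$ and periodic pairs of $f$ (up to the universally bounded overcounting constant captured in Observation~\ref{o1ex4}), identifies $N_\xi(f;B,T)$, $N_p(f;B,T)$, $D_Y^\xi(f;B,T)$, $E_Y^\xi(f;B,T)$ with $N_\rho(B,T)$, $N_p(B,T)$, $D_Y^\rho(B,T)$, $E_Y^\rho(B,T)$ respectively, up to errors that are $O(1)$ in $T$ and therefore absorbed in $e^{\delta T}$.

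At this point \eqref{1ex4parG}, \eqref{2ex4parG} are immediate from \eqref{3_2016_03_25} and \eqref{3_2016_03_25_B} of Theorem~\ref{t2pc6_B}, while \eqref{1ex5paG} follows from Theorem~\ref{t1dp13} and Theorem~\ref{t1dp13B}; the finiteness dichotomy for $C_\xi(Y)$ is exactly the one given there, once we observe that ``$\overline Y\cap\Omega_\infty\cap\Omega_{\rho_1}$'' in the GDMS statement reduces, in the rational dynamics setting where neighbourhoods of $\xi$ are contained in one cell, to ``$\overline Y$ contains a parabolic point $\omega$ with $p(\omega)\ge\delta$'', which is the condition in the corollary. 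The main obstacle—and the only nontrivial point—is the construction of the Markov partition compatible with the parabolic fixed points together with the verification that the resulting $\mathcal S$ satisfies all the structural hypotheses of Section~\ref{section:parabolic} (in particular the cone condition at parabolic vertices and Property (BDP) of Section~\ref{section:parabolic}), but this is standard in the theory of parabolic rational maps; once this is in place the rest is bookkeeping. Finally, the case of arbitrary $\xi\in J(f)$ (including those on the boundary of cells) is reduced to the generic case by an approximation argument identical to the open-set-to-general-Borel-set step in the proof of Theorem~\ref{dyn}.
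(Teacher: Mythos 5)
Your proposal follows exactly the route the paper takes: the paper derives Corollary~\ref{c1ex19} ``with arguments parallel to those in the proof of Theorem~\ref{t1ex4}'' as a consequence of Theorem~\ref{t2pc6_B}, Theorem~\ref{t1dp13} and Theorem~\ref{t1dp13B}, i.e.\ by coding $f|_{J(f)}$ as a finite irreducible parabolic conformal GDMS via a Markov partition compatible with the parabolic fixed points and then transferring the counting functions, which is precisely what you do. The only discrepancy is that your (correct) reading of the finiteness dichotomy in Theorem~\ref{t1dp13} yields infiniteness of $C_\xi(Y)$ when $p(\xi)\ge\delta$, whereas the corollary as printed writes $p(\xi)\le\delta$ --- an apparent typo in the statement rather than a gap in your argument.
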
 

\sp As in the previous subsection, the
 stochastic laws appear an immediate consequence of the results in Subsections~\ref{CLT-Parabolic-1} and \ref{CLT-Parabolic-2} we get the following.

We begin with a Central Limit Theorem for the expansion along 
orbits.  
\begin{thm}\label{t51_2017_04_03} 
Let $f:\oc\to\oc$ be a parabolic rational function of the Riemann sphere $\widehat{\mathbb{C}}$ with $\d>\frac{2p}{p+1}$.  With notaion of Theorem~\ref{t1ex4} we have the following. 

There exists $\sigma^2 > 0$ (in fact $\sg^2=\P''(0)> 0$) such that if $G \subset \mathbb R$ is a Lebesgue measurable set with $\hbox{{\rm Leb}}(\partial G) = 0$, then 
$$
\lim_{n \to +\infty}
\mu_\d\left(
\left\{
z\in J(f) \hbox{ : } \frac{\log \big|(f^n)'(z)\big| - \chi_{\mu_\d} n}{\sqrt{n}}
\in G
\right\}
\right)
= \frac{1}{\sqrt{2\pi}\sigma} \int_G e^{-\frac{t^2}{2\sigma^2}} \,dt.
$$  
In particular, for any $\alpha <\beta$
$$
\lim_{n \to +\infty}
\mu_\d\left(
\left\{
z\in J(f) \hbox{ : } \alpha \leq \frac{\log\big|(f^n)'(z)\big| - \chi_{\mu_\d} n}{\sqrt{n}}
\leq \beta
\right\}
\right)
= \frac{1}{\sqrt{2\pi}\sigma} \int_\a^\b e^{-\frac{t^2}{2\sigma^2}}\, dt.
$$  
\end{thm}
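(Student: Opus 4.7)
The plan is to encode the dynamics of $f$ on $J(f)$ via a finite alphabet irreducible parabolic conformal GDMS, and then to transfer the Central Limit Theorem already proved in that symbolic framework (Theorem~\ref{t1ms1-again2}) to the present geometric setting, exactly in the spirit of how Theorem~\ref{t1ex6} for expanding rational maps was deduced from Theorem~\ref{t1ex4} (or rather Theorem~\ref{t11_2017_04_03}) using the construction in Subsection~\ref{CER}.

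First I would fix a Markov partition $\mathcal R=\{R_e\}_{e\in F}$ of $J(f)$ of sufficiently small diameter; such partitions exist for parabolic rational functions by the work of Denker--Urba\'nski. Exactly as in Subsection~\ref{CER}, I would let the alphabet be
$$
E:=\bigl\{(i,j)\in F\times F: \Int R_j \cap f(\Int R_i)\ne\es\bigr\},
$$
take the contractions $\phi_{(i,j)}:=f_{i,j}^{-1}:B(R_j,\beta)\to\mathbb C$ to be the well-defined holomorphic inverse branches, and use the natural incidence matrix. The parabolic fixed points of $f$ become parabolic elements in the sense of Definition~\ref{Def_Parabolic} (after passing, if necessary, to an iterate, as discussed in Remark~\ref{r2_2017_02_17}). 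The resulting system $\mathcal S_{\mathcal R}$ is then a finite alphabet irreducible conformal parabolic GDMS with $J_{\mathcal S_{\mathcal R}}=J(f)$, $\delta_{\mathcal S_{\mathcal R}}=\delta$, and the conformal and invariant measures produced by the GDMS coincide (after normalization) with the $\delta$-conformal measure $m_\delta$ and its invariant version $\mu_\delta$ on $J(f)$. Crucially, the hypothesis $\delta>\frac{2p}{p+1}$ translates verbatim to the GDMS side, so $\mu_\delta$ is a probability measure (Theorem~\ref{t1_2017_02_18}).

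Next, let $\pi:=\pi_{\mathcal S_{\mathcal R}}:E_A^\infty\to J(f)$ be the coding map. The key geometric identity, valid for every $\omega\in E_A^\infty$ that codes a point $z=\pi(\omega)$ and every $n\ge1$, is
$$
\phi_{\omega|_n}\bigl(\pi(\sigma^n(\omega))\bigr)=z,\qquad \phi_{\omega|_n}=f_{z}^{-n}\text{ on a neighborhood of }f^n(z),
$$
so that by the chain rule
$$
-\log\bigl|\phi_{\omega|_n}'(\pi(\sigma^n(\omega)))\bigr|=\log\bigl|(f^n)'(z)\bigr|.
$$
The Lyapunov exponent of $\mu_\delta$ with respect to $\sigma$ on $E_A^\infty$ (in the sense of Theorem~\ref{t1_2017_02_18}(4)) then coincides with $\chi_{\mu_\delta}:=\int_{J(f)}\log|f'|\,d\mu_\delta$.

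I would then invoke Theorem~\ref{t1ms1-again2} for the parabolic GDMS $\mathcal S_{\mathcal R}$: since $\delta>\frac{2p}{p+1}$ and, by Theorem~\ref{t1pc3}, the associated attracting system $\mathcal S_{\mathcal R}^*$ is automatically D-generic, the theorem produces $\sigma^2=\P_*''(\delta)>0$ such that for every Lebesgue measurable $G\subset\mathbb R$ with $\Leb(\partial G)=0$,
$$
\lim_{n\to+\infty}\mu_\delta\!\left(\!\left\{\omega\in E_A^\infty:\frac{-\log|\phi_{\omega|_n}'(\pi(\sigma^n(\omega)))|-\chi_{\mu_\delta}n}{\sqrt n}\in G\right\}\!\right)=\frac{1}{\sqrt{2\pi}\,\sigma}\int_G e^{-t^2/2\sigma^2}\,dt.
$$
Pushing this forward under $\pi$, using the identity above and the equality $\mu_\delta\circ\pi^{-1}=\widehat\mu_\delta$ on $J(f)$ (with the same symbol abuse as elsewhere in the monograph), yields the desired limit, and the in-particular statement for intervals $[\alpha,\beta]$ is immediate.

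The main (and essentially only) obstacle is the standard one that the coding map $\pi$ is not globally injective: there is a countable set of points (preimages of boundaries of Markov cells, and the parabolic orbits themselves) with multiple codes. However, this set carries zero $\mu_\delta$-measure, so the push-forward identity above is unaffected, exactly as in the reduction from Theorem~\ref{t1ms1-again2} to Theorem~\ref{t41_2017_04_03} in Subsection~\ref{1dimparabolicexamples}. The positivity $\sigma^2>0$ does not require any additional genericity hypothesis here, since by Theorem~\ref{t1pc3} the associated attracting system is automatically D-generic, which, via Remark~\ref{r2_2017_02_17}, excludes the degenerate case $\sigma^2=0$.
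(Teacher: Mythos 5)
Your proposal is correct and follows essentially the same route as the paper, which states this theorem as an immediate consequence of Theorem~\ref{t1ms1-again2} via the Markov-partition encoding of $f|_{J(f)}$ as a finite alphabet parabolic conformal GDMS; your write-up simply makes that reduction explicit (chain-rule identity for inverse branches, $\mu_\delta\circ\pi^{-1}=\widehat\mu_\delta$, and $\mu_\delta(\partial\mathcal R)=0$). The only cosmetic slip is calling the set of ambiguously coded points "countable" — it is a countable union of preimages of cell boundaries, hence of $\mu_\delta$-measure zero, which is all the argument needs.
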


We next have a Central Limit Theorem for diameters.

\begin{thm}\label{t12_2017_04_03} 
Let $f:\oc\to\oc$ be a 
parabolic rational function of the Riemann sphere $\widehat{\mathbb{C}}$ with $\d>\frac{2p}{p+1}$. Let $\sigma^2:=\P''(0)> 0$. With notaion of Subsection~\ref{CER} we have the following. 

For every $e \in F$ let $Y_e \subset R_e$ be a set with at least two points. If $G \subset \mathbb R$ is a Lebesgue measurable set with $\hbox{\rm Leb}(\partial G) = 0$, then 
$$
\lim_{n \to +\infty}
\mu_\d\left(
\left\{
z\in J(f) \hbox{ : }
 \frac{
 -\log \hbox{\rm diam}\(f_x^{-n}(Y_{e(z,n)})\)  - \chi_{\mu_\d} n
 }{\sqrt{n}}
\in G
\right\}
\right)
= \frac{1}{\sqrt{2\pi}\sigma} \int_G e^{-\frac{t^2}{2\sigma^2}}\, dt.
$$  
In particular, for any $\alpha < \beta$
$$
\lim_{n \to +\infty}
\mu_\d\left(
\left\{
z\in J(f) \hbox{ : } \alpha \leq 
 \frac{-\log \hbox{\rm diam}\(f_x^{-n}(Y_{e(z,n)})\) - \chi_{\mu_\d} n}{\sqrt{n}}
\leq \beta
\right\}
\right)
= \frac{1}{\sqrt{2\pi}\sigma} \int_\alpha^\beta e^{-\frac{t^2}{2\sigma^2}}\,dt.
$$  
\end{thm}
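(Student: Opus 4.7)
The plan is to reduce the statement to the symbolic Central Limit Theorem for diameters in parabolic GDMSs (Theorem~\ref{t1ms1-again3}) by associating to $f$ an appropriate parabolic conformal GDMS, in complete analogy with the scheme used in Subsection~\ref{CER} for expanding repellers but now employing the parabolic machinery of Section~\ref{section:parabolic}. First, I would fix a Markov partition $\mathcal R = \{R_e : e \in F\}$ for $f|_{J(f)}$ of sufficiently small diameter; such partitions exist for parabolic rational functions, with each rationally indifferent periodic point appearing as a boundary fixed point of some cell, and one may refine $\mathcal R$ so that every parabolic point is a vertex of exactly one cell. Following the construction of Subsection~\ref{CER} verbatim, one associates to $\mathcal R$ a finite alphabet $E$, an incidence matrix $A$ and the collection of inverse branches $\phi_{ij} = f_{ij}^{-1}$. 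The resulting system $\mathcal S_{\mathcal R} = \{\phi_e\}_{e \in E}$ is then a finite alphabet irreducible parabolic conformal GDMS in the sense of Definition~\ref{Def_Parabolic}, with parabolic indices corresponding exactly to the rationally indifferent periodic points of $f$ and with parabolicity exponents $p(a)$ determined by the tangency order of the relevant iterate of $f$ at the indifferent fixed point.

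By the theory recalled in Theorem~\ref{t1_2017_02_18}, the $\delta$-conformal measure $\widehat m_\delta$, its invariant version $\widehat\mu_\delta$, and the Lyapunov exponent $\chi_{\mu_\delta}$ for $f|_{J(f)}$ coincide, up to the identification $\pi_{\mathcal S_{\mathcal R}} : E_A^\infty \to J(f)$, with the corresponding objects for $\mathcal S_{\mathcal R}$; moreover the hypothesis $\delta > 2p/(p+1)$ is precisely what guarantees the finiteness of $\mu_\delta$, which we normalize to a probability. Under the coding $\pi_{\mathcal S_{\mathcal R}}(\omega) = z$ the crucial identity
\[
f_{x}^{-n}\bigl(Y_{e(z,n)}\bigr) \;=\; \phi_{\omega|_n}\bigl(Y_{t(\omega_n)}\bigr)
\]
holds, where $x = f^n(z) \in R_{e(z,n)}$ and $t(\omega_n) = e(z,n)$, so that the event whose $\mu_\delta$-measure we wish to control pulls back under $\pi_{\mathcal S_{\mathcal R}}$ to exactly the event appearing in Theorem~\ref{t1ms1-again3}.

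The third step is a direct application of Theorem~\ref{t1ms1-again3}, which yields the asserted Gaussian limit with variance $\sigma^2 = \mathrm{P}''(0) > 0$; positivity of $\sigma^2$ follows because $f$ is not of the form $z \mapsto z^d$, equivalently $\log|f'|$ is not cohomologous to a constant on $J(f)$ (see Proposition~\ref{p1nh13} together with Remark~\ref{r2_2017_02_17}, noting that by Theorem~\ref{t1pc3} the associated attracting system $\mathcal S_{\mathcal R}^*$ is automatically $D$-generic). Conclusion of \eqref{1_2017_04_04}-style integral formulation for arbitrary Lebesgue continuity sets $G$ is then immediate from weak convergence.

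The principal technical obstacle is the hypothesis of Theorem~\ref{t1ms1-again3} requiring $\overline{Y_v} \cap \Omega = \emptyset$, which is not imposed in the statement to be proved. I would address this in two stages. In the first stage, assume that each $\overline{Y_e}$ is disjoint from the (finite) set of parabolic periodic points; this case is a direct application of Theorem~\ref{t1ms1-again3}. In the second stage, handle the general case by an approximation argument: for each $\epsilon > 0$ choose sets $Y_e^- \subset Y_e \subset Y_e^+$ whose closures are disjoint from $\Omega$ and whose diameters satisfy $|\log\mathrm{diam}(Y_e^\pm) - \log\mathrm{diam}(Y_e)| < \epsilon$. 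By the bounded distortion property for $\mathcal S_{\mathcal R}^*$ combined with the parabolic asymptotics of Proposition~\ref{p1c5.13}, the differences
\[
\bigl|\log\mathrm{diam}\bigl(\phi_{\omega|_n}(Y_{t(\omega_n)}^{\pm})\bigr) - \log\mathrm{diam}\bigl(\phi_{\omega|_n}(Y_{t(\omega_n)})\bigr)\bigr|
\]
are $\mu_\delta$-a.s.\ uniformly bounded by a constant depending only on $\epsilon$ and on $\mathcal R$, hence after division by $\sqrt{n}$ they tend to $0$ and do not affect the limiting Gaussian distribution. Sandwiching the event in question between the two approximating events and letting $\epsilon \to 0$ then yields the CLT for the original $Y_e$, completing the proof.
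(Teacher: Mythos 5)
Your first three steps --- coding $f|_{J(f)}$ by a finite alphabet irreducible parabolic conformal GDMS built from a Markov partition, matching up $\widehat m_\delta$, $\widehat\mu_\delta$ and $\chi_{\mu_\delta}$ with their symbolic counterparts, using the identity $f_x^{-n}(Y_{e(z,n)})=\phi_{\omega|_n}(Y_{t(\omega_n)})$, and invoking Theorem~\ref{t1ms1-again3} --- are exactly what the paper does: it presents the theorem as an immediate consequence of the results of Subsections~\ref{CLT-Parabolic-1} and \ref{CLT-Parabolic-2} via the construction of Subsection~\ref{CER}, with no further argument.

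Your fourth stage, however, does not work as written. First, if $\overline{Y_e}$ meets the set $\Omega$ of parabolic points, then no superset $Y_e^+\supset Y_e$ can have closure disjoint from $\Omega$, so the outer half of your sandwich cannot be constructed. Second, and more seriously, the claimed uniform bound on $\bigl|\log\diam(\phi_{\omega|_n}(Y^{\pm}))-\log\diam(\phi_{\omega|_n}(Y))\bigr|$ is false: if $\omega|_n$ ends in a parabolic block $a^j$ and $x_a\in\overline{Y}$ while $x_a\notin\overline{Y^-}$, then by Proposition~\ref{p1c5.13} (and the dichotomy used in the proof of Theorem~\ref{t1ms1-again3}) one has $\diam(\phi_{a^j}(Y))\asymp (j+1)^{-1/p_a}$ whereas $\diam(\phi_{a^j}(Y^-))\asymp (j+1)^{-(p_a+1)/p_a}$, so the discrepancy is of order $\log(j+1)$, which is unbounded in $j$. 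The argument is nevertheless salvageable: since $j\le n$, the discrepancy is $O(\log n)=o(\sqrt{n})$ uniformly in $\omega$, so a single inner approximation $Y_e^-$ already forces the two normalized sequences to have the same distributional limit, and no sandwiching is needed. Alternatively --- and this is really the cleaner route --- the proof the paper gives for Theorem~\ref{t1ms1-again3} never actually uses the hypothesis $\overline{Y_v}\cap\Omega=\emptyset$: it treats both cases $x_a\in\overline{Y}$ and $x_a\notin\overline{Y}$ through the exponent dichotomy $\alpha\in\{1/p_a,(p_a+1)/p_a\}$ and the Chebyshev estimate $\mu_\delta(g_n\ge n^{1/4})\le Dn^{-1/4}$, so the symbolic CLT for diameters holds for arbitrary $Y_v$ with at least two points and your stage one already covers the general case.
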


Finally, we have a Central Limit Theorem for counting.

\begin{thm}\label{t54_2017_04_03}
If $f:\oc\to\oc$ is a parabolic rational function of the Riemann sphere $\widehat{\mathbb{C}}$ with $\d>\frac{2p}{p+1}$, then for every $\xi\in J(f)$, we have that
\begin{equation}\label{12_2017_04_04}
\lim_{n \to +\infty} \int_{f^{-n}(\xi)} \frac{\log\big|(f^n)'\big|}{n} d\mu_n 
= \chi_\delta.
\end{equation}
\end{thm}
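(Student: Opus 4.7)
The plan is to reduce this statement to its symbolic counterpart, Theorem~\ref{t1ms15.1}, by associating to $f$ a suitable finite irreducible conformal parabolic GDMS. Since $f$ is parabolic, $J(f)$ contains finitely many parabolic cycles and no critical points (Theorem~\ref{t2ex18}), so by the standard construction (as used already in the proof of Corollary~\ref{c1ex19}) there exists a Markov partition $\mathcal R=\{R_e\}_{e\in F}$ of $J(f)$ of arbitrarily small diameter such that the inverse branches $\{f_{i,j}^{-1}\}_{(i,j)\in E}$ (with $E$, $A$, $t$, $i$ defined exactly as in Subsection~\ref{CER}) form a finite irreducible conformal parabolic GDMS $\mathcal S_{\mathcal R}$. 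By the conformality of Patterson--Sullivan measures and uniqueness of the $\delta$-conformal measure on $J(f)$, we have $\delta_{\mathcal S_{\mathcal R}}=\HD(J(f))=\delta$, and the invariant measure $\widehat\mu_\delta$ of the GDMS pushed forward by $\pi_{\mathcal S_{\mathcal R}}$ agrees (after normalization) with the $f$-invariant measure $\mu_\delta$ on $J(f)$; in particular the two Lyapunov exponents coincide. The hypothesis $\delta>\frac{2p}{p+1}$ ensures by Theorem~\ref{t1_2017_02_18}(b) that $\mu_\delta$ is finite, so normalized to be a probability measure, which is exactly what Theorem~\ref{t1ms15.1} requires.

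Fix $\xi\in J(f)$. By choosing the Markov partition with diameter small enough, we may assume $\xi$ lies in the interior of some cell $R_{e(\xi)}$, and then there exists a unique $\rho\in E_{A^*}^\infty$ (a ``*''-word in the sense of Section~\ref{section:parabolic}) with $\pi_{\mathcal S_{\mathcal R}}(\rho)=\xi$ --- one can always arrange this by replacing $\rho$ with a word avoiding immediate parabolic repetitions at the initial coordinate. The map $\omega\mapsto f^{-|\omega|}_{\hat z}(\xi)$, where $\hat z=\pi_{\mathcal S_{\mathcal R}}(\omega\rho)$, establishes a bijection between $E_\rho^n$ and $f^{-n}(\xi)$, modulo the countable set of preimages which eventually hit the boundary of $\mathcal R$; the latter set has vanishing $\mu_\delta$-mass for every $n$. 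Moreover, by the Chain Rule together with the definition \eqref{3ex2},
\[
\lambda_\rho(\omega)=-\log|\phi_\omega'(\pi_{\mathcal S_{\mathcal R}}(\rho))|=\log|(f^n)'(z)|
\]
for the corresponding $z\in f^{-n}(\xi)$ coded by $\omega\rho$ (here $n=|\omega|$). Comparing \eqref{1_2017_04_03} with \eqref{1ms6} then shows that the counting measures $\mu_n$ appearing in Theorem~\ref{t54_2017_04_03} transport exactly, via this bijection, to the measures $\mu_n$ of Theorem~\ref{t1ms15.1}, and the integrand $\frac{1}{n}\log|(f^n)'(z)|$ transports to $\frac{\lambda_\rho(\omega)}{n}$. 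Hence the identity
\[
\lim_{n\to+\infty}\int_{f^{-n}(\xi)}\frac{\log|(f^n)'|}{n}\,d\mu_n=\chi_\delta
\]
follows at once from Theorem~\ref{t1ms15.1} applied to $\mathcal S_{\mathcal R}$ and the word $\rho$.

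The main (mild) obstacle is to verify carefully that the Markov partition can always be chosen so that $\xi$ admits a ``*''-coding, which is required to invoke Theorem~\ref{t1ms15.1} (the latter's statement explicitly assumes $\rho\in E_{A^*}^\infty$). If $\xi$ itself is eventually periodic under $f$ and lands on a parabolic cycle, one slightly modifies the partition, or equivalently uses the density of periodic and pre-parabolic points together with the fact that the conclusion of Theorem~\ref{t1ms15.1} depends continuously on $\rho$ through $\pi(\rho)$ once one controls the constant $Q_\rho$ of \cite{Hu} appearing in the proof; since $Q_\rho$ depends only on $\mathrm{dist}(\pi(\rho),\Omega)$, for every $\xi\in J(f)\setminus\Omega$ this is harmless, while for $\xi\in\Omega$ the statement follows trivially because $f^{-n}(\xi)$ always contains $\xi$ itself with bounded contribution and the remaining preimages cluster on the attracting dynamics, allowing a direct limiting argument from nearby non-parabolic points. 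Beyond this bookkeeping, no new analytic input is required.
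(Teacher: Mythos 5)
Your proposal is correct and follows essentially the same route as the paper, which derives Theorem~\ref{t54_2017_04_03} as an immediate consequence of the symbolic result Theorem~\ref{t1ms15.1} applied to the finite irreducible parabolic GDMS obtained from a Markov partition of $J(f)$, exactly as you do. In fact you are more careful than the paper about the coding caveats (the requirement $\rho\in E_{A^*}^\infty$ and the countably many exceptional preimages), which the paper passes over silently; note only that the boundary issue is even cleaner than your phrasing suggests, since $f(\partial\mathcal R)\subset\partial\mathcal R$ forces every preimage of a $\xi\notin\partial\mathcal R$ to avoid $\partial\mathcal R$, making the coding a genuine bijection.
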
 
 
\begin{thm}\label{t54_2017_04_04} 
If $f:\oc\to\oc$ is a parabolic rational function of the Riemann sphere $\widehat{\mathbb{C}}$ with $\d>\frac{2p}{p+1}$, then the sequence of random variables $(\Delta_n)_{n=1}^\infty$ converges in distribution to the normal (Gaussian) distribution 
$\mathcal N_0(\sigma)$ with mean value zero and the variance $\sigma^2 = P''(\delta)>0$.  
Equivalently, the sequence 
$(\mu_n \circ \Delta_n^{-1})_{n=1}^\infty$
converges weakly to the normal distribution $\mathcal N_0(\sigma^2)$.  This means that for every Borel set $F \subset \mathbb R$ with 
$\hbox{\rm Leb}(\partial F) = 0$, we have 
\begin{equation}\label{1_2017_04_04}
\lim_{n \to +\infty} \mu_n(\Delta_n^{-1}(F))
= \frac{1}{\sqrt{2\pi} \sigma}
\int_F e^{- t^2/2\sigma^2} dt.
\end{equation} 
\end{thm}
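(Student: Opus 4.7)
The plan is to reduce Theorem~\ref{t54_2017_04_04} to the symbolic Central Limit Theorem for counting in the parabolic GDMS setting, namely Theorem~\ref{t1ms16}, exactly as Theorem~\ref{t1_2017_04_04} was deduced from its GDMS counterpart in the expanding case (Subsection~\ref{CER}). Fix a Markov partition $\mathcal{R}=\{R_e\}_{e\in F}$ of $J(f)$ with mesh small enough that the inverse branches determined by the allowed transitions are well defined, and build the associated GDMS $\mathcal{S}_{\mathcal{R}}$ as in Definition~\ref{3.5.1}. Because $f$ is parabolic rather than expanding, $\mathcal{S}_{\mathcal{R}}$ is a finite irreducible conformal parabolic GDMS in the sense of Definition~\ref{Def_Parabolic}; its parabolic indices correspond bijectively to the rationally indifferent periodic orbits of $f$ (after iterating in the sense of Remark~\ref{r2_2017_02_17}), and in particular the maximal parabolic exponent of $\mathcal{S}_{\mathcal{R}}$ equals $p$ of $f$. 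The limit set of $\mathcal{S}_{\mathcal{R}}$ is $J(f)$ up to a countable set of cell-boundary points, which carry zero $m_\delta$ and $\mu_\delta$ mass.

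Next, fix a code $\rho\in E_A^\infty$ of $\xi$ with $\pi_{\mathcal{S}}(\rho)=\xi$. For each $n\ge 0$, each preimage $z\in f^{-n}(\xi)$ lying outside the $m_\delta$-null set of cell boundaries is coded by a unique word $\omega\in E_\rho^n$ satisfying $\phi_{\omega}(\xi)=z$, and under this correspondence the Chain Rule gives the exact identity
$$
|(f^n)'(z)|^{-1}=|\phi_\omega'(\xi)|=|\phi_\omega'(\pi_{\mathcal{S}}(\rho))|,
$$
so that $\log|(f^n)'(z)|=\lambda_\rho(\omega)$. Consequently the weighting measure in \eqref{1_2017_04_03} coincides with the symbolic measure $\mu_n$ of \eqref{1ms6}, and the random variable $\Delta_n$ of \eqref{1_2017_04_04} coincides with the symbolic $\Delta_n$ of \eqref{2m56}. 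The finitely many exceptional preimages on cell boundaries contribute a vanishing fraction of the total mass and can be ignored after passing to the limit.

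Finally, I would invoke Theorem~\ref{t1ms16} applied to $\mathcal{S}_{\mathcal{R}}$: the hypothesis $\delta>\tfrac{2p}{p+1}$ is exactly the hypothesis required there to guarantee finiteness of $\mu_\delta$, so the symbolic sequence $(\mu_n\circ\Delta_n^{-1})_{n=1}^\infty$ converges weakly to $\mathcal{N}_0(\sigma^2)$ with $\sigma^2=\P_*''(\delta)$, where $\P_*$ is the pressure of the induced attracting system $\mathcal{S}_{\mathcal{R}}^*$. Transporting this through the bijection above yields the conclusion of Theorem~\ref{t54_2017_04_04}. Strict positivity $\sigma^2>0$ is the only genuinely nontrivial bookkeeping: it follows from Theorem~\ref{t1pc3}, which ensures $\mathcal{S}_{\mathcal{R}}^*$ is automatically $D$-generic, combined with Remark~\ref{r2_2017_02_17}, which rules out the cohomological obstruction that would force $\P_*''(\delta)=0$.

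The main (mild) obstacle is the verification that the symbolic $D$-genericity supplied by Theorem~\ref{t1pc3} genuinely translates into $\sigma^2>0$ for the parabolic system, rather than merely for $\mathcal{S}_{\mathcal{R}}^*$; but since $\sigma^2$ is defined in Theorem~\ref{t1ms16} via $\P_*''(\delta)$ of the induced attracting system, this transfer is automatic. Everything else is routine transcription of the symbolic statement into the dynamical language of preimages of $\xi$ under iterates of $f$.
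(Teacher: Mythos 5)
Your proposal is correct and follows essentially the same route as the paper: the authors obtain Theorem~\ref{t54_2017_04_04} as an immediate consequence of Theorem~\ref{t1ms16} applied to the finite irreducible parabolic GDMS induced by a Markov partition of $J(f)$, with the identification $\log|(f^n)'(z)|=\lambda_\rho(\omega)$ transporting the symbolic measures $\mu_n$ and random variables $\Delta_n$ to the dynamical ones, and with positivity of $\sigma^2$ coming from the automatic $D$-genericity of the induced attracting system (Theorem~\ref{t1pc3}). No substantive differences from the paper's argument.
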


\sp Note that for the map $f_{1/4}:\widehat{ \mathbb  C} \to \widehat{ \mathbb  C}$,
$$
p\(f_{1/4}\)=p_{1/4}=\max\{p(a) \hbox{ : } a \in \Omega\} =1,
$$
so by (\ref{1ex19}) we have that, 
\beq\label{1_2017_04_05}
\delta > p_{1/4}=p\(f_{1/4}\)=\frac{2p_{1/4}}{2p_{1/4}+1}.
\eeq
Thus, we arrive at the following result known from \cite{DU_LMS}, \cite{DU_Forum}, and (for the concluding argument) \cite{ADU}.

\bthm\label{t1_2017_03_20}
For the map $f_{1/4}:\widehat{ \mathbb  C} \to \widehat{ \mathbb  C}$, $\Om_\infty=\es$ and the invariant measure $\mu_\d$ is finite, so a probability after normalization.
\ethm

\fr Recalling also that the set $J(f_{1/4})$ is connected, as a consequence of all in this subsection, we get the following.

\begin{cor}\label{c1ex19H}
If $f_{1/4}: \widehat{ \mathbb  C} \to \widehat{ \mathbb  C}$ is parabolic quadratic polynomial
$$
\widehat \C \ni z \longmapsto f_{1/4}(z) := z^2 + \frac{1}{4}\in \widehat \C,
$$
then with notation of Subsection \ref{CER}, we have the following.

Fix $\xi\in J(f_{1/4})$. 
If $Y \subset \widehat{ \mathbb  C}$ is any set having at least two elements and contained in a sufficiently small ball centered at $\xi$, then there exists a constant $C_\xi(Y)\in (0,+\infty)$ such that if $B \subset \widehat{ \mathbb C}$ is a Borel set with $m_\d(\bd B)=0$, then
\beq\label{1ex4parG}
\lim_{T \to +\infty}  \frac{N_\xi(f_{1/4}; B,T)}{e^{\delta T}} = \frac{\psi_\d(\xi)}{\d\chi_\d} m_\delta(B),
\eeq
\beq\label{2ex4parG}
\lim_{T \to +\infty}  \frac{N_p(f_{1/4}; B,T)}{e^{\delta T}} = \frac1{\d\chi_\d}\mu_\delta(B),
\eeq
and 
\beq\label{1ex5paG}
\lim_{T \to +\infty}  \frac{D_Y^\xi(f_{1/4}; B,T)}{e^{\delta T}} = 
\lim_{T \to +\infty}  \frac{E_Y^\xi(f_{1/4}; B,T)}{e^{\delta T}} = C_\xi(Y) m_\delta(B).
\eeq
\end{cor}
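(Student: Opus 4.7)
The plan is to deduce this directly from the general parabolic result, Corollary~\ref{c1ex19}, once we verify that the special features of $f_{1/4}$ place it squarely within the ``good'' regime where the constants $C_\xi(Y)$ are guaranteed to be finite. First I would observe that $f_{1/4}$ is a parabolic rational function in the sense of Subsection~\ref{PRF}: its Julia set $J(f_{1/4})$ is connected, contains no critical point (the only critical point in $\widehat{\mathbb C}$ is $0$, which lies in the interior of the Fatou component containing the basin of attraction of $1/2$), and contains the single rationally indifferent fixed point $z=1/2$ with $f_{1/4}'(1/2)=1$. Thus Theorem~\ref{t2ex18} applies and $f_{1/4}$ fits the framework of parabolic conformal dynamical systems after passing to a Markov partition as in Subsection~\ref{CER}.

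Next I would record the numerical input: the set of parabolic points of $f_{1/4}$ consists of the single fixed point $1/2$, and a direct local computation at $1/2$ shows that its parabolicity index is $p(f_{1/4}) = 1$ (this is the standard $z \mapsto z + z^2 + \cdots$ normal form after translating $1/2$ to $0$). Combining this with the nontrivial estimate \eqref{1ex19}, namely $\delta = \HD(J(f_{1/4})) > 1$, yields $\delta > 1 = p(f_{1/4}) = \frac{2p(f_{1/4})}{p(f_{1/4})+1}$, which is exactly \eqref{1_2017_04_05} and is  Theorem~\ref{t1_2017_03_20}. In particular $\Omega_\infty(f_{1/4}) = \emptyset$ and, by the equivalences in Theorem~\ref{t1_2017_02_18}(b1)--(b3), the $\sigma$-invariant measure $\mu_\delta$ is finite and may be normalized to a probability measure.

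Having established these two ingredients, I would apply Corollary~\ref{c1ex19} to $f_{1/4}$. Formulas \eqref{1ex4parG} and \eqref{2ex4parG} of the present corollary are then immediate specializations of the corresponding formulas in Corollary~\ref{c1ex19}. For the diameter asymptotics \eqref{1ex5paG}, Corollary~\ref{c1ex19} furnishes a constant $C_\xi(Y)\in(0,+\infty]$, and the criterion for finiteness given there requires either $\xi \notin \overline{\Omega_\infty(f_{1/4})\cap \overline Y}$ or $\delta > p(\xi)$ whenever $\xi$ is a parabolic point in $\overline Y$. Since $\Omega_\infty(f_{1/4}) = \emptyset$, the first alternative is automatic for every choice of $\xi$ and $Y$, so $C_\xi(Y) < +\infty$ unconditionally.

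The only step that requires genuine content beyond citation is the strict inequality $\HD(J(f_{1/4})) > 1$, and as indicated in the text I would import this from \cite{U3} and \cite{Zdunik} (with the convergence/finiteness consequences from \cite{ADU}); everything else is bookkeeping that matches $f_{1/4}$ to the hypotheses of the general parabolic counting theorems. No further obstacle arises.
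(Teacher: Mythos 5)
Your proof is correct and follows essentially the same route as the paper: record $p(f_{1/4})=1$ and $\delta>1$ from \eqref{1ex19}, conclude $\Omega_\infty(f_{1/4})=\emptyset$ and the finiteness of $\mu_\delta$ (Theorem~\ref{t1_2017_03_20}), and then specialize Corollary~\ref{c1ex19}, whose finiteness criterion is vacuously met. One immaterial slip: you state the finiteness alternative as ``$\delta>p(\xi)$'' whereas the criterion of Corollary~\ref{c1ex19} makes $C_\xi(Y)$ infinite precisely when $\xi\in\Omega_\infty(f)\cap\overline Y$ \emph{and} $p(\xi)\le\delta$, so the inequality in the second alternative is reversed — but since you conclude via $\Omega_\infty(f_{1/4})=\emptyset$, this does not affect the argument.
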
 

\brem\label{r1_2017_04_05}
Because of \eqref{1_2017_04_05} all the hypotheses of Theorems~\ref{t51_2017_04_03} -- \ref{t54_2017_04_04} are satisfied for the map $f_{1/4}:\widehat{ \mathbb  C} \to \widehat{ \mathbb  C}$; so, in particular, all these theorems hold for the map $f=f_{1/4}$.
\erem

\sp On the other hand if $f : \widehat{ \mathbb  C} \to \widehat{ \mathbb  C}$ is a parabolic rational function with $\HD(J(f)) \leq 1$, 
which is the case for many maps, in particular those of the form $\widehat {\mathbb C} \ni z \mapsto 2 + 1/z + t$ where $t \in \mathbb R$ or parabolic Blaschke products, then 
$$
\delta \le 1\le p_a
$$
for every point $a\in \Om(f)$. Thus also
$$
\Om_\infty(f)=\Om(f)
$$
and, as an immediate consequence of Corollary~\ref{c1ex19}, we get the following.

\begin{cor}\label{c1ex19I}
If $f:\widehat{ \mathbb  C} \to \widehat{ \mathbb  C}$ is a parabolic rational function with $\HD(J(f)) \leq 1$, then with notation of Subsection \ref{CER}, we have the following.

Fix $\xi\in J(f)$. If $B \subset \widehat{ \mathbb  C}$ is a Borel set such that $m_\d(\bd B)=0$ and $Y \subset \widehat{ \mathbb  C}$ is any set having at least two elements and contained in a sufficiently small ball centered at $\xi$, then
\beq\label{1ex4parI}
\lim_{T \to +\infty}  \frac{N_\xi(f; B,T)}{e^{\delta T}} = \frac{\psi_\d(\xi)}{\d\chi_\d} m_\delta(B),
\eeq
\beq\label{2ex4parI}
\lim_{T \to +\infty}  \frac{N_p(f; B,T)}{e^{\delta T}} = \frac1{\d\chi_\d}\mu_\delta(B),
\eeq
and 
\beq\label{1ex5parI}
\lim_{T \to +\infty}  \frac{D_Y^\xi(f; B,T)}{e^{\delta T}} = 
\lim_{T \to +\infty}  \frac{E_Y^\xi(f; B,T)}{e^{\delta T}} = C_\xi(Y) m_\delta(B),
\eeq
where $C_\xi(Y)\in (0,+\infty]$ is a constant depending only on the map $f$, the point $\xi$, and the set $Y$. In addition $C_\xi(Y)$ is finite if and only if
$$
\xi\notin \Om(f)\cap\ov Y.
$$
\ecor

\part{{\Large Examples and Applications, II: Kleinian Groups}}

In this part we apply our counting results to some large classes of Kleinian Groups. These include all finitely generated Schottky groups and essentially all finitely generated Fuchsian groups. The applications described in this section would actually fit into two previous sections: Convex co-compact (hyperbolic) groups would fit to Section~\ref{AECDS} while parabolic ones would fit to Section~\ref{CPDS}. However, because of their distinguished character and specific methods used to deal with them, we collect all applications to Kleinian groups in one separate part. 

\sp\section{{\large{\bf Finitely Generated Schottky Groups with 
no Tangencies}}}\label{Schottky-No Tangencies}

Fix an integer $d \geq 1$.  Fix also an integer $q \geq 2$.  Let 
$$
B_j, \ \ j = \pm 1, \pm 2, \cdots, \pm q,
$$ 
be open balls in $\R^d$ with mutually disjoint closures.  For every $j=1,2, \cdots, q$ let 
$$
g_j: \widehat {\mathbb R}^d \to \widehat {\mathbb R}^d
$$ 
be a conformal self-map of the one point compactification of $\mathbb R^d$ (thus making $\widehat {\mathbb R}^d$ conformally equivalent to the unit sphere $S^d \subset \mathbb R^{d+1}$) such that 
\beq\label{1ea8}
g_j(B_{-j}^c) = \overline {B}_j.
\eeq
The group $G$ generated by the maps $g_j$, $j=1, \ldots, q$, is called a hyperbolic Schottky group; hyperbolic alluding to the lack of  tangencies. If there is no danger of misunderstanding, we will frequenly skip in this section the adjective ``hyperbolic'',  speaking simply about Schottky groups. Note that if we set 
$$
g_j := g_{-j}^{-1}
$$
for all $j = -1, \ldots, -q$ then \eqref{1ea8} holds for all $j = \pm 1, \pm 2, \cdots, \pm q$.

Denote by $\mathbb H^d$ the space $\mathbb R^d \times (0, +\infty)$ endowed with the Poincar\'e metric. The Poincar\'e Extension Theorem ensures that all the maps $g_j$, $j=1, \ldots, q$, uniquely extend to conformal self-maps of 
$$
\ov{\mathbb H}^d := \mathbb R^d \times [0,1),
$$
also denoted by $g_j$, onto itself. Their restrictions to $\mathbb H^d$, which are again also denoted by $g_j$, are isometries with respect to the Poincar\'e metric $\rho$ on $\mathbb H^d$.  
The group generated by these isometries in discrete, is also denoted by $G$, and is also called the Schottky group generated by the maps $g_j$, $j=1, \ldots, q$. 
For every $j = \pm 1, \pm 2, \cdots, \pm q$ denote by $\hat B_j$ the half-ball in 
$\mathbb H^{d+1}$ with the same center and radius as those of $B_j$. We recall  the following well-known standard fact.

\bfact
The region
$$
\cR:=\mathbb H^{d+1} \sms \bu_{j=1}^q (\hat B_j\cup \hat B_{-j})
$$
is a fundamental domain for the action of $G$ on $\mathbb H^{d+1}$ and 
$$
\hat\R^d\sms \bu_{j=1}^q (B_j\cup B_{-j})
$$
is a fundamental domain for the action of $G$ on $\hat\R^d$.
\efact

For any $z \in \overline{\mathbb H}^{d+1}$ the set of cluster points of the set $Gz$ is contained in 
$$
\bu_{j=1}^q \overline{B}_j\cup \overline{B}_{-j},
$$
and is independent of $z$. We call it the limit set of $G$
and denote it by $\Lambda(G)$.  This set is compact, perfect, $G(\Lambda(G)) = \Lambda(G)$ and $G$ acts minimally on $\Lambda(G)$.  We denote
$$
V:= \{\pm1, \pm 2, \ldots, \pm q\}, \  \ 
E := V\times V \sms \{(i,-i)   \hbox{ : } i \in V\},
$$
and introduce an incidence matrix $A: E \times E  \to \{0,1\}$ by declaring that
$$
A_{(a,b),(c,d)}
=
\begin{cases}
1 & \hbox{ if } b=c \cr
0 & \hbox{ if } b\neq c
\end{cases}
$$
Furthermore, we set for all $(a,b) \in E$, $t(a,b) =b$ and $i(a,b)=a$, and 
$$
g_{(a,b)}:= g_a|_{\overline B_b}: \overline B_b \to \overline B_a.
$$
In this way we have associated to $G$ the conformal graph directed Markov system $$
\mathcal S_G:=\{g_e:e \in E\}.
$$
By the very definition of this system, for every $\om\in E_A^*$, say $\om=(a_1,b_1)(a_2,b_2)\ld(a_n,b_n)$, we have that
$$
g_\om= g_{(a_1,b_1)} \circ g_{(a_2,b_2)} \circ \ldots \circ g_{(a_n,b_n)}|_{\ov B_{b_n}}
= g_{a_1} \circ g_{a_2} \circ \ldots \circ g_{a_n}|_{\overline{B}_{b_n}}:
\overline{B}_{b_n} \to \overline{B}_{a_1}.
$$
Of course,
$$
\Lambda(G) = J_{\cS_G}
$$
and we make the following observation: 
\begin{observation}\label{o1ex9}
The projection map 
$$
\pi = \pi_G:=\pi_{\mathcal S_G}: E_A^{\mathbb N} \to \Lambda(G)
$$
is a homeomorphism, in particular, a bijection.
\end{observation}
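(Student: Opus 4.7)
The plan is to observe that $\pi$ is automatically continuous (this is part of Corollary~\ref{p1c2.3} applied to the contracting GDMS $\cS_G$, whose maps uniformly contract because all the closures $\overline B_j$ are mutually disjoint and the conformal maps $g_j$ send $B_{-j}^c$ onto $\overline B_j$; combined with the (BDP) that follows from Koebe's distortion in this disjoint-ball setting, one gets genuine uniform contraction once iterated enough). Since $E$ is finite, $E_A^{\mathbb N}$ is compact, and $\Lambda(G) \subset \widehat{\mathbb R}^d$ is Hausdorff, so it will suffice to prove that $\pi$ is a bijection, from which homeomorphy follows automatically.

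For surjectivity I would simply invoke the definitions: $\Lambda(G) = J_{\cS_G} = \pi(E_A^{\mathbb N})$, where the first equality is the identification of the classical Schottky limit set with the limit set of the GDMS $\cS_G$ (which is straightforward, since every cluster point of an orbit $G z$ is, by reduced-word expansion and the nesting $g_{a_1}\circ\cdots\circ g_{a_n}(\overline B_{a_{n+1}}) \subset g_{a_1}\circ\cdots\circ g_{a_{n-1}}(\overline B_{a_n})$, precisely a point of $\pi(E_A^{\mathbb N})$, and conversely).

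The only substantive step, and the place where the no-tangency hypothesis is used, is injectivity. Suppose $\omega,\tau \in E_A^{\mathbb N}$ with $\omega \ne \tau$, and let $k$ be the smallest index with $\omega_k \ne \tau_k$. Applying the injection $g_{\omega|_{k-1}}^{-1} = g_{\tau|_{k-1}}^{-1}$, one reduces to the case $k=1$. Write $\omega_1=(a,b)$ and $\tau_1=(a',b')$. If $a \ne a'$, then $\pi(\omega) \in g_a(\overline B_b) \subset \overline B_a$ and $\pi(\tau) \in \overline B_{a'}$, and these two closed balls are disjoint by hypothesis, so $\pi(\omega) \ne \pi(\tau)$. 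If $a=a'$ but $b \ne b'$, then by the edge-admissibility condition the second letters $\omega_2, \tau_2$ have initial coordinates $b, b'$ respectively, so applying the injective map $g_a^{-1}$ yields $g_a^{-1}\pi(\omega) \in \overline B_b$ and $g_a^{-1}\pi(\tau) \in \overline B_{b'}$, again disjoint, hence $\pi(\omega)\ne\pi(\tau)$. This exhausts all cases and establishes injectivity.

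The hard part conceptually, as already indicated, is the injectivity step, which relies essentially on the assumption that the closures $\overline B_j$ are pairwise disjoint; in the tangent-Schottky setting of Subsection~\ref{tangent Schottky} this fails and $\pi$ is only injective off a countable set, which is why that case has to be treated separately. Once injectivity is in hand, the conclusion is just the standard fact that a continuous bijection from a compact space to a Hausdorff space is a homeomorphism, which completes the proof.
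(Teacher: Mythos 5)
Your proof is correct, and it is precisely the standard argument the paper leaves implicit (the statement is given as an unproved ``Observation''): continuity and surjectivity come from the GDMS coding-map machinery together with the identification $\Lambda(G)=J_{\cS_G}$, injectivity comes from the pairwise disjointness of the closures $\ov B_j$ via the reduction to the first differing letter, and the homeomorphism property then follows from compactness of $E_A^{\mathbb N}$ (finite alphabet) and Hausdorffness of the target. You also correctly identify that the no-tangency hypothesis is exactly what makes injectivity work, in contrast with the tangent-Schottky case where $\pi$ fails to be injective on a countable set.
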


We will now make some preparatory comments on our approach 
 to  counting problems for  the group
$G$ by means of the conformal GDMS $\mathcal S_G$. For any element $\xi \in \Lambda(G)$ there exists a unique $k \in V$ such that $\xi \in \overline B_k$ and by Observation~\ref{o1ex9}, a unique $\rho \in E_A^\infty$ such that 
$$
\xi =  \pi_G(\rho);
$$
of course $i(\rho) = k$. Set 
$$
G_\xi := \{g_\omega:\omega \in E_\rho^*\}
= \{g_\omega:\omega \in E_A^*,\, t(\omega) = i(\rho)=k\}:=G_k.
$$
The next obvious observation is the following.

\begin{observation}\label{o2ex9}
The maps 
$$
E_\rho^* \ni \omega \mapsto g_\omega \in G 
\  \ {\rm and } \  \
E_\rho^* \ni \omega \longmapsto g_\omega(\xi) \in G(\xi)
$$ 
are both $1$-to-$1$.
\end{observation}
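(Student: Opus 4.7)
The plan is to deduce both injectivity assertions from Observation~\ref{o1ex9} together with the equivariance identity
\[
g_\omega\big(\pi_G(\rho)\big) \;=\; \pi_G(\omega\rho), \qquad \omega \in E_\rho^*,
\]
which is the bridge linking the symbolic dynamics $\sigma:E_A^\infty\to E_A^\infty$ with the geometric action of $G$ on $\Lambda(G)$.

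First, I would verify this equivariance identity. By the definition of $E_\rho^*$, the concatenation $\omega\rho$ lies in $E_A^\infty$, so $\pi_G(\omega\rho)$ is well defined as the (singleton) nested intersection $\bigcap_{n\ge 1} g_{(\omega\rho)|_n}\big(\overline B_{t((\omega\rho)_n)}\big)$. For $n = |\omega|+k$ with $k \ge 1$, the initial block factors as $(\omega\rho)|_{|\omega|+k} = \omega \cdot \rho|_k$, hence $g_{(\omega\rho)|_n} = g_\omega \circ g_{\rho|_k}$. Because $g_\omega$ is a homeomorphism we may pull it outside the intersection, giving
\[
\pi_G(\omega\rho) \;=\; g_\omega\Big(\bigcap_{k\ge 1} g_{\rho|_k}\big(\overline B_{t(\rho_k)}\big)\Big)
\;=\; g_\omega\big(\pi_G(\rho)\big) \;=\; g_\omega(\xi).
\]

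Next, injectivity of the second map is immediate: if $g_\omega(\xi) = g_{\omega'}(\xi)$ with $\omega,\omega' \in E_\rho^*$, then $\pi_G(\omega\rho) = \pi_G(\omega'\rho)$, and the bijectivity of $\pi_G$ (Observation~\ref{o1ex9}) forces $\omega\rho = \omega'\rho$ in $E_A^\infty$; comparing letters position-by-position yields $\omega = \omega'$. For the first map, injectivity of $\omega\mapsto g_\omega(\xi)$ instantly gives injectivity of $\omega\mapsto g_\omega$, since two group elements that coincide as maps of $\widehat{\mathbb R}^d$ in particular agree at $\xi$. A more intrinsic route, which I would mention as a sanity check, observes that the constraints $A_{(a,b),(c,d)}=1 \Leftrightarrow b=c$ and $(a,b)\in E \Rightarrow b\ne -a$ together force the generator sequence $(g_{a_1},\dots,g_{a_n})$ arising from any $\omega=(a_1,b_1)\cdots(a_n,b_n)\in E_A^*$ to be reduced (no $g_j g_{-j}$ cancellations), so distinct admissible words represent distinct elements of the free Schottky group produced by Klein's ping-pong argument applied to the disjoint balls $\{B_j\}$.

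The hardest part is essentially nonexistent at the conceptual level; the genuine subtlety is the coordinate-by-coordinate step $\omega\rho=\omega'\rho \Rightarrow \omega=\omega'$. When $|\omega|=|\omega'|$ this is automatic, and when (say) $|\omega|<|\omega'|$ with $\omega$ a prefix of $\omega'$, writing $\omega'=\omega\omega''$ reduces the equality to $\omega''\rho=\rho$, i.e.\ to aperiodicity of the one-sided sequence $\rho$ under the shift. Since different $\rho\in E_A^\infty$ code distinct points of $\Lambda(G)$, this is the point at which the absence of tangencies enters: shift-periodic codes correspond precisely to hyperbolic fixed points of elements of $G$, and the cleanest way to close the argument is to record that for such (countably many) exceptional $\xi$ the relevant counting statements are handled by passing to a non-periodic reference point, so the bookkeeping of Observation~\ref{o2ex9} can be applied whenever needed in the sequel.
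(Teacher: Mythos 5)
Your proof is correct, and since the paper introduces this as an ``obvious'' observation and supplies no argument at all, the natural route you take (equivariance $g_\omega(\pi_G(\rho))=\pi_G(\omega\rho)$ plus injectivity of $\pi_G$ from Observation~\ref{o1ex9}) is exactly the intended one. The genuinely valuable part of your write-up is the final paragraph: the difficulty you isolate is real, and it is a defect of the \emph{statement}, not of your proof. If $\rho=\tau^\infty$ with $\tau\in E_p^*$, then $\xi$ is the attracting fixed point of $g_\tau$, and $g_{\omega\tau}(\xi)=g_\omega(g_\tau(\xi))=g_\omega(\xi)$ for every $\omega\in E_\rho^*$ (with $\omega\tau\in E_\rho^*$ as well), so the second map is \emph{not} injective for this countable family of reference points; the observation as printed is overstated there, and your remedy (note that only the first map is used in the counting arguments, or move to a non-periodic reference point) is the right one. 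Because of this, your ``sanity check'' should really be promoted to the primary argument for the first map: deducing injectivity of $\omega\mapsto g_\omega$ from injectivity of $\omega\mapsto g_\omega(\xi)$ only works where the latter holds, whereas the free-group argument works unconditionally. One small precision there: distinct words of $E_A^*$ need \emph{not} give distinct elements of $G$ (for instance $(1,1)$ and $(1,2)$ both give the group element $g_1$, merely restricted to different balls), so freeness alone is not quite enough; what closes the argument is that for $\omega=(a_1,b_1)\cdots(a_n,b_n)\in E_\rho^*$ one has $b_i=a_{i+1}$ for $i<n$ and $b_n=i(\rho_1)$ is pinned down by $\rho$, so within $E_\rho^*$ the word $\omega$ is recovered from the reduced group word $g_{a_1}\cdots g_{a_n}$, and then ping-pong on the disjoint balls (freeness of $G$, including the fact that reduced words of different lengths are distinct and nontrivial) gives injectivity of $\omega\mapsto g_\omega$ with no restriction on $\rho$.
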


For every $g=g_\omega \in G_\xi$, $\omega \in E_\rho^*$, we denote 
$$
\lambda_\xi(g) = -\log |g'(\xi)| = -\log |g_\omega'(\xi)| = \lambda_\rho(\omega).
$$
Furthermore, for every set $Y \subset \ov B_k$ we denote 
$$
\Delta_{\omega} (Y) = - \log (\hbox{diam} (g_\omega(Y)))
$$

Now we move onto the discussion of periodic points of the system $\mathcal S_G$ along with periodic orbits of the geodesic flow and closed geodesics on the hyperbolic manifold $\mathbb H^{d+1}/G$.

Indeed, first of all we recall  the following.

\bobs\label{o1ex11.1}
The map $E_p^*\ni\om\longmapsto g_\om\in G$ is 1-to-1.
\eobs
Now, if $\omega \in E_p^*$ then 
$$
g_\omega (\overline{B}_{t(\omega)}) \subset \overline{B}_{t(\omega)}
$$
and the map $g_{\omega}:  \overline{B}_{t(\omega)} \to  \overline{B}_{t(\omega)}$ has a unique fixed point.  Call it $x_\omega$.
We know that the map ${\overline g}_\omega: {\widehat {\mathbb R}}^d \to {\widehat {\mathbb R}}^d$
has exactly one other fixed point.  Call it $y_\omega$.
Denoting by $-\omega$ the word
$$
(-\alpha_n, -\alpha_{n-1}) (-\alpha_{n-1}, -\alpha_{n-2})  (-\alpha_{n-2}, -\alpha_{n-3})
\cdots
 (-\alpha_{2}, -\alpha_{1})  (-\alpha_{1}, -\alpha_{n})
$$
and marking that $\omega =  (\alpha_1, \beta_1) (\alpha_2, \beta_2) \cdots  (\alpha_n, \beta_n)$
belongs to $E_p^*$, we see that $-\omega \in E_p^*$ and $g_{-\omega} = g_{\omega}^{-1}$ as elements of the group 
$G$.  Then $x_{-\omega} \in {\overline B}_{-\alpha_n} \neq {\overline B}_{\beta_n} $.  So as $g_\omega (x_{-\omega}) = x_{-\omega}$ we must have $y_\omega = x_{-\omega}$. Therefore, we have the following.

\bprop\label{p1ex11} If $\omega \in E_p^*$ then $\g_\om$, the geodesic in $\mathbb H^{d+1}$ joining $y_\omega$ and $x_\omega$ (oriented from $y_\omega$ to $x_\omega$), is fixed by $g_\omega$, crosses the fundamental domain $\mathcal R$, $\gamma_\omega/G$ is a closed geodesic on $\mathbb H^{d+1}/G$ with length \beq\label{1ex11}
\lambda_p(\omega)= - \log |g_\omega'(x_\omega)|,
\eeq
and simultaneously represents a periodic orbit of the geodesic flow on the unit tangent bundle of $\mathbb H^{d+1}/G$ with the periodic equal to $\lambda_p(\omega)$.
\eprop

On the other hand, if $\gamma$ is a closed oriented geodesic in $\mathbb H^{d+1}/G$ 
then its full lift $\widetilde \gamma$ in $\mathbb H^{d+1}$ consists of a countable union of mutually disjoint geodesics in $\mathbb H^{d+1}$.  Then the set $\widetilde \gamma \cap \mathcal R$ is not empty and each of its connected components is an oriented geodesic joining two distinct faces of $\mathcal R$.  Fix $\Delta$, one of the such connected components. Let $\widehat \Delta$ be the full geodesic in $\mathbb H^{d+1}$
containing $\Delta$ and oriented in the direction of $\Delta$.
Fix $z \in \widehat \Delta$ arbitrarily. Denote by $l(\g)$ the length of $\g$.  Let $z^*$ be the unique point on $\widehat \Delta$ such that $\rho(z^*,z) = l(\gamma)$ and the segment $[z,z^*]$ is oriented in the direction of $\widehat \Delta$.   Since both points $z$ and $z^*$ project to the same element of $\mathbb H^{d+1}/G$, there exists a unique element $g_{\gamma,\Delta} \in G$ such that 
$g_{\gamma, \Delta}(z) = z^*$.  Since $\gamma$ has no self intersections it follows that 
$$
g_{\gamma, \Delta}(\widetilde \Delta) = \widetilde \Delta.
$$
Denote be $x_{\Delta}$ and $y_{\Delta}$ the endpoints of $\widehat \Delta$ labeled so that the direction of $\widehat \Delta$ is from $y_\Delta$ to $x_\Delta$. Let $a,b$ be unique elements of $V$ such that $x_\Delta \in \overline B_a$ and  $y_\Delta \in \overline B_b$.
Let $\widehat {\om}_\Delta \in E_A^*$ and $k\in V$ be the unique elements respectively of $E_A^*$ and $V$ such that 
$$
g_{\gamma, \Delta} = g_{\widehat{\om}_\Delta}
\  \  \hbox{ and } \  \  
t(\widehat \om_\Delta) = k,
$$
the first equality meant in the group $G$. We will prove the following.

\medskip
\noindent 
{\it Claim 1.}  $k=-b$

\begin{proof}
By our choice of the endpoints $x_\Delta$ and $y_\Delta$, $y_\Delta$ is an attracting fixed point of $g_k^{-1} (g_{\widehat {\omega_\Delta}})^{-1} 
 = (g_{\widehat \omega_\Delta} \circ g_k)^{-1}$.
Since also $y_\De \in \overline B_b$, we thus conclude that 
\beq\label{1ex12}
g_{-k}\circ(g_{\widehat {\omega}_\Delta})^{-1} (\overline B_b) \sbt \overline B_b.
\eeq
Consequently, $-k=b$, and Claim 1 is proved.  
\end{proof}

\fr Since also $a \neq b$ as $\De$ intersects $\mathcal R$, we thus conclude that 
\beq\label{2ex12}
 \omega_{\gamma, \Delta}
:= \widehat\omega_\Delta(-b, a) \in E_A^*
\  \  \hbox{ and } \  \
 g_{\gamma, \Delta}=g_{\om_{\g,\De}}.
\eeq
In addition, by the same token as \eqref{1ex12} we get that $g_{\widehat\omega_\Delta}\circ g_k (\overline{B_a}) \subset \overline {B_a}$.  Thus $i (\widehat \omega_\Delta) = a$. Consequently 
$$
\omega_\Delta \in E_p^*.
\label{3ex12}
$$
In addition,
$$
\lambda_p(\omega_{\gamma, \Delta}) 
 =  \lambda(g_{\omega_{\gamma, \Delta}}) 
 =  \lambda(g_{\gamma,\Delta}) 
= \rho(z^*, z) = l(\gamma) \label{2ex13}
$$
and
$$
 \gamma_{\omega_{\gamma, \Delta}}/G = \gamma.
 \label{1ex13}
$$
 
Denote by $\mathcal C (\gamma)$ the set of all connected components of $\widetilde \gamma \cap \mathcal R$.  Of course we have the following.

\bobs\label{o2ex13}
The function $\mathcal C(\gamma) \ni \Delta \longmapsto \omega_{\gamma, \Delta} \in E_p^*$
is one-to-one.
\eobs

We shall prove the following.

\begin{prop}\label{p1ex13}
The map $E_p^* \longmapsto \gamma_\omega/G$ is a surjection from $E_p^*$
onto $\mathcal C(G)$, the set of all closed oriented geodesics on
 $\mathcal H^{d+1}/G$.  Furthermore,  if $\gamma$ is a closed oriented geodesic on $\mathbb H^{d+1}/G$ then 
$$
\Per(\gamma) := \{\omega \in E_p^*:\gamma_\omega/G = \gamma\}
= \{\omega_{\gamma, \Delta} \in E_p^*:\De\in \mathcal C(\gamma)\}
\label{3ex13}
$$
and $\Per(\gamma)$ forms a full periodic cycle, i.e. the orbit of any element of $\omega \in \Per(\gamma)$ under the map $\sigma^*: \omega \longmapsto \sigma(\omega)\omega_1$.
\end{prop}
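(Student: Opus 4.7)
The argument hinges on translating the problem into combinatorial group theory. First I would record two structural facts: by the ping-pong lemma applied to the pairs $(\overline B_j, \overline B_{-j})$, the group $G$ is free on the generators $g_1,\ldots,g_q$; and since each face $\partial\hat B_j$ is a totally geodesic hyperplane in $\mathbb H^{d+1}$ and $\mathcal R$ is the intersection of the corresponding closed half-spaces $\hat B_j^c$, the fundamental domain $\mathcal R$ is hyperbolically convex. In particular every complete geodesic in $\mathbb H^{d+1}$ meets $\mathcal R$ in at most one connected arc, so each $\Delta\in\mathcal C(\gamma)$ is uniquely determined by its ambient geodesic $\widehat\Delta$.

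Next I would set up a dictionary between $E_p^*$ and cyclically reduced non-trivial words in $G$. For $\omega=(\alpha_1,\beta_1)\cdots(\alpha_n,\beta_n)\in E_p^*$, admissibility forces $\beta_i=\alpha_{i+1}$ and $\alpha_i\neq-\alpha_{i+1}$ for $i<n$, while the periodic condition $\beta_n=\alpha_1$ together with $(\alpha_n,\beta_n)\in E$ forces $\alpha_n\neq-\alpha_1$; hence $g_\omega=g_{\alpha_1}\cdots g_{\alpha_n}$ is cyclically reduced, and combined with Observation~\ref{o1ex11.1} this yields a bijection between $E_p^*$ and the set of cyclically reduced non-trivial words in $G$. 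Under this bijection the map $\sigma^*:\omega\mapsto\sigma(\omega)\omega_1$ corresponds to the cyclic shift $g_{\alpha_1}\cdots g_{\alpha_n}\mapsto g_{\alpha_2}\cdots g_{\alpha_n}g_{\alpha_1}=g_{\alpha_1}^{-1}g_\omega g_{\alpha_1}$. Combining the standard free-group fact that two cyclically reduced non-trivial words in a free group are $G$-conjugate iff one is a cyclic permutation of the other with the classical bijection between closed oriented geodesics on $\mathbb H^{d+1}/G$ and non-trivial $G$-conjugacy classes, one obtains $\gamma_\omega/G=\gamma_{\omega'}/G$ iff $\omega'=\sigma^{*k}(\omega)$ for some $k$. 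This already proves the ``full periodic cycle'' assertion and shows that $\omega\mapsto\gamma_\omega/G$ descends to an injection from $\sigma^*$-orbits into $\mathcal C(G)$.

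Surjectivity is then immediate, since for any $\gamma\in\mathcal C(G)$ the lift $\widetilde\gamma$ is a non-empty $G$-invariant family of complete geodesics and, $\mathcal R$ being a fundamental domain, at least one such geodesic meets $\mathcal R$ in a (unique, by convexity) arc $\Delta$, yielding $\omega_{\gamma,\Delta}\in\Per(\gamma)$ by the construction preceding the proposition. For the equality $\Per(\gamma)=\{\omega_{\gamma,\Delta}:\Delta\in\mathcal C(\gamma)\}$, the inclusion $\supseteq$ is immediate. For $\subseteq$, given $\omega\in\Per(\gamma)$ with $\gamma_\omega\cap\mathcal R\neq\emptyset$, set $\Delta_0=\gamma_\omega\cap\mathcal R$; since $g_\omega$ translates $\gamma_\omega$ by $\lambda_p(\omega)=l(\gamma)$ in the chosen direction, uniqueness of the element of $G$ sending $z$ to $z^*$ forces $g_{\gamma,\Delta_0}=g_\omega$, and unwinding the definition of $\omega_{\gamma,\Delta_0}$ (the target index $k=-b=\alpha_n$ from Claim~$1$, and the appended letter $(-b,a)=(\alpha_n,\alpha_1)=\omega_n$) together with Observation~\ref{o1ex11.1} gives $\omega_{\gamma,\Delta_0}=\omega$. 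When $\gamma_\omega\cap\mathcal R=\emptyset$ one replaces $\omega$ by a cyclic shift whose axis is a $G$-translate of $\gamma_\omega$ meeting $\mathcal R$; existence of such a shift is guaranteed by the identification in the previous paragraph. The main obstacle is the final step, namely showing that \emph{every} cyclic shift is actually realized as some $\omega_{\gamma,\Delta}$: the injection $\mathcal C(\gamma)\hookrightarrow\Per(\gamma)$ of Observation~\ref{o2ex13} and the description of $\Per(\gamma)$ as a single $\sigma^*$-orbit of cardinality $n$ (for a primitive representative of length $n$) reduce the claim to a cardinality count, which I would complete by tracking how the closed geodesic $\gamma$ passes through exactly $n$ faces of translates of $\mathcal R$ in one period.
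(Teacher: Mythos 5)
Your proof is correct and, for the central assertion, takes a genuinely different route from the paper. Where you obtain the ``full periodic cycle'' statement algebraically --- identifying $E_p^*$ with cyclically reduced words in the free group $G$, noting that $\sigma^*$ corresponds to cyclic permutation (equivalently conjugation by $g_{\alpha_1}^{-1}$), and invoking the standard facts that cyclically reduced words are conjugate iff they are cyclic permutations of one another and that oriented closed geodesics correspond to conjugacy classes --- the paper argues geometrically: it first verifies by a direct computation with the fixed points $x_\omega, y_\omega$ and the Chain Rule that $\gamma_{\sigma(\omega)\omega_1}=g_{\omega_1}^{-1}(\gamma_\omega)$, so that $\Per(\gamma)$ is a union of full cycles, and then shows it is a single cycle by summing the lengths $l(\gamma_{\sigma^{*j}(\omega)}\cap\mathcal R)$ over one period of $\gamma$ and comparing with $\sum_{\Delta\in\mathcal C(\gamma)}|\Delta|=l(\gamma)$. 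Your algebraic route buys a one-stroke proof of both $\sigma^*$-invariance and single-orbitness, at the cost of importing the free-group conjugacy criterion and the axis/conjugacy-class dictionary (which need the freeness and free action you get from ping-pong); the paper's route stays entirely inside the geometric apparatus already assembled in Proposition~\ref{p1ex11} and Observation~\ref{o2ex13}. One remark: your closing paragraph treats ``every cyclic shift is realized as some $\omega_{\gamma,\Delta}$'' as the remaining obstacle and proposes a cardinality count, but your own earlier argument already disposes of it --- every $\omega\in\Per(\gamma)$ has $\gamma_\omega$ crossing $\mathcal R$ by Proposition~\ref{p1ex11}, and your identification $g_{\gamma,\Delta_0}=g_\omega$ for $\Delta_0=\gamma_\omega\cap\mathcal R$, combined with the injectivity of $E_p^*\ni\omega\mapsto g_\omega$, gives $\omega=\omega_{\gamma,\Delta_0}$ directly; the cardinality count you sketch is precisely the paper's length-summation argument, so either branch closes the proof.
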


\begin{proof}
The first part of this proposition has already been proved.   
More precisely, it is contained in Proposition \ref{p1ex11} and formula \eqref{1ex13}. The inclusion 
$$
\{
\omega_\Delta \in E_p^* \hbox{ : } \Delta \in \mathcal C (\gamma)  
\}
\subset
 \{
\omega \in E_p^* \hbox{ : } \gamma_\omega/G = \gamma 
\}
$$
follows immediately from \eqref{1ex13}. The inclusion  
$$
\Per(\gamma)= 
\{
\omega_\Delta \in E_p^* \hbox{ : } \gamma_\omega/G = \gamma  
\}
\subset
 \{
\omega_{\gamma, \Delta} \in E_p^* \hbox{ : }  \Delta \in \mathcal C(\gamma)
\}
$$
follows from the fact that for each $\omega \in E_p^*$ the geodesic $\gamma_\omega$
crosses $\mathcal R$.  So formula \eqref{3ex13} is established.  Now, 
$$
g_{\sg(\omega)\om_1} (g_{\omega_1}^{-1}(x_\omega))
= g_{\omega_1}^{-1} \circ g_{\omega_1} \circ g_{\sigma(\omega)}(x_\omega)
= g_{\omega_1}^{-1} \circ g_\om(x_\omega)
= g^{-1}_{\omega_1}(x_\omega).
$$
Similarly, 
$$
g_{\sg(\omega))\om_1} (g_{\omega_1}^{-1}(y_\omega))
= g_{\omega_1}^{-1}(y_\omega).
$$
Also, by the Chain Rule,
$$
l(g_{\sigma(\omega)\omega_1})
= \lambda_p (\sigma (\om)\omega_1) = \lambda_p(\omega) = l(g_\omega).
$$
Therefore, noting also that $g_{\omega_1}^{-1}(\gamma_\om)$
crosses $\mathcal R$, we get 
$$
\gamma_{\sigma(\omega)\omega_1} = g_{\omega_1}^{-1}(\gamma_\omega)
\  \  \hbox{ and } \  \
\gamma_{\sigma(\omega)\omega_1}/G = \gamma_\om/G = \gamma.
$$
So, $\sigma(\omega)\omega_1 \in \Per(\gamma)$ and we have proved that $\Per(\gamma)$ is a union of full periodic cycles.  Let $\omega \in \Per(\gamma)$ be arbitrary. Put $n:= |\omega|$. Since 
$$
\sum_{j=0}^{n-1} l(\gamma_{\sigma^{*j}(\omega)}\cap \mathcal R)
= l(\gamma)
= \sum_{\Delta \in \mathcal C(\gamma)}|\Delta|,
$$
since all elements $\gamma_{\sigma^{*j}(\omega)}\cap \mathcal R$
are mutually disjoint, and since 
$\{\gamma_{\sigma^{*j}(\omega)} \cap \mathcal R \hbox{ : } 0 \leq j \leq n-1 \}
\subset \mathcal C(\gamma)$ we can  conclude
$$
\{\gamma_{\sigma^{*j}(\omega)} \cap \mathcal R \hbox{ : } 0 \leq j \leq n-1 \}
= \mathcal C(\gamma).
$$
Along with (\ref{3ex13}) and Observation \ref{o2ex13} this yields the last assertion of Proposition \ref{p1ex13} and the proof of this proposition is complete
\end{proof}

Denote by $\widehat G \subset G$ the set of those elements in $G$ for which 
$\gamma_g$, the oriented geodesic in $\mathbb H^{d+1}$ from its repelling fixed points
$y_g$ to its attracting fixed point $x_g$ crosses the fundamental domain $\mathcal R$. 
We can now complete Observation~\ref{o1ex11.1} by proving the following.

\begin{prop}\label{p1ex14.1}
The map $E_p^* \ni \omega \longmapsto g_\omega \in G$ is a bijection from $E_p^*$ onto $\widehat G$.
\end{prop}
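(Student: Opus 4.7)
The proof has two components beyond the injectivity already recorded in Observation~\ref{o1ex11.1}: that every image $g_\omega$ lies in $\widehat G$, and that every element of $\widehat G$ arises this way.

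For the first inclusion, fix $\omega\in E_p^*$. Proposition~\ref{p1ex11} exhibits the oriented geodesic $\gamma_\omega\subset\mathbb H^{d+1}$ joining the repelling fixed point $y_\omega=x_{-\omega}$ to the attracting fixed point $x_\omega$ of $g_\omega$, and asserts that it is fixed by $g_\omega$ and crosses the fundamental domain $\mathcal R$. Because $x_\omega$ and $y_\omega$ are the only fixed points of $g_\omega$ on $\widehat{\mathbb R}^d$ with the orientation matching that of $\gamma_{g_\omega}$, we have $\gamma_{g_\omega}=\gamma_\omega$, whence $\gamma_{g_\omega}$ crosses $\mathcal R$ and $g_\omega\in\widehat G$.

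For surjectivity the plan is to reuse the coding machinery of Proposition~\ref{p1ex13}. Fix $g\in\widehat G$, so that its oriented axis $\gamma_g$ (from $y_g$ to $x_g$) meets $\mathcal R$; pick any connected component $\Delta$ of $\gamma_g\cap\mathcal R$, and let $\gamma:=\gamma_g/G$ be the corresponding closed oriented geodesic in $\mathbb H^{d+1}/G$. Proposition~\ref{p1ex13} supplies the word $\omega_{\gamma,\Delta}\in E_p^*$ together with the element $g_{\gamma,\Delta}\in G$ of formula \eqref{2ex12}, which satisfies $g_{\gamma,\Delta}=g_{\omega_{\gamma,\Delta}}$ and is characterized as the unique element of $G$ sending a fixed $z\in\widehat\Delta$ to the point $z^{*}\in\widehat\Delta$ at signed hyperbolic distance $l(\gamma)$ from $z$ along the orientation of $\widehat\Delta=\gamma_g$.

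The entire substance of the argument now lies in identifying $g_{\gamma,\Delta}$ with the given $g$. Since $g$ is a hyperbolic isometry of $\mathbb H^{d+1}$ with axis $\gamma_g$ and translation length equal to the length of the associated closed orbit $\gamma=\gamma_g/G$, it acts on $\gamma_g$ precisely as translation by $l(\gamma)$ in the direction from $y_g$ to $x_g$; hence $g(z)=z^{*}$. The uniqueness clause characterizing $g_{\gamma,\Delta}$ then forces $g=g_{\gamma,\Delta}=g_{\omega_{\gamma,\Delta}}$, which simultaneously places $g$ in the image and, together with Observation~\ref{o1ex11.1}, singles out the unique preimage. The principal technical concern one has to keep in mind throughout is that $l(\gamma)$ and the translation length of $g$ must be understood as the same number (so that $g$ and $g_{\gamma,\Delta}$ translate $z$ to the same point); this is precisely the matching of orientations and distances already built into the construction of $\omega_{\gamma,\Delta}$ in the proof of Proposition~\ref{p1ex13}.
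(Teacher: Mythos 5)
Your proposal is correct and follows essentially the same route as the paper's proof: injectivity from Observation~\ref{o1ex11.1}, containment of the range in $\widehat G$ from Proposition~\ref{p1ex11}, and surjectivity by identifying a given $g\in\widehat G$ with $g_{\gamma,\Delta}=g_{\omega_{\gamma,\Delta}}$ for the closed geodesic $\gamma$ obtained by projecting $\gamma_g$ (chosen so that $l(\gamma)$ equals the translation length of $g$) and a component $\Delta$ of $\gamma_g\cap\mathcal R$. You merely make explicit the uniqueness argument that the paper leaves implicit.
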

  
\begin{proof}  
Observation~\ref{o1ex11.1} tells us that this map is one-to-one and Proposition~
\ref{p1ex11} tells us that its range is contained in $\widehat G$. Thus, in order to complete the proof we have to show that $\widehat G$ is contained 
in this range. So fix $g \in \widehat G$.  Let $\alpha$ be the projection on 
$\mathbb H^{d+1}/G$ of the geodesic $\gamma_g$ such that $l(\alpha) = \alpha(g)$.
Then $g = g_{\omega_\alpha, \Delta}$ where $\Delta = \gamma_g \cap \mathcal R$.
Since $\omega_{\alpha, \Delta} \in E_p^*$ we are done.
\end{proof}
  
Propositions~\ref{p1ex11} and \ref{p1ex13} provide a full description of closed 
oriented geodesics and periodic orbits of the geodesic flow in terms of symbolic dynamics and graph directed Markov systems. For the picture to be complete we also describe all periodic points of the group $G$.

\begin{prop}\label{p1ex14}
The map 
$$
E_p^* \ni \omega \longmapsto \langle \omega\rangle = \{ g \circ g_\omega \circ g^{-1} \hbox{ : } g \in G\}
$$
has the following properties:
\begin{enumerate}
\item 
$\langle \omega\rangle = \langle \tau\rangle \  \eqv
\  \langle \omega\rangle \cap \langle \tau\rangle \ne\es \  \eqv 
\ \tau = \sigma^{*j}(\omega)$ for some $j \geq 0$.
\item
Each element $g \circ g_\omega \circ g^{-1}$ has precisely two fixed points $g(x_\omega)$ and $g(y_\omega$).  In addition
$$
(g\circ g_\omega \circ g^{-1})'(g(x_\omega)) = g_\omega'(x_\omega)
\  \hbox{ and } \
(g\circ g_\omega \circ g^{-1})'(g(y_\omega)) = g_\omega'(y_\omega)
$$
\item 
For each $h \in G\sms \{\Id\}$ there exists a unique periodic cycle such that 
\begin{enumerate}
\item there exists $\omega \in E_p^*$ in this periodic cycle and a unique $g \in G$, depending on $\omega$, such that $h = g\circ g_\omega \circ g^{-1} $,
\item for each $\omega \in E_p^*$ in this periodic cycle there exists a unique $g \in G$, depending on $\omega$, such that $h = g\circ g_\omega \circ g^{-1} $.
\end{enumerate} 
\end{enumerate}
\end{prop}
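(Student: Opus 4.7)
The proof splits naturally into three parts, and I would tackle them in order of increasing subtlety, with the common engine being the freeness of $G$.

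\textbf{Part (2), a warm-up.} This is a direct calculation. The identity $(g\circ g_\omega\circ g^{-1})(g(x_\omega))=g(g_\omega(x_\omega))=g(x_\omega)$ and its analogue for $y_\omega$ show that $g(x_\omega)$ and $g(y_\omega)$ are fixed points, and since $g\circ g_\omega\circ g^{-1}$ is a non-trivial M\"obius transformation of $\widehat{\R}^d$ (being conjugate to $g_\omega$), it has exactly two fixed points, so these exhaust them. The derivative identity is the chain rule at a fixed point, combined with $(g^{-1})'(g(x_\omega))=1/g'(x_\omega)$ and the observation $g_\omega(g^{-1}(g(x_\omega)))=x_\omega$.

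\textbf{Part (1).} The easy direction rests on the explicit identity $g_{\sigma^*(\omega)}=g_{\omega_1}^{-1}\circ g_\omega\circ g_{\omega_1}$, immediate from $\sigma^*(\omega)=\omega_2\cdots\omega_n\omega_1$; iterating gives $\langle\sigma^{*j}(\omega)\rangle=\langle\omega\rangle$ for all $j\ge 0$. The equivalence between "$\langle\omega\rangle=\langle\tau\rangle$" and "$\langle\omega\rangle\cap\langle\tau\rangle\ne\es$" is formal, by transitivity of conjugation. The substantive implication is "$\langle\omega\rangle=\langle\tau\rangle\Rightarrow\tau=\sigma^{*j}(\omega)$ for some $j$". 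Here I would invoke the Ping-Pong Lemma applied to the mutually disjoint closed balls $\ov B_{\pm 1},\ld,\ov B_{\pm q}$ (the defining condition \eqref{1ea8} is precisely the ping-pong hypothesis) to conclude that $G$ is free on $g_1,\ld,g_q$. The key bookkeeping is that the map $\omega=(a_1,a_2)(a_2,a_3)\cdots(a_n,a_1)\mapsto g_{a_1}g_{a_2}\cdots g_{a_n}$ sends $E_p^*$ bijectively onto the set of non-trivial cyclically reduced words in $G$: the constraints $b_i\ne -a_i$ together with $A_{\omega_i\omega_{i+1}}=1$ and $A_{\omega_n\omega_1}=1$ translate to $a_{i+1}\ne -a_i$ for all $i$ mod $n$, which is precisely cyclic reducedness. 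The classical theorem that two elements of a free group are conjugate iff their cyclically reduced forms are cyclic permutations of each other then completes the argument.

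\textbf{Part (3).} For existence of a conjugate in $E_p^*$ for each $h\in G\sms\{\Id\}$: freeness lets one write $h$ as a reduced word in $g_{\pm 1},\ld,g_{\pm q}$ and then conjugate away any residual cancellation at the seam, producing a cyclically reduced word $g_{a_1}\cdots g_{a_n}=g_\omega$ with $\omega\in E_p^*$ and $h=g\circ g_\omega\circ g^{-1}$ for an explicit $g\in G$. Uniqueness of the $\sigma^*$-cycle $[\omega]$ is immediate from Part (1). Finally, for fixed $\omega$ in the cycle, if $g_1,g_2\in G$ both satisfy $h=g_i\circ g_\omega\circ g_i^{-1}$, then $g_1^{-1}g_2\in C_G(g_\omega)$, which in the free group $G$ is the infinite cyclic subgroup generated by the primitive root of $g_\omega$; thus $g$ is determined uniquely modulo this centralizer. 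I read the statement's "unique $g$" as asserting that this canonical coset of $C_G(g_\omega)$ is well defined by $h$ and $\omega$, which is the sharpest uniqueness available in a free group and the geometrically meaningful statement in context.

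\textbf{Main obstacle.} Every part hinges on freeness of $G$, which is supplied by the Ping-Pong Lemma applied to the Schottky balls. Once freeness is in hand, the remaining content is classical free-group combinatorics (cyclic reduction, conjugation as cyclic rotation, cyclic centralizers of non-trivial elements). The only genuine labor lies in carefully setting up the dictionary between the Markov data defining $E_p^*$ -- the incidence matrix $A$ and the forbidden pairs $(i,-i)$ -- and the combinatorial notion of cyclic reducedness in $G$; once this dictionary is established, all three parts of the proposition reduce to standard assertions about free groups.
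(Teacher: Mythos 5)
The paper offers no proof to compare against: it states after the proposition that ``the proof of this proposition is straightforward and we omit it,'' so your write-up has to stand on its own, and it does. The engine you identify is the right one: the Ping-Pong Lemma applied to the balls $\ov B_{\pm 1},\ldots,\ov B_{\pm q}$ gives that $G$ is free on $g_1,\ldots,g_q$ (note that no $g_j$ can be an involution, since $g_j(\ov B_j)\subsetneq \ov B_j$ follows from $g_j(B_{-j}^c)=\ov B_j$), and your dictionary is exactly correct: an admissible periodic word $\om=(a_1,a_2)(a_2,a_3)\cdots(a_n,a_1)\in E_p^*$ corresponds to $g_{a_1}\cdots g_{a_n}$ with $a_{i+1}\ne -a_i$ cyclically, i.e.\ to a nontrivial cyclically reduced word, and this correspondence is a bijection. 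With that in place, (1) is the classical fact that conjugate cyclically reduced words are cyclic permutations of one another, the shift $\sigma^{*}$ realizing conjugation by the prefix $g_{\om_1}$; (2) is the chain rule at a fixed point together with the fact that $g_\om$ is loxodromic; and the existence parts of (3) follow by cyclically reducing $h$.

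Your caveat about the uniqueness of $g$ in part (3) is warranted, and you should treat it as a correction to the statement rather than a gap in your argument: if $h=g_0\circ g_\om\circ g_0^{-1}$ then the full solution set is the coset $g_0\,C_G(g_\om)$, and since $C_G(g_\om)=\langle w_0\rangle$ (with $w_0$ the primitive root of $g_\om$) is infinite cyclic, $g$ is literally never unique --- already $h=g_\om$ itself admits $g=\Id,\, g_\om,\, g_\om^2,\ldots$. Two honest repairs are available: either assert uniqueness of the coset $gC_G(g_\om)$, as you do, or note that $g$ becomes genuinely unique once one demands that the expression $g\,g_\om\,g^{-1}$ be reduced as written (no cancellation at either seam), since every $h\ne\Id$ in a free group has a unique such decomposition $h=u\,w\,u^{-1}$ with $w$ cyclically reduced. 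The latter is probably what the authors had in mind, and it also singles out which $\om$ in the cycle pairs with which $g$.
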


\fr The proof of this proposition is straightforward and we omit it.  

\sp Now we pass to the main goal of this monograph, i.e., counting estimates.
We deal with these in the symbol space and on both
$\mathbb H^{d+1}$ and $\mathbb H^{d+1}/G$.  We start with appropriate definitions.

Let $B$ denote a Borel subset of $\R^d$. Set

$$
\pi_\xi (G; T, B):= \{ g \in G_\xi \hbox{ : } \lambda_\xi (g) \leq T \hbox{ and } g(\xi) \in B \}
$$

$$
\pi_\xi (G; T):=\pi_\xi (G; T, \R^d) 
=\{ g \in G_\xi \hbox{ : } \lambda_\xi (g) \leq T \}
$$

$$
\pi_p(G; T, B):= \{ \omega \in E_p^*
\hbox{ : } \lambda_p (\omega) = l(\gamma_\omega) \leq T \hbox{ and } x_\omega \in B \},
$$

$$
\pi_p(G; T):=\pi_p(G; T, \R^d)
=\{ \omega \in E_p^* \hbox{ : } \lambda_p (\omega) = l(\gamma_\omega) \leq T  \},
$$

$$
\widehat \pi_p (G,T) := \{g \in \widehat G \hbox{ : } l(\gamma_g)  \leq T \} 
$$

\sp\fr Having $k \in V = \{\pm j\}_{j=1}^q$ and  $Y\subset\overline B_k$
put  
$$
\Delta_g(Y) : = -\log\(\diam(g(Y))\).
$$
We further denote

$$
\mathcal D_\xi(G; T, B, Y): = \{g \in G_k \hbox{ : } \Delta_g(Y) \leq T \  \hbox{ and } \ g(\xi) \in B\},
$$

$$
\mathcal E_k(G; T, B, Y) = \{ g \in G_k \hbox{ : } \Delta_g(Y)\leq T  \  \hbox{ and } \  g(Y)\cap B\ne\es\},
$$
and 
$$
\mathcal E_k(G; T, Y):=\cE_k(G; T, \R^d, Y)
=\{ g \in G_k \hbox{ : } \Delta_g(Y)\leq T\}.
$$

\sp\fr We denote by 
$N_\xi(G; T, B)$, $N_\xi(G; T)$,
$N_p(G; T, B)$, $N_p(G; T)$,
$\widehat N_p(G; T)$,
$D_\xi(G; T, B, Y)$,
$E_k(G; T, B, Y)$ and $E_k(G; T, Y)$
the corresponding cardinalities.

\sp As an  immediate consequence  of Theorem~\ref{dyn}, Theorem~\ref{t1da7}, and  Theorem~\ref{t1ma1}
along with Observation \ref{o2ex9}, Proposition \ref{p1ex11}, Observation \ref{o1ex11.1},
Observation \ref{o2ex13} and Proposition \ref{p1ex14.1} we get the following.

\begin{thm}\label{t1ex16}
Let $G = \langle g_j\rangle_{j=1}^q$ be a hyperbolic finitely generated Schottky group acting on $\hat\R^d$, $d\ge 2$. 

\sp\begin{itemize}
\item Let $\delta_G$ be the Poincar\'e  exponent of $G$; it is known to be equal to $\HD(\Lambda(G))$.  

\sp\item Let $m_{\delta_G}$ be the Patterson-Sullivan
conformal measure for $G$ on $\Lambda(G)$.  

\sp\item Let $\mu_{\delta_G}$ be the $\mathcal S_G$-invariant measure on $\Lambda(G)$ equivalent to $m_{\delta_G}$.  

\sp\sp\item Fix $k\in\{\pm 1, \pm 2, \cdots, \pm q\}$ and $\xi \in \Lambda(G) \cap \overline B_k$.
\end{itemize}

\fr Let $B\sbt\R^d$ be a Borel set with $m_{\d_G}(\bd B)=0$ (equivalently $\mu_{\d_G}(\bd B)=0$) and let $Y\sbt\ov B_k$ be a set having at least two distinct points. Then with some constant $C_k(Y)\in(0,+\infty)$, we have that
$$
\lim_{T\to+\infty}\frac{N_\xi(G; T, B)}{e^{\d_G T}}
=\frac{\psi_{\d_G}(\xi)}{\d_G\chi_{\d_G}}m_{\d_G}(B),
\  \  \  \  \
\lim_{T\to+\infty}\frac{N_\xi(G; T)}{e^{\d_G T}}
=\frac{\psi_{\d_G}(\xi)}{\d_G\chi_{\d_G}},
$$

$$
\lim_{T\to+\infty}\frac{N_p(G; T, B)}{e^{\d_G T}}
=\frac{1}{\d_G\chi_{\d_G}}\mu_{\d_G}(B),
\  \  \  \  \
\lim_{T\to+\infty}\frac{N_p(G; T)}{e^{\d_G T}}
=\frac{1}{\d_G\chi_{\d_G}},
$$

$$
\begin{aligned}
\lim_{T\to+\infty}\frac{\widehat N_p(G; T)}{e^{\d_G T}}
&=\frac{1}{\d_G\chi_{\d_G}}, \\  \\
\lim_{T\to+\infty}\frac{D_\xi(G; T, B,Y)}{e^{\d_G T}}
&=C_k(Y)m_{\d_G}(B), \\  \\
\lim_{T\to+\infty}\frac{E_k(G; T, B,Y)}{e^{\d_G T}}
&=C_k(Y)m_{\d_G}(B), \\ \\
\lim_{T\to+\infty}\frac{E_k(G; T,Y)}{e^{\d_G T}}
&=C_k(Y).
\end{aligned}
$$
\end{thm}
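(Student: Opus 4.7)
The plan is to deduce Theorem~\ref{t1ex16} directly from the general equidistribution results (Theorem~\ref{dyn}, Theorem~\ref{t1da7}, Theorem~\ref{t1ma1}) applied to the conformal GDMS $\cS_G$ canonically attached to the Schottky group $G$. First I would verify the structural hypotheses on $\cS_G$: it is a finite-alphabet conformal GDMS (the no-tangency assumption forces each $g_{(a,b)}$ to send $\overline B_b$ strictly inside the disjoint $\overline B_a$, yielding a uniform contraction on a suitable conformal extension, and the BDP is automatic since the $g_j$ are M\"obius); its incidence matrix is finitely primitive (any two admissible letters can be joined by at most two intermediate letters avoiding the only forbidden back-tracking $(i,-i)$); and being finite and irreducible it is strongly regular by the final sentence of Section~\ref{Attracting_GDMS_Prel}.

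The only non-routine hypothesis is $D$-genericity. By Proposition~\ref{p1nh13} this amounts to the additive group generated by $\{\log|g_\om'(x_\om)|:\om\in E_p^*\}$ being non-cyclic. By Proposition~\ref{p1ex11} each $-\log|g_\om'(x_\om)|$ equals the length of the closed geodesic $\gamma_\om/G$ on the convex co-compact hyperbolic manifold $\mathbb H^{d+1}/G$, so non-$D$-genericity would force the entire primitive length spectrum into a cyclic subgroup $c\mathbb Z\subset\mathbb R$. For any non-elementary Schottky group this fails: one can always exhibit two loxodromic elements whose translation lengths are $\mathbb Q$-linearly independent (the standard non-arithmeticity statement for length spectra of non-elementary convex co-compact hyperbolic manifolds, available via the mixing of the geodesic flow on the non-wandering set). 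This classical input is the step I expect to require the most care.

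Once $D$-genericity is in hand, the measure-theoretic identifications are routine. Patterson--Sullivan theory, together with Theorem~\ref{t2_2016_01_12}, yields $\delta_G=\HD(\Lambda(G))$, and the probability measure on $\Lambda(G)$ produced by Theorem~\ref{thm-conformal-invariant}(a)--(b) via $\widehat m_{\delta_G}=m_{\delta_G}\circ\pi_G^{-1}$ satisfies the usual conformal transformation rule under all of $G$, so by the uniqueness of the Patterson--Sullivan measure the two agree; similarly $\widehat\mu_{\delta_G}$ is the $\cS_G$-invariant equivalent version, $\chi_{\delta_G}$ the Lyapunov exponent of $\mu_{\delta_G}$, and $\psi_{\delta_G}$ the Radon--Nikodym derivative on the symbolic side, with $\psi_{\delta_G}(\xi):=\psi_{\delta_G}(\rho)$ for the unique code $\rho\in E_A^\infty$ of $\xi$. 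Using the weight-preserving bijections of Observations~\ref{o2ex9} and \ref{o1ex11.1} and Proposition~\ref{p1ex14.1} we obtain the dictionary $N_\xi(G;T,B)=N_\rho(B,T)$, $N_p(G;T,B)=N_p(B,T)$, $\widehat N_p(G;T)=N_p(T)$, $D_\xi(G;T,B,Y)=D_Y^\rho(B,T)$, and $E_k(G;T,B,Y)=E_Y^\rho(B,T)$. Feeding the first two equalities into Theorem~\ref{dyn} yields all four $N$-formulae (specialising $B=\mathbb R^d$ for the versions without $B$, using $\widehat m_{\delta_G}(\Lambda(G))=\widehat\mu_{\delta_G}(\Lambda(G))=1$), and consequently also the formula for $\widehat N_p(G;T)$; Theorem~\ref{t1da7} and Theorem~\ref{t1ma1} applied to the last two equalities produce the three $D$-/$E$-formulae with a common constant $C_k(Y)\in(0,+\infty)$ depending only on $k$ and $Y$ by Remark~\ref{r1_2017_03_20}.
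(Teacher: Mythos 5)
Your proposal is correct and follows the same route as the paper: the authors obtain Theorem~\ref{t1ex16} as an immediate consequence of Theorem~\ref{dyn}, Theorem~\ref{t1da7} and Theorem~\ref{t1ma1} combined with exactly the dictionary you describe (Observations~\ref{o2ex9}, \ref{o1ex11.1}, \ref{o2ex13} and Propositions~\ref{p1ex11}, \ref{p1ex14.1}). The one place where you go beyond the paper is the explicit verification of $D$-genericity of $\cS_G$: the general theorems you invoke all carry this hypothesis, yet the paper never checks it for hyperbolic Schottky groups, so your reduction via Proposition~\ref{p1nh13} and Proposition~\ref{p1ex11} to non-arithmeticity of the length spectrum of a non-elementary convex co-compact group is a genuine and necessary addition. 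Be careful only with your parenthetical justification: topological mixing of the geodesic flow on the non-wandering set is normally deduced \emph{from} non-arithmeticity of the length spectrum (the two are equivalent), so you should instead cite a direct proof of non-arithmeticity (e.g.\ Dal'bo's argument, or an explicit pair of loxodromic elements with incommensurable translation lengths) rather than the mixing property, to avoid circularity.
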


\sp Theorem~\ref{t1ms1} -- Theorem~\ref{t1ms1-2again} for the conformal GDMS $\cS_G$, associated to the group $G$, are valid without changes. Therefore, we do not 
repeat  them here. However, we present the  appropriate versions of Theorems~\ref{t1ms5.2} and \ref{t1m58} as their formulations are closer to the group $G$. In order to get appropriate expressions in the language of the group $G$ itself, given $\xi\in\La(G)$, and an integer $n\ge 1$, we set
$$
G_\xi^n:=\{g_\om:\om\in E_\rho^n\}\sbt G_\xi.
$$
Furthermore, we define a probability measure $\mu_n$ on $G_\xi^n$ by setting that 

\begin{equation}\label{21_2017_04_04}
\mu_n(H) := \frac{\sum_{g \in H} e^{-\delta \lambda_\xi(g)}}{\sum_{\omega \in G_\xi^n} e^{-\delta \lambda_\xi(g)}}
\end{equation}
for every set $H \subset G_\xi^n$. As an immediate consequence of Theorem~\ref{t1ms5.2} we get the following.

\bthm\label{t31_2017_04_04}
If $G = \langle g_j\rangle_{j=1}^q$ is a hyperbolic finitely generated Schottky group acting on $\hat\R^d$, $d\ge 2$, then for every $\xi\in\La(G)$ we have that
$$
\lim_{n \to +\infty} \int_{G_\xi^n} \frac{\lambda_\xi}{n} d\mu_n 
= \chi_{\mu_\d}.
$$
\ethm

\fr Now define the functions $\Delta_n:G_\xi^n\to \mathbb R$ by the formulae
$$
\Delta_n(g) = 
\frac{ \lambda_\xi(g)- \chi n}{\sqrt{n}},
$$
As an immediate consequence of Theorem~\ref{t1m58} we get the following.

\begin{thm}\label{t32_2017_04_04} 
If $G = \langle g_j\rangle_{j=1}^q$ is a hyperbolic finitely generated Schottky group acting on $\hat\R^d$, $d\ge 2$, then for every $\xi\in\La(G)$
the sequence of random variables $(\Delta_n)_{n=1}^\infty$ converges in distribution to the normal (Gaussian) distribution 
$\mathcal N_0(\sigma)$ with mean value zero and the variance $\sigma^2 = \P_{\cS_G}''(\delta)>0$.  
Equivalently, the sequence 
$(\mu_n \circ \Delta_n^{-1})_{n=1}^\infty$
converges weakly to the normal distribution $\mathcal N_0(\sigma^2)$.  This means that for every Borel set $F \subset \mathbb R$ with 
$\hbox{\rm Leb}(\partial F) = 0$, we have 
\begin{equation}\label{1ms8}
\lim_{n \to +\infty} \mu_n(\Delta_n^{-1}(F))
= \frac{1}{\sqrt{2\pi} \sigma}
\int_F e^{- t^2/2\sigma^2} dt.
\end{equation} 
\end{thm}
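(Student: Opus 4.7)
The plan is to reduce this statement to its symbolic counterpart, namely Theorem~\ref{t1m58} applied to the associated conformal GDMS $\cS_G$, and then transport the conclusion back to the group $G$ via the coding map. First I would fix $\rho\in E_A^\infty$ with $\pi_G(\rho)=\xi$ and recall from Observation~\ref{o2ex9} that the correspondence $E_\rho^*\ni\om\longmapsto g_\om\in G_\xi$ is a bijection, and moreover $\lambda_\xi(g_\om)=\lambda_\rho(\om)$ by the very definition of $\lambda_\xi$. Under this bijection the measure $\mu_n$ of formula~\eqref{21_2017_04_04} on $G_\xi^n$ is the pushforward of the measure $\mu_n$ of formula~\eqref{1ms6} on $E_\rho^n$, and the random variable $\Delta_n(g_\om)$ coincides with $(\lambda_\rho(\om)-\chi n)/\sqrt{n}$. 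Consequently $\mu_n\circ\Delta_n^{-1}$ as a distribution on $\R$ is exactly the distribution appearing in Theorem~\ref{t1m58}, and proving the present theorem reduces to verifying that the system $\cS_G$ satisfies the hypotheses of that theorem.

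Next I would check those hypotheses one by one. Since the alphabet $E=V\times V\sms\{(i,-i)\}$ is finite, $\cS_G$ is automatically finitely irreducible (any admissible pair can be completed to a word passing through any prescribed letter thanks to the liberal incidence relation of Schottky systems), and, as noted at the end of Section~\ref{Attracting_GDMS_Prel}, every finite irreducible conformal GDMS is strongly regular. The fact that $\cS_G$ is an attracting conformal GDMS was established at the start of Section~\ref{Schottky-No Tangencies}.

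The main, and really the only, genuine obstacle is D--genericity. By Proposition~\ref{p1nh13}, this amounts to showing that the additive group generated by the translation lengths $\{-\log|g_\om'(x_\om)|:\om\in E_p^*\}$, i.e., by the lengths of closed geodesics on the hyperbolic manifold $\mathbb H^{d+1}/G$, is not cyclic. To achieve this I would fix three hyperbolic elements of $G$ corresponding to periodic words $\om_1,\om_2,\om_3\in E_p^*$ whose axes pairwise span different $2$--planes in $\mathbb H^{d+1}$ (possible since $d+1\ge 3$ and $G$ is non--elementary), and argue that their translation lengths cannot simultaneously lie in a single cyclic subgroup of $\R$; the essential non--linearity of the Schottky action on $\Lambda(G)$, combined with the rigidity contained in Liouville's theorem on conformal maps in dimension $\ge 3$, rules out any such arithmetic coincidence, paralleling the argument behind Proposition~\ref{p1_2017_01_25} for essentially non--linear expanders. (In the exceptional planar cases, i.e., $d=1$, one invokes the classical fact that the length spectrum of a non--elementary finitely generated Fuchsian/Kleinian group is not contained in a discrete subgroup of $\R$.)

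Once D--genericity is in hand, Theorem~\ref{t1m58} applied to $\cS_G$ with $s=\d=\d_G$ yields that the symbolic random variables $\Delta_n$ converge in distribution to $\mathcal N_0(\sigma^2)$ with $\sigma^2=\P_{\cS_G}''(\d)>0$; transporting this via the bijection $\om\longmapsto g_\om$ produces exactly formula~\eqref{1ms8} in the form stated for $G$, and completes the proof. Since $\psi_\d$ and $\chi_{\mu_\d}$ for $\cS_G$ coincide with the Patterson--Sullivan data of $G$ listed in Theorem~\ref{t1ex16}, no further identification of constants is needed.
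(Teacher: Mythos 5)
Your proposal is correct and follows essentially the same route as the paper: the theorem is obtained as an immediate consequence of Theorem~\ref{t1m58} applied to the attracting conformal GDMS $\cS_G$, transported through the bijection $E_\rho^n\ni\om\mapsto g_\om\in G_\xi^n$, which identifies the measure of \eqref{21_2017_04_04} with that of \eqref{1ms6} and the two versions of $\Delta_n$. The only point where you go beyond the paper is the explicit verification of D--genericity of $\cS_G$ (a hypothesis of Theorem~\ref{t1m58} that the paper leaves implicit); your sketch there, via non-coplanar axes and Liouville rigidity, is heuristic rather than a proof, but since non-arithmeticity of the length spectrum of a non-elementary Kleinian group is standard, this is a reasonable supplement rather than a deviation.
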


\section{{\large{\bf Generalized (allowing tangencies) Schottky Groups}}}\label{tangent Schottky}

In this section we keep to the same setting and the same notation as in Subsection~\ref{Schottky-No Tangencies}.
except that we now do not assume that the closures $\overline B_j$, $j=\pm 1, \cdots, \pm q$ to be disjoint but merely that the open balls $B_j$, $j=\pm 1, \cdots, \pm q$ themselves are mutually disjoint.

\subsection{General Schottky Groups}
We also assume that if an element $g \in G\sms\{\Id\}$ has a fixed point (call it $z_q$) in $\partial B_j$
for some $j \in \{\pm 1, \cdots, \pm q\}$ then $g$ is parabolic.  Then $z_g$ is a unique fixed point of $g$
and there exists a unique $j^*\in \{\pm 1, \cdots, \pm q\}\sms\{j\}$ such that
$$
z_g\in\ov B_j\cap \ov B_{j^*}.
$$ 
We refer to $z_g$ as a parabolic fixed point of $G$ (and of $g$). We denote by $p(g)\ge 1$ its rank. We further denote by $\Om(G)$ the set of all parabolic fixed points of $G$. Any such group $G$ is called a generalized Schottky group (GSG). If $G$ has at least one parabolic element, it is called a parabolic Schottky group (PSG). We associate to the group $G$ the conformal GDMS $\mathcal S_G$ in exactly the same way as for hyperbolic (i.e. without tangencies) Schottky groups in Section~\ref{Schottky-No Tangencies}.
Since any generalized Schottky group $G$ is geometrically finite, the number of conjugacy classes of parabolic elements of $G$ and the number of orbit classes of parabolic fixed points of $G$, i.e. $\Om(G)/G$, are both finite. In consequence, we have the following.

\begin{observation}\label{o1ex25}
The conformal GDMS $\mathcal S_G$ associated to $G$ is attracting if $G$ has no parabolic fixed points and it is (finite) parabolic (in the sense of Remark~\ref{r2_2017_02_17}) if $G$ has some parabolic fixed points.  
\end{observation}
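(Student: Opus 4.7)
The plan is to verify, in both cases, each of the defining axioms of a conformal GDMS as given in Section~\ref{Attracting_GDMS_Prel} (attracting case) and in Definition~\ref{Def_Parabolic} together with conditions (1)--(8) preceding it (parabolic case), by reducing everything to standard properties of Möbius transformations of $\widehat{\R}^d$ and to the geometry of the balls $B_j$. The open set condition is immediate from the disjointness of the open balls $B_j$, conformality of each $g_a$ is built into its definition as a Möbius map, the cone condition holds trivially because each $X_v = \ov B_v$ is a round ball, and the Hölder/BDP bounds follow from Remark~\ref{p1.033101} (Koebe/Liouville) applied to the smooth Möbius maps $g_a$ on a neighbourhood of $\ov B_b$. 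So the only nontrivial points are (i) the uniform contraction $\ka<1$ and (ii) the correct handling of tangencies.

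First I would handle the attracting case, where the closures $\{\ov B_j\}$ are pairwise disjoint. For each admissible edge $(a,b)\in E$ one has $\ov B_b\sbt \R^d\sms \ov B_{-a}$, and since $g_a$ maps $\R^d\sms \ov B_{-a}$ conformally onto $\ov B_a$, compactness of $\ov B_b$ together with the disjointness $\ov B_b\cap\ov B_{-a}=\es$ forces $\sup_{\ov B_b}|g_a'|\le\ka$ for some $\ka<1$ depending only on the minimal Euclidean distance between the finite family $\{\ov B_j\}$. Extending $g_a$ to a slightly enlarged open neighbourhood $W_b\spt\ov B_b$ still contained in $\R^d\sms\ov B_{-a}$, we obtain the required conformal extension with Lipschitz constant $\le\ka$. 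Thus $\cS_G$ satisfies all the hypotheses of an attracting conformal GDMS, and in fact is finitely irreducible (indeed, maximal up to the forbidden pairs $(i,-i)$).

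Next I would treat the case when $G$ has parabolic elements, so that some tangencies $\ov B_i\cap\ov B_j=\{p\}$ appear. The key geometric observation is that, by the standing hypothesis on $G$, every such tangency point $p$ is a fixed point of some element of $G\sms\{\Id\}$, and hence must be parabolic; conversely, every parabolic fixed point $z_g\in\Om(G)$ lies in exactly one such intersection $\ov B_j\cap\ov B_{j^*}$. It follows that $|g_a'(x)|<1$ for all $x\in\ov B_b$ except when $(a,b)$ equals one of finitely many pairs $(e,b)$ for which $\ov B_b\cap\ov B_{-e}$ contains a parabolic fixed point of $g_e$; in that case $|g_e'|$ equals $1$ precisely at that parabolic fixed point. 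This is exactly axiom (2) of the definition of a subparabolic GDMS preceding Definition~\ref{Def_Parabolic}; the uniform hyperbolicity on admissible words in which a parabolic letter does not repeat (axioms (5), (6)) follows from the same compactness argument as in the attracting case applied to the hyperbolic letters, while axiom (4), namely $\bi_n g_{e^n}(X)=\{x_e\}$ with shrinking diameters, is the content of Proposition~\ref{p1c5.13} applied to a parabolic Möbius map $g_e$. Finite irreducibility, and finiteness of $\Om$, follow from $\#E<\infty$ and geometric finiteness of $G$.

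The main obstacle in executing this plan is the bookkeeping at a tangency point $p\in\ov B_i\cap\ov B_j$: one must check that the group element producing the parabolic fixed point at $p$ really does correspond, under the alphabet identification, to a \emph{single} letter $g_e\in\{g_{\pm 1},\ldots,g_{\pm q}\}$ whose associated GDMS map $\phi_e$ has $p$ as a fixed point, so that axiom (2) holds in the literal form stated in Section~\ref{section:parabolic}. In some configurations $p$ may only be fixed by a word $\om\in E_A^*$ of length $\ge 2$ rather than by a generator; in that case one invokes Remark~\ref{r2_2017_02_17} (and Remark~\ref{r1_2017_04_01}) and replaces $\cS_G$ by a suitable iterate $\cS_G^q$, within which the parabolic behaviour is realized by a single letter. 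Once this identification of tangencies with parabolic letters (possibly after iteration) is made, axioms (1)--(8) are verified exactly as above and the conclusion that $\cS_G$ is a finite parabolic conformal GDMS follows.
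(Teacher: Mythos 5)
Your overall strategy --- verifying the axioms of an attracting, respectively (sub)parabolic, conformal GDMS one by one, deriving finiteness of the parabolic data from geometric finiteness, and deferring the tangency bookkeeping to an iterate via Remarks~\ref{r1_2017_04_01} and \ref{r2_2017_02_17} --- is exactly what the paper intends; the paper itself offers nothing beyond the sentence preceding the Observation (geometric finiteness gives finitely many parabolic classes), so your reconstruction is considerably more explicit than the source, and you correctly isolate the genuinely delicate point that a tangency point of $\ov B_i\cap\ov B_j$ is in general fixed only by a word of length $\ge 2$.

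There is, however, one step that fails as written: the claim that compactness of $\ov B_b$ together with $\ov B_b\cap\ov B_{-a}=\emptyset$ forces $\sup_{\ov B_b}|g_a'|\le\kappa<1$. The hypothesis $g_a(B_{-a}^c)=\ov B_a$ does not control where $|g_a'|=1$: the isometric sphere of $g_a$ is centred at $g_a^{-1}(\infty)\in B_{-a}$, but in general it need not be contained in $\ov B_{-a}$. Concretely, replacing a ``good'' generator $g_a$ by $g_a\circ h$, where $h$ is a M\"obius map preserving $B_{-a}$ whose isometric sphere sits just outside $\partial B_{-a}$, leaves the paired--ball condition $\,(g_a\circ h)(B_{-a}^c)=\ov B_a\,$ intact while creating a region in $B_{-a}^c$ on which the derivative has modulus greater than $1$; if another ball $\ov B_b$ of the configuration meets that region, the edge map $g_{(a,b)}$ is not a contraction, and no $\kappa<1$ exists letter by letter. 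The observation survives for two reasons: (i) in the paper's concrete applications the generators are inversions, for which $\partial B_{-a}$ \emph{is} the isometric sphere and your estimate is valid (with equality $|g_a'|=1$ occurring on $\ov B_b$ exactly at tangency points, which is also what makes axiom (2) of the parabolic definition come out right); and (ii) in general the nested images $g_\om(\ov B_{t(\om)})$ still shrink uniformly to points by the standard ping--pong argument for a discrete geometrically finite group, so some iterate $\cS_G^q$ consists of uniform contractions away from the parabolic letters --- precisely the device of Remark~\ref{r1_2017_04_01} that you already invoke for the parabolic fixed points. You should route the contraction axiom through that remark (or through the isometric--ball normalization) rather than through the compactness claim.
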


\fr and

\begin{observation}\label{o1ex25J}
We have that: 

\sp
\begin{itemize}
\item Each parabolic fixed point of $G$ has a representative in 
$$
\bu_{-q\le j<k\le q}\ov B_j\cap \ov B_k,
$$
and 
\item
$$
\Om(\cS_G)=\Om(G)\cap \bu_{-q\le j<k\le q}\ov B_j\cap \ov B_k.
$$
\end{itemize}
\end{observation}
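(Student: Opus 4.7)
The plan is to derive both assertions directly from the defining axiom of a generalized Schottky group recalled at the start of Section~\ref{tangent Schottky}, together with the construction of $\cS_G$. The governing structural fact is the hypothesis that any $g\in G\sms\{\Id\}$ possessing a fixed point on some $\bd B_j$ must be parabolic, with that fixed point automatically lying in a unique pairwise intersection $\ov B_j\cap \ov B_{j^*}$; everything else is a matter of translating between the group-theoretic description of $\Om(G)$ and the symbolic description of $\Om(\cS_G)$.

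For the first bullet, given $z\in\Om(G)$ I would fix a parabolic element $h\in G$ with $h(z)=z$, and then exploit geometric finiteness of $G$ together with the fact that $\hat\R^d\sms\bu_{j}(B_j\cup B_{-j})$ is a fundamental domain for the action of $G$ on $\hat\R^d$: some $g\in G$ moves $z$ onto the boundary of this fundamental domain, so $g(z)\in\bd B_j$ for some $j$. Since $g(z)$ is a fixed point of the non-identity parabolic element $ghg^{-1}$, the defining axiom then forces $g(z)\in \ov B_j\cap \ov B_{j^*}$, producing the required representative. The easy direction ``$\sbt$'' of the second bullet now follows quickly: any $x\in\Om(\cS_G)$ is, by Remark~\ref{r2_2017_02_17} applied to an iterate $\cS_G^q$, a fixed point $x_\om$ of some $\phi_\om$, $\om\in E_A^*$, with $|\phi_\om'(x_\om)|=1$. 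But on $\ov B_{t(\om)}$ the map $\phi_\om$ coincides with the group element $g_{a_1}\circ\cdots\circ g_{a_n}\in G$, so the derivative condition upgrades this element to a parabolic M\"obius transformation; thus $x_\om\in\Om(G)$, and since $x_\om$ is a parabolic fixed point in the closed ball $\ov B_{t(\om)}$ it in fact lies on $\bd B_{t(\om)}$, so the defining axiom again places it in some $\ov B_j\cap \ov B_k$.

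The main obstacle is the reverse inclusion ``$\spt$'' in the second bullet, which requires converting the group-theoretic data of a parabolic fixed point into an admissible periodic symbolic word. Given $x\in\Om(G)\cap \ov B_j\cap \ov B_k$, I would pick a primitive parabolic $h\in G$ fixing $x$ and invoke the classical fact that a Schottky group is free on its generators to write $h=g_{a_1}g_{a_2}\cdots g_{a_n}$ as a reduced word; reducedness immediately gives $a_{i+1}\neq -a_i$ for $1\leq i<n$, so each $(a_i,a_{i+1})\in E$. After possibly replacing $h$ by a conjugate of $h^{\pm 1}$ (to cyclically reduce the word and to arrange $x\in \ov B_{a_1}$ rather than $\ov B_{-a_n}$), one also secures $a_1\neq -a_n$, so
\[
\om:=(a_1,a_2)(a_2,a_3)\cdots(a_{n-1},a_n)(a_n,a_1)\in E_A^*
\]
with $i(\om)=t(\om)=a_1$. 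Then $\phi_\om=h|_{\ov B_{a_1}}$ fixes $x$ and satisfies $|\phi_\om'(x)|=|h'(x)|=1$, realising $x\in\Om(\cS_G)$ in the iterated sense of Remark~\ref{r2_2017_02_17}. The delicate step here is the cyclic reduction together with the choice between $a_1=j$ and $a_1=k$ when $x$ sits on the tangency $\ov B_j\cap \ov B_k$; this is resolved by swapping the roles of $j$ and $k$, equivalently by replacing $h$ by $h^{-1}$, and invoking the fact that the defining axiom attaches $x$ to a unique such pair.
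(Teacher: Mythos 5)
The paper records this statement as an unproved observation, so there is no official argument to compare against; judged on its own terms, your strategy (ping--pong plus cyclic reduction) is the right one, and your inclusion $\Om(\cS_G)\sbt\Om(G)\cap\bu_{j<k}\ov B_j\cap\ov B_k$ is essentially sound, modulo two points you assert without justification: that $h=g_{a_1}\cdots g_{a_n}$ is parabolic rather than elliptic (this needs the remark that all powers of $h$ are nonempty reduced words, so $h$ has infinite order), and that $x_\om$ lies on $\bd B_{t(\om)}$ (this follows because $x_\om$ is the unique fixed point of both $h$ and $h^{-1}$, hence lies in $\ov B_{a_1}\cap\ov B_{-a_n}$, two closed balls with disjoint interiors).

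The genuine gap is in the reverse inclusion. You propose to cyclically reduce the primitive parabolic $h$ fixing $x$ by passing to a conjugate, but conjugation moves the fixed point: the word $\om$ built from the cyclically reduced conjugate $u^{-1}hu$ satisfies $\phi_\om\(u^{-1}(x)\)=u^{-1}(x)$, so you would only conclude $u^{-1}(x)\in\Om(\cS_G)$, not $x\in\Om(\cS_G)$. (Replacing $h$ by $h^{-1}$, which you also invoke, preserves the fixed point but merely swaps $a_1$ with $-a_n$; it does not cyclically reduce.) What you actually need, and what is true, is that $h$ is \emph{already} cyclically reduced, and this requires a ping--pong computation you have not made: if $h=uwu^{-1}$ in reduced form with $u=g_{u_1}\cdots g_{u_m}$ nonempty and $w=g_{w_1}\cdots g_{w_l}$ cyclically reduced, then the fixed point $y$ of $w$ lies in $\bd B_{w_1}\cap\bd B_{-w_l}$ and in no other closed ball, while reducedness of $uwu^{-1}$ forces $-u_m\notin\{w_1,-w_l\}$; hence $y$ lies in the open exterior of $B_{-u_m}$, so $g_{u_m}(y)$ lies in the \emph{open} ball $B_{u_m}$, and applying $g_{u_{m-1}},\ldots,g_{u_1}$ keeps the point in open balls. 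Thus the fixed point $x=u(y)$ of $h$ would lie in the open ball $B_{u_1}$, which is disjoint from every $\bd B_j$ --- contradicting $x\in\bd B_j\cap\bd B_k$. With this lemma, $a_1\ne-a_n$, your word $\om=(a_1,a_2)\cdots(a_n,a_1)$ is admissible and periodic, and $\phi_\om=h|_{\ov B_{a_1}}$ fixes $x$ with $|\phi_\om'(x)|=1$. The same lemma yields the first bullet directly and lets you avoid the appeal to the fundamental domain, which as stated is not quite legitimate: the tiling property of $\cR$ concerns the ordinary set, so the claim that some $g\in G$ carries a limit point $z$ onto $\bd\cR$ is not automatic and is essentially the assertion being proved.
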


\fr We define
\beq\label{2_2017_04_05}
p_G:=p(\cS_G):=\sup\{p(g):g\in\Om(G)\}.
\eeq  
So, as an immediate consequence of Theorems~\ref{t2pc6_B}, 
\ref{t1dp13}, and \ref{t1dp13B}, in the 
same way as Theorem \ref{t1ex16}, i.e., along with 
Observation~ \ref{o2ex9}, Proposition~\ref{p1ex11},
Observation~\ref{o1ex11.1}, Observation~\ref{o2ex13} and Proposition \ref{p1ex14.1}, we get the following. 

\begin{thm}\label{t2ex25} Let $G = \langle g_j \rangle_{j=1}^q$
 be a parabolic Schottky group acting on $\mathbb R^d$, $d \geq 2$. 

\sp\begin{itemize}
\item Let $\delta_G$ be the Poincar\'e  exponent of $G$; which is known to be equal to $\HD(\Lambda(G))$.  

\sp\item Let $m_{\delta_G}$ be the Patterson-Sullivan
conformal measure for $G$ on $\Lambda(G)$.  

\sp\item Let $\mu_{\delta_G}$ be the $\mathcal S_G$-invariant measure on $\Lambda(G)$ equivalent to $m_{\delta_G}$.  

\sp\sp\item Fix $k\in\{\pm 1, \pm 2, \cdots, \pm q\}$ and $\xi \in \Lambda(G) \cap \overline B_k$. 
\end{itemize}

\fr Let $B\sbt\R^d$ be a Borel set with $m_{\d_G}(\bd B)=0$ (equivalently $\mu_{\d_G}(\bd B)=0$) and let $Y\sbt\ov B_k$ be a set having at least two distinct points. 
Then with some constant $C_k(Y)\in(0,+\infty)$, we have that
$$
\lim_{T\to+\infty}\frac{N_\xi(G; T, B)}{e^{\d_G T}}
=\frac{\psi_{\d_G}(\xi)}{\d_G\chi_{\d_G}}m_{\d_G}(B),
\  \  \  \  \
\lim_{T\to+\infty}\frac{N_\xi(G; T)}{e^{\d_G T}}
=\frac{\psi_{\d_G}(\xi)}{\d_G\chi_{\d_G}},
$$

$$
\lim_{T\to+\infty}\frac{N_p(G; T, B)}{e^{\d_G T}}
=\frac{1}{\d_G\chi_{\d_G}}\mu_{\d_G}(B),
\  \  \  \  \
\lim_{T\to+\infty}\frac{N_p(G; T)}{e^{\d_G T}}
=\frac{1}{\d_G\chi_{\d_G}},
$$

$$
\begin{aligned}
\lim_{T\to+\infty}\frac{\widehat N_p(G; T)}{e^{\d_G T}}
&=\frac{1}{\d_G\chi_{\d_G}}, \\  \\
\lim_{T\to+\infty}\frac{D_\xi(G; T, B,Y)}{e^{\d_G T}}
&=C_k(Y)m_{\d_G}(B), \\  \\
\lim_{T\to+\infty}\frac{E_k(G; T, B,Y)}{e^{\d_G T}}
&=C_k(Y)m_{\d_G}(B), \\ \\
\lim_{T\to+\infty}\frac{E_k(G; T,Y)}{e^{\d_G T}}
&=C_k(Y).
\end{aligned}
$$
In addition, $C_k(Y)>0$ is finite if and only if 

\sp\begin{enumerate}
\item 
$$
\ov Y\cap \Om_\infty(\cS_G)=(\ov Y\cap \Om_\infty(\cS_G)\cap \bd B_k)=\es
$$
or 
\item 
$$
\d_G>\max\big\{p(g):z_g\in\bd B_k \big\}.
$$
\end{enumerate}
\end{thm}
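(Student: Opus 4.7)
The plan is to deduce everything directly from the parabolic GDMS asymptotics of Theorems~\ref{t2pc6_B}, \ref{t1dp13}, and \ref{t1dp13B}, exactly as Theorem~\ref{t1ex16} was deduced from the attracting GDMS results. First I would invoke Observation~\ref{o1ex25} to record that $\mathcal S_G$ is a finite irreducible parabolic conformal GDMS (in the slightly generalized sense of Remark~\ref{r2_2017_02_17}, passing to an iterate if necessary to accommodate the one-step parabolic behaviour at tangency points). Observation~\ref{o1ex9} identifies $\Lambda(G)$ with $J_{\mathcal S_G}$, so $\delta_G = \HD(\Lambda(G)) = \delta_{\mathcal S_G}$, and by uniqueness of $\delta$-conformal measures the Patterson--Sullivan measure $m_{\delta_G}$ coincides (up to normalisation) with the $\mathcal S_G$-conformal measure $\widehat m_{\delta_{\mathcal S_G}}$ supplied by Theorem~\ref{t1_2017_02_18}; similarly for $\mu_{\delta_G}$.

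Next, for counting by multipliers, I would fix $\xi \in \Lambda(G) \cap \overline B_k$ and its unique code $\rho \in E_A^\infty$ with $i(\rho_1) = k$. By Observation~\ref{o2ex9} the map $E_\rho^* \ni \omega \mapsto g_\omega \in G_\xi$ is a bijection that preserves the weight, $\lambda_\xi(g_\omega) = \lambda_\rho(\omega)$, so the four formulae involving $N_\xi$ and $N_p$ translate term by term into the conclusions of Theorem~\ref{t2pc6_B}. For the closed geodesic count $\widehat N_p(G;T)$ I would use Proposition~\ref{p1ex14.1} (bijection $E_p^* \to \widehat G$) together with Proposition~\ref{p1ex11} (length $l(\gamma_g) = \lambda_p(\omega)$) to identify $\widehat N_p(G;T) = N_p(T)$ in the GDMS sense, after which the formula is just the parabolic periodic-orbit asymptotic. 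The diameter counts $D_\xi$ and $E_k$ are even more direct translations of Theorems~\ref{t1dp13} and \ref{t1dp13B} via the same bijection, noting $\Delta_g(Y) = \Delta(\omega)$ where $g = g_\omega$.

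The main obstacle will be the finiteness criterion for the constant $C_k(Y)$. In the GDMS statement the alternatives are that $\overline Y$ miss $\Omega_\infty(\mathcal S_G) \cap \Omega_{\rho_1}$, or that $\delta > \max\{p(a) : a \in \Omega_{\rho_1}, x_a \in \overline Y\}$. By Observation~\ref{o1ex25J}, parabolic fixed points of $\mathcal S_G$ lie precisely on the tangency set $\bigcup_{j<k} \overline B_j \cap \overline B_k$, and the admissibility constraint $A_{a\rho_1}=1$ (where $a$ ends in a symbol $(\cdot,\ell)$ with $\ell = i(\rho_1) = k$) selects exactly the parabolic points located on $\partial B_k$; this compatibility is the content I would need to verify carefully from the definition of the incidence matrix and the edge labels $g_{(a,b)}$. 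Once that matching is in place, the two alternatives in Theorems~\ref{t1dp13}--\ref{t1dp13B} transcribe verbatim into the two alternatives stated in the theorem, with $p(g) = p(a)$ for the corresponding parabolic element, and $\Omega_\infty(\mathcal S_G)$ determined by the ranks $p(a)$ versus $\delta_G$ via the definition in Section~\ref{section:parabolic}. The uniformity in $\xi$ (i.e.\ the fact that $C_k(Y)$ depends only on $k$ and $Y$, not on the particular $\rho$ coding $\xi \in \overline B_k$) is Remark~\ref{r1_2017_03_20B} applied to the coding $\rho$, completing the proof.
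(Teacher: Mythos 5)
Your proposal is correct and follows essentially the same route as the paper: the authors state Theorem~\ref{t2ex25} as an immediate consequence of Theorems~\ref{t2pc6_B}, \ref{t1dp13}, and \ref{t1dp13B}, combined with Observation~\ref{o2ex9}, Proposition~\ref{p1ex11}, Observation~\ref{o1ex11.1}, Observation~\ref{o2ex13}, and Proposition~\ref{p1ex14.1}, exactly as in the derivation of Theorem~\ref{t1ex16} for the tangency-free case. Your additional care in checking that the admissibility constraint $A_{a\rho_1}=1$ selects precisely the parabolic points on $\partial B_k$ (via Observation~\ref{o1ex25J}) is the right point to verify and matches the paper's implicit translation of the finiteness alternatives.
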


As in the case of hyperbolic Schottky groups, there are also Central Limit Theorems on the distribution of the preimages for parabolic Schottky groups.
 Theorem~\ref{t1ms1-again2} and Theorem~\ref{t1ms1-again3} for the parabolic conformal GDMS $\cS_G$, associated to the group $G$, take the same form. Therefore, we do not repeat them here.
However, we present the appropriate versions of Theorems~\ref{t1ms15.1} and \ref{t1ms16} as their formulations are closer to the actual group $G$. As in the case of hyperbolic groups, in order to get appropriate expressions in the language of the group $G$ itself, given $\xi\in\La(G)$, and an integer $n\ge 1$, we set
$$
G_\xi^n:=\{g_\om:\om\in E_\rho^n\}\sbt G_\xi.
$$
Furthermore, we define a probability measure $\mu_n$ on $G_\xi^n$ by setting that 

\begin{equation}\label{21_2017_04_05}
\mu_n(H) := \frac{\sum_{g \in H} e^{-\delta \lambda_\xi(g)}}{\sum_{\omega \in G_\xi^n} e^{-\delta \lambda_\xi(g)}}
\end{equation}
for every set $H \subset G_\xi^n$. As an immediate consequence of Theorem~\ref{t1ms15.1} we get the following.

\bthm\label{t31_2017_04_05}
If $G = \langle g_j\rangle_{j=1}^q$ is a parabolic finitely generated Schottky group acting on $\hat\R^d$, $d\ge 2$, and 
$$
\d_G>\frac{2p_G}{p_G+1},
$$
i.e the invariant measure $\mu_\delta$ is finite (so a probability after normalization), then for every $\xi\in\La(G)$ we have that
$$
\lim_{n \to +\infty} \int_{G_\xi^n} \frac{\lambda_\xi}{n} d\mu_n 
= \chi_{\mu_{\d}}.
$$
\ethm

\fr Again as in the hyperbolic (no tangencies) case, we define the functions $\Delta_n:G_\xi^n \to \mathbb R$, $n\in\N$, by the formulae
$$
\Delta_n(g) = 
\frac{\lambda_\xi(g)- \chi n}{\sqrt{n}}.
$$
As an immediate consequence of Theorem~\ref{t1ms16} we get the following.

\begin{thm}\label{t32_2017_04_05} 
If $G = \langle g_j\rangle_{j=1}^q$ is a parabolic finitely generated Schottky group acting on $\hat\R^d$, $d\ge 2$, and 
$$
\d_G>\frac{2p_G}{p_G+1},
$$
i.e.,  the invariant measure $\mu_\delta$ is finite (thus a probability measure after normalization), then for every $\xi\in\La(G)$
the sequence of random variables $(\Delta_n)_{n=1}^\infty$ converges in distribution to the normal (Gaussian) distribution 
$\mathcal N_0(\sigma)$ with mean value zero and the variance $\sigma^2 = \P_{\cS_G^*}''(\delta)>0$.  
Equivalently, the sequence 
$(\mu_n \circ \Delta_n^{-1})_{n=1}^\infty$
converges weakly to the normal distribution $\mathcal N_0(\sigma^2)$.  This means that for every Borel set $F \subset \mathbb R$ with 
$\hbox{\rm Leb}(\partial F) = 0$, we have 
$$
\lim_{n \to +\infty} \mu_n(\Delta_n^{-1}(F))
= \frac{1}{\sqrt{2\pi} \sigma}
\int_F e^{- t^2/2\sigma^2} dt.
$$
\end{thm}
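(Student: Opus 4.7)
The plan is to transfer the statement from the group-theoretic setting to the symbolic/GDMS setting and then invoke Theorem~\ref{t1ms16} directly. Fix $\xi\in\Lambda(G)$ and let $k\in V$ be the unique index with $\xi\in\overline{B}_k$. By Observation~\ref{o1ex25}, the associated conformal GDMS $\cS_G$ is finite irreducible parabolic, and by Observation~\ref{o1ex9} the projection $\pi_G:E_A^\infty\to\Lambda(G)$ is a homeomorphism, so there is a unique $\rho\in E_A^\infty$ with $\pi_G(\rho)=\xi$ and $i(\rho_1)=k$. By Observation~\ref{o2ex9}, the assignment $\omega\mapsto g_\omega$ is a bijection from $E_\rho^n$ onto $G_\xi^n$ for every $n\ge 1$.

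Next, I would verify that this bijection intertwines the relevant quantities. By the definition adopted just after Observation~\ref{o2ex9}, $\lambda_\xi(g_\omega)=-\log|g_\omega'(\xi)|=\lambda_\rho(\omega)$, so the function $\Delta_n$ on $G_\xi^n$ pulls back under $\omega\mapsto g_\omega$ to precisely the symbolic $\Delta_n$ on $E_\rho^n$ from \eqref{2m56}. Consequently the probability measure $\mu_n$ on $G_\xi^n$ defined by \eqref{21_2017_04_05} pulls back to the measure $\mu_n$ on $E_\rho^n$ defined by \eqref{1ms6}, so the pushforward $\mu_n\circ\Delta_n^{-1}$ is the same probability measure on $\mathbb R$ whether computed on the group side or on the symbol side.

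The standing hypothesis $\delta_G>\tfrac{2p_G}{p_G+1}$ together with \eqref{2_2017_04_05} is precisely the finiteness condition of Theorem~\ref{t1_2017_02_18} (b1)--(b3), which guarantees that the $\sg$--invariant measure $\mu_{\delta_G}$ of $\cS_G$ is finite and hence a probability measure after normalization. By Theorem~\ref{t1pc3} the associated contracting system $\cS_G^*$ is automatically D--generic; this is the point that ensures positivity of the asymptotic variance $\sigma^2=\P_{\cS_G^*}''(\delta)>0$, since D--genericity rules out the cohomological degeneracy excluded in Remark~\ref{r2_2017_02_17}. Theorem~\ref{t1ms16} then applies verbatim to $\cS_G$ and yields weak convergence of $\mu_n\circ\Delta_n^{-1}$ to $\mathcal N_0(\sigma^2)$, which, by the bijection above, is exactly the desired convergence on the group side.

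The only real point requiring care---and the one I would treat as the main (minor) obstacle---is verifying that $\cS_G$ falls within the scope of Theorem~\ref{t1ms16}. The definition of a parabolic GDMS in Definition~\ref{Def_Parabolic} requires parabolic fixed points of individual generators, whereas for a generalized Schottky group the parabolicity arises from tangencies between distinct balls and so appears only at the level of iterates, in the sense of Remark~\ref{r2_2017_02_17}. One therefore needs to invoke that remark to confirm that the proofs of Theorem~\ref{t1pc3} and Theorem~\ref{t1ms16} go through unchanged for $\cS_G$; this is a purely notational verification that amounts to replacing ``$e\in\Omega$'' by ``some iterate $\phi_\omega$ has parabolic fixed points'' throughout the arguments, with no change to the essential estimates.
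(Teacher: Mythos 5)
Your proposal is correct and follows exactly the route the paper takes: the paper derives Theorem~\ref{t32_2017_04_05} as an ``immediate consequence'' of Theorem~\ref{t1ms16}, via the bijection $\omega\mapsto g_\omega$ between $E_\rho^n$ and $G_\xi^n$ and the identification $\lambda_\xi(g_\omega)=\lambda_\rho(\omega)$, with the generalized (iterate-level) parabolicity of $\cS_G$ handled through Observation~\ref{o1ex25} and Remark~\ref{r2_2017_02_17}. Your write-up simply makes explicit the translation steps that the paper leaves implicit, including the correct identification of the finiteness hypothesis with Theorem~\ref{t1_2017_02_18} and of the positivity of $\sigma^2$ with the automatic D--genericity from Theorem~\ref{t1pc3}.
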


\subsection{Apollonian Circle Packings}\label{Apollonian Circle Packings}
We now describe the application of Theorem~\ref{t2ex25}
to Apollonian circle packings, as explained in the introduction. 
This can be formulated in the framework we described in the introduction to this section. Some additional information related to the subject of this section and the one following it can be found in works such as \cite{Arnoux}, \cite{BM}, \cite{Fu2}, \cite{GLMWY}, \cite{KO}, \cite{lag}, \cite{MSW}, \cite{OS1}--\cite{OS3}, \cite{Pa}, and \cite{Sa2}. Of course we make no claims for this list to be even remotely complete. 

Let $C_1, C_2, C_3, C_4$ be four distinct circles in the Euclidean (complex) plane, each of which shares a common tangency point with each of the others. We assume that the bounded component of the complement of one of these circles contains the bounded components of the complements of the remining three circles.
Without loss of genrality $C_4$ is this circle enclosing the three other. We refer to such configuration of circles $C_1, C_2, C_3, C_4$ as bounded. This name will be justified in a moment. We can now choose  the new four circles $K_1, K_2, K_3, K_4$ that are dual to the original four tangent circles, i.e., those circles that pass through the three of the four possible tangent points between the initial circles $C_1, C_2, C_3, C_4$. We label them (uniquely) so that
$$
C_i\cap K_i=\es
$$
for all $i=1, 2, 3, 4$. Figure~3
depicts this construction. 

\begin{figure}
\begin{tikzpicture}[scale=0.65]
\draw (0,0) circle [radius=1.0];;
\draw (1,0.6) circle [radius=2.16];;
\draw (2,0) circle [radius=1.0];;
\draw (1.0,1.75) circle [radius=1.0];;
\draw [red] (1.0,0.61) circle [radius=0.62];;
\draw [red] (5, 3) circle [radius=4];;
\draw [red] (-3, 3) circle [radius=4];;
\draw [red] (1, -4) circle [radius=4];;
\node [red] at (-7,0.5) {$K_2$}; 
\node [red] at (0,-7) {$K_1$}; 
\node [red] at (7,0.5) {$K_3$}; 
\node [red]at (1,0.5) {$K_4$}; 
\node at (-1.5,1.5) {$C_4$}; 
\node  at (2.4,0.4) {$C_2$}; 
\node  at (0.2,-0.5) {$C_3$}; 
\node at (1.6,1.9) {$C_1$}; 
\end{tikzpicture}
\caption{The Tangent Circles $C_1, C_2, C_3, C_4$ and Dual Circles $K_1, K_2, K_3, K_4$}
\end{figure}
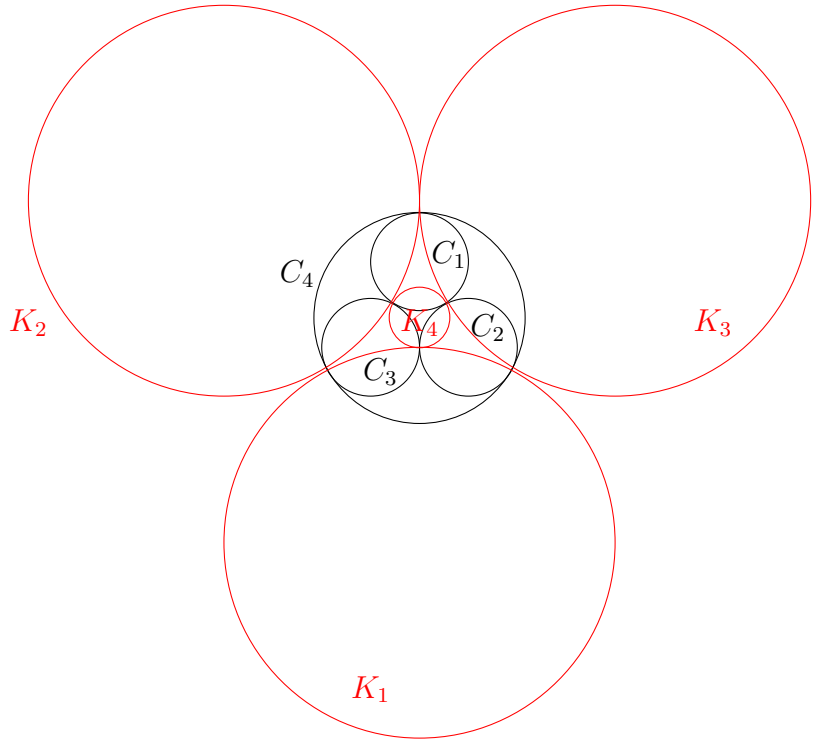

We associate to  the dual circles $K_1, K_2, K_3, K_4$ the respective inversions $g_1, g_2, g_3, g_4$ in these four dual circles. More precisely, if $K_i$, $i=1, 2, 3, 4$, is a circle with center\ $a_i \in \mathbb C$ and radius $r_i>0$ then we define
$$
g_i(z) = \frac{1}{r_i^2} \frac{z-a_i}{|z-a_i|^2}  + a_i, \  
$$
Denote by $B_1, B_2, B_3$ and $B_4$ the open balls (disks) enclosed, respectively, by the circles $K_1, K_2, K_3, K_4$. Let 
$$
G:= \langle 
g_1, g_2, g_3, g_4
\rangle 
$$
be the group generated by the four inversions $g_1, g_2, g_3, g_4$.
Let $\Gamma$ be the subgroup of $G$ consisting of its all orientation preserving elements. Observe that $\Ga$ is a free group generated by three elements, for example by
$$
\gamma_1:= g_{4} \circ g_1, \  \
\gamma_2:= g_{4} \circ g_2, \  \
\gamma_3:= g_{4} \circ g_3.
$$
Now noting that the the balls
$$
B_1, \  \ B_2, \  \  B_3; \ \
B_{-1} := g_4(B_1),\  \ B_{-2} := g_4(B_2), \  \ B_{-3} := g_4(B_3), 
$$
are mutually disjoint (see Figure~4), and that for every $i=1,2,3$:
$$
\gamma_i(\overline B_i)  = g_4 \circ g_i (\overline B_i) = g_4(B_i^c)
= (g_4(B_i))^c = B_{-i}^c
$$
we get the following.
\begin{figure}
\begin{tikzpicture}[scale=2.1]
\draw (0,0) circle [radius=1.0];;
\draw (1,0.6) circle [radius=2.16];;
\draw (2,0) circle [radius=1.0];;
\draw (1,1.75) circle [radius=1.0];;
\draw [red] (0.97,0.6) circle [radius=0.6];;
\draw [blue] (0.615,0.76) circle [radius=0.2];;
\node [blue] at (0.615,0.76) {$B_{-2}$}; 
\draw [red] (3,-0.8)  arc [radius =3.8, start angle = 52, end angle = 120];
\draw [red] (1.14,3.1)  arc [radius = 3.98, start angle = 173, end angle = 240];
\draw [red] (-1.3,-0.5)  arc [radius =3.86, start angle = 292, end angle = 360];
\node at (2.8,2.3) {$C_4$}; 
\node at (0.4,2) {$C_1$}; 
\node at (2.7,0.2) {$C_2$}; 
\node at (-0.5,-0.5) {$C_3$}; 
\node [red] at (2.8,-1.0) {$B_1$}; 
\node [red] at (1.5,3.1) {$B_3$}; 
\node [red] at (-1.5,-0.2) {$B_2$}; 
\draw [dashed] (1.0,-1.1) circle [radius=0.46];;
\node at (1.0,-1.1) {$g_1(C_1)$}; 
\draw [dashed] (2.455,1.4) circle [radius=0.47];;
\draw [dashed] (2.9,0.82) circle [radius=0.23];;
\node at (3.8,0.82) {$g_3 g_1(C_1)$}; 
\node [red] at (1.15,0.95) {$B_4$}; 
\end{tikzpicture}
\caption{Circles, Disks, and Generators of $G$}
\end{figure}
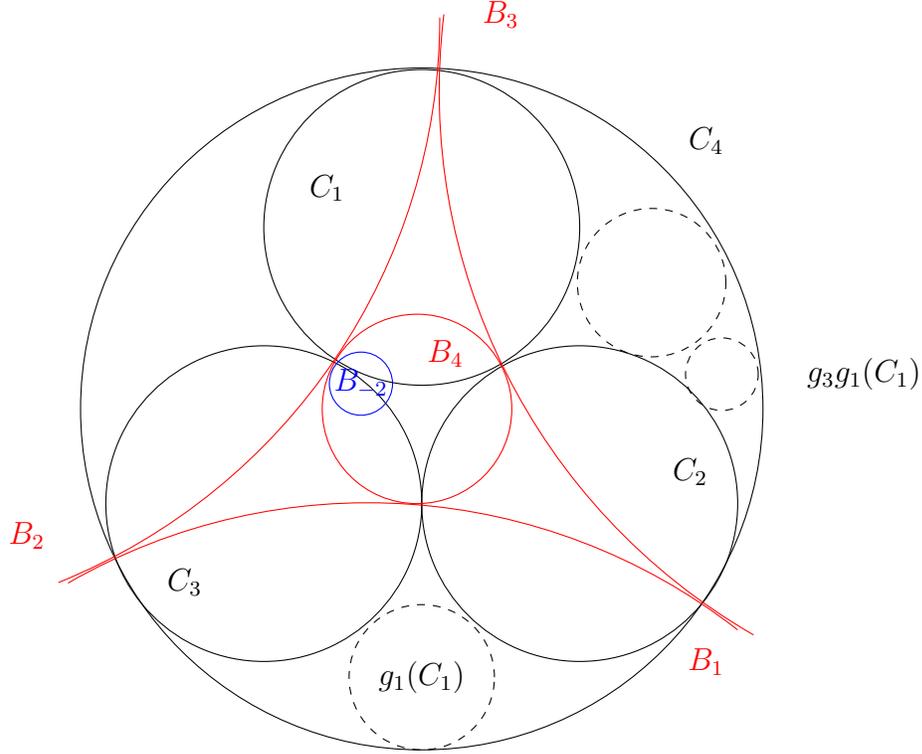 

\begin{observation}\label{o1ex27}
$\Gamma = \langle\gamma_1, \gamma_2, \gamma_3 \rangle$ is a parabolic Schottky group.
\end{observation}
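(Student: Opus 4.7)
The plan is to verify the four defining clauses of a parabolic Schottky group from the start of Section~\ref{tangent Schottky}: (a) the open disks $B_j$, $j\in\{\pm1,\pm2,\pm3\}$, are pairwise disjoint; (b) each generator pairs complements according to $\gamma_i(B_{-i}^c)=\overline{B_i}$ (or an equivalent relation after relabeling $\gamma_i\leftrightarrow\gamma_i^{-1}$); (c) every non-identity element of $\Gamma$ with a fixed point on some $\partial B_j$ is parabolic; and (d) $\Gamma$ actually contains a parabolic element (so that we get a PSG, not merely a GSG).

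Conditions (a) and (b) are essentially bookkeeping. For (a), observe that $B_1,B_2,B_3$ all lie in the unbounded component of $\widehat{\mathbb C}\setminus K_4$, while each $B_{-i}=g_4(B_i)$ lies inside $B_4$ because $g_4$ swaps the interior and exterior of its defining circle $K_4$; this separates the positively from the negatively indexed disks. Pairwise disjointness among $B_1,B_2,B_3$ follows from the fact that any two of $K_1,K_2,K_3$ meet at a single tangency point lying on $C_4$ (the unique element common to the two relevant triples of tangencies of the $C_i$'s), together with the consistent choice of each $B_i$ as the component disjoint from $C_i$; bijectivity of $g_4$ transports the same disjointness to $B_{-1},B_{-2},B_{-3}$. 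Condition (b) is exactly the calculation appearing in the excerpt just before the statement: $\gamma_i(\overline{B_i})=g_4(g_i(\overline{B_i}))=g_4(B_i^c)=B_{-i}^c$.

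For (d), a short combinatorial check shows that any two distinct dual circles $K_i,K_j$ with $i,j\in\{1,2,3,4\}$ meet in exactly one point, namely the unique tangency of the configuration $C_1,C_2,C_3,C_4$ that lies on both defining triples. Consequently each $\gamma_i=g_4\circ g_i$ is the composition of inversions in two internally tangent circles $K_4,K_i$; Möbius-conjugation by a transformation sending the shared tangency point to $\infty$ turns $\gamma_i$ into the composition of reflections in two parallel lines, which is a non-trivial Euclidean translation. Hence each generator $\gamma_i$ is parabolic with fixed point at the tangency of $K_4$ and $K_i$, settling (d) immediately.

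The main obstacle is (c), which requires ruling out loxodromic (or elliptic) elements of $\Gamma$ having fixed points on any $\partial B_j$. The plan is to combine the ping-pong dynamics on the disk system $\{B_{\pm i}\}_{i=1}^{3}$ with the classical fact that $\Gamma$ is geometrically finite with fundamental polyhedron $\mathcal R=\mathbb H^3\setminus\bigcup_j\hat B_j$: the ideal vertices of $\mathcal R$ are exactly the six pairwise tangency points of the dual circles, and the stabilizer in $\Gamma$ of each such ideal vertex is a cyclic group generated by a single parabolic translation (one of the $\gamma_i^{\pm1}$, up to conjugacy). Given $g\in\Gamma\setminus\{\mathrm{Id}\}$ with $g(z_g)=z_g\in\partial B_j$, the usual reduced-word ping-pong argument on the disk system forces $z_g$ to lie in the $\Gamma$-orbit of an ideal vertex of $\mathcal R$, so $g$ lies in a conjugate cusp stabilizer and is therefore parabolic; both the uniqueness of $z_g$ as a fixed point and the existence of a unique $j^*\neq j$ with $z_g\in\overline{B_j}\cap\overline{B_{j^*}}$ then follow from the explicit cuspidal form of this stabilizer.
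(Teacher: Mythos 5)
Your verification is correct, and it supplies exactly what the paper omits: Observation~\ref{o1ex27} is stated without proof, the authors evidently regarding it as immediate from the mutual disjointness of the six disks $B_{\pm 1},B_{\pm 2},B_{\pm 3}$ and the displayed pairing computation $\gamma_i(\overline B_i)=g_4(B_i^c)=B_{-i}^c$, with the parabolicity of the generators recorded only in the unproved observation that follows. You rightly single out clause (c) --- that every $g\in\Gamma\setminus\{\mathrm{Id}\}$ fixing a point of some $\partial B_j$ is parabolic --- as the one condition with genuine content, and your plan is sound: a boundary fixed point is forced by the nested-disk (ping-pong) structure into the $\Gamma$-orbit of a tangency point of the disk system, and the stabilizer of such a point in a discrete group contains no loxodromic (nor, since $\Gamma$ is free, any elliptic). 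Two cosmetic corrections: $K_4$ and $K_i$ are externally, not internally, tangent (immaterial to the conclusion that $g_4\circ g_i$ is parabolic); and the ideal vertices of $\mathcal R$ are the nine tangency points among the six disks $B_{\pm i}$ (three each of the types $\overline B_i\cap\overline B_{-i}$, $\overline B_i\cap\overline B_j$, $\overline B_{-i}\cap\overline B_{-j}$), which fall into the six $\Gamma$-orbits listed in the paper, rather than six vertices.
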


\fr In addition, 

\begin{observation}
The parabolic Schottky group $\Gamma$ has six conjugacy classes 
of parabolic elements whose representatives are
$$
\gamma_1, \  \  \gamma_2, \  \  \gamma_3, \  \  \gamma_1 \gamma_2^{-1}, \  \  \gamma_1 \gamma_3^{-1}, \  \  \gamma_2 \gamma_3^{-1},
$$
with the corresponding parabolic fixed points being the only elements, respectively, of 
$$
\overline B_1 \cap \overline B_4, \ \
\overline B_2 \cap \overline B_4, \  \
\overline B_3 \cap \overline B_4, \  \
\overline B_{-1} \cap \overline B_{-2}, \  \
\overline B_{-1} \cap \overline B_{-3}, \  \
\overline B_{-2} \cap \overline B_{-3}.
$$
\end{observation}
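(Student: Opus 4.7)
The plan is to reduce the statement to three independent claims: parabolicity (with the stated fixed points), pairwise non-conjugacy in $\Gamma$, and completeness (no other conjugacy classes exist). The key geometric input I will rely on is the \emph{self-duality} of the Apollonian configuration, namely that the dual circles $K_1, K_2, K_3, K_4$ are themselves mutually tangent, with $K_i$ meeting $K_j$ precisely at the tangency point of $C_k \cap C_l$ for $\{k,l\} = \{1,2,3,4\} \setminus \{i,j\}$. I will establish this by a short inversive-geometry argument, e.g.\ by normalizing via a M\"obius transformation sending one tangency of the $C_i$'s to infinity, which reduces the picture to two parallel lines together with two congruent tangent circles.

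Once mutual tangency is in hand, parabolicity is essentially immediate from the classical fact that a composition of inversions in two tangent circles is a parabolic M\"obius transformation whose fixed point is the tangency. For $\gamma_i = g_4 \circ g_i$ this is direct and gives the fixed point in $\overline B_4 \cap \overline B_i$. For $\gamma_i \gamma_j^{-1}$, using that $g_j$ and $g_4$ are involutions yields
\[
\gamma_i \gamma_j^{-1} = g_4 g_i \cdot g_j g_4 = g_4\,(g_i g_j)\,g_4,
\]
so $\gamma_i \gamma_j^{-1}$ is $g_4$-conjugate in the ambient group $G$ to the parabolic element $g_i g_j$ with fixed point $K_i \cap K_j$; applying $g_4$ sends this point into $g_4(\overline B_i \cap \overline B_j) = \overline B_{-i} \cap \overline B_{-j}$, as claimed. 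For pairwise non-conjugacy I will invoke that $\Gamma$-conjugate parabolics have $\Gamma$-equivalent fixed points, and then verify that the six listed points lie on the boundary of the fundamental domain
\[
\mathcal R = \widehat{\mathbb C} \setminus \bigcup_{i=1}^{3}\bigl(\overline B_i \cup \overline B_{-i}\bigr)
\]
for $\Gamma$, with no two identified by the side-pairings $\gamma_i : \partial B_{-i} \to \partial B_i$. This is just a bookkeeping check.

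The main obstacle will be \emph{completeness}: that every parabolic of $\Gamma$ is conjugate to one of the six listed elements. My strategy is to exploit that $\Gamma$ is geometrically finite (being the orientation-preserving subgroup of a finitely generated Schottky group with tangencies) and invoke the Bers--Sullivan cusp-finiteness theorem: conjugacy classes of maximal parabolic subgroups of $\Gamma$ correspond bijectively to $\Gamma$-orbits of parabolic fixed points, equivalently to ideal vertices of the Poincar\'e polyhedron $\widehat{\mathcal R} \subset \mathbb H^3$ spanned by the hyperbolic hemispheres over $K_{\pm 1}, K_{\pm 2}, K_{\pm 3}$. A direct inspection of $\widehat{\mathcal R}$ — again powered by the self-duality of the Apollonian configuration and the fact that $g_4$ preserves tangencies — will show exactly six ideal vertices, namely the six tangency points singled out in the statement. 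Since $\Gamma$ is free, every maximal parabolic subgroup is cyclic, and choosing a generator in each class yields precisely the six elements listed, completing the argument.
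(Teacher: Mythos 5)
First, a point of reference: the paper offers no proof of this Observation at all — it is stated as self-evident from the construction — so your write-up is supplying an argument the authors omit. Most of it is correct and well organized. The self-duality of the Descartes configuration (the dual circles $K_1,\dots,K_4$ are again mutually tangent, with $K_i\cap K_j$ equal to the tangency point $C_k\cap C_l$ for $\{k,l\}=\{1,2,3,4\}\setminus\{i,j\}$), the classical fact that the product of inversions in two tangent circles is parabolic with the tangency as its fixed point, and the identity $\gamma_i\gamma_j^{-1}=g_4\,(g_ig_j)\,g_4^{-1}$ together give exactly the six listed parabolics with the stated fixed points. Reducing completeness to counting cusps of the geometrically finite free group $\Gamma$ via its fundamental polyhedron is also the right idea, and your reading of "six conjugacy classes" as six classes of maximal parabolic subgroups (equivalently six $\Gamma$-orbits of parabolic fixed points) is the intended one.

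The genuine gap is in the final count. The Poincar\'e polyhedron over $\mathcal R$ has \emph{nine} ideal vertices, not six: besides the three points $\overline{B}_i\cap\overline{B}_4=\overline{B}_i\cap\overline{B}_{-i}$ and the three points $\overline{B}_{-i}\cap\overline{B}_{-j}$, the closures $\overline{B}_i$ and $\overline{B}_j$ for $i\neq j\in\{1,2,3\}$ also meet, at the single point $K_i\cap K_j=C_k\cap C_4$ — a tangency you yourself use when declaring $g_ig_j$ parabolic, but omit when inspecting $\widehat{\mathcal R}$. These three extra vertices are fixed by the parabolics $g_ig_j=\gamma_i^{-1}\gamma_j\in\Gamma$, so a literal "nine ideal vertices" count would yield nine classes, while your claimed "six ideal vertices" is false. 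What rescues the statement is the vertex-cycle identification: $\gamma_i^{-1}$ carries $\overline{B}_{-i}\cap\overline{B}_{-j}=g_4(K_i\cap K_j)$ back to $K_i\cap K_j$, so these six vertices pair into three cusp cycles, while each $p_i=\overline{B}_i\cap\overline{B}_4$ is fixed by its own side pairing $\gamma_i$ and forms a cycle of length one — nine vertices, six cycles, six cusps. Relatedly, your non-conjugacy "bookkeeping" must track full vertex cycles rather than single side pairings (or, more cleanly, use that $\Gamma$ is free on $\gamma_1,\gamma_2,\gamma_3$ and that the six words are cyclically reduced and pairwise not cyclic permutations of one another). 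With these repairs the argument is complete.
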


\begin{observation}
The limit set $\La(\Gamma)$ coincides with the residual set of the Apollonian circle packing generated by the circles $C_1, C_2, C_3, C_4$. In addition (see \cite{Boyd}, \cite{MU_Apollo}, and Theorem~\ref{t1_2017_02_18}), we have the following.

\sp\begin{enumerate}\label{o3_2017_04_05}
\item $\delta_\Ga = \HD(\Lambda(\Gamma))>1$,

\sp\item $p(g)=1$ for every parabolic element of $\Ga$, and so 
$$
\d_\Ga> \sup\{p(g)\},
$$
where $g\in\Ga$ ranges of all parabolic elements of $G$,

\sp\item $\Om_\infty\(\cS_\Ga\)=\es$, and so $\mu_{\delta_{\Ga}}$, the probability $\mathcal S_\Ga$-invariant measure on $\Lambda(\Ga)$, is finite, thus probability after normalization.
\end{enumerate}
\end{observation}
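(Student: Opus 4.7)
The plan is to verify the three items in turn, in essence by coupling two classical inputs with the GDMS machinery assembled earlier in the paper. The strict inequality $\delta_\Gamma>1$ is Boyd's theorem from \cite{Boyd}, and the identification $\delta_\Gamma=\HD(\Lambda(\Gamma))$ is an immediate output of Theorem~\ref{t1_2017_02_18}(1) applied to the finite irreducible parabolic GDMS $\cS_\Gamma$: one has $J_{\cS_\Gamma}=\Lambda(\Gamma)$ up to the countable set of parabolic $G$-orbits (the latter carrying zero $\HD$-mass since $\delta_\Gamma>0$), and the Poincar\'e exponent of $\Gamma$ coincides with $\delta_{\cS_\Gamma}$ because the generating maps of $\cS_\Gamma$ are precisely (restrictions of) the generators of $\Gamma$ realizing the word-length exponential contraction rates.

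The rank assertion in (2) is the computation identifying the $p_i$ of Proposition~\ref{p1c5.13} for each $a\in\Omega(\cS_\Gamma)$. Since $d=2$ here, Proposition~\ref{p1c5.13} already guarantees $p_a\in\mathbb{N}$, so it suffices to exclude $p_a\ge 2$. I would conjugate the parabolic element $g_a\in\Gamma$ (whose unique fixed point $z_a$ lies in $\overline B_i\cap\overline B_j$ for a tangency pair) by a M\"obius map sending $z_a$ to $\infty$; the conjugate is an affine transformation $w\mapsto w+c$ with $c\neq 0$, so the inverse branches of $g_a^n$ acting on the tangent disks become, after conjugation by $1/z$, maps whose $n$-th derivative at the tangency decays like $n^{-2}$. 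This is precisely $n^{-(p_a+1)/p_a}$ with $p_a=1$. Applying this to the six conjugacy classes of parabolic elements listed in the preceding Observation yields $p(g)=1$ uniformly, and hence $\sup\{p(g)\}=1<\delta_\Gamma$ by item (1).

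Item (3) is then a direct combinatorial consequence. By definition
\[
\Omega_\infty(\cS_\Gamma)=\Bigl\{a\in\Omega(\cS_\Gamma):\tfrac{2p_a}{p_a+1}\ge\delta_\Gamma\Bigr\}.
\]
Substituting $p_a=1$ from (2) gives $\tfrac{2p_a}{p_a+1}=1$, while by (1) we have $\delta_\Gamma>1$, so the defining inequality fails for every $a$ and $\Omega_\infty(\cS_\Gamma)=\varnothing$. Equivalently $\delta_\Gamma>\tfrac{2p_{\cS_\Gamma}}{p_{\cS_\Gamma}+1}$, and Theorem~\ref{t1_2017_02_18}(3)(b) delivers the finiteness of $\mu_{\delta_\Gamma}$ (and of $\widehat\mu_{\delta_\Gamma}$ on $\Lambda(\Gamma)$), which can be normalized to a probability.

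The only genuinely nontrivial piece is Boyd's estimate $\delta_\Gamma>1$; the remaining steps are structural and follow once one has correctly identified the local exponents $p_a$. The main technical obstacle in a fully written proof is therefore the bookkeeping of the six conjugacy classes in part (2): one must verify that each tangency does give a simple parabolic germ, i.e.\ that no higher-order fixed-point contact can occur in this specific configuration of four mutually tangent circles. After the conjugation to $w\mapsto w+c$, this is automatic, but the clean choice of conjugating M\"obius map deserves to be recorded carefully, together with a uniform constant $C$ in Proposition~\ref{p1c5.13} valid across all six classes.
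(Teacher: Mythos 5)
Your proposal is correct and matches the paper's own (implicit) justification: the paper states this as an Observation with no written proof, relying exactly on \cite{Boyd} and \cite{MU_Apollo} for $\delta_\Gamma>1$, on the fact that parabolic M\"obius transformations of the plane are conjugate to translations (hence $p(g)=1$, consistent with Proposition~\ref{p1c5.13}), and on Theorem~\ref{t1_2017_02_18}(3)(b) together with $1=\tfrac{2p}{p+1}<\delta_\Gamma$ for the finiteness of $\mu_{\delta_\Gamma}$. Your reconstruction fills in these steps in the intended way, so nothing further is needed.
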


\fr Hence, as an immediate consequence of Theorem \ref{t2ex25}, we get the following.

\begin{cor}\label{c2ex28} 
Let $C_1, C_2, C_3, C_4$ be a bounded\footnote{Boundedness of the configuration
$C_1, C_2, C_3, C_4$ guarantees us
that  the group $\Ga$ is  Schottky in the sense of our previous section, and, in particular, all the numbers $N_\xi(\Ga;T)$ and $N_p(\Ga;T)$ are finite.}
  configuration of four distinct circles in the plane, each of which shares a common tangency point with each of the others. Let $\Gamma$ be the corresponding parabolic Schottky group. 

\sp\begin{itemize} 
\item Let $\delta_\Ga$ be the Poincar\'e exponent of $\Ga$; it is known to be equal to $\HD(\Lambda(\Ga))$.  

\sp\item Let $m_{\delta_\Ga}$ be the Patterson-Sullivan
conformal measure for $\Ga$ on $\Lambda(\Ga)$.  

\sp\item Let $\mu_{\delta_{\Ga}}$ be the probability $\mathcal S_\Ga$-invariant measure on $\Lambda(\Ga)$ equivalent to $m_{\delta_\Ga}$. 

\sp\item Fix $k \in \{\pm 1, \pm 2, \pm 3\}$ and $\xi \in \Lambda(\Gamma) \cap \overline B_k$.
\end{itemize}

\sp\fr Then for every set $Y \subset \overline {B_k}$ having at least two distinct points there exists a constant $C_k(Y)\in(0,+\infty)$ such that for every Borel set $B \subset \R^d$ with $m_{\delta_\Ga}(\bd B)=0$ (equivalently $\mu_{\delta_\Ga}(\bd B)=0$), we have that
$$
\lim_{T\to+\infty}\frac{N_\xi(\Ga; T, B)}{e^{\d_\Ga T}}
=\frac{\psi_{\d_\Ga}(\xi)}{\d_\Ga\chi_{\d_\Ga}}m_{\d_\Ga}(B),
\  \  \  \  \
\lim_{T\to+\infty}\frac{N_\xi(\Ga; T)}{e^{\d_\Ga T}}
=\frac{\psi_{\d_\Ga}(\xi)}{\d_\Ga\chi_{\d_\Ga}},
$$

$$
\lim_{T\to+\infty}\frac{N_p(\Ga; T, B)}{e^{\d_\Ga T}}
= \frac{1}{\d_\Ga\chi_{\d_\Ga}}\mu_{\d_\Ga}(B),
\  \  \  \  \
\lim_{T\to+\infty}\frac{N_p(\Ga; T)}{e^{\d_\Ga T}}
=\frac{1}{\d_\Ga\chi_{\d_\Ga}},
$$

$$
\begin{aligned}
\lim_{T\to+\infty}\frac{\widehat N_p(\Ga; T)}{e^{\d_\Ga T}}
&=\frac{1}{\d_\Ga\chi_{\d_\Ga}}, \\  \\
\lim_{T\to+\infty}\frac{D_\xi(\Ga; T, B,Y)}{e^{\d_\Ga T}}
&=C(Y)m_{\d_G}(B), \\  \\
\lim_{T\to+\infty}\frac{E_k(\Ga; T, B,Y)}{e^{\d_\Ga T}}
&=C_k(Y)m_{\d_\Ga}(B), \\ \\
\lim_{T\to+\infty}\frac{E_k(\Ga; T,Y)}{e^{\d_\Ga T}}
&=C_k(Y).
\end{aligned}
$$
\end{cor}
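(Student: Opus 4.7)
The plan is to derive Corollary~\ref{c2ex28} as a direct specialization of Theorem~\ref{t2ex25} to the specific parabolic Schottky group $\Gamma=\langle\gamma_1,\gamma_2,\gamma_3\rangle$, $\gamma_i=g_4\circ g_i$, associated to the Apollonian packing. First I would verify that $\Gamma$ fits the framework of the preceding subsection: by Observation~\ref{o1ex27} the six disks $B_{\pm 1},B_{\pm 2},B_{\pm 3}$ are mutually disjoint, and the generators send $\overline B_i$ onto $B_{-i}^c$, so $\Gamma$ is a generalized Schottky group in the precise sense used to formulate Theorem~\ref{t2ex25}. Boundedness of the configuration $C_1,C_2,C_3,C_4$ (i.e.\ that $C_4$ encloses the others) is the input that guarantees all the counting functions $N_\xi(\Gamma;T)$, $N_p(\Gamma;T)$, $\widehat N_p(\Gamma;T)$ are finite for each fixed $T$, since the limit set then lies in a compact subset of $\mathbb{C}$ and the number of $\Gamma$-translates of a point accumulating at modulus $\le T$ is controlled by the Schottky structure.

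Second, I would extract the parabolic data. The six conjugacy classes of parabolic elements, with representatives $\gamma_i$ and $\gamma_i\gamma_j^{-1}$, are precisely those identified in the observation preceding Corollary~\ref{c2ex28}, and each parabolic fixed point lies at a tangency between two $\overline B_j$'s, so by Observation~\ref{o1ex25J} one has $\Omega(\mathcal S_\Gamma)=\Omega(\Gamma)\cap \bigcup_{j<k}\overline B_j\cap\overline B_k$ and the associated GDMS $\mathcal S_\Gamma$ is finite parabolic. Crucially every parabolic element has rank $p(g)=1$, so the quantity $p_\Gamma$ defined in \eqref{2_2017_04_05} equals $1$.

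Third comes the only non-routine input: the strict inequality $\delta_\Gamma>1$, which is due to Boyd \cite{Boyd} and \cite{MU_Apollo} and is cited in observation item (1) on p.~\pageref{o3_2017_04_05}. Combined with $p_\Gamma=1$ this gives $\delta_\Gamma>1=\tfrac{2p_\Gamma}{p_\Gamma+1}$, so by item (b) of Theorem~\ref{t1_2017_02_18} the invariant measure $\mu_{\delta_\Gamma}$ is finite and, after normalization, probabilistic; moreover $\Omega_\infty(\mathcal S_\Gamma)=\{a\in\Omega:\tfrac{2p_a}{p_a+1}\ge\delta_\Gamma\}=\emptyset$. This has the dual consequence, via the dichotomy at the end of Theorem~\ref{t2ex25}, that the constants $C_k(Y)$ produced there are automatically finite for every set $Y\subset\overline B_k$ with at least two points, since condition (1) of that dichotomy is satisfied vacuously.

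With these verifications in hand, the seven limit formulas in Corollary~\ref{c2ex28} are read off line by line from Theorem~\ref{t2ex25} applied with $G=\Gamma$; the only identifications to be made are that the Poincar\'e series coding, Patterson--Sullivan measure $m_{\delta_\Gamma}$, invariant density $\psi_{\delta_\Gamma}$, and Lyapunov exponent $\chi_{\delta_\Gamma}$ of $\Gamma$ coincide with the corresponding objects of $\mathcal S_\Gamma$, all of which are built into the group-to-GDMS correspondence set up via $\pi_G$ and Observations~\ref{o2ex9}--\ref{o1ex11.1} (which carry over verbatim to the generalized Schottky setting since tangencies are disjoint from the interior coding of generic orbits). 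There is no genuine obstacle; the only step that required work beyond our framework is the inequality $\delta_\Gamma>1$, and this is invoked as a known theorem.
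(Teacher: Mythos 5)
Your proposal is correct and follows the paper's own route: the paper derives Corollary~\ref{c2ex28} as an immediate consequence of Theorem~\ref{t2ex25}, using Observation~\ref{o1ex27} (that $\Gamma$ is a parabolic Schottky group), the identification of the six parabolic conjugacy classes all of rank $p(g)=1$, and the key input $\delta_\Gamma>1$ from \cite{Boyd} and \cite{MU_Apollo}, which forces $\Om_\infty(\cS_\Ga)=\es$ and the finiteness of $\mu_{\d_\Ga}$ and of the constants $C_k(Y)$. Your write-up simply makes explicit the same chain of observations the paper records before stating the corollary.
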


Making use of Observation~\ref{o3_2017_04_05}, as an immediate consequence respectively of Theorem~\ref{t31_2017_04_05} and Theorem~\ref{t32_2017_04_05}, we get the following two theorems.

\bthm\label{t6_2017_04_05}
Let $C_1, C_2, C_3, C_4$ be a bounded configuration of four  distinct circles in the plane, each of which shares a common tangency point with each of the others. If $\Gamma$ is the corresponding parabolic Schottky group, then for every $\xi\in\La(\Ga)$ we have that
$$
\lim_{n \to +\infty} \int_{\Ga_\xi^n} \frac{\lambda_\xi}{n} d\mu_n 
= \chi_{\mu_{\d}}.
$$
\ethm

The next theorem is a Central Limit Theorem for diameters of 
circles in the Apollonian Circle Packing.

\begin{thm}\label{7_2017_04_05} 
Let $C_1, C_2, C_3, C_4$ be a bounded configuration of four distinct circles in the plane, each of which shares a common tangency point with each of the others. If $\Gamma$ is the corresponding parabolic Schottky group, then for every $\xi\in\La(\Ga)$
the sequence of random variables $(\Delta_n)_{n=1}^\infty$ converges in distribution to the normal (Gaussian) distribution 
$\mathcal N_0(\sigma)$ with mean value zero and the variance $\sigma^2 = \P_{\cS_\Ga^*}''(\delta)>0$.  
Equivalently, the sequence 
$(\mu_n \circ \Delta_n^{-1})_{n=1}^\infty$
converges weakly to the normal distribution $\mathcal N_0(\sigma^2)$.  This means that for every Borel set $F \subset \mathbb R$ with 
$\hbox{\rm Leb}(\partial F) = 0$, we have 
$$
\lim_{n \to +\infty} \mu_n(\Delta_n^{-1}(F))
= \frac{1}{\sqrt{2\pi} \sigma}
\int_F e^{- t^2/2\sigma^2} dt.
$$
\end{thm}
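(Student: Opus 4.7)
The plan is to derive this Central Limit Theorem as a direct application of Theorem~\ref{t32_2017_04_05} to the parabolic Schottky group $\Gamma$ associated to the Apollonian configuration. What I need to do is verify that all the hypotheses of that general CLT are satisfied in the Apollonian setting, and then the conclusion will follow verbatim.

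First I would record, following Observation~\ref{o1ex27}, that $\Gamma = \langle\gamma_1,\gamma_2,\gamma_3\rangle$ is a finitely generated parabolic Schottky group acting on $\widehat{\mathbb R}^2$, so that the conformal GDMS $\mathcal S_\Gamma$ is a finite-alphabet irreducible parabolic conformal GDMS in the sense of Definition~\ref{Def_Parabolic} (with the mild generalization of Remark~\ref{r2_2017_02_17}). Thus Theorem~\ref{t32_2017_04_05} is applicable in principle; what remains is to check the key hypothesis $\delta_\Gamma > \frac{2 p_\Gamma}{p_\Gamma+1}$, i.e.\ that the invariant measure $\mu_{\delta_\Gamma}$ is finite.

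Here I would appeal to the facts collected in Observation~\ref{o3_2017_04_05} (which in turn rest on \cite{Boyd}, \cite{MU_Apollo}, and Theorem~\ref{t1_2017_02_18}). Namely: (i) $\delta_\Gamma = \HD(\Lambda(\Gamma)) > 1$, and (ii) $p(g) = 1$ for every parabolic element of $\Gamma$, because in dimension two parabolic rank is forced to equal one. Consequently $p_\Gamma = \max\{p(g):g\in\Omega(\Gamma)\} = 1$, and so $\frac{2 p_\Gamma}{p_\Gamma+1} = 1 < \delta_\Gamma$. Equivalently, as recorded in (iii) of that Observation, $\Omega_\infty(\mathcal S_\Gamma)=\emptyset$, which by Theorem~\ref{t1_2017_02_18}(3)(b) guarantees the finiteness (and hence probability after normalization) of $\mu_{\delta_\Gamma}$.

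With both the structural hypothesis (finite alphabet, irreducible, parabolic) and the thickness hypothesis $\delta_\Gamma > \frac{2 p_\Gamma}{p_\Gamma+1}$ in place, I would then simply invoke Theorem~\ref{t32_2017_04_05} for the fixed reference point $\xi\in\Lambda(\Gamma)$ and the sequence of measures $\mu_n$ on $\Gamma_\xi^n$ defined by formula \eqref{21_2017_04_05}. The conclusion—that $(\mu_n\circ\Delta_n^{-1})_{n=1}^\infty$ converges weakly to $\mathcal N_0(\sigma^2)$ with $\sigma^2 = P''_{\mathcal S_\Gamma^*}(\delta_\Gamma)$—transfers word for word, and the positivity $\sigma^2 > 0$ is automatic by Theorem~\ref{t1pc3} (which provides the $D$--genericity of the associated attracting system $\mathcal S_\Gamma^*$) together with Remark~\ref{r2_2017_02_17}. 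There is no real obstacle here: the entire content of the theorem is already packaged in Theorem~\ref{t32_2017_04_05}, and the only genuine input specific to Apollonian packings is the inequality $\delta_\Gamma > 1$, which is not obvious and is the one place where one must draw on external results \cite{Boyd}, \cite{MU_Apollo}.
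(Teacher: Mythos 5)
Your proposal is correct and follows exactly the route the paper takes: the paper states this result as an immediate consequence of Theorem~\ref{t32_2017_04_05} combined with Observation~\ref{o3_2017_04_05}, which supplies precisely the facts you check ($\delta_\Gamma>1$, $p_\Gamma=1$, hence $\delta_\Gamma>\frac{2p_\Gamma}{p_\Gamma+1}$ and finiteness of $\mu_{\delta_\Gamma}$). Nothing is missing.
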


In Figure 2 we illustate the Central Limit Theorem for the diameters in the standard Apollonian Circle Packing
in Theorem \ref{t32_2017_04_05}. 
\begin{figure}[h]
\includegraphics[height=6cm]{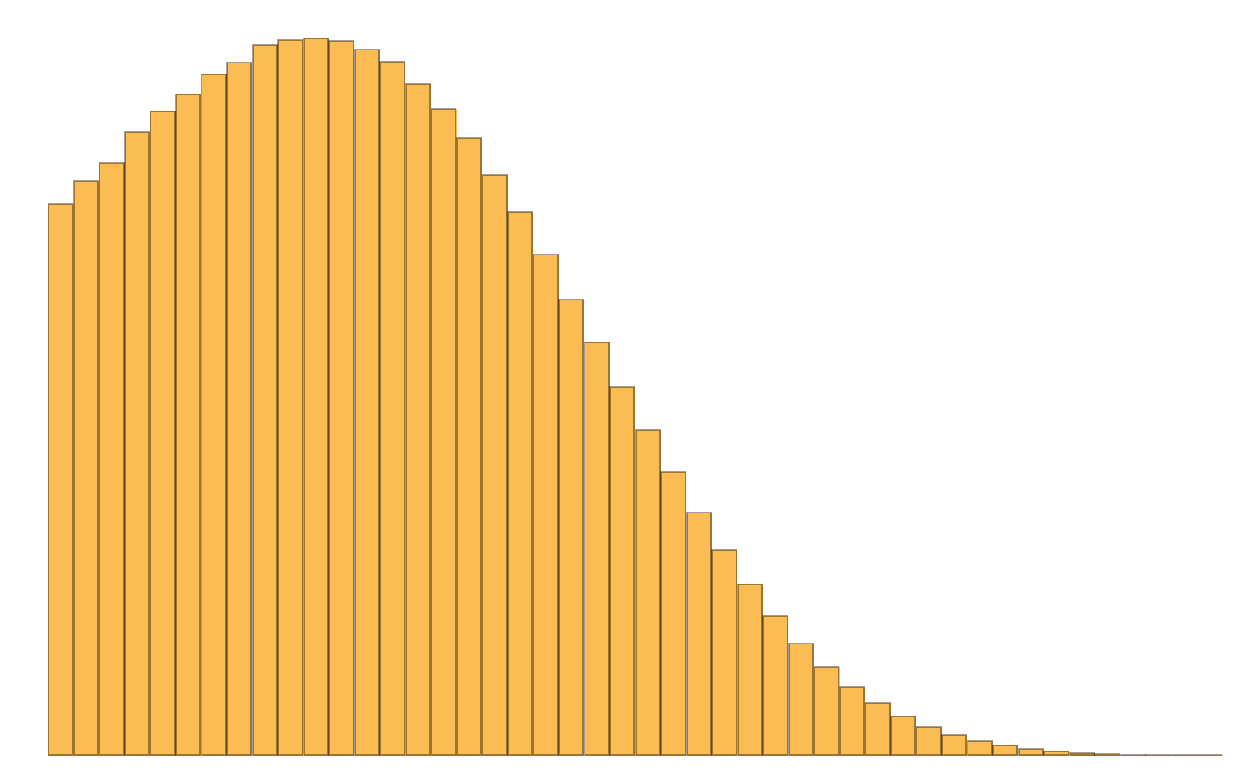}
\caption{We plot a portion of the weighted histogram of the 
6,377,292
values $-\log r$ where $r$ is a circle of generation $n=14$ for
standard  Apollonian circle packing.  There are 46 bins with a weighting corresponding to $r^\delta$.}
\end{figure}

Now, we consider the actual counting of the circles in the Apollonian circle packing generated by the bounded configuration of the circles 
$C_1, C_2, C_3$ and $C_4$. The following immediate observation is crucial to this goal.

\begin{observation}\label{o3ex28}
The elements of $\mathcal A$, the Apollonian circle packing generated by the bounded configuration of the circles $C_1, C_2, C_3, C_4$, is bounded\footnote{This justifies the name ``bounded'' in regards to the configuration $C_1, C_2, C_3, C_4$.} and coincide with the following disjoint union
$$
\begin{aligned}
\{
C_1, C_2, C_3, C_4\}
\cup
\bigcup_{j=1}^3 &(\Gamma_j \cup \{\Id \})(g_j(C_j))
\cup
\bigcup_{j=1}^3 \bigcup_{i=1, i \neq j}^4 (\Gamma_i \cup \{\Id \})(g_i \circ g_j(C_j)) \cup \\ 
&\cup \{g_4(C_4) \}
\cup \cup_{j=1}^3 (\Gamma_j \cup \{\Id \})(g_j\circ g_4)(C_4),
\end{aligned}
$$
and for $j=1,2,3$ and $i \in \{1,2,3\}\sms \{ j \}$ we have that 
$$
g_j(C_j) \subset \overline B_j, \  \
g_i \circ g_j(C_j) \subset \overline B_i, \  \
g_4 \circ g_j(C_j) \subset \overline B_{-j}, \  \
g_j \circ g_4(C_4) \subset \overline B_{j}.
$$
\end{observation}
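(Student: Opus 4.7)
The plan is to identify $\mathcal A$ with the $G$-orbit of $\{C_1,C_2,C_3,C_4\}$ and then to parametrize that orbit by reduced words in $G$, regrouped via the index-two subgroup $\Gamma$. First I would prove that $\mathcal A = G\cdot\{C_1,\ldots,C_4\}$ via the standard geometric observation underlying the Apollonian construction: filling a triangular interstice bounded by three mutually tangent circles amounts to inverting the omitted member of the Descartes quadruple in the circle through the three tangency points of the other three, and this inverting circle is precisely one of the dual circles $K_j$; hence the Apollonian process is the iterated action of $g_1,\ldots,g_4$ on the initial four circles. Next I would establish the stabilizer fact: for $i\neq j$ the circle $K_i$ is orthogonal to $C_j$, because $K_i$ passes through two of the tangency points on $C_j$ and the circle through the three tangency points of three mutually tangent circles is orthogonal to each of them (a classical incidence computation). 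This yields $g_i(C_j)=C_j$ for $i\neq j$, so $\operatorname{Stab}_G(C_j)=\langle g_i:i\neq j\rangle$, and the four $C_j$ lie in four distinct $G$-orbits.

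The containment inclusions in the Observation follow directly from the Schottky structure: $C_j$ lies on the exterior of $K_j$, so $g_j(C_j)\subset\overline B_j$; for $i\neq j$ with $i,j\in\{1,2,3\}$ the ball $\overline B_j$ is disjoint from $\overline B_i$ and hence exterior to $K_i$, so $g_i(\overline B_j)\subset\overline B_i$ gives $g_ig_j(C_j)\subset\overline B_i$; the defining equality $B_{-j}=g_4(B_j)$ yields $g_4g_j(C_j)\subset\overline B_{-j}$; and the same argument gives $g_jg_4(C_4)\subset\overline B_j$. Since $G\cong(\Z/2)^{*4}$ (the free product of four copies of $\Z/2$, one for each involution $g_i$), every element of $G$ has a unique reduced word in the generators $g_1,\ldots,g_4$. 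Combined with the stabilizer fact, every nontrivial element $D\in G\cdot C_j$ admits a unique expression $D=h\cdot g_j(C_j)$ where $h$ is a reduced word not ending in $g_j$ (allowing $h=\Id$); equivalently, elements of $\mathcal A\sms\{C_1,\ldots,C_4\}$ are in bijection with nontrivial reduced $G$-words of the form $w=hg_j$.

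Finally I would match this parametrization with the pieces in the claimed decomposition by grouping reduced words $w=hg_j$ by length and leading structure. Length-$1$ words yield the isolated circles $g_j(C_j)$, accounting for the term $\{g_4(C_4)\}$ and the $\Id$-parts of $(\Gamma_j\cup\{\Id\})(g_j(C_j))$ for $j\in\{1,2,3\}$. Length-$2$ reduced words $g_ig_j$ and $g_jg_4$ give the $\Id$-parts of $(\Gamma_i\cup\{\Id\})(g_ig_j(C_j))$ and $(\Gamma_j\cup\{\Id\})(g_jg_4(C_4))$ respectively. Longer reduced words factor uniquely as $\gamma\cdot g_ig_j$, $\gamma\cdot g_jg_4$, or $\gamma\cdot g_j$ for a nontrivial $\gamma\in\Gamma$, and via the identifications $\gamma_i=g_4g_i$, $\gamma_i^{-1}=g_ig_4$ the $G$-reducedness of the full word translates precisely into the GDMS-admissibility condition defining membership of $\gamma$ in the appropriate $\Gamma_i$ or $\Gamma_j$. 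The hard part will be this last matching step: the $G$-length and $\Gamma$-length of a given element generally disagree (for instance $g_1g_2=\gamma_1^{-1}\gamma_2$ has both lengths equal to $2$, whereas $g_4g_1g_4g_2=\gamma_1\gamma_2$ has $G$-length $4$ but $\Gamma$-length $2$), so a careful case analysis is needed to verify both the exhaustiveness of the decomposition and the identification of each $\Gamma$-GDMS admissibility condition with the corresponding $G$-reducedness condition. Disjointness of the union then follows at once from the uniqueness of the reduced $G$-word form.
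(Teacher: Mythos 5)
The paper offers no argument for this statement at all --- it is introduced with the words ``the following immediate observation'' and never proved --- so there is nothing to compare your proposal against except the statement itself. Your framework is the right one and most of it is sound: $\mathcal A$ is the $G$-orbit of the initial quadruple because filling an interstice amounts to inversion in a dual circle; $g_i(C_j)=C_j$ for $i\neq j$ because $K_i$ is the circle through the three tangency points of the mutually tangent triple $\{C_a\}_{a\neq i}$ and is therefore orthogonal to each of its members; $G$ is the free product of four copies of $\mathbb Z/2$ by ping-pong on the four tangent disks; and consequently the circles of $\mathcal A\setminus\{C_1,\dots,C_4\}$ are in bijection with nonempty reduced words $g_{i_1}\cdots g_{i_n}$, applied to $C_{i_n}$. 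The four displayed inclusions follow exactly as you say (your word ``disjoint'' for $\overline B_i$ and $\overline B_j$ should be ``tangent'', but only disjointness of the open balls is used).

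The genuine gap is the ``matching step'' that you explicitly defer, and it is not merely tedious: carried out, it shows that the displayed union, read literally, is \emph{not} all of $\mathcal A$. Every element of $\Gamma$ has even length as a reduced word in $g_1,\dots,g_4$, and for $\gamma$ in the relevant $\Gamma_k$ the products $\gamma g_j$, $\gamma g_ig_j$, $\gamma g_jg_4$ appearing in the five terms remain reduced; hence the only circle $w(C_4)$ with $w$ a reduced word of \emph{odd} length ending in $g_4$ that the right-hand side produces is $g_4(C_4)$ itself. But $g_1g_2g_4(C_4)$ is a genuine third-generation circle of the packing, and since distinct reduced words ending in $g_4$ represent distinct cosets of $\mathrm{Stab}_G(C_4)=\langle g_1,g_2,g_3\rangle$, it coincides with no circle on the list. (The symbol $\Gamma_4$ in the third term is also undefined, the vertex set of the Schottky coding of $\Gamma$ being $\{\pm1,\pm2,\pm3\}$.) The statement becomes correct --- and your strategy then closes --- if one interprets $\Gamma_4$ as the set of admissible words whose final letter is some $g_4g_b$ with $b\in\{1,2,3\}$ (equivalently, the words defined on all of $\overline B_4$) and replaces the term $\{g_4(C_4)\}$ by $(\Gamma_4\cup\{\Id\})(g_4(C_4))$; with that reading the five families correspond exactly to the empty word, the odd reduced words ending in $g_j$ with $j\le3$, the even reduced words ending in $g_ig_j$ with $j\le3$, the odd reduced words ending in $g_4$, and the even reduced words ending in $g_jg_4$, and both exhaustiveness and disjointness drop out of the uniqueness of reduced forms as you intended. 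Any complete write-up must therefore prove this corrected statement rather than the literal one.
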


\sp\fr For every $T > 0$ and every set $B \subset \mathbb C$, we denote 
$$
\mathcal D(T; B) := \big\{ C \in \mathcal A \hbox{ : } - \log \hbox{\diam}(C) \leq T \hbox{ and } C \cap B \neq \emptyset\big\},
$$
$$
\mathcal D(T) := \mathcal D(T; \C)
$$
$$
N_\cA(T; B):=\#\mathcal D(T; B)
\  \  \hbox{ and } \  \
N_\cA(T):=\#\mathcal D(T).
$$
As an immediate consequence of Corollary~\ref{c2ex28} and Observation~\ref{o3ex28} we get the following 
result proved in \cite{KO} (see also \cite{OS1}--\cite{OS3}) by entirely different methods.  

\begin{thm}\label{t1ex29}
Let $C_1, C_2, C_3, C_4$ be a bounded configuration of four distinct circles in the plane, each of which shares a common tangency point with each of the others. Let $\mathcal A$ be the corresponding circle packing.

Let $\delta = 1.30561\ldots$ be the Hausdorff dimension of the residual set of $\mathcal A$ and let $m_\delta$ be the Patterson-Sullivan measure of the corresponding parabolic Schottky group $\Gamma$.  

Then the limit 
$$
\lim_{T\to +\infty} \frac{N_{\mathcal A}(T)}{e^{\delta T}}
$$
exists, is positive, and finite. Moreover, there exists a constant $C \in (0, +\infty)$ such that 
$$
\lim_{T \to +\infty} 
 \frac{N_{\mathcal A}(T; B)}{e^{\delta T}} = C m_\delta(B)
$$
for every Borel set $B \subset \mathbb C$ with $m_\delta(\partial B) = 0$.
\end{thm}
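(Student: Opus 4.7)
The strategy is to reduce the counting to a finite sum of counts coming from Corollary~\ref{c2ex28}. By Observation~\ref{o3ex28}, the Apollonian packing $\mathcal{A}$ decomposes as a disjoint union
\[
\mathcal{A} = \mathcal{F} \,\cup\, \bigcup_{\ell} (\Gamma_{j_\ell} \cup \{\Id\})\cdot Y_\ell,
\]
where $\mathcal{F}$ is a finite collection of exceptional circles (the four originals, $g_4(C_4)$, etc.) and the remaining pieces are $\Gamma_{j_\ell}$-orbits of finitely many seed circles $Y_\ell$, each contained in some closed disk $\overline{B}_{k_\ell}$ with $k_\ell \in \{\pm 1,\pm 2,\pm 3\}$. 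First I would observe that the contribution of $\mathcal{F}$ is bounded, hence negligible after dividing by $e^{\delta T}$.

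Next, for each infinite piece $\Gamma_{j_\ell} \cdot Y_\ell$ inside $\overline{B}_{k_\ell}$, the number of its elements of diameter at least $e^{-T}$ meeting $B$ is (up to an $O(1)$ error absorbing the identity term and boundary effects from passing between $\Gamma_{j_\ell}$ and the full group $\Gamma$) precisely the quantity $E_{k_\ell}(\Gamma; T, B, Y_\ell)$ from Section~\ref{Schottky-No Tangencies}, possibly restricted to a sub-alphabet of the Schottky GDMS $\mathcal{S}_\Gamma$ that still contains enough letters to be finitely irreducible and carry the same Patterson--Sullivan measure (or a multiple thereof on $\overline{B}_{k_\ell}$). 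Applying Corollary~\ref{c2ex28} (the $E_k$-asymptotic), together with Observation~\ref{o3_2017_04_05} which guarantees $\delta_\Gamma>1\geq p(g)$ for every parabolic $g$ and hence $\Omega_\infty(\mathcal{S}_\Gamma)=\emptyset$ and $C_{k_\ell}(Y_\ell) \in (0,+\infty)$, yields
\[
\lim_{T \to +\infty}\frac{\#\{C \in \Gamma_{j_\ell}\cdot Y_\ell : -\log\diam(C)\le T,\ C\cap B\ne\emptyset\}}{e^{\delta T}}
= C_{k_\ell}(Y_\ell)\, m_\delta(B).
\]

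Summing the finitely many such limits and folding in the negligible contribution from $\mathcal{F}$ gives
\[
\lim_{T\to+\infty}\frac{N_\mathcal{A}(T;B)}{e^{\delta T}} \;=\; C\, m_\delta(B),
\qquad C := \sum_\ell C_{k_\ell}(Y_\ell) \in (0,+\infty),
\]
whenever $m_\delta(\partial B)=0$. Specializing to any Borel set $B$ with $\mathcal{A} \subset B$ (e.g.\ a large disk containing $C_4$) makes $N_\mathcal{A}(T;B) = N_\mathcal{A}(T)$ and $m_\delta(B)=m_\delta(\Lambda(\Gamma))=1$, so the unrestricted limit is $C$, which is positive and finite.

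The main obstacle I anticipate is the bookkeeping around the subgroups $\Gamma_{j_\ell}$: each orbit $\Gamma_{j_\ell}\cdot Y_\ell$ corresponds not to counting under the full Schottky group $\Gamma$ but under a sub-collection of reduced words with a forbidden first (or last) letter. To apply Corollary~\ref{c2ex28} one must identify this orbit with $E$-counts for a sub-GDMS obtained by restricting the alphabet of $\mathcal{S}_\Gamma$; this sub-GDMS inherits finite irreducibility, the same critical exponent $\delta$, and a finite non-zero Patterson--Sullivan mass on $\overline{B}_{k_\ell}$, so the conclusion of Corollary~\ref{c2ex28} still applies with the same exponential rate $e^{\delta T}$ and some finite constant. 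Once this identification is justified, the rest of the argument is an elementary finite sum.
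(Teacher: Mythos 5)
Your proposal is correct and follows essentially the same route as the paper, which obtains Theorem~\ref{t1ex29} directly from Observation~\ref{o3ex28} (the decomposition of $\mathcal A$ into a finite exceptional family plus finitely many orbits $(\Gamma_j\cup\{\Id\})\cdot Y$ of seed circles $Y\subset\overline B_j$) together with the $E_k$-asymptotics of Corollary~\ref{c2ex28}, summing the finitely many limits. The obstacle you anticipate is not actually present: the sets $\Gamma_{j}$ appearing in the decomposition are precisely the index sets $G_k=\{g_\omega:\omega\in E_A^*,\ t(\omega)=k\}$ over which the counting functions $E_k(\Gamma;T,B,Y)$ are defined in Section~\ref{Schottky-No Tangencies}, so Corollary~\ref{c2ex28} applies verbatim to each orbit piece and no restricted sub-alphabet or sub-GDMS is needed.
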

 
\subsection{Apollonian Triangle} 

\sp Now we consider the Apollonian triangle. Let $C_1, C_2, C_3$ be three mutually tangent circles in the plane having mutually disjoint interiors. Let $C_4$ be the circle tangent to all the circles $C_1, C_2, C_3$ and having all of them in its interior, i.e. the configuration $C_1, C_2, C_3, C_4$ is bounded.

We look at the curvilinear triangle $\mathcal T$ formed by the three edges joining the three tangency points of $C_1, C_2, C_3$
and lying on these circles. The bounded collection 
$$
\mathcal G := \{ C \in \mathcal A \hbox{ : } C \subset \mathcal T\}
$$
 is called the Apollonian gasket generated by the circles $C_1, C_2, C_3$.  Since $\partial \mathcal T \cap \Lambda(\Gamma)=\partial \mathcal T$ has Hausdorff dimension $1$, since $\d>1$ and since $m_\d$ is a constant multiple of $\d$--dimensional Hausdorff measure restricted to $\La(\Ga)$, we have that $m_\delta(\partial T)=0$. Another, a more general argument for this, would be to invoke Corollary~1.4 from \cite{FS}. Therefore, as an immediate consequence of Theorem~\ref{t1ex29} we get the following result, also proved by Kontorovich and Oh in \cite{KO} (see also \cite{OS1}--\cite{OS3}) with entirely different methods.
 
\begin{cor}\label{c1ex30}
Let $C_1, C_2, C_3$ be three mutually tangent circles in the plane having mutually disjoint interiors. Let $C_4$ be the circle tangent to all the circles $C_1, C_2, C_3$ and having all of them in its interior, i.e. the configuration $C_1, C_2, C_3, C_4$ is bounded. Let $\mathcal A$ be the corresponding (bounded) circle packing.

Let $\delta = 1.30561 \ldots$ be the Hausdorff dimension of the residual set of $\mathcal A$ and let $m_\delta$ be the Patterson-Sullivan measure of the corresponding parabolic Schottky group $\Gamma$.  

If $\mathcal T$ is the curvilinear triangle formed by $C_1$, $C_2$ and $C_3$, then the limit
$$
\lim_{T\to +\infty} \frac{N_{\mathcal A}(T; \mathcal T)}{e^{\delta T}}
$$
exists, is positive, and finite; we just count the elements of $\mathcal G$. Moreover, there exists a constant $C \in (0, +\infty)$, in fact the one of Theorem~\ref{t1ex29}, such that 
$$
\lim_{T \to +\infty}  \frac{N_{\mathcal A}(T; B)}{e^{\delta T}} = C m_\delta(B)
$$
for every Borel set $B \subset \mathcal T$ with $m_\delta(\partial B) = 0$.
\end{cor}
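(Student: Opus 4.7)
The plan is to derive Corollary~\ref{c1ex30} as a direct specialization of Theorem~\ref{t1ex29}. Only two things need to be checked: (i) that the hypothesis $m_\delta(\partial\mathcal{T})=0$ holds, so Theorem~\ref{t1ex29} can be applied with $B=\mathcal{T}$, and (ii) that the counting function $N_{\mathcal{A}}(T;\mathcal{T})$ agrees, up to a bounded additive error, with the count $\#\{C\in\mathcal{G}:-\log\diam(C)\le T\}$.

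For (i) I would argue as follows. The boundary $\partial\mathcal{T}$ is the union of three circular arcs lying respectively on $C_1,C_2,C_3$, hence a smooth $1$-dimensional set. Since $\delta=1.30561\ldots>1$ and the Patterson--Sullivan measure $m_\delta$ is a constant multiple of the $\delta$-dimensional Hausdorff measure restricted to $\Lambda(\Gamma)$, one has $\mathcal{H}^\delta(\partial\mathcal{T})=0$ and therefore $m_\delta(\partial\mathcal{T})=0$; alternatively, one may invoke Corollary~1.4 of \cite{FS}. Theorem~\ref{t1ex29} applied with $B=\mathcal{T}$ then yields
\[
\lim_{T\to+\infty}\frac{N_{\mathcal{A}}(T;\mathcal{T})}{e^{\delta T}}=C\,m_\delta(\mathcal{T}),
\]
which is finite and strictly positive, the positivity following because $\mathcal{T}$ contains the entire limit set of the sub-system coding the gasket, a set of positive $m_\delta$-measure.

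For (ii), in a bounded Apollonian packing every circle of $\mathcal{A}$ distinct from $C_1,C_2,C_3,C_4$ is contained in the interior of exactly one of the four curvilinear triangles determined by the initial configuration. A circle lying in one of the three \emph{outer} curvilinear triangles is tangent to the adjacent $C_i$ only on the arc of $C_i$ bounding that outer triangle, which is disjoint from the arc of $C_i$ lying on $\partial\mathcal{T}$; such a circle therefore does not meet $\mathcal{T}$ at all. Consequently the circles of $\mathcal{A}$ meeting $\mathcal{T}$ are exactly the elements of $\mathcal{G}$ together with at most the three boundary circles $C_1,C_2,C_3$. The additive constant at most $3$ is absorbed in the limit $e^{\delta T}\to\infty$, so the limit above also counts elements of $\mathcal{G}$ asymptotically.

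For the second assertion, fix any Borel $B\subset\mathcal{T}$ with $m_\delta(\partial B)=0$; viewed as a Borel subset of $\mathbb{C}$ it satisfies the same hypothesis, so Theorem~\ref{t1ex29} gives directly $\lim_{T\to+\infty}N_{\mathcal{A}}(T;B)/e^{\delta T}=C\,m_\delta(B)$ with the very same constant $C$ produced in the case $B=\mathcal{T}$. The only genuine obstacle is the verification $m_\delta(\partial\mathcal{T})=0$ together with the geometric bookkeeping in (ii); the remainder is a direct invocation of the already-established theorem.
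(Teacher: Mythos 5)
Your proposal is correct and follows essentially the same route as the paper: verify $m_\delta(\partial\mathcal{T})=0$ from the facts that $\partial\mathcal{T}$ has Hausdorff dimension $1$, that $\delta>1$, and that $m_\delta$ is a constant multiple of $\delta$-dimensional Hausdorff measure on $\Lambda(\Gamma)$ (or cite Corollary~1.4 of \cite{FS}), and then apply Theorem~\ref{t1ex29}. Your step (ii), identifying the circles meeting $\mathcal{T}$ with the elements of $\mathcal{G}$ up to a bounded discrepancy, is left implicit in the paper but is a correct and worthwhile clarification of the phrase ``we just count the elements of $\mathcal{G}$.''
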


Now we will provide a somewhat different proof of Corollary~
\ref{c1ex30}, by appealing directly to the theory
 of parabolic conformal IFSs and avoiding the intermediate 
step of parabolic Schottky groups. Indeed, let $C_0$ be the circle 
inscribed in $\mathcal T$ and tangent to the circles 
$C_1$, $C_2$ and $C_3$.  Let $x_1, x_2$ and $x_3$ be the vertices
of the curvilinear triangle $\mathcal T$, i.e., for 
$i=1,2,3$, $x_i$ is the only element of the intersection $K_i \cap K_4$.
Let 
$$
\phi_i: \widehat {\mathbb C} \to  \widehat {\mathbb C}
$$
be the M\"obius transformation fixing the point $x_i$ and mapping
the other vertices $x_j$ and $x_k$, respectively, onto the only points of the intersections $C_0 \cap C_j$ and $C_0 \cap C_k$.  
Then 
$$
\mathcal S = \{\phi_1, \phi_2, \phi_3\}
$$ 
is a parabolic IFS defined on $\overline B_4$, $x_i$ is a parabolic fixed point 
of $\phi_i$, $i=1,2,3$, and 
$$
\mathcal G = \{\phi_\om (C_0) \hbox{ : } \om \in \{1,2,3\}^* \},
$$
see Figure~5. We therefore obtain Corollary~\ref{c1ex30} immediately from  Theorem~\ref{c1da12.1J}.

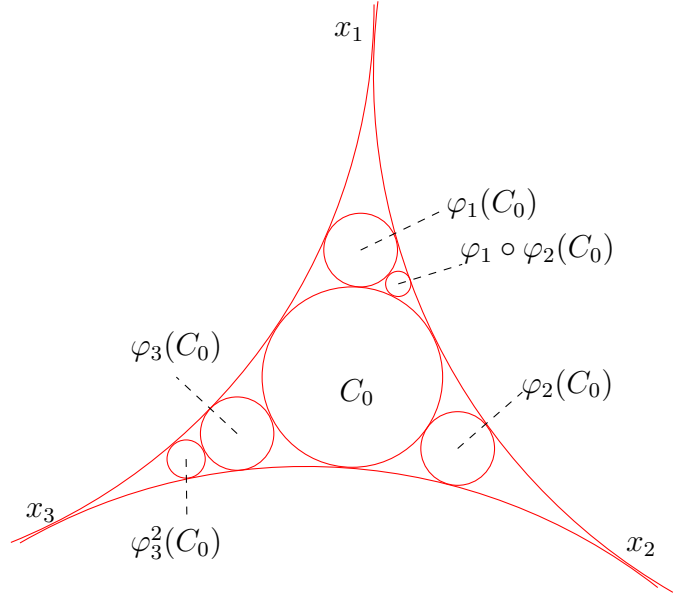
\begin{figure}
\begin{tikzpicture}[scale=2.0]
\draw [red] (0.97,0.6) circle [radius=0.6];;
\draw [red] (3,-0.8)  arc [radius =3.8, start angle = 52, end angle = 120];
\draw [red] (1.14,3.1)  arc [radius = 3.98, start angle = 173, end angle = 240];
\draw [red] (-1.3,-0.5)  arc [radius =3.86, start angle = 292, end angle = 360]; 
\draw [red] (1.67,0.125) circle [radius=0.245];;
\draw [dashed] (1.67,0.125)  -- (2.1, 0.5);
\draw [red] (0.204, 0.224) circle [radius=0.244];;
\draw [dashed](0.204, 0.224)  -- (-0.2, 0.6);
\draw [red] (1.025, 1.445) circle [radius=0.245];;
\draw [dashed] (1.025, 1.445)  -- (1.55, 1.7);
\draw [red] (-0.135, 0.055) circle [radius=0.127];;
\draw [dashed] (-0.135, 0.055)  -- (-0.13, -0.35);
\draw [red] (1.275, 1.22) circle [radius=0.084];;
\draw [dashed] (1.275, 1.22)  -- (1.7, 1.35);
\node at (-1.1, -0.3) {$x_3$};;
\node at (2.89, -0.53) {$x_2$};;
\node at (0.95, 2.9) {$x_1$};;
\node at (-0.2, 0.8) {$\phi_3(C_0)$};;
\node at (-0.2, -0.5) {$\phi_3^2(C_0)$};;
\node at (2.4, 0.55) {$\phi_2(C_0)$};;
\node at (2.2, 1.45) {$\phi_1 \circ \phi_2(C_0)$};;
\node at (1.9, 1.75) {$\phi_1(C_0)$};;
\node at (1.0, 0.50) {$C_0$};;
\end{tikzpicture}
\caption{Apollonian Triangle}
\end{figure}

\begin{rem}
In the context of limit sets, such as circle packings,  there is scope for finding error terms in the above asymptotic formulae, see ex. \cite{LO} and \cite{Pan}. It could also done using the techniques worked out in our present manuscript.  However, in the general setting of conformal graph directed Markov systems quite delicate technical hypotheses might well be required. 
\end{rem}

\begin{rem}
 For these analytic maps it would be equally possible to work with  Banach spaces of analytic functions, rather than H\"older continuous functions.
 This would have the advantage that the transfer operator operator is compact (even trace class or nuclear) and might help to simplify some of the arguments as well as being useful in explicit numerical computations.    On the other hand, working with H\"older functions allows the results to be applied to a  far  greater range of examples.
\end{rem}

\begin{rem}
In higher dimensions, we can consider the packing of the sphere $S^d$ by mutually tangent $d$-spheres.  The same analysis gives a corresponding asymptotic for the diameters of spheres. In an overlapping setting and with entirely different metods this question has been addressed in Oh's paper \cite{Oh}. 
\end{rem}

\sp
\section{{\large{\bf Fuchsian Groups}}}\label{Fuchsian Groups}

We recall that a Fuchsian group $\Gamma$ is a discrete 
group of orientation preserving Poincar\'e isometries acting 
on the unit disk 
$$
\mathbb D= \{z \in \mathbb C \hbox{ : } |z| < 1\}
$$ 
in the complex plane.
A Poincar\'e isometry  means that the Poincar\'e metric 
$$
\frac{|dz|}{1- |z|^2}
$$ 
is preserved, equivalently the map is 
a holomorphic homeomorphism of the disk $\mathbb D$ onto
itself.  The limit set $\Lambda (\Gamma)$ of $\Gamma$ is a 
compact perfect subset of $S^1 =  \bd\mathbb D =  
\{z \in \mathbb C \hbox{ : } |z| = 1\}$.
Assume that $\Gamma$ is finitely generated and denote a 
minimal (in the sense of inclusion) set of its generators by 
$\{g_j\}_{j= \pm 1}^{\pm q}$ where $g_j = g_j^{-1}$.  
Assume that $q \geq 2$, so that $\Gamma$ is non-elementary.  
Following \cite{series1} (see also \cite{series2}) we call $\Gamma$ non-exceptional if at least one of the following conditions holds (corresponding to conditions (10.1)-(10.3) from \cite{lalley}):

\sp\begin{enumerate}
\item
$\mathbb D/\Gamma$ is not compact; 

\sp\item
The generating set has at least $5$ elements (i.e., $q \geq 5$)
and every non-trivial relation has length $5$; and

\sp\item
At least $3$ of the generating relations have length at least $7$. 
\end{enumerate}

\sp\fr In particular finitely generated every parabolic Fuchsian group is non-exceptional as the condition (1) above is satisfied. In the language of conformal GDMSs, C. Series proved 
in \cite{series1} (see also \cite{series2}) the following:

\begin{thm}\label{t1_2017_04_13}
If $\Gamma$ is a non-exceptional Fuchsian group then there 
exists a finite irreducible pre-parabolic GDMS $\mathcal 
S_\Gamma$ with an incidence matrix $A$, a finite set of 
vertices $V$ and a finite alphabet $E = \{\pm 1, \pm 2, \cdots, 
\pm q\}$ such that 
\begin{enumerate}
\item  For every $j \in E$ the corresponding element of 
$\mathcal S_\Gamma$ is $g_j:X_{t(j)} \to X_{i(j)}$

\sp\item All sets $X_v$, $v \in V$ are closed subarcs of $S^1$

\sp\item The map $E_A^* \ni \omega \mapsto g_\omega \in \Gamma$ is a bijection 

\sp\item $\Lambda(\Gamma) = \mathcal J_{\mathcal S_\Gamma}$

\sp\item The map $\pi_{\mathcal S_\Gamma}: E_A^\infty \to \mathcal J_{\mathcal S_\Gamma} = \Lambda(\Gamma)$ is a continuous surjection and it is $1$-to-$1$ except at countably many points, where it is 
$2$-to-$1$.
\end{enumerate}
\end{thm}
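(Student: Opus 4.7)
The plan is to follow the Bowen--Series boundary expansion construction, adapted to the non-exceptional case as in \cite{series1}. First, I would fix a convex fundamental polygon $\mathcal D$ for $\Gamma$ (e.g. the Dirichlet domain centered at $0 \in \mathbb D$), whose sides are paired by the generators $\{g_j\}_{j=\pm 1}^{\pm q}$. The geodesics carrying these sides meet $S^1$ at a finite set of $2q$ points which partition $S^1$ into finitely many closed subarcs; these arcs (one for each region of $S^1$ cut out by the extended sides) will serve as the sets $\{X_v\}_{v\in V}$, establishing (2). Each generator $g_j$, viewed as a M\"obius transformation of $\widehat{\C}$, acts on the circle: one defines $g_j$ on the arc $X_{t(j)}$ to be the restriction of this M\"obius map, with $X_{i(j)}$ chosen as the smallest arc in the partition containing $g_j(X_{t(j)})$. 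This gives (1).

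Next, I would define the incidence matrix $A$ by declaring $A_{ij}=1$ if and only if the concatenation $g_i \circ g_j$ corresponds to a reduced word in the generators of $\Gamma$ (in the sense of the Bowen--Series normal form). The non-exceptional hypotheses (1)--(3) are precisely what is needed to guarantee the Markov property here: without them, the image $g_j(X_{t(j)})$ may fail to be a union of partition arcs, but under these hypotheses the combinatorics of the relations in $\Gamma$ force the image to align with the partition. Finite irreducibility of $A$ then follows from the topological transitivity (indeed minimality) of the $\Gamma$-action on $\Lambda(\Gamma)$. The contracting property holds because each $g_j$ is a hyperbolic M\"obius transformation (away from any parabolic fixed points), acting as a strict contraction on the closure of $X_{t(j)}$ with respect to the Euclidean metric; at parabolic fixed points the contraction degenerates to the identity derivative, which is precisely the reason the system is only \emph{pre}-parabolic (and the actual parabolic GDMS of Section~\ref{section:parabolic} is then obtained via the iteration procedure of Remark~\ref{r2_2017_02_17}, so that some iterate $\mathcal S_\Gamma^q$ is genuinely contracting away from the parabolic points).

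For (3), the bijection $E_A^* \ni \omega \mapsto g_\omega \in \Gamma$ encodes the uniqueness of the Bowen--Series normal form: every element of $\Gamma$ has a unique reduced expression as a product $g_{\omega_1}\circ\cdots\circ g_{\omega_n}$ with $\omega \in E_A^*$, and conversely each admissible word yields a distinct group element, because the admissibility constraints encoded in $A$ are exactly those imposed by the relations of the presentation. For (4), the inclusion $J_{\mathcal S_\Gamma} \subset \Lambda(\Gamma)$ is immediate from the construction since each $X_v \subset S^1$ and $g_j \in \Gamma$, while the reverse inclusion follows because the fixed points of hyperbolic elements of $\Gamma$ all lie in $J_{\mathcal S_\Gamma}$ (they are coded by periodic sequences via (3)) and these fixed points are dense in $\Lambda(\Gamma)$.

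Finally, for (5), the map $\pi_{\mathcal S_\Gamma}$ is continuous and surjective by (4) and the general theory of attracting GDMSs. It is 1-to-1 at every limit point whose admissible coding does not eventually hit the countable set of partition endpoints $\{x_k\}$ and their $\Gamma$-orbit (together with the parabolic fixed points): at such a generic point only one admissible sequence can code it. At the countably many exceptional points, two distinct admissible sequences can represent the same limit point, and these exceptional points correspond precisely to the two sides of a non-trivial relation of $\Gamma$ (respectively to the two ways of approaching a parabolic cusp), yielding exactly the 2-to-1 behavior claimed. The main obstacle in carrying this out rigorously is the verification that the non-exceptional hypotheses precisely yield the Markov property: establishing that images $g_j(X_{t(j)})$ are exact unions of partition arcs requires a detailed combinatorial analysis of how the sides of $\mathcal D$ are identified and of the lengths of the relations, which is the technical heart of \cite{series1} and \cite{series2}.
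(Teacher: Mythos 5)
The paper does not prove this theorem at all: it is quoted directly from C.~Series (\cite{series1}, \cite{series2}), so there is no in-paper argument to compare yours against. Your sketch follows exactly the route of the cited source (the Bowen--Series boundary expansion: fundamental polygon, side-pairing generators, partition of $S^1$ into arcs, Markov admissibility, normal forms, and the countable $2$-to-$1$ locus), and you correctly identify that the technical heart --- verifying that the non-exceptional hypotheses force the images $g_j(X_{t(j)})$ to align with the partition --- lives in Series's papers. Two small inaccuracies worth flagging: the generators of a non-exceptional Fuchsian group need not be hyperbolic M\"obius maps (they can be elliptic, as in the Hecke groups treated later in the paper, or parabolic), so the contraction argument cannot rest on hyperbolicity of the generators themselves but only on uniform contraction of a suitable iterate away from the parabolic points, as you note when invoking Remark~\ref{r1_2017_04_01}; and the admissibility condition $A_{ij}=1$ in the Bowen--Series coding is defined geometrically (by which arcs map into which) rather than purely by reducedness of words --- the equivalence of the two is part of what Series proves, not something one may assume at the outset.
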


Similarly to (but not exactly) as in Section~\ref{Schottky-No 
Tangencies}, given $e\in E$ we define 
$$
\Gamma_e:= \{\gamma \omega \in E_A^* \  \hbox{ and }  \  \om_1 = e\}. 
$$
Then having $\rho \in A_A^{\mathbb N}$ we set 
$$
\Gamma_\rho := \Gamma_{\rho_1}
$$
Again, similarly as in Section~\ref{Schottky-No Tangencies}, we denote 
$$
\lambda_{\rho}(\gamma)
= - \log |\gamma'(\pi_\Gamma(\rho))|
= - \log |\gamma_\omega'(\pi_\Gamma(\omega))|
= \lambda_{\rho}(\omega)
$$
for every $\omega \in E^*_\rho$ ($\gamma = \gamma_\omega
\in \Gamma_{\rho_1} = \Gamma_\rho$) and 
$$
\Delta(Y):= - \log (\hbox{\rm diam} (\gamma_{\omega}(Y)))
$$
if $Y \subset X_{t(\rho_1)}$.  Also 
$$
\lambda_p(\omega) = - \log |\gamma_\omega'(x_\omega)|
$$
if $\omega \in E_p^*$

Let $B$ denote a Borel subset of the set $S^1$. Set
$$
\pi_\rho (\Ga; T, B):= \{\g \in \Ga_\rho \hbox{ : } \lambda_\rho (\g) \leq T \hbox{ and } \g(\pi_\Ga(\rho)) \in B \}
$$

$$
\pi_\rho (\Ga; T):=\pi_\xi (\Ga; T, S^1) 
=\{ \g \in \Ga_\rho \hbox{ : } \lambda_\rho (\g) \leq T \}
$$

$$
\pi_p(\Ga; T, B):= \{ \omega \in E_p^*
\hbox{ : } \lambda_p (\omega) = l(\gamma_\omega) \leq T \hbox{ and } x_\omega \in B \},
$$

$$
\pi_p(\Ga; T):=\pi_p(\Ga; T, S^1)
=\{ \omega \in E_p^* \hbox{ : } \lambda_p (\omega) = l(\gamma_\omega) \leq T  \},
$$

$$
\widehat \pi_p (\Ga,T) := \{\g \in \widehat\Ga \hbox{ : } l(\gamma_\g)  \leq T \} 
$$
With $e:=\rho_1$ we further denote
$$
\mathcal D_\rho(\Ga; T, B, Y): = \{\g \in \Ga_e \hbox{ : } 
\Delta_\g(Y) \leq T \  \hbox{ and } \  \g(\pi_\Ga(\rho)) \in B\},
$$

$$
\mathcal E_e(\Ga; T, B, Y) = \{\g \in \Ga_e \hbox{ : } 
\Delta_\g(Y)\leq T  \  \hbox{ and } \  \g(Y)\cap B\ne\es\},
$$
and 
$$
\mathcal E_e(\Ga; T, Y):=\cE_e(\Ga; T,S^1, Y)
=\{\g \in \Ga_e \hbox{ : } \Delta_\g(Y)\leq T\}.
$$

\sp\fr We denote by 
$N_\xi(\Ga; T, B)$, $N_\xi(\Ga; T)$,
$N_p(\Ga; T, B)$, $N_p(\Ga; T)$,
$\widehat N_p(\Ga; T)$,
$D_\xi(\Ga; T, B, Y)$,
$E_e(\Ga; T, B, Y)$ and $E_e(\Ga; T, Y)$
the corresponding cardinalities.

\sp As immediate consequences of  Theorem~\ref{dyn}, 
Theorem~\ref{t1da7}, Theorem~\ref{t1ma1}, 
Theorems~\ref{t2pc6_B}, \ref{t1dp13}, and \ref{t1dp13B},
along with Theorem~\ref{t1_2017_04_13} and Fuchsian 
counterparts of Proposition \ref{p1ex11},
Observation \ref{o2ex13} and Proposition \ref{p1ex14.1}, 
following from \cite{series1} and \cite{series2}, we 
get the following. 

\begin{thm}\label{t1ex16_04_14_2017}
Let $\Ga = \langle \g_j\rangle_{j=1}^q$ be a  finitely 
generated non-exceptional Fuchsian group. 

\sp\begin{itemize}
\item Let $\delta_\Ga$ be the Poincar\'e  exponent of $\Ga$; 
it is known to be equal to $\HD(\Lambda(\Ga))$.  

\sp\item Let $m_{\delta_\Ga}$ be the Patterson-Sullivan
conformal measure for $G$ on $\Lambda(\Ga)$.  

\sp\item Let $\mu_{\delta_\Ga}$ be the $\mathcal 
S_\Ga$-invariant measure on $\Lambda(\Ga)$ equivalent to 
$m_{\delta_\Ga}$.  

\sp\sp\item Fix $e\in E = \{\pm 1, \pm 2, \cdots, 
\pm q\}$ and $\rho \in E_A^\infty$ with $\rho_1=e$.
\end{itemize}

\fr Let $B\sbt S^1$ be a Borel set with $m_{\d_\Ga}(\bd B)=0$ 
(equivalently $\mu_{\d_\Ga}(\bd B)=0$) and let 
$Y\sbt X_{t(e)}$ be a set having at least two distinct 
points. Then with some constant $C_e(Y)\in(0,+\infty]$, we have that
$$
\lim_{T\to+\infty}\frac{N_\xi(\Ga; T, B)}{e^{\d_\Ga T}}
=\frac{\psi_{\d_\Ga}(\xi)}{\d_\Ga\chi_{\d_\Ga}}m_{\d_\Ga}(B),
\  \  \  \  \
\lim_{T\to+\infty}\frac{N_\xi(\Ga; T)}{e^{\d_\Ga T}}
=\frac{\psi_{\d_\Ga}(\xi)}{\d_\Ga\chi_{\d_\Ga}},
$$

$$
\lim_{T\to+\infty}\frac{N_p(\Ga; T, B)}{e^{\d_\Ga T}}
=\frac{1}{\d_\Ga\chi_{\d_\Ga}}\mu_{\d_\Ga}(B),
\  \  \  \  \
\lim_{T\to+\infty}\frac{N_p(\Ga; T)}{e^{\d_\Ga T}}
=\frac{1}{\d_\Ga\chi_{\d_\Ga}},
$$

$$
\begin{aligned}
\lim_{T\to+\infty}\frac{\widehat N_p(\Ga; T)}{e^{\d_\Ga T}}
&=\frac{1}{\d_\Ga\chi_{\d_\Ga}}, \\  \\
\lim_{T\to+\infty}\frac{D_\xi(\Ga; T, B,Y)}{e^{\d_\Ga T}}
&=C_e(Y)m_{\d_\Ga}(B), \\  \\
\lim_{T\to+\infty}\frac{E_k(\Ga; T, B,Y)}{e^{\d_\Ga T}}
&=C_e(Y)m_{\d_\Ga}(B), \\ \\
\lim_{T\to+\infty}\frac{E_k(\Ga; T,Y)}{e^{\d_\Ga T}}
&=C_e(Y).
\end{aligned}
$$
In addition, $C_e(Y)>0$ is finite if and only if 
$$
\ov Y\cap \Om(\cS_\Ga)=\es,
$$
in particular if $\Ga$ has no parabolic points, i.e. if it is 
convex co-compact.
\end{thm}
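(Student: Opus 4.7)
The plan is to deduce this theorem by applying the general asymptotic counting results already proved for attracting and parabolic conformal GDMSs (Theorems \ref{dyn}, \ref{t1da7}, \ref{t1ma1} in the convex co-compact case, and Theorems \ref{t2pc6_B}, \ref{t1dp13}, \ref{t1dp13B} when $\Ga$ contains parabolic elements) to the Series GDMS $\cS_\Ga$ produced by Theorem \ref{t1_2017_04_13}. First, I would invoke Theorem \ref{t1_2017_04_13} to pass from the Fuchsian group $\Ga$ to the finite irreducible conformal GDMS $\cS_\Ga$ with alphabet $E=\{\pm 1,\ldots,\pm q\}$ and $1$-to-$1$ coding $E_A^*\ni\om\mapsto g_\om\in\Ga$. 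Under this bijection $\Ga_\rho$ corresponds to $E_\rho^*$, and because each $g_j$ is a Poincar\'e isometry whose Euclidean derivative on $S^1$ agrees with the hyperbolic scaling, we have $\lambda_\rho(g_\om)=\lambda_\rho(\om)$ and $\Delta(g_\om(Y))=\Delta(\om)$ with notation matching the earlier GDMS sections. The limit set equality $\La(\Ga)=J_{\cS_\Ga}$ and the fact that $\pi_{\cS_\Ga}$ is bijective off a countable (hence $m_{\d_\Ga}$- and $\mu_{\d_\Ga}$-null) set allows us to transport the conformal measure $\^m_{\d_\Ga}$ and its invariant version $\^\mu_{\d_\Ga}$ between $J_{\cS_\Ga}$ and $\La(\Ga)$ without altering the measure of any Borel set with null boundary.

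Next, I need to verify the structural hypotheses of the general theorems. In the convex co-compact case ($\Om(\Ga)=\es$), $\cS_\Ga$ is a finite alphabet attracting conformal GDMS, which is automatically strongly regular (every finite irreducible system is), so the only nontrivial check is D-genericity. This I would obtain via Proposition \ref{p1nh13} by exhibiting two periodic elements $g_{\om_1}, g_{\om_2}\in\Ga$ whose Poincar\'e translation lengths have irrational ratio; for any non-elementary Fuchsian group such pairs exist (they can be built from any two hyperbolic generators whose axes do not share an endpoint), so $\cS_\Ga$ is D-generic. In the parabolic case, Theorem \ref{t1pc3} guarantees D-genericity of the induced attracting system $\cS_\Ga^*$ automatically, so no extra hypothesis is needed.

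Having placed $\cS_\Ga$ in the scope of the general theory, the asymptotic formulas for $N_\xi(\Ga;T,B)$ and $N_p(\Ga;T,B)$ follow directly from formulas \eqref{3_2016_01_30} and \eqref{3_2016_01_30E} of Theorem \ref{dyn} (or \eqref{3_2016_03_25} and \eqref{3_2016_03_25_B} of Theorem \ref{t2pc6_B} in the parabolic case), applied to the Borel set $B\cap J_{\cS_\Ga}$ — the two-to-one points form a null set, so the difference between counting with respect to $\La(\Ga)$ and with respect to $J_{\cS_\Ga}$ is absorbed. The asymptotics for $D_\xi(\Ga;T,B,Y)$, $E_e(\Ga;T,B,Y)$ and $E_e(\Ga;T,Y)$ follow analogously from Theorem \ref{t1da7}, Theorem \ref{t1ma1} (or their parabolic counterparts Theorem \ref{t1dp13} and Theorem \ref{t1dp13B}), producing the constant $C_e(Y)$ and its finiteness dichotomy; in the convex co-compact case $\Om(\cS_\Ga)=\es$ so $C_e(Y)$ is automatically finite, whereas in the parabolic case finiteness fails precisely when $\ov Y$ meets a parabolic fixed point for which the exponent condition of Theorem \ref{t1dp13} is violated, which in this one-dimensional setting simplifies to $\ov Y\cap\Om(\cS_\Ga)=\es$ because $\d_\Ga>\tfrac{2p}{p+1}$ holds for any non-elementary finitely generated Fuchsian group with cusps (indeed $\d_\Ga\geq 1$ while all cusps have rank $p=1$, giving $\tfrac{2p}{p+1}=1$, and strict inequality holds whenever the group is non-elementary).

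The counting $\widehat N_p(\Ga;T)$ of distinct group elements whose axes cross the fundamental domain is obtained from $N_p(\Ga;T)$ using the Fuchsian analogue of Proposition \ref{p1ex14.1}: the map $E_p^*\ni\om\mapsto g_\om\in\widehat\Ga$ is a bijection onto the set of elements whose oriented axis crosses the Series fundamental domain, proved exactly as in the Schottky setting in Section \ref{Schottky-No Tangencies}. The main obstacle, and the one requiring the most care, is verifying that the Fuchsian counterparts of Proposition \ref{p1ex11}, Observation \ref{o2ex13} and Proposition \ref{p1ex14.1} go through for the Series coding; unlike the Schottky setting where the combinatorics of the coding are transparent from the disjoint balls $B_j$, here one must use the more delicate structure of the Bowen-Series boundary map (see \cite{series1}, \cite{series2}) to match periodic words in $E_p^*$ with axes crossing the fundamental polygon, and to check that the matching is a bijection. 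Once this correspondence is established and the null-set issues around the cusps and the $2$-to-$1$ points of $\pi_{\cS_\Ga}$ are absorbed into the hypothesis $m_{\d_\Ga}(\bd B)=0$, the seven asymptotic formulas in the statement follow at once from the respective general theorems.
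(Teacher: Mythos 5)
Your overall route is the same as the paper's: pass to the Series coding $\cS_\Gamma$ of Theorem~\ref{t1_2017_04_13}, check the hypotheses of the general attracting (resp.\ parabolic) counting theorems, and transport the conclusions back to $\Gamma$ using the fact that the coding is $1$-to-$1$ off a countable (hence null) set, together with Fuchsian counterparts of Proposition~\ref{p1ex11}, Observation~\ref{o2ex13} and Proposition~\ref{p1ex14.1}. The paper gives essentially no more detail than this citation, so on the main asymptotic formulas your write-up is, if anything, more careful; in particular you address D-genericity in the convex co-compact case, which the paper leaves implicit, and non-arithmeticity of the length spectrum of a non-elementary Fuchsian group is indeed the right fact to invoke there.

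There is, however, a genuine error in your justification of the finiteness criterion for $C_e(Y)$. You assert that $\delta_\Gamma\ge 1$ (strictly $>1$ for non-elementary groups) whenever $\Gamma$ has cusps. This is false: for any Fuchsian group $\delta_\Gamma\le 1$, and for a finitely generated (hence geometrically finite) group with parabolic elements one only has $\tfrac12<\delta_\Gamma\le 1$, with $\delta_\Gamma<1$ unless $\Gamma$ is a lattice. Worse, your claimed inequality would derail the very conclusion you want: every cusp of a Fuchsian group has $p(a)=1$, so if $\delta_\Gamma>1=\max_a p(a)$ held, then alternative (2) of Theorem~\ref{t1dp13} would be satisfied and $C_e(Y)$ would be finite for \emph{every} $Y$, contradicting the dichotomy in the statement. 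The correct argument runs in the opposite direction: since $\delta_\Gamma\le 1=\tfrac{2p_a}{p_a+1}=p(a)$ for every parabolic index $a$, alternative (2) always fails and $\Om_\infty(\cS_\Gamma)=\Om(\cS_\Gamma)$, so Theorem~\ref{t1dp13} gives finiteness of $C_e(Y)$ exactly when $\overline{Y}\cap\Om(\cS_\Gamma)=\emptyset$, as stated. Note also that the general parabolic counting theorems (Theorems~\ref{t2pc6_B}, \ref{t1dp13}, \ref{t1dp13B}) do not require $\delta>\tfrac{2p}{p+1}$, so you neither need, nor in general have, finiteness of $\mu_{\delta_\Gamma}$.
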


\sp Theorem~\ref{t1ms1} -- 
Theorem~\ref{t1ms1-2again} hold for the conformal GDMS $\cS_\Ga$, 
associated to the group $\Ga$, without changes. 
Therefore, we do not repeat them here. However, as in 
Section~\ref{Schottky-No Tangencies}, we present  the 
appropriate versions of Theorems~\ref{t1ms5.2} and 
\ref{t1m58} as their formulations are closer to the group 
$\Ga$. In order to get appropriate expressions in the 
language of the group $\Ga$ itself, given $\rho\in E_A^\infty$, 
and an integer $n\ge 1$, we set
$$
\Ga_\rho^n:=\{\g_\om:\om\in E_\rho^n\}\sbt \Ga_\rho.
$$
Furthermore, we define a probability measure $\mu_n$ on 
$\Ga_\rho^n$ by setting that 

\begin{equation}\label{21_2017_04_14}
\mu_n(H) 
:= \frac{\sum_{\g \in H} 
e^{-\delta \lambda_\rho(\g)}}{\sum_{\g\in\Ga_\rho^n} 
e^{-\delta \lambda_\rho(\g)}}
\end{equation}
for every set $H \subset \Ga_\rho^n$. As an immediate 
consequence of Theorem~\ref{t1ms5.2} we get the following.

\bthm\label{t31_2017_04_14} 
If $\Ga = \langle \g_j\rangle_{j=1}^q$ is a finitely 
generated non-exceptional convex co-compact (i. e. without 
parabolic fixed points) Fuchsian group, then for every 
$\rho\in E_A^\infty$ we have that
$$
\lim_{n \to +\infty} \int_{\Ga_\rho^n} \frac{\lambda_\rho}{n} d\mu_n 
= \chi_{\mu_\d}.
$$
\ethm

\fr Now define the functions $\Delta_n:\Ga_\rho^n\to \mathbb
R$ by the formulae
$$
\Delta_n(\g) = 
\frac{ \lambda_\xi(\g)- \chi n}{\sqrt{n}}.
$$
As an immediate consequence of Theorem~\ref{t1m58} we get the following.

\begin{thm}\label{t32_2017_04_14} 
If $\Ga = \langle \g_j\rangle_{j=1}^q$ is a finitely 
generated non-exceptional convex co-compact (i. e. without 
parabolic fixed points) Fuchsian group, then for every 
$\rho\in E_A^\infty$
the sequence of random variables $(\Delta_n)_{n=1}^\infty$ converges in distribution to the normal (Gaussian) distribution 
$\mathcal N_0(\sigma)$ with mean value zero and the variance 
$\sigma^2 = \P_{\cS_\Ga}''(\delta)>0$.  
Equivalently, the sequence 
$(\mu_n \circ \Delta_n^{-1})_{n=1}^\infty$
converges weakly to the normal distribution $\mathcal N_0(\sigma^2)$.  This means that for every Borel set $F \subset \mathbb R$ with 
$\hbox{\rm Leb}(\partial F) = 0$, we have 
\begin{equation}\label{1ms8}
\lim_{n \to +\infty} \mu_n(\Delta_n^{-1}(F))
= \frac{1}{\sqrt{2\pi} \sigma}
\int_F e^{- t^2/2\sigma^2} dt.
\end{equation} 
\end{thm}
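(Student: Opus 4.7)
\textbf{Proof proposal for Theorem~\ref{t32_2017_04_14}.} The strategy is to reduce the statement to Theorem~\ref{t1m58} by transporting the counting measures on $\Gamma_\rho^n$ to the symbolic counting measures on $E_\rho^n$ via the coding provided by Theorem~\ref{t1_2017_04_13}. First I would invoke Theorem~\ref{t1_2017_04_13} to obtain the finite irreducible pre-parabolic conformal GDMS $\mathcal{S}_\Gamma$ together with the bijection $E_A^* \ni \omega \longmapsto g_\omega \in \Gamma$. Under the convex co-compact hypothesis, $\Gamma$ has no parabolic elements, so $\mathcal{S}_\Gamma$ is actually a finite alphabet irreducible \emph{attracting} conformal GDMS (no parabolic maps appear in the symbolic presentation). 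By the last sentence of Section~\ref{Attracting_GDMS_Prel}, any finite irreducible attracting system is strongly regular, so $\mathcal{S}_\Gamma$ is strongly regular.

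Next I would verify D-genericity of $\mathcal{S}_\Gamma$ using Proposition~\ref{p1nh13}. Under the bijection $\omega \mapsto g_\omega$, the set $\{\log|\phi_\omega'(x_\omega)| : \omega \in E_p^*\}$ corresponds (up to the standard identification of fixed-point derivatives with lengths of closed geodesics on the convex co-compact surface $\mathbb{D}/\Gamma$) to the negatives of the lengths of primitive and iterated closed geodesics on $\mathbb{D}/\Gamma$. For a non-elementary convex co-compact Fuchsian group the length spectrum generates a non-cyclic (in fact dense) additive subgroup of $\mathbb{R}$; this non-arithmeticity is a standard consequence of e.g.\ the existence of three independent hyperbolic elements producing closed geodesics of incommensurable lengths, which can be seen directly from trace identities in $\mathrm{PSL}_2(\mathbb{R})$. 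By Proposition~\ref{p1nh13} this gives D-genericity of $\mathcal{S}_\Gamma$, and hence by Remark~\ref{r2_2017_02_17} together with the D-genericity conclusion that $\sigma^2 = \mathcal{P}_{\mathcal{S}_\Gamma}''(\delta) > 0$.

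Having established that $\mathcal{S}_\Gamma$ is a strongly regular finitely irreducible D-generic attracting conformal GDMS, I would apply Theorem~\ref{t1m58} directly. The bijection $E_\rho^n \ni \omega \longmapsto g_\omega \in \Gamma_\rho^n$ identifies the measure $\mu_n$ on $\Gamma_\rho^n$ defined by formula \eqref{21_2017_04_14} with the symbolic measure $\mu_n$ on $E_\rho^n$ defined by \eqref{1ms6}, because $\lambda_\rho(g_\omega) = \lambda_\rho(\omega)$ under this identification. Similarly the random variable $\Delta_n$ on $\Gamma_\rho^n$ transports to the random variable $\Delta_n$ on $E_\rho^n$ from \eqref{2m56}. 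Theorem~\ref{t1m58} then yields the weak convergence $\mu_n \circ \Delta_n^{-1} \to \mathcal{N}_0(\sigma^2)$ and, in particular, the Lebesgue-null boundary formula displayed in the statement.

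The main technical point, and arguably the only non-routine step, is the verification of D-genericity. In the framework developed earlier in the paper the genericity of a GDMS is a purely dynamical condition; here we need to import a classical fact about Fuchsian groups (non-arithmeticity of the length spectrum for non-elementary surfaces) and interpret it through Proposition~\ref{p1nh13}. Everything else is a mechanical transport along the bijection of Theorem~\ref{t1_2017_04_13}, and no new probabilistic input is required beyond Theorem~\ref{t1m58}.
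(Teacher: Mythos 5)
Your proposal is correct and follows essentially the same route as the paper, which derives this theorem as an immediate consequence of Theorem~\ref{t1m58} by transporting the measures $\mu_n$ and the variables $\Delta_n$ along the bijection $\omega\mapsto g_\omega$ furnished by the Series coding of Theorem~\ref{t1_2017_04_13}. Your explicit verification of the hypotheses of Theorem~\ref{t1m58} (strong regularity of a finite irreducible attracting system, and D-genericity via the non-arithmeticity of the length spectrum together with Proposition~\ref{p1nh13}) merely fills in details the paper leaves implicit.
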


\sp
\subsection{Hecke Groups}
A special class of Fuchsian parabolic (so non-exceptional) groups are Hecke groups. These are easiest to express in the Lobachevsky model of hyperbolic geometry and plane rather than in the Poincar\'e one. The 2-dimensional hyperbolic (Lobachevsky) plane is the set 
$$
\bH:=\{z\in\C: \im z>0\}
$$
endowed with the Riemannian metric
$$
\frac{|dz|}{\im z}.
$$
Given $\e>0$ the corresponding Hecke group is defined as follows
$$
\Gamma_\epsilon:= \big\langle z\mapsto -1/z,\, z\mapsto z+ 1+\epsilon \big\rangle.
$$
This group has an elliptic element order $2$ which is the map $z\longmapsto -1/z$ and one (conjugacy class) of parabolic elements which is the map $z\longmapsto z+ 1+\epsilon$. Its (parabolic) fixed point is $\infty$. In particular all the limit sets $\La(\Ga_\e)$ are unbounded, and therefore the Hecke groups $\Gamma_\e$ do not really fit into the setting of our current manuscript. However, any M\"obius transformation
$$
H:\D\to\bH
$$
is an isometry with respect to corresponding Poincar\'e metrics and the map
$$
\Ga_\e\ni \g\longmapsto H^{-1}\circ\g\circ H
$$
establishes an algebraic isomorphism between $\Ga_\e$ and the group 
$$
\hat\Ga_\e:=\{H^{-1}\circ\g\circ H:\g\in\Ga_e\}.
$$
Of course, the conjugacy $H$ between $\hat\Ga_\e$ and $\Ga_\e$ congregates elements of $\hat\Ga_\e$ and $\Gamma_\e$ viewed as isometric actions. The groups $\hat\Ga_\e$ are Fuchsian parabolic (so non-exceptional) groups acting on $\D$ and perfectly fit into the setting of Section~\ref{Fuchsian Groups}. In particular, Theorem~\ref{t1ex16_04_14_2017} holds for them.

\end{document}

$$
\pi_p^{(2)} (G; T, B) = \{ \omega \in E_p^*
\hbox{ : } \lambda_p (\omega) = l(\gamma_\omega) \leq T \hbox{ and } 
\{g_{\sigma^{*j}(\cdot)}x_\omega \}_{j=1}^{|\omega|} \cap  B \neq \emptyset\}
$$

$$
 \pi_p (G,T) := \{g \in \mathcal C(G) \hbox{ : } l(\gamma)  \leq T \} 
$$

$$
\mathcal D_\xi^{(2)} (G; T, B, Y): = \{g \in G_k \hbox{ : } \Delta_g(Y) \leq T \hbox{ and }g(Y) \cap B \neq \emptyset\}.
$$

 This is illustrated by a very simple model problem.
 
\begin{example}[Model Problem:  The Farey Map and the Gauss map]
Let $F: [0,1] \to [0,1]$ be the Farey map defined by
$$
F(x) 
= \begin{cases}
\frac{x}{1-x} &\hbox{ if } 0 \leq x \leq \frac{1}{2}\cr
\frac{1-x}{x} &\hbox{ if } \frac{1}{2} \leq x \leq 1
\end{cases}
$$
This has derivative having absolute value strictly greater than $1$, except at the points $0$ and $1$ where the derivative has absolute value equal to $1$.  
 For any $n \geq 1$ we  can consider the $2^n$ preimages 
$F^{-n}(1)$ of  $1$.  These are related to the terms at the $n$th level $T_n$ of the well known
Stern-Brocot tree.

Let $Y= [0,1/2]$ and define a map $n: [0,1] \to \mathbb N$ where $n(x)$ is the least value of $n$ such that $F^nx \in Y$.
 We can define a new map $G:[0,1] \to [0,1]$ defined by $S(x) = F^{n(x)+1}(x)$.  A calculation reveals that $G$ is precisely the Gauss map, i.e., $G(x) = \frac{1}{x}$ (mod $1$).  
We can consider the $F$-preimages $\cup_{n=1}^{\infty} F^{-n}(x_0)$ of a reference point $x_0$ and observe that these therefore include the subfamily of $G$-preimages $\cup_{n=1}^{\infty} G^{-n}(x_0)$. 
 When a point $x$ lies in both families (i.e., $G^nx = F^{m_n}x = x_0$, say, then we can consider the corresponding weights 
$\log |(F^{m_n})'(x)| = \log |(G^{n})'(x)|$).  It is easy to show using 
the  hyperbolicity of the Gauss map that there exists $C>0$ such that 
$$
\#\{y \hbox{ : } \exists n \in \mathbb N, G^n(x) = x_0 \hbox{ and } \log|(G^n)'(x)| \leq T \} \sim C e^{T} 
$$
as $T \to +\infty$.  The additional preimages for $F^m$ correspond to  
$$
F^{-m}(x_0) \hbox{ for } m_{n} < m < m_{n+1}, 
$$
say, which make a contribution of similar size to those counted in $F^{-m_n}(x_0) = G^{-n}(x_0)$ and it is a simple modification of the proof of the previous asymptotic to  see that  there exists $C'>0$ such that 
$$
\#\{y \hbox{ : } \exists m \in \mathbb N, F^m(x) = x_0 \hbox{ and } \log|(F^m)'(x)| \leq T \} \sim C' e^{T} 
$$
as $T \to +\infty$.
\footnote{The idea in  the proof is that for $F^m(z) = x_0$ we can write 
$(F^m)'(z)= (G^{n} \circ F^{m-m_n})'(z) =  (G^{n})'(F^{m-m_n}(z))(F^{m-m_n})'(z)$  and then  
summing the weights
$(F^m)'(z)^{-s}$ ($Re(s) > 1/2$)
over all preimages $z \in F^{-n}(x_0)$ gives the function
$$\eta(s) =  \sum_{n=0}^\infty\mathcal L_s^n(h_s)(x_0)$$
 where $\mathcal L_s(h)(x) = \sum_{n=1}^\infty (x+n)^{-2s} h((1+x)^{-1})$ is the usual weighted   transfer operator for the Gauss map and 
 $$h_s(x) = \sum_{k=0}^\infty ((F_1^k)'(x))^{-s}$$ where 
 $F_1: [0,1] \to [0, 1/2]$ given by $F_1(x) = x/(1+x)$ is the first inverse branch of $F$ and so $F_1^n(x) =  x/(n+x)$ and so $(F_1^n)'(x) \asymp \frac{1}{n}$ which diverges at $s=1$},  
We might also consider the related of problem of counting the Farey   interval $I \subset [0,1]$ (ordered by their lengths $|I|$) for which  $F^n:I \to [0,1]$ is a bijective map, for suitable $n \geq 1$.  We can take  the endpoints of $I$ to be a preimage $x$ of $x_0=1$ under $F^n$, say.
Thus the endpoints will be in the $n$th Farey sequence.  The length of the interval will be closely related to $|(F^n)'(x)|$, but not exactly the same.  However, a suitable approximation argument gives that there exists $C''>0$ such that 
$$
\#\{I \hbox{ : } \exists m \in \mathbb N, F^m: I  \to [0,1] \hbox{ is a bijection and } -\log |I| \leq T \} \sim C'' e^{T} 
$$
as $T \to +\infty$.
\end{example}

\begin{rem}
The model problem on Farey intervals doesn't seem to fit into this setting.  The reason is that for the Farey map the invariant probability measure is sigma finite.
\end{rem}

\section{Applications}

The results on Central Limit Theorems for the weights 
$\lambda(\omega)$, $\omega \in E_A$,  can be 
translated  into results on the logarithms of the diameters $\Delta(n)$.  The ratio 
$\lambda(\omega)/\Delta(\omega)$ is uniformly bounded above and below, and thus the difference 
$\log \lambda(\omega) - \log \Delta(\omega)$ is uniformly bounded.  Since the CLT involves dividing by $\sqrt{n}$ we deduce that the results translate from 
$\lambda(\omega)$   to  $\Delta(\omega)$.